\documentclass[10pt,reqno,fleqn]{amsart}

\usepackage[utf8]{inputenc}		
\usepackage[english]{babel}		

\usepackage{amsmath}				
\usepackage{mathtools}			
\usepackage{amsthm}				
\usepackage{amssymb}				
\usepackage[nointlimits]{esint}	
\usepackage{mathrsfs} 			

\usepackage{graphicx}			
\usepackage{epsfig}				
\usepackage{tikz}				
\usepackage{xxcolor}				
\usepackage{wrapfig}
\usepackage{caption}

\usepackage{enumitem}			
\usepackage{ifdraft} 			

\usepackage[]{todonotes}			

\usepackage[linktocpage=true,colorlinks=true,linkcolor=blue,citecolor=green]{hyperref}
\usepackage[
	backend=biber,style=alphabetic, natbib=false, mcite=false, casechanger=auto,
	sorting=nyt, sortcites=false, pluralothers=true, maxnames=4, minnames=1,
	backref=false, arxiv=abs, 
	date=year,
	isbn=false, url=false, doi=true, eprint=true, related=false,
	giveninits=true
]{biblatex}
\renewbibmacro{in:}{}	

\usepackage[hmarginratio=1:1,top=32mm,columnsep=20pt]{geometry} 

\numberwithin{equation}{section}				

\theoremstyle{plain}							
\newtheorem{theorem}{Theorem}[section]		
\newtheorem*{theorem*}{Theorem}				
\newtheorem*{maintheorem*}{Main Result}
\newtheorem{lemma}[theorem]{Lemma}			
\newtheorem{proposition}[theorem]{Proposition}
\newtheorem{corollary}[theorem]{Corollary}

\theoremstyle{definition}					
\newtheorem{definition}[theorem]{Definition}	
\newtheorem{example}[theorem]{Example}

\theoremstyle{remark}						
\newtheorem{remark}[theorem]{Remark}

\theoremstyle{definition}					
\providecommand{\customtheoremname}{}			
\newtheorem{placeholderthhm}{\customtheoremname}
\newcommand{\newcustomtheorem}[2]{%
  \newenvironment{#1}[1]						
  {											
   \renewcommand\customtheroremname{#2}
   \renewcommand\theplaceholderthhm{##1}	
   \placeholderthhm							
  }											
  {\endplaceholderthhm}						
}
\newcustomtheorem{customass}{Assumption}		
\newcustomtheorem{customdef}{Definition}

\DeclarePairedDelimiter{\Vpair}{\|}{\|} 			
\DeclarePairedDelimiter{\vpair}{|}{|} 			
\DeclarePairedDelimiter{\bpair}{[}{]} 			
\DeclarePairedDelimiter{\ppair}{(}{)} 			
\DeclarePairedDelimiter{\Bpair}{\{}{\}} 			
\DeclarePairedDelimiterX{\cond}[2]{[}{]}{#1  \,\delimsize\vert\,  \mathopen{}#2}

\newcommand\mathvshift[2]{\raisebox{#1}{$\displaystyle{#2}$}}
\newcommand\strutb{\vphantom{\bigm|}}

\newcommand\norminner[2][0pt]{\mathchoice{\mathvshift{#1}{\strutb\mathopen{}#2}}{\smash{#2}}{#2}{#2}}
\newcommand\bracketinner[2][0pt]{\mathchoice{\mathvshift{#1}{\strutb\mathopen{}#2}}{#2}{#2}{#2}}
\ifoptiondraft{
	\newcommand\norm[2][0pt]{\Vpair*{#2}} 	
	\newcommand\abs[2][0pt]{\vpair*{#2}} 		
	\newcommand\pp[2][0pt]{\ppair*{#2}} 		
	\newcommand\bp[2][0pt]{\bpair*{#2}} 		
	\newcommand\Bp[2][0pt]{\Bpair*{#2}} 		
}{
	\newcommand\norm[2][0pt]{\Vpair*{\norminner[#1]{#2}}}	 	
	\newcommand\abs[2][0pt]{\vpair*{\norminner[#1]{#2}}} 		
	\newcommand\pp[2][0pt]{\ppair*{\bracketinner[#1]{#2}}} 	
	\newcommand\bp[2][0pt]{\bpair*{\bracketinner[#1]{#2}}} 	
	\newcommand\Bp[2][0pt]{\Bpair*{\bracketinner[#1]{#2}}}		
}
\newcommand\delimstyle[2]{\setlength{\delimitershortfall}{#1}\delimiterfactor=#2}
\delimstyle{5pt}{850}

\newcommand{\ov}{\overline}
\newcommand{\wt}{\widetilde}
\newcommand{\eps}{\varepsilon}

\newcommand{\vphi}{\varphi}
\newcommand{\vtheta}{\vartheta}

\newcommand{\data}{\mathrm{data}}

\newcommand{\N}{\mathbb{N}}
\newcommand{\Nz}{\mathbb{N}_0}
\newcommand{\Z}{\mathbb{Z}}

\newcommand{\R}{\mathbb{R}}

\newcommand{\Zd}{{\Z^d}}
\newcommand{\Rd}{{\R^{d}}}
\newcommand{\Rdd}{{\R^{d\times d}}}
\newcommand{\Rddsym}{{\R^{d\times d}_{\mathrm{sym}}}}
\newcommand{\Rt}{{\R^{2}}}

\newcommand{\Sone}{{\mathbb{S}^1}}
\newcommand{\Sd}{{\mathbb{S}^{d-1}}}
\newcommand{\Ut}{\mathcal{O}}				
\newcommand{\hsp}[1]{{\mathcal{H}_{#1}^+}}	
\newcommand{\hsm}[1]{{\mathcal{H}_{#1}^-}}	
\newcommand{\hsma}[4]{{S_{#1,#2,#3}^-\pp{#4}}}			

\newcommand\sorder[1]{o\pp{#1}}	

\newcommand{\pat}{\partial_t}

\newcommand{\m}{\,\mathrm{d}}				

\newcommand{\diam}{\operatorname{diam}}
\newcommand{\dist}{\operatorname{dist}}

\newcommand{\charfun}[1][_]{{\if_#1 \chi \else \chi_{#1} \fi}}			
\newcommand{\comp}[1]{{{#1}^c}}										
\newcommand{\Id}{\operatorname{Id}}
\newcommand{\Idd}{\operatorname{Id}_{d\times d}}

\newcommand{\tr}{\operatorname{tr}}		
\newcommand{\supp}{\operatorname{supp}}

\newcommand{\Ev}{\mathbb{E}}												
\newcommand{\EV}[2][_]{ {\if_#1 \Ev\bp{#2} \else \Ev\cond*{#2}{#1} \fi} }	
\newcommand{\Pm}{{\mathbb{P}}}											
\newcommand{\PM}[2][_]{ {\if_#1 \Pm\bp{#2} \else \Pm\cond*{#2}{#1} \fi} }	
\newcommand{\F}{\mathcal{F}}												
\newcommand{\G}{\mathcal{G}}												

\newcommand{\vmin}{v_{\mathrm{min}}}
\newcommand{\vmineff}{v_{\mathrm{min}}}
\newcommand{\vmax}[1][_]{{\if_#1 v_{\mathrm{max}} \else v_{\mathrm{max},#1} \fi}}

\newcommand{\restcomp}{Q^{\oslash}}		

\newcommand{\Eup}[2][_]{{E^{\if_#1 \scalebox{0.6}{$\uparrow$} \else \scalebox{0.6}{$\uparrow$},#1 \fi}_{#2}}}

\newcommand{\Qup}[1]{{Q^{\scalebox{0.5}{$\uparrow$}}_{#1}}}

\newcommand{\Espeed}[2][_]{{E^{\if_#1 \scalebox{0.65}{$\circledcirc$} \else \scalebox{0.65}{$\circledcirc$},#1 \fi}_{#2}}}
\newcommand{\ratespeed}{{\alpha_{\scalebox{0.5}{$\circledcirc$}}}}
\newcommand{\Cspeed}{{C_{\scalebox{0.5}{$\circledcirc$}}}}
\newcommand{\cspeed}{{c_{\scalebox{0.5}{$\circledcirc$}}}}

\newcommand{\supJ}[1][]{\mathcal{J}^{2,+}_{#1}}			
\newcommand{\subJ}[1][]{\mathcal{J}^{2,-}_{#1}}			
\newcommand{\supP}[1][]{\mathcal{P}^{2,+}_{#1}}			
\newcommand{\supPc}[1][]{\ov{\mathcal{P}}{}^{2,+}_{#1}}	
\newcommand{\subP}[1][]{\mathcal{P}^{2,-}_{#1}}			
\newcommand{\subPc}[1][]{\ov{\mathcal{P}}{}^{2,-}_{#1}}	
\newcommand{\supPt}[1][]{\wt{\mathcal{P}}{}^{2,+}_{#1}}	
\newcommand{\subPt}[1][]{\wt{\mathcal{P}}{}^{2,-}_{#1}}	

\newcommand{\reach}[3][]{{{\mathscr{R}}_{#2}^{#1}\pp{#3}}}
\newcommand{\met}[4][]{{m^{#1}_{#2}\pp{#3,#4}}}			
\newcommand{\metb}[4][]{{m^{#1}_{#2}\pp{#3,#4}}}	
\newcommand{\uls}[2][_]{\if_#1 u_{#2} \else u_{#2, #1} \fi}	
\newcommand{\ur}[2][_]{\if_#1 u_{#2}^{\mathscr{R}} \else u_{#2, #1}^{\mathscr{R}} \fi}		

\newcommand{\hsubs}[1][_]{\if_#1 \subset_h \else \subset_{#1} \fi}
\newcommand{\hsups}[1][_]{\if_#1 \supset_h \else \supset_{#1} \fi}

\newcommand{\Cmeas}{c_{\F}}
\newcommand{\cstable}{c_s}

\newcommand{\M}{\mathcal{M}}						
\newcommand{\MA}{X}								
\newcommand{\delM}{Z}					
\newcommand{\timebound}[3]{T_{#1}\pp{#2,#3}}	
\newcommand{\maxM}[4][]{\if_#1 M \else M_{#2}\pp{#3,#4}\fi}		
\newcommand{\Cflu}{C_{\mathrm{fl}}}	
\newcommand{\cflu}{c_{\mathrm{fl}}}
\newcommand{\Cfluco}{C_{\mathrm{co}}}	
\newcommand{\CG}[1][]{C_{\G}}	
\newcommand{\Ccond}{C_K}

\newcommand{\Tfull}[1][]{{T^{#1}_{\mathrm{full}}}}		
\newcommand{\Tfirst}[1][]{{T^{#1}_{\mathrm{first}}}}		
\newcommand{\delF}{{\Delta F}}							
\newcommand{\delFdel}{{\delta_F}}							
\newcommand{\vhom}[1][]{\ov{v}_{#1}}
\newcommand{\limm}[4][]{\ov{m}^{#1}_{#2,#3}\pp{#4}}	
\newcommand{\genhoel}{{\beta^\ast}}
\newcommand{\hoel}{\eta}
\newcommand{\Choel}{{C^\ast}}

\graphicspath{ {./MCF_fluct-hom_images/} }
\counterwithin{figure}{section}

\begin{document}

\title[Homogenization of geometric motions through obstacles]{Quantitative homogenization of forced geometric motions through random fields of obstacles}

\author{Julian Fischer}
\address{Institute of Science and Technology Austria (ISTA), Am~Campus~1, 
3400 Klosterneuburg, Austria}
\email{julian.fischer@ista.ac.at}
\author{Jonas Ingmanns}
\address{Institute of Science and Technology Austria (ISTA), Am~Campus~1, 
3400 Klosterneuburg, Austria}
\email{jonas.ingmanns@ista.ac.at}

\begin{abstract}
We establish a quantitative homogenization result for an interface moving through a field of sufficiently sparse but possibly impenetrable random obstacles. From a physical viewpoint, such problems arise e.\,g.\ in the context of the motion of dislocations or magnetic domain walls in a material with impurities.

More precisely, given an interface moving by forced mean curvature flow -- with a positive global driving force plus a spatially fluctuating (negative) driving force modeling the obstacles -- , we prove that the effective large-scale behavior of the forward front is governed by a constant-speed effective motion. For typical values of the global forcing, on large scales of the order $\varepsilon^{-1}$ we obtain a (relative) error estimate for arrival times of the front of the order $\varepsilon^{1/9-}$.

Previous stochastic homogenization results for forced mean curvature motion in the literature have required a positive pointwise lower bound on the combined forcing, which implies the absence of any actual obstacles capable of locally blocking the interface motion. In contrast, our results are valid even in the presence of islands with locally negative forcing, potentially allowing for locally pinned interfaces and eventually enclosures left behind the main front. Thus, our homogenization result applies to settings closer to (but still strictly away from) the pinning-depinning transition.
\end{abstract}

\thanks{
This project has received funding from the European Research Council (ERC) under the European Union's Horizon 2020 research and innovation programme (grant agreement No 948819)
\smash{
\begin{tabular}{@{}c@{}}\includegraphics[width=6ex]{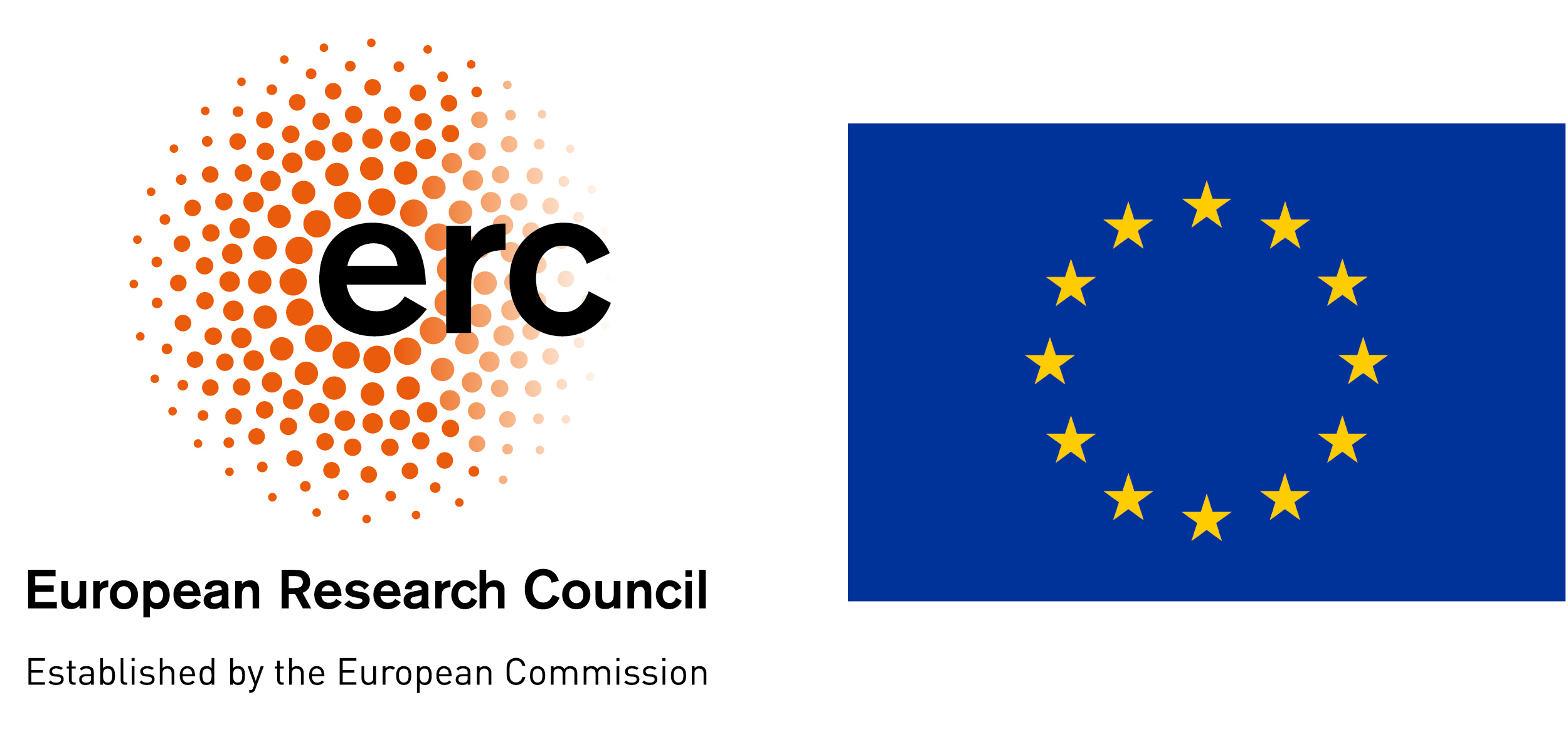}\end{tabular}
}
}

\maketitle
{\small{\bfseries \href{https://mathscinet.ams.org/mathscinet/msc/msc2020.html}{MSC2020 subject classification}:} 35B27, 60K35, 53E10, 35R60, 35F21}

{\small{\bfseries Keywords and phrases:} Homogenization of random media, dislocations, curvature flow, de-pinning, geometric motions.}

\setcounter{tocdepth}{1}		
\tableofcontents
\setcounter{tocdepth}{3}		


\section{Introduction}
In the physics of materials, lower dimensional features in the material -- like dislocation lines, magnetic domain walls, or grain boundaries -- are central to the quantitative understanding of many phenomena, like e.\,g.\ plasticity, ferromagnetism, or fracture.
The motion of lines and interfaces in materials is often governed by the combination of a global force, driving the line or interface in a certain direction, with the general preference of minimization of the interface energy. In the simplest setting, the corresponding evolution is thus given by a forced mean curvature motion \cite{IoVi87, Kar98, BraNa04}. In a material with impurities, these impurities may act as obstacles to the evolution of the line or interface, giving rise to a complex range of expected phenomena, such as pinning \cite{BoTe14,CDO22,DiYi06,DDS11}, de-pinning \cite{BoTe14,CDL10,CDO22,DiYi06,DoSch12}, and possibly homogenization \cite{ArCa18,CaMo14,DKY08,DiYi06,Fel19,LiSo05a}.

In the present work, we consider the motion of an interface driven by curvature and a uniform forcing through a field of sufficiently sparse but possibly impenetrable random obstacles. We prove that this motion can be approximated on large scales by a homogeneous first-order motion, i.\,e., in the case of isotropic media, by a constant speed motion; see Figure \ref{intro_fig_hom} for an illustration.

More precisely, we treat the case of an interface motion with the normal velocity $V$ at $x\in\Rd$ given in terms of the normal $\nu\in\Sd$ and the mean curvature $H$ by
\begin{equation}\label{intro_eq_normal}
	V= H + F\pp{x, \nu},
\end{equation}
with the random forcing field $F(x,\nu)=F_{uni}(\nu)-F_{obst}(x,\nu)$ combining the uniform positive forcing and the obstacles.
On large scales, that is zooming out by $\eps^{-1}$ and accelerating time by $\eps^{-1}$ for some small $\eps>0$, the interface motion is given by
\begin{equation}\label{intro_eq_SurfM_normal}
	V= \eps H + F\pp{\frac{x}{\eps}, \nu}.
\end{equation}
For small $\eps>0$ and under suitable conditions on the obstacle distribution, one expects that this motion can be approximated by a first-order evolution with a homogeneous speed, that is with
\begin{equation}\label{intro_eq_HomM_normal}
	V= \vhom(\nu)
\end{equation}
for a suitable function $\vhom\colon\Sone\rightarrow\R_{>0}$ capturing the effective speed in each direction.
For our result, the effective speed $\vhom$ will be independent of the direction since we assume isotropy of the random obstacle distribution, that is that the law of $F(\cdot)$ is invariant with respect to rotations.
Under suitable further assumptions on the probability distribution of $F$ -- assumptions that in particular cover the case of sufficiently sparse but possibly impenetrable random obstacles $F_{obst}$ and generic positive forcings $F_{uni}$ -- , we provide a quantitative error estimate for this approximation of the order of $\varepsilon^{1/9-}$.

Previous stochastic homogenization results in the literature for forced mean curvature flow \eqref{intro_eq_SurfM_normal} and similar PDEs had been limited to strictly positive forcings $F>0$:
The first homogenization result for \eqref{intro_eq_SurfM_normal} in the random setting was obtained by Armstrong and Cardaliaguet \cite{ArCa18}, who required that $\inf_x F^2-(d-1)|\nabla F|>0$. In $d=2$ dimensions, Feldman \cite{Fel19} weakened this positivity assumption to $\inf_x F>0$.
Analogous results in the periodic setting had previously been derived by Lions and Souganidis \cite{LiSo05a} and by Caffarelli and Monneau \cite{CaMo14} respectively.
In all of these cases, a consequence of the uniform lower bound on the forcing is the existence of a minimum speed for the expansion of sufficiently regular sets, and thus the absence of any impenetrable obstacles.

\begin{figure}[h]
\centering
\includegraphics[width=0.4\textwidth]{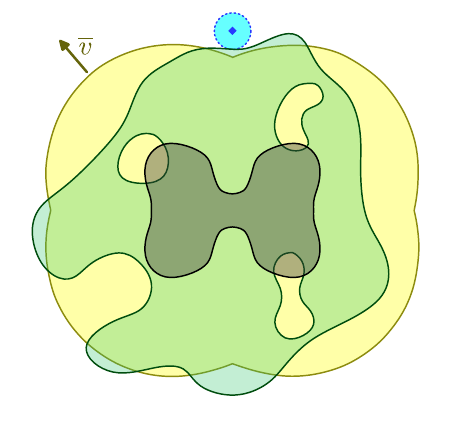}
$\quad$
\includegraphics[width=0.4\textwidth]{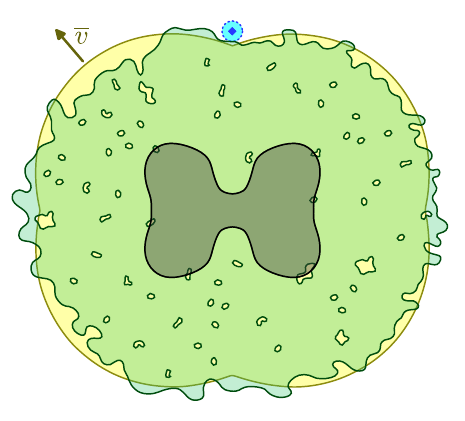}
\caption{	A comparison of the evolution by forced mean curvature flow in a field of random obstacles with the evolution according to a homogeneous first-order motion. 
			The initial set is depicted in grey, the evolution by forced mean curvature flow in green, and the evolution by the first-order motion in yellow.
			The target area for which the arrival times are compared is depicted in blue.
			Left: The length scale of the initial set is of the same order as the range of dependence.
			Right: The length scale of the initial set is much larger than the range of dependence, and homogenization is observed.}
\label{intro_fig_hom}
\end{figure}

In the present paper, we allow for potentially negative forcing and hence actual impenetrable obstacles. 
This means that locally there is a non-zero probability that the interface gets stuck.
In particular, some locations might never be reached.
Hence, a pointwise comparison between the evolutions \eqref{intro_eq_SurfM_normal} and \eqref{intro_eq_HomM_normal} is not possible. 
Instead, we compare the interface motions \eqref{intro_eq_SurfM_normal} and \eqref{intro_eq_HomM_normal} with respect to arrival times of interfaces at some $x_0\in\Rd$.
In fact, for \eqref{intro_eq_SurfM_normal} we consider the arrival time in a neighborhood $\ov{B}_{h_\eps}(x_0)$ to ensure that the probability of the arrival time being infinity converges to $0$ as $\eps\rightarrow 0$.
In addition, we truncate the arrival time to have a well defined expected value.
The quantitative error estimate then consists of two parts:
\begin{itemize}
\item	For the fluctuations of the truncated arrival time for \eqref{intro_eq_SurfM_normal} in $\ov{B}_{h_\eps}(x_0)$ we obtain a bound of order $\eps^{\frac{1}{2}-}$ with essentially exponential tail bounds.
\item 	For the bias, the difference between the expected value and the arrival time for the homogenized evolution \eqref{intro_eq_HomM_normal}, we obtain a bound of order $\eps^{\frac{1}{9}-}$.
\end{itemize}

Briefly summarizing our setting and assumptions, the coefficient field $F$ is sampled from a stationary, isotropic probability measure $\Pm$ satisfying a finite range of dependence assumption. 
Regarding the regularity of the coefficient field, $F\in C^{0,1}(\Rd\times\Sd)$ is assumed to be uniformly bounded and uniformly Lipschitz.

Moreover, there are two less straightforward assumptions:
First, we assume that actual obstacles are sufficiently sparse.
That is, we require that for any sufficiently large and regular set with overwhelming probability there exists an interior approximation, which does not shrink under \eqref{intro_eq_normal} and expands at an `effective minimum speed' $\vmineff>0$ on large scales.
Here, expanding at an `effective minimum speed' $\vmineff>0$ on a fixed scale means that at any time $t\geq 0$ the evolved set grows at speed $\vmineff$ up to left-behind holes of size below this scale, see Figure \ref{sett_fig_vmin}.
In fact, this is a generalization of the assumptions on the forcing from \cite{Fel19} and \cite{ArCa18}, since their assumptions guarantee that the original sets themselves are non-shrinking and expand with an actual minimum speed.

Second, we require that our candidate for the homogenized speed $\vhom$ is H\"older continuous at $\delF=0$ with respect to changing the forcing by a constant from $F$ to $F+\delF$.
We will show that this assumption holds generically in the sense that $\delF\mapsto\vhom[\delF]$ is increasing and hence Lipschitz continuous at almost every $\delF$.

In order to illustrate our result, we first state it in the special case of sufficiently sparse Poisson-distributed obstacles of bounded diameter in dimension $d=2$ as in \cite{DDS11, CDO22}. 
In the companion paper \cite{FI26} we will rigorously prove that this setting satisfies the assumptions for the present paper's main result.
We further expect that the same holds in arbitrary dimension $d\geq 3$ and that these arguments should also be applicable to essentially Gaussian obstacle fields,  where $F_{obst}$ arises from suitable local modifications of a Gaussian random field with finite range of dependence and where $F_{uni}>0$ is sufficiently large.

\subsection{An application of the main result: Poisson-distributed obstacles}
Before we discuss the informal version of our main result and the technical notions necessary to state it, we present this application for the special case of potentially impenetrable obstacles distributed according to a Poisson point process  in dimension $d=2$ and a uniform forcing which is sufficiently large compared to the intensity of the point process.
In the companion paper \cite{FI26} we will show that the requirements for the application of the results in the present paper are satisfied.
We expect that an analogous result holds in dimension $d\geq 3$ up to a different scaling of the lower bound for the uniform forcing.

\begin{theorem}[Homogenization for Poisson-distributed obstacles]\label{intro_thm_Poisson}
Consider Poisson-distributed obstacles in dimension $d=2$ represented by 
\begin{equation*}
	F_{obst}\colon\R^2\rightarrow \R,\quad F_{obst}(x) \coloneqq \max_{i\in\N}\vphi(\abs{x-X_i})
\end{equation*}
for
\begin{itemize}
\item	a Poisson point process $\Bp{X_i}_{i\in\N}\subset\Rt$ with intensity $\rho>0$, and
\item	a Lipschitz continuous obstacle shape $\vphi\in C^{0,1}(\R)$ with support $\operatorname{\supp}\vphi\subset\ov{B}_1$.
\end{itemize}
There exist $\rho_0>0$ and $C_0>0$ independent of the obstacle shape $\vphi$ such that -- given any intensity $\rho\in(0, \rho_0]$ -- for almost every $F_{uni}\geq C_0\sqrt{\rho}$ the interface motion given by
\begin{equation}\label{intro_eq_Poisson-motion}
	\mathrm{Normal~velocity} = \mathrm{mean~curvature} + F_{uni}-F_{obst}(x) 
	\qquad\text{at $x\in \R^2$}
\end{equation}
is approximated by a first-order motion with constant positive speed $\vhom=\vhom(\rho,\vphi, F_{uni}) >0$ in the following sense.

Let $x_0\in\R^2$ and let $S\subset\R^2$ be a union of balls with radius $r>0$.
For $\eps>0$, let $(\reach{t}{\eps^{-1}S})_{t\geq 0}$ be the set evolution of $\eps^{-1}S$ corresponding to the interface motion \eqref{intro_eq_Poisson-motion}, see Definition \ref{not_def_reach} below.
Let $\vtheta\in(0,1)$.
Then the arrival time of the set evolution in the target area $\ov{B}_{\eps^{-\vtheta}}(\eps^{-1}x_0)$, given by
\begin{equation*}
	\met{\eps^{-\vtheta}}{\eps^{-1}x_0}{\eps^{-1}S} \coloneqq \min\Bp{t\geq 0\,:\,\ov{B}_{\eps^{-\vtheta}}(\eps^{-1}x_0) \cap\reach {t}{\eps^{-1}S}\neq \emptyset},
\end{equation*}
is described well by a constant-speed motion in the sense that
\begin{equation*}
	\PM{\abs{ \eps \met{\eps^{-\vtheta}}{\eps^{-1}x_0}{\eps^{-1}S} - \vhom^{-1}\dist(x_0,S)} \geq C\eps^{\frac{1-\vtheta}{9}}}
	\leq C \exp\pp{-C^{-1}\eps^{-\alpha\vtheta}}
\end{equation*}
for the probability measure $\Pm$ associated to the Poisson point process and for some $\alpha>0$ and some $C=C(\rho,\norm{\vphi}_{C^{0,1}}, F_{uni}, r, \dist(x_0,S), \vtheta)>0$.
\end{theorem}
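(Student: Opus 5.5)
This theorem is an application of the paper's main result; I would prove it by checking the main result's hypotheses for the Poisson obstacle field and then reading off the estimate. The plan is to verify the structural assumptions on the law $\Pm$ of $F(x,\nu)=F_{uni}-F_{obst}(x)$, feed them into the main homogenization theorem (fluctuation bound plus bias bound), and choose $h_\eps=\eps^{-\vtheta}$.

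\textbf{Step 1: elementary assumptions.} Stationarity and isotropy of $\Pm$ follow from the invariance of the Poisson point process under translations and rotations, together with the radial form of $\vphi(\abs{x-X_i})$. A finite range of dependence (range $2$) holds because $F_{obst}$ restricted to a set $A$ depends only on the points in the $1$-neighbourhood of $A$, and Poisson points in disjoint regions are independent. Boundedness and Lipschitz continuity of $F$ are inherited from $\vphi$: a maximum of uniformly Lipschitz functions is Lipschitz with the same constant, and at most finitely many points contribute locally.

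\textbf{Step 2: the sparsity / effective minimum speed assumption.} This is the main obstacle. One must show that a sufficiently large, regular set has, with overwhelming probability, an interior approximation that does not shrink and that expands at a positive effective speed $\vmineff$ up to holes left behind. I would first discard clusters: for $\rho\le\rho_0$, a percolation/Peierls argument shows that connected components of $\bigcup_i\ov{B}_1(X_i)$ of diameter $\ell$ have probability decaying like $(C\rho)^{\ell}$. I would then construct explicit barriers, namely subsolutions made of unions of balls of radius of order $F_{uni}^{-1}$. Away from clusters these grow at speed $\gtrsim F_{uni}$. When a ball meets a cluster, the front is routed around it, and the cluster is left behind as a hole. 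The threshold $F_{uni}\ge C_0\sqrt\rho$ should come from balancing the curvature cost of bending around obstacles, which scales like $F_{uni}$, against the typical obstacle spacing $\rho^{-1/2}$. I would carry out this construction as in \cite{FI26}, using comparison principles for the level-set formulation.

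\textbf{Step 3: Hölder continuity and conclusion.} The continuity assumption on $\delF\mapsto\vhom[\delF]$ at $\delF=0$ is exactly the generic statement the paper proves. This map is monotone, hence differentiable, and so Lipschitz at $\delF=0$ for almost every $F_{uni}$; this is the source of the ``almost every'' in the theorem. Given all assumptions, I would invoke the main result for the arrival time $\met{\eps^{-\vtheta}}{\eps^{-1}x_0}{\eps^{-1}S}$. The fluctuation part gives deviations of order $\eps^{1/2-}$ with stretched-exponential tails. The bias part gives an error of order $\eps^{1/9-}$, measured relative to the effective scale $\eps^{-(1-\vtheta)}$ that the target radius leaves, which yields $\eps^{(1-\vtheta)/9}$.

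Two further points must be checked. First, the event that the truncated arrival time differs from the true one, or that the target ball is never reached, has probability at most $C\exp(-C^{-1}\eps^{-\alpha\vtheta})$: a ball of radius $\eps^{-\vtheta}$ is fully blocked only if a cluster of diameter comparable to $\eps^{-\vtheta}$ exists, which Step 2 excludes. Second, the initial set $S$, a union of balls of radius $r$, satisfies the regularity needed for the interior approximation. Combining these gives the stated tail bound with constants depending on $\rho,\norm{\vphi}_{C^{0,1}},F_{uni},r,\dist(x_0,S),\vtheta$.
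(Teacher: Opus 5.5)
Your outline follows the route the paper intends. Assumptions \ref{sett_aP_stat}--\ref{sett_aP_ani} are elementary for the Poisson field. Assumption~\ref{veff_aP_star} is not proved here but deferred to \cite{FI26}, via the box criterion of Example~\ref{ex_ex_box} rather than a global percolation/barrier construction. There the delicate part is producing \emph{stable} interior approximations despite $F<0$ on obstacles, not merely growing barriers. Assumption~\ref{sett_aP_gen} with $\eta=1$ comes from monotonicity, as in Remark~\ref{sett_rem_hoel}. Theorem~\ref{main_thm} needs Assumption~\ref{veff_aP_star} on a whole interval $[F_{uni}-\delFdel,F_{uni}+\delFdel]$ with uniform constants. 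The ``almost every'' lets you take $F_{uni}>C_0\sqrt\rho$ strictly, and monotonicity in the forcing then gives this.

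The step that fails as written is the final assembly. Theorem~\ref{main_thm_fluct} has three restrictions:
\begin{itemize}
\item it applies only to $S$ with bounded boundary;
\item its threshold $\eps_0$ depends on $\diam(\partial S)$, through \eqref{fluct_eq_hrateA} and the factor $(1+\diam(M)/h)^d$ in \eqref{veff_eq_probBound};
\item it concerns the truncated arrival time with target radius $(R/\eps)^{\vtheta}$.
\end{itemize}
The statement instead allows any union of $r$-balls (e.g.\ a half-plane), concerns the untruncated time, and claims a constant independent of $\diam(\partial S)$. Your handling of the truncation has the same defect. The relevant tool is Lemma~\ref{stable_lem_TvsEspeed} on $\Espeed{h}(U)$ with $U\supset(S+\dist(x_0,S)\ov{B}_1)\cap(\partial S\cup\comp{S})$. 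This $U$ is unbounded when $\partial S$ is, so \eqref{veff_eq_probBound} gives nothing. The ``no cluster of diameter $\eps^{-\vtheta}$'' heuristic does not replace it: truncation kicks in whenever the front is effectively slower than $\vmineff$, not only when it is blocked.

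The fix is to use the sandwich of Subsection~\ref{subs_hom_reduc} at the level of probabilities, one tail at a time. Set $h=h(R_\eps)$ and $B=\ov{B}_h(y_\eps)\subset\eps^{-1}S\subset H=\R^2\setminus B_{R_\eps}(\eps^{-1}x_0)$.
\begin{itemize}
\item Upper tail: the untruncated time of $\eps^{-1}S$ is at most that of $B$. On $\Espeed{h}(U_B)$, with $U_B$ bounded of diameter $\lesssim R_\eps$, this coincides with the truncated one. Then apply Proposition~\ref{fluct_prop} to $B$ together with Proposition~\ref{hom_prop_ball}.
\item Lower tail: for $\eps^{-1}S$, untruncated $\ge$ truncated, and this is $\ge$ the truncated time of $H$, since the distance and hence the truncation time agree. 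Then apply Proposition~\ref{fluct_prop} to $H$ together with Proposition~\ref{hom_prop_hole}.
\end{itemize}
Both boundaries have diameter $\lesssim R/\eps$, so the constants depend only on $R=\dist(x_0,S)$. They depend on $r$ only through the requirement $\eps^{1-\vtheta}\le rR^{-\vtheta}$ that $\eps^{-1}S$ be $h(R_\eps)$-fat; larger $\eps$ is trivial. Switching between target radii $\eps^{-\vtheta}$ and $(R/\eps)^{\vtheta}$ costs $O(\eps^{1-\vtheta})$ after rescaling. You can compare radii via the effective minimum speed as in the proof of Corollary~\ref{lin_cor_exUpB}, or simply run the argument with $h(r)=(r/R)^{\vtheta}$.

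Finally, replace the ``relative to the effective scale'' heuristic by the actual computation. For $\eta=1$ the bias exponent in Theorem~\ref{main_thm} is $(1-\vtheta)/(9-5\vtheta)$. This is strictly larger than $(1-\vtheta)/9$, and the strict inequality absorbs the logarithms. Taking $\lambda\approx\eps^{-7(1-\vtheta)/18}$ in the fluctuation bound then yields the tail $C\exp(-C^{-1}\eps^{-\alpha\vtheta})$ for some small $\alpha>0$.
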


Since $\rho_0$ and $C_0$ are independent of the obstacle shape, the shape can in particular be chosen such that the obstacles are impenetrable for a large range of uniform forcings $F_{uni}\geq C_0\sqrt{\rho}$.
The scaling $\sqrt{\rho}$ of the lower bound for the uniform forcing corresponds to the Taylor scaling of the pinning to depinning transition (see Subsection \ref{subs_lit_pinn}) established by Courte, Dondl and Ortiz in \cite{CDO22}.
Note however that the assumption of an `effective minimum speed' in our main result below is stronger than the existence of a ballistic evolution proven in \cite{CDO22}.
For a brief discussion of the companion paper \cite{FI26} and its application to Poisson distributed obstacles see Example \ref{ex_ex_box} and Example \ref{veff_ex_Poisson}.

\subsection{Set evolutions and arrival times}
Next, we want to state the informal version of our main result, comparing the arrival times for the heterogeneous and homogenized geometric motions.
In order to state this informal version, we first define these arrival times at points $x_0\in\Rd$ or in target areas $\ov{B}_h(x_0)\in\Rd$.

We view the geometric motions as set evolutions with \eqref{intro_eq_normal}, \eqref{intro_eq_SurfM_normal} and \eqref{intro_eq_HomM_normal} describing the speed at which the interface moves.
To obtain well-defined set evolutions, we work with level-set formulations in the framework of viscosity solutions, see \cite{Giga06} for a self-contained account of the theory and Appendix \ref{s_visc} for a summary of the definitions and results used by us.
In this framework, a set evolves according to for example \eqref{intro_eq_SurfM_normal} if it corresponds to the sublevel set of a function $u^\eps$ which in the case of \eqref{intro_eq_SurfM_normal} is a viscosity solution to
\begin{equation}\label{intro_eq_SurfM_ls}
		\pat u^\eps - \eps \tr\pp{A\pp{\frac{\nabla u^\eps}{\abs{\nabla u^\eps}}}D^2 u^\eps} 
		+ F\pp{\frac{x}{\eps}, \frac{\nabla u^\eps}{\abs{\nabla u^\eps}}}\abs{\nabla u^\eps}
		=0
\end{equation}
in $\Rd\times(0,\infty)$ with $A(e)=\Id-e\otimes e$ for $x\in\Rd$, $e\in\Sd$.
More precisely, we work with the following notion of set evolutions.

For some partial results, we allow $A(\cdot,\cdot)$ to take a more general form and in particular to be space dependent, see Remark \ref{intro_rem_aniso} below.
This is why we state the definition of set evolutions for a larger class of coefficient fields corresponding to geometric motions.

\begin{definition}[The set evolution in the heterogeneous environment]\label{not_def_reach}
Let $(A,F)\in\Omega$, with $\Omega$ denoting the admissible coefficient fields as specified in Section \ref{subs_sett_AF}.
Given a closed set $S\subset\Rd$, we define the evolution $\pp{\reach{t}{S}}_{t\geq 0}$ of $S$ with respect to $(A,F)$ as
\begin{equation*}
	\reach{t}{S}=\reach[(A,F)]{t}{S}\coloneqq \Bp{x\in\Rd \,:\, \uls{S}(x,t)\leq 0},
\end{equation*}
where $\uls{S}$ with continuous initial data $\uls{S}(\cdot,0)$ satisfying $S=\Bp{x\in\Rd \,:\, \uls{S}(x,0)\leq 0}$ is a (viscosity) solution of 
\begin{align}
	&\pat u + G_{(A,F)}(x,\nabla u, D^2 u)=0 
	\quad\,\text{in }\Rd\times(0,\infty)\label{intro_eq_uls}\\
	&	\qquad\qquad\quad \text{with }\quad
	G_{(A,F)}(x,\xi,X) \coloneqq - \tr\pp{A\pp{x, \frac{\xi}{\abs{\xi}}}X} + F\pp{x, \frac{\xi}{\abs{\xi}}}\abs{\xi}\label{intro_eq_GAF}
\end{align}
for $(x,\xi,X)\in \Rd\times\pp{\Rd\!\setminus\!\Bp{0}}\times \Rddsym$.
Such a solution always exists and $\reach{t}{S}$ is independent of the specific initial data $\uls{S}(\cdot,0)$.
Further, we can assume that $\uls{S}$ is uniformly continuous on $\Rd\times(0,T)$ for any $T>0$, see Theorem \ref{comp_thm_exis}, Remark \ref{comp_rem_iniData}, Theorem \ref{comp_thm_unique}.
\end{definition}

\begin{remark}[Hyperbolic scaling regime]
The scaling of $\eps\rightarrow 0$ in \eqref{intro_eq_SurfM_normal} and \eqref{intro_eq_SurfM_ls} corresponds to zooming out and accelerating time by a factor of $\eps^{-1}$.
The evolution of a set $S$ with respect to these equations is hence given by $t\mapsto \eps\reach[(A,F)]{\eps^{-1}t}{\eps^{-1}S}$.
\end{remark}

\begin{remark}[Selection criterion for fattening]
Instead of a set evolution, one might hope to directly obtain an interface evolution from \eqref{intro_eq_uls} with the initial hypersurface given by $\Bp{x\in\Rd \,:\, \uls{S}(x,0)= 0}$ with Hausdorff-dimension $d-1$.
However, the level set $\Bp{x\in\Rd \,:\, \uls{S}(x,t)= 0}$ might at some point develop a non-empty interior, a phenomenon which is called fattening \cite{BSS93}.
In this case, there can be multiple interfaces within the level set for which we could say that they evolve by \eqref{intro_eq_normal}.
Our choice for $\reach{t}{S}$ implies that the interface $\partial\reach{t}{S}$ is the fastest or outermost of the interfaces within the level set.
\end{remark}

\begin{definition}[The homogenized set evolution]\label{intro_def_homR}
Let $v\in C(\Sd,\R{>0})$.
Given a closed set $S\subset\Rd$, we define 
\begin{equation*}
	\reach[hom,v]{t}{S}\coloneqq	\Bp{x\in\Rd \,:\, u^{hom}_S(x,t)\leq 0}
\end{equation*}
where $u^{hom}_S$ with $S=\Bp{x\in\Rd \,:\, u^{hom}_S(x,0)\leq 0}$ is a (viscosity) solution of 
\begin{equation}\label{intro_eq_HomM_ls}
		\pat u^{hom} + v\pp{\frac{\nabla u^{hom}}{\abs{\nabla u^{hom}}}}\abs{\nabla u^{hom}}=0
		\quad\,\text{in }\Rd\times(0,\infty).
\end{equation}
\end{definition}

As stated above, we compare the set evolutions $(\reach[(A,F)]{t}{S})_{t\geq 0}$ and $(\reach[hom,\vhom]{t}{S})_{t\geq 0}$ in terms of their arrival times at appropriately scaled target areas.

\begin{definition}[Arrival times]\label{intro_def_met}
For admissible coefficient fields $(A,F)$, $x_0\in\Rd$ and a closed set $S\subset\Rd$ we define the (actual) \textit{arrival time}
\begin{equation*}
	\met{}{x_0}{S}	=	\met[(A,F)]{}{x_0}{S}	\coloneqq	\min\Bp{t\geq 0\,:\,x_0\in\reach[(A,F)]{t}{S}}.
\end{equation*} 	
We further denote the \textit{truncated arrival time in the target area $\ov{B}_h(x_0)$} for $h>0$ with
\begin{equation*}
	\metb{h}{x_0}{S}	=	\metb[(A,F)]{h}{x_0}{S}	\coloneqq	\min\Bp{\min_{y\in\ov{B}_h(x_0)}\met[(A,F)]{}{y}{S},\,\timebound{h}{x_0}{S}}.
\end{equation*}
Here, $\timebound{h}{x_0}{S}$ is the time for which with high probability we will have a guarantee that the set evolution has reached the target area $\ov{B}_h(x_0)$, given by
\begin{equation*}
	\timebound{h}{x_0}{S}\coloneqq \vmineff^{-1}\dist(x_0,S)+C(\data)h.
\end{equation*}
The guarantee is due to the assumption on the existence of the effective minimum speed $\vmineff$ from Assumption \ref{sett_aP_pseudoStar} below, see Lemma \ref{stable_lem_TvsEspeed}.

Considering the homogenized set evolutions, for $v\in C(\Sd,\R_{>0})$ we set
\begin{equation*}
	\met[hom,v]{}{x_0}{S}	\coloneqq	\min\Bp{t\geq 0\,:\,x_0\in\reach[hom,v]{t}{S}}.
\end{equation*}
\end{definition}

\subsection{The informal main result}
Before we state the informal version of our main theorem, we provide an informal version of the key assumption, which effectively replaces and relaxes the deterministic, uniform bounds on the forcing required in \cite{ArCa18} or \cite{Fel19}. 
For the rigorous formulation see Assumption \ref{veff_aP_star} in Subsection \ref{subs_aP} and Subsection \ref{subs_sett_veff}, where the respective notions are made precise.
\begin{enumerate}[label=`(P$\circledcirc$)', left=0pt, resume]
	\item	\label{sett_aP_pseudoStar}\textbf{Almost guaranteed effective minimum speed}.
			We assume that there exist $\vmineff,C>0$, $h_0\geq 1$ and a rate $\ratespeed>0$ such that for all $h\geq h_0$ and $M\subset\Rd$
			\begin{align*}
				&\Pm\Big[\big\lbrace
					\text{In $M$, on the scale $h$, every `fat' set has a `stable approximation',} \\
				&\qquad\qquad\qquad\quad			
					\text{which propagates with an `effective minimum speed' of $\vmineff$}
				\big\rbrace\Big]\\
				&\qquad\quad	\geq		1-C\pp{1+\diam\pp{M}}^d\exp\pp{-C^{-1}h^\ratespeed},
			\end{align*}
			where `fat' sets $S\subset\Rd$ are unions of sufficiently large balls and `stable approximations' are non-shrinking interior approximations $S_h\subset S$ (see Figure \ref{sett_fig_stabapp}).
			An `effective minimum speed' on the scale $h$ means that for any $t\geq 0$ the evolved set $\reach{t}{S_h}$ after another time step of $\vmineff^{-1}h$ has advanced by at least $h$ in all directions while allowing for enclosures of width smaller than $2h$ to be left behind the main front (see Figure \ref{sett_fig_vmin}).
\end{enumerate}

Up to replacing this assumption with its rigorous formulation stated in Assumption \ref{veff_aP_star}, the following informal version of our main result is a precise statement.

\begin{theorem}[Informal main result]
Let $F_{obst}=F_{obst}(x,e)$ be a stationary random field with a finite range of dependence, which is uniformly bounded and Lipschitz.
Let $F_{uni}\in \R$ be a uniform forcing such that $F(x,e)=F_{uni}-F_{obst}(x,e)$ satisfies Assumption \ref{sett_aP_pseudoStar}.
Let $x_0\in\Rd$ and let $S\subset\Rd$ be a fat set with bounded boundary.

Then the truncated arrival time in the target area $\ov{B}_{h_\eps}(\eps^{-1}x_0)$ for the set evolution $(\reach{t}{\eps^{-1}S})_{t\geq 0}$ corresponding to \eqref{intro_eq_normal} with $h_\eps= \eps^{-\vtheta}$ for arbitrary $0<\vtheta<1$ has fluctuations of order at most $\eps^\frac{1-\vtheta}{2}$ with essentially exponential tail bounds, that is
\begin{multline}\label{intro_eq_infmain}
	\PM{\abs{\eps\metb{h_\eps}{\eps^{-1}x_0}{\eps^{-1}S}-\EV{\eps\metb{h_\eps}{\eps^{-1}x_0}{\eps^{-1}S}}}\geq C\eps^\frac{1-\vtheta}{2}\lambda}\\
	\leq \max\Bp{C\exp\pp{-\lambda}, C\exp\pp{-\frac{\eps^{-\vtheta\ratespeed}}{C\log(\eps^{-1})}}}
\end{multline}
for $\ratespeed$ from Assumption \ref{sett_aP_pseudoStar} and some constant $C>0$ independent of $\eps$.

Moreover, if $F_{obst}$ is sampled from an isotropic probability distribution, then for Lebesgue-almost every $F_{uni}\in\R$ with  $F(x,e)=F_{uni}-F_{obst}(x,e)$ satisfying Assumption \ref{sett_aP_pseudoStar} there is a  homogenized speed $\vhom\geq \vmin$ such that
\begin{equation*}
	\abs{\EV{\eps\metb{h_\eps}{\eps^{-1}x_0}{\eps^{-1}S}}-\met[hom,\vhom]{}{x_0}{S}}
	\leq C\eps^{\frac{1-\vtheta}{9}}.
\end{equation*}
\end{theorem}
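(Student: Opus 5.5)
The proof splits into two parts: a concentration estimate for the fluctuations, and a subadditivity/convergence-rate argument for the bias.

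\textbf{Fluctuations.} I would run a martingale (Azuma-type) argument on the truncated arrival time $X\coloneqq\metb{h_\eps}{\eps^{-1}x_0}{\eps^{-1}S}$.

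1. Partition the region between $\eps^{-1}S$ and $\eps^{-1}x_0$ into layers of width of order $h_\eps$ and reveal $F$ layer by layer.
2. Let $X_k\coloneqq\EV[\F_k]{X}$ be the Doob martingale with respect to the filtration $\F_k$ generated by the first $k$ layers.
3. Bound the increments via the finite range of dependence and Assumption \ref{sett_aP_pseudoStar}. Resampling one layer changes the arrival time by at most $C h_\eps$. The reason is that the stable approximation moves at effective minimum speed $\vmineff$: after the front is perturbed in a slab of width $h_\eps$, it re-reaches the perturbed region's outer boundary within time $C\vmineff^{-1}h_\eps$. Comparison for viscosity solutions then lets the two evolutions be sandwiched up to this time shift.
4. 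This increment bound holds only on the good event of \ref{sett_aP_pseudoStar}. Its complement has probability $\lesssim \eps^{-Cd}\exp(-C^{-1}h_\eps^{\ratespeed})$ over the relevant region. On the complement, the truncation at $\timebound{h}{x_0}{S}$ keeps $X$ bounded by $C\eps^{-1}$, so the bad event contributes only the second term in \eqref{intro_eq_infmain}.
5. Counting gives $N\sim\eps^{-1}h_\eps^{-1}$ layers, so Azuma yields fluctuations of $X$ of size $\sqrt{N}h_\eps=\eps^{-1/2}h_\eps^{1/2}$. After multiplying by $\eps$ and using $h_\eps=\eps^{-\vtheta}$, this is $\eps^{(1-\vtheta)/2}$, as claimed. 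The $\log(\eps^{-1})$ factor comes from a union bound over the layers.

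\textbf{Bias.} I would define $\vhom$ through planar-like fat initial sets, for example half-spaces or large balls.

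1. Set $\vhom^{-1}=\lim_{R\to\infty}R^{-1}\EV{\metb{h}{Re}{S_0}}$, using approximate subadditivity of arrival times. Roughly, $m(x_0,S)\le m(y,S)+m(x_0,\reach{}{\cdot})$, and the restart error is controlled by stable approximations costing $O(h)$ each. By isotropy, $\vhom$ is independent of $e$.
2. Obtain a rate via a multiscale comparison between scales $R$ and $2R$. Write the arrival time at scale $2R$ as a sum of two scale-$R$ times plus an error. That error consists of the fluctuations from the first part, of size $R^{1/2}$, plus the defect from the non-planarity of the intermediate front.
3. Control this defect by a Hölder-continuity-in-forcing assumption on $\delF\mapsto\vhom[\delF]$. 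Monotonicity of the arrival time in $F$ lets us absorb geometric mismatches (curvature of the intermediate front, holes of size $h$) by perturbing the forcing by $\delF$, at a speed cost of $\delF^{\hoel}$. Since $\delF\mapsto\vhom[\delF]$ is monotone, it is Lipschitz for almost every $F_{uni}$, which is why the statement holds only for Lebesgue-almost every $F_{uni}$.
4. Optimize the parameters $h$, $\delF$ and the intermediate scale against one another; I expect this to produce the exponent $1/9$.
5. Pass from planar data to general fat $S$ via the Hopf–Lax/Huygens representation $\met[hom,\vhom]{}{x_0}{S}=\vhom^{-1}\dist(x_0,S)$. Sandwich $S$ between unions of balls and half-spaces, and use comparison to bound $\eps X$ from above and below.

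\textbf{Main obstacle.} The hardest step is the approximate subadditivity with restart from a non-planar, hole-ridden intermediate front. The reached set at an intermediate time need not be fat, because enclosures are left behind the front. One must therefore replace it by a fat inner approximation whose stable approximation still propagates at rate $\vmineff$, and prove that this loses only $O(h)$ in time. My first attempt would be to take the union of balls of radius $h$ contained in the reached set, and to use the good event of \ref{sett_aP_pseudoStar} on all of them simultaneously.
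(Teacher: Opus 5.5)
Your overall architecture matches the paper: martingale concentration on a high-probability event, a subadditive definition of $\overline{v}$, genericity of the H\"older condition by monotonicity, and sandwiching $S$. However, the two load-bearing steps do not work as you describe them.

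\textbf{Fluctuations.} Step 3 is false as stated. You use a static filtration by spatial layers $\{kh\le\dist(\cdot,\eps^{-1}S)<(k+1)h\}$. Resampling one such layer is not an $O(h)$ perturbation, even on the good event of Assumption~\ref{veff_aP_star}, since that event only bounds propagation from below. The front does not hit a layer simultaneously. Suppose the revealed inner layers contain a fast channel reaching layer $k$ on the side opposite to $x_0$, and layer $k$ itself is fast. Then the front runs along the layer and reaches the side facing $x_0$ at time $\approx(1+\pi)kh/v_{\mathrm{fast}}$. After resampling layer $k$ to a slow one (with a slow inner bulk), this takes $\approx kh/v_{\mathrm{slow}}$. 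For $v_{\mathrm{fast}}>(1+\pi)v_{\mathrm{slow}}$, the arrival time therefore changes by order $kh$. Your sandwich (``re-reaches the outer boundary within $C\vmineff^{-1}h$'') compares two different environments, and it breaks exactly here.

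The paper never compares environments. It reveals the field dynamically: $\G_t$ is generated by the discretized states $K_h(\reach{s}{S})$, $s\le t$, together with the coefficients within $Ch$ of them. Stable approximations prevent the front from falling back far into the explored region. By finite range of dependence, the conditional expectation of the restarted arrival time is then $\sum_i\mathbb{E}\bigl[m_h(x_0,K_i+C\overline{B}_h)\bigr]\chi_{E_i(t)}$ up to good-event indicators, an expectation over a fresh environment (Lemma~\ref{fluct_lem_condE}). Bounding $\mathcal{M}_t-\mathcal{M}_s$ thus only requires comparing arrival times, in the same environment, from two deterministic sets with $K_j\subset K_i+Ch\overline{B}_1$ and $K_i\subset K_j+(1+\vmax(t-s)+Ch)\overline{B}_1$. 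The effective minimum speed bounds this by $C(t-s)+Ch$ (Proposition~\ref{fluct_prop_MtMs}).

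Azuma also cannot be invoked as you do, because the increment bound holds only up to terms $T\,\mathbb{E}[1-\chi_E\mid\G_t]$. The paper combines a maximal inequality for these terms (Lemma~\ref{azuma_lem_charfun}) with a moment-method replacement for Azuma (Proposition~\ref{azuma_prop_alternative}). Through the constraint $(T/h)^{2k}\mathbb{P}[E^c]\le 1$, this is where the exponential tails and the $\log$ in the second term come from, not a union bound over layers.

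\textbf{Bias.} The missing idea is localization of the comparison principle.

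The direction you call the main obstacle is the easy one. For subadditivity, the paper takes the time until the large disk $h$-envelops the intermediate disk. Via a grid, a union bound and the fluctuation estimate, it reduces this time to a maximum of translated arrival times (Proposition~\ref{lin_prop_sub}).

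The hard direction is superadditivity, and later the lower bound for the shrinking hole. There one restarts from an intermediate set that is ahead of the true evolved set only locally, while globally the true set is much larger. Without a minimum speed there is no Lipschitz regularity, hence no usable finite speed of propagation of perturbations, and monotonicity in $F$ alone localizes nothing. The paper's key tool is Proposition~\ref{inf_prop}: if $S_1$ is stable and $(S_2+\overline{B}_h)\cap\overline{B}_R\subset S_1$, then $S_1$ evolved with $F+\delF$ stays ahead of $S_2$ in $\overline{B}_r$ up to time $T$, provided roughly $\delF\gtrsim T^{3/4}(R-1-h-r)^{-1/2}$. It is proved by doubling variables, using that after raising the forcing a stable set has a level-set solution that is nonincreasing in time (Lemma~\ref{inf_lem_stableExt}, which needs $A$ constant in space).

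This localization, not absorbing curvature or holes, is why the forcing is shifted and why the H\"older assumption is needed. It also dictates the building blocks: bounded disks of radius $C_\beta r^\beta$ with $\beta>3/2$, at forcing cost $r^{3/4-\beta/2}$. They must be bounded anyway, since the fluctuation estimate needs bounded boundary, so half-spaces are not admissible.

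Curvature is then handled geometrically. The paper sandwiches $\overline{B}_h(y_\eps)\subset\eps^{-1}S\subset\Rd\setminus B_{R_\eps}(\eps^{-1}x_0)$. Unions of half-spaces do not help here, because for the heterogeneous flow the evolution of a union is not the union of evolutions. It then grows the ball and shrinks the hole in steps $r_{n+1}\approx R_n^\gamma$, with tangentially aligned disks leaving a gap $\rho\approx r^{2\beta}/R$ (Propositions~\ref{hom_prop_ball} and~\ref{hom_prop_hole}). Balancing $r^{1-(\beta/2-3/4)\hoel}$, $\sqrt{r^{1+\vtheta}}\log R$ and $r^{2\beta}/R$ gives $\alpha=\bigl(1+4(\frac{1}{1-\vtheta}+\frac{1}{\hoel})\bigr)^{-1}$. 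This yields any rate below $1/9$ for $\hoel=1$ and $\vtheta\to0$.
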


\begin{remark}\label{intro_rem_aniso}
In fact, we obtain the fluctuation bounds in \eqref{intro_eq_infmain} also for space-dependent, anisotropic curvature terms, that is for interface motions with the normal velocity $V$ at $x\in\Rd$ given in terms of the normal $\nu\in\Sd$ by
\begin{equation*}
	V= \tr\pp{a(x,\nu)\nabla\nu} + F\pp{x, \nu}
\end{equation*}
for a positive semi-definite coefficient field $a(x,e)\in\Rddsym$.
In terms of the level-set formulation, this corresponds to \eqref{intro_eq_uls} with $A(x,e) = \pp{\Id-e\otimes e}a(x,e)\pp{\Id-e\otimes e}$.
\end{remark}

\subsection{Structure of the paper}
In Section \ref{s_lit}, we first collect related literature and then compare our result and the strategy of our proofs to \cite{ArCa18}.
In Section \ref{s_assump}, we lay out our assumptions and rigorously introduce the notions used in Assumption \ref{sett_aP_pseudoStar}.
In Section \ref{s_str}, we state our main results and then describe the strategy of the overall proof in detail. 
In Section \ref{s_pre}, we collect frequently used basic properties of the set evolutions and stable sets, as well as provide a measurability criterion, with their proofs given in Appendix \ref{s_pre-proofs}.
The major parts of the proof for the main results are presented in Section \ref{s_fluct}, Section \ref{s_inf}, Section \ref{s_lin}, Section \ref{s_hom}, and Appendix \ref{s_azuma}. 
A more detailed description of these Sections is given at the start of Subsection \ref{subs_str_main}.

\section{Related literature}\label{s_lit}
In this section we will discuss some of the literature which is related to our setting. 
In Subsection \ref{subs_lit_appl}, we provide motivation to study curvature-driven interface motions in random environments based on applications in physics.
In Subsection \ref{subs_lit_perio}, we review homogenization results for geometric motions in periodic environments.
In Subsection \ref{subs_lit_HJrand}, we review the known homogenization results for geometric motions in random environments, including some of the work on Hamilton--Jacobi equations which these results are based on.
In Subsection \ref{subs_lit_pinn}, we discuss results on pinning and de-pinning phenomena for geometric motions in random obstacle fields.
In Subsection \ref{subs_lit_perc}, we discuss some results from first passage percolation, which can be viewed as an edge case of the type of problem we study.
Finally, in Subsection \ref{subs_lit_ArCa} we compare the methods of our proof to those used in the existing literature on the stochastic homogenization of interface motions, in particular in \cite{ArCa18}.
 
\subsection{Physical motivation}\label{subs_lit_appl}
Equation \eqref{intro_eq_normal} describes the motion of an interface under the combined effects of surface tension -- represented by the mean curvature -- and a forcing field.
The forcing field can be thought of as the combination of a uniform external driving field and the effects of the random spatially heterogeneous environment including obstacles.  
Models of type  \eqref{intro_eq_normal} are used to describe the evolution of dislocation lines in a slip plane \cite{IoVi87, BraNa04} ($d=2$), and the evolution of domain boundaries in magnetic materials with impurities \cite{IoVi87, Kar98, BraNa04}, in the random-field Ising model \cite{KoLe85, Kar98}, or in a simplified version of immiscible-fluid displacement in porous media \cite{KoLe85, Kar98} ($d=2,3$).

In addition, there are phenomenologically related models for charge density waves \cite{FuLe78, IoVi87, Kar98, BraNa04}, contact lines \cite{Ge85, Kar98}, Wigner crystals \cite{NaSch00, BraNa04}, and vertex (or flux) lines in type-II superconductors \cite{Kar98, NaSch00, BraNa04}.

\subsection{Results for geometric motions in periodic environments}\label{subs_lit_perio}
A detailed survey on the homogenization of surfaces in periodic media up until 2013 can be found in \cite{Caf13}.
In the periodic (and almost periodic) setting, Lions and Souganidis \cite{LiSo05a} have provided a qualitative homogenization result for a class of second-order degenerate PDE in $\R^d$, which includes the level set formulation \eqref{intro_eq_SurfM_ls}. 
The coefficient fields are required to satisfy a technical assumption which in our setting corresponds to
\begin{equation}\label{lit_eq_mcf_forcingCond}
	\inf_{x\in\R^d}\left(F(x)^2-(d-1)|A||DF(x)|\right)>0
\end{equation}
and implies that for sufficiently large and regular sets the arrival times are uniformly Lipschitz continuous, which corresponds to a uniform minimum speed for the set evolutions.
Caffarelli and Monneau \cite{CaMo14} have shown that for $d=2$ this assumption can be relaxed to $\inf_x F(x)>0$, which yields a minimum speed on large scales.
In addition, a counterexample was provided showing that homogenization might fail in $d\geq 3$ for the same relaxation.
Gao and Kim \cite{GaKi19} have shown that for $d\geq 3$, under the assumption of a strictly positive forcing with $\inf_x F(x)>0$, for each direction there is a head and a tail speed for the expansion of sets.
For the counterexample from \cite{CaMo14}, head and tail speed differ, which is the reason why there can be no homogenization.

For signed, but very small forcing compared to the strength of the curvature, it was shown by Dirr, Karali, and Yip \cite{DKY08} that surfaces initialized as a plane stay within a fixed distance of a plane which moves with a fixed velocity.
For signed forcing, pinning and de-pinning phenomena occur when adding a constant $F_{uni}$ to the forcing, see Subsection \ref{subs_lit_pinn} for a brief introduction. 
In the work by Dirr and Yip \cite{DiYi06} it was shown for approximations of forced mean curvature flow in the graph setting that homogenization occurs and further that the homogenized speed scales like $(F_{uni}-F_*)^{\frac{1}{2}}$ close to the de-pinning threshold $F_*$.

\subsection{Results for Hamilton--Jacobi equations in random environments with geometric motions as a special case}\label{subs_lit_HJrand}
The level set formulation \eqref{intro_eq_uls} of the set evolution is a viscous Hamilton--Jacobi equation with a geometric and hence degenerate diffusion matrix and a $1$-homogeneous, non-coercive Hamiltonian.
Our setting corresponds to a hyperbolic scaling regime with the strength of the curvature proportional to the average distance between obstacles.
We limit our summary to results on stochastic homogenization  for Hamilton--Jacobi equations which can be interpreted as the level set formulation for geometric motions and an overview of the works which led to these results.

For first order Hamilton--Jacobi equations, the first qualitative homogenization results were obtained in \cite{ReTa00, Sou99} for convex Hamiltonians assuming just ergodicity.
Qualitative results for viscous, that is semilinear second-order  equations were first proven by Lions and Souganidis \cite{LiSo05b} and Kosygina, Rezakhanlou, and Varadhan \cite{KRV06}.
Extensions to larger classes of Hamilton--Jacobi equations and new proofs followed \cite{LiSo10, Sch09, ArSo12, ArSo13} with many more since.

However, there have been no results for Hamilton--Jacobi equations corresponding to curvature-driven geometric motions with general ergodicity assumptions for the random environment.
The first stochastic homogenization result for geometric motions was obtained for random environments with a finite range of dependence:
It is included as a special case in the last paper of a series of works by Armstrong, Cardaliaguet, and Souganidis \cite{ACS14} and by Armstrong and Cardaliaguet \cite{ArCa14, ArCa18} on the quantitative homogenization of increasingly larger classes of Hamilton--Jacobi equations.
As in the work \cite{LiSo05a} on periodic environments, the coefficient fields are assumed to satisfy a technical assumption, which in our setting corresponds to \eqref{lit_eq_mcf_forcingCond}, implying a guaranteed minimum speed of propagation.
Feldman \cite{Fel19} has shown for forced mean curvature flow in $d=2$ that this technical assumption can be relaxed to $\inf_x F(x)>0$, which again implies a minimum speed on large scales as in \cite{CaMo14} for the periodic case.
Our arguments also cover the setting from \cite{Fel19} and the case of geometric motions from \cite{ArCa18} with the additional assumption of statistical isotropy, substantially improving the convergence rates. 
However, the main novel feature of our work is that we allow for actual obstacles, which might locally pin the interface and thus cause enclosures left behind a main front.
Compared to the existing homogenization results in the literature, one can thus view our setting to be much closer to the de-pinning threshold, see Subsection \ref{subs_lit_pinn} below.

\subsection{Pinning and de-pinning phenomena in random environments}\label{subs_lit_pinn}
For curvature driven geometric motions through random obstacle fields, there has been a lot of interest in pinning and de-pinning phenomena.
The forcing field is often written as $F(\cdot)=F_{uni}-F_{obst}(\cdot)$ with a uniform forcing $F_{uni}\geq 0$ and a random field $F_{obst}(\cdot)\geq 0$ encoding the environment and obstacles.
For a fixed obstacle field $F_{obst}$, the choice of $F_{uni}$ can lead to qualitatively very different behavior.
First, there is the pinning regime $0\leq F_{uni}\leq F_*$, where $F_*$ is the maximal forcing for which there exists an interface above a given hyper-plane which corresponds to a stationary subsolution (in the sign-convention we use for our level-set formulation). 
By comparison principle, this interface is a barrier for the evolution of any interface initialized below.
Second, there is the ballistic regime $F_{uni}>F^*$, where $F^*$ is the infimum over all forcings such that the hyper-plane expands according to some minimum speed, that is that the expected height of the main front above each point of the hyperplane grows linear in time.
Due to ergodicity, these critical forcings are deterministic.
While $F_*\leq F^*$, in general it is not clear that $F_*=F^*$: There might be a so-called sub-ballistic regime inbetween, where the interfaces advance only at a sub-linear pace. 
Clearly our Assumption \ref{veff_aP_star} can hold only in the ballistic regime, but it is unclear whether it is satisfied for all $F_{uni}>F^*$.

In the prototypical case, $F_{obst}(\cdot)$ corresponds to obstacles of sufficient strength which are distributed according to a Poisson point process.
Alternatively, the obstacles can be placed on a grid and only the obstacle strength is random. 
Many results on pinning and de-pinning have been derived for the quenched Edwards--Wilkinson model, which is a linearized version of forced mean curvature flow in a graph setting and hence does not develop topological singularities.
For this model or (semi-)discrete versions, under varying assumptions on the obstacle distribution it has been shown that it is possible to have 
	$F_*>0$ \cite{DDS11, DJS21}, 
	$F_*<+\infty$ \cite{CDL10}, 
	$F^*<+\infty$ \cite{DoSch12, BoTe14, DoSch17}, 
	$F_*=F^*$ \cite{BoTe14},
	$F_*<F^*$ \cite{DoSch17},
	or $F_*=+\infty$	\cite{DJS22}.
	
For forced mean curvature flow as in \eqref{intro_eq_normal} with obstacles distributed according to a Poisson point process, it was shown by Dirr, Dondl, and Scheutzow \cite{DDS11} for $d\geq 2$ that $F_*>0$.
The proof relies on a percolation result proven in \cite{DDGHS10}.  
In the recent work by Courte, Dondl, and Ortiz \cite{CDO22}, it was shown in $d=2$ also for more general interface motions that the estimate $c\sqrt{\rho}\leq F_* \leq F^* \leq C\sqrt{\rho}$ holds, where $c,C>0$ are constants and where $\rho>0$ denotes the intensity of the Poisson point process. Note that $\sqrt{\rho}$ is proportional to the average obstacle distance.

\subsection{Related results from first passage percolation}\label{subs_lit_perc}
Our setting can be seen as a generalized version of first passage percolation.
The theory of first passage percolation studies the large-scale behavior of the passage time $T(x,y)$ between two points in a random environment, that is the optimal cost among all paths connecting the two points. 
For a detailed survey of the theory up until 2015 see \cite{ADH17}.
The most common choice for the underlying space is the lattice $\Zd$ with a random cost or time associated to the traversal of each edge.

In our setting, for $A=0$ and direction independent forcing $F(x,\xi)=F(x)$, the arrival times from Definition \ref{intro_def_met} correspond to the passage times in Riemannian first passage percolation on the continuum as introduced in \cite{LaGW10}.
For a description of this connection in the discrete setting see \cite[Section 2.2]{ADH17} .
In particular, the scaling of the fluctuation bounds from Theorem \ref{main_thm_fluct} and the convergence of the mean in Theorem \ref{main_thm} in this general setting can not be better than the best possible bounds for first passage percolation.

It has been predicted that the fluctuations of the passage times $T(0,\eps^{-1}e)$ scale like $\sim \eps^{-\frac{1}{3}}$ for $d=2$ and that they decrease further for higher dimensions.
Proving this without some stronger, unproven additional hypothesis is still open.
The best upper bound proven and published to date is $\lesssim \frac{\eps^{-\frac{1}{2}}}{\log\pp{\eps^{-1}}}$ for $d\geq 2$ which has first been achieved for Bernoulli distributed edge costs by Benjamini, Kalai, and Schramm \cite{BKS03} and was later extended to more general settings.
The best prior bound was $\lesssim \eps^{-\frac{1}{2}}$ by Kesten \cite{Kes93}, for the proof of which the martingale method of bounded increments was used.
This proof inspired the fluctuation estimates for the metric problem in the stochastic homogenization of Hamilton--Jacobi equations \cite{ACS14, ArCa14, ArCa18, Fel19} and thus in turn our proof of Theorem~\ref{main_thm_fluct}.

Regarding the rate of convergence to the mean, in \cite{Ale97} it was shown that 
\begin{equation*}
	\abs{\EV{T(0,\eps^{-1}e)}-\eps^{-1}\mu(e)} \lesssim \pp{\eps^{-1}\log\pp{\eps^{-1}}}^{\frac{1}{2}}
\end{equation*}
for all $e\in\Sd$.
The existence of the limit $\mu(e)$ had already been established beforehand in the form of shape theorems, see \cite{ADH17} for an overview.

\subsection{Comparison to \cite{ArCa18}}\label{subs_lit_ArCa}
Even though the work by Armstrong and Cardaliaguet \cite{ArCa18} also treats geometric motions, we use quite a different language.
This is due to both regularity issues in our setting and the fact that they are treating the level-set equation for forced mean curvature flow as a special case of more general Hamilton--Jacobi equations rather than just those corresponding to geometric motions.
The basic building block in \cite{ArCa18} is the metric problem, whose solutions for geometric motions would be exactly our arrival times.
To be more precise, for $\mu>0$ and a set $S\subset\Rd$ the metric problem is given by
\begin{equation}\label{intro_eq_metricPlanar}
	\left\lbrace\begin{aligned}
		&- \tr\pp{A\pp{x, \frac{\nabla \wt{m}_\mu}{\abs{\nabla\wt{m}_\mu}}}D^2 \wt{m}_\mu} + F\pp{x, \frac{\nabla \wt{m}_\mu}{\abs{\nabla \wt{m}_\mu}}}\abs{\nabla \wt{m}_\mu}
		=\mu
		&&\text{in }\Rd\setminus S,\\
		&\wt{m}_\mu=0
		&&\text{on }S.
	\end{aligned}\right.
\end{equation}
Here, since the underlying Hamilton--Jacobi equation corresponds to a geometric motion, we have $\wt{m}\coloneqq\wt{m}_1=\mu^{-1}\wt{m}_\mu$ for all $\mu>0$.
If a solution $\wt{m}$ to the metric problem existed, then $(x,t)\mapsto \max\Bp{0,\, -t+\wt{m}(x)}$ would be a solution to the level-set equation \eqref{intro_eq_uls} with the initial 0-sublevel set given by $S$.
In particular, our arrival time for $S$ at some $x_0\in\Rd$ would be given by $\met{}{x_0}{S}=\wt{m}(x_0)$.
However, in our setting the metric problem is ill-posed because our arrival times might attain infinity as a value since large areas might never be reached by the set evolution.

Translating the properties of the metric problem from \cite{ArCa18}, one of their central assumptions is that all realizations of $F$ satisfy an inequality which implies the Lipschitz continuity of the metric problem for sufficiently `fat' sets $S$.
In our language this corresponds to a guarantee that these sets expand at a general minimum speed  and hence the absence of any actual obstacles.
They heavily use this property both on large scales for the general concept of the proofs and on small scales for specific regularity issues. 
With our potentially negative forcing, there is no guaranteed minimum speed. 
In addition, it might happen that the front is not strictly expanding but might shrink locally.
Both of these possibilities potentially damage the effective independence between the `past' and `future' of the evolution, which is central in \cite{ArCa18}.
In order to salvage this property, we introduce a suitable notion of `stability', preventing shrinkage on large scales, as well as the notion of an `effective minimum speed of propagation' on large scales, ignoring enclosures behind the main front. 
While the `stability' property and the `effective minimum speed' are not guaranteed to be applicable, we can however reasonably assume that on large scales they are very likely to be applicable.

Applied on large enough scales, we use these two notions to obtain the fluctuation bounds for the truncated arrival times in a similar fashion to the fluctuation bounds for the metric problem in \cite{ArCa18}:
We also use the method of bounded martingale differences, writing the truncated arrival time as the endpoint of a suitably defined martingale.
However, in addition to significant technical adaptions due to the lack of an actual minimum speed, we obtain bounds for the martingale differences not almost surely, but just with high probability since our `effective minimum speed of propagation' is only applicable with high probability.
Despite this, we are still able to obtain good fluctuation bounds by introducing a suitable replacement for Azuma's inequality.

On small scales we have much worse regularity properties assuming an effective instead of an actual minimum speed.
This has two major consequences:
First, we only obtain an `approximate finite speed of propagation of perturbations' property corresponding to \cite[Proposition 4.2]{ArCa18} at the cost of increasing the forcing by a small constant, that is replacing $(A,F)$ with $(A,F+\delF)$ for some small $\delF>0$.
To obtain the overall quantitative homogenization result, we hence have to introduce an assumption on the H\"older continuity of the homogenized speed with respect to these forcing modifications.
Second, since the metric problem is ill-posed in our setting, we are not able to reproduce the classical homogenization argument for these kind of problems, which would be to use the perturbed test function method to lift the homogenization bounds from the metric problem first to an approximate corrector equation and then to the level-set equation. 
Instead, we use a more constructive approach specific to geometric motions, which is not applicable for the general viscous Hamilton--Jacobi equations treated in \cite{ArCa18}.

\section{Setting and Assumptions}\label{s_assump}
For convenience, we denote dependencies on the parameters introduced with the assumptions below via
\begin{equation*}
	\data\coloneqq	\pp{d, C_{1A}, C_{1F}, \vmineff, h_0, \ratespeed, \cspeed, \Cspeed, \Cmeas, \cstable, \hoel, \Choel }.
\end{equation*}

\subsection{Admissible coefficient fields}\label{subs_sett_AF}
We say that $(A,F)$ is an admissible coefficient field if the following conditions \eqref{sett_eq_assAbd}, \eqref{sett_eq_assAgeom} and \eqref{sett_eq_assF} are satisfied.
$A$ is assumed to be continuously differentiable, degenerate elliptic and uniformly bounded and Lipschitz, that is there exist $\sigma\in C^{0,1}(\Sd\times\Rd;\,\Rddsym)$ and $C_{1A}>0$ such that
\begin{equation}\label{sett_eq_assAbd}
	A=\sigma\sigma^\intercal
	\,\,\text{ with }
	\norm{\sigma}_{C^{0,1}(\Sd\times\Rd;\,\Rddsym)},\norm{A}_{C^{0,1}(\Sd\times\Rd;\,\Rddsym)} \leq C_{1A}.
\end{equation}
To guarantee that \eqref{intro_eq_SurfM_ls} is the level-set formulation of a surface motion, we require
\begin{equation}\label{sett_eq_assAgeom}
	A(x,e)\pp{\Idd-e\otimes e} = A(x,e)
	\text{ for all }e\in\Sd,\, x\in\Rd.
\end{equation}
$F$ is also assumed to be uniformly bounded and Lipschitz, that is there exists $C_{1F}>0$ such that
\begin{equation}\label{sett_eq_assF}
	F\in C^{0,1}(\Sd\times\Rd;\,\R) 
	\text{ with }
	\norm{F}_{C^{0,1}\pp{\Sone\times\Rd;\,\R}} \leq C_{1F}.
\end{equation}
These conditions on $(A,F)$ guarantee that the theory for surface evolution equations is applicable to \eqref{intro_eq_uls}, see Appendix \ref{s_visc} for a summary of the results relevant to us.
In addition, the uniform bounds will be used in some later sections.
To simplify the notation we use the $0$-homogeneous extension of $A$ and $F$ to $\pp{\Rd\setminus\Bp{0}}\times\Rd$, for $A$ given by $A(\xi,x)\coloneqq A\pp{\abs{\xi}^{-1}\xi,x}$.

\subsection{Assumptions on the probability distribution}\label{subs_aP}
In short, we assume that the coefficient field $(A,F)$ in \eqref{intro_eq_uls} is a random variable sampled from a stationary probability measure with finite range of dependence.
Some of the later results we are only able to prove under two additional assumptions: 
The assumption of $A$ being deterministic and the assumption of isotropy of the probability measure. 
Finally, we introduce two non-standard assumptions:
First, one of our key assumptions is that for `fat' sets with overwhelming probability there are `stable approximations', for which an `effective minimum speed' is applicable.
Second, we require that our candidate for the homogenized speed, which is well-defined due to the previous assumptions, is H\"older continuous with respect to changing the forcing $F$ by a constant.

More explicitly, the probability space is defined as the set of admissible coefficient fields
\begin{equation}\label{sett_eq_omega}
	\Omega\coloneqq \Bp{(A,F)\,:\,A\text{ satisfies }\eqref{sett_eq_assAbd}\text{ and } \eqref{sett_eq_assAgeom},\,
								F\text{ satisfies }\eqref{sett_eq_assF}}.
\end{equation}
On $\Omega$ we define the $\sigma$-algebras $\Bp{\F(U)}$ for all Borel sets $U\subset\Rd$,
\begin{multline}\label{sett_eq_sigmaF}
  	\F(U)\coloneqq 	\text{the $\sigma$-algebra generated by the family of maps}\\
  					\Omega\rightarrow \Rdd\times\R,\,
  					(A,F)\mapsto \pp{A(x,e), F(x,e)}\quad
  					\text{with }x\in U,\,e\in\Sd.
\end{multline}  
The $\sigma$-algebras $\F(U)$ can be seen as containing all of the knowledge which can be gained by observing the coefficient field on $U\subset\Rd$.

We assume that the randomness of the coefficient field is described by a probability measure $\Pm$ on $(\Omega,\F(\Rd))$, which satisfies the following conditions.
For some of our results, only a subset of these assumptions will be required.
First, we have two classical key assumptions from the theory of stochastic homogenization:
\begin{enumerate}[label=(P\arabic*), left=8pt]
	\item	\label{sett_aP_stat} \textbf{Stationarity.}
			 $\Pm$ is invariant under translations of the coefficient field: 
			\begin{equation*}
				\Pm=\Pm\circ \tau_x, \quad\text{for any }x\in\Rd,
			\end{equation*}
			 where 
				$\tau_x(\wt{\Omega})=\lbrace\pp{A(\cdot+x,\cdot),F(\cdot+x,\cdot)}\,:\,(A,F)\in\wt{\Omega}\rbrace$  
			for $\wt{\Omega}\in\F(\Rd)$.
	\item	\label{sett_aP_fin} \textbf{Finite range of dependence.}
			For all Borel sets $U,V\subset\Rd$ with $\dist(U,V)\geq 1$
			\begin{equation*}
				\F(U)\text{ and }\F(V)\text{ are $\Pm$-independent.}
			\end{equation*}
\end{enumerate}
Regarding the two additional assumptions, we require that the following holds:
\begin{enumerate}[label=(P\arabic*), left=8pt, resume]
	\item	\label{sett_aP_Aconst}	\textbf{Spatially constant and thus deterministic second order-term.}
			There holds
			\begin{equation*}
				\PM{(A,F)\in\Omega\,:\,A(x,e)=A(0,e)\text{ for all }x\in\Rd, e\in\Sd}=1.
			\end{equation*}
	\item	\label{sett_aP_ani} \textbf{Isotropy.}
			$\Pm$ is invariant under rotations, that is there exists a non-negative constant $a\geq 0$ such that 
			\begin{equation*}
				\PM{(A,F)\in\Omega\,:\,A(x,e)=a\pp{\Idd-e\otimes e} \text{ for all }x\in\Rd, e\in\Sd}=1,
			\end{equation*}	
			and $\operatorname{Law}(F(\cdot,\cdot)) = \operatorname{Law}{F(R\cdot,R\cdot)}$ for all $R\in\operatorname{SO}(d)$.
\end{enumerate}
Regarding our first non-standard assumption, we require that on large scales there is an almost guaranteed effective minimum speed for the evolution of sets.
\begin{enumerate}[label=(P$\circledcirc$), left=0pt]
\item	\label{veff_aP_star}	\textbf{Almost guaranteed effective minimum speed}.
		There is $\vmineff>0$, $h_0\geq 1$ and a rate $\ratespeed>0$ with constants $\cspeed, \Cspeed, \Cmeas, \cstable>0$, such that for each $h\geq h_0$ there is a family of events $\pp{\Espeed{h}(M)}_{M\subset\Rd}$ with 
		\begin{equation}\label{veff_eq_probBound}
			\quad	\Pm[\Espeed{h}(M)]\geq 1 - \Cspeed\pp{1+\frac{\diam(M)}{h}}^d\exp(-\cspeed h^{\ratespeed}),
		\end{equation}			
		which satisfies
		\begin{enumerate}[label=($*$\roman*), left=5pt]
			\item	\label{veff_Pmeas}
					$\Espeed{h}(M)$ is measurable with respect to $\F(M+\Cmeas \ov{B}_h)$,
			\item	\label{veff_Porder}
					for all $M_1\subset M_2$ there holds $\Espeed{h}(M_1)	\subset	\Espeed{h}(M_2)$,
			\item	\label{veff_Pspeed}
					for all $(A,F)\in \Espeed{h}(M)$, every $h$-fat set $S$ with $\partial S\subset M$ has a stable $(h,\cstable)$-approximation, 
					for which $(A,F)$ admits $\vmineff>0$ as an effective minimum speed of propagation on the scale $h$ in $M\subset\Rd$, see Definition \ref{veff_def_fatstab} below for the precise introduction of these notions.
		\end{enumerate}	
\end{enumerate} 
\begin{remark}
We do not explicitly define $\Espeed{h}(M)$ directly as the set such that \ref{veff_Pspeed} holds, because then in general we would not have measurability as in \ref{veff_Pmeas}.
\end{remark}

Regarding our second non-standard assumption, we require H\"older continuity of the homogenized speed with respect to changing the forcing by a constant $\delF\in\R$.
Note that generically this assumption is satisfied, see Remark \ref{sett_rem_hoel} below.
We want to choose this homogenized speed as the large scale limit of the average speed corresponding to truncated arrival times for half-spaces.
Since we are not able to guarantee the existence of this limit under just the previous assumptions, we approximate the half-spaces with bounded sets as in Figure \ref{lin_fig_hs-approx} and define the homogenized speed for the forcing $F+\delF$ as
\begin{equation*}
				\delF\mapsto \vhom[\delF,\beta](e)\coloneqq \pp{\lim_{r\rightarrow\infty}	\frac{1}{r}\EV{\metb[(A,F+\delF)]{h(r)}{re}{\hsma{e}{\beta}{h(r)}{r}}}}^{-1},
\end{equation*}
where $r\mapsto \hsma{e}{\beta}{h(r)}{r}$ are disks of radius $\sim r^{\beta}$ with $\beta\geq \frac{3}{2}$ and thickness $\sim h(r)<<r$ approximating the boundary of the half space $\hsm{e}=\Bp{x\in\Rd\,:\,x\cdot e\leq 0}$ for $e\in\Sd$, see Figure \ref{lin_fig_hs-approx} and Corollary \ref{lin_cor_bd} for the precise definition.
In Section \ref{s_lin} we will prove the existence of these limits for all $\beta$ and all $\delF\in\R$ if Assumptions \ref{sett_aP_stat}, \ref{sett_aP_fin}, \ref{sett_aP_Aconst} and \ref{veff_aP_star} hold for the random coefficient field $(A,F+\delF)$.
Assumption \ref{sett_aP_ani} further guarantees that these quantities are the same for all $e\in\Sd$.
We assume that the following holds.
\begin{enumerate}[label=(P$\hoel$), left=0pt, resume]
	\item	\label{sett_aP_gen}\textbf{H\"older-continuity of the homogenized speed with respect to changing the forcing by a constant.} 
			There is $0< \hoel\leq 1$ and a width scaling $\genhoel> \frac{3}{2}$ for the half-space approximations $\pp{\hsma{e}{\genhoel}{h(r)}{r}}_{e\in\Sd,r>0}$ defined in Corollary \ref{lin_cor_bd}, such that for all $e\in\Sd$ the function
			\begin{equation*}
				\delF\mapsto \vhom[\delF](e)
				\coloneqq 	\vhom[\delF,\genhoel](e)
				=			\pp{\lim_{r\rightarrow\infty}	\frac{1}{r}\EV{\metb[(A,F+\delF)]{h(r)}{re}{\hsma{e}{\genhoel}{h(r)}{r}}}}^{-1}
			\end{equation*}
			is H\"older-continuous at $\delF=0$ with H\"older coefficient $\hoel$.
			That is, there exists $\Choel>0$ such that
			\begin{equation*}
				\abs{\vhom[\delF](e)-\vhom[0](e)}\leq	\Choel\abs{\delF}^{\hoel}
				\,\, \text{ for all $\delF\in\bp{-\frac{1}{\Choel},\frac{1}{\Choel}}$ for which $\vhom[\delF](e)$ exists.}
			\end{equation*}
			For the homogenized speed given the unchanged forcing we write $\vhom=\vhom[0]$.
\end{enumerate}

\begin{remark}[On Assumption \ref{sett_aP_gen}]\label{sett_rem_hoel}
\begin{enumerate}[left=0pt]
\item	\label{sett_item_hoel_1}
		We will show in Lemma \ref{lin_lem_hoelInv} that the width scaling $\genhoel$ of the half-space approximations is arbitrary: 
		If all the previous assumptions hold, then Assumption \ref{sett_aP_gen} implies that the function $\delF\mapsto \vhom[\delF,\beta](e)$ is $\hoel$-H\"older continuous at $\delF=0$ with the same constant $\Choel$ and $\vhom[0,\beta](e)=\vhom[0,\genhoel](e)=\vhom$ for all $\beta>\frac{3}{2}$.
\item	Assumption \ref{sett_aP_gen} can be expected to hold generically in the following sense: 
		Since by the comparison principle $\delF\mapsto \vhom[\delF]$ is a nondecreasing function, it is Lipschitz-continuous at Lebesgue-almost every $\delF$.
\item	Assumption \ref{sett_aP_gen} is satisfied if there is an actual minimum speed as for example in the setting of \cite{ArCa18} for forced mean curvature flow.
		This can be seen by manipulating the metric problem, whose solutions in this case are well posed, Lipschitz continuous and correspond to our arrival times.
\item	Heuristic arguments suggest that $\delF\mapsto \vhom[\delF]$ is always at least $\frac{1}{2}$-H\"older continuous when all the previous assumptions hold.
		One reason for the failure of Lipschitz continuity can be the sufficiently frequent occurrence of local de-pinning when increasing $F$ by $\Delta F$, thus unlocking `shortcuts'.
		In this case, morally the effective speed can be expected to increase at most like $\sim\Delta F^{\frac{1}{2}}$, see the discussion on de-pinning transitions in the introduction of \cite{DiYi06}.
		However, further investigating this is out of scope for this paper.
\end{enumerate}
\end{remark}

\begin{remark}[Core assumptions versus additional requirements]
While \ref{sett_aP_fin} and \ref{veff_aP_star} are central assumptions for this paper, \ref{sett_aP_stat}, \ref{sett_aP_Aconst}, \ref{sett_aP_ani} and \ref{sett_aP_gen} are only required in the latter half of the paper for the deterministic error estimate, see the beginning of Subsection \ref{subs_str_main} for a description of where the assumptions enter the overall proof.
Note that in particular the isotropy assumption \ref{sett_aP_ani} is only required for the very last part of the proof and can be replaced by a less restrictive but more technical assumption on the homogenized speed $\vhom\colon\Sd\mapsto \R_{>0}$, see Remark \ref{hom_rem_aniso}.
\end{remark}

\begin{remark}[On Assumption \ref{sett_aP_ani}]\label{sett_rem_ani_actual}
Strictly speaking, Assumption \ref{sett_aP_ani} is stronger than just assuming that the probability distribution is isotropic in the context of the level-set formulation in \eqref{intro_eq_uls}, which would correspond to the following more general assumption.
\begin{enumerate}[label=\ref{sett_aP_ani}', left=0pt]
\item	\label{sett_aP_ani_actual} \textbf{Isotropy with respect to \eqref{intro_eq_uls}.}
		$\Pm$ is invariant under rotations:
		\begin{equation*}
			\Pm=\Pm\circ \operatorname{rot}_R,		\quad\text{for any }R\in\operatorname{SO}(d),
		\end{equation*}
		where $\operatorname{rot}_R(\wt{\Omega})$ for any set of coefficients $\wt{\Omega}\in\F(\Rd)$ is given by
		\begin{equation*}
			\operatorname{rot}_R(\wt{\Omega})=\lbrace\pp{(x,e)\mapsto R^\intercal A(Rx,Re)R,F(Rx,Re)}\,:\,(A,F)\in\wt{\Omega}\rbrace.
		\end{equation*}
\end{enumerate}
However, taken together with the other assumptions, this indeed requires that the second-order term corresponds to mean curvature up to a constant factor and hence \ref{sett_aP_ani} holds, as we state in the following lemma, which we prove in Appendix \ref{s_pre-proofs}.
\end{remark}

\begin{lemma}\label{sett_lem_isotropy}
	In view of \eqref{sett_eq_assAbd} and \eqref{sett_eq_assAgeom}, if Assumption \ref{sett_aP_stat}, Assumption \ref{sett_aP_fin}, Assumption \ref{sett_aP_Aconst}, and Assumption \ref{sett_aP_ani_actual} hold, then there exists a constant $a\geq 0$ such that 
	\begin{equation*}
		\PM{(A,F)\in\Omega\,:\,A(x,e)=a\pp{\Idd-e\otimes e} \text{ for all }x\in\Rd, e\in\Sd}=1.
	\end{equation*}	
\end{lemma}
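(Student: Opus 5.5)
By \ref{sett_aP_Aconst}, almost surely $A(x,e)=A_0(e):=A(0,e)$ is a random, spatially constant field. The plan is to show that $A_0$ is in fact deterministic, and then to use rotation invariance together with \eqref{sett_eq_assAgeom} to pin down its form.

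\emph{Step 1: $A_0$ is deterministic.} Fix $e\in\Sd$ and a bounded Borel set $B\subset\Rddsym$. Consider the event
\begin{equation*}
	E=\Bp{A(0,e)\in B}=\Bp{A(x,e)\in B \text{ for all } x},
\end{equation*}
where the two sets agree up to null sets. It is $\F(U)$-measurable (modulo null sets) for $U=\Bp{0}$ and for $U=\Bp{2e_1}$, since almost surely $A(0,e)=A(2e_1,e)$. By \ref{sett_aP_fin}, $\F(\Bp{0})$ and $\F(\Bp{2e_1})$ are independent, so $E$ is independent of a set that coincides with it up to a null set. Hence $\Pm[E]=\Pm[E]^2\in\Bp{0,1}$. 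Running over a countable generating family of $B$'s and countably many $e$ in a dense subset of $\Sd$, and using the Lipschitz continuity in $e$ from \eqref{sett_eq_assAbd}, we get a deterministic $\bar A\colon\Sd\to\Rddsym$ with $A(x,e)=\bar A(e)$ for all $x,e$ almost surely. The only technical care needed here is handling the null-set modification, i.e.\ working with completions of the $\sigma$-algebras. Stationarity \ref{sett_aP_stat} is not really needed for this step; it is consistent with the conclusion either way.

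\emph{Step 2: equivariance.} Apply \ref{sett_aP_ani_actual}. The rotated field $R^\intercal \bar A(Re)R$ has the same law as the original one. Since the law of $A$ is a point mass at $\bar A$, this forces
\begin{equation*}
	\bar A(Re)=R\bar A(e)R^\intercal \quad\text{for all } R\in\operatorname{SO}(d),\ e\in\Sd.
\end{equation*}

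\emph{Step 3: the form of $\bar A$.} Fix $e$ and set $M=\bar A(e)$. By \eqref{sett_eq_assAgeom}, $Me=0$, and since $M$ is symmetric it maps $e^\perp$ to itself. Every $R$ in the stabilizer of $e$, which acts as $\operatorname{SO}(d-1)$ on $e^\perp$, commutes with $M$. For $d\geq3$, $\operatorname{SO}(d-1)$ acts irreducibly on $e^\perp\cong\R^{d-1}$ (including $d=3$, where $\operatorname{SO}(2)$ on $\R^2$ has no invariant real line). Schur's lemma for symmetric matrices then says that $M|_{e^\perp}$ is a scalar $a(e)$. For $d=2$ the space $e^\perp$ is one-dimensional, so $M|_{e^\perp}$ is a scalar trivially. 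Thus $\bar A(e)=a(e)(\Idd-e\otimes e)$. The equivariance from Step 2 together with transitivity of $\operatorname{SO}(d)$ on $\Sd$ for $d\geq2$ gives $a(Re)=a(e)$, so $a$ is a constant. Finally, $a\geq0$ because $A=\sigma\sigma^\intercal$ is positive semidefinite.

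I expect the main obstacle to be Step 1: making the measurability and independence rigorous when $A$ is only almost surely constant in $x$. The remaining steps are linear algebra.
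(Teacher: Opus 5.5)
Your proof is correct and follows the same route as the paper: obtain a deterministic $\tilde A$, derive the rotation equivariance $\tilde A(Re)=R\tilde A(e)R^\intercal$ from \ref{sett_aP_ani_actual}, show that $\tilde A(e)$ acts as a scalar on $e^\perp$ using rotations fixing $e$, and conclude with transitivity of $\operatorname{SO}(d)$ that $a$ is constant. Your zero--one argument in Step 1 supplies a detail the paper only asserts; it needs no completions, since $E\in\F(\Bp{0})$ and $E'\in\F(\Bp{2e_1})$ with $\Pm(E\triangle E')=0$ already give $\Pm(E)=\Pm(E)^2$. Your Schur-lemma/eigenspace argument in Step 3 replaces the paper's explicit quarter-turn rotations that swap the eigenvectors $e_2$ and $e_i$.
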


For $d= 2$, Assumptions \ref{sett_aP_stat}, \ref{sett_aP_fin}, \ref{sett_aP_Aconst}, \ref{sett_aP_ani}, and \ref{veff_aP_star} are satisfied for mean curvature flow with a uniform, positive forcing except for Poisson-distributed symmetric obstacles of diameter smaller than one, as long as the intensity of the underlying Poisson process is low enough.
Assumption \ref{sett_aP_gen} holds for Lebesgue-almost every choice of the uniform forcing as stated in Remark \ref{sett_rem_hoel}.
For more details see Theorem \ref{intro_thm_Poisson} above and Example \ref{veff_ex_Poisson} below.

\subsection{Stability and the notion of effective minimum speeds}\label{subs_sett_veff}
In this subsection, we will properly introduce the notions used to state Assumption \ref{veff_aP_star} from above and its informal version \ref{sett_aP_pseudoStar} from the introduction.
That is, we will make precise what we mean with `fat' sets, `stable approximations' and an `effective minimum speed'.
In particular, we first need to introduce stable sets.

\begin{figure}[h]
\centering
\includegraphics[width=0.36\textwidth]{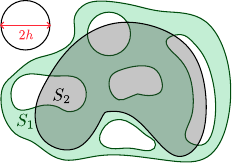}
\caption{	An illustration of our notion of h-envelopment $S_2 \hsubs S_1$.
			A set $S_1 \subset \Rd$ (depicted in green) which $h$-envelops a set $S_2\subset \Rd$ (depicted in grey).}
\label{sett_fig_envelop}
\end{figure}

\begin{figure}[h]
\centering
\includegraphics[width=0.35\textwidth]{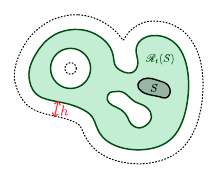}
$\quad$
\includegraphics[width=0.35\textwidth]{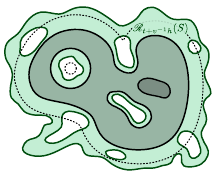}
\caption{An illustration of our notion of effective minimum speed.
	Left: An initial set S (depicted in grey) and its evolution $\reach{t}{S}$ after some time $t$ (depicted green). 
	The dotted line corresponds to a further expansion of $\reach{t}{S}$ by $\ov{B}_h$.
	Right: If an effective minimum speed $v$ holds for $S$, then after an additional time step $v^{-1} h$ the evolved set $\reach{t+v^{-1} h}{S}$ (depicted in green) must $h$-envelop the dotted area.}
\label{sett_fig_vmin}
\end{figure}

\begin{figure}[h]
\centering
\includegraphics[width=0.3\textwidth]{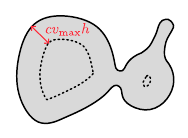}
$\quad$
\includegraphics[width=0.3\textwidth]{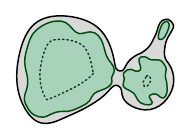}
$\quad$
\includegraphics[width=0.3\textwidth]{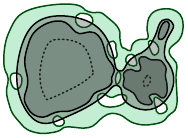}
\caption{	An illustration of our notion of a stable stable $(h,c)$-approximation $S_h$
of a set $S$.
			Left: The set $S\subset \Rd$ to be approximated is depicted in grey. 
			The subset of $S$ that must be included in $S_h$ is depicted with a dotted boundary.
			Center: In addition to the aforementioned sets, the approximation $S_h$ is depicted in green.
			Right: After a time step $ch$, the evolved set $\reach{ch}{S_h}$ (depicted in green) must $h$-envelop the set $S$.}
\label{sett_fig_stabapp}
\end{figure}

\subsubsection{Introduction of Stable Sets}
Throughout this paper, sets which do not shrink under the interface evolution will be central to many arguments.
We call these sets or the corresponding interfaces \textit{stable}.
We will discuss properties of these sets in Section \ref{s_pre}.

\begin{definition}[Stable Sets]\label{box_def_stable}
	A closed set $S\subset\Rd$ is called stable (with respect to $(A,F)\in\Omega$) if $u\coloneqq 1-\charfun[S]$ is a supersolution of 
	\begin{equation}\label{box_eq_stable}
		- \tr\pp{A\pp{x, \frac{\nabla u}{\abs{\nabla u}}}D^2 u} 
		+ F\pp{x, \frac{\nabla u}{\abs{\nabla u}}}\abs{\nabla u}
		= 0
		\quad\text{in }\Rd.
	\end{equation}
\end{definition}

\begin{remark}
Due to the forcing possibly being negative, there is no condition that would guarantee that any `flat enough' set is stable with respect to all coefficient fields $(A,F)\in\Omega$.
This is different from \cite{ArCa18}, where admissible coefficient fields have to satisfy stricter conditions. 
There, essentially the ``ball condition'' $S=\bigcup\Bp{\ov{B}_1(x)\,:\,\ov{B}_1(x)\subset S}$ guarantees that $S\subset\R^d$ is stable with respect to all admissible coefficient fields.
\end{remark}

\subsubsection{Approximate stability and effective minimum speeds}
We now make precise what we mean in Assumption \ref{veff_aP_star} with an `effective minimum speed' of propagation, `approximate stability' and `fat' sets.
We first give a brief intuitive description of these notions and then provide the formal definition.

Loosely speaking, a coefficient field $(A,F)$ admits $v>0$ as an \textit{effective minimum speed of propagation} on the scale $h$ in an area $M\subset\Rd$ for some set $S\subset\Rd$, if at any time $t\geq 0$ the evolved set $\reach{t}{S}$ within a time step $v^{-1}h$ further expands by $h$ in each direction, see Figure \ref{sett_fig_vmin}.
However, for this expansion we ignore enclosed holes of width smaller than $2h$. Morally we can view such thin enclosed areas as being part of the set since we look at arrival times in target areas of diameter $2h$, see Definition \ref{intro_def_met}.

A set $S$ is \textit{$(h,c)$-approximately stable} if it has a stable subset $S_h$, which approximates $S$ in two ways: First, it covers the part of the interior of $S$, which is further than $c\vmax h$ from the boundary. Second, after evolving for $ch$, $S_h$ envelops all of $S$ except for potential holes of width smaller than $2h$, see Figure \ref{sett_fig_stabapp}.

A set $S$ is \textit{$h$-fat}, if it is same as the union of all included balls with radius $h$ - that is, if there are no parts on a scale smaller than $h$ protruding from the set.

\begin{definition}\label{veff_def_fatstab}
Let $(A,F)\in\Omega$.
For a set $S\subset\Rd$, let $t\mapsto \reach[(A,F)]{t}{S}$ denote the set evolution from Definition \ref{not_def_reach}.
Let $h>0$, $c>0$.
\begin{enumerate}[label=(\roman*)]
\item	We say that a set $S_1\subset\Rd$ \textit{$h$-envelops} another set $S_2\subset\Rd$, denoted by $S_2\hsubs S_1$,
		if for each connected component $H$ of $\comp{(S_1)}$ either $H\cap S_2 = \emptyset$ or $H\subset S_1+\ov{B}_h$, see Figure \ref{sett_fig_envelop}.	
\item	We say that $(A,F)$ admits $v>0$ as an \textit{effective minimum speed of propagation} on the scale $h$ in $M\subset\Rd$ for a set $S\subset\Rd$ if for any $t\geq 0$ there holds
		\begin{equation}\label{veff_eq_effspeed}
			\quad	\pp{\reach[(A,F)]{t}{S}\cap M} + \ov{B}_h  \hsubs \reach[(A,F)]{t+v^{-1}h}{S}.
		\end{equation} 
\item	We say that a set $S\subset\Rd$ is \textit{$(h,c)$-approximately stable} with respect to $(A,F)$
		if there exists a set $S_h\subset S$ such that
		\begin{align*}
				\quad	S_h \text{ is stable},
			&&		S	\hsubs \reach[(A,F)]{c h}{S_h},
			&&		S_h	\supset \Rd\setminus\pp{\comp{S}+c\vmax B_h},
		\end{align*}		
		where $\vmax=\vmax(\data)$ corresponds to the maximum speed with which the interface propagates on larger scales, see Lemma \ref{not_lem_vmax}.
		In this case, $S_h$ is called a \textit{stable $(h,c)$-approximation} of $S$.
\item	We say that a set $S\subset \Rd$ is \textit{$h$-fat} if 
		\begin{equation*}
			\quad	S=\bigcup\Bp{\ov{B}_h(x)	\,:\,	x\in\Rd \text{ with }	\ov{B}_h(x)\subset S}.
		\end{equation*}
		We say that $S$ is \textit{fat}, if there exists some $h_0>0$ such that $S$ is $h_0$-fat.
\end{enumerate}
\end{definition}

Note that the notion of $h$-envelopes is chosen to be compatible with the truncated arrival times from Definition \ref{intro_def_met}, as we will show in the following lemma. For the proof see Appendix \ref{s_pre-proofs}.

\begin{lemma}[Compatibility of the arrival times {$\met[]{h}{\cdot}{\cdot}$} and ``{$\hsubs$}'']\label{veff_lem_henv}
Let $(A,F)\in\Omega$. 
Let $h>0$, $x_0\in\Rd$ and $S_1, S_2\subset\Rd$ be closed sets with
\begin{align}\label{veff_eq_reach-hsubs}
	\reach[(A,F)]{t}{S_1}\cap\ov{B}_{\max\Bp{h, \dist(x_0,S_2)}}(x_0) \hsubs\reach[(A,F)]{t}{S_2}  
	\qquad\text{for all }t\leq \timebound{h}{x_0}{S_2}
\end{align}
with $\timebound{h}{x_0}{S_2}=\vmineff^{-1}\dist(x_0,S_2)+Ch$ from Definition \ref{intro_def_met}.
Then there holds 
\begin{align*}
	\met[(A,F)]{h}{x_0}{S_1}	\geq \met[(A,F)]{h}{x_0}{S_2}.
\end{align*}
\end{lemma}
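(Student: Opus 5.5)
Write $T\coloneqq\timebound{h}{x_0}{S_2}$ and $\rho\coloneqq\max\Bp{h,\dist(x_0,S_2)}$. By definition $\metb{h}{x_0}{S_i}=\min\Bp{\tau_i,\,\timebound{h}{x_0}{S_i}}$, where $\tau_i\coloneqq\min_{y\in\ov{B}_h(x_0)}\met{}{y}{S_i}$. A first point to settle is that the truncation threshold in the conclusion is the one belonging to $S_2$: $\timebound{h}{x_0}{S_1}$ could be smaller than $T$ if $S_1$ is closer to $x_0$. The hypothesis \eqref{veff_eq_reach-hsubs} at $t=0$ should exclude this. If $S_1$ meets $\ov{B}_\rho(x_0)$ at a point outside $S_2$, that point lies in a connected component $H$ of $\comp{S_2}$ which must be contained in $S_2+\ov{B}_h$. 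One then argues that $\dist(x_0,S_1)\geq\dist(x_0,S_2)-h$, adjusting the constant $C(\data)$ in $T$ if necessary. I would check this first. If it fails, I would read the statement with a common truncation level $T$ for both sides, which is how the lemma is presumably used later.

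The core claim is the following. If $t\leq T$ and $\reach{t}{S_1}\cap\ov{B}_h(x_0)\neq\emptyset$, then also $\reach{t}{S_2}\cap\ov{B}_h(x_0)\neq\emptyset$. Given this, either $\tau_1>T$, and then $\metb{h}{x_0}{S_1}=T\geq\metb{h}{x_0}{S_2}$. Or $\tau_1\leq T$, and applying the claim at $t=\tau_1$ gives $\tau_2\leq\tau_1$, so again $\metb{h}{x_0}{S_1}\geq\metb{h}{x_0}{S_2}$. Here we use that the minimum defining $\tau_1$ is attained and that $t\mapsto\reach{t}{S}$ is closed and monotone up to the first hitting time. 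Both facts follow from the uniform continuity of $\uls{S}$, so the set of times at which the ball has been hit is closed.

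To prove the claim, take $y\in\reach{t}{S_1}\cap\ov{B}_h(x_0)$. Since $h\leq\rho$, the point $y$ lies in $\reach{t}{S_1}\cap\ov{B}_\rho(x_0)$. If $y\in\reach{t}{S_2}$ we are done. Otherwise $y$ lies in a connected component $H$ of $\comp{\reach{t}{S_2}}$ that meets the left-hand side of \eqref{veff_eq_reach-hsubs}. By the definition of $h$-envelopment, $H\subset\reach{t}{S_2}+\ov{B}_h$, so there is $z\in\reach{t}{S_2}$ with $\abs{y-z}\leq h$. This is the delicate step, because $z$ need not lie in $\ov{B}_h(x_0)$; we only know $z\in\ov{B}_{2h}(x_0)$.

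To fix this, I would use that $H$ is connected, contains $y\in\ov{B}_h(x_0)$, and that $\partial H\subset\reach{t}{S_2}$. There are two cases. If $x_0\in H$, then $\dist(x_0,\reach{t}{S_2})\leq h$ directly, since $H\subset\reach{t}{S_2}+\ov{B}_h$. If $x_0\notin H$, the segment from $y$ to $x_0$ lies in $\ov{B}_h(x_0)$ and must leave $H$. It therefore meets $\partial H\subset\reach{t}{S_2}$ inside $\ov{B}_h(x_0)$. In both cases $\reach{t}{S_2}\cap\ov{B}_h(x_0)\neq\emptyset$. The first case is exactly where the closed ball of radius $h$ in the definitions of $h$-envelopment and of $\metb{h}{\cdot}{\cdot}$ fit together.

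I expect the main obstacle to be the bookkeeping around the truncation threshold in the first step, rather than this geometric argument.
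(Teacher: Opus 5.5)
Your core claim is correct and is essentially the paper's argument: for $t\le\timebound{h}{x_0}{S_2}$, $\reach{t}{S_1}\cap\ov{B}_h(x_0)\neq\emptyset$ forces $\reach{t}{S_2}\cap\ov{B}_h(x_0)\neq\emptyset$, via the component $H$ and the segment from $y$ to $x_0$. The paper packages this as the implication $S_3\hsubs S_4\Rightarrow S_3+\ov{B}_r\subset S_4+\ov{B}_{\max\{r,h\}}$, used with $r=h$.

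The gap is the truncation, which you flag but do not resolve. Your bound $\dist(x_0,S_1)\ge\dist(x_0,S_2)-h$ only gives $\timebound{h}{x_0}{S_1}\ge\timebound{h}{x_0}{S_2}-\vmineff^{-1}h$. No adjustment of $C$ can repair this, because both thresholds carry the same term $Ch$; their difference is exactly $\vmineff^{-1}(\dist(x_0,S_1)-\dist(x_0,S_2))$. Your fallback of a common truncation level changes the statement. Both of your cases silently presuppose $\timebound{h}{x_0}{S_1}\ge\timebound{h}{x_0}{S_2}$. In the case $\tau_1>T$ you write $\metb{h}{x_0}{S_1}=T$. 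In the case $\tau_1\le T$, passing from $\tau_2\le\tau_1$ to the truncated inequality fails if $\timebound{h}{x_0}{S_1}<\min\{\tau_1,\tau_2\}$.

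The missing step is a dichotomy, which your own segment argument supplies at $t=0$, where $\reach{0}{S_i}=S_i$.
\begin{itemize}
\item If $\dist(x_0,S_1)\le h$, then $\tau_1=0$. Your core claim at $t=0$ gives $\tau_2=0$, so $\metb{h}{x_0}{S_2}=0$ and there is nothing to prove.
\item If $r\coloneqq\dist(x_0,S_1)>h$ and $r>\rho$, then $r>\rho\ge\dist(x_0,S_2)$ directly.
\item If $h<r\le\rho$, a nearest point $y\in S_1$ lies in $\ov{B}_\rho(x_0)$, so the hypothesis at $t=0$ applies to it. Either $y\in S_2$; or $x_0$ lies in the component $H\ni y$ of $\comp{S_2}$, giving $\dist(x_0,S_2)\le h<r$; or the segment $[y,x_0]$ meets $\partial H\subset S_2$, giving $\dist(x_0,S_2)\le r$.
\end{itemize}
In the last two cases $\dist(x_0,S_1)\ge\dist(x_0,S_2)$, hence $\timebound{h}{x_0}{S_1}\ge\timebound{h}{x_0}{S_2}$ with no loss of $h$. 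The point is that $\max\{r,h\}=r$ once $r>h$. This is how the paper closes the proof, and with it your case distinction goes through.

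As a side remark, monotonicity of $t\mapsto\reach{t}{S}$ does not hold in general and is not needed for the minima to be attained.
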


\subsection{Examples}\label{subs_sett_ex}
In this subsection, we list a range of settings for which the assumptions from Subsection \ref{subs_sett_AF} are applicable.

\begin{example}[In the absence of actual obstacles as in \cite{ArCa18}]
Let $\Pm$ satisfy Assumptions \ref{sett_aP_stat} and \ref{sett_aP_fin}.
In addition, for almost every $(A,F)\in\Omega$ let 
\begin{align*}
	A(x,\xi)	&=\Id-\frac{\xi}{\abs{\xi}}\otimes\frac{\xi}{\abs{\xi}},
	&
	F(x,\xi)	&=F(x),
\end{align*}
which corresponds to mean curvature flow with an additional forcing of $F(\cdot)$.
As shown in \cite{ArCa18}, if we have $\delta>0$ such that
\begin{equation*}
	\inf_{x\in\Rd}\pp{F(x)^2-(d-1)\abs{DF(x)}}\geq \delta>0
	\quad\text{$\Pm$-almost surely},
\end{equation*}
then there exists $R_0>0$ and a guaranteed minimum speed $\vmineff>0$, which applies to any $R_0$-fat set $S\subset\Rd$, that is 
\begin{equation*}
	\reach{t+s}{S}\supset \reach{t}{S}+\vmineff s\ov{B}_1
	\quad\text{$\Pm$-almost surely for all $t,s\geq 0$}.
\end{equation*}
Hence, Assumption \ref{veff_aP_star} is satisfied with the stable approximations given by the sets themselves. 
Moreover, Assumption \ref{sett_aP_ani} also is satisfied, see Remark \ref{sett_rem_hoel}.

In this special case and additionally requiring the isotropy from Assumption \ref{sett_aP_ani}, our result hence improves the error rate for the bias from \cite{ArCa18}.
Of course, \cite{ArCa18} covers a much more general setting and there had been no attempt to optimize this error rate. 
We do not believe that our error rate is optimal either.
\end{example}

\begin{example}[First passage percolation in a continuous environment]
For $A=0$ and direction independent forcing $F(x,\xi)=F(x)$, our setting corresponds to first passage percolation on continuous space, that is Riemannian first passage percolation as introduced in \cite{LaGW10}.
In this case, the arrival time $\met{}{x_0}{S}$ can be seen as the fastest time at which $x_0$ can be reached via an arbitrary path from a closed set $S\subset\Rd$, with the speed along the path at some point $x\in\Rd$ given by $F(x)$, see \cite{LiSo05b, ADH17}.
Areas with $F\leq 0$ would be impassable in this case.

Hence, the best possible bounds on fluctuations and bias in our general setting can not be better than the best possible bounds for first passage percolation.
\end{example}

\begin{example}[A box criterion for $d=2$]\label{ex_ex_box}
In the companion paper \cite{FI26}, we will provide a practical, constructive criterion which we prove to be sufficient for Assumption \ref{veff_aP_star} in dimension $d=2$ if Assumptions \ref{sett_aP_stat} and \ref{sett_aP_fin} hold.
In order to check the criterion, it suffices to sample the coefficient field in a thin, long box $\Qup{r,w}=[0,w]\times[-w,r]$ with width $w>0$ and height $r>1$ not necessarily much larger than the range of dependence.
If we did not have the isotropy from Assumption \ref{sett_aP_ani}, then we would also have to sample the coefficient field in a 90$^\circ$ rotation of the box.

The criterion is satisfied if it is likely enough that there is a stable set in the bottom of the box, which at some point reaches the top of the box when evolving with respect to $(A,F)$ restricted to the box, see Figure \ref{ex_fig_box}. 

To be more precise, we define the event
\begin{align*}
	\Eup{r,w}	\coloneqq
				\Big\lbrace	(A,F)\in\Omega	\,:\,	
						&\text{there is a w.r.t. $(A,F)$ stable set $S\subset[0,w]\times[-w,0]$}\\
				&\text{with }\reach[{(A,F)_{\Qup{r,w}}}]{t}{S}\cap [0,w]\times\Bp{r} \neq\emptyset \text{ for some }t>0		
				\Big\rbrace,
\end{align*}
where $(A,F)_{\Qup{r,w}}$ is the restriction of $(A,F)$ to the box $\Qup{r,w}$ as in Definition \ref{restr_def}.
The criterion is satisfied if for some $p_{r,w}\geq 0$ 
\begin{align}\label{ex_eq_box}
	\PM{\Eup{r,w}}\geq 1-p_{r,w},
\end{align}
where $p_{r,w}>0$ for sufficiently long and thin boxes.
\end{example}

\begin{figure}[h]
\centering
\includegraphics[width=0.25\textwidth]{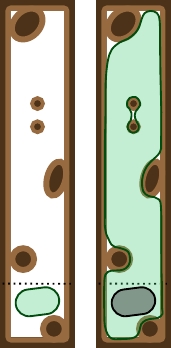}
\caption{	An illustration of our box criterion (Example \ref{ex_ex_box}) sufficient for Assumption \ref{veff_aP_star} in dimension $d=2$.
			Left: A rectangular box with a sample of the obstacles in the interior (depicted in brown) and the artificial obstacle from the restriction surrounding the boundary (also depicted in brown) with an initial stable set $S$ in the bottom of the box (depicted in green).
			Right: After some time $t$, we require the evolution $\reach{t}{S}$ (depicted in green) of the set $S$ to have reached the top of the box.
}
\label{ex_fig_box}
\end{figure}

\begin{example}[Poisson-distributed obstacles]\label{veff_ex_Poisson}
An explicit setting in which our results are applicable is given by forced mean curvature flow through a field of obstacles distributed according to a Poisson point process with small enough intensity. 

To be more precise, we assume that our coefficient fields are given by
\begin{align*}
	A(x,\xi)	&=\Id-\frac{\xi}{\abs{\xi}}\otimes\frac{\xi}{\abs{\xi}},
	&
	F(x,\xi)	&=F_{uni}-F_{obst}(x),
\end{align*}
where $F_{uni}>0$ represents a uniform forcing and where $F_{obst}$ representing the obstacles is the maximum of compactly supported, symmetric, smooth but steep bumps with their centers chosen according to the Poisson point process, see Theorem \ref{intro_thm_Poisson}.

Assumptions \ref{sett_aP_stat}, \ref{sett_aP_fin}, \ref{sett_aP_Aconst}, and \ref{sett_aP_ani} are satisfied for all $F_{uni}>0$, with the finite range of dependence given by the diameter of the obstacles.
If Assumption \ref{veff_aP_star} is satisfied for some $F_{uni}^*>0$, then with Remark \ref{sett_rem_hoel} Assumption \ref{sett_aP_gen} will be satisfied for Lebesgue-almost every $F_{uni}>F_{uni}^*$.

For $d=2$, we can check that these coefficient fields satisfy the box criterion from Example \ref{ex_ex_box} and hence Assumption \ref{veff_aP_star} if $F_{uni}$ is large enough compared to the intensity or average density $\rho$ of the Poisson point process.
That is, we can show that \eqref{ex_eq_box} holds for sufficiently large boxes if $F_{uni}\geq C\sqrt{\rho}$ for some $C>0$ with the support of the obstacles small enough, where $C$ is independent of the individual strength of the obstacles.
In particular, \eqref{ex_eq_box} holds even if the obstacles are chosen to individually be impenetrable for a given uniform forcing. 

For $d\geq 3$, we also expect that Assumption \ref{veff_aP_star} is satisfied if $F_{uni}$ is large enough compared to the intensity of the Poisson point process, based on the properties of sufficiently wide obstacle free areas.
Exploring this specific setting is left for future research.
\end{example}

\section{Main result and the strategy for the proof}\label{s_str}

\subsection{The main results}\label{subs_str_main}
Our main result can be split into two parts:
First the fluctuation bounds, for which we need much less assumptions and the arguments of which are applicable to a much broader range of set evolutions, as long as they satisfy a comparison principle.
Second the homogenization error, for which we need the remaining more specific requirements to make up for the lack of regularity from not having a guaranteed minimum speed of propagation.

\begin{theorem}[Main result for fluctuation bounds]\label{main_thm_fluct}
Let $(A,F)\in\Omega$ be a random coefficient field with a respective probability measure $\Pm$ on $(\Omega,\F(\Rd))$, which
\begin{itemize}
\item	has a finite range of dependence (Assumption \ref{sett_aP_fin}), and
\item	admits an effective minimum speed of propagation $\vmineff$ with overwhelming probability (Assumption \ref{veff_aP_star}).
\end{itemize}
Choose $0<\vtheta<1$ for the scaling of the target areas and set $h(r)\coloneqq r^{\vtheta}$.
Let $x_0\in\Rd$ and $S\subset\Rd$ have bounded boundary and be a fat set, as in Definition \ref{veff_def_fatstab}.
Set $R=\dist\pp{x_0,S}$.
Then there exists $\eps_0=\eps_0(\data,\vtheta,R,\diam(\partial S))>0$ such that for all $0<\eps\leq \eps_0$ the fluctuations of the truncated arrival times on the scale $\eps^{-1}$ can be bounded by
\begin{multline*}
	\PM{\abs{\eps\met[(A,F)]{h(\eps^{-1}R)}{\eps^{-1}x_0}{\eps^{-1}S}
		-	\EV{\eps\met[(A,F)]{h(\eps^{-1}R)}{\eps^{-1}x_0}{\eps^{-1}S}}}\geq C R \pp{\frac{\eps}{R}}^{\frac{1-\vtheta}{2}}\lambda}\\
	\leq		\max\Bp{	C	\exp\pp{-\lambda},	
					C	\exp\pp{-\frac{R^{\vtheta\ratespeed}\eps^{-\vtheta\ratespeed}}{C\log\pp{\pp{\frac{R}{\eps}}^{1-\vtheta}}}}}
\end{multline*}
for any $\lambda>0$ and for some $C=C(\data)>0$.
Here, $\ratespeed>0$ is the exponential rate for the probability bounds from Assumption \ref{veff_aP_star}.
\end{theorem}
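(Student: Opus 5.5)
We prove Theorem \ref{main_thm_fluct} by Kesten's martingale method of bounded differences, in the form used for the metric problem in \cite{ArCa18}. Rescaling, it suffices to bound the fluctuations of $m\coloneqq \met{h}{x_0'}{S'}$ with $x_0'=\eps^{-1}x_0$, $S'=\eps^{-1}S$, $h=h(\eps^{-1}R)$ and $L\coloneqq \eps^{-1}R$ at the level $C L^{\frac{1+\vtheta}{2}}\lambda$. Multiplying by $\eps$ gives exactly $CR(\eps/R)^{\frac{1-\vtheta}{2}}\lambda$.

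\textbf{Filtration.} Since $m\leq \timebound{h}{x_0'}{S'}\lesssim L$, we discretize time into $N\sim L/h = (R/\eps)^{1-\vtheta}$ steps of length $\tau\sim \vmineff^{-1}h$. We let $\G_k$ be the $\sigma$-algebra generated by $\F$ restricted to a neighbourhood $\reach{k\tau}{S'}+\Cmeas\ov{B}_{Ch}$ of the evolved set up to time $k\tau$, together with the event that the target is reached before $k\tau$. To make this rigorous (stopping-set measurability) we would use the measurability criterion from Section \ref{s_pre}, possibly replacing $\reach{k\tau}{S'}$ by the evolution of the coefficient field restricted to a slightly enlarged region.

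\textbf{Increment bound.} Set $M_k=\EV[\G_k]{m}$, so that $m-\EV{m}=\sum_{k}(M_k-M_{k-1})$. We want $|M_k-M_{k-1}|\leq Ch$ on a good event $E_k$, where $E_k=\Espeed{h}(M)$ for $M$ a neighbourhood of the region explored up to time $T_h$, which has diameter $\lesssim L$. By \eqref{veff_eq_probBound}, $\Pm[\comp{E_k}]\leq C(L/h)^d\exp(-\cspeed h^{\ratespeed})$.

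On $E_k$, knowing the field near $\reach{k\tau}{S'}$ determines $m$ up to $O(h)$, for the following reason. We take the $h$-fat envelope of $\reach{k\tau}{S'}$ and its stable $(h,\cstable)$-approximation. The effective minimum speed \eqref{veff_eq_effspeed}, combined with comparison and Lemma \ref{veff_lem_henv}, sandwiches $m$ between $k\tau+m_h(\cdot)$ computed from inner and outer approximations that differ by $O(h)$ in time. The stability (non-shrinking) property makes the "restart" at time $k\tau$ comparable to the original evolution. The finite range of dependence \ref{sett_aP_fin} then makes the future part independent of $\G_{k-1}$ up to this $O(h)$ error. Together these give the conditional increment bound.

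\textbf{Main obstacle: the modified Azuma inequality.} The increments are bounded only with high probability, not almost surely, so the plain Azuma inequality does not apply. This replacement (Appendix \ref{s_azuma}) is where I expect the main difficulty. We truncate the increments $D_k=M_k-M_{k-1}$ to $\tilde D_k=D_k\charfun[\{|D_k|\leq Ch\}]$ and recentre to $\tilde D_k-\EV[\G_{k-1}]{\tilde D_k}$, which is a martingale difference with increments bounded by $2Ch$. Azuma then gives tail $\exp(-\lambda^2/C)$, which is at least as good as $\exp(-\lambda)$, at scale $h\sqrt N\sim L^{\frac{1+\vtheta}{2}}$.

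The error $\sum_k(D_k-\tilde D_k)$ is controlled in two pieces. First, $|D_k|\leq CL$ always, because $0\leq m\leq T_h$. Second, $\Pm[|D_k|>Ch]$ is bounded via conditional probabilities of $\comp{E}$. Conditional expectations of indicators of bad events are handled by Markov's inequality at threshold $\exp(-\frac{c}{2}h^{\ratespeed})$. A union bound over $N$ steps, with the polynomial factors in $L/h$ absorbed into the exponential by increasing constants, produces the second term $C\exp\bigl(-R^{\vtheta\ratespeed}\eps^{-\vtheta\ratespeed}/(C\log((R/\eps)^{1-\vtheta}))\bigr)$. The logarithm arises from absorbing the factor $N$, and we would also optimise the truncation level over a dyadic range. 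Finally, $\eps_0$ is chosen so that $h\geq h_0$ and $S'$ is $h$-fat, which is possible since $S$ is fat; this makes Assumption \ref{veff_aP_star} applicable.
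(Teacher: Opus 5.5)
Your architecture is the paper's. Theorem~\ref{main_thm_fluct} is Proposition~\ref{fluct_prop} after rescaling ($h=L^{\vtheta}$, $N\sim L^{1-\vtheta}$). That proposition is proved by a Kesten martingale over the explored environment, increments of size $Ch$ on an event $E_{\mathcal M}$ of overwhelming probability (Proposition~\ref{fluct_prop_MtMs}), and a substitute for Azuma's inequality.

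\textbf{The concentration step is where you genuinely differ.} The paper proves by induction the moment bound of Proposition~\ref{azuma_prop_alternative}, $\mathbb{E}[\chi_{E}X_N^{2k}]\le (2k)!N^k+C(2k)!N^kT^{2k}\,\mathbb{P}[E^c]$, and then optimises $k$. In the application the increments are normalised by $Ch$, so $T$ effectively becomes $T/h\sim N$. The order is then limited by $(T/h)^{2k}\mathbb{P}[E^c]\lesssim 1$, that is $k\lesssim h^{\ratespeed}/\log N$. This yields only exponential tails and the cap $\exp(-ch^{\ratespeed}/\log N)$. So the logarithm comes from restricting the moment order, not from a union bound over $N$ steps as you suggest.

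Your truncation route is also correct, and in fact sharper:
\begin{itemize}
\item With $\tilde D_k=D_k\chi_{\{|D_k|\le Ch\}}$ you have $\{|D_k|>Ch\}\subset E_{\mathcal M}^c$, hence $|\mathbb{E}[\tilde D_k\mid\mathcal G_{k-1}]|\le 2T\,\mathbb{E}[\chi_{E_{\mathcal M}^c}\mid\mathcal G_{k-1}]$.
\item Lemma~\ref{azuma_lem_charfun} at level $\delta=\mathbb{P}[E_{\mathcal M}^c]^{1/2}$ makes the total drift at most $2TN\delta\ll h$ under \eqref{fluct_eq_hrateA}, outside an event of probability at most $2\delta$.
\item Classical Azuma on the recentred increments then gives, in the normalisation of the theorem, Gaussian tails $2\exp(-c\lambda^2)$ with cap $C\exp(-ch^{\ratespeed})$ and no logarithm. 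This implies the stated bound, and no dyadic optimisation of the truncation level is needed.
\end{itemize}
The paper's remark that generalising Azuma fails concerns exponential moments of the untruncated martingale (Proposition~\ref{azuma_prop_azuma}). There the term $e^{2sT}\mathbb{P}[E^c]$ forces $s\lesssim h^{\ratespeed}/T$; truncating first sidesteps this.

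\textbf{Your sketch is too loose in the filtration and increment steps, which is where the paper does most of its work.}
\begin{itemize}
\item Defining $\mathcal G_k$ via $\F$ near $\reach{k\tau}{S'}$ does not give an increasing family once sets can shrink. The paper instead uses a countable family of discretised states $K_i$ and events $E_i(t)$ intersected with $\Espeed{h}(\partial S)$. Lemma~\ref{fluct_lem_reachRestr} then shows that, on this event, everything explored before time $t$ lies in $K_i+\CG\ov{B}_h$. Correspondingly, the arrival time is $\mathcal G$-measurable only after multiplying by $\chi_{\Espeed{h}(\partial S)}$ (Lemma~\ref{fluct_lem_measIn}).
\item The field near the current set does not determine $m$ up to $O(h)$. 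Rather, the conditional expectation resolves into deterministic quantities $\mathbb{E}[m_h(x_0;K_i+\Ccond\ov{B}_h)]$. The increment bound comes from comparing these for the states at consecutive times.
\item You cannot restrict to the global good event $\Espeed{h}(M)$ and then invoke finite range of dependence, because that event depends on the explored field. The paper swaps it for the localized event $\Espeed{h}\bigl(\partial(K_i+\Ccond\ov{B}_h)\bigr)$ (Lemma~\ref{fluct_lem_condE}) and pays the swap errors $R_4,R_5$ in Proposition~\ref{fluct_prop_MtMs}.
\end{itemize}
With these devices the increment bound holds on $E_{\mathcal M}$, and your concentration argument then closes the proof.
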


This result follows from Proposition \ref{fluct_prop}, which treats general large scale fluctuations, by carefully plugging in the $\eps$-scaling.

\begin{theorem}[Main result for homogenization]\label{main_thm}
Let $(A,F)\in\Omega$ be a random coefficient field with a respective probability measure $\Pm$ on $(\Omega,\F(\Rd))$, which
\begin{itemize}
\item	is stationary, that is invariant with respect to translations (Assumption \ref{sett_aP_stat}),
\item	has a finite range of dependence (Assumption \ref{sett_aP_fin}), 
\item	has a constant, deterministic second-order coefficient $A$ (Assumption \ref{sett_aP_Aconst}),
\item	is isotropic, that is invariant with respect to rotations (Assumption \ref{sett_aP_ani}),
\item	admits an effective minimum speed of propagation $\vmineff$ with overwhelming probability for all forcings $F+\delF$ with $\delF$ in some interval around $0$ (Assumption \ref{veff_aP_star} for all $(A,F+\delF)$ with $\delF\in \bp{-\delFdel, \delFdel}$ for some $\delFdel>0$), and
\item	generates a candidate $\vhom$ for the homogenized speed that is $\hoel$-H\"older continuous with respect to changing the forcing by a constant (Assumption \ref{sett_aP_gen}).
\end{itemize}
Choose $0<\vtheta<1$ for the scaling of the target areas and set $h(r)\coloneqq r^{\vtheta}$.
Let $x_0\in\Rd$ and $S\subset\Rd$ be a fat set, as in Definition \ref{veff_def_fatstab}.
Set $R=\dist\pp{x_0,S}$.
Then for all $\eps>0$ the bias of the truncated arrival times on the scale $\eps^{-1}$ compared to the arrival times for the homogeneous evolution can be bounded by
\begin{multline*}
	\abs{	\EV{\eps\met[(A,F)]{h(\eps^{-1}R)}{\eps^{-1}x_0}{\eps^{-1}S}}
			-\met[hom,\vhom]{}{x_0}{S}}\\
	\leq		C R \pp{\frac{\eps}{R}}^\alpha \log(\max\Bp{R,2})^{\frac{3-\vtheta}{2}}\log(\eps^{-1})^{\frac{3-\vtheta}{2}}
\end{multline*}
for $C=C(\data,\delFdel, \vtheta)>0$ with $\alpha\coloneqq	\pp{1+4\pp{\frac{1}{1-\vtheta}+\frac{1}{\hoel}}}^{-1}$, where $\hoel$ is the H\"older coefficient from Assumption \ref{sett_aP_gen}.
For $\hoel=1$ we can hence obtain any rate $0<\alpha<\frac{1}{9}$ if we choose $\vtheta>0$ small enough.
\end{theorem}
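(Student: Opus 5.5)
The plan is to reduce the statement to the planar (half-space) problem in the direction of a single unit vector, and then lift the result to general fat sets by an iterated constructive comparison that uses isotropy.

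\textbf{Rescaling.} Set $r\coloneqq \eps^{-1}R$. By scaling, the claim is equivalent to
\[
\Bigl|\EV{\met{h(r)}{\eps^{-1}x_0}{\eps^{-1}S}} - \vhom^{-1} r\Bigr| \lesssim r^{1-\alpha}\,\mathrm{polylog}.
\]
Here I use that under \ref{sett_aP_ani} the homogenized evolution is a constant-speed motion. Hence $\met[hom,\vhom]{}{x_0}{S}=\vhom^{-1}\dist(x_0,S)$ by the Hopf--Lax representation of \eqref{intro_eq_HomM_ls}.

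\textbf{Step 1: quantitative planar rate.} First I establish a rate for the half-space approximations $\hsma{e}{\genhoel}{h(r)}{r}$. The approach is subadditive:
\begin{itemize}
\item Split a distance $r$ into $k$ segments of length $r/k$.
\item Use stable $(h,\cstable)$-approximations and the effective minimum speed from \ref{veff_aP_star} to restart the evolution at each intermediate front. Lemma \ref{veff_lem_henv} then gives an approximate sub- and superadditivity of expected truncated arrival times.
\item The additive error per segment comes from three sources: the envelopment scale $h$; the fluctuation bound of Proposition \ref{fluct_prop}, of order $(r/k)^{(1+\vtheta)/2}$ up to logs, needed to replace a random intermediate front by a deterministic flat one; and an ``approximate finite speed of propagation of perturbations''.
\item The last item is only available after raising the forcing to $F+\delF$, so each comparison actually passes between $\vhom[0]$ and $\vhom[\pm\delF]$. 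This costs $\Choel|\delF|^{\hoel}$ per unit length by \ref{sett_aP_gen}, and \ref{veff_aP_star} for $(A,F\pm\delF)$ with $|\delF|<\delFdel$ keeps all tools available.
\end{itemize}
Iterating dyadically gives, for the planar problem,
\[
\Bigl|\tfrac1r\EV{\metb{h(r)}{re}{\hsma{e}{\genhoel}{h(r)}{r}}} - \vhom^{-1}\Bigr| \lesssim r^{-\gamma}\log^{c} r + |\delF|^{\hoel} + (\text{perturbation error in }\delF).
\]

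\textbf{Step 2: lift to fat sets.} I partition the path from $S$ to $x_0$ into $N$ steps of length $L=r/N$.
\begin{itemize}
\item \emph{Upper bound (not arriving too late).} Pick $y\in\partial(\eps^{-1}S)$ with $|y-\eps^{-1}x_0|=r$. By fatness there is a ball inside $\eps^{-1}S$ touching $y$. At each step, inscribe a planar disk approximation of width $L^{\genhoel}$-type scale into the current (stable approximation of the) evolved set and apply Step 1 in direction $e=(\eps^{-1}x_0-y)/r$.
\item \emph{Lower bound (not arriving too early).} Since $\eps^{-1}S\subset \comp{B_r(\eps^{-1}x_0)}$, I compare with the evolution of the complement of a ball. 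After each time step $\approx L/\vhom$, the front must be contained in a slightly smaller complement of a ball. This is proved locally by comparing with half-space approximations tangent to the current sphere, whose arrival times are controlled by Step 1 through comparison and Lemma \ref{veff_lem_henv}. Stationarity and the union bound from \eqref{veff_eq_probBound} over $\sim (r/L)^{d}$ tangent planes handle all directions at once.
\item Each step incurs a geometric error $\sim L^2/r$ from the curvature of the sphere against flat disks, plus $L\,(L^{-\gamma}+|\delF|^{\hoel})$ and the perturbation cost in $\delF$.
\end{itemize}
Summing over $N$ steps and optimizing in $L$ and $\delF$ should reproduce $\alpha=(1+4(\tfrac1{1-\vtheta}+\tfrac1\hoel))^{-1}$. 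The logarithmic factors come from the union bounds and from Proposition \ref{fluct_prop}.

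\textbf{Main obstacle.} I expect the hardest step to be the approximate finite speed of propagation of perturbations, used in both Step 1 and the lower bound. Without a genuine minimum speed, a distant modification of the initial set or the coefficients might influence the front through pinned regions. My first attempt would be to show that, after raising $F$ by $\delF$, any set whose evolution under $(A,F)$ is enveloped by the stable approximation is overtaken within time $O(h/\delF)$ by the $(A,F+\delF)$-evolution of a slightly smaller set. I would prove this by a comparison argument on stable sets, where the extra forcing $\delF$ absorbs the local curvature defects on scale $h$. The resulting trade-off between $\delF$ and the time lag is exactly what makes the exponent depend on $\hoel$.
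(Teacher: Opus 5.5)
Your overall architecture matches the paper: sandwich $\eps^{-1}S$ between a ball and the complement of a ball, obtain a planar rate for half-space approximations from approximate sub-/superadditivity at the price of a forcing increase plus Hölder continuity, then lift via tangent disks and union bounds combined with Proposition \ref{fluct_prop}. However, two load-bearing pieces are missing or wrong.

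\textbf{First gap: the far-field comparison.} What is needed is not a local statement of the form ``overtaken within time $O(h/\delF)$''. Such a statement never refers to the region where the disk and the true set differ. The required result is Proposition \ref{inf_prop} (and its half-ball version, Proposition \ref{inf_prop_half}). It says: if a stable $S_1$ contains $(S_2+\ov{B}_h)\cap\ov{B}_R$, then $\reach[(A,F)]{t}{S_2}\cap\ov{B}_r\subset\reach[(A,F+\delF)]{t}{S_1}$ for $t\le T$, provided $\delF\gtrsim T^{3/4}(R-1-h-r)^{-1/2}$.

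A local absorption argument cannot control influence coming from far away. The curvature term gives no finite speed of propagation of perturbations, and without a minimum speed there is no Lipschitz bound on the level-set function. The paper proves the estimate by doubling variables with the following ingredients:
\begin{itemize}
\item the penalization $|x-y|^4/(4\delta)$;
\item a spatial cutoff $\vphi_\eps$ with $|\nabla\vphi_\eps|,|D^2\vphi_\eps|\le\eps\sim(R-1-h-r)^{-1}$;
\item the one-sided bound $\partial_t u_1\le 0$, coming from the decreasing level-set solution of a stable set (Lemma \ref{inf_lem_stableExt}), which is also where spatially constant $A$ is used.
\end{itemize}
Taking $T\sim r$ and $R\sim r^\beta$ in this relation forces disks of width $C_\beta r^\beta$ with $\beta>\tfrac32$ and the forcing cost $f(r)=r^{3/4-\beta/2}$. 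So $\delF$ is not a free parameter you can optimize. Without this quantitative link you cannot obtain the superadditivity half of your Step 1, nor the exponent $\alpha$.

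\textbf{Second gap: the error accounting when lifting to fat sets.} A flat disk of width $\sim L^\beta$ aligned with a sphere of current radius $R_n$ leaves a gap $\rho\sim L^{2\beta}/R_n$, not $L^2/r$. Moreover $R_n$ ranges over $[r^\vtheta,r]$: the ball starts at radius $h(r)$, and the hole ends there.

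With a uniform step $L=r/N$, the disk does not even fit into the initial ball or the final hole unless $L^\beta\lesssim r^\vtheta$. That constraint yields a rate that degenerates as $\vtheta\to0$, instead of one approaching $\tfrac19$.

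The paper fixes this as follows:
\begin{itemize}
\item choose $\beta=\tfrac32+\tfrac{1-\vtheta}{\hoel}$ to balance the forcing error $r f(r)^{\hoel}$ against the fluctuation error $\sqrt{r^{1+\vtheta}}\log r$;
\item use adaptive steps $r_{n+1}\approx R_n^{\gamma}$ with $\gamma=(2\beta-\tfrac{1+\vtheta}{2})^{-1}$, so every step has balanced relative error;
\item this gives $N\lesssim R_\eps^{1-\gamma}\log R_\eps$ steps, and Jensen's inequality bounds the total error by $R_\eps^{1-\gamma\frac{1-\vtheta}{2}}$ up to logarithms, which is exactly $\alpha=\gamma\tfrac{1-\vtheta}{2}$.
\end{itemize}

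Also, for the upper bound, tracking a single ray from $y$ is not enough. To inscribe a disk around the ray at step $n$, you must know the evolved set contains a whole cap, and this requirement propagates back to control of the entire sphere. The paper works with $t_n=\sup_{y\in\partial B_{R_n}}$ of the truncated arrival times from $\ov{B}_{R_{n-1}}$. It controls this supremum by a union bound over a finite grid of translated disks combined with Proposition \ref{fluct_prop}.
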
 

This result will be proven in Section \ref{s_hom}, using the fluctuation bounds from Section \ref{s_fluct}, the control on the propagation of influence via increasing the forcing from Section \ref{s_inf} and the quantitative convergence of the arrival times for the half-space approximations from Section \ref{s_lin}, which are used to define the homogenized speed $\vhom>0$.

\subsection{Overall strategy for the proof}
We give a rough overview of the overall proof and then go into more detail regarding the major sections in the following paragraphs.

In Section \ref{s_fluct}, we will  prove Theorem \ref{main_thm_fluct}, controlling the fluctuations of truncated arrival times for sets with bounded boundary.
This estimate will be heavily used for the overall homogenization proof.
For this section we only need the finite range of dependence from Assumption \ref{sett_aP_fin} and the very likely guaranteed minimum speed and stability from Assumption \ref{veff_aP_star}.
The proof is based on the martingale method of bounded differences, for which a quantity is written as the endpoint of a martingale with bounded increments and Azuma's inequality -- a martingale concentration inequality -- is applied.
We provide a detailed description of this method for set evolutions in Subsection \ref{subs_str_fluct}. 
However, in our setting we only obtain \textit{likely} bounded martingale differences.
We exploit these bounds with an alternative martingale concentration inequality, which we introduce and prove in Appendix \ref{s_azuma}.

In Section \ref{s_inf} we introduce the other major tool for the overall homogenization proof.
We provide a comparison principle that allows us to ignore the propagation of far-away influences at the cost of increasing the forcing by a small constant for one of the sets.
This is the only major part of the proof for which we have to work on the level of the specific level-set equation \eqref{intro_eq_uls}.
Since without an actual minimum speed of propagation we lack regularity on small scales, we are not able to deal with space-dependent second-order coefficients, such that from here on we have to use Assumption \ref{sett_aP_Aconst}, that is that $A(\cdot,\cdot)$ is constant in space and hence deterministic.

Next, we introduce the homogenized speed in Section \ref{s_lin}.
In the already established frameworks, this could be translated as the average speed corresponding to large-scale arrival times for half spaces.
In our case, we have to replace the half spaces with bounded half-space approximations in order to apply our fluctuation bounds.
Combining these fluctuation bounds from Section \ref{s_fluct} with the comparison principle from Section \ref{s_inf} and using the stationarity from Assumption \ref{sett_aP_stat}, we show that the large scale arrival times behave approximately linearly and hence obtain a quantitative convergence result for the averages.
However, as in Section \ref{s_inf} we are only able to do so at the cost of slightly changing the forcing. 
To properly apply this intermediate result, we will hence from here on use Assumption \ref{sett_aP_gen}, quantifying the continuity of the homogenized speed with respect to changing the forcing by a constant.
Hence, at the end of this section we prove Remark \ref{sett_rem_hoel} \eqref{sett_item_hoel_1}, that is that the choice of half-space approximations is arbitrary with respect to Assumption \ref{sett_aP_gen}.

Finally, we look at general fat sets in Section \ref{s_hom} to prove Theorem \ref{main_thm}.
On a mesoscopic level, we align the half-space approximations from Section \ref{s_lin} with the interface of the homogeneous first-order evolution in order to compare it to the heterogeneous second-order evolution. 
We use the isotropy from Assumption \ref{sett_aP_ani} to ensure the smoothness of the homogeneous evolution, allowing us to properly align the half-space approximations.
With this method we can lift the quantitative convergence result from Section \ref{s_lin} to general sets fat sets and obtain Theorem \ref{main_thm}.

\subsubsection{Fluctuation bounds for large-scale arrival times}\label{subsub_str_fluct}
In Section \ref{s_fluct} we will provide fluctuation bounds for large sets with bounded boundaries.
Let $h\geq h_0$ and $S\subset\Rd$ be an $h$-fat set with $x_0\in\Rd$ such that $\diam\pp{\partial S}$ and $\dist\pp{x_0,S}$ are not exponentially larger than $h$.
Then for the truncated arrival time for $S$ in the target area $\ov{B}_h(x_0)$ we obtain
\begin{equation*}
	\PM{\abs{\metb{h}{x_0}{S}-\EV{\metb{h}{x_0}{S}}}\geq \lambda}
	\leq		C\exp\pp{-\frac{\lambda}{C\sqrt{h\dist(x_0,S)}}}
	\quad\text{for }\lambda\leq \lambda_0
\end{equation*}
with $\lambda_0=h^\ratespeed\frac{\sqrt{h\dist(x_0,S)}}{\log\pp{\frac{\dist\pp{x_0,S}}{h}}}$, where $\ratespeed>0$ is the exponential rate from Assumption \ref{veff_aP_star}.
That is, we have exponential tail bounds which are capped essentially by the probability bounds from Assumption \ref{veff_aP_star}.
This cap is natural since we do not expect any control for coefficient fields in areas with no effective minimum speed of propagation on the scale $h$.

The proof for these bounds is based on the martingale method of bounded differences, using the framework developed in \cite{ACS14, ArCa14, ArCa18} as orientation.  
The truncated arrival time can be viewed as the endpoint of a martingale, whose increments are bounded to then apply a concentration result.
This martingale is the conditional expectation of the truncated arrival time with respect to the environment explored by the set evolution at different times.
We will describe this established framework in more detail in Subsection \ref{subs_str_fluct}.
The novelty of our result comes from applying this framework in a setting with no guaranteed minimum speed of propagation. 
This requires two types of innovation:
First, we use our concepts of an effective minimum speed and stable approximations to cover all the relevant use cases of an actual minimum speed on larger scales with some technical adjustments.
Second, we have no full guarantee that there actually are stable approximations and that the effective minimum speed is applicable, only a very high probability.
This yields bounds for the martingale increments which do not hold almost surely as usual, but only on a set of very high probability. 
We hence need a replacement for Azuma's inequality, the concentration result via which the fluctuation bounds are usually obtained.
We provide a suitable replacement in Appendix \ref{s_azuma}, via which we obtain the result.

Note that this method is applicable to much more general set evolutions, there are only four prerequisites:
\begin{itemize}
\item	The evolution admits a comparison principle.
\item	The range of dependence for the coefficient field is finite (Assumption \ref{sett_aP_fin}).
\item	With overwhelming probability, stable approximations exist and an effective minimum speed of propagation is applicable (Assumption \ref{veff_aP_star}).
\item	The set evolution only depends on the values of the coefficient field on the interface or in a small neighborhood of the overall interface.
\end{itemize}

\subsubsection{Bounding the propagation of influence at the cost of increasing the forcing}\label{subsub_str_inf}
Our building blocks for homogenizing the evolution of general large-scale sets will be a range of standardized disks.
Later in Section \ref{s_lin}, we will obtain good control on the evolution close to the center of these disks. 
For the final homogenization argument, we will align these disks with the boundary of much larger sets in order to extend the control on their evolution to arbitrary sets.
Therefore, we need a comparison principle for the evolution close to the center of the disk, which allows us to ignore the relation between the disk and the general set far away from this area.

In Section \ref{s_inf} we will show that for such a local comparison we can indeed ignore far away influences but at the cost of slightly increasing the forcing.
More precisely, we consider a stable set $S_1$, that is `ahead' of a second set $S_2$ within a large ball of radius $R>0$ -- outside of this ball, $S_2$ might be much larger than $S_1$, see Figure \ref{inf_fig_ball}. 
We then show that within in a much smaller ball of radius $r>0$ and for a limited time $T>0$, the evolution of $S_1$ stays ahead.
However, this only works at the cost of increasing the forcing in the evolution of $S_1$. 
More precisely, if for some $h\geq 1$
\begin{align*}
 	S_1							&\supset	\pp{S_2+\ov{B}_h	}\cap \ov{B}_R,\\
\intertext{then}
	\reach[(A,F+\delF)]{t}{S_1}	&\supset	\reach[(A,F)]{t}{S_2}\cap \ov{B}_r	
	&&\text{for all }0\leq t\leq T
\end{align*}
if the change to the forcing satisfies
\begin{align*}
	\delF\geq	C\frac{T^\frac{3}{4}}{(R-h-r)^{\frac{1}{2}}}.
\end{align*}
Since we are interested in truncated arrival times on the scale $r$, due to the truncation we only need to compare the sets up to time $T\approx r$. 
If we choose $R\approx r^\beta>>r,h$ for some $\beta\geq 1$, then we need $\delF\gtrsim r^{\frac{3}{4}-\frac{\beta}{2}}$.
For only a small change of the forcing to be sufficient, we hence require $\beta>\frac{3}{2}$ for the scaling of the larger radius.

The same result holds true if we replace the balls $\ov{B}_r$ and $\ov{B}_R$ with the half-balls $\ov{B}_r\cap\hsp{e}$ and $\ov{B}_R\cap\hsp{e}$ with $\hsp{e}\coloneqq \Bp{x\in\Rd\,:\,x\cdot e\geq 0}$ for $e\in\Sd$.
In this case we have to assume in addition that $S_1$ covers the boundary corresponding to the half-space, that is
\begin{equation*}
	S_1	\supset	\pp{\partial\hsp{e}+\ov{B}_h}\cap \ov{B}_R.
\end{equation*}
The proof for these results is an adaptation of the proof for the comparison principle.
For this, it is crucial that the locally large set is stable, which for the  solution of the level-set equation implies increasing sub-level sets. 
Thus, the solution is decreasing and we obtain a one-sided bound for the time derivative.
This is the only way for us to impose any control on the time derivatives of the level set evolution.
If we knew that there was a general minimum speed of propagation as in the setting from \cite{ArCa18}, then we would obtain a decreasing, Lipschitz-continuous solution.
Since we lack this regularity, we can not deal with space dependent second order terms and additionally need to increase the forcing for the evolution of the stable set to compensate.
Hence from here on we require Assumption \ref{sett_aP_Aconst}, that is that the second-order coefficient $A$ is constant in space and thus deterministic due to the stationarity.

\subsubsection{Introduction of the homogenized speed and quantitative convergence bounds.}\label{subsub_str_lin}
Next, in Section \ref{s_lin} we introduce our candidate for the homogenized speed $\vhom$, for which we need both the fluctuation bounds from Section \ref{s_fluct} and the bounds on the propagation of influence from Section \ref{s_inf} corresponding to the Subsections \ref{subsub_str_fluct} and \ref{subsub_str_inf}.
We would have liked to straight-up define $\vhom$ as the average large scale speed for the evolution of the half-spaces, that is
\begin{equation}\label{str_eq_vhom}
	\vhom=\vhom(e)	=	\pp{\lim_{r\rightarrow\infty}	\frac{1}{r}\EV{\metb{h(r)}{re}{\hsm{e}}}}^{-1}
\end{equation}
with the half-spaces $\hsm{e}=\Bp{x\in\Rd\,:\,x\cdot e \leq 0}$ for $e\in\Sd$.
The invariance with respect to $e\in\Sd$ is due to the isotropy from Assumption \ref{sett_aP_ani}, although technically we will only use this property later in Section \ref{s_hom}.
If we were able to control the fluctuations of arrival times for sets with unbounded boundary, then we would be able to show the approximate sub-linearity of the truncated arrival times $r\mapsto \met{h(r)}{re}{\hsm{e}}$ for half spaces $\hsm{e}=\Bp{x\in\Rd\,:\,x\cdot e \leq 0}$ with $h(r)<<r$, which would yield the existence of the limit in \eqref{str_eq_vhom}.
If in addition we had approximate super-linearity, we would obtain quantitative bounds for the convergence.

However, we only control the fluctuations for sets with bounded boundary. 
Thus, we approximate the boundary of the half space with disks $r\mapsto \hsma{e}{\beta}{h(r)}{r}$ of radius $\sim r^{\beta}$ for $\beta\geq \frac{3}{2}$ and thickness $\sim h(r)<<r$, see Figure \ref{lin_fig_hs-approx}.
In order to justify the existence of the limit and quantify the convergence in \eqref{str_eq_vhom}, we will replace $\hsm{e}$ with $\hsma{e}{\beta}{h(r)}{r}$.
We then show that the arrival times are approximately linear in $r$, which yields the convergence of the averages and quantitative error bounds.

First, just using the fluctuation bounds from Section \ref{s_fluct}, we obtain approximate superlinearity for the arrival times 
\begin{equation*}
	r\mapsto \met{h(r)}{re}{\hsma{e}{\beta}{h(r)}{r}}
\end{equation*}
in the sense that 
\begin{multline}\label{str_eq_sub}
	\EV{\metb{h(r_1)}{r_1 e}{\hsma{e}{\beta}{h_1(r_1)}{r_1}}}
				+ \EV{\metb{h(r_2)}{r_2 e}{\hsma{e}{\beta}{h_2(r_2)}{r_2}}}\\
	\geq			\EV{\metb{h(r_1+r_2)}{\pp{r_1+r_2}e}{\hsma{e}{\beta}{h(r_1+r_2)}{r_1+r_2}}}
				- C\sqrt{h(r_2)r_2}\log\pp{r_1}.
\end{multline}
The main idea for proving this is to place translations of the small disk $\hsma{e}{\beta}{h_2(r_2)}{r_2}$ in $\hsma{e}{\beta}{h(r_1+r_2)}{r_1+r_2}$ such that the translated target areas $\ov{B}_{h_2(r_2)}(r_2e)$ cover the smaller disk $r_2e+\hsma{e}{\beta}{h_1(r_1)}{r_1}$.
For the first step, we wait until all of these disks have reached the corresponding target area $r_2$ above them, see Figures \ref{lin_fig_Nplus} and \ref{lin_fig_Nplus_smallDisk}. 
For the second step, now that the upper, smaller disk is completely covered, we take the time until this disk reaches the target area $\ov{B}_{h(r_1)}{(r_1+r_2)}$.
By comparison principle, adding step 1 and step 2 takes longer than the time which the larger disk needs to reach the target area $\ov{B}_{h(r_1+r_2)}{(r_1+r_2)}$.
Taking the expected value, we can resolve the maximum for the arrival times over the distance $r_2$ from the first step at the cost of the typical-fluctuation-sized error $\sim C\sqrt{h_2r_2}\log\pp{r_1}$ because due to the exponential tail bounds from Section \ref{s_fluct} it is unlikely that any of the single arrival times is much larger.

We then show the approximate superlinearity.
The idea of this proof is similar to the one of \eqref{str_eq_sub}:
Instead of waiting until the intermediate disk is fully covered and then restarting with that disk, we now restart with that disk as soon as a larger intermediate disk is reached for the first time by one of the lower disks, see Figure \ref{lin_fig_Nminus}.
This yields a faster time than the original arrival time for the larger disk.
However, in this case, in addition to the fluctuation bounds from Section \ref{s_fluct} we need the comparison principle from Section \ref{s_inf}:
This comparison principle allows us to favorably compare the smaller disks $\hsma{e}{\beta}{h_1(r_1)}{r_1}$ and $\hsma{e}{\beta}{h_2(r_2)}{r_2}$ to the large disk at the cost of increasing the forcing for their evolution.
Thus, we obtain 
\begin{multline}\label{str_eq_sup}
	\EV{\metb[(A,F+f(r_1))]{h(r_1)}{r_1 e}{\hsma{e}{\beta}{h_1(r_1)}{r_1}}}
				+ \EV{\metb[(A,F+f(r_2))]{h(r_2)}{r_2 e}{\hsma{e}{\beta}{h_2(r_2)}{r_2}}}\\
	\leq		\EV{\metb[(A,F)]{h(r_1+r_2)}{\pp{r_1+r_2}e}{\hsma{e}{\beta}{h(r_1+r_2)}{r_1+r_2}}}
				+ C\sqrt{h(r_1\!+\!r_2)r_2}\log\pp{r_1},
\end{multline}
where the forcing is changed by the constant $f(r)=r^{-\frac{\beta}{2}+\frac{3}{4}}$, depending on the scaling of the width of the disks.

We now look at the limit of the averaged arrival times, having established the approximate linearity.
Already with just the sub-linearity, we obtain the existence of the limits
\begin{align*}
	\delF\mapsto		\vhom[\delF,\beta]
	\coloneqq	 \pp{\lim_{r\rightarrow\infty}\frac{1}{r}\EV{\metb[(A,F+\delF)]{h(r)}{re}{\hsma{e}{\beta}{h(r)}{r}}}}^{-1},
\end{align*}
where we use that if the underlying assumptions hold for the random coefficient field $(A,F-\delFdel)$, then they hold for all fields $(A,F+\delF)$ with $\delF\geq -\delFdel$.
The reason that we need to consider these limits after changing the forcing by different constants $\delF$ is that the quantitative convergence bounds, which we obtain from \eqref{str_eq_sub} and \eqref{str_eq_sup}, are given by 
\begin{multline}\label{str_eq_linerr}	
	\vhom[\delF,\beta]^{-1}r - C\sqrt{h(r)r}\log(r)	
	\leq 	\EV{\metb[(A,F+\delF)]{h(r)}{re}{\hsma{e}{\beta}{h(r)}{r}}}\\ 
	\leq		\vhom[\delF-Cf(r),\beta]^{-1}r + C\sqrt{h(r)r}\log(r).
\end{multline}
We would hence get proper quantitative bounds for the convergence if we knew that $\delF\mapsto \vhom[\delF,\beta]$ is H\"older continuous, which  Assumption \ref{sett_aP_gen} yields for some $\beta=\genhoel>\frac{3}{2}$. 
We justified this assumption in Remark \ref{sett_rem_hoel}.
At the end of Section \ref{s_lin}, we will show that if Assumption \ref{sett_aP_gen} holds for $\beta=\genhoel$, then it holds for all $\beta>\frac{3}{2}$ via the result from Section \ref{s_inf} bounding the propagation of far-away influence. 
In fact, if Assumption \ref{sett_aP_gen} holds, then the limit in \eqref{str_eq_vhom} for the full half spaces exists and is the same as $\vhom[0,\beta]$ for all $\beta>\frac{3}{2}$.

\subsubsection{Homogenization for general sets using half-space approximations as building blocks}
The final argument for the homogenization result from Theorem \ref{main_thm} is based on using the half-space approximations from Section \ref{s_lin} and Subsection \ref{subsub_str_lin} as building blocks.
The first step is to reduce the case of a general fat set $S\subset\Rd$ to two much simpler sets:
Given a target point $x_0\in\Rd$ with $R=\dist(x_0,S)$, we sandwich $\eps^{-1}S$ between
\begin{equation*}
	\ov{B}_{h(\eps^{-1}R)}(y_\eps)	\subset \eps^{-1}S	\subset	\Rd\setminus B_{\eps^{-1}R}(\eps^{-1}x_0)
\end{equation*}
with $y_\eps$ chosen such that $\dist(\eps^{-1}x_0,\ov{B}_{h(\eps^{-1}R)}(y_\eps))=\eps^{-1}\dist(x_0,S)$, see Figure \ref{hom_fig_sandwich}.
This is possible if we choose $\eps$ small enough such that $S$ is $h(\eps^{-1}R)$-fat, see Definition \ref{veff_def_fatstab}.  
The reason why the ball and the round hole are much simpler to treat is because they have self-similar first-order evolutions:
\begin{align}
	\reach[hom,\vhom]{t}{\ov{B}_{h(\eps^{-1}R)}}	&=	\pp{h(\eps^{-1}R)+\vhom t}\ov{B}_{1},\label{str_eq_ball}\\
	\reach[hom,\vhom]{t}{\Rd\setminus B_{\eps^{-1}R}}	&=	\Rd\setminus \pp{\eps^{-1}R-\vhom t}B_{1}\label{str_eq_hole}.
\end{align}
In particular, the arrival times at $\eps^{-1}x_0$ with respect to the homogenized first-order evolution are the same for the small ball, the original set $\eps^{-1}S$, and the large hole.
Thus, in order to obtain Theorem \ref{main_thm}, it is sufficient to show two things:
\begin{enumerate}
\item	On large scales the expansion of a ball with respect to the heterogeneous second-order evolution is not much slower than \eqref{str_eq_ball}.
\item	On large scales the shrinking of a hole with respect to the heterogeneous second-order evolution is not much faster than \eqref{str_eq_hole}.
\end{enumerate}
We obtain this by comparing the evolutions for discrete time-steps.
For the ball, starting with radius $R_0=h(\eps^{-1}R)$, we incrementally choose expansions $(r_n)_n$ until the radius $R_N=R_0+\sum_{n=1}^N r_n$ covers the distance between $\eps^{-1}S$ and $\eps^{-1}x_0$.
We show that the expected value for the time $t_{n+1}$ until the ball $\ov{B}_{R_n}$ fully envelops the next larger ball $\ov{B}_{R_{n+1}}$ is less than $\vhom^{-1}(R_{n+1}-R_n)+\tau_{n+1,ball}$ for a controlled error $\tau_{n+1,ball}$.
For the hole, starting with radius $R_0=\eps^{-1}R$, we incrementally choose expansions $(r_n)_n$ until $R_N=R_0-\sum_{n=1}^N r_n$ corresponds to the target area of radius $h(\eps^{-1}R)$.
We show that the expected value for the time $t_{n+1}$ until $\Rd\setminus B_{R_n}$ would first leave $\Rd\setminus B_{R_{n+1}}$ is more than $\vhom^{-1}\pp{R_{n}-R_{n+1}}-\tau_{n+1,hole}$ for an error term $\tau_{n+1,hole}$ similar to $\tau_{n+1,ball}$.
By comparison principle, the overall homogenization error can thus be bounded by adding up these error terms.
  
Since the shapes are self-similar, each step of this iteration is essentially the same.
For first-order motions with an arbitrary continuous speed $e\mapsto v(e)$, there does always exist such a self-similar, convex grower $K$ given by $\reach[hom,v]{t}{\Bp{0}}=tK$, which in our isotropic case is the ball. 
There are two potential issues in the anisotropic case though: 
First, if $K$ is convex but not smooth, then it is not possible to properly align the half-space approximations.
Second, in general it is not true that the mirror image $-K$ yields a self-similar shrinking hole as in the isotropic case. 

For the error terms $(\tau_{n,ball})_n$ and  $(\tau_{n,hole})_n$, there are three contributions, which we will exemplify for the shrinking hole; the proof for the growing ball is very similar.
We will choose the discrete step sizes $r_{n}=R_{n}-R_{n-1}$ to optimally balance these three error contributions.
In order to relate the time $t_{n}$ until $\Rd\setminus B_{R_{n-1}}$ first leaves $\Rd\setminus B_{R_{n}}$ to $\vhom^{-1}r_{n}$, 
we use half-space approximations $\hsma{e}{\beta}{h(r_{n})}{r_{n}}$ with target area $\ov{B}_{h(r_n)}(r_{n}e)$ from Subsection \ref{subsub_str_lin}, which were used to define the homogenized speed.
We align translations of these half-space approximations with the boundary of $\Rd\setminus B_{R_{n-1}}$, but place them slightly in front, see Figure \ref{hom_fig_hole-gap}.
The size $\rho_{n}$ of the resulting gap due to the curvature is given by 
\begin{equation*}
 	\pp{R_{n-1}-\rho_{n}}^2+\pp{C_\beta r_{n}^\beta}^2
 	=	R_{n-1}^2,
 	\quad\text{implying}\quad
 	\rho_{n}\leq C(\beta,\data)\frac{r_{n}^{2\beta}}{R_{n-1}},
 \end{equation*} 
The combined target areas for these half-space approximations approximate $\partial B_{R_{n}}$ up to an error corresponding to $\rho_{n}$.
Making up for the propagation of the far away influences with the half-space approximations locally ahead of the boundary of $\Rd\setminus B_{R_{n-1}}$ as described in Subsection \ref{subsub_str_inf}, it takes longer for the hole $\Rd\setminus B_{R_{n-1}}$ to reach these target areas than for the respective half-space approximation as long as for their evolution we slightly increase the forcing by $f(r_{n})=f_{\beta}(r_{n})$.
The time $t_{n}$ is hence larger than the minimum of these arrival times for half-space approximations up to an error of approximately $\vmineff^{-1}\rho_{n}$.
Taking the expected value, we can resolve this minimum at the cost of a typical-fluctuation-sized error since the essentially exponential tail bounds for the fluctuations make it unlikely that any of the single arrival times is much smaller.
Plugging in the convergence rate of the arrival times for half-space approximations from \eqref{str_eq_linerr} and $\rho_{n}\approx\frac{r_{n}^{2\beta}}{R_{n-1}}$, we obtain
\begin{align*}
	\EV{t_{n}}	&\geq		\vhom^{-1}r_{n}
						-\pp{\vhom^{-1}-\vhom[f_\beta(r_{n})]^{-1}}r_{n}
						-C \sqrt{h(r_{n})r_{n}}\log\pp{R_{n-1}}
						-C \frac{r_{n}^{2\beta}}{R_{n-1}}\\
				&\eqqcolon	\vhom^{-1}r_{n}	-	\tau_{n}.
\end{align*}
The first error term comes from comparing the homogenized speed for $(A,F+\delF)$ with $\delF=f_{\beta}(r_{n})$ and $\delF=0$.
The second error term comes from using the fluctuation bounds and the convergence rate to the homogenized speed.
The third error term comes from the gap between the half-space approximations and the boundary of $\Rd\setminus B_{R_{n-1}}$.

We choose $r_{n}=r_{n}(R_{n-1})$ and $\beta>\frac{3}{2}$ in order to optimally balance these three error contributions and obtain
\begin{align*}
	\tau_{n}\lesssim \sqrt{r_n^{1+\vtheta}}\log\pp{R_{n-1}}
	\quad\quad\text{for }\quad
	r_n\approx R_{n-1}^{\frac{2}{1-\vtheta+4\pp{\frac{1-\vtheta}{\hoel}+1}}},
\end{align*}
yielding a recursive scheme for the choice of the enlargements, where $\hoel$ is the H\"older coefficient from Assumption \ref{sett_aP_gen}.
Adding up these errors for each step until $R_N=h(\eps^{-1}R)$ and hence the original target area $\ov{B}_{h(\eps^{-1}R)}$ is reached yields the overall error.
Similar considerations yield an analogous result for the growing hole, such that in summary we obtain Theorem \ref{main_thm} after multiplying with $\eps$.

\subsection{The proof of the fluctuation bounds}\label{subs_str_fluct}
We will illustrate the main ideas behind the proof for the fluctuation bounds of the arrival times in Section \ref{s_fluct} in more detail.
For this, we prove an analogous result under the much more restrictive assumption that there exists a guaranteed minimum speed of propagation $\vmin>0$:
\begin{equation}\label{str_eq_vmin}
	\reach[(A,F)]{t}{S}\supset\reach[(A,F)]{s}{S}+\vmin(t-s)\ov{B}_1
	\,\,\,\text{for all }t\geq s,\,S=\bigcup\Bp{\ov{B}_1(x)\subset S}.
\end{equation}
We know from \cite{ArCa18} that this is satisfied for example if
\begin{align*}
	A(x,\xi)&=a\pp{\Id-\frac{\xi}{\abs{\xi}}\otimes\frac{\xi}{\abs{\xi}}},
	&	F(x,\xi)&=F(x)	\text{ with }F^2-(d-1)a|DF|\geq c,
\end{align*}
for some $c>0$ large enough since \eqref{str_eq_vmin} corresponds to Lipschitz continuity of the metric problem for $S$. 
This is because for geometric motions the metric problem corresponds to the arrival times, if it is well-posed -- which this assumption implies.
Hence, this case has been covered by the fluctuation bounds for the metric problem in \cite{ArCa18}.

The ideas presented here originate from \cite{Kes93} and have been extended and adapted to increasingly general Hamilton--Jacobi equations in the series \cite{ACS14, ArCa14, ArCa18}, finally including this case in \cite{ArCa18}. 
However, the proof of \cite[Proposition 3.1]{ArCa18} does not just cover interface motions but also general viscous Hamilton--Jacobi equations and therefore comes with significant additional technical baggage. 
Therefore, we think that it is worthwhile to present a concise proof in order to illustrate the already established main components and provide orientation for readers unfamiliar with these kind of arguments.
For this, we use our language of arrival times and set evolutions instead of working on the level of the equation describing the metric problem.

The technical innovations allowing us to adapt the ideas presented here to our setting later in Section \ref{s_fluct} are fourfold:
\begin{itemize}
\item	Usage of arrival times in a target area instead of solutions to the metric problem,
\item	Usage of stable approximations and an \textit{effective} minimum speed of propagation instead of a strict minimum speed of propagation,
\item	Applying the arguments on a set of high instead of full probability,
\item	An alternative to Azuma's inequality requiring good martingale bounds with just high probability (instead of full probability).
\end{itemize}
The simpler result which we will prove to demonstrate the main ideas is the following:
\begin{proposition}[Fluctuation bounds for guaranteed minimum speed of propagation]\label{str_prop_fluct}
Assume $\Pm$ is a probability measure on $(\wt{\Omega},\F(\Rd))$ with $\wt{\Omega}=\Bp{(A,F)\in\Omega\,:\,\eqref{str_eq_vmin}\text{ holds}}$ with a finite range of dependence as in Assumption \ref{sett_aP_fin}.
Then for the arrival times from Definition \ref{intro_def_met} there exists $C=C(\data,\vmin)>0$ such that for all $x_0\in\Rd$ and $S\subset\Rd$ with $S=\bigcup\Bp{\ov{B}_1(x)\subset S}$ as well as $\partial S$ bounded, for any $\lambda>0$ there holds
\begin{equation*}
	\PM{\abs{\met[]{}{x_0}{S}-\EV{\met[]{}{x_0}{S}}}\geq \lambda}
	\leq		C\exp\pp{-\frac{\lambda^2}{C(1+\dist(x_0,S))}}.
\end{equation*}
\end{proposition}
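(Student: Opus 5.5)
We follow Kesten's martingale method of bounded differences, written in the language of set evolutions. Set $m\coloneqq\met{}{x_0}{S}$ and $R\coloneqq\dist(x_0,S)$. By \eqref{str_eq_vmin}, $m\leq \vmin^{-1}R$ almost surely, so all expectations below are finite. We also use a maximum speed of propagation $\vmax$ (Lemma \ref{not_lem_vmax}), which gives $\reach{t}{S}\subset S+\vmax(1+t)\ov{B}_1$.

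\textbf{Filtration.} We first introduce a filtration indexed by time. For $t\geq 0$, let $\G_t$ be the $\sigma$-algebra generated by the evolution up to time $t$, i.e.\ the information of $\F(\reach{t}{S}+\ov{B}_1)$ in the sense of stopping sets. The key locality fact is that $\reach{s}{S}$ for $s\leq t$ depends only on the coefficients in $\reach{t}{S}+\ov{B}_1$ (really, on a neighbourhood of the interface). This follows from the comparison principle: modifying $(A,F)$ outside a neighbourhood of the evolved set does not change the evolution. We then set $X_k\coloneqq\EV[\G_{k}]{m}$ for $k=0,1,\dots,N$ with $N\approx\vmin^{-1}R$. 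Since $m\leq N$, the event $\{m\leq t\}$ is $\G_t$-measurable, so $X_N=m$, and $X_0$ is essentially deterministic up to an $O(1)$ error coming from the coefficients near $\partial S$. This error can be absorbed, or we can start the filtration at a trivial $\sigma$-algebra.

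\textbf{Bounded increments.} The central step is to show $\abs{X_{k+1}-X_k}\leq C$. Given $\G_k$, we know $S_k\coloneqq\reach{k}{S}$, which is $1$-fat up to harmless modifications (we may replace it by its fat interior part using \eqref{str_eq_vmin}). By the semigroup property, $m=k+\met{}{x_0}{S_k}$ on $\{m>k\}$. Between times $k$ and $k+1$, the minimum speed gives $S_k+\vmin\ov{B}_1\subset S_{k+1}\subset S_k+C\ov{B}_1$. Monotonicity of arrival times then yields
\begin{equation*}
k+1+\met{}{x_0}{S_k+C\ov{B}_1}\leq m\leq k+1+\met{}{x_0}{S_k+\vmin\ov{B}_1}.
\end{equation*}
Applying \eqref{str_eq_vmin} to $S_k$ compares these sandwich bounds to $\met{}{x_0}{S_k}$ within $C/\vmin$, so $m$ is determined up to $O(1)$ by $S_k$ together with the coefficients away from $S_k$.

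The hard part is controlling how the new information in $\G_{k+1}\setminus\G_k$ (coefficients in $S_{k+1}\setminus S_k$) affects the conditional expectation. To handle it, we introduce the auxiliary quantity $Y_k\coloneqq k+\met{}{x_0}{S_k+C\ov{B}_1}$. It depends on coefficients outside $S_k+C\ov{B}_1$ only through the evolution starting from $S_k+C\ov{B}_1$, and it is within $O(1)$ of $m$. By finite range of dependence (Assumption \ref{sett_aP_fin}), the coefficients outside $S_k+(C+1)\ov{B}_1$ are independent of $\G_k$ when conditioned appropriately; rigorously this requires a discretisation of the possible sets $S_k$ into countably many candidates, via a lattice approximation. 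Hence $\EV[\G_k]{Y_k}$ and $\EV[\G_{k+1}]{Y_k}$ differ by at most $O(1)$, because the only extra information consists of coefficients within distance $O(1)$ of $S_k$, which do not affect $Y_k$ beyond that. Combining this with $\abs{m-Y_k}\leq C$ gives $\abs{X_{k+1}-X_k}\leq C$.

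\textbf{Conclusion.} Azuma's inequality with $N\leq C(1+R)$ steps yields the stated bound $C\exp(-\lambda^2/(C(1+R)))$.

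The main obstacle is making the conditioning rigorous: $\G_k$ is a random-set $\sigma$-algebra, and we need the conditional independence of the unexplored environment. We handle this by approximating $S_k$ by unions of lattice cubes, taking values in a countable family, and summing over the possible values.
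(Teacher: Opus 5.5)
Your overall architecture (time-indexed filtration of explored states, lattice discretisation of $S_k$ into countably many $K_i$, finite range of dependence to resolve conditional expectations, Azuma with $N\approx\vmin^{-1}\dist(x_0,S)$ unit steps) is the same as the paper's. Your bounded-increment step, however, takes a genuinely different route.

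The paper resolves the conditional expectation at \emph{both} times with the minimal buffer $\ov{B}_1$, via Lemma \ref{str_lem_meas}\ref{str_meas_condE}. It then compares the deterministic numbers $\mathbb{E}[\met{}{x_0}{K_i+\ov{B}_1}]$ and $\mathbb{E}[\met{}{x_0}{K_j+\ov{B}_1}]$ for every pair with $E_i(t)\cap E_j(s)\neq\emptyset$, using the minimum speed in one direction and the maximum speed in the other. This yields an increment bound that is explicitly linear in $t-s$ with rate $\vmax/\vmin-1$, at the cost of the error terms $R_t$ and $R_x$.

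You instead freeze one proxy built from the time-$k$ state, with a buffer wide enough that everything $\G_{k+1}$ reveals lies at distance at least $1$ from the region the proxy depends on. After discretisation, $\mathbb{E}[Y_k\mid\G_{k+1}]$ and $\mathbb{E}[Y_k\mid\G_k]$ then both equal $k+\sum_i\mathbb{E}[\met{}{x_0}{K_i+C_2\ov{B}_1}]\chi_{E_i(k)}$. So on $\{m>k\}$ the increment is at most $2\sup_{\{m>k\}}|m-Y_k|$, which the minimum speed makes $O((\sqrt{d}h+C_2)/\vmin)$. This is the ``insensitive proxy'' form of Kesten's argument. It needs no pairwise comparison of expected arrival times, and the maximum speed enters only through the buffer size. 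It gives a constant for unit steps rather than a bound as a function of the step size, which suffices here.

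Two slips need fixing.
\begin{itemize}
\item Your claim $|m-Y_k|\leq C$ is false on $\{m\leq k\}$. There $x_0\in S_k$, so $Y_k=k$, while $m$ can be much smaller than $k$. Since $\{m\leq k\}$ and $m\chi_{\{m\leq k\}}$ are $\G_k$-measurable (as you note), split on this event, or use $\min\{m,k\}+\met{}{x_0}{S_k+C_2\ov{B}_1}$ as the proxy.
\item You use the same $C$ in $S_{k+1}\subset S_k+C\ov{B}_1$ and in $Y_k$. Then the coefficients revealed by $\G_{k+1}$ near $S_{k+1}+\ov{B}_1$ may lie exactly where $Y_k$ depends, and your assertion that the two conditional expectations ``differ by at most $O(1)$'' is not justified. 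Take $Y_k\coloneqq k+\met{}{x_0}{K_h(S_k)+C_2\ov{B}_1}$ with $C_2\geq C+\sqrt{d}h+2$. Then finite range of dependence applies and the two conditional expectations coincide exactly. Replacing $S_k$ by $K_h(S_k)+C_2\ov{B}_1$ costs only $(\sqrt{d}h+C_2)/\vmin$, by \eqref{str_eq_vmin} applied to the evolution of $S$. These sets are $1$-fat, so Lemma \ref{str_lem_meas}\ref{str_meas_met} applies to them.
\end{itemize}
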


\subsubsection{Fluctuation bounds via Azuma's inequality}
We obtain this inequality by viewing $\met[]{}{x_0}{S}-\EV{\met[]{}{x_0}{S}}$ as the endpoint of a martingale, for which we apply a standard concentration result:
We set $(\wt{\M}_n)_{n\in\Nz}\coloneqq (\M_{n\Delta t})_n$ with $\Delta t>0$ to be chosen later and
\begin{equation*}
	\M_t\coloneqq \EV[\G_t]{\met[]{}{x_0}{S}} - \EV{\met[]{}{x_0}{S}},
\end{equation*}
with
\begin{align}
	\G_0		&\coloneqq	\Bp{\emptyset, \Omega},\notag\\
	\G_t		&\coloneqq 	\text{``$\sigma$-algebra corresponding to information on the state of $\reach{s}{S}$}\label{str_eq_defGt1}\\ 
			&			\qquad\qquad\qquad\qquad\qquad\quad		\text{and the coefficient field covered by it for $s\leq t$''},\notag
\end{align}
essentially uncovering the information on the coefficient field as the evolution of $S$ passes through. 
In particular, \eqref{str_eq_vmin} implies that $x_0\in\reach[(A,F)]{\timebound{}{x_0}{S}}{S}$ for all $(A,F)\in\wt{\Omega}$ with $\timebound{}{x_0}{S}\coloneqq \vmin^{-1}\dist(x_0,S)$, therefore $\met[]{}{x_0}{S}$ should be measurable with respect to $\G_{\timebound{}{x_0}{S}}$ and hence indeed the `endpoint' of the martingale would be
\begin{align*}
	\wt{M}_N	&=	\met[]{}{x_0}{S} - \EV{\metb[]{}{x_0}{S}}
	&	\text{for }N	\coloneqq \left\lceil\frac{\timebound{}{x_0}{S}}{\Delta t}\right\rceil
						=	\left\lceil\frac{\dist(x_0,S)}{\vmin\Delta t}\right\rceil.
\end{align*}
The concentration result which we want to apply to $(\wt{M}_n)_n$ is Azuma's inequality:

\begin{proposition}[Azuma's inequality \cite{Az67}]\label{str_prop_azuma}
Let $(\M_n)_n\in\Nz$ be a martingale which almost surely satisfies
\begin{equation*}
	\abs{\M_{n}-\M_{n-1}}\leq c_{n}	\quad\text{for all }n\in\N
\end{equation*}
for some sequence $(c_n)_n\subset\R_+$.
Then for all $N\in\N$ and $\lambda>0$ there holds
\begin{equation*}
	\PM{\abs{\M_N-\M_0}\geq \lambda}
	\leq		2\exp\pp{-\frac{\lambda^2}{2\sum_{n=1}^Nc_k^2}}.
\end{equation*}
\end{proposition}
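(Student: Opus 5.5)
The plan is the standard exponential-moment (Chernoff) argument, combining Hoeffding's lemma for bounded mean-zero increments with the tower property of conditional expectation. Write $D_n\coloneqq \M_n-\M_{n-1}$ and let $(\G_n)_n$ be the filtration to which $(\M_n)_n$ is adapted, so that $\EV[\G_{n-1}]{D_n}=0$ and $\abs{D_n}\leq c_n$ almost surely. In the exponent of the claimed bound, $c_k$ should be read as $c_n$.

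The first step is the conditional Hoeffding lemma. For $s\in\R$ we aim to show
\begin{equation*}
	\EV[\G_{n-1}]{e^{sD_n}}\leq \exp\pp{\tfrac{s^2c_n^2}{2}}.
\end{equation*}
By convexity of $x\mapsto e^{sx}$ on $[-c_n,c_n]$,
\begin{equation*}
	e^{sx}\leq \tfrac{c_n-x}{2c_n}e^{-sc_n}+\tfrac{c_n+x}{2c_n}e^{sc_n}.
\end{equation*}
Inserting $x=D_n$ and taking conditional expectations, the mean-zero property gives $\EV[\G_{n-1}]{e^{sD_n}}\leq\cosh(sc_n)$. The elementary inequality $\cosh y\leq e^{y^2/2}$ then follows by comparing Taylor coefficients, since $(2k)!\geq 2^k k!$.

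The second step iterates this estimate. Since $\M_{N-1}-\M_0$ is $\G_{N-1}$-measurable,
\begin{equation*}
	\EV{e^{s(\M_N-\M_0)}}=\EV{e^{s(\M_{N-1}-\M_0)}\,\EV[\G_{N-1}]{e^{sD_N}}}\leq e^{s^2c_N^2/2}\,\EV{e^{s(\M_{N-1}-\M_0)}}.
\end{equation*}
By induction this yields $\EV{e^{s(\M_N-\M_0)}}\leq \exp\pp{\tfrac{s^2}{2}\sum_{n=1}^N c_n^2}$.

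The third step applies Markov's inequality to $e^{s(\M_N-\M_0)}$ with $s>0$:
\begin{equation*}
	\PM{\M_N-\M_0\geq\lambda}\leq \exp\pp{-s\lambda+\tfrac{s^2}{2}\textstyle\sum_n c_n^2}.
\end{equation*}
Optimizing in $s$, i.e.\ choosing $s=\lambda/\sum_n c_n^2$, gives the bound $\exp\pp{-\lambda^2/(2\sum_n c_n^2)}$. The same argument applied to the martingale $-\M$, whose increments satisfy the same bounds, handles the lower tail. A union bound over the two tails produces the factor $2$.

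No step is genuinely hard. The only point needing care is the conditional Hoeffding lemma: the convexity bound must be applied pointwise before taking conditional expectations, and it is exactly here that almost-sure boundedness of $D_n$ is used. This is precisely the hypothesis that fails in the paper's setting, which is why a replacement inequality is developed in Appendix~\ref{s_azuma}.
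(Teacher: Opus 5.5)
Your proof is correct and is the standard argument. The paper itself does not prove this statement but cites it from \cite{Az67}; its closest argument, the proof of the generalization in Proposition~\ref{azuma_prop_azuma} via the conditional Hoeffding Lemma~\ref{azuma_lem_hoeff}, follows exactly your route of bounding conditional exponential moments of the increments, iterating with the tower property, and optimizing the Chernoff bound in $s$.
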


If we could show that there was $C>0$ such that for all $0\leq s\leq t$ there holds 
\begin{equation}\label{str_eq_MtMs}
	\abs{\M_t-\M_s}\leq C(t-s)+C,
\end{equation}
then we could apply Azuma's inequality to obtain
\begin{equation*}
	\PM{\abs{\M_N-\M_0}\geq \lambda}
	\leq		2\exp\pp{-\frac{\lambda^2}{2C^2N(\Delta t + 1)^2}}.
\end{equation*}
Choosing for example $\Delta t=1$ then yields Proposition \ref{str_prop_fluct}.
Essentially, we need to show that `information' gained while evolving for $\Delta t$ impacts the current best guess for the arrival time at most linearly in $\Delta t$.
Due to the finite range of dependence, this will come down to comparing the slowest to the fastest possible evolution for this time step.

\subsubsection{Measurability and the application of the finite range of dependence assumption}
In order to point out some of the key considerations, we first need to make the definition of $\G_t$ from \eqref{str_eq_defGt1} more precise:
Due to measurability issues, we need to approximate the possible states of $\reach{t}{S}$ with countable many sets.
Fixing $h>0$, we define the discrete approximation of a set $M\subset\Rd$ as
\begin{equation*}
	K_h(M)\coloneqq \bigcup\Bp{x+[0,h]^d\,:\,x\in h\Zd,\,M\cap \pp{x+[0,h]^d}\neq\emptyset}.
\end{equation*}
Clearly, $\Bp{K_h(M)\,:\,M\subset\Rd}$ is not countable and hence too large for the discrete state space approximating the set evolutions.
However, due to the minimum speed from \eqref{str_eq_vmin} and the maximum speed of propagation (see Lemma \ref{not_lem_vmax}), for any $t\geq 0$ there holds
\begin{equation*} 
	\reach{t}{S}\in\Bp{M\subset\Rd\,:\,S\subset M\subset S+B_R\text{ for some }R>0},
\end{equation*}
and since $\partial S$ is bounded, there are countable many sets $(K_i)_{i\in\N}$ such that
\begin{equation*}
	\Bp{K_i\,:\,i\in\N}=\Bp{K_h(M)\,:\,M\subset\Rd \text{ with }S\subset M\subset S+B_R\text{ for some }R>0}.
\end{equation*}
Based on these sets we define the events $\Bp{E_i(t)}_{i\in\N}$ capturing the approximate state of the set evolution at time $t\geq 0$ as
\begin{equation*}
	E_i(t)	\coloneqq 
	\Bp{(A,F)\in\wt{\Omega}\,:\, K_h\pp{\reach[(A,F)]{s}{S}}=K_i}.
\end{equation*}
Note that $\bigcup_i E_i(t)=\wt{\Omega}$ for all $t\geq 0$.
Finally, this allow us to properly define $\G_t$ as
\begin{equation*}
	\G_t		\coloneqq 	\text{$\sigma$-algebra generated by events }E_i(s)\cap F, ~0\leq s\leq t,~F\in\F\pp{K_i},
\end{equation*}
where $\F\pp{K_i}$ was the $\sigma$-algebra containing the information of the coefficient field in $K_i$ as introduced in \eqref{sett_eq_sigmaF}.
Essentially, $\G_t$ contains the information of the approximate states of the set evolution up to time $t$ and on the coefficient field in the area covered by these approximate states.
In order to work with the conditional expectations of arrival times with respect to $\G_t$ from the definition of the martingale, we need the following measurability properties.

\begin{lemma}[Measurability of arrival times]\label{str_lem_meas}
In the setting of Proposition of \ref{str_prop_fluct} and with $\G_t$ and $\M_t$ defined as above, the following holds with the $\sigma$-algebras $\Bp{\F(U)})$ from \eqref{sett_eq_sigmaF} containing the information on the coefficient field in $U\subset\Rd$ respectively.
\begin{enumerate}[label=(\alph*), left=0pt]
\item	\label{str_meas_met}
		$\met{}{x_0}{S}$ is $\F(\ov{\comp{S}})$-measurable,
\item	\label{str_meas_Ei}
		$E_i(t)$ is $\F(K_i)$-measurable for all $i\in\N$ and $t\geq 0$,
\item	\label{str_meas_metchar}
		$\Bp{(A,F)\,:\,\met[(A,F)]{}{x_0}{S}\leq t}$ and $\met{}{x_0}{S}\charfun[\Bp{(A,F)\,:\,\met[(A,F)]{}{x_0}{S}\leq t}]$ are $\G_t$-measurable,
\item	\label{str_meas_condE}
		$\EV[\G_t]{\met{}{x_0}{K_h\pp{\reach{t}{S}}+\ov{B}_1}}
			=\sum_{i=1}^\infty\EV{\met{}{x_0}{K_i+\ov{B}_1}}\charfun[E_i(t)]$ for all $t\geq 0$.
\end{enumerate}
\end{lemma}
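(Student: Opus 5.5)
We prove the four items in order, using two structural facts about the set evolution. The first is \emph{locality}: by comparison, $\reach{t}{S}$ up to time $t$ depends only on the coefficient field on a neighbourhood of the region it covers. The second is \emph{time monotonicity}: under \eqref{str_eq_vmin}, $s\mapsto\reach{s}{S}$ is nondecreasing, so the region explored up to time $t$ is contained in the closure of $\reach{t}{S}$.

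\textbf{Item (a).} Let $(A,F)$ and $(A',F')$ agree on $\ov{\comp{S}}$. Since $S\subset\reach{t}{S}$ for all $t$, both evolutions coincide on $S$. To conclude that $\reach[(A,F)]{t}{S}=\reach[(A',F')]{t}{S}$, we show that a solution $\uls{S}$ for $(A,F)$ is also a solution for $(A',F')$.
\begin{itemize}
\item In the interior of $S$ one may take $u\leq 0$ constant in time, so the equation is satisfied there for either coefficient field.
\item Elsewhere the coefficients agree, so the viscosity inequalities are identical.
\end{itemize}
Uniqueness (Theorem \ref{comp_thm_unique}) then gives equality of the sets. Measurability follows by expressing $\{\met{}{x_0}{S}\leq t\}$ through countably many evaluations of the field; $\Omega$ consists of continuous fields, so the maps $(A,F)\mapsto(A(x,e),F(x,e))$ for rational $(x,e)$ generate $\F(\ov{\comp{S}})$.

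\textbf{Item (b).} The same locality argument shows that, on $E_i(t)$, the evolution up to time $t$ stays inside $K_i$. Define the restricted evolution $\reach[(A,F)_{K_i}]{s}{S}$, for instance by modifying the field outside $K_i$ by a strongly negative forcing, i.e.\ a ``wall''. Comparison gives
\[
\reach[(A,F)]{s}{S}\cap K_i=\reach[(A,F)_{K_i}]{s}{S}
\]
as long as the true evolution has not left $K_i$. Hence $E_i(t)$ coincides with the event $K_h(\reach[(A,F)_{K_i}]{t}{S})=K_i$. The latter is $\F(K_i)$-measurable, since the restricted evolution depends only on the field in $K_i$.

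\textbf{Item (c).} By the definition of the $\sigma$-algebras we have
\[
\{\met{}{x_0}{S}\leq t\}=\bigcup_i E_i(t)\cap\{x_0\in\reach[(A,F)_{K_i}]{t}{S}\}.
\]
Each term lies in $\G_t$, by (b) and the generating family. The product $\met{}{x_0}{S}\,\charfun[\{\met{}{x_0}{S}\leq t\}]$ is handled the same way, by writing its sublevel sets $\{\met{}{x_0}{S}\leq s\}$, $s\leq t$, via the $E_i(s)$.

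\textbf{Item (d).} On $E_i(t)$ the random variable $X_i:=\met{}{x_0}{K_i+\ov{B}_1}$ is $\F(\ov{\comp{(K_i+\ov B_1)}})$-measurable by (a). Since $\dist(\comp{(K_i+\ov B_1)},K_i)\geq 1$, Assumption \ref{sett_aP_fin} makes $X_i$ independent of $\F(K_i)$. The main obstacle is that $\G_t$ also contains the events $E_i(s)$ for $s<t$. For this we use two facts.
\begin{itemize}
\item By monotonicity, $E_i(t)\cap E_j(s)\neq\emptyset$ forces $K_j\subset K_i$.
\item Hence the trace of $\G_t$ on $E_i(t)$ is generated by $E_i(t)$ and sets in $\F(K_i)$.
\end{itemize}
A $\pi$–$\lambda$ argument then gives $\EV[\G_t]{X_i}\charfun[E_i(t)]=\EV{X_i}\charfun[E_i(t)]$. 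Summing over the disjoint partition $\{E_i(t)\}_i$ yields (d). Here $X=X_i$ on $E_i(t)$, and we use $\EV{|X_i|}<\infty$, which follows from the minimum speed bound $X_i\leq\vmin^{-1}\dist(x_0,S)$.
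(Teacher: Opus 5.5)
Structurally, (b)--(d) follow the paper's sketch. In (c) you decompose $\{\met{}{x_0}{S}\le t\}$ over the $E_i(t)$. In (d) you use that $E_i(t)\cap E_j(s)\neq\emptyset$ forces $K_j\subset K_i$, so $G\cap E_i(t)\in\F(K_i)$ for every $G\in\G_t$, and then apply the finite range of dependence.

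Your locality argument in (a) is a valid alternative to the paper's, which uses $\partial\reach{t}{S}\subset\ov{\comp{S}}$ and the restricted field $(A,F)_{\ov{\comp{S}}}$. Take $\uls{S}(\cdot,0)=\dist(\cdot,S)$; then $\uls{S}\ge 0$, and \eqref{str_eq_vmin} gives $\uls{S}\equiv 0$ on $S\times[0,\infty)$. The envelopes of $G_{(A,F)}$ at $x\in\ov{\comp{S}}$ involve only $A(x,\cdot)$ and $F(x,\cdot)$, so $\uls{S}$ solves the equation for every field agreeing with $(A,F)$ on $\ov{\comp{S}}$, and uniqueness gives equal evolutions.

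The gap is the step from ``\emph{determined by} $(A,F)|_U$'' to ``\emph{$\F(U)$-measurable}''. You use it in (a), in (b) (``since the restricted evolution depends only on the field in $K_i$''), and for the events $\{x_0\in\reach[(A,F)_{K_i}]{t}{S}\}$ in (c). This implication fails in general: the preimage under the restriction map of a non-Borel set of restricted fields depends only on $U$ but is not in $\F(U)$. Knowing that countably many evaluations generate $\F(U)$ does not help unless you show the event belongs to the generated $\sigma$-algebra. That needs some regularity of $(A,F)\mapsto\met[(A,F)]{}{x_0}{S}$, which is the real content of the lemma; even plain $\F(\Rd)$-measurability needs it.

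The paper supplies this through Lemma~\ref{meas_lem_closure}, by showing $\{\met{}{x_0}{S}\le T\}$ is closed under weighted uniform convergence of the fields on $U$. The lower relaxed limit of $1-\chi_{\reach[(A_k,F_k)]{\cdot}{S}}$ is a supersolution for the limit field by Theorem~\ref{comp_thm_stab}. Comparison with $\uls{S}$ then shows the limit field still reaches $x_0$ by time $T$, as carried out in the proof of Lemma~\ref{fluct_lem_meas}. Since the seminorm only sees $U$, this closedness already contains the locality.

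Your observation that $1-\chi_{\reach[(A_k,F_k)]{\cdot}{S}}$ vanishes on $S$ makes the localization to $U=\ov{\comp{S}}$ easy: the relaxed limit is locally constant on the interior of $S$, and the fields converge on $\ov{\comp{S}}$. The same closedness argument is what (b) and (c) need, applied to the cube conditions defining $\{K_h(\cdot)=K_i\}$ and to $\{x_0\in\reach[(A,F)_{K_i}]{t}{S}\}$. Without it, the independence used in (d) is not established either.
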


The proof of \ref{str_meas_condE} is the one and only point where we use the finite range of dependence from Assumption \ref{sett_aP_fin}.
Resolving the conditional expectation will be key when we bound the martingale increments below -- the combination of \ref{str_meas_metchar} and \ref{str_meas_condE} allows us to do so.
Further note that \ref{str_meas_metchar} implies that indeed 
	$\wt{M}_N = \met[]{}{x_0}{S} - \EV{\metb[]{}{x_0}{S}}$ 
since due to the minimum speed we have $\met{}{x_0}{S}\leq \timebound{}{x_0}{S}$ and thus the $\G_{N\Delta t}$-measurability of $\met[]{}{x_0}{S}$.

\begin{proof}[Sketch proof of Lemma \ref{str_lem_meas}]
In order to show measurability with respect to information about the coefficient field on only a certain region $U\subset\Rd$, we use some localization $(A,F)_{U}\in\wt{\Omega}$ essentially depending only on $A|_U$ and $F|_U$ with $(A,F)_U=(A,F)$ on $U$.
The precise definition of this localization is flexible, see \cite[Section 3.2]{ArCa18} or our restricted coefficient fields from Definition \ref{box_def_stable}.
For example, one could choose $(A,F)_U=0$ on $\Rd\setminus\pp{U+\ov{B}_1}$ with a smooth transition between $U$ and $\Rd\setminus\pp{U+\ov{B}_1}$.

Regarding \ref{str_meas_met}, due to the guaranteed growth from \eqref{str_eq_vmin} we have $\partial \reach[(A,F)]{t}{S}\subset \ov{\comp{S}}$ for all $t\geq 0$ and hence the evolution depends only on the coefficient field on $\ov{\comp{S}}$, yielding $\reach[(A,F)]{t}{S}=\reach[(A,F)_{\ov{\comp{S}}}]{t}{S}$ for all $t\geq 0$. 
From here it is straightforward to show \ref{str_meas_met}, see our main measurability criterion in Lemma \ref{meas_lem_closure}.

Analogously, regarding \ref{str_meas_Ei}, $E_i(t)$ is $\F(K_i)$-measurable because we have 
\begin{equation*}
	E_i(t)	=
	\Bp{(A,F)\in\wt{\Omega}\,:\, K_h\pp{\reach[(A,F)_{K_i}]{t}{S}}=K_i}.
\end{equation*}
The inclusion ``$\subset$'' holds since if $K_h\pp{\reach[(A,F)]{t}{S}}=K_i$, then $\reach[(A,F)]{s}{S}\subset K_i$ for all $0\leq s\leq t$ due to the minimum speed from \eqref{str_eq_vmin} and hence 
	$\reach[(A,F)]{s}{S}=\reach[(A,F)_{K_i}]{s}{S}$
because the evolution only depends on the coefficient field in $K_i$ for all $0\leq s \leq t$. 
The reverse inclusion ``$\supset$'' follows in the same way.

Regarding \ref{str_meas_metchar}, with $\reach[(A,F)]{s}{S}=\reach[(A,F)_{K_i}]{s}{S}$ for $s\leq t$ and $(A,F)\in E_i(t)$ we have
\begin{equation*}
	\Bp{(A,F)\,:\,\met[(A,F)]{}{x_0}{S}\leq t}
	=	\bigcup_{i\in\N}\pp{\Bp{(A,F)\,:\,x_0\in\reach[(A,F)_{K_i}]{t}{S}}\cap E_i(t)}
\end{equation*}
which is $\G_t$-measurable due to the definition of $\G_t$. 
For the same reason, we obtain
\begin{equation*}
	\met[(A,F)]{}{x_0}{S}
	=	\sum_{i\in\N}\met[(A,F)_{K_i}]{}{x_0}{S}\charfun[E_i(t)]
\end{equation*}
for $(A,F)\in\Bp{(A',F')\,:\,\met[(A',F')]{}{x_0}{S}\leq t}$ and thus $\G_t$-measurability.

Finally, \ref{str_meas_condE} follows immediately by summing over $i\in\N$ if we can show 
\begin{equation*}
	\EV[\G_t]{\met{}{x_0}{K_i+\ov{B}_1}\charfun[E_i(t)]}	=	\EV{\met{}{x_0}{K_i+\ov{B}_1}}\charfun[E_i(t)],
\end{equation*}
which follows if for any $E\in\G_t$ we can show that
\begin{equation*}
	\EV{\met{}{x_0}{K_i+\ov{B}_1}\charfun[E_i(t)]\charfun[E]}
	=	\EV{\met{}{x_0}{K_i+\ov{B}_1}}\PM{E_i(t)\cap E}.
\end{equation*}
This is the case due to the finite range of dependence from Assumption \ref{sett_aP_fin}:
From \ref{str_meas_met} we know that $\met{}{x_0}{K_i+\ov{B}_1}$ is $\F(\ov{\comp{\pp{K_i+\ov{B}_1}}})$-measurable, while $E\cap E_i(t)\in\F(K_i)$:
It is sufficient to prove this for $E=E_j(s)\cap F$ with $s\leq t$ and $F\in\F(K_j)$ due to the definition of $\G_t$.
We know from \ref{str_meas_Ei} that $E_j(s)\in\F(K_j)$ and hence $E\in\F(K_j)$.
If $E_i(t)\cap E_j(s)=\emptyset$, then we are done.
However, if there is $(A,F)\in E_i(t)\cap E_j(s)$, then due to the guaranteed minimum speed from \eqref{str_eq_vmin} there holds
\begin{equation*}
	K_j	=	K_h\pp{\reach[(A,F)]{s}{S}}	\subset	K_h\pp{\reach[(A,F)]{t}{S}}	=	K_i
\end{equation*}
and hence $E_j(s)\cap F\cap E_i(t)\in\F(K_i)$.
\end{proof}

\subsubsection{Bounds for the martingale increments}\label{subsub_str_MtMs}
With the measurability properties from Lemma \ref{str_lem_meas} established, we can now obtain bounds for the martingale increments as in \eqref{str_eq_MtMs} and thus prove Proposition \ref{str_prop_fluct}.

\begin{proof}[Sketch proof of Proposition \ref{str_prop_fluct}]
It remains to find $C>0$ such that \eqref{str_eq_MtMs} holds for all $0\leq s\leq t$.
For $\EV[\G_s]{\met[]{}{x_0}{S}}$, that is the `guess' for the arrival time given the information from $\G_s$, we consider two cases: 
The case where $x_0$ has already been reached at time $s$, and the case where $x_0$ is still outside of the already covered area. 
We have
\begin{align*}
	\M_s		&=	\EV[\G_s]{\met[]{}{x_0}{S}} - \EV{\metb[]{}{x_0}{S}}\\
			&=	\met{}{x_0}{S}\charfun[\Bp{\met{}{x_0}{S}\leq s}]
					+	\EV[\G_s]{\pp{s+\met[]{}{x_0}{\reach{s}{S}}}\charfun[\Bp{\met{}{x_0}{S}>s}]} \\
			&\quad	- \EV{\metb[]{}{x_0}{S}}\\
			&=	\met{}{x_0}{S}\charfun[\Bp{\met{}{x_0}{S}\leq s}]
					+	s\charfun[\Bp{\met{}{x_0}{S}>s}] \\
			&\quad	+	\EV[\G_s]{\met[]{}{x_0}{\reach{s}{S}}}\charfun[\Bp{\met{}{x_0}{S}>s}] 
					- \EV{\metb[]{}{x_0}{S}},
\end{align*}
where we used Lemma \ref{str_lem_meas}\ref{str_meas_metchar} and the definition of the arrival times.
We can match this for $t$ by writing
\begin{align*}
	\M_t		&=	\met{}{x_0}{S}\pp{\charfun[\Bp{\met{}{x_0}{S}\leq s}]+\charfun[\Bp{\met{}{x_0}{S}\in(s,t]}]}
					+	t\charfun[\Bp{\met{}{x_0}{S}>t}] \\
			&\quad	+	\EV[\G_t]{\met[]{}{x_0}{\reach{t}{S}}\charfun[\Bp{\met{}{x_0}{S}>t}]} 
					- \EV{\metb[]{}{x_0}{S}}\\
			&=	\met{}{x_0}{S}\charfun[\Bp{\met[(A,F)]{}{x_0}{S}\leq s}]
					+	t\charfun[\Bp{\met{}{x_0}{S}>s}] \\
			&\quad	+	\EV[\G_t]{\met[]{}{x_0}{\reach{t}{S}}}\charfun[\Bp{\met{}{x_0}{S}>s}] 
					- \EV{\metb[]{}{x_0}{S}}
					+R_t,
\end{align*}
introducing a small error term $R_t$ for changing arrival times in $(s,t]$ to $t$, given by
\begin{multline*}
	R_t	=	\pp{\met{}{x_0}{S}- t} \charfun[\Bp{\met{}{x_0}{S}\in(s,t]}]
			-\EV[\G_t]{\met[]{}{x_0}{\reach{t}{S}}\charfun[\Bp{\met{}{x_0}{S}\in(s,t]}]}\\
		=	\pp{\met{}{x_0}{S}- t} \charfun[\Bp{\met{}{x_0}{S}\in(s,t]}]\in [s-t,0].
\end{multline*}
Taking the difference, we obtain
\begin{align}
	\M_t-\M_s	&=	\pp{\EV[\G_t]{\met[]{}{x_0}{\reach{t}{S}}}	-	\EV[\G_s]{\met[]{}{x_0}{\reach{s}{S}}}}\charfun[\Bp{\met{}{x_0}{S}>s}]\notag\\
				&\quad	+	(t-s)\charfun[\Bp{\met{}{x_0}{S}>s}] + R_t.\label{str_eq_MtMspreK}
\end{align}
Due to the finite range of dependence, the future evolution should essentially not depend on the already explored environment, but just on the current state of the set. 
Accordingly, we want to resolve the conditional expectation and replace it with just an expected value, which corresponds to Lemma \ref{str_lem_meas}\ref{str_meas_condE}. 
In order to use the finite range of dependence in the form of Lemma \ref{str_lem_meas}\ref{str_meas_condE}, we need to slightly enlarge the set. 
Note that due to the minimum speed from \eqref{str_eq_vmin} we have $\reach{t}{S}\subset K_h\pp{\reach{t}{S}}+\ov{B}_1\subset	\reach{t+\vmin^{-1}\pp{\sqrt{d}h+1}}{S}$ and hence there holds
\begin{equation*}
	\met[]{}{x_0}{\reach{t}{S}}	\geq	 \met[]{}{x_0}{K_h\pp{\reach{t}{S}}+\ov{B}_1}	
								\geq	 \met[]{}{x_0}{\reach{t}{S}}	-	\vmin^{-1}\pp{\sqrt{d}h+1}.
\end{equation*}
Thus, enlarging $\reach{t}{S}$ and $\reach{s}{S}$ in \eqref{str_eq_MtMspreK} and then using the independence to resolve the conditional expectation via Lemma \ref{str_lem_meas}\ref{str_meas_condE} and that $\sum_i \charfun[E_i(t)]=\sum_j \charfun[E_j(s)]=1$ we obtain
\begin{align}\label{str_eq_MtMspreij}
	&\M_t-\M_s	=	(t-s)\charfun[\Bp{\met{}{x_0}{S}>s}] + R_t + R_K\\
	&\qquad				+	\sum_{i,j=1}^\infty	\pp{\EV{\met{}{x_0}{K_i+\ov{B}_1}}	-	\EV{\met{}{x_0}{K_j+\ov{B}_1}}}\charfun[E_i(t)\cap E_j(s)]\charfun[\Bp{\met{}{x_0}{S}>s}],\notag
\end{align}
with $\abs{R_K}\leq	\vmin^{-1}\pp{\sqrt{d}h+1}$.
In order to bound this term, we compare the expected arrival times from $K_i+\ov{B}_1$ and $K_j+\ov{B}_1$ if $E_i(t)\cap E_j(s)\neq \emptyset$, which can be done by comparing the fastest and slowest evolution during the time step $t-s$:
Due to the minimum speed from \eqref{str_eq_vmin} and the maximum speed from Lemma \ref{not_lem_vmax} there holds
\begin{equation*}
	\reach{s}{S}+\vmin(t-s)\ov{B}_1	\subset \reach{t}{S}	\subset	\reach{s}{S}+\pp{1+\vmax(t-s)}\ov{B}_1.
\end{equation*}
Since $K_i=K_h\pp{\reach[(A,F)]{t}{S}}$ and $K_j=K_h\pp{\reach[(A,F)]{s}{S}}$ if there is $(A,F)\in E_i(t)\cap E_j(s)$, this implies
\begin{equation*}
	K_j+\pp{\vmin(t-s)-\sqrt{d}h}\ov{B}_1	\subset K_i	\subset	K_j+\pp{1+\sqrt{d}h+\vmax(t-s)}\ov{B}_1.
\end{equation*}
On the one hand, the maximum speed implies that
\begin{equation*}
	\reach{\tau}{K_j+\ov{B}_1}	\subset K_i+\ov{B}_1	
	\quad\text{for all }\tau\leq \frac{\vmin}{\vmax}(t-s) - \frac{1+\sqrt{d}h}{\vmax},
\end{equation*}
while on the other hand, the minimum speed implies that
\begin{equation*}
	\reach{\tau}{K_j+\ov{B}_1}	\supset	K_i+\ov{B}_1
	\quad\text{for }\tau= \frac{\vmax}{\vmin}(t-s) + \frac{1+\sqrt{d}h}{\vmin}.
\end{equation*}
Hence, by the definition of the arrival times, for $E_i(t)\cap E_j(s)\neq \emptyset$ there holds
\begin{equation*}
	\met{}{x_0}{K_j+\ov{B}_1}-\met{}{x_0}{K_i+\ov{B}_1}
	\left\{\begin{aligned}
		&\geq	\frac{\vmin}{\vmax}(t-s) - \frac{1\!+\!\sqrt{d}h}{\vmax},	&&	\!\!\text{if }x_0\notin  K_i\!+\!\ov{B}_1\\
		&\geq	0,														&&	\!\!\text{if }x_0\in  K_i\!+\!\ov{B}_1\\
		&\leq	\frac{\vmax}{\vmin}(t-s) + \frac{1\!+\!\sqrt{d}h}{\vmin}.
	\end{aligned}\right.
\end{equation*}
Plugging these bounds into \eqref{str_eq_MtMspreij} and using that $1-\frac{\vmin}{\vmax}\leq \frac{\vmax}{\vmin}-1$, we obtain
\begin{equation*}
	\abs{\M_t-\M_s}	\leq		\pp{\frac{\vmax}{\vmin}-1}(t-s) + C(\data,h) + \abs{R_t} + \abs{R_{x}}
\end{equation*}
with 
\begin{align*}
	\abs{R_t}	&\leq (t-s)\charfun[\Bp{\met{}{x_0}{S}\in(s,t]}],\\
	\abs{R_{x}}	&\leq	\max\Bp{1-\pp{\frac{\vmax}{\vmin}-1},0}(t-s)\sum_{i\in\N}\charfun[E_i(t)\cap\Bp{x_0\in K_i\!+\!\ov{B}_1}\cap\Bp{\met{}{x_0}{S}>s}]
\end{align*}
which yields \eqref{str_eq_MtMs} once we fix an arbitrary small $h>0$.
In summary, during a time step $\Delta t$ the best guess for the arrival time changes at most linearly in $\Delta t$ with rate at most $\frac{\vmax}{\vmin}-1$ up to error terms.
With a more careful approach, the error terms $R_t$ and $R_{x}$ can be mitigated so that they do not influence this scaling.
\end{proof}

\section{Preliminaries}\label{s_pre}
Throughout this Section, if not specified further, we always assume $(A,F)\in\Omega$ or $(A,F)=(\wt{A},\wt{F})_Q$, that is that $(A,F)$ is the restriction of some admissible coefficient field $(\wt{A},\wt{F})\in\Omega$ to a closed set $Q\subset\Rd$, see Definition \ref{restr_def} below.
We first state some basic properties of the set evolutions defined in Definition \ref{not_def_reach}.
Then we will present some more specific properties related to stable sets and restricted coefficient fields.
Finally, we will provide a tool to check measurability with respect to $\F(U)$.
The proofs of these basic properties can be found in Appendix \ref{s_pre-proofs}.

\begin{remark}[Semigroup property of $\reach{t}{\cdot}$]\label{not_rem_SGprop}
Letting closed sets $S\subset\Rd$ evolve with respect to a fixed coefficient field $(A,F)$ for different times can be seen as a semigroup, that is
\begin{equation*}
	\reach[(A,F)]{t+s}{S}=\reach[(A,F)]{t}{\reach[(A,F)]{s}{S}}
	\qquad\text{for any $t,s\geq 0$,}
\end{equation*}
see e.g. \cite[Theorem 4.5.1]{Giga06}.
We will often implicitly use this property.
\end{remark}

Throughout this paper, we will often use that the (negative) characteristic function of the evolution of a set is a supersolution to the level-set equation \eqref{intro_eq_uls}.

\begin{lemma}\label{not_lem_ur}
Let $S\subset\Rd$ be a closed set.
The function $\ur{S}=\ur[(A,F)]{S}$ given by
\begin{equation*}
	\ur[(A,F)]{S}(x,t)
	\coloneqq 1-\charfun[{\reach[(A,F)]{t}{S}}](x)
\end{equation*}
is a supersolution of \eqref{intro_eq_uls} with the coefficient field $(A,F)$ in $\Rd\times(0,\infty)$.
\end{lemma}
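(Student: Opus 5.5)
The plan is to realize $\ur{S}$ as a monotone limit of solutions of the level-set equation and then use stability of viscosity supersolutions under lower semicontinuous envelopes. Let $\uls{S}$ be the uniformly continuous solution from Definition \ref{not_def_reach}, with $\reach{t}{S}=\{\uls{S}(\cdot,t)\leq 0\}$. The equation \eqref{intro_eq_uls} is geometric, i.e.\ invariant under $u\mapsto \phi(u)$ for nondecreasing continuous $\phi$; this is the standard invariance in \cite{Giga06}, see Appendix \ref{s_visc}. Hence for each $k\in\N$ the function
\[
	u_k\coloneqq \phi_k(\uls{S}),\qquad \phi_k(s)\coloneqq \min\Bp{1,\max\Bp{0,ks}},
\]
is again a (viscosity) solution, and in particular a supersolution.

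Next I identify the limit. Pointwise, $u_k(x,t)$ is nondecreasing in $k$ and converges to $1$ wherever $\uls{S}(x,t)>0$. Wherever $\uls{S}(x,t)\leq 0$, i.e.\ for $x\in\reach{t}{S}$, it stays equal to $0$. Thus $u_k\uparrow 1-\charfun[{\reach{t}{S}}](x)=\ur{S}(x,t)$. Since $\{\uls{S}\leq 0\}$ is closed in space-time, by continuity of $\uls{S}$, the limit $\ur{S}$ is lower semicontinuous.

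The final step is the stability result for supersolutions: the lower relaxed limit $\liminf_* u_k$ of supersolutions is a supersolution (Appendix \ref{s_visc}, or \cite{Giga06}). This result has to be applied in the version for the degenerate geometric operator $G_{(A,F)}$, which is discontinuous at $\xi=0$ and is handled through its lower semicontinuous envelope. I must then check that $\liminf_* u_k=\ur{S}$. For an increasing sequence of continuous functions, the relaxed liminf equals the lower semicontinuous envelope of the pointwise limit. Since that limit is already lower semicontinuous, the relaxed liminf is $\ur{S}$ itself.

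I expect the main care to be needed in confirming that the envelope and stability machinery is stated for the relaxed operator at $\nabla u=0$, and that in the case of restricted fields $(A,F)_Q$ the same invariance and stability results apply. Granting Appendix \ref{s_visc}, the argument above is complete.
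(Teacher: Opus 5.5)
Your proof is correct, but it takes a longer route than the paper's. The paper's proof is one line. It writes $\ur{S}=\chi_{\{r>0\}}\circ\uls{S}$ and notes that $\chi_{\{r>0\}}$ is nondecreasing and lower semicontinuous. Theorem \ref{comp_thm_inv} then applies directly, because as stated in Appendix \ref{s_visc} that invariance theorem already allows semicontinuous $\theta$, not just continuous ones.

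You instead use invariance only for the continuous truncations $\phi_k$. You then pass to the discontinuous composition with the stability result, Theorem \ref{comp_thm_stab}. This requires the extra identification $\liminf_{k\rightarrow\infty}\!{}_*\,u_k=\sup_k u_k=\ur{S}$. Your justification of that identity is sound: for $j\geq m$ one has $u_j\geq u_m$, and continuity of $u_m$ gives $\liminf_* u_k\geq u_m$ for every $m$.

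In effect you re-derive the special case of the semicontinuous invariance theorem that is needed here, from its continuous version plus stability. This makes your argument less dependent on the discontinuous-$\theta$ form of invariance, at the cost of the relaxed-limit computation.

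One small slip: for supersolutions the relevant envelope of $G_{(A,F)}$ at $\xi=0$ is the upper one, $G^*$, not the lower one. This does not affect your argument, since Theorem \ref{comp_thm_stab} handles the envelope for you.
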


A key property of the set evolutions is that -- while for example around closing holes or small gaps the interface might move at arbitrary high speeds as typical for the standard mean curvature flow -- on larger scales, there is a maximum speed which is essentially dominated by the first order term.

\begin{lemma}[Maximum speed of front propagation]\label{not_lem_vmax}
Let $S\subset\Rd$ be a closed set.
Then for any $\delta>0$ there holds 
	$\reach[(A,F)]{T}{S}\subset S+\pp{\delta+T\vmax[\delta]}\ov{B}_1$ 
for all $T>0$, where
\begin{equation*}
 	\vmax[\delta]\coloneqq \max\Bp{C_{1F},\,6C_{1A}}+ C_{1A}\delta^{-1}.
\end{equation*} 
We write $\vmax\coloneqq \vmax[1]$.
\end{lemma}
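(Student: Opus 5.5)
We prove the statement by comparison with an explicit radially symmetric classical supersolution of the level-set equation \eqref{intro_eq_uls}. By translation invariance of the class $\Omega$ and the comparison principle (Theorem \ref{comp_thm_unique}), it suffices to build, for each $y\in\Rd$ with $\dist(y,S)>\delta+T\vmax[\delta]$, a barrier showing $y\notin\reach{t}{S}$ for $t\le T$. Set $\rho_0\coloneqq\dist(y,S)$ and use the shrinking-ball ansatz
\begin{equation*}
	w(x,t)\coloneqq \rho(t)-\abs{x-y},
	\qquad \rho(t)\coloneqq \rho_0 - t\vmax[\delta],
\end{equation*}
for $t\in[0,T]$. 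Here $\rho(t)\ge\rho_0-T\vmax[\delta]>\delta$ on the whole interval. Then $\Bp{w>0}=B_{\rho(t)}(y)$ is a shrinking ball which initially avoids $S$. The goal is to show that the complement of this ball contains $\reach{t}{S}$.

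For the supersolution check, note that away from $x=y$ we have
\begin{equation*}
	\nabla w=-\frac{x-y}{\abs{x-y}},\qquad D^2w=-\frac{1}{\abs{x-y}}\pp{\Id-\hat e\otimes\hat e},\qquad \hat e\coloneqq\frac{x-y}{\abs{x-y}}.
\end{equation*}
We need the ball to shrink at least as fast as the front could invade it, i.e.\ we must check the relevant sign of $\pat w+G_{(A,F)}(x,\nabla w,D^2w)$. Using $\abs{\nabla w}=1$, $\abs{F}\le C_{1F}$, and $\abs{\tr(A(\Id-\hat e\otimes\hat e))}\le dC_{1A}$, we get
\begin{equation*}
	\abs{G}\le C_{1F}+\frac{dC_{1A}}{\abs{x-y}}.
\end{equation*}
This is the first technical point. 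The curvature term only needs to be controlled on the region $\abs{x-y}\ge\rho(t)-\delta'$, where the interface of the barrier sits, and there $\abs{x-y}\gtrsim\delta$, so the term is bounded by roughly $C_{1A}\delta^{-1}$. The dimensional factor in front of $C_{1A}$ is absorbed using \eqref{sett_eq_assAgeom} together with the structure $A=\sigma\sigma^\intercal$; this is where the constant $6C_{1A}$ (rather than $dC_{1A}$) should come from, possibly after a more careful accounting of the trace.

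To avoid the singularity at the centre, replace $w$ by a truncation $\min\Bp{w,\delta/2}$, or by a smooth concave profile $\phi(\rho(t)-\abs{x-y})$ with $\phi$ increasing and constant for arguments above $\delta/2$. This makes $w$ constant near $y$, so the degenerate point $x=y$ is harmless in the viscosity sense. We then compare with $\ur{S}$ from Lemma \ref{not_lem_ur}, or directly with $u_S$, choosing the initial datum so that $u_S(\cdot,0)\ge c\,\min\{w(\cdot,0),\delta/2\}^+$, which is possible because $w(\cdot,0)\le0$ on $S$. Comparison then shows that $u_S>0$ on $B_{\rho(t)}(y)$ for all $t\le T$, hence $y\notin\reach{t}{S}$.

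We expect the main obstacle to be exactly this viscosity bookkeeping: the barrier must be an admissible supersolution in the correct direction, including at the kink produced by the truncation, and the curvature contribution must be matched to the precise constant $\max\Bp{C_{1F},6C_{1A}}+C_{1A}\delta^{-1}$. If the constant $6C_{1A}$ does not come out of the direct computation, we would fall back on the quadratic profile $w=\rho(t)^2-\abs{x-y}^2$, whose Hessian is $-2\Id$. The geometric condition \eqref{sett_eq_assAgeom} and the bound $\abs{A}\le C_{1A}$ then control the trace term uniformly, at the price of the explicit numerical factor.
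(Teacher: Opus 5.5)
Your strategy matches the paper's. A ball around a point at distance more than $\delta+T\vmax[\delta]$ from $S$ shrinks at speed $\vmax[\delta]$, so its radius stays above $\delta$, and it serves as a barrier via comparison. The paper takes the indicator $\chi_{B_{\rho(t)}(x_0)}$ and compares it with $\ur{S}$ on the ball by Theorem~\ref{comp_thm_comp}. Your Lipschitz profile compared with $u_S$ on $\Rd$ via Theorem~\ref{comp_thm_comp-unb} and Remark~\ref{comp_rem_condcomp} works equally well; Theorem~\ref{comp_thm_unique}, which you cite, is only uniqueness of set evolutions.

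However, the direction is wrong throughout. To get $u_S\geq c\,\theta(w)$, and hence $u_S>0$ on $B_{\rho(t)}(y)$, the barrier must be a \emph{subsolution}. So you need $\partial_t w+G\leq 0$, which for $\abs{x-y}>\delta$ reads $\vmax[\delta]\geq \tr A(x,-\hat e)/\abs{x-y}+F(x,-\hat e)$. The supersolution inequality you announce fails, since $\partial_t w=-\vmax[\delta]$.

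With the right direction the kinks need no bookkeeping. Take $\theta(s)=\min\{s^+,\epsilon\}$ with $0<\epsilon<\dist(y,S)-\delta-T\vmax[\delta]$; truncating at $\delta/2$ instead would cost a factor $2$ in the curvature term. Then $\theta\circ w$ is a subsolution on $\{\abs{x-y}>\delta\}$ by (the local form of) Theorem~\ref{comp_thm_inv}, and it is constant on $\{\abs{x-y}<\rho(t)-\epsilon\}$. These two open sets cover $\Rd\times(0,T)$. For the initial ordering, take $u_S(\cdot,0)=\min\{\dist(\cdot,S),1\}$ and use $w(x,0)\leq\dist(x,S)$, which is the triangle inequality; knowing only that $w\leq 0$ on $S$ is not enough.

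Your explanation of $6C_{1A}$ is wrong, and it hides a genuine omission. The term $\max\{C_{1F},6C_{1A}\}$ is not multiplied by $\delta^{-1}$, so no manipulation of the trace via \eqref{sett_eq_assAgeom} or $A=\sigma\sigma^\intercal$ can produce it from the curvature contribution. It is a bound on the \emph{forcing}. The lemma must also hold for restricted fields $(A,F)_Q$: this is the standing assumption of Section~\ref{s_pre}, and the lemma is applied to $(A,F)_{\hat U}$ in Lemma~\ref{fluct_lem_reachRestr}. By Definition~\ref{restr_def}, $F_Q$ interpolates to $-\max\{C_{1F},6C_{1A}\}$ away from $Q$, so only $\abs{F_Q}\leq\max\{C_{1F},6C_{1A}\}$ holds, and your bound $\abs{F}\leq C_{1F}$ fails. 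With the correct bound and $0\leq A_Q\leq A$, your computation covers both cases, exactly as in the paper.

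The prefactor of $\delta^{-1}$ is governed by $\sup\tr A$, which the paper controls by $C_{1A}$ through $0\leq A\leq C_{1A}$. Whether a factor $d-1$ appears depends on the matrix norm in \eqref{sett_eq_assAbd}. Your quadratic fallback does not change this: it produces the same term $\tr A/\rho(t)$, and its only gain is smoothness at $x=y$.
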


\subsection{Stable sets}
One of the key concepts in this work is the guarantee that a set will not shrink with respect to the evolution according to a given coefficient field $(A,F)$.
As stated in Definition \ref{box_def_stable}, we call a set \textit{stable} with respect to $(A,F)$ if its negative characteristic function is a stationary supersolution to the level-set equation \eqref{intro_eq_uls}.
The first lemma of this subsection states that these stable sets indeed do not shrink under the evolution with respect to $(A,F)$ and in fact the sets preserve the stability under the evolution.

\begin{lemma}[Preservation of Stability]\label{stable_lem_preserv}
Let $S\subset\Rd$ be a closed set.
If $S$ is stable with respect to $(A,F)$,
then also $\reach[(A,F)]{t}{S}$ is stable and $\reach[(A,F)]{s}{S}\subset \reach[(A,F)]{t}{S}$ for any $0\leq s \leq t$.
\end{lemma}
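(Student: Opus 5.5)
Monotonicity comes first and is proved by comparison: $S\subset\reach{s}{S}$ for every $s\geq 0$, and stability of all $\reach{t}{S}$ then follows from monotonicity together with the semigroup property (Remark \ref{not_rem_SGprop}). Throughout we work with the level-set solution $\uls{S}$ from Definition \ref{not_def_reach}, and we use the freedom in the initial data (Remark \ref{comp_rem_iniData}) to choose it conveniently.

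\textbf{Step 1: $S\subset\reach{s}{S}$ for all $s\geq 0$.} By Definition \ref{box_def_stable}, $w(x,t)\coloneqq 1-\charfun[S](x)$ is a time-independent supersolution of \eqref{intro_eq_uls}, since $\pat w=0$ and $w$ is a stationary supersolution of \eqref{box_eq_stable}. The difficulty is that $w$ is discontinuous, so the comparison principle for continuous solutions does not apply directly. We would handle this in one of two ways.
\begin{itemize}
\item Use the version of the comparison principle for semicontinuous sub- and supersolutions from Appendix \ref{s_visc}. Here $w$ is lower semicontinuous because $S$ is closed, and $\uls{S}$ is continuous, so we may take $\uls{S}(\cdot,0)=\min\{\dist(\cdot,S),1\}$-type data.
\item Use the characterization $\reach{t}{S}=\{x:\ur{S}(x,t)=0\}$ through the minimal (lower semicontinuous) solution, as in Lemma \ref{not_lem_ur}. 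Comparison of the lower semicontinuous envelope then gives $\ur{S}\leq w$.
\end{itemize}
Concretely, let $u_0$ be continuous with $\{u_0\leq 0\}=S$ and $0\leq u_0\leq 1$, for instance $u_0=\min\{1,\dist(\cdot,S)\}$. Then $u_0\leq w$ at time $0$, and comparison yields $\uls{S}(\cdot,t)\leq w$. Hence $x\in S$ implies $\uls{S}(x,t)\leq 0$, which gives $S\subset\reach{t}{S}$.

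The main obstacle is justifying comparison between a continuous subsolution and a discontinuous supersolution whose interface is the unbounded set $\partial S$, with the singular geometric operator at $\nabla u=0$. We would invoke the comparison theorem for semicontinuous functions used for geometric equations (Theorem \ref{comp_thm_unique} in its semicontinuous form, \cite{Giga06}). If that theorem requires bounded or uniformly continuous data, we would instead approximate $w$ from below by the continuous supersolutions $\min\{1,k\,\dist(\cdot,S)\}$. These are not supersolutions in general, however, so we prefer the direct semicontinuous comparison. A third option is to compare $w$ with a continuous solution using invariance of the geometric equation under monotone reparametrizations $\psi\circ u$ with $\psi$ nondecreasing.

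\textbf{Step 2: monotonicity in time.} For $0\leq s\leq t$, the semigroup property together with the monotonicity of $\reach{t-s}{\cdot}$ with respect to set inclusion (again by comparison) gives
\[
\reach{t}{S}=\reach{s}{\reach{t-s}{S}}\supset\reach{s}{S},
\]
using $\reach{t-s}{S}\supset S$ from Step 1.

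\textbf{Step 3: stability of $\reach{t}{S}$.} By Lemma \ref{not_lem_ur}, $\ur{S}(x,\tau)=1-\charfun[{\reach{\tau}{S}}](x)$ is a supersolution in $\Rd\times(0,\infty)$, and by Step 2 it is nonincreasing in $\tau$. Fix $t>0$ and let $\varphi$ touch $1-\charfun[{\reach{t}{S}}]$ from below at $x_0$. Then $\Phi(x,\tau)\coloneqq\varphi(x)$ touches $\ur{S}$ from below at $(x_0,t)$ on the backward neighbourhood $\tau\leq t$, because $\ur{S}(x,\tau)\geq\ur{S}(x,t)\geq\varphi(x)$ there. Supersolutions are characterized by one-sided (parabolic) test functions in the time direction, so the supersolution inequality at $(x_0,t)$ gives $0+G_{(A,F)}(x_0,\nabla\varphi,D^2\varphi)\geq 0$, with the usual envelope convention when $\nabla\varphi=0$. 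This is exactly the stationary supersolution property \eqref{box_eq_stable}. For $t=0$ the claim is the hypothesis, and the case $t>0$ is independent of the choice of $t$.

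As an alternative to Step 3, one can obtain the supersolution property via lower semicontinuous limits as $\tau\uparrow t$.
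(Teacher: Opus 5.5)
Your proof is correct. Your Step 3 is the paper's Step 2: monotonicity turns a spatial subjet $(\xi,X)$ of $1-\chi_{\reach{t}{S}}$ at $x_0$ into a relaxed (past-only) parabolic subjet $(0,\xi,X)\in\subPt\ur{S}(x_0,t)$, and localization in time (Proposition \ref{comp_prop_local}) yields the stationary inequality. Cite that proposition explicitly, since it is the nontrivial input there.

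The genuine difference is how you obtain monotonicity. The paper shows that the running infimum $\wt{u}(x,t)=\inf_{s\in[0,t]}\ur{S}(x,s)$ is itself a supersolution. It distinguishes whether the infimum is attained at some $t_1>0$, where it uses the supersolution property of $\ur{S}$, or only at $t_1=0$, where it uses the stability of $S$. A single comparison with $\uls{S}$ then gives $\bigcup_{s\le t}\reach{s}{S}=\reach{t}{S}$ directly.

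You instead compare $\uls{S}$ only with the stationary supersolution $1-\charfun[S]$. The supersolution check is trivial there, since every parabolic subjet $(a,\xi,X)$ of a time-independent function has $a=0$ and $(\xi,X)\in\subJ(1-\charfun[S])(x_0)$. This gives only $S\subset\reach{t}{S}$. You then upgrade to full monotonicity with the semigroup property (Remark \ref{not_rem_SGprop}) and Lemma \ref{not_lem_comp}(a), neither of which depends on the present lemma. So your route avoids the running-infimum verification at the cost of importing the semigroup property from \cite{Giga06}, while the paper's argument does not need it.

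The obstacle you flag in Step 1 is not real. Theorem \ref{comp_thm_comp-unb} with Remark \ref{comp_rem_condcomp} a), b) applies verbatim to the bounded solution $\uls{S}$ with uniformly continuous initial data $\min\{1,\dist(\cdot,S)\}\le 1-\charfun[S]$ and the bounded lower semicontinuous supersolution $1-\charfun[S]$, because only one of the two initial data needs to be uniformly continuous. This is exactly how the paper proves Lemma \ref{not_lem_comp}(a).

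You should therefore drop the hedges, which are partly wrong. Theorem \ref{comp_thm_unique} is not a comparison principle. Your second bullet would require $\ur{S}$ to be a subsolution, whereas Lemma \ref{not_lem_ur} only gives the supersolution property.
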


Next, we translate the comparison principle for viscosity solutions to our set evolutions:
When a set envelops another set, at any time the evolution of the first set will envelop that of the other.
As long as the larger set is stable, this stays true if we replace the full envelopment with $h$-envelopment from Definition \ref{veff_def_fatstab}, that is when ignoring holes of width smaller than $2h$.

\begin{lemma}[The comparison principle for set evolutions]$\quad$ \label{not_lem_comp}
\begin{enumerate}[label=(\alph*), left=0pt]
\item	\label{not_list_lemcomp1}
		Let $S_1\subset S_2$. 
		Then $\reach{t}{S_1}\subset \reach{t}{S_2}$ holds for all $t\geq 0$.
\item	\label{not_list_lemcomp2}
		Let $S_1\hsubs S_2$ for some $h>0$. 
		If in addition $S_2$ is stable, then $\reach{t}{S_1}\hsubs \reach{t}{S_2}$ holds for all $t\geq 0$.
\end{enumerate}
\end{lemma}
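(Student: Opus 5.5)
Part \ref{not_list_lemcomp1} is the standard comparison principle. Let $u_{S_1}$ and $u_{S_2}$ be the level-set solutions from Definition \ref{not_def_reach}. The reaching sets do not depend on the chosen initial data, so I pick initial data ordered as $u_{S_2}(\cdot,0)\le u_{S_1}(\cdot,0)$. A concrete choice is $u_{S_i}(\cdot,0)=\min\{\dist(\cdot,S_i),1\}$, which is ordered because $S_1\subset S_2$. The comparison principle for \eqref{intro_eq_uls} (Theorem \ref{comp_thm_unique}, with uniform continuity on finite time intervals) then gives $u_{S_2}\le u_{S_1}$ on $\Rd\times[0,\infty)$. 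Taking $0$-sublevel sets yields $\reach{t}{S_1}\subset\reach{t}{S_2}$.

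For part \ref{not_list_lemcomp2} I reduce to part \ref{not_list_lemcomp1} by filling in the small holes. Split $\comp{(S_2)}$ into its connected components. Let $\mathcal{H}_{\mathrm{small}}$ be the components $H$ with $H\subset S_2+\ov{B}_h$, and set $\wt S_2\coloneqq S_2\cup\bigcup_{H\in\mathcal{H}_{\mathrm{small}}}H$; take the closure if needed, which adds nothing outside $S_2+\ov{B}_h$ that matters. The hypothesis $S_1\hsubs S_2$ says exactly that every component meeting $S_1$ is small, so $S_1\subset\wt S_2$. Part \ref{not_list_lemcomp1} gives $\reach{t}{S_1}\subset\reach{t}{\wt S_2}$.

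It then suffices to show $\reach{t}{\wt S_2}\hsubs\reach{t}{S_2}$. The natural route is to show that the components of $\comp{(S_2)}$ not meeting $\wt S_2$ (the large holes) evolve independently of what happens inside the small holes. Stability is essential here: by Lemma \ref{stable_lem_preserv}, $\reach{t}{S_2}$ is nondecreasing in $t$, so $\comp{\reach{t}{S_2}}$ shrinks and each of its components sits inside a component of $\comp{(S_2)}$.

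Take a component $H'$ of $\comp{\reach{t}{S_2}}$, inside a component $H$ of $\comp{(S_2)}$. If $H$ is small, then $H'\subset H\subset S_2+\ov{B}_h\subset\reach{t}{S_2}+\ov{B}_h$, which is admissible. If $H$ is large, I need $H'\cap\reach{t}{\wt S_2}=\emptyset$. This should follow from a localization argument. In $\ov H$, the sets $\wt S_2$ and $S_2$ have the same trace near $H$, since $\wt S_2\cap H=\emptyset$. Stability of $S_2$ means that $1-\charfun[\reach{t}{S_2}]$ acts as a barrier: the function equal to $1-\charfun[\reach{t}{\wt S_2}]$ off $H$ and to $1-\charfun[\reach{t}{S_2}]$ on $H$ should again be a supersolution. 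Concretely, I would construct $w\coloneqq\min\{1-\charfun[\reach{t}{\wt S_2}],\,\charfun[\ov H]\cdot(\text{stuff})+\dots\}$, or more simply compare $\wt S_2$ with the set $S_2\cup(\Rd\setminus \ov{H})$. That set should be stable because it is a union of stable pieces glued away from the interface in $\ov H$, and a minimum of supersolutions is a supersolution. Monotonicity would then give $\reach{t}{\wt S_2}\cap H\subset\reach{t}{S_2\cup\comp{\ov H}}\cap H=\reach{t}{S_2}\cap H$.

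The main obstacle is this last step: making rigorous that the evolution inside a large component $H$ is unaffected by filling the small holes. The interfaces of different components may touch, since $\partial H$ and $\partial H_{\mathrm{small}}$ can share points of $\partial S_2$. To handle this I would first try to prove a localized version of Lemma \ref{not_lem_ur} and of Lemma \ref{stable_lem_preserv}. The claim is that for a stable $S_2$ and any union $U$ of components of $\comp{(S_2)}$, the set $S_2\cup U$ is stable, via a direct check of the viscosity supersolution property of $1-\charfun[S_2\cup U]$ at touching points. I expect the lower semicontinuity and the test-function inequalities to reduce to those of $1-\charfun[S_2]$, because near any boundary point of $S_2\cup U$ the function either agrees with $1-\charfun[S_2]$ or is locally constant.
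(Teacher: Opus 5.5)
\textbf{Verdict: there is a genuine gap in part (b), at the step you yourself flag as the main obstacle.}

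\textbf{What is fine.} Part (a) is correct, though the result you need there is the comparison principle, Theorem~\ref{comp_thm_comp-unb} with Remark~\ref{comp_rem_condcomp}, not Theorem~\ref{comp_thm_unique}. In part (b) your reduction is also sound. The set $\wt S_2$ is closed, $S_1\subset\wt S_2$, and part (a) gives $\reach{t}{S_1}\subset\reach{t}{\wt S_2}$. Your case analysis then correctly reduces everything to showing $\reach{t}{\wt S_2}\cap H\subset\reach{t}{S_2}$ for every large component $H$ of $\comp{(S_2)}$.

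\textbf{Where it fails.} You write $\reach{t}{\wt S_2}\cap H\subset\reach{t}{\Rd\setminus H}\cap H=\reach{t}{S_2}\cap H$, using that $S_2\cup(\Rd\setminus\ov H)=\Rd\setminus H$ because $\partial H\subset S_2$. You propose to justify this by proving that $S_2\cup U$ is stable. The inclusion is part (a), but the equality is exactly the localization statement to be proved, and stability of $\Rd\setminus H$ does not give it. Stability only says that $\reach{t}{\Rd\setminus H}$ contains $\Rd\setminus H$ and is nondecreasing. Monotonicity only yields $\reach{t}{S_2}\subset\reach{t}{\Rd\setminus H}$, which is the opposite of the inclusion you need inside $H$.

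Stability of $S_2\cup U$ does in fact hold, but not for the reason you give. At a point of $\partial S_2$ bordering both a filled and an unfilled hole, $1-\chi_{S_2\cup U}$ neither agrees locally with $1-\chi_{S_2}$ nor is locally constant. What works is $1-\chi_{S_2\cup U}\le 1-\chi_{S_2}$ with equality on $S_2$, so subjets transfer. In any case it is not the lemma you need.

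\textbf{The missing idea.} Use stability of $S_2$ as a boundary barrier on $\partial H$, rather than proving stability of an enlarged set.
\begin{itemize}
\item By Lemma~\ref{stable_lem_preserv}, $\partial H\subset S_2\subset\reach{t}{S_2}$ for all $t\ge 0$.
\item Hence a continuous solution $\uls{S_2}$ with values in $[0,1]$ vanishes on $\partial H\times[0,T]$.
\item The supersolution $1-\chi_{\reach{t}{\wt S_2}}$ is nonnegative there and equals $1$ on $H$ at $t=0$.
\item The comparison principle on the open, possibly unbounded domain $H$ (Theorem~\ref{comp_thm_comp-unb} with Remark~\ref{comp_rem_condcomp}\ref{comp_cond1_bound} and \ref{comp_cond3_u}) gives $\uls{S_2}\le 1-\chi_{\reach{t}{\wt S_2}}$ in $H\times(0,T)$, i.e.\ $\reach{t}{\wt S_2}\cap H\subset\reach{t}{S_2}$.
\end{itemize}
This is the paper's argument, except that the paper compares $\uls{S_2}$ directly with $1-\chi_{\reach{t}{S_1}}$ on the union $\comp{(\wt S_2)}$ of all large holes. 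That makes the detour through part (a) and $\reach{t}{\wt S_2}$ unnecessary.

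Your first idea would also have worked by the same mechanism. Take the function equal to $1-\chi_{\reach{t}{\wt S_2}}$ off $H$ and to $1-\chi_{\reach{t}{S_2}}$ on $H$. By part (a) it lies below $1-\chi_{\reach{t}{S_2}}$ everywhere. Because $S_2$ is stable, it coincides with it (value $0$) on $\partial H$ for all times. So its subjets at $\partial H$ are subjets of a supersolution, and it is itself a supersolution. You left this unproved and replaced it by the stability claim, which does not close the argument.
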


We use the above comparison principle to obtain the following lemma for the iterative application of the effective minimum speed from Definition \ref{veff_def_fatstab}.

\begin{lemma}[Iterative application of the effective minimum speed]\label{stable_lem_vmin-iterative}
Let $S\subset\Rd$ be a stable set with respect to some coefficient field $(A,F)$.
Assume that $(A,F)$ admits $v>0$ as an effective minimum speed of propagation on a scale $h>0$ in some set $U\subset\Rd$ as in Definition \ref{veff_def_fatstab}.
Then for any $n\in\N$ with $U\supset\pp{S+n\ov{B}_h}\cap\pp{\partial S\cup\comp{S}}$ there holds
\begin{equation*}
	S+n\ov{B}_h\hsubs \reach[(A,F)]{nv^{-1}}{S}.
\end{equation*}
\end{lemma}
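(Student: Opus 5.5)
We argue by induction on $n$, using the semigroup property (Remark \ref{not_rem_SGprop}), the preservation of stability (Lemma \ref{stable_lem_preserv}) and the comparison principle for $h$-envelopment (Lemma \ref{not_lem_comp}\ref{not_list_lemcomp2}). Write $S_k\coloneqq\reach{kv^{-1}h}{S}$. By Lemma \ref{stable_lem_preserv} every $S_k$ is stable and $S\subset S_k\subset S_{k+1}$. The time in the statement should be read as $nv^{-1}h$, one time step $v^{-1}h$ per layer of thickness $h$. The claim to prove inductively is
\begin{equation*}
	S+k\ov{B}_h\hsubs S_k\qquad\text{for }k=0,\dots,n.
\end{equation*}
The case $k=0$ is trivial since $S\subset S$.

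For the induction step, assume $S+k\ov{B}_h\hsubs S_k$ with $k<n$. The effective minimum speed \eqref{veff_eq_effspeed} at time $t=kv^{-1}h$ gives
\begin{equation*}
	\pp{S_k\cap U}+\ov{B}_h\hsubs S_{k+1}.
\end{equation*}
We then need to get from this to $S+(k+1)\ov{B}_h\hsubs S_{k+1}$. Write $S+(k+1)\ov{B}_h=(S+k\ov{B}_h)+\ov{B}_h$. Points of this set lying in $S$ are already contained in $S_{k+1}$. For the remaining points $y$, pick $x\in S+k\ov{B}_h$ with $\abs{x-y}\le h$. Then $x$ either lies in $S_k$ or lies in an enclosed hole $H$ of $S_k$ with $H\subset S_k+\ov{B}_h$. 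In both cases the relevant points lie in $\pp{S+n\ov{B}_h}\cap\pp{\partial S\cup\comp{S}}\subset U$, or else they lie inside $S\subset S_{k+1}$. So the speed condition applies to them.

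The delicate part is this bookkeeping of holes: enclosures of $S_k$ do not obviously remain enclosures of $S_{k+1}$, and the $h$-neighbourhood of a hole point might reach further than the neighbourhood of $S_k\cap U$ does. To handle this, we will use that $h$-envelopment is monotone under enlarging the outer set. If $A\hsubs B$ and $B\subset B'$, then every component of $\comp{(B')}$ is contained in a component of $\comp{B}$. Such a component either misses $A$, or is contained in $B+\ov{B}_h\subset B'+\ov{B}_h$. Hence $A\hsubs B'$. Together with $S_k\subset S_{k+1}$, this lets us transport the inductive envelopment forward in time.

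For the hole points, we would instead use a direct component argument. Let $G$ be a component of $\comp{(S_{k+1})}$ meeting $S+(k+1)\ov{B}_h$. If $G$ meets $(S_k\cap U)+\ov{B}_h$, then $G\subset S_{k+1}+\ov{B}_h$ by the speed condition, and we are done. Otherwise, $G$ meets a point within distance $h$ of a hole $H$ of $S_k$. Since $G$ is connected and disjoint from $S_k\subset S_{k+1}$, the set $G$ then lies in $H$ or in a component touching $\partial H\subset S_k$. In either case it meets $(S_k\cap U)+\ov{B}_h$, because $\partial H\subset S_k\cap U$ by the hypothesis on $U$. This is the step I expect to be the main obstacle, because the components must be tracked carefully. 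If a cleaner route is needed, I would first replace $S_k$ by its hole-filled version and apply Lemma \ref{not_lem_comp}\ref{not_list_lemcomp2}, which is allowed since $S_k$ is stable, and then use \eqref{veff_eq_effspeed}.
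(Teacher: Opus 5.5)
Your strategy is the same as the paper's: induction over layers of thickness $h$, one application of \eqref{veff_eq_effspeed} at $t=kv^{-1}h$ per step, and monotonicity of $\hsubs$ under enlarging the outer set. Your reading of the time as $nv^{-1}h$ is also correct. The gap is in the step you flag as the main obstacle, which rests on a false claim. The inclusion $\partial H\subset S_k\cap U$ does not follow from $U\supset(S+n\ov{B}_h)\cap(\partial S\cup\comp{S})$. The induction hypothesis only gives $H\subset S_k+\ov{B}_h$, so $H$ can be a long thin enclosure whose boundary runs far outside $S+n\ov{B}_h$, since the evolution may be much faster than $v$. In particular, in the subcase $G\subset H$ your assertion that $G$ meets $(S_k\cap U)+\ov{B}_h$ need not hold. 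For $k+1=n$, $G$ can be a small remnant of $H$ near $\partial(S+n\ov{B}_h)$ all of whose $h$-close points of $S_k$ lie outside $S+n\ov{B}_h$. Your case $x\in S_k$ is also left vague when $x\in\operatorname{int}S$, where $x\notin U$ is possible. The hole-filling fallback does not help either: \eqref{veff_eq_effspeed} is a property of the evolution $t\mapsto\reach{t}{S}$ only and cannot be applied to the hole-filled set.

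The repair is to split on the segment $[x,y]$ rather than on $x$. This is exactly the paper's choice of a point of $S_{n-1}\cap(S+n\ov{B}_h)$ on the shortest path.
\begin{itemize}
\item If $[x,y]\cap S_k=\emptyset$, then $x\notin S_k$ and $x,y$ lie in the same component $H$ of $\comp{(S_k)}$. Since $H$ meets $S+k\ov{B}_h$ at $x$, the induction hypothesis gives $H\subset S_k+\ov{B}_h$. As $G$ is connected, disjoint from $S_k$ and contains $y$, we get $G\subset H\subset S_{k+1}+\ov{B}_h$, with no use of the speed condition.
\item If $[x,y]\cap S_k\neq\emptyset$, let $z$ be the point of $S_k\cap[x,y]$ closest to $y$; note $y\notin S_{k+1}\supset S_k$. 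Then $z\in\partial S_k$, so $z\notin\operatorname{int}S$ because $S\subset S_k$. Also $\abs{z-x}\le h$ gives $z\in S+(k+1)\ov{B}_h\subset S+n\ov{B}_h$. Hence $z\in S_k\cap U$ and $y\in(S_k\cap U)+\ov{B}_h$, and \eqref{veff_eq_effspeed} yields $G\subset S_{k+1}+\ov{B}_h$.
\end{itemize}
The second case also covers $x\in S_k$, including $x\in\operatorname{int}S$, and completes the induction.
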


Assumption \ref{veff_aP_star} provides an effective minimum speed $\vmineff$ on scales $h\geq h_0$, which applies for an area $U\subset\Rd$ if the event $\Espeed{h}(U)$ holds. 
For bounded $U$ and large enough $h\geq h_0$, this event has high probability.
We now use Lemma \ref{stable_lem_vmin-iterative} to show that this implies that the arrival time of an $h$-fat set in a target area $\ov{B}_h(x_0)$ is very likely smaller than the time $\timebound{h}{x_0}{S}$, which essentially corresponds to the arrival time of the same set expanding continuously at that minimum speed.

\begin{lemma}[Guaranteed upper bound for arrival times in target areas]\label{stable_lem_TvsEspeed}
Let $x_0\in\Rd$ and $S\subset\Rd$ be an $h$-fat set with bounded boundary.
Then with $\vmineff>0$, $\cstable\geq 0$ from Assumption \ref{veff_aP_star} there holds
\begin{equation*}
	\min\Bp{\met[]{}{y}{S}\,:\,y\in\ov{B}_h(x_0)}\charfun [\Espeed{h}(U)]
	\leq		\timebound{h}{x_0}{S}
	\coloneqq \vmineff^{-1}\dist(x_0,S)+ \cstable h
\end{equation*}
for any $U\subset\Rd$ with 
\begin{equation*}
	U\supset \pp{S+\dist(x_0,S)\ov{B}_1}\cap\pp{\partial S\cup\comp{S}}.
\end{equation*}
This means that the artificial upper bound in the definition of the truncated arrival times $\met[(A,F)]{h}{x_0}{S}$ in the target area $\ov{B}_h(x_0)$ only kicks in for $(A,F)\notin\Espeed{h}(U)$ from Assumption \ref{veff_aP_star}, which by this assumption is very unlikely, see \eqref{veff_eq_probBound}.
\end{lemma}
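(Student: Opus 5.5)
The plan is to work on the event $\Espeed{h}(U)$ and to show directly that $\reach{T}{S}\cap\ov{B}_h(x_0)\neq\emptyset$ for $T\coloneqq\timebound{h}{x_0}{S}=\vmineff^{-1}\dist(x_0,S)+\cstable h$. Off this event the left-hand side vanishes, so there is nothing to prove there. Write $R\coloneqq\dist(x_0,S)$. If $R\leq h$, the claim is trivial because $S\cap\ov{B}_h(x_0)\neq\emptyset$. Otherwise fix $y\in\partial S$ with $\abs{x_0-y}=R$.

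\textbf{Step 1: reduce to a stable set.} Since $\partial S\subset U$, monotonicity \ref{veff_Porder} and \ref{veff_Pspeed} of Assumption \ref{veff_aP_star} give a stable $(h,\cstable)$-approximation $S_h\subset S$ for which $(A,F)$ admits $\vmineff$ as an effective minimum speed on the scale $h$ in $U$. I would then set $S'\coloneqq\reach{\cstable h}{S_h}$. By Lemma \ref{stable_lem_preserv} the set $S'$ is stable, and by definition of the approximation $S\hsubs S'$. By the semigroup property and Lemma \ref{not_lem_comp}\ref{not_list_lemcomp1} (using $S_h\subset S$) we have $\reach{t}{S'}=\reach{\cstable h+t}{S_h}\subset\reach{\cstable h+t}{S}$. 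It therefore suffices to find a point of $\reach{nh/\vmineff}{S'}$ in $\ov{B}_h(x_0)$, where $n\coloneqq\lfloor R/h\rfloor$, since then $\cstable h+nh/\vmineff\leq T$.

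\textbf{Step 2: iterate the effective speed.} Because $S'\supset S_h$ and $\reach{t}{S'}$ is the evolution of $S_h$ shifted in time, the effective minimum speed property carries over to $S'$. I would apply Lemma \ref{stable_lem_vmin-iterative} to $S'$ with this $n$; here the hypothesis $U\supset(S+R\ov{B}_1)\cap(\partial S\cup\comp{S})$ must be checked to cover $(S'+n\ov{B}_h)\cap(\partial S'\cup\comp{S'})$. That check needs $S'$ to lie between $S$ minus its holes of width $<2h$ and $S$ itself, which requires care because $S'$ may stick out of $S$ by up to $\vmax\cstable h$. The lemma yields
\begin{equation*}
	S'+n\ov{B}_h\hsubs\reach{nh/\vmineff}{S'}\eqqcolon Q.
\end{equation*}

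\textbf{Step 3: locate a point of $Q$ near $x_0$; this is the main obstacle.} Assume $x_0\notin Q$ and let $H$ be the connected component of $\comp{Q}$ containing $x_0$. If $H$ meets $S'+n\ov{B}_h$, then $H\subset Q+\ov{B}_h$ by the definition of $\hsubs$, so $\dist(x_0,Q)\leq h$ and we are done.

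Otherwise, consider the segment from $x_0$ to $y$. It is connected, starts in $H$, and must leave $H$ before reaching $S'+n\ov{B}_h$. Its first exit point $p$ lies in $Q$. The key estimate is $\abs{p-x_0}\leq h$, which requires the points of the segment at distance $\geq h$ from $x_0$ to lie in $S'+n\ov{B}_h$ or in a hole of $Q$ already shown to be enveloped. To get this, I would use that $y\in S$ is covered by $S'$ up to a hole $H'$ of $\comp{S'}$ with $H'\subset S'+\ov{B}_h$. I would also use the $h$-fatness of $S$: the interior ball $\ov{B}_h(w)\subset S$ tangent at $y$ lies on the line through $x_0$ and $y$. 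This should absorb the additional $h$ that the envelopment would otherwise cost, so that no extra time beyond $\cstable h$ is lost.

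If the segment argument turns out to lose exactly one $h$, I would first try to shorten it by exploiting the inclusion $S_h\supset\Rd\setminus(\comp{S}+\cstable\vmax B_h)$ near $w$. Once $\dist(x_0,Q)\leq h$ is established, Step 1 gives $\min_{\ov{B}_h(x_0)}\met{}{\cdot}{S}\leq T$ on $\Espeed{h}(U)$, which is the claim.
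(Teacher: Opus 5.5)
Steps 1 and 2 follow the paper's route: the same $S'=\reach{\cstable h}{S_h}$ and the same $n=\lfloor R/h\rfloor$ applications of the effective speed. Step 3, however, has a genuine gap, and it is exactly the loss you anticipate. The only relation between $y$ and $S'$ you use is $y\in S'+\ov B_h$ (via the hole $H'$). From that, the segment $[x_0,y]$ is only guaranteed to meet $S'+n\ov B_h$ at distance $R-(n-1)h\in[h,2h)$ from $x_0$. This gives only $\dist(x_0,Q)<2h$, i.e.\ the bound $\timebound{h}{x_0}{S}+\vmineff^{-1}h$ instead of $\timebound{h}{x_0}{S}$.

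Your proposed remedies cannot recover the missing $h$. The tangent interior ball $\ov B_h(w)$ lies beyond $y$, with $\abs{x_0-w}=R+h$. So even knowing $w\in S_h$ (which $S_h\supset\Rd\setminus(\comp{S}+\cstable\vmax B_h)$ only yields if $\cstable\vmax\le 1$) again loses one $h$.

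What is missing is the elementary sharpening of $h$-envelopment that the paper uses, \eqref{veff_eq_hsubs-cons}: if $S_3\hsubs S_4$, then $S_3+\ov B_h\subset S_4+\ov B_h$. To see this, take $x\notin S_4$ and $z\in S_3$ with $\abs{x-z}\le h$. Either the segment $[x,z]$ meets $S_4$, or $x$ and $z$ lie in one component of $\comp{S_4}$; that component then meets $S_3$ and is therefore contained in $S_4+\ov B_h$. Applied to $S\hsubs S'$, this gives $x_0\in S+(n+1)\ov B_h\subset S'+(n+1)\ov B_h$. Applied to $S'+n\ov B_h\hsubs Q$, it gives $x_0\in Q+\ov B_h$ at once, with no fatness needed. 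In your own terms: the point of $[x_0,y]$ at distance $h$ from $y$ already lies in $S'+\ov B_h$. Hence the point at distance $nh$ from $y$, which is within $R-nh<h$ of $x_0$, lies in $S'+n\ov B_h$, and your component argument closes.

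The hypothesis check in Step 2 is also left open, and it is not implied by the assumption on $U$. The set $(S'+n\ov B_h)\cap(\partial S'\cup\comp{S'})$ can leave $S+R\ov B_1$ where $S'$ protrudes from $S$, and it contains holes of $S'$ in the interior of $S$. You do not need it, though. It suffices to run the induction behind Lemma \ref{stable_lem_vmin-iterative} on the sets $S+k\ov B_h$, in line with the paper, which states its requirement in terms of $(S+n\ov B_h)\cap(\partial S\cup\comp{S})$. Concretely, show $S+k\ov B_h\hsubs Q_k\coloneqq\reach{\cstable h+k\vmineff^{-1}h}{S_h}$ for $k\le n$, starting from $S\hsubs S'\subset Q_k$, which already handles points of $S$. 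For $x\in(S+k\ov B_h)\setminus S$, pick $z\in S+(k-1)\ov B_h$ with $\abs{x-z}\le h$, and let $y$ be the first point of $[x,z]$ in $Q_{k-1}\cup S$. If there is no such point, or $y\in S\setminus Q_{k-1}$, then $x$ lies in an $h$-enveloped hole of $Q_{k-1}$. Otherwise $y\in Q_{k-1}\cap(\partial S\cup\comp{S})\cap(S+k\ov B_h)\subset U$, so the effective speed applies at $y$ and yields $\Bp{x}\hsubs Q_k$.
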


\subsection{Restricted solutions}
In order to apply the finite range of dependence condition \ref{sett_aP_fin}, we need to guarantee that some properties of solutions depend on the coefficient field only in a fixed area, as discussed for example in the proof of Lemma \ref{str_lem_meas}.
In the companion paper \cite{FI26}, we further want to ensure that a set evolves within or along a fixed area.
For this purpose, we introduce suitably restricted coefficient fields.

\subsubsection*{Introduction of restricted coefficient fields}
Given an admissible coefficient field $(A,F)$, we restrict it to some set $Q\subset\Rd$ by leaving it untouched on $Q$ and then continuously transition to constant coefficients within $Q+\ov{B}_1\setminus Q$. 
In fact, instead of $1$ we could choose an arbitrary small scale for the transition, but this would only impact the constants in our results.
While in principle we could transition to arbitrary coefficients, we choose a very negative forcing, such that sets starting within $Q$ are guaranteed to stay trapped within a small neighborhood of $Q$.

\begin{definition}[Restricted coefficient fields]\label{restr_def}
Let $Q\subset\Rd$ be a Borel set. 
Given $(A,F)\in\Omega$ with $A=\sigma\sigma^\intercal$ we define $(A,F)_Q\coloneqq \pp{A_Q,F_Q}$ with $A_Q=\sigma_Q\sigma_Q^{\intercal}$, the coefficient field restricted to $Q$, as 
\begin{equation}\label{box_eq_defAQ}
	\sigma_Q(x,e)\coloneqq
	\left\lbrace\begin{aligned}
		&\sigma(x,e)							&&\text{for }\dist(x,Q)\leq \frac{2}{3},\\
		&3\pp{1-\dist(x,Q)} \sigma(x,e)		&&\text{for }\dist(x,Q)\in\bp{\frac{2}{3},1},\\
		&0								&&\text{for }\dist(x,Q)\geq 1,\\
	\end{aligned}\right.
\end{equation}
\begin{equation}\label{box_eq_defFQ}
	F_Q(x,e)\coloneqq
	\left\lbrace\begin{aligned}	
		&F(x,e)			\qquad\qquad\qquad	\qquad\qquad\qquad\,		\text{for }\dist(x,Q)\leq 0,\\					
		&3\pp{\frac{1}{3}-\dist(x,Q)} F(x,e) - 3\dist(x,Q)\max\Bp{C_{1F},\,6C_{1A}}	\\			
		&\qquad\quad		\qquad\qquad\qquad	\qquad\qquad\qquad\,		\text{for }\dist(x,Q)\in\bp{0,\frac{1}{3}},\\				
		&-\max\Bp{C_{1F},\,6C_{1A}}	\,		\qquad\qquad\qquad\,		\text{for }\dist(x,Q)\geq \frac{1}{3}.
	\end{aligned}\right.
\end{equation}
\end{definition}

By definition, the evolution of a set with respect to $(A,F)_Q$ only depends on $(A,F)$ restricted to $Q+B_1$ and hence should be $\F(Q+B_1)$ measurable, see Lemma \ref{meas_lem_closure}.
As a consequence of this definition, any $S\subset Q$ stays stuck in $Q+B_{\frac{2}{3}}$ when evolving by \eqref{intro_eq_uls} with coefficients $(A,F)_Q$.
In addition, in this case the evolution of a set with respect to $(A,F)_Q$ provides a lower bound to the evolution of the same set with respect to the unmodified coefficient field $(A,F)$.
For the rigorous statement see Lemma \ref{restr_lem_PropRestr} below.

\begin{remark}[Well-posedness of evolution for restricted coefficient fields]
Existence, uniqueness and uniform continuity for solutions of the level-set equation as well as invariance for the evolution of the level sets as stated in Definition \ref{not_def_reach} for $(A,F)\in\Omega$ also hold if $(A,F)=(\wt{A},\wt{F})_Q$ for some $(\wt{A},\wt{F})\in\Omega$ and $Q\subset\Rd$.
\end{remark}

\subsubsection*{Properties of restricted coefficient fields}
As already stated above, the choice for the constant coefficients in the definition of restricted coefficient fields $(A,F)_Q$ means that sets stay trapped in a neighborhood of $Q$, which we make precise in the following lemma.

\begin{lemma}[Properties of restricted coefficient fields]\label{restr_lem_PropRestr}
Let $u$ be a supersolution to \eqref{intro_eq_uls} with coefficients $(A,F)_{Q}$ for $(A,F)\in\Omega$ and some closed set $Q\subset\Rd$, which satisfies 
\begin{align*}
	u(\cdot,0)&\leq C	\quad\text{in }\Rd,	&
	\Bp{x\,:\, u(\cdot,0)<C} \subset \comp{\pp{\restcomp}},
\end{align*}
where, as in Figure \ref{restr_fig_restcomp},
\begin{equation}\label{restr_eq_SQ}
	\restcomp\coloneqq \bigcup\Bp{B_{\frac{1}{3}}(x)\,:\,\dist(x,Q)\geq \frac{2}{3}},
	\quad\text{ see Figure \ref{restr_fig_restcomp}.}
\end{equation}
Then for any $t\geq 0$ there holds
\begin{align*}
	u(\cdot,t)&\leq C	\quad\text{in }\Rd,	&
	\Bp{x\,:\, u(\cdot,t)<C} \subset \comp{\pp{\restcomp}}.
\end{align*}
In addition, $u$ is a supersolution also to \eqref{intro_eq_uls} with coefficients $(A,F)_{\wt{Q}}$ for any $\wt{Q}\supset Q$. 

In particular, for any $S\subset \comp{\pp{\restcomp}}$ and $\wt{Q}\supset Q$ there holds
\begin{enumerate}[label=(\alph*), left=0pt]
\item	\label{restr_prop1_restr}
		$\reach[(A,F)_Q]{t}{S}\subset \comp{\pp{\restcomp}}\subset Q+ \ov{B}_{\frac{2}{3}}$ for any $t\geq 0$.
\item	\label{restr_prop2_stable}
		If $S$ is stable with respect to $(A,F)_Q$, then the set is stable with respect to $(A,F)_{\wt{Q}}$.
\item	\label{restr_prop3_super}
		$\ur[(A,F)_Q]{S}=1-\charfun[{\reach[(A,F)_Q]{\cdot}{S}}]$ as in Lemma \ref{not_lem_ur} is a supersolution to \eqref{intro_eq_uls} with coefficients $(A,F)_{\wt{Q}}$.
\end{enumerate}
\end{lemma}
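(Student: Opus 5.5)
The core of the argument is a barrier, i.e.\ a comparison argument for viscosity supersolutions. Set $c_0\coloneqq\max\Bp{C_{1F},6C_{1A}}$. The plan is to show that the function
\begin{equation*}
	w(x,t)\coloneqq \min\Bp{u(x,t),\,C}
\end{equation*}
is again a supersolution whose initial data agree with those of $u$. Equivalently, we show that on $\restcomp$ the constant $C$ is a strict subsolution barrier which $u$ cannot drop below. The forcing on $\Bp{x:\dist(x,Q)\geq\frac13}$ equals $-c_0$ and $\sigma_Q=0$ where $\dist(x,Q)\geq 1$. Hence on $\restcomp$, whose points all satisfy $\dist(\cdot,Q)>\frac13$, the operator reads
\begin{equation*}
	\pat u-\tr\pp{A_Q D^2u}-c_0\abs{\nabla u}.
\end{equation*}
The negative forcing pushes level sets inward, i.e.\ sets shrink there.

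To prove $u\geq C$ on $\restcomp$ for all $t$, I would use radial barriers centred at points $x$ with $\dist(x,Q)\geq\frac23$. On such a ball $B_{1/3}(x)$ we have $\dist(\cdot,Q)>\frac13$, so $F_Q=-c_0$ there. For a ball of radius $\rho\leq\frac13$ evolving under normal speed $\le \tr(A)$-curvature term minus $c_0$, the bounds $\abs{A}\leq C_{1A}$ and $c_0\geq 6C_{1A}$ give a shrinking speed of at least $c_0-(d-1)C_{1A}\rho^{-1}$. This is problematic for small $\rho$, since the curvature then helps shrinking, which is fine. So the complement of a ball expands.

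Concretely, I would take
\begin{equation*}
	\phi(y,t)\coloneqq C-\delta\,\psi(\abs{y-x})
\end{equation*}
with $\psi$ smooth, vanishing outside $B_{1/3}$, and check that it is a subsolution on $\Rd\times(0,T)$ with $\phi(\cdot,0)\leq u(\cdot,0)$. The initial inequality holds because $u(\cdot,0)\geq C$ on $\restcomp$. Outside $B_{1/3}(x)$ we have $\phi=C$, and a supersolution bounded below by $C$ initially stays $\geq$ a sub-barrier. Letting $\delta\to 0$ would then give $u\geq C$ on $B_{1/3}(x)$. The difficulty is that $u$ need not be $\geq C$ outside $\restcomp$, so a local barrier does not directly apply. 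I would therefore instead use the set-evolution viewpoint: let $S_t\coloneqq\Bp{u(\cdot,t)<C}$. Since $u$ is a supersolution, $\Bp{u(\cdot,t)< C}\subset\reach{t}{\Bp{u(\cdot,0)\leq C'}}$ for $C'<C$ after a monotone reparametrisation. It then suffices to show that the set evolution of a closed set $S\subset\comp{(\restcomp)}$ remains in $\comp{(\restcomp)}$.

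For this I would show that $\ov{\restcomp}$ shrinks under the evolution, i.e.\ its complement's evolution never enters it. Each ball $B_{1/3}(x)$ with $\dist(x,Q)\ge\frac23$ lies in the region where $\sigma_Q$ is $\sigma$ damped and $F_Q=-c_0$. Using the maximum speed bound (Lemma \ref{not_lem_vmax}, with the constant $c_0$), an interface can move into a ball with outward speed of magnitude at most $\max\Bp{C_{1F},6C_{1A}}+C_{1A}\rho^{-1}$. This is precisely what $-c_0$ counteracts. Making this quantitative near the boundary of the ball, where the curvature $\rho^{-1}=3$ gives $3C_{1A}\le c_0/2$, yields a strict sign. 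That strict sign is the step I expect to be delicate, and I would verify it by a direct viscosity test-function computation at a first touching point of $\partial\restcomp$.

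The remaining claims are then straightforward. $(A,F)_{\wt Q}$ coincides with $(A,F)_Q$ on $\comp{(\restcomp)}$ up to where $\dist(\cdot,Q)\le\frac13$. Where they differ we have $u\equiv C$ locally (since $u\le C$ and $\{u<C\}$ avoids $\restcomp$), so any test function touching from below at such a point has $\nabla\phi=0$ or matches the constant. The supersolution inequality there is then trivial (the interior of $\restcomp$ is a region where $u$ is locally constant, and the degenerate-gradient convention handles it). This gives the supersolution property for $\wt Q$, and (a)--(c) follow by applying it to $u=\ur[(A,F)_Q]{S}$ with $C=1$, together with Lemma \ref{not_lem_ur} and Definition \ref{box_def_stable}.
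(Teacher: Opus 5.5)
\textbf{The $\wt Q$ step.} Your argument for the supersolution property with respect to $(A,F)_{\wt Q}$ has a genuine gap. The two coefficient fields do not differ only inside $\restcomp$. By \eqref{box_eq_defFQ}, $F_Q$ and $F_{\wt Q}$ in general already differ at points with $0<\dist(x,Q)\le\frac13$ and $\dist(x,\wt Q)<\dist(x,Q)$. Such points need not lie in $\restcomp$: for a half-space $Q$ one has $\restcomp=\Bp{\dist(\cdot,Q)>\frac13}$. There $u$ is in general not locally constant, so your test-function argument gives nothing.

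What is missing is an ordering rather than an equality. With $c_0=\max\Bp{C_{1F},6C_{1A}}$, on $\Bp{\dist(\cdot,Q)\le\frac13}$ one has $F_Q=F-3\dist(\cdot,Q)\pp{F+c_0}$. This is nonincreasing in the distance because $F+c_0\ge 0$. Together with $\dist(\cdot,\wt Q)\le\dist(\cdot,Q)$, this gives $F_Q\le F_{\wt Q}$ everywhere. Moreover $\sigma_Q=\sigma_{\wt Q}=\sigma$ on $Q+\ov B_{2/3}$. Hence $G_{(A,F)_Q}\le G_{(A,F)_{\wt Q}}$ on $Q+\ov B_{2/3}$, which transfers $a+G^*\ge 0$ in the right direction. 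Only on the complement of $Q+\ov B_{2/3}$, an open subset of $\restcomp$, do you need $u\equiv C$ locally.

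\textbf{The barrier step.} The core step, that $\restcomp$ is never entered, is left as a heuristic. The local barrier $C-\delta\psi$ cannot satisfy $\phi(\cdot,0)\le u(\cdot,0)$, as you notice yourself. The maximum-speed bound of Lemma \ref{not_lem_vmax} plays no role. A ``first touching point'' argument is not available for viscosity level-set evolutions unless the obstacle is encoded as a sub- or supersolution and the comparison principle is invoked. What is missing is a global barrier:
\begin{enumerate}
\item Normalise to $C=1$, $0\le u\le 1$, e.g.\ by replacing $u$ with $\min\Bp{e^{u-C},1}$. This is a supersolution by Theorem \ref{comp_thm_inv} and has the same set $\Bp{u<C}$.
\item Let $\restcomp_\eps$ be the union of the closed balls of radius $\frac{1-\eps}{3}$ centred at points with $\dist(\cdot,Q)\ge\frac23$, and show that $\chi_{\restcomp_\eps}$ is a stationary subsolution for $(A,F)_Q$. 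At its boundary $\dist(\cdot,Q)\ge\frac13+\frac{\eps}{3}$, so $F_Q=-c_0$. The interior ball bounds the tangential part of any superjet Hessian from below by $-\frac{3}{1-\eps}\abs{\xi}$. This is exactly your curvature-versus-forcing computation.
\item Since $\chi_{\restcomp_\eps}$ vanishes outside $\restcomp_\eps$, the initial ordering $\chi_{\restcomp_\eps}\le u(\cdot,0)$ holds on all of $\Rd$, regardless of where $u<C$.
\item The gap $\dist\pp{\restcomp_\eps,\comp{(\restcomp)}}\ge\frac{\eps}{3}$ provides the modulus required in Theorem \ref{comp_thm_comp-unb} for these discontinuous functions. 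This is why one cannot work with radius $\frac13$ directly; a ``strict sign'' there does not help.
\item Letting $\eps\to 0$ gives $u=C$ on $\restcomp$.
\end{enumerate}
Your detour through $\reach{t}{\Bp{u(\cdot,0)\le C'}}$ is valid but unnecessary. It only postpones this same barrier argument to the proof of (a).
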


\begin{figure}[h]
\centering
\includegraphics[width=0.6\textwidth]{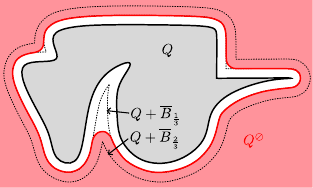}
\caption{An illustration of our notion of restricted coefficient fields.
		The set $Q$ to which the coefficient field  is restricted is depicted in grey. 
		The set $\restcomp$, which sets initialized in $\comp{\pp{\restcomp}}$ will not reach, is depicted in red.}
\label{restr_fig_restcomp}
\end{figure}

Having chosen the constant coefficients such that sets stay trapped in a neighborhood of $Q$ makes it easier to compare the evolution with respect to the original coefficient field to the evolution with respect to the restricted field.

\begin{lemma}[A condition for the equivalence of restricted and unrestricted solutions]\label{restr_lem_QwtQ}
Let $(A,F)\in\Omega$ and $S\subset\Rd$ be a closed set.
Suppose $t_0>0$ and $Q\subset \wt{Q}\subset \Rd$. 
If $S\subset\comp{(\restcomp)}$ as in Lemma \ref{restr_lem_PropRestr}, then
\begin{equation*}
	\reach[(A,F)_{Q}]{t}{S}\subset \reach[(A,F)_{\wt{Q}}]{t}{S}
	\quad\text{ for all }0\leq t.
\end{equation*}
If $\reach[(A,F)_{Q}]{t}{S}+B_\eps\subset Q$ for all $0\leq t\leq t_0$ given some $\eps,t_0>0$, then there holds
\begin{equation*}
	\reach[(A,F)_{Q}]{t}{S}=\reach[(A,F)_{\wt{Q}}]{t}{S}
	\quad\text{ for all }0\leq t\leq t_0.
\end{equation*}
\end{lemma}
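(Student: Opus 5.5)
The proof has two parts, corresponding to the two claims. Both rest on the monotone structure of the restriction in Definition~\ref{restr_def} and on the comparison principle for semicontinuous viscosity sub- and supersolutions from Appendix~\ref{s_visc}.

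\emph{First claim: the inclusion.} Let $S\subset\comp{(\restcomp)}$. By Lemma~\ref{restr_lem_PropRestr}\ref{restr_prop3_super}, the lower semicontinuous function $w\coloneqq\ur[(A,F)_Q]{S}=1-\charfun[{\reach[(A,F)_Q]{\cdot}{S}}]$ is a supersolution of \eqref{intro_eq_uls} with coefficients $(A,F)_{\wt Q}$. For intuition, the underlying mechanism is as follows. By Lemma~\ref{restr_lem_PropRestr}, the evolution never enters $\restcomp$, so it lives where $\dist(x,Q)<\frac23$. There $A_Q=A=A_{\wt Q}$, since $\dist(x,\wt Q)\le\dist(x,Q)$. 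Moreover $F_Q\le F_{\wt Q}$ pointwise, because the interpolation in \eqref{box_eq_defFQ} is nonincreasing in $\dist(\cdot,Q)$ given $F\ge -C_{1F}\ge-\max\{C_{1F},6C_{1A}\}$. Hence $G_{(A,F)_Q}\le G_{(A,F)_{\wt Q}}$ on the relevant region, and a supersolution for $(A,F)_Q$ stays a supersolution for $(A,F)_{\wt Q}$.

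Now take the continuous solution $u$ for $(A,F)_{\wt Q}$ with initial data $u(\cdot,0)=\min\{\dist(\cdot,S),1\}$. This is admissible for $S$ by Definition~\ref{not_def_reach} and Remark~\ref{comp_rem_iniData}. Its initial data satisfy $u(\cdot,0)\le 1-\charfun[S]=w(\cdot,0)$. The comparison principle (subsolution $u$ versus lsc supersolution $w$) gives $u\le w$ on $\Rd\times[0,\infty)$. Hence $u(\cdot,t)\le0$ on $\reach[(A,F)_Q]{t}{S}$, which is the desired inclusion.

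\emph{Second claim: the reverse inclusion under confinement.} Assume $\reach[(A,F)_Q]{t}{S}+B_\eps\subset Q$ for $t\le t_0$. This implies $S\subset Q\subset\comp{(\restcomp)}$, so the first claim applies. It therefore suffices to show $\reach[(A,F)_{\wt Q}]{t}{S}\subset\reach[(A,F)_Q]{t}{S}$. The fields $(A,F)_Q$ and $(A,F)_{\wt Q}$ agree on $Q$. I would argue by a localized comparison on the open set $D\coloneqq\operatorname{int}Q$, combined with a first-exit-time argument.

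Let $u^Q,u^{\wt Q}$ be the solutions with the same initial data $\min\{\dist(\cdot,S)/\delta,1\}$, with $\delta<\eps/2$. Let $T^*\le t_0$ be the supremum of times up to which $\reach[(A,F)_{\wt Q}]{s}{S}\subset \reach[(A,F)_Q]{s}{S}+B_{\eps/2}$. On $\ov{D'}\times[0,T^*]$, with $D'\coloneqq\reach[(A,F)_Q]{\cdot}{S}+B_{\eps}$ (a space-time neighborhood inside $Q$), both functions solve the same equation. On the lateral boundary, both $u^{\wt Q}$ and $u^Q$ are positive: $u^Q$ because $\reach[(A,F)_Q]{t}{S}$ is at distance $\eps$, and $u^{\wt Q}$ by the definition of $T^*$. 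I would truncate at a level $\kappa>0$ small, comparing $\min\{u^{\wt Q},\kappa\}$ and $\min\{u^{Q},\kappa\}$, so that the boundary data coincide. This uses that truncations of solutions of geometric equations are again solutions. Comparison on the bounded domain then gives $\min\{u^Q,\kappa\}\le\min\{u^{\wt Q},\kappa\}$ and vice versa up to $T^*$, so the zero sublevel sets coincide up to $T^*$. By continuity of the evolution (Lemma~\ref{not_lem_vmax} on short time scales, together with uniform continuity of $u^{\wt Q}$), equality at $T^*$ with margin $\eps$ propagates slightly beyond $T^*$. This forces $T^*=t_0$.

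\emph{Main obstacle.} I expect the delicate step to be this localized comparison. The boundary values must really be matched after truncation, which requires a quantitative positivity of $u^Q$ and $u^{\wt Q}$ on $\partial D'$ that is uniform in time. The open/closed bookkeeping in the exit-time argument also needs care. If needed, I would replace the truncation by comparing the lsc supersolution $1-\charfun[{\reach[(A,F)_Q]{\cdot}{S}+\ov B_{\eps/2}}]$-type barriers with $u^{\wt Q}$ directly on $D'$.
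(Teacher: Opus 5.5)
Your first part is correct and coincides with the paper's proof. Lemma~\ref{restr_lem_PropRestr}\ref{restr_prop3_super} makes $1-\charfun[{\reach[(A,F)_Q]{t}{S}}]$ a supersolution for $(A,F)_{\wt{Q}}$, and comparison with a continuous $(A,F)_{\wt{Q}}$-solution whose initial zero set is $S$ gives the inclusion. You are also right that the confinement hypothesis forces $S\subset Q\subset\comp{(\restcomp)}$.

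The second part, however, does not close. Your exit time is measured against the \emph{moving} set $\reach[(A,F)_Q]{s}{S}$, and this set can shrink discontinuously. A small component (a small ball driven by curvature, or one sitting where the forcing is negative) disappears at some instant. If that instant is $T^*$, then for $s>T^*$ the place where it was lies outside $D'$. Via the semigroup property and equality at $T^*$, Lemma~\ref{not_lem_vmax} only gives $\reach[(A,F)_{\wt{Q}}]{s}{S}\subset\reach[(A,F)_Q]{T^*}{S}+\pp{\delta+(s-T^*)\vmax[\delta]}\ov{B}_1$. This says nothing about the distance to $\reach[(A,F)_Q]{s}{S}$. Uniform continuity of $u^{\wt{Q}}$ does not help either, since $u^{\wt{Q}}\approx 0$ near the vanishing component at time $T^*$. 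There $u^{\wt{Q}}$ sits on the parabolic boundary of your moving domain without any control, so the local comparison cannot be restarted and $T^*=t_0$ is not forced. Showing that this component also vanishes for $(A,F)_{\wt{Q}}$ is exactly what the lemma asserts.

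Three further problems remain. For the same reason as above, $D'$ need not be open in space-time, and comparison on such time-dependent domains is not covered by Theorem~\ref{comp_thm_comp}. $D'$ need not be bounded either, since the lemma is applied with unbounded $S$ and $Q$, e.g.\ through Lemma~\ref{fluct_lem_reachRestr}. Finally, the uniform level $\kappa$ for $u^{\wt{Q}}$ on $\partial D'$ is unjustified. Your fallback compares a supersolution built from the $Q$-evolution with $u^{\wt{Q}}$, but any such comparison only yields inclusions \emph{into} $\reach[(A,F)_{\wt{Q}}]{t}{S}$, i.e.\ the direction you already have.

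The missing idea is this: the reverse inclusion needs a \emph{subsolution} of the $(A,F)_{\wt{Q}}$-equation whose zero set is $\reach[(A,F)_Q]{t}{S}$, compared with the supersolution $1-\charfun[{\reach[(A,F)_{\wt{Q}}]{t}{S}}]$ from Lemma~\ref{not_lem_ur}. Such a subsolution exists globally, so no a priori control of the $\wt{Q}$-evolution is required. The paper takes the continuous $(A,F)_Q$-solution $u^Q$ and uses its uniform continuity and the hypothesis to find $c>0$ with $\Bp{u^Q(\cdot,t)\le c}\subset Q$ for $0\le t\le t_0$. It then sets $w\coloneqq\min\Bp{u^Q/c,1}$, again a solution by Theorem~\ref{comp_thm_inv}. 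At points of $Q$ the two coefficient fields coincide. Near points of $\comp{Q}$ one has $w\equiv1$, so every parabolic superjet there is $(0,0,X)$ with $X\ge0$, and $\pp{G_{(A,F)_{\wt{Q}}}}_*(x,0,X)\le0$ because $A\ge0$. Hence $w$ is a subsolution for $(A,F)_{\wt{Q}}$ on $\Rd\times(0,t_0)$. A single global application of Theorem~\ref{comp_thm_comp-unb} with Remark~\ref{comp_rem_condcomp} then gives $w\le1-\charfun[{\reach[(A,F)_{\wt{Q}}]{t}{S}}]$, which is the reverse inclusion.

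Alternatively, you can repair your exit-time idea by measuring the exit against the \emph{fixed} set $Q$, say $\reach[(A,F)_{\wt{Q}}]{s}{S}+B_{\eps/2}\subset Q$. Compare $u^Q$ with $1-\charfun[{\reach[(A,F)_{\wt{Q}}]{\cdot}{S}}]$ on $\operatorname{int}Q\times(0,T)$; the lateral data match trivially because the latter equals $1$ on $\partial Q$. Then continue past $T^*$ by applying Lemma~\ref{not_lem_vmax} from a time $t<T^*$, at which $\reach[(A,F)_{\wt{Q}}]{t}{S}=\reach[(A,F)_Q]{t}{S}$ lies $\eps$-deep inside $Q$. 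This variant even dispenses with the uniform level $c$.
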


\subsection{Measurability of events with respect to $\F(U)$}

To prove the measurability of events with respect to the $\sigma$-algebras $\F(U)$ defined in \eqref{sett_eq_sigmaF}, our main strategy is the application of the following lemma, which states that it is sufficient to check if the event is closed with respect to uniform convergence of the coefficient fields in $U$.

\begin{lemma}\label{meas_lem_closure}
Let $U\subset\Rd$ be a closed set.
Let $E\subset\Omega\subset C(\Rd\times\Sd,\Rdd\times\R)$ be an event which is closed in $\Omega$ with respect to the norm
\begin{equation*}
	\|(A,F)\|_{C_w(U\times\Sd,\Rdd\times\R)}\coloneqq \sup_{x\in U,\,e\in\Sd}\frac{1}{1+\abs{x}^2}\abs{(A(x,e),F(x,e))},
\end{equation*}
which corresponds to weighted uniform convergence and is well defined on $\Omega$ due to the uniform bounds from \eqref{sett_eq_assAbd} and \eqref{sett_eq_assF}.
Then $E$ is measurable with respect to $\F(U)$.
\end{lemma}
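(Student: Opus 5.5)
The approach is to show that the generating maps of $\F(U)$ already separate the closed set $E$ well enough. First I recall that $\F(U)$ is generated by the point evaluations $(A,F)\mapsto (A(x,e),F(x,e))$ with $x\in U$, $e\in\Sd$. By continuity of admissible coefficient fields, it suffices to use a countable dense set $D\subset U\times\Sd$. Here continuity comes from the uniform Lipschitz bounds \eqref{sett_eq_assAbd}, \eqref{sett_eq_assF}. More precisely, for every fixed $(x,e)\in U\times\Sd$ the evaluation is a pointwise limit of evaluations at points of $D$, so $\F(U)$ equals the $\sigma$-algebra generated by $\{\mathrm{ev}_{(x,e)}:(x,e)\in D\}$. (If $U$ has isolated points or is not the closure of a nice set, I still take $D$ dense in $U\times\Sd$, which exists by separability.)

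Next I would express the weighted distance $\Omega\ni\omega\mapsto \|\omega-\omega'\|_{C_w(U\times\Sd)}$ using only these countably many evaluations. For fixed $\omega'=(A',F')$ this reads
\begin{equation*}
	\|\omega-\omega'\|_{C_w(U\times\Sd)} = \sup_{(x,e)\in D}\frac{|(A(x,e),F(x,e))-(A'(x,e),F'(x,e))|}{1+|x|^2}.
\end{equation*}
The supremum over $D$ equals the supremum over $U\times\Sd$ by continuity of both fields, and it is finite by the uniform bounds. Hence this map is $\F(U)$-measurable, as a countable supremum of measurable functions.

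The key step is to write $E$ as a countable intersection of sets built from this pseudo-distance. Consider the map $\Phi:\Omega\to \prod_{D}(\Rdd\times\R)$ given by restriction to $D$. Then $\|\cdot\|_{C_w(U\times\Sd)}$ is a pseudometric on $\Omega$ depending only on $\Phi$, and $\Omega$ is separable for it. Separability holds because the fields restricted to $U\times\Sd$ are uniformly Lipschitz and bounded, and the weight $(1+|x|^2)^{-1}$ makes the tails small, so an Arzelà--Ascoli/diagonal argument yields a countable dense subset $\{\omega_k\}\subset E$ (dense in $E$ for the pseudometric). Since $E$ is closed for this pseudometric, I claim
\begin{equation*}
	E=\bigcap_{m\in\N}\bigcup_{k\in\N}\Bp{\omega\in\Omega:\|\omega-\omega_k\|_{C_w(U\times\Sd)}<\tfrac1m}.
\end{equation*}
The inclusion $\subset$ follows from density. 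For $\supset$, a point in the right-hand side is a pseudometric limit of elements of $E$, hence lies in $E$ by closedness. Note that closedness in a pseudometric forces $E$ to be saturated: two fields agreeing on $U\times\Sd$ are either both in $E$ or both outside. Each set on the right-hand side is $\F(U)$-measurable by the previous step, which gives the claim.

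The main obstacle is the separability/density step. It must be done in the weighted norm on unbounded $U$, and the countable dense family must be chosen inside $E$ rather than in $\Omega$. I would handle this by first covering the compact (in $C_w$) set of restrictions $\{\omega|_{U\times\Sd}:\omega\in\Omega\}$. This set is compact because of the equi-Lipschitz and equibounded property together with the vanishing weight at infinity. Compactness gives separability of every subset, in particular of $E$.
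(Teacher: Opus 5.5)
Your proof is correct and follows essentially the same route as the paper: a countable dense subset of $E$ (from separability under the uniform Lipschitz and sup bounds), $E$ written as a countable intersection of countable unions of balls around these points, and each ball shown to be $\F(U)$-measurable by taking the supremum over a countable dense set of evaluation points in $U\times\Sd$. Your Arzel\`a--Ascoli argument with the decaying weight fills in the separability that the paper only asserts; strictly speaking you only need total boundedness (relative compactness) of the set of restrictions, not its compactness, which avoids having to check closedness of $\Omega$ under $C_w$-limits.
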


\section{Fluctuation bounds for the truncated arrival times in target areas}\label{s_fluct}
In this section, we aim to bound fluctuations of the arrival time at a point $x_0\in\Rd$ around its expectation given a fixed starting set $S\subset\Rd$.
We will follow the approach as laid out in Subsection \ref{subs_str_fluct}. 
However, compared to Subsection \ref{subs_str_fluct} there are multiple main issues: 
\begin{enumerate}[label=\arabic*.), left=12pt]
\item	Since we have no guaranteed minimum speed, $x_0$ might never be reached, that is potentially $\PM{\met{}{x_0}{S}=\infty}>0$.
\item	Since in general sets might shrink, the independence of their evolution from the values of the coefficient fields in their interior breaks down.
\item	Since locally the effective minimum speed might not hold, we will not get uniform bounds for the martingale increments.
		Hence, Azuma's inequality is not applicable.
\end{enumerate}
To deal with these issues, we innovate as follows:
\begin{enumerate}[label=\arabic*.), left=12pt]
\item	We do not work with the actual arrival time at $x_0\in\Rd$, but the arrival time in the surrounding target area $\ov{B}_h(x_0)$, ignoring holes of width up to $2h$ such that the effective minimum speed guaranteed by Assumption \ref{veff_aP_star} applies.
		Since with low probability this might still yield an infinite arrival time, we introduce a truncation, which however is increasingly unlikely to kick in on larger scales, see Definition \ref{intro_def_met} and Lemma \ref{stable_lem_TvsEspeed}. 
\item	We use stable approximations from Assumption \ref{veff_aP_star} to guarantee that sets can not shrink by too much and hence the independence between the coefficients in the interior of the set and its future evolution stays intact. 
		We keep track of the error from the unlikely case that the approximations do not exist.
\item	We introduce an alternative to of Azuma's inequality in Appendix \ref{s_azuma}, so that it suffices to obtain uniform bounds on the martingale increments only with overwhelming and not necessarily full probability.
\end{enumerate}

To be precise, let us briefly recall the precise definition of the arrival times from Definition \ref{intro_def_met} for $S\subset\Rd$, $x_0\in\Rd$ and $h\geq h_0$. 
We write
\begin{align*}
	\met{}{x_0}{S}	&=	\met[(A,F)]{}{x_0}{S}	\coloneqq	\min\Bp{t\geq 0\,:\,x_0\in\reach[(A,F)]{t}{S}},\\
	\metb{h}{x_0}{S}	&=	\metb[(A,F)]{h}{x_0}{S}	\coloneqq	\min\Bp{\min_{y\in\ov{B}_h(x_0)}\met{}{y}{S},\,\timebound{h}{x_0}{S}}\\
	&\qquad\qquad\qquad\qquad\qquad\quad	\text{with }\timebound{h}{x_0}{S}\coloneqq \vmineff^{-1}\dist(x_0,S)+\cstable h,
\end{align*} 
where $t\mapsto \reach[(A,F)]{t}{S}$ is the evolution of the set $S$ with respect to the coefficient field $(A,F)$ from Definition \ref{not_def_reach} and $\cstable>0$ is a parameter from Assumption \ref{veff_aP_star} for the stable approximations.

Note that these are well defined since if $x_0$ or $\ov{B}_h(x_0)$ are reached at some point, there exists a minimum due to the maximum speed of propagation from Lemma \ref{not_lem_vmax}.
Further, these are random variables with respect to $(\Omega,\F(\Rd),\Pm)$, see Lemma \ref{fluct_lem_meas} below for the measurability.
We now state the main result from this section, showing that the fluctuations of these truncated arrival times in the target area $\ov{B}_h(x_0)$ are of order $\sqrt{h\dist(x_0,S)}$.

\begin{proposition}[Fluctuation bounds for truncated arrival times]\label{fluct_prop}
Assume that \ref{sett_aP_fin} and \ref{veff_aP_star} hold.
There are $c,\Cflu,\cflu, \Cfluco=c(\data),\Cflu(\data),\cflu(\data), \Cfluco(\data)>0$ such that 
for all $x_0\in\Rd$, $h\geq h_0$ and $h$-fat sets $S\subset\Rd$ with bounded boundary as well as
\begin{equation}\label{fluct_eq_hrateA}
	\frac{\diam(\partial S)+\dist(x_0,S)}{h}	\leq		\exp\pp{c h^{\ratespeed}}
\end{equation}
there holds
\begin{align}\label{fluct_eq_PMfluct}
	&\PM{\abs{\metb[]{h}{x_0}{S}	- \EV{\metb[]{h}{x_0}{S}}}\geq \lambda}\notag\\
	&\qquad\quad\leq	\left\lbrace	\begin{aligned}
							&\Cflu	\exp\pp{-\frac{\cflu\lambda}{\sqrt{h\dist(x_0,S)}}},	
								&&\text{if }\lambda \leq	\Cfluco\frac{h^{\ratespeed}\sqrt{h\dist(x_0,S)}}{\log\pp{\frac{\dist(x_0,S)}{h}}},\\
							&\Cflu	\exp\pp{-\frac{\cflu h^{\ratespeed}}{\log\pp{\frac{\dist(x_0,S)}{h}}}},
								&&\text{if }\lambda >		\Cfluco\frac{h^{\ratespeed}\sqrt{h\dist(x_0,S)}}{\log\pp{\frac{\dist(x_0,S)}{h}}},\\
							&0,	&&\text{if }\lambda>\timebound{h}{x_0}{S},
						\end{aligned}
					\right.
\end{align}
where $\ratespeed>0$ is the exponential rate from the probability bounds in Assumption \ref{veff_aP_star} for the existence of stable approximation and the applicability of the effective minimum speed of propagation.
\end{proposition}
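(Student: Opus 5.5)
We follow the martingale scheme of Subsection \ref{subs_str_fluct}, with the time step adapted to the scale $h$. Set $R\coloneqq\dist(x_0,S)$, $T\coloneqq\timebound{h}{x_0}{S}$ and $\Delta t\coloneqq \vmineff^{-1}h$, so that the martingale has $N\approx R/h$ steps. Define $\M_t\coloneqq\EV[\G_t]{\metb{h}{x_0}{S}}-\EV{\metb{h}{x_0}{S}}$. Here $\G_t$ is generated, as in Lemma \ref{str_lem_meas}, by the events $E_i(t)\cap F$ with $F\in\F(K_i)$. The sets $K_i$ now range over discretizations (on the grid of mesh $\sim h$, enlarged by $\Cmeas h$ and the restriction layer) of the evolution of a \emph{stable $(h,\cstable)$-approximation} $S_h$ of $S$, computed with respect to restricted coefficient fields $(A,F)_{Q}$. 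Because $S_h$ is stable, Lemma \ref{stable_lem_preserv} makes its evolution monotone. By Lemma \ref{restr_lem_QwtQ} and Lemma \ref{meas_lem_closure}, the events $E_i(t)$ are then $\F(K_i)$-measurable, and the chain $E_j(s)\cap E_i(t)\neq\emptyset\Rightarrow K_j\subset K_i$ survives. This restores items \ref{str_meas_Ei}--\ref{str_meas_condE} of Lemma \ref{str_lem_meas}. The truncation at $T$ makes $\metb{h}{x_0}{S}$ bounded, so all conditional expectations are finite and $\M_{N\Delta t}=\metb{h}{x_0}{S}-\EV{\metb{h}{x_0}{S}}$ up to the bad event. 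Lemma \ref{veff_lem_henv} transfers arrival times between $S$, $S_h$ and their $h$-envelopes, at a cost of $O(h)$.

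The increment bound replaces \eqref{str_eq_MtMs}. Work on the good event $E_{good}\coloneqq\Espeed{h}(U)$ with $U\supset (S+C R\ov{B}_1)\cap(\partial S\cup\comp{S})$, of diameter $\lesssim \diam(\partial S)+R$. By \eqref{veff_eq_probBound} and \eqref{fluct_eq_hrateA}, $\PM{\comp{E_{good}}}\le C\exp(-\tfrac{\cspeed}{2}h^{\ratespeed})$ once $c$ is small. On $E_{good}$ the computation \eqref{str_eq_MtMspreK}--\eqref{str_eq_MtMspreij} is repeated. The minimum speed is replaced by Lemma \ref{stable_lem_vmin-iterative}: after time $\Delta t$ the evolved state $h$-envelops its $h$-neighbourhood. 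The maximum speed is Lemma \ref{not_lem_vmax}. Lemma \ref{not_lem_comp}\ref{not_list_lemcomp2} then compares the arrival times from $K_j$ and $K_i$ up to $C(\Delta t+h)\lesssim h$. This gives $\abs{\M_{n\Delta t}-\M_{(n-1)\Delta t}}\le Ch$ on an event of probability at least $1-C\exp(-c h^{\ratespeed})$. The difficulty is that the conditional expectations are taken over all of $\Omega$. So the expectations $\EV{\metb{h}{x_0}{K_i}}$ must be compared using the good events for the \emph{future} evolution, $\Espeed{h}(U\setminus K_i)$, which are independent of $\G_t$ by \ref{veff_Pmeas} and \ref{sett_aP_fin}. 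On the complement, the truncation bounds the error by $T\cdot C\exp(-ch^{\ratespeed})$, which is negligible by \eqref{fluct_eq_hrateA}.

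The final step is the concentration inequality from Appendix \ref{s_azuma}, which we expect to have the following form. For a martingale with $\abs{\M_n-\M_{n-1}}\le c$ outside events of total probability $p$, and a deterministic bound $\abs{\M_n-\M_{n-1}}\le T$, one has
\[
\PM{\abs{\M_N}\ge\lambda}\le C\exp\Bigl(-\frac{\lambda}{C c\sqrt N}\Bigr)+ C\,\frac{N\,p}{\cdots}.
\]
We would prove it by stopping at the first bad increment, or equivalently by conditioning the increments with the bounded part. The stopping introduces a drift proportional to $N T p$, and the resulting loss is the source of the $\log(R/h)$ factor. With $c=Ch$ and $N\sim R/h$, so that $c\sqrt N\sim\sqrt{hR}$, this yields the first regime of \eqref{fluct_eq_PMfluct}. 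Balancing the exponential tail against the bad probability $p\approx\exp(-ch^{\ratespeed})$, which is raised to a power $\sim 1/\log(R/h)$ after a union bound over the $N$ steps, gives the threshold $\Cfluco h^{\ratespeed}\sqrt{hR}/\log(R/h)$ and the capped second regime. The third regime is trivial, since $0\le\metb{h}{x_0}{S}\le T$. We expect the main obstacle to be making the conditional-independence identity \ref{str_meas_condE} hold when stable approximations exist only with high probability. This requires building the filtration from the restricted evolution of the approximation itself, so that measurability is exact and the probabilistic loss appears only in the increment bounds.
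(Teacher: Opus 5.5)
Your skeleton is the paper's: an exploration martingale with $\Delta t\sim h$ and $N\sim R/h$ (where $R=\dist(x_0,S)$), increments of size $O(h)$ off an event of probability $\le C\exp(-ch^{\ratespeed})$, and then a substitute for Azuma. The gap is in the step you single out as the main obstacle: your resolution of it does not work as stated.

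\textbf{The filtration.} Item \ref{veff_Pspeed} of Assumption \ref{veff_aP_star} only asserts that \emph{some} stable $(h,\cstable)$-approximation $S_h$ exists. It is neither unique nor a measurable function of $(A,F)$, so ``the evolution of $S_h$'' does not define events $E_i(t)$. A canonical choice does exist: the largest stable subset of $S$, i.e.\ the closure of the union of all stable subsets, which is stable because the lower semicontinuous envelope of an infimum of supersolutions is a supersolution. But it is determined by the coefficient field on all of $S$. Off the good event it can be much smaller than $S$, and certifying that no larger subset of $S$ is stable requires the field on $S\setminus K_i$. Hence $E_i(t)$ becomes $\F(K_i+C\ov{B}_h)$-measurable only after intersecting with an event such as $\Espeed{h}(\partial S)$ that forces $S\subset K_i+C\ov{B}_h$ (Lemma \ref{fluct_lem_KiS}). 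The filtration then only partitions that event, so the probabilistic loss cannot be confined to the increments as you claim.

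This is exactly the paper's design. It tracks $S$ itself via $E_i(s)=\{K_h(\reach{s}{S})=K_i\}\cap\Espeed{h}(\partial S)$. It replaces exact monotonicity $K_j\subset K_i$ by the approximate version from Lemma \ref{fluct_lem_reachRestr}, which is why $\G_t$ uses $\F(K_i+\CG\ov{B}_h)$. It then pays explicit error terms at $t=0$ and at the terminal time.

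\textbf{Independence.} Your independence step also fails as stated. $\Espeed{h}(U\setminus K_i)$ is $\F((U\setminus K_i)+\Cmeas\ov{B}_h)$-measurable, and that region reaches into $K_i$, so the event is not independent of $\G_t$. The paper instead restarts from $K_i+\Ccond\ov{B}_h$ with $\Ccond$ large compared to $\CG,\Cmeas,\cstable\vmax$, and multiplies by the indicator of $\Espeed{h}(\partial(K_i+\Ccond\ov{B}_h))$ (Lemma \ref{fluct_lem_condE}, using Lemma \ref{fluct_lem_meas}).

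\textbf{Conditional bad-event terms.} Restricting $\EV[\G_t]{\cdot}$ to the global good event $E=\Espeed{h}(\maxM{h}{x_0}{S})$ leaves random terms $\timebound{h}{x_0}{S}\,\EV[\G_t]{1-\chi_{E}}$ in the increments. ``Working on $E_{good}$'' does not control these, because conditional expectations average over bad configurations. The paper needs the maximal inequality of Lemma \ref{azuma_lem_charfun} to make them $\le h$ for all $n$ simultaneously outside a small event.

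\textbf{The concentration step.} This is also off as described. Stopping at the first bad increment is not available: $\{\abs{\M_n-\M_{n-1}}>c\}$ is only $\G_n$-measurable, and the stopped martingale still contains a jump of size up to $T\sim R$. A union bound over the $N$ steps multiplies $p$ by $N$; it does not raise $p$ to the power $1/\log(R/h)$. In the paper that exponent comes from the moment method of Proposition \ref{azuma_prop_alternative}, where the admissible moment order $2k$ is capped by the requirement $T^{2k}\PM{\comp{E}}\le 1$.

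Your other suggestion does work if implemented as truncation. Let $\bar Z_n$ be the increment $\M_n-\M_{n-1}$ clipped to $[-c,c]$. Then:
\begin{itemize}
\item the recentred sum $\sum_n(\bar Z_n-\EV[\G_{n-1}]{\bar Z_n})$ is a martingale with increments $\le 2c$, so Azuma applies;
\item the drift satisfies $\abs{\EV[\G_{n-1}]{\bar Z_n}}\le 2T\,\EV[\G_{n-1}]{\chi_{\comp{E}}}$;
\item by Markov, the total drift exceeds $c\sqrt N$ with probability at most $2T\sqrt N\,\PM{\comp{E}}/c$.
\end{itemize}
With $c\sim h$, $N\sim R/h$, $T\sim R$ and \eqref{fluct_eq_hrateA}, this yields $C\exp(-\lambda^2/(ChR))+C\exp(-c'h^{\ratespeed})$. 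That gives \eqref{fluct_eq_PMfluct}, in fact with Gaussian tails and no $\log(R/h)$ loss. But it only helps once the filtration and the event on which the increments are bounded have been built correctly, and that is where your proposal has the gap.
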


Analogously to Subsection \ref{subs_str_fluct}, we now define the martingale
\begin{equation}\label{fluct_eq_defMt}
	\M_t\coloneqq \EV[\G_t]{\metb[]{h}{x_0}{S}} - \EV{\metb[]{h}{x_0}{S}},
\end{equation}
where $\pp{\G_t}_{t\geq 0}$ is a filtration with the $\sigma$-algebra $\G_t\subset\F(\Rd)$ containing the information on the approximate state of the set evolution at times $s\leq t$ and on the coefficient field around the area explored up to time $t$.

Let us first make precise the definition of $\G_t$.
As in Subsection \ref{subs_str_fluct}, for any set $M\subset\Rd$ we define its discrete approximation as 
\begin{equation*}
	K_h(M) \coloneqq	\bigcup\pp{y+[0,h]^d \,:\, y\in h\Zd \text{ and } y+[0,h]^d\cap M \neq \emptyset}.
\end{equation*}
Since $\Bp{K_h(M)\,:\,M\subset\Rd}$ is not countable, we need a smaller state space for the discretized set evolutions.
Assuming that there is a stable $(h,\cstable)$-approximation $S_h\subset S$ of the set $S\subset\Rd$ as in Definition \ref{veff_def_fatstab}, we have 
\begin{equation*}
	\Rd\setminus\pp{\comp{S}+\cstable\vmax B_h}
	\subset	S_h
	\subset	\reach{t}{S_h}
	\subset	\reach{t}{S}
	\subset	S+(1+\vmax t)\ov{B}_1
\end{equation*}
with the first inclusion due to the definition of stable $(h,\cstable)$-approximations, the second inclusion due to the stability of $S_h$ and Lemma \ref{stable_lem_preserv}, the third inclusion due to the comparison principle as in Lemma \ref{not_lem_comp} and the final inclusion due to the maximum speed of the evolution as in Lemma \ref{not_lem_vmax}.
Putting these limitations on the state space and since $\partial S$ is bounded, we obtain a countable family $\Bp{K_i}_{i\in\N}\subset\Rd$ such that 
\begin{equation*}
	\Bp{K_i}_{i\in\N} 
	=\Bp{K_h(M)\,:\, \Rd\setminus\pp{\comp{S}+\cstable\vmax B_h}\subset M\subset S+B_R \text{ for some }R>0}.
\end{equation*}
Based on this state space we define the events $\Bp{E_i(t)}_{i\in\N}$ capturing the approximate state of the set evolution at time $t\geq 0$ as
\begin{equation}\label{fluct_eq_Ei}
	E_i(s)	\coloneqq 
	\Bp{(A,F)\in\Omega\,:\, K_h\pp{\reach[(A,F)]{s}{S}}=K_i} \cap \Espeed{h}\pp{\partial S },
\end{equation}
where the restriction to $\Espeed{h}\pp{\partial S}$ from Assumption \ref{veff_aP_star} guarantees the existence of a stable approximation.
Finally, we can now define the appropriate filtration $\Bp{\G_t}_{t\geq 0}$ capturing the information on the current state of the set evolution and the coefficient field in a neighborhood of the explored area as
\begin{align}
	\G_0		&\coloneqq	\Bp{\emptyset, \Omega},\notag\\
	\G_t		&\coloneqq 	\text{$\sigma$-algebra generated by the events }E_i(s)\cap F\notag\\ 
			&\qquad\qquad\qquad\qquad\qquad			\text{for any }~0\leq s\leq t
												\text{ and } F\in\F\pp{K_i+\CG\ov{B}_h}\label{fluct_eq_Gt}
\end{align}
with $\Bp{\F(U)}_U$ from \eqref{sett_eq_sigmaF} denoting the $\sigma$-algebras containing the information on the coefficient field in $U\subset\Rd$ and $\CG=\CG(\data)>0$ chosen large enough.
Essentially, we choose $\CG$ such that expanding $K_i$ by $\CG\ov{B}_h$ covers the additional area which the evolution of a set might have covered before falling back to within $K_i$, at least assuming that the set has a stable $(h,\cstable)$-approximation.
For the precise requirements on $\CG$, see Lemma \ref{fluct_lem_reachRestr}, Lemma \ref{fluct_lem_measEi} and Lemma \ref{fluct_lem_measIn} as well as their proofs.

In order to prove Proposition \ref{fluct_prop}, we largely follow the approach laid out in Subsection \ref{subs_str_fluct}. 
In Subsection \ref{subs_fluct_meas}, we will deal with the necessary measurability issues corresponding to Lemma \ref{str_lem_meas}, essentially using the finite range of dependence to show that the future evolution of a set is independent from the coefficient field explored while reaching its current state. 
In Subsection \ref{subs_fluct_incr} we will use these observations to bound the martingale increments given an event of overwhelming probability and then apply the alternative to Azuma's inequality from Appendix \ref{s_azuma} to obtain Proposition \ref{fluct_prop}.

But first, since in our setting $S$ is not fully guaranteed to be stable, two obvious properties from the minimum speed case treated in Subsection \ref{subs_str_fluct} -- $\bigcup_{i\in\N} E_i(t)=\Omega$ and $S\subset K_i$ for $i\in\N$ -- do not hold and we need replacements.
 
\begin{lemma}\label{fluct_lem_EiCup}
Let $S\subset\Rd$ be an $h$-fat set with bounded boundary.
Let the events $(E_i(t))_i$ for some $t\geq 0$ be defined as in \eqref{fluct_eq_Ei}.
The sets $(E_i(t))_i$ are disjoint and there holds
\begin{equation*}
	\bigcup_{i\in\N} E_i(t) = \Espeed{h}\pp{\partial S}.
\end{equation*}
\end{lemma}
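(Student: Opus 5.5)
Disjointness is immediate from the definition: for fixed $t$, the map $(A,F)\mapsto K_h(\reach[(A,F)]{t}{S})$ is single-valued. Hence $E_i(t)\cap E_j(t)\neq\emptyset$ forces $K_i=K_h(\reach[(A,F)]{t}{S})=K_j$. Since the family $\Bp{K_i}_{i\in\N}$ is an enumeration without repetition, this gives $i=j$. Likewise, the inclusion $\bigcup_i E_i(t)\subset\Espeed{h}(\partial S)$ is trivial, because every $E_i(t)$ is intersected with $\Espeed{h}(\partial S)$ in \eqref{fluct_eq_Ei}.

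For the reverse inclusion I fix $(A,F)\in\Espeed{h}(\partial S)$. It suffices to show that $M\coloneqq\reach[(A,F)]{t}{S}$ satisfies
\begin{equation*}
	\Rd\setminus\pp{\comp{S}+\cstable\vmax B_h}\subset M\subset S+B_R
\end{equation*}
for some $R>0$. Then $K_h(M)$ belongs to the countable family indexed by $\N$, say $K_h(M)=K_i$, and therefore $(A,F)\in E_i(t)$.

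The upper bound follows from Lemma \ref{not_lem_vmax} with $\delta=1$, which gives $M\subset S+(1+\vmax t)\ov{B}_1\subset S+B_{R}$ for $R=2+\vmax t$.

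For the lower bound I invoke Assumption \ref{veff_aP_star}\ref{veff_Pspeed} with $M=\partial S$. Since $S$ is $h$-fat and $\partial S\subset\partial S$, this yields a stable $(h,\cstable)$-approximation $S_h\subset S$. By Definition \ref{veff_def_fatstab}, it satisfies $S_h\supset\Rd\setminus(\comp{S}+\cstable\vmax B_h)$. Lemma \ref{stable_lem_preserv} then gives $S_h\subset\reach{t}{S_h}$, and the comparison principle, Lemma \ref{not_lem_comp}\ref{not_list_lemcomp1}, applied to $S_h\subset S$ gives $\reach{t}{S_h}\subset\reach{t}{S}=M$. Chaining these inclusions yields the desired lower bound. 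This is exactly the inclusion chain displayed before \eqref{fluct_eq_Ei}.

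The only point needing care is a consistency check rather than a real obstacle. The index family was defined as the set of all $K_h(M)$ with $M$ in this admissible range, so membership of $K_h(M)$ is automatic once the two inclusions hold. It is also countable, because $\partial S$ is bounded: $K_h(M)$ is determined by finitely many cubes in the bounded region $(S+B_R)\setminus\pp{\Rd\setminus(\comp{S}+\cstable\vmax B_h)}$. That region is bounded because it lies within distance $\max\Bp{R,\cstable\vmax h}$ of $\partial S$. The remaining cubes are forced: those inside $\Rd\setminus(\comp{S}+\cstable\vmax B_h)$ are always included, and those outside $S+B_R$ are always excluded. Taking the union over $R\in\N$ then gives countability, so the enumeration by $\N$ is legitimate.
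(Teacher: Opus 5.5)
Your proof is correct and follows the paper's argument essentially step for step. You read off disjointness and the trivial inclusion from the definition. For the reverse inclusion you sandwich $\reach{t}{S}$ between the stable approximation $S_h$, using Lemma~\ref{stable_lem_preserv} and Lemma~\ref{not_lem_comp}, and $S+(1+\vmax t)\ov{B}_1$, using Lemma~\ref{not_lem_vmax}. Your remarks on the enumeration being without repetition and on countability only make explicit what the paper assumes when setting up the family $\Bp{K_i}_{i\in\N}$.
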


\begin{proof}
By the definition \eqref{fluct_eq_Ei} it is clear that `$\subset$' holds. 
It remains to show '$\supset$'. 
Assume that $(A,F)\in \Espeed{h}\pp{\partial S}$.
Then with \ref{veff_Pspeed} from Assumption \ref{veff_aP_star} there exists a stable $(h,\cstable)$-approximation $S_h$ of $S$.
In particular, $S_h$ is stable with respect to $(A,F)$, satisfies $S_h\subset S$ and $S_h\supset \Rd\setminus\pp{\comp{S}+\cstable\vmax B_h}$.
Therefore, with Lemma \ref{stable_lem_preserv} and \ref{not_lem_comp} we obtain
\begin{equation*}
	S_h	\subset \reach{t}{S_h}	\subset	\reach{t}{S}
	\quad\text{ and }\quad
	\reach{t}{S}\supset\Rd\setminus\pp{\comp{S}+\cstable\vmax B_h}.
\end{equation*}
Due to the maximum speed of propagation from Lemma \ref{not_lem_vmax}, we have 
\begin{equation*}
	\reach{t}{S}\subset S+\pp{1+\vmax t}\ov{B}_1.
\end{equation*}
Thus there exists $i_0\in\N$ with $K_h\pp{\reach{t}{S}}=K_{i_0}$.
Hence, $E_{i_0}(t)$ holds and since the events $\Bp{E_i(t)}_i$ are clearly disjoint, we are done.
\end{proof}

\begin{lemma}\label{fluct_lem_KiS}
Let $S\subset\Rd$ be an $h$-fat set with bounded boundary.
Consider the events $(E_i(t))_i$ for some $t\geq 0$ as defined in \eqref{fluct_eq_Ei}.
Then for any $i\in\N$ with $E_i(t)\neq\emptyset$ there holds
\begin{equation*}
	S\subset K_i +  \pp{1+\pp{1+\vmax\cstable}h}\ov{B}_1.
\end{equation*}
\end{lemma}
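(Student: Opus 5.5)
Pick $(A,F)\in E_i(t)$; such a field exists since $E_i(t)\neq\emptyset$. Then $K_i=K_h(\reach{t}{S})$ and $(A,F)\in\Espeed{h}(\partial S)$. The strategy is to show that every point of $S$ lies within distance $(1+\vmax\cstable)h$ of $\reach{t}{S}$, up to a possible additive unit. Since $\reach{t}{S}\subset K_h(\reach{t}{S})=K_i$, this gives the claim.

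\textbf{Step 1 (the stable approximation).} By \ref{veff_Pspeed} of Assumption \ref{veff_aP_star} there is a stable $(h,\cstable)$-approximation $S_h$ of $S$. Exactly as in the proof of Lemma \ref{fluct_lem_EiCup}, Lemma \ref{stable_lem_preserv} and Lemma \ref{not_lem_comp}\ref{not_list_lemcomp1} give
\begin{equation*}
	\Rd\setminus\pp{\comp{S}+\cstable\vmax B_h}\subset S_h\subset\reach{t}{S_h}\subset\reach{t}{S}\subset K_i .
\end{equation*}

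\textbf{Step 2 (covering the boundary layer).} Take $x\in S$. If $\dist(x,\comp{S})\geq \cstable\vmax h$, then $x\in K_i$ by Step 1. Otherwise $x$ lies in the layer of width $\cstable\vmax h$ near $\partial S$, and we use $h$-fatness. There is a ball $\ov{B}_h(y)\subset S$ with $x\in\ov{B}_h(y)$. Its center satisfies $\dist(y,\comp{S})\geq h$, which is not yet deep enough to reach $S_h$.

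The fix is to move from $x$ into the interior. Let $z\in\Rd\setminus(\comp{S}+\cstable\vmax B_h)$ be a nearest point to $x$. The distance $\abs{x-z}$ is at most $\cstable\vmax h$, plus possibly $h$, provided a point of depth at least $\cstable\vmax h$ in $S$ exists nearby. This can fail when $S$ is thin, i.e.\ when $S$ consists of $h$-balls not containing any deeper ball. In that case one has to use instead the envelopment property $S\hsubs\reach{\cstable h}{S_h}$ together with the maximum speed bound from Lemma \ref{not_lem_vmax}.

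\textbf{Step 3 (the thin case, via the maximum speed).} Here $S\hsubs\reach{\cstable h}{S_h}$. Every connected component of the complement of $\reach{\cstable h}{S_h}$ that meets $S$ is contained in $\reach{\cstable h}{S_h}+\ov{B}_h$. Hence
\begin{equation*}
	S\subset\reach{\cstable h}{S_h}+\ov{B}_h .
\end{equation*}
By Lemma \ref{not_lem_vmax} with $\delta=1$,
\begin{equation*}
	\reach{\cstable h}{S_h}\subset S_h+(1+\vmax\cstable h)\ov{B}_1 .
\end{equation*}
Combining these,
\begin{equation*}
	S\subset S_h+\pp{1+\pp{1+\vmax\cstable}h}\ov{B}_1\subset K_i+\pp{1+\pp{1+\vmax\cstable}h}\ov{B}_1,
\end{equation*}
where the last inclusion uses $S_h\subset K_i$ from Step 1. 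This is exactly the stated constant.

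This argument is in fact uniform and covers both cases, so Step 2 can be dropped. The only care needed is the envelopment step, where points of $S$ inside $\reach{\cstable h}{S_h}$ itself are trivially covered.

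I expect the main subtlety to be this envelopment step. One must check that $S\hsubs\reach{\cstable h}{S_h}$ yields containment in the $h$-neighbourhood, not just a statement about components. Since each complementary component meeting $S$ lies in $\reach{\cstable h}{S_h}+\ov{B}_h$, this follows directly from the definition.
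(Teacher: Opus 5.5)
Your proof is correct and, once the unnecessary case distinction of Step~2 is dropped (as you note yourself), it is exactly the paper's argument. Stability and comparison give $S_h\subset\reach{t}{S_h}\subset\reach{t}{S}\subset K_i$, and the envelopment property together with Lemma~\ref{not_lem_vmax} gives $S\subset\reach{\cstable h}{S_h}+\ov{B}_h\subset S_h+(1+\vmax\cstable h)\ov{B}_1+\ov{B}_h$.
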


\begin{proof}
Let $(A,F)\in E_i(t)$, in particular $(A,F)\in\Espeed{h}\pp{\partial S}$.
Then with \ref{veff_Pspeed} from Assumption \ref{veff_aP_star} there exists a stable $(h,\cstable)$-approximation $S_h$ of $S$ with respect to $(A,F)$.
In particular, we have $S_h\subset S\hsubs \reach[(A,F)]{\cstable h}{S_h}$.
On the one hand, with Lemma \ref{stable_lem_preserv} and the stability of $S_h$ as well as the comparison principle from Lemma \ref{not_lem_comp} we obtain
\begin{equation*}
	S_h	\subset	\reach[(A,F)]{t}{S_h}	\subset	\reach[(A,F)]{t}{S}	\subset	K_i
\end{equation*}
since $K_h\pp{\reach[(A,F)]{t}{S}}=K_i$. 
On the other hand, with $\vmax$ from Lemma \ref{not_lem_vmax} we have
\begin{equation*}
	S	\hsubs	\reach[(A,F)]{\cstable h}{S_h}
		\subset	S_h+(1+\vmax\cstable h)\ov{B}_1.
\end{equation*}
Combining both, we obtain
\begin{equation*}
	S\subset	K_i+(1+\vmax\cstable h)\ov{B}_1 + \ov{B}_h.
\end{equation*}
\end{proof}

\subsection{Measurability issues}\label{subs_fluct_meas}
In this subsection we will provide replacements for the measurability properties in Lemma \ref{str_lem_meas}, which relied on the guaranteed minimum speed.
We first obtain an analogous result to Lemma \ref{str_lem_meas}\ref{str_meas_met}: Arrival times are indeed measurable and further independent from the coefficient field in the interior of the starting set -- as long as a stable approximation for this starting set exists, which is guaranteed by the event $\Espeed{h}\pp{\partial S}$ introduced in Assumption \ref{veff_aP_star}.
If $\Espeed{h}\pp{\partial S}$ does not hold, then the interface might partially collapse into the interior of the starting set, breaking the independence
Hence, we multiply the truncated arrival times with $\charfun[\Espeed{h}\pp{\partial S}]$ to only consider the case that $\Espeed{h}\pp{\partial S}$ holds.

\begin{lemma}[Measurability of arrival times]\label{fluct_lem_meas}
Let $S\subset\Rd$ and $x_0\in\Rd$.
Then
\begin{equation}\label{fluct_eq_measMET}
	\met[]{}{x_0}{S} \,\,\text{ and }\,\,	\metb[]{h}{x_0}{S}	\quad\text{are measurable with respect to }\F(\Rd).
\end{equation}
Assume that in addition $S$ is an $h$-fat set.
Then for the event $\Espeed{h}\pp{\partial S}$ from Assumption \ref{veff_aP_star} there holds
\begin{multline}\label{fluct_eq_measMETstable}
		\met[]{}{x_0}{S}\charfun[\Espeed{h}\pp{\partial S}]	\,\,\text{ and }\,\,	\metb[]{h}{x_0}{S}\charfun[\Espeed{h}\pp{\partial S}]\\
		\quad\text{are measurable with respect to }\F\pp{\ov{\comp{S}} + \max\Bp{\cstable\vmax,\Cmeas}\ov{B}_h}.
\end{multline}
\end{lemma}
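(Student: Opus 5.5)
The plan is to reduce everything to the closure criterion of Lemma \ref{meas_lem_closure}, applied to sublevel events of the arrival times. It suffices to show that for each $\tau\geq 0$ the events
\[
\Bp{(A,F)\,:\,\met[(A,F)]{}{x_0}{S}\leq\tau}\quad\text{and}\quad\Bp{(A,F)\,:\,\min_{y\in\ov{B}_h(x_0)}\met[(A,F)]{}{y}{S}\leq\tau}
\]
are closed with respect to the relevant weighted uniform topology. The truncation by the deterministic constant $\timebound{h}{x_0}{S}$ preserves measurability, and multiplication by $\charfun[\Espeed{h}(\partial S)]$ is harmless once we know that $\Espeed{h}(\partial S)\in\F(\partial S+\Cmeas\ov{B}_h)$ by \ref{veff_Pmeas}, since $\partial S\subset\ov{\comp{S}}$.

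For \eqref{fluct_eq_measMET}, take $(A_n,F_n)\to(A,F)$ locally uniformly with $x_0\in\reach[(A_n,F_n)]{\tau_n}{S}$ and $\tau_n\leq\tau$. By the uniform Lipschitz bounds \eqref{sett_eq_assAbd} and \eqref{sett_eq_assF}, the level set functions $\uls{S}$ (with fixed initial data) converge locally uniformly by stability of viscosity solutions and uniqueness (Appendix \ref{s_visc}). Hence $\uls{S}^{(A,F)}(x_0,\tau')\leq 0$ for a limit point $\tau'\leq\tau$, which gives closedness. For the target-area version we use in addition that $\ov{B}_h(x_0)$ is compact, passing to the limit in $y_n$ as well. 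This is the standard semicontinuity of the set evolution (lower level sets are closed under limits), and $x_0\in\reach{t}{S}$ for all $t\geq\met{}{x_0}{S}$ needs monotonicity; if monotonicity fails I would instead use the joint closedness of the set $\Bp{(x,t):\uls{S}\leq0}$, since the arrival time is defined as a minimum.

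For \eqref{fluct_eq_measMETstable}, set $U\coloneqq\ov{\comp{S}}+\max\Bp{\cstable\vmax,\Cmeas}\ov{B}_h$. The key step is a localization: on $\Espeed{h}(\partial S)$ there is a stable approximation $S_h\subset S$ with $S_h\supset\Rd\setminus(\comp{S}+\cstable\vmax B_h)$, and by Lemma \ref{stable_lem_preserv} together with Lemma \ref{not_lem_comp} this gives $\reach{t}{S}\supset S_h$ for all $t$. Hence the evolution never touches $\Rd\setminus U$ from the outside: the interface stays in $U$. I would then show that, for two fields agreeing on $U$ and both lying in $\Espeed{h}(\partial S)$, the evolutions of $S$ coincide. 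To do this, compare $\uls{S}$ for one field with $1-\charfun$ of the evolution of the other field, using Lemma \ref{not_lem_ur}. Both are viscosity (super)solutions with the same equation near their discontinuity set, because $S_h$ stays inside. The shortcut is to note that $u\coloneqq\min\bigl(\uls{S},\,\text{value inside }S_h\bigr)$ only sees the coefficients on $U$.

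Combined with closedness under convergence on $U$ only, this yields the criterion of Lemma \ref{meas_lem_closure} on the set $\Espeed{h}(\partial S)$. The closedness argument goes as follows. Take fields converging on $U$ and replace them outside $U$ by those of the limit, which is allowed since the arrival time depends only on $U$. Then apply the first part. Finally, multiply by the measurable indicator.

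The main obstacle is this localization claim, namely that the arrival time on $\Espeed{h}(\partial S)$ depends only on the field in $U$. The stable set $S_h$ depends on the field itself, so I would argue pointwise. For fixed $\omega$ and $\omega'$ in the event and agreeing on $U$, build a supersolution for $\omega'$ from the solution for $\omega$, glued with the stationary supersolution $1-\charfun[S_h]$, and then invoke the comparison principle symmetrically.
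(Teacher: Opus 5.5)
Your argument for \eqref{fluct_eq_measMET} is fine. It is the paper's closedness argument, except that you go through two-sided locally uniform convergence of the level-set functions. The paper only uses the one-sided fact that the lower relaxed limit of the supersolutions $1-\charfun[{\reach[(A_k,F_k)]{\cdot}{S}}]$ is a supersolution for the limit field, and compares it with the continuous solution; this also spares you the control of the initial trace of the upper relaxed limit. Your pairwise localization claim for \eqref{fluct_eq_measMETstable} is also correct: two fields in $\Espeed{h}\pp{\partial S}$ that agree on $U$ give the same evolution of $S$.

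The gap is in turning this localization into closedness. The field that equals $(A_k,F_k)$ on $U$ and $(A,F)$ off $U$ is in general discontinuous across $\partial U$, since the two fields agree there only in the limit. So it is not in $\Omega$. Even a smoothed transition layer would not respect the constraints defining $\Omega$: the bounds $C_{1A}$, $C_{1F}$ and the structure $A=\sigma\sigma^\intercal$ with \eqref{sett_eq_assAgeom}. For such a field you cannot use the first part, your localization claim, or membership in $\Espeed{h}\pp{\partial S}$, so ``apply the first part'' has nothing to apply to.

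The fix, which is what the paper does, is to drop the gluing and run the relaxed-limit argument directly with convergence only on $U$. For $(A_k,F_k)\in\Espeed{h}\pp{\partial S}$, let $S_h^{(k)}$ be the stable approximation. By Lemma \ref{stable_lem_preserv} and Lemma \ref{not_lem_comp}, $\Rd\setminus U\subset S_h^{(k)}\subset\reach[(A_k,F_k)]{t}{S}$ for all $t$. Hence $u_k=1-\charfun[{\reach[(A_k,F_k)]{\cdot}{S}}]$ vanishes identically on the open set $(\Rd\setminus U)\times[0,\infty)$. Consequently:
\begin{itemize}
\item at points outside $U$, the parabolic subjets of $u_k$ are of the form $(0,0,X_k)$ with $X_k\leq 0$, and these satisfy the supersolution inequality for every admissible field;
\item at points of $U$, the coefficients converge.
\end{itemize}
So the lower relaxed limit of $(u_k)_k$ is a supersolution for the limit field, whatever the fields do outside $U$. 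The comparison step, with the uniform maximum speed from Lemma \ref{not_lem_vmax} for the initial data, then goes through as in the first part.

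One further point: Lemma \ref{meas_lem_closure} requires a set that is closed in all of $\Omega$. Since $\Espeed{h}\pp{\partial S}$ is only known to be measurable, you obtain only relative closedness. You must therefore write $\{\metb{h}{x_0}{S}\leq\tau\}\cap\Espeed{h}\pp{\partial S}$ as its $C_w(U)$-closure intersected with $\Espeed{h}\pp{\partial S}$. The paper handles this instead through the auxiliary event that a stable $S_h\subset S$ with $S_h\supset\Rd\setminus(\comp{S}+\cstable\vmax B_h)$ exists. That event contains $\Espeed{h}\pp{\partial S}$ and is itself treated by the closure criterion.
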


\begin{proof}
We only prove the measurability of the truncated arrival times $\metb[]{h}{x_0}{S}$ in the target area $\ov{B}_h(x_0)$, since the proofs for the actual arrival times $\met[]{}{x_0}{S}$ are analogous.
Regarding \eqref{fluct_eq_measMET}, it is sufficient to show for any $T\geq 0$ that 
\begin{equation*}
	\Bp{(A,F)\in\Omega	\,:\,	\met[(A,F)]{h}{x_0}{S}\leq T}\in\F(\Rd).
\end{equation*}
This is clear for $T\geq\timebound{h}{x_0}{S}$.
For  $T<\timebound{h}{x_0}{S}$ we will apply Lemma \ref{meas_lem_closure}, for which we need to check that the set is closed with respect to weighted uniform convergence of the coefficient fields.

Let $(A_k,F_k)\in\Omega$ with $T_k\coloneqq \met[(A_k,F_k)]{h}{x_0}{S}\leq T$ and $(A_k,F_k)\rightarrow (A,F)\in\Omega$ with respect to the norm $\|\cdot\|_{C_w(\Rd\times\Sd,\Rdd\times\R)}$ from Lemma \ref{meas_lem_closure}.
By definition, for $k\in\N$ there exists $y_k\in \ov{B}_h(x_0)\cap\reach[(A_k,F_k)]{T_k}{S}$.
Then there exists a (non-relabeled) subsequence such that $T_k\rightarrow T_0\leq T$ and $y_k\rightarrow y_0\in\ov{B}_h(x_0)$.
We take the lower relaxed limit as defined in \eqref{comp_eq_defLrellim}, 
\begin{equation*}
	v\coloneqq \liminf_{k\rightarrow \infty}\!{}_* \,\ur[(A_k,F_k)]{S},
\end{equation*}
where $\ur[(A_k,F_k)]{S}(x,t)=1-\charfun[{\reach[(A_k,F_k)]{t}{S}}](x)$ are the supersolutions from Lemma \ref{not_lem_ur}.
Due to the stability of viscosity solutions from Theorem \ref{comp_thm_stab} and the weighted uniform convergence of $(A_k,F_k)$, we obtain that $v$ is a supersolution of the level-set equation \eqref{intro_eq_uls} with the coefficient field $(A,F)$.
Since $S\subset\Bp{x\in\Rd\,:\,v(x,0)\leq 0}$, with the comparison principle from Theorem \ref{comp_thm_comp-unb} and Remark \ref{comp_rem_condcomp}, we obtain $\uls[(A,F)]{S}\leq v$ for a continuous solution $\uls[(A,F)]{S}$ from Definition \ref{not_def_reach} and thus in particular
\begin{equation*}
	\reach[(A,F)]{T_0}{S}	=		\Bp{x\in\Rd\,:\,	\uls[(A,F)]{S}(x,T_0)\leq 0}	
							\supset \Bp{x\in\Rd\,:\,	v(x,T_0)\leq 0}	
							\supset 	\Bp{y_0},
\end{equation*}
because $v(y_0,T_0)\leq \liminf_{k\rightarrow 0}\ur[(A_k,F_k)]{S}(y_k,T_k)=0$ due to the definition of the lower relaxed limit. 
Since $y_0\in \ov{B}_h(x_0)$, this implies $\met[(A,F)]{h}{x_0}{S}\leq T_0\leq T$ and hence Lemma \ref{meas_lem_closure} yields the measurability of $\met[]{h}{x_0}{S}$ with respect to $\F(\Rd)$.

Regarding \eqref{fluct_eq_measMETstable}, we write 
\begin{multline*}
	E\coloneqq \big\lbrace	(A,F)\in\Omega\,:\,	\text{there exists }S_h\subset S\text{ stable w.r.t. }(A,F)\\
												\text{and with }S_h\supset\Rd\setminus\pp{\comp{S}+\cstable\vmax B_h} \big\rbrace
\end{multline*}
for the event that there exists a stable set approximating $S$ from inside. 
Clearly, Assumption \ref{veff_aP_star} implies $\Espeed{h}\pp{\partial S}\subset E$, since a stable $(h,\cstable)$-approximation of $S$ satisfies these properties.
Therefore we have 
\begin{equation*}
	\met[]{h}{x_0}{S}\charfun[\Espeed{h}\pp{\partial S}]	
	=	\met[]{h}{x_0}{S}\charfun[E]\charfun[\Espeed{h}\pp{\partial S}],
\end{equation*}
Since $\Espeed{h}\pp{\partial S}\in\F\pp{\partial S + \Cmeas \ov{B}_h}$ by assumption, it is hence sufficient to show for any $T>0$ that 
\begin{align*}
					&	O_0\coloneqq	\Bp{(A,F)\in\Omega\,:\,	\met[(A,F)]{h}{x_0}{S}\charfun[E]=0}		\in\F\pp{\ov{\comp{S}}+\cstable\vmax \ov{B}_h}\\
	\text{and }\quad	&	O_T\coloneqq	\Bp{(A,F)\in\Omega\,:\,	0<\met[(A,F)]{h}{x_0}{S}\charfun[E]\leq T}	\in\F\pp{\ov{\comp{S}}+\cstable\vmax \ov{B}_h}.
\end{align*}
Regarding $O_0$, if $\pp{x_0+\ov{B}_h}\cap S= \emptyset$, then $\met[(A,F)]{h}{x_0}{S}>0$ due to the bounds on the speed of expansion from Lemma \ref{not_lem_vmax} and we have $O_0=\comp{E}$.
If $\pp{x_0+\ov{B}_h}\cap S\neq \emptyset$, then we clearly have  $\met[(A,F)]{h}{x_0}{S}=0$ and hence $O_0=\Omega$.
Hence, to show the measurability of $O_0$, it is enough to show that $E\in\F\pp{\ov{\comp{S}}+\cstable\vmax \ov{B}_h}$.
Regarding $O_T$, note that there has to be $\tau>0$ with $\met[(A,F)]{h}{x_0}{S}\geq \tau$ for all $(A,F)\in O_T$ if $O_T\neq\emptyset$, because otherwise we would obtain $\pp{x_0+\ov{B}_h}\cap S\neq \emptyset$ again due to the bounds on the speed of expansion from Lemma \ref{not_lem_vmax}.
This would imply $O_0=\Omega$ and hence $O_T=\emptyset$, yielding a contradiction.
With these considerations and taking the proof of \eqref{fluct_eq_measMET} from above as a blueprint, both measurability statements are straightforward to show with Lemma \ref{meas_lem_closure} using the stability of viscosity solutions, i.e. Theorem \ref{comp_thm_stab}.
The key insight is that due to the existence of the stable set $S_h\subset S$ from $E$ for any $t\geq 0$ we have 
\begin{equation*}
	\Rd\setminus\pp{\comp{S}+\cstable\vmax B_h}
	\subset	S_h
	\subset	\reach{t}{S_h}
	\subset	\reach{t}{S}
\end{equation*}
where we used that with Lemma \ref{stable_lem_preserv} the stable set $S_h$ does not shrink and then applied the comparison principle from Lemma \ref{not_lem_comp}. 
Thus, the interface never leaves $\ov{\comp{S}}+\cstable\vmax \ov{B}_h$, because of which the evolution is not impacted by the values of the coefficient field outside of this area. 
\end{proof}

In order to obtain replacements for the remaining parts of Lemma \ref{str_lem_meas}, we need the following lemma.
It describes one of the key implications of the existence of a stable $(h,\cstable)$-approximation of a set $S\subset\Rd$:
The area explored before the evolution of $S$ reaches a given state can not be much larger than that state. 
In particular, in order to check if the set evolution ends up in that state, it is sufficient to use the evolution with respect to the coefficient field restricted to a slightly larger area than the state for which we check.
This will imply that the events $E_i$ defined in \eqref{fluct_eq_Ei} are measurable with respect to information from a set only slightly larger than $K_i$.

\begin{lemma}\label{fluct_lem_reachRestr}
Let $h\geq h_0$ and $S\subset\Rd$ be an $h$-fat set.
Let $(A,F)\in\Espeed{h}\pp{\partial S}$.
Let $U\subset\Rd$ be such that
\begin{equation}\label{fluct_eq_SwtU}
	S + (2+\vmax\cstable h)\ov{B}_1
	\subset \hat{U}\coloneqq U+\ov{B}_R
	\quad\text{for some }R\geq 2+\pp{1+\vmax\cstable}h.
\end{equation}
Let $(A,F)_{\hat{U}}$ be the restriction of the coefficient field to $\hat{U}$ as in Definition \ref{restr_def}.
If there holds $\reach[(A,F)_{\hat{U}}]{t_0}{S}\subset U$ for some $t_0\geq 0$, then $\reach[(A,F)]{t}{S}\subset \hat{U}$ for all $0\leq t\leq t_0$ and 
\begin{equation*}
	\reach[(A,F)_{\hat{U}}]{t}{S}=\reach[(A,F)]{t}{S}
	\quad\text{ for all }0\leq t\leq t_0.
\end{equation*}
\end{lemma}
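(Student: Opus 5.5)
We want: if the restricted evolution stays in $U$ up to time $t_0$, then the unrestricted one stays in $\hat U$ and both coincide on $[0,t_0]$.

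\emph{Step 1: the restricted evolution lies below the unrestricted one.} Since $(A,F)_{\hat U}=(A,F)$ on $\hat U$, and $F_{\hat U}\le F$ wherever $F$ is modified, the function $1-\charfun[{\reach[(A,F)_{\hat U}]{t}{S}}]$ is a supersolution for $(A,F)$ as long as the front stays where the coefficients agree. Comparison (Lemma~\ref{not_lem_comp}, Lemma~\ref{restr_lem_PropRestr}) then gives
\[
\reach[(A,F)_{\hat U}]{t}{S}\subset\reach[(A,F)]{t}{S}\quad\text{for all } t\ge 0 .
\]
This is the harmless inclusion.

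\emph{Step 2: the unrestricted evolution does not leave $\hat U$ on $[0,t_0]$.} This is the step I expect to be the main obstacle: it is the only place where $(A,F)\in\Espeed{h}(\partial S)$ enters. I would argue by contradiction. Let
\[
t_1:=\sup\Bp{t\le t_0\,:\,\reach[(A,F)]{s}{S}\subset U+\ov{B}_{R-1}\ \text{for all } s\le t}.
\]
On $[0,t_1]$ the unrestricted front stays at distance at least $1$ from $\partial\hat U$. The coefficients agree there, so a localized comparison (the equality part of Lemma~\ref{restr_lem_QwtQ}) yields equality of the two evolutions on $[0,t_1]$. In particular,
\[
\reach[(A,F)]{t_1}{S}=\reach[(A,F)_{\hat U}]{t_1}{S}\subset U .
\]
The maximum speed bound of Lemma~\ref{not_lem_vmax} shows that on a short interval after $t_1$ the unrestricted set stays within $U+\ov{B}_{1+\vmax\delta}\subset U+\ov{B}_{R-1}$. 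Hence $t_1=t_0$ unless $t_1$ fails to be attained, which continuity of the evolution excludes.

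The subtlety in this argument is that the front might move fast near shrinking parts of the set, and some part of $S$ itself may lie outside $U$. The assumption \eqref{fluct_eq_SwtU}, namely $S+(2+\vmax\cstable h)\ov B_1\subset\hat U$, together with $R\ge 2+(1+\vmax\cstable)h$, handles the initial time and this buffer. For the buffer I would use the stable $(h,\cstable)$-approximation $S_h$ together with Lemma~\ref{fluct_lem_KiS}-type reasoning: since $S\hsubs\reach{\cstable h}{S_h}$ and $S_h$ is stable, every point of $S$ stays within distance of order $(1+\vmax\cstable)h$ of the evolving set. Any part of $\reach{t}{S}$ escaping $U+\ov{B}_R$ would therefore require the set to have reached far from $U$ at an earlier time, which is incompatible with the inclusion in $U$ at $t_0$ combined with the stability (non-shrinking) of $\reach{t}{S_h}\subset\reach{t}{S}$.

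\emph{Step 3: equality.} Once Step 2 shows that $\reach[(A,F)]{t}{S}$ stays inside $\hat U$, with a margin of at least $1$ after shrinking $R$ by $1$, the coefficients coincide on a neighborhood of all fronts up to time $t_0$. The second statement of Lemma~\ref{restr_lem_QwtQ}, applied with $Q=\hat U$ and $\wt Q=\Rd$ (formally, via the viscosity comparison on $\hat U\times[0,t_0]$), then gives $\reach[(A,F)_{\hat U}]{t}{S}=\reach[(A,F)]{t}{S}$ for all $0\le t\le t_0$.
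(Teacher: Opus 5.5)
Your Step~2 has a genuine gap at the sentence ``In particular, $\reach[(A,F)]{t_1}{S}=\reach[(A,F)_{\hat U}]{t_1}{S}\subset U$''. The hypothesis gives $\reach[(A,F)_{\hat U}]{t_0}{S}\subset U$ only at the final time $t_0$. Nothing says the restricted evolution lies in $U$ at an intermediate time $t_1<t_0$. Since $S$ itself need not be stable, its evolution may first expand and later shrink back into $U$. Ruling out such an excursion before $t_0$ is exactly what the lemma asserts, so your continuation argument presupposes its conclusion.

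Your next paragraph names the right tool, the stable approximation $S_h$, but it works with $\reach[(A,F)]{t}{S_h}$ and $S\hsubs\reach[(A,F)]{\cstable h}{S_h}$, i.e.\ with the unrestricted field. Your only information at time $t_0$ concerns the restricted field, and as written nothing connects the two. Likewise, ``handles the initial time and this buffer'' does not say how the first inclusion in \eqref{fluct_eq_SwtU} is actually used.

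The missing step is to transfer $S_h$ to $(A,F)_{\hat U}$ and then argue entirely inside the restricted evolution.
\begin{itemize}
\item Since $(A,F)_{\hat U}=(A,F)$ on $\hat U\supset S+2\ov{B}_1$, i.e.\ on a neighborhood of $S_h$, the set $S_h$ is also stable for $(A,F)_{\hat U}$.
\item By the maximum speed (Lemma~\ref{not_lem_vmax}), $\reach[(A,F)_{\hat U}]{t}{S_h}+B_1\subset S+(2+\vmax\cstable h)\ov{B}_1\subset\hat U$ for $t\le\cstable h$. Lemma~\ref{restr_lem_QwtQ} then gives $\reach[(A,F)_{\hat U}]{\cstable h}{S_h}=\reach[(A,F)]{\cstable h}{S_h}$, hence $S\hsubs\reach[(A,F)_{\hat U}]{\cstable h}{S_h}$. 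This is the role of the first inclusion in \eqref{fluct_eq_SwtU}.
\item For every $t\le t_0$, Lemma~\ref{stable_lem_preserv} and Lemma~\ref{not_lem_comp} give $\reach[(A,F)_{\hat U}]{t}{S_h}\subset\reach[(A,F)_{\hat U}]{t_0}{S_h}\subset\reach[(A,F)_{\hat U}]{t_0}{S}\subset U$.
\item They also give $\reach[(A,F)_{\hat U}]{t}{S}\hsubs\reach[(A,F)_{\hat U}]{t+\cstable h}{S_h}\subset U+(1+\vmax\cstable h)\ov{B}_1$.
\item Hence $\reach[(A,F)_{\hat U}]{t}{S}+B_1\subset U+(2+(1+\vmax\cstable)h)\ov{B}_1\subset\hat U$ by the choice of $R$.
\end{itemize}
The second part of Lemma~\ref{restr_lem_QwtQ} (with $\wt{Q}=\Rd$) then yields equality on all of $[0,t_0]$ at once, and $\reach[(A,F)]{t}{S}\subset\hat U$ follows from that equality. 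No bootstrap in $t$ and no a priori control of the unrestricted evolution is needed.
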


\begin{proof} 
Once we obtain that $\reach[(A,F)_{\hat{U}}]{t}{S}+B_1\subset \hat{U}$ for all $0\leq t\leq t_0$, Lemma \ref{restr_lem_QwtQ} yields $\reach[(A,F)_{\hat{U}}]{t}{S}=\reach[(A,F)]{t}{S}$  for all $0\leq t\leq t_0$ and the result follows immediately.
The idea for proving that $\reach[(A,F)_{\hat{U}}]{t}{S}$ can not be too large is the following:
\begin{itemize}[left = 0pt]
\item	$S$ is sandwiched between a stable set $S_h$ from the inside and from the outside by the same $S_h$ but evolved for a short additional time span $\cstable h$.
\item	$S_h$ has to stay within $U$ at all times, otherwise it and thus $S$ can not drop back into $U$.
\item	This also means that the set bounding $S$ from the outside can not be farther from $U$ than evolving $U$ with the maximum speed from Lemma \ref{not_lem_vmax} for the short time span $\cstable h$.
\end{itemize}
We will now formalize this argument, also considering the restriction of the coefficient field to $\hat{U}$ as in Definition \ref{restr_def}.
If $(A,F)\in\Espeed{h}\pp{\partial S}$ holds, then with \ref{veff_Pspeed} from Assumption \ref{veff_aP_star} there exists a stable $(h,\cstable)$-approximation $S_h$ of $S$.
In particular, $S_h$ is stable with respect to $(A,F)$ and satisfies $S_h\subset S\hsubs \reach[(A,F)]{\cstable h}{S_h}$.
Due to the maximum speed of propagation described in Lemma \ref{not_lem_vmax}, for all $0\leq t\leq \cstable h$ there holds
\begin{equation*}
	\reach[{(A,F)_{\hat{U}}}]{t}{S_h}+B_1	
							\subset S_h+(2+\vmax\cstable h)\ov{B}_1
							\subset	S+(2+\vmax\cstable h)\ov{B}_1
							\subset \hat{U}.
\end{equation*}
Hence, with Lemma \ref{restr_lem_QwtQ} we obtain $\reach[(A,F)]{\cstable h}{S_h}=\reach[(A,F)_{\hat{U}}]{\cstable h}{S_h}$ and thus 
\begin{equation*}
	S_h\subset S\hsubs \reach[(A,F)_{\hat{U}}]{\cstable h}{S_h},
	\quad\text{ with }S_h\text{ stable with respect to }(A,F)_{\hat{U}},
\end{equation*}
where the stability with respect to $(A,F)_{\hat{U}}$ follows since $(A,F)=(A,F)_{\hat{U}}$ on $\hat{U}\supset S_h$.

With the comparison principle from Lemma \ref{not_lem_comp}, which states that for stable sets not only ``$\subset$'' but also ``$\hsubs$'' is preserved under the evolution, and the stability of $S_h$ via Lemma \ref{stable_lem_preserv},
for all $t\geq s\geq 0$ we obtain 
\begin{equation}\label{fluct_eq_Uproof}
	\reach[(A,F)_{\hat{U}}]{s}{S_h}
	\subset	\reach[(A,F)_{\hat{U}}]{t}{S_h}
	\subset	\reach[(A,F)_{\hat{U}}]{t}{S}	
	\hsubs	\reach[(A,F)_{\hat{U}}]{t+\cstable h}{S_h}.
\end{equation}
In particular, the assumption that $\reach[(A,F)_{\hat{U}}]{t_0}{S}\subset U$ implies that $\reach[(A,F)_{\hat{U}}]{t}{S_h}\subset U$ for all $0\leq t\leq t_0$.
Again due to the maximum speed of propagation from Lemma \ref{not_lem_vmax}, we have 
\begin{equation*}
	\reach[{(A,F)_{\hat{U}}}]{t+\cstable h}{S_h}
							\subset \reach[(A,F)_{\hat{U}}]{t}{S_h}+(1+\vmax\cstable h)\ov{B}_1
							\subset	U+(1+\vmax\cstable h)\ov{B}_1
\end{equation*}
for all $0\leq t\leq t_0$.
Together with \eqref{fluct_eq_Uproof}, and since $S_1\hsubs S_2$ implies $S_1\subset S_2+\ov{B}_h$ for any $S_1,S_2\subset\Rd$ by Definition \ref{veff_def_fatstab}, this yields
\begin{align*}
	\reach[(A,F)_{\hat{U}}]{t}{S}
	&\subset	U+(1+(1+\vmax\cstable h))\ov{B}_1
	&&\text{ for all }0\leq t\leq t_0\\
\intertext{and hence}
	\reach[(A,F)_{\hat{U}}]{t}{S} + B_1
	&\subset		U+(2+(1+\vmax\cstable) h)\ov{B}_1
	\subset		\hat{U}
	&&\text{ for all }0\leq t\leq t_0,
\end{align*}
which concludes the proof as described at the start.
\end{proof}

With Lemma \ref{fluct_lem_reachRestr} established, for the event $E_i(t)$, which checks if $\reach{t}{S}\approx K_i$, we are now able to prove the measurability with respect to the $\sigma$-algebra $\F\pp{K_i+\CG\ov{B}_h}$ containing the information of the coefficient field on a set only slightly larger than $K_i$.

\begin{lemma}[Measurability of $E_i$]\label{fluct_lem_measEi}
Let $S\subset\Rd$ be an $h$-fat set with bounded boundary.
For $i\in\N$ and $t\geq 0$, the event $E_i(t)$ as defined in \eqref{fluct_eq_Ei} is measurable with respect to $\F\pp{K_i+\CG\ov{B}_h}$ if $\CG=\CG(\data)>0$ has been chosen large enough.
\end{lemma}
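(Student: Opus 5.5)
The plan is to write $E_i(t)$ as the intersection of $\Espeed{h}(\partial S)$ with an event defined through the \emph{restricted} coefficient field, and to show that each piece is $\F(K_i+\CG\ov{B}_h)$-measurable. First, by Lemma \ref{fluct_lem_KiS}, any nonempty $E_i(t)$ satisfies $S\subset K_i+(1+(1+\vmax\cstable)h)\ov{B}_1$. Hence $\partial S+\Cmeas\ov{B}_h\subset K_i+\CG\ov{B}_h$ once $\CG\geq \Cmeas+2+\vmax\cstable+2/h_0$. By \ref{veff_Pmeas} and \ref{veff_Porder}, $\Espeed{h}(\partial S)$ is then $\F(K_i+\CG\ov{B}_h)$-measurable; if $E_i(t)=\emptyset$ there is nothing to prove.

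Next, set $U\coloneqq K_i$ and $\hat U\coloneqq K_i+\ov{B}_R$ with $R\coloneqq 2+(1+\vmax\cstable)h$. This choice makes \eqref{fluct_eq_SwtU} hold, since $S+(2+\vmax\cstable h)\ov{B}_1\subset K_i+(3+(1+2\vmax\cstable)h)\ov{B}_1$, after enlarging $R$ to $3+(1+2\vmax\cstable)h$ if needed. The central claim is
\begin{equation*}
	E_i(t)=\Bp{(A,F)\in\Omega\,:\,K_h\pp{\reach[(A,F)_{\hat U}]{t}{S}}=K_i}\cap\Espeed{h}(\partial S).
\end{equation*}
For ``$\supset$'': if $K_h(\reach[(A,F)_{\hat U}]{t}{S})=K_i$, then $\reach[(A,F)_{\hat U}]{t}{S}\subset K_i=U$. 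Lemma \ref{fluct_lem_reachRestr}, applicable since $(A,F)\in\Espeed{h}(\partial S)$, then gives $\reach[(A,F)_{\hat U}]{t}{S}=\reach[(A,F)]{t}{S}$. For ``$\subset$'': if $K_h(\reach[(A,F)]{t}{S})=K_i$, I need the converse direction, namely that the unrestricted evolution stays inside $\hat U$ on $[0,t]$. I would run the argument of Lemma \ref{fluct_lem_reachRestr} with the roles swapped. The stable approximation $S_h$ is nondecreasing under the $(A,F)$-evolution and is contained in $\reach[(A,F)]{t}{S}\subset K_i$ for all $s\leq t$. Also $\reach[(A,F)]{s}{S}\hsubs\reach[(A,F)]{s+\cstable h}{S_h}\subset K_i+(1+\vmax\cstable h)\ov{B}_1$. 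Together these give $\reach[(A,F)]{s}{S}+B_1\subset\hat U$ for $s\leq t$. Lemma \ref{restr_lem_QwtQ}, applied with $Q=\hat U$, $\wt{Q}=\Rd$ and trivial restriction, or rather the second part of Lemma \ref{restr_lem_QwtQ} applied through comparison, then yields equality of the two evolutions on $[0,t]$.

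Finally, the first event on the right depends on the coefficient field only through $(A,F)_{\hat U}$, which is determined by $(A,F)$ on $\hat U+\ov{B}_1\subset K_i+\CG\ov{B}_h$ for $\CG$ large depending on $\data$, using $h\geq h_0\geq1$. To get measurability, I would use Lemma \ref{meas_lem_closure} exactly as in the proof of Lemma \ref{fluct_lem_meas}: stability of viscosity solutions under weighted uniform convergence on $\hat U+\ov{B}_1$ gives closedness.

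The main obstacle is this last step. The condition ``$K_h(\cdot)=K_i$'' is an equality, so it is not obviously closed: cubes can be lost in the limit. I would split it into ``$\reach{t}{S}\subset K_i$ with the cube intersecting'' and ``$\reach{t}{S}$ meets every cube of $K_i$''. Each condition would be written as a countable intersection or union of conditions of the form $\{\reach{t}{S}\cap \text{closed cube}\neq\emptyset\}$, which are closed by the lower-relaxed-limit argument, and of conditions of the form $\{\reach{t}{S}\cap \text{open set}=\emptyset\}$. For the latter I would use the upper relaxed limit together with the continuous solution $\uls{S}$ and uniqueness via Theorem \ref{comp_thm_unique}, again applied to the restricted field.
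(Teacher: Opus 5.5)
Up to your last paragraph this is the paper's proof. You use the same identity $E_i(t)=\Bp{K_h(\reach[(A,F)_{\hat U}]{t}{S})=K_i}\cap\Espeed{h}(\partial S)$ with $\hat U=K_i+(\CG-1)\ov{B}_h$, the same appeal to Lemma \ref{fluct_lem_KiS} for the factor $\Espeed{h}(\partial S)$, and Lemma \ref{fluct_lem_reachRestr} for ``$\supset$''.

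Your route for ``$\subset$'' is valid but longer than necessary. The hypothesis of the second part of Lemma \ref{restr_lem_QwtQ} concerns the \emph{restricted} evolution, so you must invoke its first part anyway (with $\wt Q=\Rd$, where $(A,F)_{\Rd}=(A,F)$). That gives $\reach[(A,F)_{\hat U}]{t}{S}\subset\reach[(A,F)]{t}{S}\subset K_i=U$. At that point Lemma \ref{fluct_lem_reachRestr} applies directly, which is exactly what the paper does.

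The step that would fail is your treatment of the ``empty intersection'' conditions. You are right that $\Bp{K_h(\cdot)=K_i}$ is not obviously closed; the paper passes over this with ``straightforward''. But events of the form $\Bp{\reach{t}{S}\cap O=\emptyset}$ are not closed either, so Lemma \ref{meas_lem_closure} cannot be applied to them.

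For a closed cube this fails already in a trivial case. Take $A=0$, $F\equiv 1$, $S=\ov{B}_\rho$, and a cube $Q$ with $\dist(0,Q)=\rho+t$, and let $F_k=F-\frac1k$. Then every $\reach[(A,F_k)]{t}{S}=\ov{B}_{\rho+t(1-1/k)}$ avoids $Q$, while the limit $\ov{B}_{\rho+t}$ meets it.

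For an open set $O$, the upper-relaxed-limit argument only yields $\uls{S}(\cdot,t)\geq 0$ on $O$. This does not prevent $\reach[(A,F)]{t}{S}=\Bp{\uls{S}(\cdot,t)\leq 0}$ from meeting $O$ when the zero level set fattens. Theorem \ref{comp_thm_unique} does not help, since it concerns a single fixed coefficient field.

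No closedness is needed there. Let $Q_y=y+[0,h]^d$, $y\in h\Zd$, be the grid cubes and $M=\reach[(A,F)_{\hat U}]{t}{S}$. Then
\begin{equation*}
\Bp{K_h(M)=K_i}=\bigcap_{Q_y\subset K_i}\Bp{M\cap Q_y\neq\emptyset}\cap\bigcap_{Q_y\not\subset K_i}\pp{\Omega\setminus\Bp{M\cap Q_y\neq\emptyset}}.
\end{equation*}
Each event $\Bp{M\cap Q_y\neq\emptyset}$ is closed by the lower-relaxed-limit argument of Lemma \ref{fluct_lem_meas}, using compactness of $Q_y$. Hence each is $\F(\hat U+\ov{B}_1)$-measurable, and complements and countable intersections give the claim.

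Note also that the second family must range over the closed cubes not contained in $K_i$, i.e.\ the condition is $M\cap\overline{\Rd\setminus K_i}=\emptyset$. The condition ``$\reach{t}{S}\subset K_i$'' is not the right one, because points of $\partial K_i$ also belong to neighbouring cubes.
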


\begin{proof}
Assume that $E_i(t)\neq\emptyset$, otherwise we are done.
We claim that with Lemma \ref{fluct_lem_reachRestr} we obtain
\begin{equation}\label{fluct_eq_Eialt}
	E_i(t) = 
 	\Bp{(A,F)\in\Omega\,:\, K_h\pp{\reach[(A,F)_{K_i+\pp{\CG-1}\ov{B}_h}]{t}{S}}=K_i} \cap 
	\Espeed{h}\pp{\partial S},
\end{equation} 
that is we use $\reach[(A,F)_{K_i+\pp{\CG-1}\ov{B}_h}]{t}{S}$ instead of $\reach[(A,F)]{t}{S}$ as in the definition, replacing the unmodified coefficient field with the restriction to an area only slightly larger than the set, for which we are checking if the final state of the evolution corresponds to it.
Having established \eqref{fluct_eq_Eialt}, we quickly obtain that $E_i(t)$ is measurable with respect to $\F\pp{K_i+\CG\ov{B}_h}$:
It is straightforward to show that $\Bp{(A,F)\in\Omega\,:\, K_h\pp{\reach[(A,F)_{K_i+\pp{\CG-1}\ov{B}_h}]{t}{S}}=K_i}$ is $\F\pp{K_i+\CG\ov{B}_h}$-measurable with Lemma \ref{meas_lem_closure} as for example in the proof of Lemma \ref{fluct_lem_meas}.
With \ref{veff_Pmeas} from Assumption \ref{veff_aP_star} we have $\Espeed{h}\pp{\partial S}\in\F\pp{\partial S + \Cmeas \ov{B}_h}$.
Since $E_i(t)\neq \emptyset$, with Lemma \ref{fluct_lem_KiS} for $\CG$ large enough we have 
\begin{align*}
	\partial S +\Cmeas \ov{B}_h
	\subset K_i + \pp{1+\pp{1+\vmax\cstable}h}\ov{B}_1 + \Cmeas \ov{B}_h 
	\subset K_i+\CG\ov{B}_h
\end{align*}
and since $\F(M_1)\subset\F(M_2)$ for $M_1\subset M_2$, we obtain that $\Espeed{h}\pp{\partial S}$ is measurable with respect to $\F\pp{K_i+\CG\ov{B}_h}$.
Thus, $E_i(t)\in\F\pp{K_i+\CG\ov{B}_h}$ due to \eqref{fluct_eq_Eialt}.

It remains to show that \eqref{fluct_eq_Eialt} holds.
In order to apply Lemma \ref{fluct_lem_reachRestr}, we need to check that $\eqref{fluct_eq_SwtU}$ holds for $U=K_i$, $\hat{U}=K_i+\pp{\CG-1}\ov{B}_h$ and $S$.
Since $E_i(t)\neq\emptyset$, we know from Lemma \ref{fluct_lem_KiS} that  
\begin{equation*}
	S\subset K_i +  \pp{1+\pp{1+\vmax\cstable}h}\ov{B}_1.
\end{equation*}
In particular, for $\CG$ large enough we have
\begin{equation*}
	S+\pp{2+\vmax\cstable h}\ov{B_1}
	\subset	K_i+\pp{3+\pp{1+2\vmax\cstable}h}\ov{B}_1
	\subset	K_i+\pp{\CG-1}\ov{B}_h,
\end{equation*}
satisfying condition \eqref{fluct_eq_SwtU} for Lemma \ref{fluct_lem_reachRestr}.

To show that ``$\supset$'' in \eqref{fluct_eq_Eialt} holds, note that if $K_h\pp{\reach[(A,F)_{K_i+\pp{\CG-1}\ov{B}_h}]{t}{S}}=K_i$ and  $(A,F)\in \Espeed{h}\pp{\partial S}$, Lemma \ref{fluct_lem_reachRestr} yields
	$\reach[(A,F)_{K_i+\pp{\CG-1}\ov{B}_h}]{t}{S}=\reach[(A,F)]{t}{S}$ 
and hence $K_h\pp{\reach[(A,F)]{t}{S}}=K_i$.

Regarding ``$\subset$'' in \eqref{fluct_eq_Eialt}, if $K_h\pp{\reach[(A,F)]{t}{S}}=K_i$ and $(A,F)\in \Espeed{h}\pp{\partial S}$, then the evolution with respect to the restricted coefficient field also ends up in $K_i$ since by Lemma \ref{restr_lem_QwtQ} the evolution with respect to restricted coefficient fields is lower than the one with respect to the original coefficient field.
Now, Lemma \ref{fluct_lem_reachRestr} again yields that the evolutions are the same up to time $t$.
Thus, we have shown that \eqref{fluct_eq_Eialt} holds.
\end{proof}

Using a similar approach, we can leverage the bounds on potential shrinkage from Lemma \ref{fluct_lem_reachRestr} to obtain that arrival times smaller than $t$ are fully determined with the information from $\G_t$, corresponding to Lemma \ref{str_lem_meas}\ref{str_meas_metchar}.
However, again this only works when a stable approximation exists as for the event $\Espeed{h}\pp{\partial S}$ from Assumption \ref{veff_aP_star}.

\begin{lemma}[Measurability of arrival times within the already explored environment]\label{fluct_lem_measIn}
Let $t\geq 0$, $x_0\in\Rd$ and $S\subset\Rd$ be an $h$-fat set with bounded boundary.
Then
\begin{equation}\label{fluct_eq_measIn}
	(A,F)\mapsto \metb[]{h}{x_0}{S}\charfun[\Bp{(A',F')\,:\,\metb[(A',F')]{h}{x_0}{S}\leq t}]((A,F))\charfun[\Espeed{h}\pp{\partial S}]((A,F))
\end{equation}
is measurable with respect to $\G_t$ as defined in \eqref{fluct_eq_Gt} with $\G_0=\Bp{\emptyset,\Omega}$.
In addition, 
\begin{align*}
	\Bp{(A,F)\,:\,\met[(A,F)]{h}{x_0}{S}\leq t}\cap\Espeed{h}\pp{\partial S}	&\quad\text{is $\G_t$-measurable for }t>0,\\
	\Bp{(A,F)\,:\,\met[(A,F)]{h}{x_0}{S}= 0}											&\quad\text{is $\G_0$-measurable}.
\end{align*}
In particular, for the truncated arrival time in the target area $\ov{B}_h(x_0)$ there holds
\begin{equation}\label{fluct_eq_measMTbound}
	\EV[\G_{\timebound{h}{x_0}{S}}]{\metb[]{h}{x_0}{S}\charfun[\Espeed{h}\pp{\partial S}]}
	=	\metb[]{h}{x_0}{S}\charfun[\Espeed{h}\pp{\partial S}].
\end{equation}
\end{lemma}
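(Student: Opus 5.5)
We adapt the sketch proof of Lemma~\ref{str_lem_meas}\ref{str_meas_metchar}, replacing the guaranteed minimum speed by Lemma~\ref{fluct_lem_reachRestr}. Since $\G_0$ is trivial, the claim for $t=0$ reduces to the observation already made in the proof of Lemma~\ref{fluct_lem_meas}. By Lemma~\ref{not_lem_vmax}, $\met{h}{x_0}{S}=0$ holds either for all $(A,F)$ (if $\ov{B}_h(x_0)\cap S\neq\emptyset$) or for none. Hence the event is $\emptyset$ or $\Omega$. For $t>0$, Lemma~\ref{fluct_lem_EiCup} gives the disjoint decomposition $\Espeed{h}(\partial S)=\bigcup_i E_i(t)$, so it suffices to show, for each $i$ with $E_i(t)\neq\emptyset$, that the event $\Bp{\met{h}{x_0}{S}\leq t}\cap E_i(t)$ and the random variable $\met{h}{x_0}{S}\charfun[\Bp{\met{h}{x_0}{S}\leq t}\cap E_i(t)]$ are of the form $E_i(t)\cap F$ with $F\in\F(K_i+\CG\ov{B}_h)$. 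These are generators of $\G_t$, and countable unions and sums of them preserve $\G_t$-measurability.

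The key step is to replace $(A,F)$ by the restriction $(A,F)_{\hat U}$ with $\hat U=K_i+(\CG-1)\ov{B}_h$, exactly as in \eqref{fluct_eq_Eialt}. On $E_i(t)$ we have $\reach{t}{S}\subset K_i$. The restricted evolution lies below the full one by Lemma~\ref{restr_lem_QwtQ}. Lemma~\ref{fluct_lem_KiS} supplies condition \eqref{fluct_eq_SwtU}, just as in the proof of Lemma~\ref{fluct_lem_measEi}. Lemma~\ref{fluct_lem_reachRestr} then gives $\reach[(A,F)]{s}{S}=\reach[(A,F)_{\hat U}]{s}{S}$ for all $s\leq t$. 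Consequently, on $E_i(t)$ and for $T\leq t$, we have $\met{h}{x_0}{S}\leq T$ if and only if $\ov{B}_h(x_0)\cap\reach[(A,F)_{\hat U}]{T}{S}\neq\emptyset$; the truncation at $\timebound{h}{x_0}{S}$ is a deterministic constant and is harmless. So we write
\begin{equation*}
	\Bp{\met{h}{x_0}{S}\leq T}\cap E_i(t)=\Bp{(A,F)\,:\,\met[(A,F)_{\hat U}]{h}{x_0}{S}\leq T}\cap E_i(t).
\end{equation*}
The first set on the right is $\F(K_i+\CG\ov{B}_h)$-measurable by Lemma~\ref{meas_lem_closure}. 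To check this we repeat the closedness argument from the proof of Lemma~\ref{fluct_lem_meas}: we take lower relaxed limits of the supersolutions $\ur[(A_k,F_k)_{\hat U}]{S}$ and use Theorem~\ref{comp_thm_stab}. This works because $(A,F)_{\hat U}$ depends continuously on $(A,F)|_{\hat U+\ov{B}_1}$ under weighted uniform convergence.

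Taking $T$ rational in $[0,t]$, together with $T=t$, yields measurability of the function \eqref{fluct_eq_measIn} and of the event $\Bp{\met{h}{x_0}{S}\leq t}\cap\Espeed{h}(\partial S)$. Finally, \eqref{fluct_eq_measMTbound} follows from \eqref{fluct_eq_measIn} applied with $t=\timebound{h}{x_0}{S}$, because $\metb{h}{x_0}{S}\leq\timebound{h}{x_0}{S}$ always holds by the truncation. The main obstacle I expect is the justification of the restriction identity on all of $[0,t]$. It must hold on $E_i(t)$ even though intermediate states are not known to lie in $K_i$. This is exactly what the monotonicity in \eqref{fluct_eq_Uproof} provides, through the stable approximation $S_h$ inside Lemma~\ref{fluct_lem_reachRestr}, and it is the reason $\CG$ has to be chosen large.
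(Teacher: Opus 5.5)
Your proposal is correct and follows the paper's proof essentially step for step: split $\Espeed{h}\pp{\partial S}$ into the disjoint events $E_i(t)$, use Lemma~\ref{fluct_lem_reachRestr} to replace $(A,F)$ by $(A,F)_{K_i+(\CG-1)\ov{B}_h}$ on all of $[0,t]$, and obtain $\F\pp{K_i+\CG\ov{B}_h}$-measurability from Lemma~\ref{meas_lem_closure}, with the proof of Lemma~\ref{fluct_lem_meas} as the template. One harmless slip: since $S$ itself need not be stable, $\metb{h}{x_0}{S}\leq T$ means that $\ov{B}_h(x_0)$ meets $\reach{s}{S}$ for some $s\leq T$, not necessarily at time $T$ itself; your displayed identity of events is unaffected, because the restricted and unrestricted evolutions coincide on the whole interval $[0,t]$.
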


\begin{proof}
We only show \eqref{fluct_eq_measIn}, the measurability of the sets follows accordingly and \eqref{fluct_eq_measMTbound} is implied by \eqref{fluct_eq_measIn} since 
\begin{equation*}
	\metb[]{h}{x_0}{S}\charfun[\Bp{(A',F')\,:\,\met[(A',F')]{h}{x_0}{S}\leq \timebound{h}{x_0}{S}}]=\metb[]{h}{x_0}{S}.
\end{equation*}
Regarding $t=0$ in \eqref{fluct_eq_measIn}, we clearly have
\begin{equation*}
	\met[]{h}{x_0}{S}\charfun[\Bp{(A',F')\,:\,\met[(A',F')]{h}{x_0}{S}\leq t}]=0.
\end{equation*}
Therefore, for the rest of the proof let $t>0$.
Due to Lemma \ref{fluct_lem_reachRestr} we obtain $\reach[(A,F)]{s}{S}=\reach[(A,F)_{K_i+\pp{\CG-1}\ov{B}_h}]{s}{S}$ for $s\leq t$ and $(A,F)\in E_i(t)$, as long as $\CG=\CG(\data)>0$ from the definition of $\G_t$ in \eqref{fluct_eq_Gt} is chosen large enough.
Hence, with $\Espeed{h}\pp{\partial S}=\bigcup_{i\in\N} E_i(t)$ from Lemma \ref{fluct_lem_EiCup} for the disjoint events $( E_i(t))_i$, for $(A,F)\in\Omega$ there holds
\begin{align*}
	&	\met[(A,F)]{h}{x_0}{S}\charfun[\Bp{(A',F')\,:\,\met[(A',F')]{h}{x_0}{S}\leq t}]((A,F))\charfun[\Espeed{h}\pp{\partial S}]((A,F))\\
	&\qquad	=	\sum_{i\in\N}\met[(A,F)]{h}{x_0}{S}\charfun[\Bp{(A',F')\,:\,\met[(A',F')]{h}{x_0}{S}\leq t}\cap E_i(t)]((A,F))\\
	&\qquad	=	\sum_{i\in\N}\met[(A,F)_{K_i+\pp{\CG-1}\ov{B}_h}]{h}{x_0}{S}\\
	&\qquad\qquad\qquad\qquad				\times	\charfun[\Bp{(A',F')\,:\,\met[(A',F')_{K_i+\pp{\CG-1}\ov{B}_h}]{h}{x_0}{S}\leq t}\cap E_i(t)]((A,F)).
\end{align*}
By the definition of $\G_t$, it only remains to show that the term
\begin{equation*}
	\sum_{i\in\N}\met[(A,F)_{K_i+\pp{\CG-1}\ov{B}_h}]{h}{x_0}{S}\charfun[\Bp{(A',F')\,:\,\met[(A',F')_{K_i+\pp{\CG-1}\ov{B}_h}]{h}{x_0}{S}\leq t}]
\end{equation*}
is $\F\pp{K_i+\CG\ov{B}_h}$-measurable.
This follows from a straightforward application of Lemma \ref{meas_lem_closure} with the proof of Lemma \ref{fluct_lem_meas} as a blueprint.
\end{proof}

Finally, we are able to resolve the conditional expectation for arrival times which have not yet been reached, corresponding to the last part of Lemma \ref{str_lem_meas}.
As in Subsection \ref{subs_str_fluct}, this is the only place where the finite range of dependence from Assumption \ref{sett_aP_fin} enters:
Essentially, it provides the independence of the future development of the set from the coefficients in the already explored environment.

\begin{lemma}[Resolving the conditional expectation]\label{fluct_lem_condE}
There exists $\Ccond=\Ccond(\CG,\data)$ depending on the choice of the constant $\CG$ in the definition of $\G_t$ from \eqref{fluct_eq_Gt}, such that the following holds.
Let $h\geq h_0$.
Let $S\subset\Rd$ be an $h$-fat set with bounded boundary.
Then for any $i\in\N$ and $t\geq 0$ there holds
\begin{multline}\label{fluct_eq_EcondE}
	\EV[\G_t]{\metb[]{h}{x_0}{K_i+\Ccond\ov{B}_h}\charfun[\Espeed{h,i}]\charfun[E_i(t)]}\\
	=\EV{\metb[]{h}{x_0}{K_i+\Ccond\ov{B}_h}\charfun[{\Espeed{h,i}}]}\charfun[{E_i(t)}]
\end{multline}
with
\begin{align*}
	\Espeed{h,i}	&\coloneqq	\Espeed{h}\pp{\partial\pp{K_i+\Ccond\ov{B}_h}}.
\end{align*}
In particular, we obtain
\begin{multline}\label{fluct_eq_condEsum}
	\EV[\G_t]{\metb[]{h}{x_0}{K_h\pp{\reach{t}{S}}+\Ccond\ov{B}_h}\charfun[\Espeed{h}(t)]}\\
	=	\sum_{i\in\N}\EV{\metb[]{h}{x_0}{K_i+\Ccond\ov{B}_h}\charfun[{\Espeed{h,i}}]}\charfun[{E_i(t)}],
\end{multline}
with the union $\Espeed{h}(t)	\coloneqq	\bigcup_{i\in\N}\pp{\Espeed{h,i}\cap E_i(t)}$ of disjoint events.
For all $0\leq t \leq \timebound{h}{x_0}{S}$ there holds
\begin{align}\label{fluct_eq_Et-Emax}
	\Espeed{h}(t)	\supset	\Espeed{h}\pp{\maxM{h}{x_0}{S}},
\end{align}
where the set $\maxM{h}{x_0}{S}$ is given by 
\begin{multline}\label{fluct_eq_defmaxM}
	\maxM{h}{x_0}{S}	\coloneqq	\pp{\ov{\comp{S}}+\cstable\vmax B_h}\\
								\cap \pp{S+\pp{1+\vmax \timebound{h}{x_0}{S}}\ov{B}_1+\pp{\Ccond+\sqrt{d}}\ov{B}_h},
\end{multline}
covering all of the area through which -- assuming that there is a stable $(h,\cstable)$-approximation -- the interface of $S$ could travel until time $\timebound{h}{x_0}{S}$, plus the additional area due to the discretization and expansion by $\Ccond\ov{B}_h$.
\end{lemma}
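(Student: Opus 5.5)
The plan is to reduce \eqref{fluct_eq_EcondE} to an independence statement, in the spirit of the sketch proof of Lemma~\ref{str_lem_meas}\ref{str_meas_condE}. Set $X_i\coloneqq\metb[]{h}{x_0}{K_i+\Ccond\ov{B}_h}\charfun[\Espeed{h,i}]$. By definition of conditional expectation, and since $E_i(t)\in\G_t$ (Lemma~\ref{fluct_lem_measEi}), it suffices to show
\begin{equation*}
	\EV{X_i\charfun[E_i(t)]\charfun[E]}=\EV{X_i}\PM{E_i(t)\cap E}
\end{equation*}
for all $E$ in a $\pi$-system generating $\G_t$. So I would take $E=E_j(s)\cap G$ with $s\leq t$ and $G\in\F(K_j+\CG\ov{B}_h)$; finite intersections of such sets are handled in the same way.

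\textbf{Measurability of $X_i$.} The set $K_i+\Ccond\ov{B}_h$ is $h$-fat, and its boundary is bounded because $K_i\supset\Rd\setminus(\comp{S}+\cstable\vmax B_h)$. By \eqref{fluct_eq_measMETstable}, $X_i$ is measurable with respect to
\begin{equation*}
	\F\pp{\ov{\comp{(K_i+\Ccond\ov{B}_h)}}+\max\Bp{\cstable\vmax,\Cmeas}\ov{B}_h}.
\end{equation*}
This region lies at distance at least $(\Ccond-\max\{\cstable\vmax,\Cmeas\})h-1$ from $K_i$.

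\textbf{Measurability of $E_i(t)\cap E$.} The event $E_i(t)\cap E$ is measurable with respect to $\F(K_i+\CG\ov{B}_h)$. Here is the key step. If $E_i(t)\cap E_j(s)=\emptyset$ there is nothing to show. Otherwise, pick $(A,F)$ in the intersection and compare the two states. A stable $(h,\cstable)$-approximation $S_h$ exists, and by \eqref{fluct_eq_Uproof} the evolution of $S$ at time $s$ is $h$-enveloped by $\reach{s+\cstable h}{S_h}\subset\reach{t+\cstable h}{S_h}$. Applying the maximum speed from Lemma~\ref{not_lem_vmax} to the interval from $t$ to $t+\cstable h$, I expect
\begin{equation*}
	K_j\subset K_i+C(\data)\ov{B}_h .
\end{equation*}
This is exactly the argument of Lemma~\ref{fluct_lem_reachRestr}: sets can shrink back by at most $O(h)$. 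It gives $\F(K_j+\CG\ov{B}_h)\subset\F(K_i+(\CG+C)\ov{B}_h)$, so $E_i(t)\cap E\in\F(K_i+(\CG+C)\ov{B}_h)$. I now choose $\Ccond$ larger than $\CG+C+\max\{\cstable\vmax,\Cmeas\}+2$. Then the two regions are at distance at least $1$, and Assumption~\ref{sett_aP_fin} gives independence, hence the factorization and \eqref{fluct_eq_EcondE}. I expect the main obstacle to be making this inclusion $K_j\subset K_i+C\ov{B}_h$ rigorous, that is, controlling possible shrinking between times $s$ and $t$ via stability.

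\textbf{The sum \eqref{fluct_eq_condEsum}.} On $E_i(t)$ we have $K_h(\reach{t}{S})=K_i$, and the $E_i(t)$ are disjoint. Writing
\begin{equation*}
	\metb[]{h}{x_0}{K_h(\reach{t}{S})+\Ccond\ov{B}_h}\charfun[\Espeed{h}(t)]=\sum_i X_i\charfun[E_i(t)],
\end{equation*}
I apply \eqref{fluct_eq_EcondE} termwise. Monotone convergence (all terms are nonnegative and bounded by truncation) justifies exchanging the sum and the conditional expectation.

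\textbf{The inclusion \eqref{fluct_eq_Et-Emax}.} Let $(A,F)\in\Espeed{h}(\maxM{h}{x_0}{S})$. Since $\partial S\subset\maxM{h}{x_0}{S}$, property \ref{veff_Porder} puts $(A,F)$ in $\Espeed{h}(\partial S)$, so by Lemma~\ref{fluct_lem_EiCup} it lies in some $E_i(t)$. The stable-approximation sandwich and Lemma~\ref{not_lem_vmax} with $t\leq\timebound{h}{x_0}{S}$ then give
\begin{equation*}
	\Rd\setminus(\comp{S}+\cstable\vmax B_h)\subset K_i\subset S+(1+\vmax\timebound{h}{x_0}{S})\ov{B}_1+\sqrt d\ov{B}_h .
\end{equation*}
Hence $\partial(K_i+\Ccond\ov{B}_h)\subset\maxM{h}{x_0}{S}$, and monotonicity \ref{veff_Porder} yields $(A,F)\in\Espeed{h,i}$.
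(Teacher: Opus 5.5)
Your proof is correct and follows the paper's argument. It uses the same three ingredients for \eqref{fluct_eq_EcondE}: measurability of $X_i$ via \eqref{fluct_eq_measMETstable}, localization of $E_i(t)\cap E$ near $K_i$ because the evolution cannot fall back by more than $O(h)$ between times $s$ and $t$, and the finite range of dependence; it then sums termwise for \eqref{fluct_eq_condEsum} and uses the same stable-approximation/maximum-speed sandwich for \eqref{fluct_eq_Et-Emax}. The differences are minor: you obtain $K_j\subset K_i+C\ov{B}_h$ directly from the chain $\reach{s}{S}\hsubs\reach{s+\cstable h}{S_h}\subset\reach{t}{S_h}+(1+\vmax\cstable h)\ov{B}_1$, as the paper itself does in the proof of Proposition~\ref{fluct_prop_MtMs}, rather than invoking Lemma~\ref{fluct_lem_reachRestr}; and you make explicit the appeal to Lemma~\ref{fluct_lem_EiCup} that the paper leaves implicit when deducing \eqref{fluct_eq_Et-Emax}.
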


\begin{proof}
We first prove \eqref{fluct_eq_Et-Emax} by showing that $E_i(t)\neq \emptyset$ implies $\partial (K_i+\Ccond\ov{B}_h)\subset \maxM{h}{x_0}{S}$ and hence $\Espeed{h}\pp{\partial\pp{K_i+\Ccond\ov{B}_h}}\subset \Espeed{h}\pp{\maxM{h}{x_0}{S}}$ due to the monotonicity of the events $\pp{\Espeed{h}\pp{M}}_M$ from Assumption \ref{veff_aP_star}.
Let $E_i(t)\neq \emptyset$.
By the definition of $E_i(t)$ from \eqref{fluct_eq_Ei} there exists $(A,F)\in\Espeed{h}\pp{\partial S}$ with $K_i=	K_h\pp{\reach[(A,F)]{t}{S}}$.
In particular, $S$ has a stable $(h,\cstable)$-approximation $S_h$ with respect to $(A,F)$, which hence satisfies $\Rd\setminus\pp{\comp{S}+\cstable \vmax B_h}\subset S_h \subset S$.
On the one hand, due to the comparison principle and the stability of $S_h$ with Lemma \ref{stable_lem_preserv} we hence obtain
\begin{equation*}
	K_i=	K_h\pp{\reach[(A,F)]{t}{S}}	\supset	\reach[(A,F)]{t}{S}
		\supset	\reach[(A,F)]{t}{S_h}
		\supset	\Rd\setminus\pp{\comp{S}+\cstable \vmax B_h}.
\end{equation*}
On the other hand, due the maximum speed of propagation from Lemma \ref{not_lem_vmax} we have 
\begin{equation*}
	K_i=	K_h\pp{\reach[(A,F)]{t}{S}}	\subset	\reach[(A,F)]{t}{S}	+\bp{-h,h}^d
			\subset	S+\pp{1+t\vmax}\ov{B}_1 +\bp{-h,h}^d.
\end{equation*}
Combining these two observations yields $\partial (K_i+\Ccond\ov{B}_h)\subset \maxM{h}{x_0}{S}$ and hence \eqref{fluct_eq_Et-Emax}.

Regarding \eqref{fluct_eq_EcondE}, we need to show for any $E\in\G_t$ that 
\begin{multline*}
	\EV{\metb[]{h}{x_0}{K_i+\Ccond\ov{B}_h}\charfun[\Espeed{h,i}]\charfun[E_i(t)]\charfun[E]}\\
	=	\EV{\metb[]{h}{x_0}{K_i+\Ccond\ov{B}_h}\charfun[\Espeed{h,i}]}\PM{E_i(t)\cap E}.
\end{multline*}
This follows because
\begin{enumerate}[label=\arabic*.), left=12pt]
\item	\label{fluct_EcondE1}
		$\metb[]{h}{x_0}{K_i+\Ccond\ov{B}_h}\charfun[\Espeed{h,i}]$ is $\F\pp{\ov{\comp{\pp{K_i+\Ccond\ov{B}_h}}} + \max\Bp{\cstable\vmax,\Cmeas}\ov{B}_h}$-measurable,
\item	\label{fluct_EcondE2}
		$E_i(t)\cap E\in\F\pp{K_i+\pp{\CG+3+\sqrt{d}+\vmax\cstable}\ov{B}_h}$,
\item	\label{fluct_EcondE3}
		for $\Ccond=\Ccond(\CG,\data)$ large enough,
				$\F\pp{\ov{\comp{\pp{K_i+\Ccond\ov{B}_h}}} + \max\Bp{\cstable\vmax,\Cmeas}\ov{B}_h}$
		and		$\F\pp{K_i+\pp{\CG+3+\sqrt{d}+\vmax\cstable}\ov{B}_h}$
		are $\Pm$-independent.
\end{enumerate}
Here, \ref{fluct_EcondE1} holds due to \eqref{fluct_eq_measMETstable} from Lemma \ref{fluct_lem_meas} since $K_i+\Ccond\ov{B}_h$ is $h$-fat by definition.
We obtain \ref{fluct_EcondE3} from the finite range of dependence in Assumption \ref{sett_aP_fin} because
\begin{align*}
	\dist&\pp{\ov{\comp{\pp{K_i+\Ccond\ov{B}_h}}} + \max\Bp{\cstable\vmax,\Cmeas}\ov{B}_h,\, K_i+\pp{\CG+3+\sqrt{d}+\vmax\cstable}\ov{B}_h}\\
		&	\geq \dist\pp{\comp{\pp{K_i+\Ccond\ov{B}_h}},\, K_i}	
			-\pp{\CG+3+\sqrt{d}+\vmax\cstable+\max\Bp{\cstable\vmax,\Cmeas}}h\\
		&	\geq		\Ccond h-\pp{\CG+3+\sqrt{d}+\vmax\cstable+\max\Bp{\cstable\vmax,\Cmeas}}h.
\end{align*}
It only remains to show \ref{fluct_EcondE2}.
By definition of $\G_t$ from \eqref{fluct_eq_Gt}, it is sufficient to deal with the case $E=E_j(s)\cap F$, where $j\in\N$, $s\leq t$ and $F\in\F\pp{K_j+\CG\ov{B}_h}$.
From Lemma \ref{fluct_lem_measEi} we know that $E\in\F\pp{K_j+\CG\ov{B}_h}$.

If $E_i(t)\cap E_j(s)=\emptyset$, then $E_i(t)\cap E=\emptyset$ and we are done.
Hence, assume that there is $(A,F)\in E_i(t)\cap E_j(s)$.
Since $\reach[(A,F)]{t}{S}\subset K_i$ and $(A,F)\in\Espeed{h}\pp{\partial S}$ by the definition of $E_i(t)$ in \eqref{fluct_eq_Ei}, the set evolution can not have reached much further than $K_i$ before.
In fact, with Lemma \ref{fluct_lem_reachRestr} we obtain $\reach[(A,F)]{s}{S}\subset K_i+\pp{2+\pp{1+\vmax\cstable}h}\ov{B}_1$ and thus
\begin{equation*}
	K_j	=		K_h\pp{\reach[(A,F)]{s}{S}}
		\subset	K_i+\pp{2+\pp{1+\vmax\cstable}h}\ov{B}_1+[-h,h]^d.
\end{equation*}
In summary, we obtain
\begin{equation*}
	E	\in	\F\pp{K_j+\CG\ov{B}_h}	\subset	\F\pp{K_i+\CG\ov{B}_h+\pp{2+\pp{1+\sqrt{d}+\vmax\cstable}h}\ov{B}_1}
\end{equation*}
and thus have shown \ref{fluct_EcondE2} since $E_i(s)\in\F\pp{K_i+\CG\ov{B}_h}$ with Lemma \ref{fluct_lem_measEi}.

Regarding \eqref{fluct_eq_condEsum}, plugging in the definition of $\Espeed{h}(t)$ and using the definition of $\Bp{E_i(t)}_{i\in\N}$ from \eqref{fluct_eq_Ei} we have 
\begin{align*}
	&\EV[\G_t]{\metb[]{h}{x_0}{K_h\pp{\reach{t}{S}}+\Ccond\ov{B}_h}\charfun[\Espeed{h}(t)]}\\
	&\qquad	=	\sum_{i\in\N}	\EV[\G_t]{\metb[]{h}{x_0}{K_h\pp{\reach{t}{S}}+\Ccond\ov{B}_h}\charfun[\Espeed{h,i}\cap E_i(t)]}\\
	&\qquad	=	\sum_{i\in\N}	\EV[\G_t]{\metb[]{h}{x_0}{K_i+\Ccond\ov{B}_h}\charfun[\Espeed{h,i}\cap E_i(t)]}\\
	&\qquad	=	\sum_{i\in\N}	\EV{\metb[]{h}{x_0}{K_i+\Ccond\ov{B}_h}\charfun[{\Espeed{h,i}}]}\charfun[{E_i(t)}],
\end{align*}
where in the final step we applied \eqref{fluct_eq_EcondE}.
\end{proof}

\subsection{Bounding the martingale increments}\label{subs_fluct_incr}
In this subsection, we will bound the increments of the martingale 
	$\M_t=\EV[\G_t]{\metb[]{h}{x_0}{S}} - \EV{\metb[]{h}{x_0}{S}}$
as introduced at the start of the section with the filtration $\Bp{\G_t}_{t\geq 0}$ defined in \eqref{fluct_eq_Gt}.
Once we obtain these bounds, we will apply our alternative to and our generalization of Azuma's inequality from Appendix \ref{s_azuma} to prove Proposition \ref{fluct_prop}.
In order to obtain the bounds for the martingale increments, we will follow the approach laid out in Subsection \ref{subsub_str_MtMs}.
However, to use the effective minimum speed from Assumption \ref{veff_aP_star}, we will restrict the arrival times to the event $\Espeed{h}\pp{\maxM{h}{x_0}{S}}$ with $\maxM{h}{x_0}{S}$ as defined in \eqref{fluct_eq_defmaxM} essentially covering the maximum range of the interface.
Since this breaks the independence used in Lemma \ref{fluct_lem_condE} to resolve the conditional expectation, we will have to relax this restriction at the appropriate occasion.
The error terms from these operations show up in the final bounds, but they are controlled with high probability, since Assumption \ref{veff_aP_star} guarantees that $\Espeed{h}\pp{\maxM{h}{x_0}{S}}$ holds with overwhelming probability.
Naturally, the probability bounds for very large fluctuations in Proposition \ref{fluct_prop} are limited by this bound from Assumption \ref{veff_aP_star}.

\begin{proposition}[Bounds for the martingale increments]\label{fluct_prop_MtMs}
Let $x_0\in\Rd$ and $S\subset\Rd$ be an $h$-fat set with bounded boundary.
Then there exists a constant $C=C(\data)>0$ such that the martingale $\pp{\M_t}_{t\geq 0}$ as defined in \eqref{fluct_eq_defMt} for all $0\leq s \leq t \leq\timebound{h}{x_0}{S}$ satisfies
\begin{align}\label{fluct_eq_MtMs}
	\abs{\M_t-\M_s}	&\leq		C(t-s) + C h \notag\\
					&\,\,		+	\pp{\delta_{t=0}+\delta_{s=0}}\timebound{h}{x_0}{S}\pp{1-\charfun[{\Espeed{h}\pp{\maxM{h}{x_0}{S}}}]}\notag\\
					&\,\,		+	3\timebound{h}{x_0}{S}\pp{\EV[\G_t]{1-\charfun[{\Espeed{h}\pp{\maxM{h}{x_0}{S}}}]}+\EV[\G_s]{1-\charfun[{\Espeed{h}\pp{\maxM{h}{x_0}{S}}}]}}\notag\\
					&\,\,		+	\timebound{h}{x_0}{S}\pp{1-\PM{\Espeed{h}\pp{\maxM{h}{x_0}{S}}}},
\end{align}
with $\delta_{t=0},\delta_{s=0}$ denoting Dirac-deltas and with $\maxM{h}{x_0}{S}$ as introduced in Lemma \ref{fluct_lem_condE} covering the maximum range of the interface until time $\timebound{h}{x_0}{S}$ and given by
\begin{multline}
	\tag{\ref{fluct_eq_defmaxM}}
	\maxM{h}{x_0}{S}	\coloneqq	\pp{\ov{\comp{S}}+\cstable\vmax B_h}\\
								\cap \pp{S+\pp{1+\vmax \timebound{h}{x_0}{S}}\ov{B}_1+\pp{\Ccond+\sqrt{d}}\ov{B}_h}.
\end{multline}
\end{proposition}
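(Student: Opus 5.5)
We follow the sketch from Subsection \ref{subsub_str_MtMs}, but carry out every step on the good event $E^\ast\coloneqq\Espeed{h}\pp{\maxM{h}{x_0}{S}}$ and pay for leaving it with the error terms in \eqref{fluct_eq_MtMs}. Abbreviate $T\coloneqq\timebound{h}{x_0}{S}$. Since $0\le\metb{h}{x_0}{\cdot}\le T$ always holds, replacing $\metb{h}{x_0}{S}$ by $\metb{h}{x_0}{S}\charfun[E^\ast]$ inside $\EV[\G_t]{\cdot}$ changes $\M_t$ by at most $T\,\EV[\G_t]{1-\charfun[E^\ast]}$. The same replacement inside $\EV{\cdot}$ costs at most $T(1-\PM{E^\ast})$. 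The cases $s=0$ or $t=0$ are special: $\G_0$ is trivial, so $\M_0=0$, and the extra $\delta$-terms absorb the pointwise difference between the indicator and its conditional expectation there.

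\textbf{Step 1: decomposition.} On $E^\ast\subset\Espeed{h}(\partial S)$ we split according to whether $\metb{h}{x_0}{S}\le s$. On that event the arrival time is $\G_s$-measurable by Lemma \ref{fluct_lem_measIn}, and the contributions to $\M_t$ and $\M_s$ cancel up to the indicator errors. On the complement $\Bp{\metb{h}{x_0}{S}>s}$ we use the semigroup property to write the arrival time as $s+\metb{h}{x_0}{\reach{s}{S}}$, with the truncation level shifted by $s$. We then sandwich $\reach{s}{S}$ between $K_h(\reach{s}{S})$ and $K_h(\reach{s}{S})+\Ccond\ov{B}_h$. The inner inclusion is trivial. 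For the outer one, $K_h(\reach{s}{S})+\Ccond\ov{B}_h$ is $h$-enveloped by $\reach{s+Ch}{S}$, which follows from the effective minimum speed on $E^\ast$ via Lemma \ref{stable_lem_vmin-iterative} applied to the evolved stable approximation, with Lemma \ref{stable_lem_preserv} keeping stability. Lemma \ref{veff_lem_henv} then gives
\[
\Bigl|\metb{h}{x_0}{\reach{s}{S}}-\metb{h}{x_0}{K_h(\reach{s}{S})+\Ccond\ov{B}_h}\Bigr|\le Ch .
\]
As in the sketch, the term $R_t$ coming from arrivals in $(s,t]$ is bounded by $t-s$.

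\textbf{Step 2: resolving the conditional expectations.} We insert $\charfun[\Espeed{h}(t)]$; this costs nothing on $E^\ast$ by \eqref{fluct_eq_Et-Emax}. We then apply \eqref{fluct_eq_condEsum} at times $t$ and $s$ and obtain
\[
\sum_{i,j}\Bigl(\EV{\metb{h}{x_0}{K_i+\Ccond\ov{B}_h}\charfun[\Espeed{h,i}]}-\EV{\metb{h}{x_0}{K_j+\Ccond\ov{B}_h}\charfun[\Espeed{h,j}]}\Bigr)\charfun[E_i(t)\cap E_j(s)]
\]
plus $(t-s)$, plus the error terms. Putting $E^\ast$ back in and then removing it in the conditional expectations at $s$ and at $t$ each produces a term $T\,\EV[\G_\cdot]{1-\charfun[E^\ast]}$. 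We expect these insertions and removals to account for the factor $3$ in \eqref{fluct_eq_MtMs}.

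\textbf{Step 3 (main obstacle): comparing $K_i$ and $K_j$ when $E_i(t)\cap E_j(s)\neq\emptyset$.} Pick $(A,F)$ in the intersection. Lemma \ref{fluct_lem_reachRestr} together with Lemma \ref{not_lem_vmax} gives $K_i\subset K_j+(C(t-s)+Ch)\ov{B}_1$. The difficulty is that no pointwise reverse inclusion is available. Instead we use $h$-envelopment. Since $K_j+\Ccond\ov{B}_h$ is $h$-fat, and on $\Espeed{h,j}$ it has a stable $(h,\cstable)$-approximation with effective speed $\vmineff$, Lemma \ref{stable_lem_vmin-iterative} yields
\[
K_i+\Ccond\ov{B}_h\hsubs\reach{C(t-s)+Ch}{K_j+\Ccond\ov{B}_h}.
\]
Conversely, $K_j\subset K_i+Ch\ov{B}_1$ up to $h$-envelopment, because the stable set $\reach{s}{S_h}\subset\reach{t}{S_h}$ is sandwiched as in \eqref{fluct_eq_Uproof}; hence $K_j+\Ccond\ov{B}_h\hsubs\reach{Ch}{K_i+\Ccond\ov{B}_h}$ on $\Espeed{h,i}$. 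Lemma \ref{veff_lem_henv} converts both envelopments into arrival-time bounds, and the truncations differ by at most $\vmineff^{-1}\abs{\dist(x_0,K_i)-\dist(x_0,K_j)}\le C(t-s)+Ch$. Off the events $\Espeed{h,i}$ or $\Espeed{h,j}$ the difference is still controlled, since on these events we only use that the truncated times are at most $C(t-s)+Ch$ apart. Doing this carefully, so that expected values rather than pathwise quantities are compared, is the delicate step, and it is where we expect the main technical work. Summing all contributions gives $C(t-s)+Ch$ plus the listed error terms.
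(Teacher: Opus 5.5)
Your Steps 1 and 2 work as an alternative bookkeeping. You split $\M_t$ at $t$ and $\M_s$ at $s$, as in the sketch of Subsection~\ref{subsub_str_MtMs}. This avoids the paper's replacement of $\reach{s}{S}$ by $\reach{t}{S}$ inside $\EV[\G_t]{\cdot}$ via \eqref{fluct_eq_Rs-Rt}, at the price of the term $(t-s)$ and the error $R_t$.

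The genuine gap is in Step 3, exactly where you defer the work. After \eqref{fluct_eq_condEsum} you must compare $\EV{\metb{h}{x_0}{K_i+\Ccond\ov{B}_h}\charfun[\Espeed{h,i}]}$ with $\EV{\metb{h}{x_0}{K_j+\Ccond\ov{B}_h}\charfun[\Espeed{h,j}]}$, and you try to do this on $\Espeed{h,j}$ resp.\ $\Espeed{h,i}$. There are two problems.
\begin{itemize}
\item By \ref{veff_Pspeed}, $\Espeed{h,j}=\Espeed{h}\pp{\partial\pp{K_j+\Ccond\ov{B}_h}}$ only yields an effective minimum speed \emph{in} $\partial\pp{K_j+\Ccond\ov{B}_h}$. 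Lemma~\ref{stable_lem_vmin-iterative} needs it on a whole strip of width about $C(t-s)+Ch$ outside this set, so $K_i+\Ccond\ov{B}_h\hsubs\reach{C(t-s)+Ch}{K_j+\Ccond\ov{B}_h}$ does not follow. These events are weak on purpose: they may only depend on coefficients outside the explored region, so that Lemma~\ref{fluct_lem_condE} applies.
\item The two expectations run over different events, so no pathwise estimate compares them. On $\Espeed{h,j}\setminus\Espeed{h,i}$ one integrand vanishes while the other can be of order $\timebound{h}{x_0}{S}$. This contradicts your remark that the difference off these events is still controlled.
\end{itemize}

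The missing idea is to move both unconditional expectations onto the common event $E^\ast=\Espeed{h}\pp{\maxM{h}{x_0}{S}}$.
\begin{itemize}
\item If $E_i(t)\cap E_j(s)\neq\emptyset$, the argument proving \eqref{fluct_eq_Et-Emax} gives $\partial\pp{K_k+\Ccond\ov{B}_h}\subset\maxM{h}{x_0}{S}$ for $k=i,j$. Hence $E^\ast\subset\Espeed{h,i}\cap\Espeed{h,j}$.
\item Both integrands lie in $[0,\timebound{h}{x_0}{S}]$, since $S\subset K_k+\Ccond\ov B_h$ by Lemma~\ref{fluct_lem_KiS}. So replacing both indicators by $\charfun[E^\ast]$ changes the difference by at most $\timebound{h}{x_0}{S}\pp{1-\PM{E^\ast}}$.
\item This is the actual origin of the last term in \eqref{fluct_eq_MtMs}. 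The replacement inside $\EV{\metb{h}{x_0}{S}}$ to which you attribute it cancels in $\M_t-\M_s$.
\item On $E^\ast$, \ref{veff_Pspeed} with $M=\maxM{h}{x_0}{S}$ gives stable approximations of both $K_i+\Ccond\ov B_h$ and $K_j+\Ccond\ov B_h$ that propagate with speed $\vmineff$ throughout the region between them.
\item From one coefficient field in $E_i(t)\cap E_j(s)$ you get $K_j\subset K_i+\pp{1+\pp{\sqrt d+\vmax\cstable}h}\ov B_1$ and $K_i\subset K_j+\pp{1+\vmax(t-s)+\sqrt d h}\ov B_1$. The first is a genuine pointwise inclusion, obtained from the stable approximation of $S$ as in \eqref{fluct_eq_Uproof}, contrary to your claim that none is available.
\item Lemmas~\ref{stable_lem_vmin-iterative} and~\ref{veff_lem_henv} then give the pathwise two-sided bound $C(t-s)+Ch$ on $E^\ast$, which closes the argument.
\end{itemize}
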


\begin{proof}[Proof of Proposition \ref{fluct_prop_MtMs}]
We will follow the same strategy as in Subsection \ref{subsub_str_MtMs} to obtain good bounds for the increments uniformly on $\Omega$.
However, as described above we will not obtain uniform bounds but might have to restrict ourselves to events such as for example $\Espeed{h}\pp{\maxM{h}{x_0}{S}}$ from Assumption \ref{veff_aP_star}, which guarantees the existence of stable $(h,\cstable)$-approximations and the applicability of the effective minimum speed.
For some shorthand notation, we write $\maxM[_]{h}{x_0}{S}\coloneqq \maxM{h}{x_0}{S}$ denoting the area covering the potential range of the interface and its modifications until time $\timebound{h}{x_0}{S}$ as defined in \eqref{fluct_eq_defmaxM}. 

We first restrict the conditional expectation in $\M_t$ to $\Espeed{h}\pp{\maxM[_]{h}{x_0}{S}}$ in order to be able to use the effective minimum speed from Assumption \ref{veff_aP_star}.
That is, at the cost of an error term $R_1^t$ we obtain
\begin{align*}
	\M_t		&= \EV[\G_t]{\metb[]{h}{x_0}{S}} - \EV{\metb[]{h}{x_0}{S}}\\
			&=	\EV[\G_t]{\met[]{h}{x_0}{S}\charfun[\Espeed{h}\pp{\maxM[_]{h}{x_0}{S}}]} - \EV{\metb[]{h}{x_0}{S}} + R_1^t
\end{align*}
with 
\begin{equation*}
	0 \leq R_1^t	\leq		\timebound{h}{x_0}{S}	\EV[\G_t]{1-\charfun[\Espeed{h}\pp{\maxM[_]{h}{x_0}{S}}]}.
\end{equation*}
Next, we separate the case that $\met[]{h}{x_0}{S}\leq s$, that is that $x_0$ is within the environment already explored at time $t\geq s$, which makes the arrival time measurable with respect to $\G_t$, see Lemma \ref{fluct_lem_measIn}.
In the remaining case that $x_0$ has not been reached at time $s$, that is $\met[]{h}{x_0}{S}> s$, we can write $\metb[]{h}{x_0}{S}=s+\metb[]{h}{x_0}{\reach{s}{S}}$ since Lemma \ref{stable_lem_TvsEspeed} guarantees that the truncation of the arrival times does not kick in because of the restriction to $\Espeed{h}\pp{\maxM[_]{h}{x_0}{S}}$. 
Writing 
	$\Bp{\met[]{h}{x_0}{S}\leq s}=\Bp{(\wt{A},\wt{F})\,:\,\met[(\wt{A},\wt{F})]{h}{x_0}{S}\leq s}$ 
and introducing the error term $R_0^t$, we obtain
\begin{align*}
	\M_t		&=	\EV[\G_t]{\met[]{h}{x_0}{S}\charfun[\Espeed{h}\pp{\partial S}]\charfun[\Espeed{h}\pp{\maxM[_]{h}{x_0}{S}}]} - \EV{\metb[]{h}{x_0}{S}} + R_1^t\\
			&=	\EV[\G_t]{\charfun[\Espeed{h}\pp{\maxM[_]{h}{x_0}{S}}]}\met[]{h}{x_0}{S}\charfun[\Bp{\met[]{h}{x_0}{S}\leq s}]\charfun[\Espeed{h}\pp{\partial S}] \\
			&\quad		+\EV[\G_t]{\pp{s+\met[]{h}{x_0}{\reach{s}{S}}}\charfun[\Espeed{h}\pp{\maxM[_]{h}{x_0}{S}}]}\charfun[\Bp{\met[]{h}{x_0}{S}> s}]\charfun[\Espeed{h}\pp{\partial S}]\\
			&\quad		- \EV{\metb[]{h}{x_0}{S}} + R_1^t + R_0^t,
\end{align*}
where we used that $\Espeed{h}\pp{\maxM[_]{h}{x_0}{S}}\subset\Espeed{h}\pp{\partial S}$ and then applied Lemma \ref{fluct_lem_measIn}.
The error $R_0^t$ captures that for $t=0$ only $\Bp{\met[]{h}{x_0}{S}> 0}\in\Bp{\emptyset,\Omega}$ is $\G_0$-measurable, but not necessarily $\Bp{\met[]{h}{x_0}{S}> 0}\cap\Espeed{h}\pp{\partial S}\in\G_0$:
\begin{align*}
	0\leq
	R_0^t		&=		\delta_{t=0}	\EV[\G_0]{\met[]{h}{x_0}{S}\charfun[\Espeed{h}\pp{\maxM[_]{h}{x_0}{S}}]}\charfun[\Bp{\met[]{h}{x_0}{S}> 0}]\pp{1-\charfun[\Espeed{h}\pp{\partial S}]}\\
				&\leq		\delta_{t=0}\timebound{h}{x_0}{S}\pp{1-\charfun[\Espeed{h}\pp{\partial S}]}.
\end{align*}
Since these considerations analogously hold for $\M_s$, we obtain
\begin{align*}
	\abs{\M_t-\M_s}	&\leq		\abs{\EV[\G_t]{\charfun[\Espeed{h}\pp{\maxM[_]{h}{x_0}{S}}]}-\EV[\G_s]{\charfun[\Espeed{h}\pp{\maxM[_]{h}{x_0}{S}}]}}\\
					&\qquad\qquad\qquad\qquad\quad	\times
									\met[]{h}{x_0}{S}\charfun[\Bp{\met[]{h}{x_0}{S}\leq s}]\charfun[\Espeed{h}\pp{\partial S}]\\
					&\,\,		+	\abs{\EV[\G_t]{\charfun[\Espeed{h}\pp{\maxM[_]{h}{x_0}{S}}]}-\EV[\G_s]{\charfun[\Espeed{h}\pp{\maxM[_]{h}{x_0}{S}}]}}\\
					&\qquad\qquad\qquad\qquad\quad	\times
									s\charfun[\Bp{\met[]{h}{x_0}{S}> s}]\charfun[\Espeed{h}\pp{\partial S}]\\
					&\,\,		+	\abs{\EV[\G_t]{\met[]{h}{x_0}{\reach{s}{S}}\charfun[\Espeed{h}\pp{\maxM[_]{h}{x_0}{S}}]}
									-	\EV[\G_s]{\met[]{h}{x_0}{\reach{s}{S}}\charfun[\Espeed{h}\pp{\maxM[_]{h}{x_0}{S}}]}}\\
					&\qquad\qquad\qquad\qquad\quad	\times
									\charfun[\Bp{\met[]{h}{x_0}{S}> s}]\charfun[\Espeed{h}\pp{\partial S}]\\
					&\,\,		+	\abs{R_1^t - R_1^s + R_0^t - R_0^s}.
\end{align*}
Viewing the first two contributions as another error term $R_2$, we have 
\begin{align}
	\abs{\M_t-\M_s}	&\leq	\abs{\EV[\G_t]{\met[]{h}{x_0}{\reach{s}{S}}\charfun[\Espeed{h}\pp{\maxM[_]{h}{x_0}{S}}]}
									-	\EV[\G_s]{\met[]{h}{x_0}{\reach{s}{S}}\charfun[\Espeed{h}\pp{\maxM[_]{h}{x_0}{S}}]}}\notag	\\
					&\quad	+ R_2	+ 	\abs{R_1^t - R_1^s + R_0^t - R_0^s},		\label{fluct_eq_diffMt}
\end{align}
where with $\met[]{h}{x_0}{S}\charfun[\Bp{\met[]{h}{x_0}{S}\leq s}]\leq s\charfun[\Bp{\met[]{h}{x_0}{S}\leq s}]$ we have
\begin{align*}
	R_2	&\leq		s\abs{\EV[\G_t]{\charfun[\Espeed{h}\pp{\maxM[_]{h}{x_0}{S}}]}-\EV[\G_s]{\charfun[\Espeed{h}\pp{\maxM[_]{h}{x_0}{S}}]}}\\
		&=			s\abs{\EV[\G_t]{1-\charfun[\Espeed{h}\pp{\maxM[_]{h}{x_0}{S}}]}-\EV[\G_s]{1-\charfun[\Espeed{h}\pp{\maxM[_]{h}{x_0}{S}}]}}.
\end{align*}
Ignoring the error terms in \eqref{fluct_eq_diffMt}, we essentially almost eliminated dependence on the information of the already explored area by time $s$, encoded in $\G_s$ -- at least if we knew that $\reach{s}{S}$ was strictly expanding. 

Regarding the conditional expectation with respect to $\G_t$, we want to replace $\reach{s}{S}$ with $\reach{t}{S}$.
For $(A,F)\in\Espeed{h}\pp{\maxM[_]{h}{x_0}{S}}$, let $S_h=S_h^{(A,F)}$ be the stable $(h,\cstable)$-approximation from \ref{veff_Pspeed} in Assumption \ref{veff_aP_star}, which exists since $\partial S\subset \maxM[_]{h}{x_0}{S}$ by definition \ref{fluct_eq_defmaxM}.
Then we have 
	$\reach{\tilde{t}}{S_h}\subset\reach{\tilde{t}}{S}\hsubs\reach{\tilde{t}+\cstable h}{S_h}$ for all $\tilde{t}\geq 0$
on $\Espeed{h}\pp{\maxM[_]{h}{x_0}{S}}$ due to the comparison principle as in Lemma \ref{not_lem_comp}.
On the one hand, note that
\begin{align*}
	\met[]{h}{x_0}{\reach{s}{S}}\charfun[\Espeed{h}\pp{\maxM[_]{h}{x_0}{S}}]
	&\geq	\met[]{h}{x_0}{\reach{s+\cstable h}{S_h}}\charfun[\Espeed{h}\pp{\maxM[_]{h}{x_0}{S}}]\\
	&\geq	\met[]{h}{x_0}{\reach{t+\cstable h}{S_h}}\charfun[\Espeed{h}\pp{\maxM[_]{h}{x_0}{S}}]\\
	&\geq	\met[]{h}{x_0}{\reach{t}{S_h}}\charfun[\Espeed{h}\pp{\maxM[_]{h}{x_0}{S}}]-\cstable h\charfun[\Espeed{h}\pp{\maxM[_]{h}{x_0}{S}}]\\
	&\geq	\met[]{h}{x_0}{\reach{t}{S}}\charfun[\Espeed{h}\pp{\maxM[_]{h}{x_0}{S}}]-\cstable h\charfun[\Espeed{h}\pp{\maxM[_]{h}{x_0}{S}}]
\end{align*}
where we first used that if $S_1\hsubs S_2$ and $x\in S_1+\ov{B}_h$, then $x\in S_2+\ov{B}_h$.
The second inequality is due to $S_h$ being stable, the third and fourth inequality follow from the definition of the arrival times.
On the other hand, we have
\begin{align*}
	\met[]{h}{x_0}{\reach{s}{S}}\charfun[\Espeed{h}\pp{\maxM[_]{h}{x_0}{S}}]
	&\leq		\met[]{h}{x_0}{\reach{t}{S}}\charfun[\Espeed{h}\pp{\maxM[_]{h}{x_0}{S}}] + (t-s)\charfun[\Espeed{h}\pp{\maxM[_]{h}{x_0}{S}}].
\end{align*}
In summary, we obtain
\begin{equation}\label{fluct_eq_Rs-Rt}
	\abs{\met[]{h}{x_0}{\reach{s}{S}}-\met[]{h}{x_0}{\reach{t}{S}}}\charfun[\Espeed{h}\pp{\maxM[_]{h}{x_0}{S}}]
	\leq		\max\Bp{\cstable h,\,t-s}\charfun[\Espeed{h}\pp{\maxM[_]{h}{x_0}{S}}].
\end{equation}

Now that we can replace $\reach{s}{S}$ with $\reach{t}{S}$, we want to resolve the conditional expectation in $\EV[\G_t]{\met[]{h}{x_0}{\reach{t}{S}}\charfun[\Espeed{h}\pp{\maxM[_]{h}{x_0}{S}}]}$ (and in the corresponding term with $s$ instead of $t$) via  Lemma \ref{fluct_lem_condE}.
However, in order to apply Lemma \ref{fluct_lem_condE}, we need to replace $\reach{t}{S}$ with a slightly larger set, for which the evolution is independent of the coefficient field in the already explored area. 
For $\Ccond=\Ccond(\CG,\data)>0$ from Lemma \ref{fluct_lem_condE}, on $\Espeed{h}\pp{\maxM[_]{h}{x_0}{S}}$ we have
\begin{equation*}
	\reach{t}{S}	\subset K_h\pp{\reach{t}{S}}	+ \Ccond\ov{B}_h
				\hsubs	\reach{t+\cstable h + \Ccond\vmineff^{-1} h}{S_h}.
\end{equation*}
The last inclusion `$\hsubs$' above comes from as in the proof of Lemma \ref{stable_lem_TvsEspeed} iteratively applying the effective minimum speed of propagation guaranteed by $\Espeed{h}\pp{\maxM[_]{h}{x_0}{S}}$ since 
\begin{equation*}
	\pp{\reach{t}{S} + \Ccond\pp{\data} \ov{B}_h}\cap		\ov{\comp{(S_h)}}	\subset \maxM[_]{h}{x_0}{S}
\end{equation*}
for $t\leq \timebound{h}{x_0}{S}$ by the definition of $\maxM[_]{h}{x_0}{S}$ in \eqref{fluct_eq_defmaxM}.
We hence have
\begin{align}\label{fluct_eq_Rt-KRt}
	\met[]{h}{x_0}{\reach{t}{S}}\charfun[\Espeed{h}\pp{\maxM[_]{h}{x_0}{S}}]
	&\leq		\met[]{h}{x_0}{K_h\pp{\reach{t}{S}}+\Ccond\ov{B}_h}\charfun[\Espeed{h}\pp{\maxM[_]{h}{x_0}{S}}]\notag\\
	&\leq		\pp{\met[]{h}{x_0}{\reach{t}{S}}+\cstable h + \Ccond\vmineff^{-1} h}\charfun[\Espeed{h}\pp{\maxM[_]{h}{x_0}{S}}].
\end{align}
Replacing $\reach{s}{S}$ with $\reach{t}{S}$, then $\reach{t}{S}$ with $K_h\pp{\reach{t}{S}}+\Ccond\ov{B}_h$ and $\reach{s}{S}$ with $K_h\pp{\reach{s}{S}}+\Ccond\ov{B}_h$ in \eqref{fluct_eq_diffMt}, we obtain
\begin{align*}
	\abs{\M_t-\M_s}	&\leq		\Big|\EV[\G_t]{\met[]{h}{x_0}{K_h\pp{\reach{t}{S}}+\Ccond\ov{B}_h}\charfun[\Espeed{h}\pp{\maxM[_]{h}{x_0}{S}}]}\\
					&\qquad\qquad\qquad\qquad		-	\EV[\G_s]{\met[]{h}{x_0}{K_h\pp{\reach{s}{S}}+\Ccond\ov{B}_h}\charfun[\Espeed{h}\pp{\maxM[_]{h}{x_0}{S}}]}\Big|\\
					&\quad	+ R_3 + R_2	+ 	\abs{R_1^t - R_1^s + R_0^t - R_0^s},
\end{align*}
where due to \eqref{fluct_eq_Rs-Rt} and \eqref{fluct_eq_Rt-KRt} the corresponding error term $R_3$ is bound by 
\begin{equation*}
	\abs{R_3}	\leq	\max\Bp{\cstable h,\,t-s}+\cstable h + \Ccond\vmineff^{-1} h.
\end{equation*}
In order to apply Lemma \ref{fluct_lem_condE}, we now only need to get rid of the restriction to $\Espeed{h}\pp{\maxM[_]{h}{x_0}{S}}$.
Because $S\subset K_h\pp{\reach{t}{S}}+\Ccond\ov{B}_h$ on $\Espeed{h}\pp{\partial S}$ with Lemma \ref{fluct_lem_KiS} and since $\Ccond$ was chosen accordingly, we have $\timebound{h}{x_0}{S}\geq \timebound{h}{x_0}{K_h\pp{\reach{t}{S}}+\Ccond\ov{B}_h}$ and hence obtain
\begin{multline}\label{fluct_eq_mK-mbK}
	0\leq	\metb[]{h}{x_0}{K_h\pp{\reach{t}{S}}+\Ccond\ov{B}_h}\pp{\charfun[\Espeed{h}\pp{t}]-\charfun[\Espeed{h}\pp{\maxM[_]{h}{x_0}{S}}]}\\
	\leq		\timebound{h}{x_0}{S}\pp{1-\charfun[\Espeed{h}\pp{\maxM[_]{h}{x_0}{S}}]},
\end{multline}
where $\Espeed{h}\pp{t}\supset\Espeed{h}\pp{\maxM[_]{h}{x_0}{S}}$ is the less restrictive event defined in Lemma \ref{fluct_lem_condE}, which also is independent from the coefficient field in the already explored area.
Thus, applying Lemma \ref{fluct_lem_condE} after replacing the restriction to $\Espeed{h}\pp{\maxM[_]{h}{x_0}{S}}$ with one to $\Espeed{h}\pp{t}$ at the cost of the error term $R_4$, we obtain
\begin{align*}
	\abs{\M_t-\M_s}	&\leq		\Bigg|\sum_{i\in\N}\EV{\metb[]{h}{x_0}{K_i+\Ccond\ov{B}_h}\charfun[\Espeed{h}\pp{t}]}\charfun[E_i(t)]  \\
					&\qquad\qquad\qquad\qquad	-	\sum_{j\in\N}\EV{\metb[]{h}{x_0}{K_j + \Ccond\ov{B}_h}\charfun[\Espeed{h}\pp{s}]}\charfun[E_j(s)] \Bigg|\\
					&\quad	+ R_4 + R_3 + R_2	+ 	\abs{R_1^t - R_1^s + R_0^t - R_0^s},
\end{align*}
with 
\begin{equation*}
	R_4	\leq		\timebound{h}{x_0}{S}\pp{\EV[\G_t]{1-\charfun[\Espeed{h}\pp{\maxM[_]{h}{x_0}{S}}]}
											+	\EV[\G_s]{1-\charfun[\Espeed{h}\pp{\maxM[_]{h}{x_0}{S}}]}}.
\end{equation*}
With Lemma \ref{fluct_lem_EiCup} we have $\bigcup E_i(t)=\Espeed{h}\pp{\partial S}\supset E_j(s)$ for any $j\in\N$ and therefore
\begin{align}
	\abs{\M_t-\M_s}	&\leq		\sum_{i,j\in\N}\charfun[E_i(t)\cap E_j(s)]	\Big|\EV{\metb[]{h}{x_0}{K_i+\Ccond\ov{B}_h}\charfun[\Espeed{h}\pp{t}]} \notag\\
					&\qquad\qquad\qquad\qquad\qquad\qquad								-	\EV{\metb[]{h}{x_0}{K_j+\Ccond\ov{B}_h}\charfun[\Espeed{h}\pp{s}]}	\Big|\notag\\
					&\quad	+ R_4 + R_3 + R_2	+ 	\abs{R_1^t - R_1^s + R_0^t - R_0^s}.\label{fluct_eq_diffMt4}
\end{align}
Now assume that there exists $(\wt{A},\wt{F})\in E_i(t)\cap E_j(s)\subset\Espeed{h}(\partial S)$ for $i,j\in\N$.
Let $S_h$ be the stable $(h,\cstable)$-approximation of $S$ with respect to $(\wt{A},\wt{F})$.
On the one hand, we have 
\begin{multline*}
	\reach[(\wt{A},\wt{F})]{s}{S}	\hsubs \reach[(\wt{A},\wt{F})]{\cstable h + s}{S_h}	
		\subset \reach[(\wt{A},\wt{F})]{s}{S_h} + \pp{1+\vmax\cstable h}\ov{B}_1\\
		\subset	\reach[(\wt{A},\wt{F})]{t}{S_h} + \pp{1+\vmax\cstable h}\ov{B}_1
		\subset	\reach[(\wt{A},\wt{F})]{t}{S} + \pp{1+\vmax\cstable h}\ov{B}_1
\end{multline*}
due to the comparison principle from Lemma \ref{not_lem_comp}, the maximum speed of propagation from Lemma \ref{not_lem_vmax}, and the stability of $S_h$.
On the other hand, again due to the maximum speed of propagation we have 
\begin{equation*}
	\reach[(\wt{A},\wt{F})]{t}{S}	\subset	\reach[(\wt{A},\wt{F})]{s}{S} + \pp{1+\vmax(t-s)}\ov{B}_1.
\end{equation*}
On the level of the discretization of the states, this implies that 
\begin{align*}
	K_j&=K_h\pp{\reach[(\wt{A},\wt{F})]{s}{S}}\subset	K_i + \pp{1+\pp{\sqrt{d}+\vmax\cstable} h}\ov{B}_1,\\
	K_i&=K_h\pp{\reach[(\wt{A},\wt{F})]{t}{S}}\subset	K_j + \pp{1+\vmax(t-s)+\sqrt{d}h}\ov{B}_1.
\end{align*}
In particular, with the effective minimum speed of propagation, analogously to the proof of Lemma \ref{stable_lem_TvsEspeed}, for $(A,F)\in \Espeed{h}\pp{\maxM[_]{h}{x_0}{S}}$ and $S_k$ denoting the stable $(h,\cstable)$-approximation of $K_k+\Ccond\ov{B}_h$ for $k=i,j$ we obtain 
\begin{align*}
	K_j+\Ccond\ov{B}_h	&\hsubs	\reach[(A,F)]{\cstable h + \vmineff^{-1}\pp{1+\pp{1+\sqrt{d}+\vmax\cstable} h}}{S_i},	\\
	K_i+\Ccond\ov{B}_h	&\hsubs	\reach[(A,F)]{\cstable h + \vmineff^{-1}\pp{1+\vmax(t-s)+\pp{1+\sqrt{d}}h}}{S_j}.
\end{align*}
For the arrival times, with the comparison principle as in Lemma \ref{not_lem_comp} this yields
\begin{align*}
	&\met[]{h}{x_0}{K_i+\Ccond\ov{B}_h}\charfun[\Espeed{h}\pp{\maxM[_]{h}{x_0}{S}}]		\\
	&\,\,\,	\geq		\pp{\met[]{h}{x_0}{K_j+\Ccond\ov{B}_h}
					-\cstable h - \vmineff^{-1}\pp{1+\pp{1+\sqrt{d}+\vmax\cstable} h}}\charfun[\Espeed{h}\pp{\maxM[_]{h}{x_0}{S}}],\\
	&\met[]{h}{x_0}{K_j+\Ccond\ov{B}_h}\charfun[\Espeed{h}\pp{\maxM[_]{h}{x_0}{S}}]		\\
	&\,\,\,\geq		\pp{\met[]{h}{x_0}{K_i+\Ccond\ov{B}_h}
					-\cstable h - \vmineff^{-1}\pp{1+\vmax(t-s)+\pp{1+\sqrt{d}}h}}\charfun[\Espeed{h}\pp{\maxM[_]{h}{x_0}{S}}].
\end{align*}
In order to apply these observations to \eqref{fluct_eq_diffMt4}, we move back to the restriction on $\Espeed{h}\pp{\maxM[_]{h}{x_0}{S}}$,
\begin{align*}
	\abs{\M_t-\M_s}	&\leq		\sum_{i,j\in\N}\charfun[E_i(t)\cap E_j(s)]	\Big|\Ev\Big[\Big(\met[]{h}{x_0}{K_i+\Ccond\ov{B}_h}\\
					&\qquad\qquad\qquad\qquad\qquad\qquad						-\met[]{h}{x_0}{K_j+\Ccond\ov{B}_h}\Big) \charfun[\Espeed{h}\pp{\maxM[_]{h}{x_0}{S}}]\Big]	\Big|\\
					&\quad	+ R_5 + R_4 + R_3 + R_2	+ 	\abs{R_1^t - R_1^s + R_0^t - R_0^s},
\end{align*}
where we again bound the resulting error via \eqref{fluct_eq_mK-mbK} with
\begin{align*}
	R_5 &\leq \timebound{h}{x_0}{S}\abs{\EV{\pp{1-\charfun[\Espeed{h}\pp{\maxM[_]{h}{x_0}{S}}]}\pp{\charfun[\Espeed{h}\pp{t}]-\charfun[\Espeed{h}\pp{s}]}}}\\
		&\leq		\timebound{h}{x_0}{S}\pp{1-\PM{\Espeed{h}\pp{\maxM[_]{h}{x_0}{S}}}}.							
\end{align*}
Now plugging in the inequalities for $E_i(t)\cap E_j(s)\neq\emptyset$, we obtain
\begin{align*}
	\abs{\M_t-\M_s}	&\leq		\cstable h + \vmineff^{-1}\pp{1+\pp{1+\sqrt{d}}h+\vmax\max\Bp{\cstable h, (t-s)}} \\
					&\quad	+ R_5 + R_4 + R_3 + R_2	+ 	\abs{R_1^t - R_1^s + R_0^t - R_0^s}.
\end{align*}
Collecting the bounds for the error terms $R_0^t$, $R_1^t$, $R_2$, $R_3$, $R_4$, and $R_5$ and using that $1\leq h$ thus yields \eqref{fluct_eq_MtMs}.
\end{proof}

Now that with Proposition \ref{fluct_prop_MtMs} we have established the `almost' uniform bounds (see Lemma \ref{azuma_lem_charfun}) for the martingale increments, we are able to prove Proposition \ref{fluct_prop}.
The first two inequalities in \eqref{fluct_eq_PMfluct} will follow from an application of Proposition \ref{azuma_prop_alternative}, our alternative to Azuma's inequality.
The final inequality trivially follows from the truncation imposed on the arrival times.

\begin{proof}[Proof of Proposition \ref{fluct_prop}]
Without loss of generality, let $\dist(x_0,S)>h$, since otherwise $\metb{h}{x_0}{S}=\EV{\metb{h}{x_0}{S}}=0$.
We will now apply Proposition \ref{azuma_prop_alternative}, our replacement for Azuma's inequality to the martingale $(\wt{\M}_n)_{n\in\Nz}$ defined by
\begin{align*}
	\wt{\M}_n	&\coloneqq	
	\left\lbrace	\begin{aligned}
		&\M_{n\Delta t}						&&\text{for }n\Delta t<\timebound{h}{x_0}{S},\\
		&\M_{\timebound{h}{x_0}{S}}			&&\text{for }n\Delta t\geq\timebound{h}{x_0}{S},
	\end{aligned}	\right.
	&\text{with }\,\,	
	\Delta t\coloneqq	h,
\end{align*}
with $\pp{\M_t}_{t}$ defined in \eqref{fluct_eq_defMt} and $\Delta t$ chosen in view of \eqref{fluct_eq_MtMs} from Proposition \ref{fluct_prop_MtMs}.
We set
\begin{equation}\label{fluct_eq_defN}
	N	\coloneqq \left\lceil	\frac{\timebound{h}{x_0}{S_0}}{\Delta t}	\right\rceil
		\leq		C\frac{\dist(x_0,S)}{h},
\end{equation}
with the inequality satisfied for some $C=C(\data)>0$ since $\dist(x_0,S)>h$.

Since $\G_0=\Bp{\emptyset,\Omega}$ there holds $\wt{\M}_0=0$.
With \eqref{fluct_eq_measMTbound} from Lemma \ref{fluct_lem_measIn} we have
\begin{align}
	\wt{M}_N		&=	\EV[\G_{\timebound{h}{x_0}{S}}]{\metb[]{h}{x_0}{S}} -\EV{\metb[]{h}{x_0}{S}}\notag\\
				&=	\metb[]{h}{x_0}{S}\charfun[\Espeed{h}\pp{\partial S}] -\EV{\metb[]{h}{x_0}{S}} + R_1\notag\\
				&=	\metb[]{h}{x_0}{S}	- \EV{\metb[]{h}{x_0}{S}}+	R_1 + R_2\label{fluct_eq_MN}
\end{align}
with 
\begin{align*}
	\abs{R_1}	&\leq	\timebound{h}{x_0}{S}\EV[\G_{\timebound{h}{x_0}{S}}]{1-\charfun[\Espeed{h}\pp{\partial S}]},\\
	\abs{R_2}	&\leq	\timebound{h}{x_0}{S}\pp{1-\charfun[\Espeed{h}\pp{\partial S}]}.
\end{align*}

In order to apply our replacement for Azuma's inequality, we will first show that there is an event $E_{\M}\in\F(\Rd)$ such that 
\begin{equation}\label{fluct_eq_incrMn}
	\abs{\wt{\M}_{n+1}-\wt{\M}_n}\charfun[E_{\M}]	\leq  Ch
	\quad\text{for all }n\in\N
\end{equation}
for some $C=C(\data)>0$, which holds with overwhelming probability, that is
\begin{equation*}
	1-\PM{E_{\M}}\leq C\exp\pp{-\frac{\cspeed}{2} h^{\ratespeed}}.
\end{equation*}

From Assumption \ref{veff_aP_star} we have
\begin{align*}
	1-\PM{\Espeed{h}\pp{\maxM{h}{x_0}{S}}}	
	&\leq	\Cspeed\pp{1+\frac{\diam(\maxM{h}{x_0}{S})}{h}}^d\exp\pp{-\cspeed h^{\ratespeed}}\\
	&\leq	C\pp{1+\frac{\diam(\partial S)+\dist(x_0,S)}{h}}^d\exp\pp{-\cspeed h^{\ratespeed}}
\end{align*}
for some constant $C=C(\data)$.
Regarding the last term in the bounds for the martingale increments from \eqref{fluct_eq_MtMs}, we hence have
\begin{multline}\label{fluct_eq_boundTPE}
	\timebound{h}{x_0}{S}\pp{1-\PM{\Espeed{h}\pp{\maxM{h}{x_0}{S}}}}	\\
	\leq		C\pp{1+\frac{\diam(\partial S)+\dist(x_0,S)}{h}}^{d+1}\exp\pp{-\cspeed h^{\ratespeed}}
	\leq		C\exp\pp{-\frac{\cspeed}{2} h^{\ratespeed}}
\end{multline}
since we assumed in \eqref{fluct_eq_hrateA} that $\partial S$ and $\dist(x_0,S)$ are not exponentially larger than $h$.

Regarding the terms with conditional expectations in \eqref{fluct_eq_MtMs},  Lemma \ref{azuma_lem_charfun} yields that 
\begin{align*}
	E_{\M}	&\coloneqq	\Espeed{h}\pp{\maxM{h}{x_0}{S}}\\
			&\,\,\quad	\cap\bigg\lbrace(A,F)\in\Omega	\,:\,	
							\EV[\G_{n\Delta t\wedge\timebound{h}{x_0}{S}}]{1-\charfun[{\Espeed{h}\pp{\maxM{h}{x_0}{S}}}]}((A,F))\\
			&\qquad\qquad\qquad\qquad\qquad\qquad\qquad\qquad\qquad
						\leq	\frac{h}{\timebound{h}{x_0}{S}}	\quad	\text{for all $n\in\N$}\bigg\rbrace
\end{align*}
as in \eqref{fluct_eq_boundTPE} for some $C=C(\data)>0$ satisfies 
\begin{equation}\label{fluct_eq_ProbEM}
	1-\PM{E_{\M}}	\leq	\pp{1+\frac{\timebound{h}{x_0}{S}}{h}}\pp{1-\PM{\Espeed{h}\pp{\maxM{h}{x_0}{S}}}}
					\leq		C\exp\pp{-\frac{\cspeed}{2} h^{\ratespeed}}.
\end{equation}
Plugging these definitions and \eqref{fluct_eq_boundTPE} into the bounds for the martingale increments from \eqref{fluct_eq_MtMs} in Proposition \ref{fluct_prop_MtMs} yields for all $n\in\N$ that
\begin{align*}
	\abs{\wt{\M}_{n+1}-\wt{\M}_n}\charfun[E_{\M}]	
	&\leq 	Ch + Ch + 0 + 6 h + C\exp\pp{-\frac{\cspeed}{2} h^{\ratespeed}}
\end{align*}
and hence we obtain that this definition of $E_{\M}$ satisfies \eqref{fluct_eq_incrMn}.

Now that we have the bounds from \eqref{fluct_eq_incrMn} for the increments on the set $E_{\M}$ satisfying \eqref{fluct_eq_ProbEM}, we apply our replacement for Azuma's inequality.
Since further $\abs{\wt{\M}_n}\leq \timebound{h}{x_0}{S}$ for all $n\in\Nz$, via Proposition \ref{azuma_prop_alternative} we obtain
\begin{equation*}
 	\EV{\wt{M}_N^{2k}\charfun[E_{\M}]}	\leq		(2k)!(Ch)^{2k}N^k + C(2k)!N^k\timebound{h}{x_0}{S}^{2k}\pp{1-\PM{E_{\M}}}
\end{equation*} 
for all $k\in\N$. 
Based on \eqref{fluct_eq_MN} with the errors $R_1,R_2$, we know that for the endpoint of the martingale we have
\begin{equation*}
	\wt{M}_N\charfun[E_{\M}]=\pp{\metb[]{h}{x_0}{S}	- \EV{\metb[]{h}{x_0}{S}}}\charfun[E_{\M}] + R_1\charfun[E_{\M}]
\end{equation*}
where due to the definition of $E_{\M}$
\begin{equation*}
	\abs{R_1}\charfun[E_{\M}]	\leq h.
\end{equation*}
Therefore, with the value of $C=C(\data)>0$ changing from line to line, we have
\begin{align*}
	&\EV{\pp{\metb[]{h}{x_0}{S}	- \EV{\metb[]{h}{x_0}{S}}}^{2k}}	\\
	&\qquad\qquad	\leq		\EV{\pp{\wt{M}_N-R_1}^{2k}\charfun[E_{\M}]} + C\timebound{h}{x_0}{S}^{2k}\pp{1-\PM{E_{\M}}}\\
	&\qquad\qquad	\leq		\pp{\EV{\wt{M}_N^{2k}\charfun[E_{\M}]}^{\frac{1}{2k}}+h}^{2k} + C\timebound{h}{x_0}{S}^{2k}\pp{1-\PM{E_{\M}}}\\
	&\qquad\qquad	\leq		(2k)!(Ch)^{2k}N^k + C(2k)!N^k\timebound{h}{x_0}{S}^{2k}\pp{1-\PM{E_{\M}}}.
\end{align*}
Normalizing this in view of Proposition \ref{azuma_prop_alternative}, we have
\begin{align*}
	&\EV{\pp{\frac{\metb[]{h}{x_0}{S}	- \EV{\metb[]{h}{x_0}{S}}}{Ch}}^{2k}}	\\
	&\qquad\qquad	\leq		(2k)!N^k + C(2k)!N^k\pp{\frac{\timebound{h}{x_0}{S}}{h}}^{2k}\pp{1-\PM{E_{\M}}}.
\end{align*}
As in Proposition \ref{azuma_prop_alternative}, from these moment bounds we obtain fluctuation bounds corresponding to \eqref{azuma_eq_alt-PM}, which with $\log\pp{1-\PM{E_{\M}}}\approx -h^{\ratespeed}$ as in \eqref{fluct_eq_ProbEM} for $\wt{\lambda}>0$ yields
\begin{multline*}
	\PM{\abs{\frac{\metb[]{h}{x_0}{S}	- \EV{\metb[]{h}{x_0}{S}}}{Ch}}\geq \wt{\lambda}}\\
	\leq		\left\lbrace\begin{aligned}
					&C\exp\pp{-\frac{\wt{\lambda}}{2\sqrt{N}}}		
							&&\text{if }\wt{\lambda}\leq	\frac{ch^{\ratespeed}\sqrt{N}}{\log\pp{\frac{\timebound{h}{x_0}{S}}{h}}},	\\
					&C\exp\pp{-\frac{ch^{\ratespeed}}{\log\pp{\frac{\timebound{h}{x_0}{S}}{h}}}}	
							&&\text{if }\wt{\lambda}\geq	\frac{ch^{\ratespeed}\sqrt{N}}{\log\pp{\frac{\timebound{h}{x_0}{S}}{h}}}.
			\end{aligned}\right.
\end{multline*}
Plugging in $\timebound{h}{x_0}{S}\approx\dist\pp{x_0,S}$ and $N\approx\frac{\dist\pp{x_0,S}}{h}$ from \eqref{fluct_eq_defN} as well as substituting $\lambda=Ch\wt{\lambda}$ yields the two central inequalities in \eqref{fluct_eq_PMfluct}.
\end{proof}

\section{Compensating the propagation of influence by increasing the forcing}\label{s_inf}
In this section we will show that we can compensate far-away influences on the evolution of sets by adding a small constant to the forcing.
More precisely, if a stable set is initially ahead of a second set within a large ball, 
we can guarantee that the set stays ahead near the center of this large ball for a finite time period at the cost of increasing the forcing by a constant -- no matter how much bigger the second set is outside of the large ball, see Figure \ref{inf_fig_ball}.
The larger the radius of the large ball, the less the forcing has to be increased.

However, with our approach we are only able to obtain this result if we assume that the diffusion matrix $A$ for the second-order term is constant in space, the reasons for which we will discuss in Remark \ref{inf_rem} below.

\begin{figure}[h]
\centering
\includegraphics[width=0.7\textwidth]{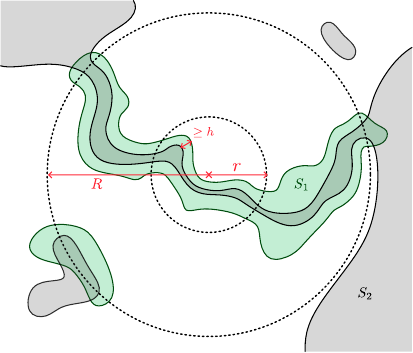}
\caption{	An illustration of the fact that far-away influences on the evolution of sets may be compensated for by increasing the forcing. 
			Within the large ball $B_R$, the set $S_1$ (depicted in green) is distance $h$ ahead of the set $S_2$ (depicted in grey). 
			Within the small ball $B_r$, $S_1$ evolving by $(A,F+\delF)$ will stay ahead of $S_2$ evolving by $(A,F)$ for a time depending on $R-r,h,\delF>0$.}
\label{inf_fig_ball}
\end{figure}

\begin{proposition}[Compensating far-away influence with higher forcing]\label{inf_prop}
Let $(A,F)\in \Omega$ as defined in \eqref{sett_eq_omega} with the additional assumption that $A$ is constant in space in the sense that $A(x,e)=A(0,e)$ for any $x\in\Rd$ and $e\in\Sd$. 
Let $S_1, S_2\subset \Rd$ be closed sets.
Assume that $S_1$ is stable with respect to $(A,F)$ and that for some $h,R>0$
\begin{equation*}
	\pp{S_2+\ov{B}_h	}\cap \ov{B}_R	\subset S_1.
\end{equation*}
Then when increasing the forcing for $S_1$ with a constant $\delF>0$ there holds
\begin{align*}
	\reach[(A,F)]{t}{S_2}\cap \ov{B}_r	&\subset	\reach[(A,F+\delF)]{t}{S_1}
	&&\text{for all }0\leq t\leq T
\end{align*}
for all $T\geq 0$ with
\begin{equation}\label{inf_eq_T}
	T\leq c\min\Bp{\pp{R-1-h-r},\, \pp{R-1-h-r}^{\frac{2}{3}}h^\frac{4}{3},\, \pp{R-1-h-r}^{\frac{2}{3}}\delF^{\frac{4}{3}}},
\end{equation}
where $c=c(\data)>0$.
\end{proposition}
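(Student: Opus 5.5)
The plan is to adapt the proof of the comparison principle (Theorem~\ref{comp_thm_comp-unb}) by doubling variables, with a spatial cutoff that localizes the comparison to $\ov{B}_R$. The constant $\delF$ is used to absorb the errors created by the cutoff.

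\emph{Setup.} On the side of $S_1$, take $u_1\coloneqq\ur[(A,F+\delF)]{S_1}=1-\charfun[{\reach[(A,F+\delF)]{t}{S_1}}]$, which is a supersolution by Lemma~\ref{not_lem_ur}. Since $F+\delF\geq F$, $S_1$ is also stable for $(A,F+\delF)$. By Lemma~\ref{stable_lem_preserv}, $t\mapsto u_1(x,t)$ is therefore nonincreasing, which gives the one-sided bound $\partial_t u_1\leq 0$ in the viscosity sense. This is the only time regularity we have, and the reason the argument is for $u_1$ rather than $u_2$. On the side of $S_2$, take a continuous solution $u_2$ for $(A,F)$ as in Definition~\ref{not_def_reach}. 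Its initial datum is $u_2(\cdot,0)=\min\{1,\dist(\cdot,S_2)/h\}$, so $\{u_2(\cdot,0)<1\}\subset S_2+B_h$. The hypothesis then gives $u_1(\cdot,0)\leq u_2(\cdot,0)$ on $\ov{B}_R$. Outside $\ov{B}_R$ there is no control, so set $L\coloneqq R-1-h-r$ and use a radial barrier $\phi(x,t)$ with the following properties: $\phi=0$ on $B_r$; $\phi\geq 1$ for $|x|\geq R-1-h$; $|\nabla\phi|\lesssim\gamma/L$ and $|D^2\phi|\lesssim\gamma/L^2$; and $\phi$ is increasing in $t$ at rate $\kappa$, i.e.\ $\phi(x,t)=\gamma\,g((|x|-r)_+/L)+\kappa t$ with an interpolation parameter $\gamma$. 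Because the equation is geometric, we may also replace $u_2$ by $\psi(u_2)$ for a monotone $\psi$, to keep the gradient of the comparison function away from $0$ where needed.

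\emph{Key steps.} (i) Assume $\sup_{\ov{B}_R\times[0,T]}\bigl(u_1-\psi(u_2)-\phi\bigr)>0$ and maximize
\[
u_1(x,t)-\psi(u_2)(y,t)-\phi(x,t)-\frac{|x-y|^4}{4\eps}.
\]
The quartic penalty matters: the Hessian of the penalty vanishes to leading order where the gradient is small, which is what lets the degenerate geometric operator be handled, as in the standard theory for mean curvature flow level sets. Use the theorem of sums for semicontinuous functions. (ii) Rule out maxima near $\partial B_R$, where $\phi\geq1$, and at $t=0$, where the initial ordering holds. (iii) At an interior maximum, subtract the two viscosity inequalities. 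The time derivatives give $-\kappa$ plus the monotonicity of $u_1$. The second-order terms cancel up to $\operatorname{tr}(A\,D^2\phi)\lesssim\gamma/L^2$; this uses that $A$ is constant in space, so no $|x-y|$ errors appear from $A$. The first-order terms give
\[
\bigl(F(x,e_1)+\delF\bigr)|p+\nabla\phi|-F(y,e_2)|p|,
\]
with $e_1,e_2$ the directions of $p+\nabla\phi$ and $p$. (iv) Estimate the difference in (iii). The error $C_{1F}|x-y|\,|p|$ is $o(1)$ as $\eps\to0$. The change of direction costs $C_{1F}|\nabla\phi|$. The remaining mismatch is absorbed by $\delF|p|$ and $\kappa$. (v) Collect terms. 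This produces a contradiction under conditions of the form $\kappa T\lesssim$ (the margin of $\phi$), $\gamma/L^2+\gamma/L\lesssim\kappa+\delF|p|$, together with a lower bound on $|p|$ at the relevant maxima. This lower bound comes from the initial slope $1/h$ of $u_2(\cdot,0)$, propagated for time $T$. (vi) Optimize over $\gamma,\kappa$ and the penalty to obtain $T\lesssim\min\{L,\;L^{2/3}h^{4/3},\;L^{2/3}\delF^{4/3}\}$, which is \eqref{inf_eq_T}.

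\emph{Main obstacle.} I expect step (iv)--(v) to be the crux: controlling the direction error $F(x,e_1)$ versus $F(y,e_2)$ and the curvature of $\phi$ when $|p|$ is small. In that regime, the perturbation by $\nabla\phi$ changes the normal direction arbitrarily, and we have no Lipschitz-in-time bound on $u_2$ (there is no minimum speed). The plan is to exploit $\partial_t u_1\leq0$: at a small-gradient maximum, the time derivative of the test function is then at most $-\kappa$, and this time gain has to be balanced against $\gamma/L^2$ and $\gamma/L$. The scale $h$ enters through the initial slope, and $\delF$ handles the large-gradient regime. If the direct estimate fails, the fallback is to first mollify $u_2$ by sup-convolution in space, at scale $\ll h$, to obtain semiconvexity and a controlled gradient, paying an error that the $h^{4/3}$ term in \eqref{inf_eq_T} should absorb. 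This balancing is where I expect the exponents $\tfrac23,\tfrac43$ to come from, and it is the step I would work out first.
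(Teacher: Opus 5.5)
Your overall architecture matches the paper's: quartic doubling, a radial cutoff, a time penalty, monotonicity of the stable side, and spatially constant $A$. The trouble is the step you flag as the crux. It is resolved incorrectly, and it is exactly where the whole estimate comes from.

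I use the paper's notation: penalty $\abs{x-y}^4/(4\delta)$, cutoff slope $\eps\sim(R-1-h-r)^{-1}$, and time penalty $\zeta t$ (your $\eps$, $\gamma/L$, $\kappa$). You need a lower bound on $\abs{\xi_0}=\abs{x_0-y_0}^3/\delta$ at the doubled maximum.

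Your proposed source, ``the initial slope $1/h$ of $u_2(\cdot,0)$, propagated for time $T$'', does not exist. Level-set solutions of these degenerate geometric equations satisfy no propagated lower gradient bound: there is no minimum speed, and nothing prevents the gradient from degenerating. Your fallback does not help either, since sup-convolution gives semiconvexity and upper Lipschitz bounds, not lower ones.

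Without such a bound, step (iii) also fails. The second-order terms do not cancel up to $\tr(A D^2\phi)$, because $A$ depends on the normal. Ishii's matrix inequality only gives
\[
\tr(A(e_1)X)-\tr(A(e_2)Y)\lesssim \eps+\delta^{-1}\abs{x_0-y_0}^2\abs{\sigma(e_1)-\sigma(e_2)}^2\lesssim \eps+\delta\,\eps^2\abs{x_0-y_0}^{-4},
\]
which is unbounded as the gradient degenerates. A fixed time gain cannot beat this. For the same reason you cannot send $\delta\to0$, so $C_{1F}\abs{x_0-y_0}\abs{\xi_0}$ is not $o(1)$.

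Your instinct to exploit $\partial_t u_1\leq 0$ is the right one. It has to be used in the supersolution inequality of $u_2$ alone, before subtracting.

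\emph{The gradient lower bound.} At an interior maximum, Ishii's lemma gives $b_1-b_2=\zeta$ with $b_1\leq0$, so $G_{(A,F)}^*(y_0,\xi_0,Y)\geq -b_2\geq\zeta$. Combine this with $-Y\leq C\delta^{-1}\abs{x_0-y_0}^2\Id$ and $\abs{F}\leq C_{1F}$. This gives $\zeta\delta\leq C\abs{x_0-y_0}^2+C\abs{x_0-y_0}^3$. Hence $x_0\neq y_0$, $\abs{x_0-y_0}\geq c(\delta\zeta)^{1/2}$, and $\abs{\xi_0}\geq c\,\delta^{1/2}\zeta^{3/2}$.

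\emph{Pinning $\delta$.} Taking $\eps<c\,\delta^{1/2}\zeta^{3/2}$ makes every cutoff error ($C\eps$ and $C\delta^{-1}\zeta^{-2}\eps^2$) smaller than $\zeta$. This pins $\delta\sim\eps^2/\zeta^3$.

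\emph{Roles of $\zeta$, $\delF$ and $h$.}
\begin{itemize}
\item The time gain $\zeta$, not $\delF$, beats the cutoff errors.
\item $\delF$ absorbs the spatial error: $C_{1F}\abs{x_0-y_0}\abs{\xi_0}\leq C\delta^{1/4}\abs{\xi_0}\leq\delF\abs{\xi_0}$.
\item $h$ enters only at $t_0=0$, through $(8\delta)^{1/4}\leq h$. This keeps $\abs{x_0-y_0}\leq h$, so the $h$-margin in the hypothesis gives the initial ordering.
\end{itemize}
Set $\zeta=1/(2T)$, $\eps=1/(R-1-h-r)$ and $\delta=CT^3/(R-1-h-r)^2$. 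Then the constraints $\zeta\delta\leq c$, $(8\delta)^{1/4}\leq h$ and $\delF\geq C\delta^{1/4}$ are exactly \eqref{inf_eq_T}.

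A smaller point: your $u_1=\ur[(A,F+\delF)]{S_1}$ is only known to be a supersolution, yet it sits on the maximized side, where you need a subsolution. You can either show that its upper semicontinuous envelope is a subsolution (it is the upper relaxed limit of $\min\{1,\max\{0,n\,u_{S_1}\}\}$), or use Lemma \ref{inf_lem_stableExt} as the paper does. That lemma gives a continuous solution for $(A,F+\delF)$ that is nonincreasing in $t$, because all its sublevel sets $S_1+\ov{B}_\rho$ with $\rho\leq C_{1F}^{-1}\delF$ are stable.
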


In order to prove this proposition, we first need an auxiliary result, which provides a decreasing (sub-)solution with initial $0$-sublevel set $S_1$ for coefficients $(A,F+\delF)$.

\begin{lemma}\label{inf_lem_stableExt}
Let $(A,F)\in \Omega$ with the additional assumption that $A$ is constant in space in the sense that $A(x,e)=A(0,e)$ for any $x\in\Rd$ and $e\in\Sd$. 
Let $S\subset\Rd$ be stable with respect to $(A,F)$.
Then for any $\delF\in\R_+$, the set $S+\ov{B}_\delta$ is stable with respect to $(A,F+\delF)$ for all $0\leq\delta\leq C_{1F}^{-1}\delF$.
In particular, there exists a solution $\uls{S}$ to \eqref{intro_eq_uls} for coefficients $(A,F+\delF)$ with initial data satisfying $S=\Bp{x\,:\,\uls{S}(x,0)\leq 0}$, which is decreasing.
\end{lemma}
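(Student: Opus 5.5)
The plan is to prove stability of $S+\ov{B}_\delta$ directly at the level of viscosity supersolutions, using a sup-convolution (translation) argument that exploits the spatial constancy of $A$. Write $u\coloneqq 1-\charfun[S]$, which by stability is a supersolution of \eqref{box_eq_stable} with coefficients $(A,F)$. For $|y|\leq\delta$ the translate $u_y(x)\coloneqq u(x-y)$ solves the same stationary equation with coefficients $(A(\cdot),F(\cdot-y,\cdot))$. Here we use that $A(x,e)=A(0,e)$, so the second-order term is unchanged by the shift. Since $|F(x-y,e)-F(x,e)|\leq C_{1F}|y|\leq C_{1F}\delta\leq\delF$, each $u_y$ is a supersolution for the forcing $F+\delF$. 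This holds in the sense that at touching points we have $F(x-y,\cdot)|\xi|\leq (F+\delF)(x,\cdot)|\xi|$. At points where $\xi=0$ the lower semicontinuous envelope condition $-\tr(A X)\geq\dots$ transfers unchanged, because only the first-order term was modified.

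Next observe that $1-\charfun[S+\ov{B}_\delta]=\inf_{|y|\leq\delta}u_y$. The infimum of a family of supersolutions is again a supersolution after taking the lower semicontinuous envelope (standard; Appendix \ref{s_visc}). Since $S+\ov{B}_\delta$ is closed, $1-\charfun[S+\ov{B}_\delta]$ is already lower semicontinuous. Hence $S+\ov{B}_\delta$ is stable with respect to $(A,F+\delF)$. One should double-check the direction: to show the infimum is a supersolution, one takes a test function touching the infimum from below at $x_0$. By compactness of $\ov{B}_\delta$ and lower semicontinuity, the infimum is attained at some $y_0$, and the same test function then touches $u_{y_0}$ from below at $x_0$. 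This avoids any stability theorem.

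For the "in particular" statement, the decreasing solution should be built as the level-set function $\uls{S}(x,t)$ of the nested family $S_\delta\coloneqq S+\ov{B}_\delta$, for $0\leq\delta\leq\delta_0\coloneqq C_{1F}^{-1}\delF$. I would take initial data $\uls{S}(x,0)=\min\{\dist(x,S),\delta_0\}$, so that $\{\uls{S}(\cdot,0)\leq\delta\}=S_\delta$ for $\delta<\delta_0$ and $\{\uls{S}(\cdot,0)\leq0\}=S$. Let $\uls{S}$ be the continuous solution from Definition \ref{not_def_reach} for $(A,F+\delF)$. Each $S_\delta$ is stable, so Lemma \ref{stable_lem_preserv} gives that $\reach[(A,F+\delF)]{t}{S_\delta}$ is nondecreasing in $t$. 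By invariance of level sets under monotone reparametrization (a geometric equation), the sublevel set $\{\uls{S}(\cdot,t)\leq\delta\}$ equals the evolution of $S_\delta$ for $\delta<\delta_0$. Hence every sublevel set below $\delta_0$ grows in time, which forces $t\mapsto\uls{S}(x,t)$ to be nonincreasing wherever $\uls{S}<\delta_0$. The values at level $\delta_0$ cannot increase, since $\uls{S}\leq\delta_0$ is preserved by comparison with the constant solution. Therefore $\uls{S}$ is decreasing.

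The main subtlety is this identification of general sublevel sets with set evolutions; it relies on the invariance results cited in Definition \ref{not_def_reach}, applied to $\max\{\uls{S},\delta\}-\delta$-type reparametrizations. The translation step itself is routine.
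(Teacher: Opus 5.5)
Your proposal is correct and is essentially the paper's proof. The step ``a subjet of $\inf_{|y|\le\delta}u_y$ at $x_0$ is a subjet of the minimizing translate $u_{y_0}$'' is exactly the subjet inclusion of Lemma~\ref{comp_lem_setsum} that the paper invokes. After that, both arguments use the Lipschitz bound on $F$ for $\xi\neq0$ and the spatial constancy of $A$ for $\xi=0$; note that the envelope relevant for supersolutions is the upper one, $G^*$, not the lower one.

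For the decreasing solution you use the paper's initial datum up to the factor $\delta_0^{-1}$. Your identification of $\{\uls{S}(\cdot,t)\le\delta\}$ with $\reach{t}{S+\ov{B}_\delta}$ via Theorems~\ref{comp_thm_inv} and \ref{comp_thm_unique}, together with comparison with constants, spells out a step the paper leaves implicit.
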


\begin{proof}[Proof of Lemma \ref{inf_lem_stableExt}]
In order to prove that $S+\ov{B}_\delta$ is stable for some $\delta>0$, we need to show that $u_{\delta}=1-\charfun[S+\ov{B}_\delta]$ is a supersolution to $G_{(A,F+\delF)}(x,\nabla u, D^2u)=0$.
Thus, let $(\xi,X)\in \subJ u_{\delta}(x_0)$ for $x_0\in\Rd$.
If $x_0\notin\pp{S+\ov{B}_\delta}$, then $\xi=0$ with $X\leq 0$ and hence $\pp{G_{(A,F+\delF)}}^*(x_0,\xi, X)\geq 0$.
If $x_0\in \pp{S+\ov{B}_\delta}$, then there is $y_0\in S$ with $|x_0-y_0|\leq\delta$. 
Lemma \ref{comp_lem_setsum} yields $(\xi,X)\in \subJ u_0(x_0)$ with $u_0=1-\charfun[S]$.
Since $S$ is stable, $u_0$ is a supersolution and hence we obtain $\pp{G_{(A,F)}}^*(y_0,\xi, X)\geq 0$.
For $\xi=0$, by the definition of the upper semi-continuous envelope and since the second order term is independent of $x_0$ and $y_0$, we have 
\begin{equation*}
	\pp{G_{(A,F+\delF)}}^*(x_0,\xi, X) = \pp{G_{(A,F+\delF)}}^*(x_0,0, X) =\pp{G_{(A,F)}}^*(y_0,0, X)\geq 0.
\end{equation*}
For $\xi\neq 0$, due to the Lipschitz-continuity of $F$ from Assumption \eqref{sett_eq_assF} we obtain
\begin{align*}
	\pp{G_{(A,F+\delF)}}^*(x_0,\xi, X)	
		&=	-\tr\pp{A(\xi)X}+\pp{F(x_0,\xi)+\delF}|\xi|\\
		&\geq	-\tr\pp{A(\xi)X}+\pp{F(y_0,\xi)-C_{1F}\abs{x_0-y_0}+\delF}|\xi|\\
		&=		\pp{G_{(A,F)}}^*(y_0,\xi, X)+\pp{\delF-C_{1F}\delta}|\xi|\\
		&\geq	0.
\end{align*}
Thus, we have shown that $u_{\delta}$ is a supersolution to $G_{(A,F+\delF)}(x,\nabla u, D^2u)=0$ and hence $S+\ov{B}_\delta$ is stable with respect to $(A,F+\delF)$ if $\delta\leq C_{1F}^{-1}\delF$.

To obtain the decreasing solution, we choose
\begin{equation*}
	u_0(x)=	\left\{\begin{aligned}
							&0		&&\text{if }x\in S,	\\
							&1		&&\text{if }x\notin S+B_{C_{1F}^{-1}\delF},\\
							&\frac{C_{1F}}{\delF}\delta		
									&&\text{if }x\in\partial \pp{S+\ov{B}_\delta}\text{ for }\delta\in(0,C_{1F}^{-1}\delF).
					\end{aligned}\right.
\end{equation*}
Clearly, $u_0$ is uniformly continuous with $S=\Bp{x\,:\,u_0(x)\leq 0}$.
Hence, due to Theorem \ref{comp_thm_exis} there exists a unique solution $\uls{S}$ to the level-set equation \eqref{intro_eq_uls} for coefficients $(A,F+\delF)$ with $\uls{S}(\cdot,0)=u_0$.
Since the initial data has been chosen such that all of the initial sublevel sets are stable with respect to $(A,F+\delF)$, all sub-level sets stay stable and do not shrink over time due to Lemma \ref{stable_lem_preserv}.
This implies that $\uls{S}$ is decreasing.
\end{proof}

Equipped with Lemma \ref{inf_lem_stableExt}, we can now prove Proposition \ref{inf_prop}.

\begin{proof}[Proof of Proposition \ref{inf_prop}]
Let $\delF\in\R_+$ and $0<r < R$.
We will obtain the result by comparing 
\begin{itemize}
\item	$0\leq u_1\leq 1$ with $\reach[(A,F+\delF)]{t}{S_1}=\Bp{x\,:\,u_1(x,t)\leq 0}$, the decreasing solution to the level-set equation \eqref{intro_eq_uls} with coefficients $(A,F+\delF)$ from Lemma \ref{inf_lem_stableExt}, and
\item	$\ur{2}$ given by $\ur{2}(x,t)=1-\charfun[{\reach[(A,F)]{t}{S_2}}](x)$, which is a supersolution to \eqref{intro_eq_uls} with coefficients $(A,F)$ by Lemma \ref{not_lem_ur}
\end{itemize}   
after applying suitable modifications:
Instead of $u_1$, we will actually use
\begin{equation*}
	(x,t)\mapsto		u_1(x,t)-\zeta t - \vphi_\eps(x)
\end{equation*}
for small $\zeta, \eps>0$, morally corresponding to slightly speeding up the evolution via $\zeta$ and gently extending $S_1$ when moving away from the origin with $\vphi_\eps\geq 0$ and
\begin{align*}
	\vphi_\eps &= 0	\quad\text{on } \ov{B}_r,
	&	\vphi_\eps(x) 	&\geq	\eps\pp{\abs{x}-r-1}	\quad\text{for all }x\in\Rd,
	&	\abs{\nabla\vphi_\eps},\abs{D^2\vphi_\eps}	&\leq	\eps.
\end{align*}
Since we have no control of the gradients, we use the standard approach of doubling the variables to introduce some regularization, to then directly obtain a comparison. 
Introducing a regularization parameter $\delta>0$ we write
\begin{equation*}
	\Psi(x,y,t)	\coloneqq	u_1(x,t) - u_2(y,t) - \zeta t - \vphi_\eps(x) - \frac{\abs{x-y}^4}{4\delta}.
\end{equation*}
Since $\Psi$ is upper semicontinuous, upper bounded and decaying for $t\rightarrow\infty$, it attains its maximum in $\ov{B}_R\times\ov{B}_R\times[0,\infty)$, say at $(x_0,y_0,t_0)$.
We obtain
\begin{align*}
	-1 \leq	\Psi(0,0,0)	\leq		\Psi(x_0,y_0,t_0)	\leq		1 -	\frac{\abs{x_0-y_0}^4}{4\delta}
\end{align*}
and hence 
\begin{equation}\label{inf_eq_xyleq}
	\abs{x_0-y_0}	\leq		(8\delta)^{\frac{1}{4}}.
\end{equation}
The rest of the proof will consist out of these three steps:
\begin{enumerate}[label=\arabic*.), left=12pt]
\item	We will show that the maximum is attained on the boundary of $\ov{B}_R\times\ov{B}_R\times[0,\infty)$ if the choices of the parameters $\zeta,\eps,\delta$ are suitably restricted.
\item	We will show that $\Psi(x_0,y_0,t_0)\leq	0$ for a suitable choice of the parameters.
\item	Combining the restrictions on the parameters and using that the argument is invariant with respect to reparametrizations of $u_1$ we will obtain the result.
\end{enumerate}

\underline{Step 1: No maximum in the interior.}
Assume that $(x_0,y_0,t_0)$ is in the interior of $\ov{B}_R\times\ov{B}_R\times[0,\infty)$.
We will show that this leads to a contradiction under suitable restrictions on the parameters.
With Theorem \ref{comp_thm_MaxPr}, known as Ishii's Lemma or a parabolic maximum principle for semicontinuous functions, fixing $\gamma>0$ there exist $b_1,b_2\in\R$ and $X,Y\in\Rddsym$ such that
\begin{align}
	&\left\{\begin{aligned}	\pp{b_1,\xi_0+\nabla\vphi_\eps(x_0),X}	&\in\supPc u_1(x_0,t_0)\\
							\pp{b_2,\xi_0,Y}							&\in\subPc \ur{2}(x_0,t_0)
	\end{aligned}\right.,\label{inf_eq_P}\\
	&\quad	b_1-b_2	=	\zeta,\label{inf_eq_b}\\
	&-\pp{\frac{1}{\gamma}+\abs{U}}\Id_{2d\times 2d}	
	\leq		\begin{pmatrix}
				X-D^2\vphi_\eps(x_0)	&	0\\	0	&	-Y
			\end{pmatrix}
	\leq		U+\gamma U^2\label{inf_eq_XYpre}
\end{align}
with the spatial derivatives of the smooth `test function' given by
\begin{equation*}
	\xi_0\coloneqq	\frac{\abs{x_0-y_0}^2}{\delta	}\pp{x_0-y_0}
\end{equation*}
and
\begin{align*}
	U	&\coloneqq	\frac{1}{\delta}\begin{pmatrix}	N	&	-N	\\	-N	&	N	\end{pmatrix}
	&\text{with }	N\coloneqq	\abs{x_0-y_0}^2\Idd + 2\pp{x_0-y_0}\otimes\pp{x_0-y_0}.
\end{align*}
If $x_0\neq y_0$ we choose $\gamma=\delta\abs{x_0-y_0}^{-2}$ and hence obtain from \eqref{inf_eq_XYpre} that
\begin{equation}\label{inf_eq_XY}
	-\frac{C}{\delta}	\abs{x_0-y_0}^2\Id
	\leq		\begin{pmatrix}
				X-D^2\vphi_\eps(x_0)	&	0\\	0	&	-Y
			\end{pmatrix}
	\leq		\frac{C}{\delta}	\abs{x_0-y_0}^2\begin{pmatrix}	\Id	&	-\Id	\\	-\Id	&	\Id\end{pmatrix}.
\end{equation}
In fact, we next show that always $x_0\neq y_0$ and hence this estimate holds.
More precisely, we claim that there is $c=c(\data)>0$ such that 
\begin{align}\label{inf_eq_xygeq}
	\abs{x_0-y_0}	&\geq	c\pp{\delta\zeta}^\frac{1}{2}
	\qquad\qquad \text{if }\,\,	\zeta\delta		\leq c.	
\end{align} 
On the one hand, note that since $\ur{2}$ is a supersolution to \eqref{intro_eq_uls}, due to the inclusion in the subjet from \eqref{inf_eq_P} we have
\begin{equation*}
	\pp{G_{(A,F)}}_*(y_0,\xi_0,Y)	\geq -b_2	=	\zeta - b_1	\geq		\zeta
\end{equation*}
where in the last step we used \eqref{inf_eq_b} that $b_1\leq 0$ since $u_1$ is decreasing due to the choice of the initial data from Lemma \ref{inf_lem_stableExt}.
On the other hand, choosing $\gamma$ as above for $x_0\neq y_0$ and arbitrarily for $x_0=y_0$, from \eqref{inf_eq_XYpre} we obtain 
\begin{equation*}
	-Y\leq C\delta^{-1}\abs{x_0-y_0}^2\Id
\end{equation*}
and hence 
\begin{equation*}
	\pp{G_{(A,F)}}_*(y_0,\xi_0,Y) \leq C C_{1A}\delta^{-1}\abs{x_0-y_0}^2+C_{1F}\abs{\xi_0}.
\end{equation*}
Combining both equations and plugging in the definition of $\xi_0$, we obtain 
\begin{equation*}
	\zeta\delta	\leq		C\abs{x_0-y_0}^2 + C_{1F}\abs{x_0-y_0}^3,
\end{equation*}
which yields \eqref{inf_eq_xygeq} and hence shows that the claim holds.

Next, we set up the contradiction. 
As above, with \eqref{inf_eq_P} and using that $u_1$ is a sub- and $u_2$ a supersolution we obtain
\begin{align*}
	b_1-\tr\pp{A(\xi_0+\nabla\vphi_\eps(x_0))X}+\pp{F\pp{\xi_0+\nabla\vphi_\eps(x_0),x_0}+\delF}|\xi_0+\nabla\vphi_\eps(x_0)|
	&\leq 0\\
	b_2-\tr\pp{A(\xi_0)Y}+F\pp{\xi_0,y_0}|\xi_0|
	&\geq 0
\end{align*}
knowing that $\xi_0\neq 0$ and for now assuming that $\xi_0+\nabla\vphi_\eps(x_0)\neq 0$.
Combining both and using \eqref{inf_eq_b} yields
\begin{multline}\label{inf_eq_sol}
	\zeta\leq	\tr\pp{A(\xi_0+\nabla\vphi_\eps(x_0))X}-\tr\pp{A(\xi_0)Y}\\
				+	F\pp{\xi_0,y_0}|\xi_0|-\pp{F\pp{\xi_0+\nabla\vphi_\eps(x_0),x_0}+\delF}|\xi_0+\nabla\vphi_\eps(x_0)|.
\end{multline}
We will derive suitable restrictions on the parameters for which the right hand side is smaller than $\zeta$, which will yield the contradiction.
First, to ensure that $\xi_0+\nabla\vphi_\eps(x_0)\neq 0$, besides $\delta\zeta\leq c$ as in \eqref{inf_eq_xygeq} we require that 
\begin{align*}
	\eps	&<	c^3\delta^{\frac{1}{2}}\zeta^{\frac{3}{2}}	
	&\text{ such that }
	\abs{\nabla\vphi_\eps(x_0)}	&\leq \epsilon	< \frac{\pp{c(\delta\zeta)^{\frac{1}{2}}}^3}{\delta}	\leq \abs{\xi_0}
\end{align*}
with the constant $c=c(\data)>0$ originating in \eqref{inf_eq_xygeq}.
Further, note that 
\begin{equation}\label{inf_eq_xiErr}
	\abs{\frac{\xi_0}{|\xi_0|}-\frac{\xi_0+\nabla\vphi_\eps(x_0)}{|\xi_0+\nabla\vphi_\eps(x_0)|}}
	\leq		2\frac{|\nabla\vphi_\eps(x_0)|}{|\xi_0|}
	\leq		\frac{2\eps\delta}{\abs{x_0-y_0}^3}.
\end{equation}
Regarding the first order terms, we claim that there is some $C=C(\data)>0$ such that 
\begin{multline}\label{inf_eq_Fcomp}
	F\pp{\xi_0,y_0}|\xi_0|-\pp{F\pp{\xi_0+\nabla\vphi_\eps(x_0),x_0}+\delF}|\xi_0+\nabla\vphi_\eps(x_0)|
	\leq C\eps\\
	\text{if }\,\,\delF\geq C\delta^{\frac{1}{4}}.
\end{multline}
In order to compare the two different locations, using the Lipschitz continuity of $F$ on $\Sd$ from \eqref{sett_eq_assF} and \eqref{inf_eq_xiErr} from above, we first bound the error for neglecting $|\nabla\vphi_\eps(x_0)|\leq \eps$,
\begin{align*}
	&\abs{\pp{F\pp{\frac{\xi_0+\nabla\vphi_\eps(x_0)}{\abs{\xi_0+\nabla\vphi_\eps(x_0)}},x_0}+\delF}|\xi_0+\nabla\vphi_\eps(x_0)|
	-\pp{F\pp{\frac{\xi_0}{\abs{\xi_0}},x_0}+\delF}|\xi_0|}\\
	&\qquad	\leq		\pp{C_{1F}+\delF}\eps + C_{1F}\frac{2\eps}{|\xi_0|}|\xi_0|.
\end{align*}
To bound the error from switching location, we use the Lipschitz continuity of $F$ in $\Rd$ from \eqref{sett_eq_assF} and obtain
\begin{equation*}
	F\pp{\xi_0,y_0}|\xi_0|-\pp{F\pp{\frac{\xi_0}{\abs{\xi_0}},x_0}+\delF}|\xi_0|
	\leq		\pp{C_{1F}|x_0-y_0|-\delF}|\xi_0|
	\leq		0
\end{equation*}
where the last inequality is guaranteed to hold via \eqref{inf_eq_xyleq} if $\delF\geq C_{1F}\pp{8\delta}^{\frac{1}{4}}$.

Regarding the second order terms, we claim that there is $C=C(\data)>0$ such that 
\begin{equation}\label{inf_eq_Acomp}
	\tr\pp{A(\xi_0+\nabla\vphi_\eps(x_0))X}-\tr\pp{A(\xi_0)Y}	\leq  C\eps+C\delta^{-1}\zeta^{-2}\eps^2.
\end{equation}
In order to use \eqref{inf_eq_XY}, we first get an error for inserting $D^2\vphi_\eps(x_0)$,
\begin{equation*}
	\abs{\tr\pp{A(\xi_0+\nabla\vphi_\eps(x_0))X}-\tr\pp{A(\xi_0+\nabla\vphi_\eps(x_0))\pp{X-D^2\vphi_\eps(x_0)}}}\leq C_{1A}\eps.
\end{equation*}
Since with Assumption \eqref{sett_eq_assAbd} we have $A=\sigma\sigma^\intercal$, we can write
\begin{align*}
	&\tr\pp{A(\xi_0+\nabla\vphi_\eps(x_0))\pp{X-D^2\vphi_\eps(x_0)}}-\tr\pp{A(\xi_0)Y}\\
	&\qquad	=	\tr\pp{\sigma^\intercal(\xi_0+\nabla\vphi_\eps(x_0))\pp{X-D^2\vphi_\eps(x_0)}\sigma(\xi_0+\nabla\vphi_\eps(x_0))	
				-\sigma^\intercal(\xi_0)Y\sigma(\xi_0)}\\
	&\qquad	\leq		\frac{C}{\delta}\abs{x_0-y_0}^2\abs{\sigma\pp{\frac{\xi_0+\nabla\vphi_\eps(x_0)}{\abs{\xi_0+\nabla\vphi_\eps(x_0)}}}-\sigma\pp{\frac{\xi_0}{\abs{\xi_0}}}}^2\\
	&\qquad	\leq		\frac{C}{\delta}\abs{x_0-y_0}^2	C_{1A}^2\frac{\eps^2}{\abs{\xi_0}^2}\\
	&\qquad	=		C\delta\abs{x_0-y_0}^{-4}\eps^2\\
	&\qquad	\leq		C\delta^{-1}\zeta^{-2}\eps^2
\end{align*}
where the first inequality is an application of \eqref{inf_eq_XY}, the second follows from the Lipschitz continuity of $\sigma$ and \eqref{inf_eq_xiErr}, and the last inequality follows from \eqref{inf_eq_xygeq}.
Combining the two estimates above yields \eqref{inf_eq_Acomp}.

To obtain the contradiction, we now plug the inequalities \eqref{inf_eq_Fcomp} and \eqref{inf_eq_Acomp} into \eqref{inf_eq_sol} and collect the restrictions on the parameters to obtain
\begin{align}\label{inf_eq_reqInt}
	\zeta	&\leq		C\eps+C\delta^{-1}\zeta^{-2}\eps^2	<	\zeta
	&\text{if }\,\,\left\{\begin{aligned}		\zeta\delta		&\leq	c_1(\data),\\
											\eps 			&<		c_2(\data)\delta^{\frac{1}{2}}\zeta^{\frac{3}{2}},\\
											\delF			&\geq	C(\data)\delta^{\frac{1}{4}},
					\end{aligned}\right.						
\end{align}
where the contradiction holds if we choose $c_2=c_2(\data,c_1)>0$ small enough.

\underline{Step 2: Bounding the maximum.}
Because the maximum is not attained in the interior, $(x_0,y_0,t_0)$ has to be on the boundary of $\ov{B}_R\times\ov{B}_R\times[0,\infty)$.
Thus, there holds $x_0\in\partial B_R$, $y_0\in\partial B_R$ or $t_0=0$. 
If $x_0\in\partial B_R$, then with $|x_0|=R$ and the definition of $\vphi_\eps$ note that
\begin{equation*}
	\Psi(x_0,y_0,t_0)	\leq 1-0-0-{\eps}(R-r-1)-0.
\end{equation*}
Similarly, if $y_0\in\partial B_R$ then due to \eqref{inf_eq_xyleq} also $|x_0|\geq R-\pp{8\delta}^{\frac{1}{4}}$ and hence
\begin{equation*}
	\Psi(x_0,y_0,t_0)	\leq 1-0-0-{\eps}\pp{R-\pp{8\delta}^{\frac{1}{4}}-r-1}-0.
\end{equation*}
If $t_0=0$ and $x_0\in S_1$, then $u_1(x_0,t_0)\leq 0$ and hence $\Psi(x_0,y_0,t_0)\leq 0$.
If however $x_0\notin S_1$, then because $S_2+\ov{B}_h\cap \ov{B}_R\subset S_1$ we have $y_0\notin S_2 $ if $\abs{x_0-y_0}\leq h$ and thus due to $\eqref{inf_eq_xyleq}$ we have
\begin{equation*}
	\Psi(x_0,y_0,t_0)	\leq 1-1-0-0-0 = 0	\qquad\qquad	\text{if }\,\,\pp{8\delta}^{\frac{1}{4}}\leq h.
\end{equation*}
In summary, we obtain 
\begin{align}
	\Psi(x_0,y_0,t_0)&\leq 0 
	&\text{if next to \eqref{inf_eq_reqInt} also }
					\left\{\begin{aligned}	\eps 	&\geq	\frac{1}{R-1-h-r}	,\\
											\delta	&\leq	8h^4.
					\end{aligned}\right.		\label{inf_eq_reqBou}	
\end{align}

\underline{Step 3: Choosing the parameters and comparing $u_1$ and $\ur{2}$.}
For $x\in \ov{B}_r$ and $t\geq 0$, having $\max_{(x,y,t)\in\ov{B}_R\times\ov{B}_R\times[0,\infty)}\Psi(x,y,t)\leq 0 $ implies 
\begin{equation*}
	u_1(x,t)-\ur{2}(x,t)-\zeta t = \Psi(x,x,t)	\leq	 0.
\end{equation*}
As an intermediate step, we want to obtain 
\begin{align}\label{inf_eq_S2S1h}
	\reach[(A,F)]{t}{S_2}\cap \ov{B}_r 
	&\subset \Bp{x\,:\,u_1(x,t)\leq \frac{1}{2}}	
	&\text{for all }0\leq t\leq T.
\end{align}
Thus, we choose $\zeta=\frac{1}{2T}$.
If we can choose the rest of the parameters such that the requirements from \eqref{inf_eq_reqInt} and \eqref{inf_eq_reqBou} are satisfied, then we obtain  \eqref{inf_eq_S2S1h}.
Thus, we choose $\eps$ and $\delta$ as small as possible, that is
\begin{align*}
	\eps &= \frac{1}{R-1-h-r},
	&	\delta	&=	C(\data)\frac{\eps^2}{\zeta^3}	=	C(\data)\frac{T^3}{\pp{R-1-h-r}^2},
\end{align*}
where the constant for $\delta$ is chosen such that the corresponding inequality in \eqref{inf_eq_reqInt} is satisfied.
For \eqref{inf_eq_reqBou} to be satisfied, we hence require 
\begin{equation*}
	\pp{R-1-h-r} \geq	C(\data)\frac{T^{\frac{3}{2}}}{h^2}.
\end{equation*}
For \eqref{inf_eq_reqInt} to be satisfied, when plugging in the choice of $\delta$ we see that we need
\begin{align*}
	\pp{R-1-h-r}	&\geq	C(\data)T,
	&	\delF	&\geq	C(\data)\frac{T^{\frac{3}{4}}}{\pp{R-1-h-r}^\frac{1}{2}}.
\end{align*}
Reformulating these requirements in terms of $T$ yields \eqref{inf_eq_T}.
To conclude the proof, note that all of the bounds above are invariant with respect to reparametrizing  the initial  data of $u_1$.
Repeating the same arguments with $\wt{u}_1\coloneqq \max\Bp{su_1,1}$ for $s\geq 1$, we also arrive at \eqref{inf_eq_S2S1h}, now corresponding to 
\begin{align*}
	\reach[(A,F)]{t}{S_2}\cap \ov{B}_r 
	&\subset		\Bp{x\,:\,\wt{u}_1(x,t)\leq \frac{1}{2}}	
	=			\Bp{x\,:\,u_1(x,t)\leq \frac{1}{2s}}
	&\text{for }0\leq t\leq T.
\end{align*}
Since $s\geq 1$ is arbitrary, we get $\reach[(A,F)]{t}{S_2}\cap \ov{B}_r \subset	\Bp{x\,:\,u_1(x,t)\leq 0}=\reach[(A,F+\delF)]{t}{S_1}$.
\end{proof} 

\begin{remark}\label{inf_rem}
The main reason why we increase the forcing for the comparison is not to obtain Lemma \ref{inf_lem_stableExt} -- this could also be solved using a slightly different concept of stability -- but to control the difference of the first-order terms in \eqref{inf_eq_Fcomp} of Step 1 in the proof of Proposition \ref{inf_prop}. 
The reason why we have to rely on this makeshift approach is that we lack a guaranteed minimum speed of propagation for the interfaces.
This minimum speed would translate to Lipschitz continuity of the level-set solutions, in terms of the notation from the proof allowing us to bound $\xi_0$ and thus obtain better control of $\abs{x_0-y_0}$ and smallness of $\delta^{-1}\abs{x_0-y_0}^4$, which would make increasing the forcing obsolete.

Similarly, we are not able to treat inhomogeneous curvature terms because due to the lack of control on the gradients also the bounds for $X$ and $Y$ obtained via the regularization explode too fast.
\end{remark}

In the following, instead of Proposition \ref{inf_prop} we will use this variation for the half-ball instead of the full ball, see Figure \ref{inf_fig_halfball}.

\begin{figure}[h]
\centering
\includegraphics[width=0.7\textwidth]{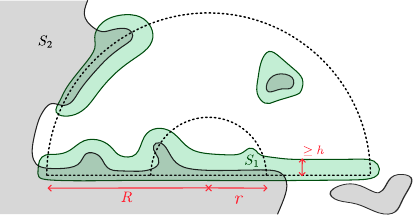}
\caption{An illustration of the fact that far-away influences on the evolution of sets within the half-ball may be compensated for by increasing the forcing. 
	In the upper half-ball $\ov{B}_R\cap\hsp{e}$, the  set $S_1$ (depicted in green) is distance $h$ ahead of the set $S_2$ (depicted in grey) as well as $h$ ahead of the boundary of the half-space. 
	Within the small half-ball $\ov{B}_r\cap\hsp{e}$, $S_1$ evolving by $(A,F+\delF)$ will stay ahead of $S_2$ evolving by $(A,F)$ for a time depending on $R-r,h,\delF>0$.}
\label{inf_fig_halfball}
\end{figure}

\begin{proposition}[Compensating far-away influence with higher forcing in the half-ball]\label{inf_prop_half}
Let $(A,F)\in \Omega$ with the additional assumption that $A(x,e)=A(0,e)$ for $x\in\Rd$, $e\in\Sd$ is constant in space. 
Let $S_1, S_2\subset \Rd$ be closed sets.
Assume that $S_1$ is stable with respect to $(A,F)$ and that for some $h,R>0$ in the upper half-space $\hsp{e}=\Bp{x\in\Rd\,:\,x\cdot e\geq 0}$ in direction $e\in\Sd$ there holds
\begin{align*}
	\pp{S_2+\ov{B}_h	}\cap \ov{B}_R\cap\hsp{e}	&\subset S_1
	&&\text{and}
	&	\pp{\partial\hsp{e}+\ov{B}_h}\cap \ov{B}_R\cap\hsp{e}	&\subset S_1.
\end{align*}
Then when increasing the forcing for $S_1$ with a constant $\delF\in\R_+$ there holds
\begin{align*}
	\reach[(A,F)]{t}{S_2}\cap \ov{B}_r\cap\hsp{e}	&\subset	\reach[(A,F+\delF)]{t}{S_1}
	&&\text{for all }0\leq t\leq T
\end{align*}
for all $T$ with
\begin{equation}\label{inf_eq_Thalf}
	T\leq c\min\Bp{\pp{R-1-h-r},\, \pp{R-1-h-r}^{\frac{2}{3}}h^\frac{4}{3},\, \pp{R-1-h-r}^{\frac{2}{3}}\delF^{\frac{4}{3}}},\tag{\ref{inf_eq_T}}
\end{equation}
where $c=c(\data)>0$.
\end{proposition}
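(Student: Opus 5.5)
The proof follows that of Proposition \ref{inf_prop} almost verbatim. The only change is that the doubling-of-variables argument runs on the half-ball $\ov{B}_R\cap\hsp{e}$ instead of $\ov{B}_R$, so the new flat part of the boundary, $\partial\hsp{e}\cap\ov{B}_R$, has to be handled in Step 2.

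As before, let $u_1$ be the decreasing solution for $(A,F+\delF)$ with initial $0$-sublevel set $S_1$ from Lemma \ref{inf_lem_stableExt}, and let $\ur{2}=1-\charfun[{\reach[(A,F)]{t}{S_2}}]$, which is a supersolution by Lemma \ref{not_lem_ur}. With the same $\zeta,\eps,\delta,\vphi_\eps$ set
\begin{equation*}
	\Psi(x,y,t)=u_1(x,t)-\ur{2}(y,t)-\zeta t-\vphi_\eps(x)-\frac{\abs{x-y}^4}{4\delta},
\end{equation*}
and maximize it over $\pp{\ov{B}_R\cap\hsp{e}}\times\pp{\ov{B}_R\cap\hsp{e}}\times[0,\infty)$. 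This domain is still compact in space, so a maximum $(x_0,y_0,t_0)$ exists and \eqref{inf_eq_xyleq} still holds. Step 1 (no interior maximum) is purely local. Ishii's lemma and the estimates \eqref{inf_eq_xygeq}, \eqref{inf_eq_Fcomp}, \eqref{inf_eq_Acomp} only use that $(x_0,y_0)$ lies in the interior of the domain, so they carry over unchanged under the same parameter restrictions \eqref{inf_eq_reqInt}.

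In Step 2 the boundary cases $x_0\in\partial B_R$, $y_0\in\partial B_R$ and $t_0=0$ are treated exactly as before. The new cases are $x_0\in\partial\hsp{e}$ and $y_0\in\partial\hsp{e}$. Here the key is the second hypothesis $\pp{\partial\hsp{e}+\ov{B}_h}\cap\ov{B}_R\cap\hsp{e}\subset S_1$. Since $S_1$ is stable, Lemma \ref{stable_lem_preserv} and Lemma \ref{inf_lem_stableExt} give $S_1\subset\reach[(A,F+\delF)]{t}{S_1}$ for all $t$, so $u_1(x,t)\le u_1(x,0)\le 0$ on this strip for every $t\ge 0$. If $x_0\in\partial\hsp{e}$, then $u_1(x_0,t_0)\le 0$, hence $\Psi\le 0$. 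If $y_0\in\partial\hsp{e}$, then $\abs{x_0-y_0}\le(8\delta)^{1/4}\le h$ together with $x_0\in\hsp{e}$ puts $x_0$ in the strip $\pp{\partial\hsp{e}+\ov{B}_h}\cap\hsp{e}$. It also lies in $\ov{B}_R$, since it is in the domain. So again $u_1(x_0,t_0)\le 0$ and $\Psi\le 0$. Both new cases therefore need only the restriction $\delta\le h^4/8$, which is already in \eqref{inf_eq_reqBou}.

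Step 3 is then identical. Take $\zeta=\frac{1}{2T}$ and choose $\eps,\delta$ as before; this yields the constraints \eqref{inf_eq_Thalf}. Evaluating $\Psi(x,x,t)\le0$ for $x\in\ov{B}_r\cap\hsp{e}$ and using the reparametrization trick with $\max\Bp{su_1,1}$ gives the inclusion. The main point to check carefully is that the Step 1 test-function computation really is insensitive to the domain shape, i.e.\ that an interior maximum of the half-ball problem is a local maximum in $\Rd\times\Rd\times(0,\infty)$. This holds because interior points of $\ov{B}_R\cap\hsp{e}$ are open-set interior points, so Ishii's lemma (Theorem \ref{comp_thm_MaxPr}) applies verbatim.
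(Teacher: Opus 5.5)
Your proposal is correct and follows the paper's own proof: Steps 1 and 3 carry over unchanged, and the only new ingredient is the Step 2 case $x_0\in\partial\hsp{e}$ or $y_0\in\partial\hsp{e}$. There, $\abs{x_0-y_0}\le(8\delta)^{1/4}\le h$ together with the strip hypothesis puts $x_0$ in $S_1$, and stability of $S_1$ gives $u_1(x_0,t_0)=0$, hence $\Psi\le 0$; your restriction $\delta\le h^4/8$ is the correct form of the condition that the paper writes as $\delta\le 8h^4$ in \eqref{inf_eq_reqBou}.
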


\begin{proof}
The proof is analogous to the proof of Proposition \ref{inf_prop}.
The only difference is that in Step 2 we might also have $x_0\in\partial\hsp{e}\cap \ov{B}_R$ or $y_0\in\partial\hsp{e}$.
Either way, since with the same choice of parameters we have $\abs{x_0-y_0}\leq h$ and hence $x_0\in S_1$ by assumption. 
Because $S_1$ is stable, this implies $x_0\in\reach[(A,F+\delF)]{t}{S_1}$ for all $t\geq 0$, thus $u_1(x_0,t_0)= 0$ and hence also $\Psi(x_0,y_0,t_0)\leq 0$ as in all of the other cases.
\end{proof}

\section{The homogenized speed via arrival times for half-space approximations}\label{s_lin}
For a direction $e\in\Sd$ we denote the corresponding upper and lower half-space with 
\begin{align*}
	\hsp{e}	&=	\Bp{x\in\Rd\,:\,x\cdot e\geq 0},
	&	\hsm{e}	&=	\Bp{x\in\Rd\,:\,x\cdot e\leq 0}.
\end{align*}
Translating the results from \cite{ArCa18} to our setting, we would like to choose the homogenized speed as the average speed with which the half space reaches an `infinitely far away' target area, that is
\begin{equation}\label{lin_eq_intro_limm}
	\vhom(e)	\coloneqq	\pp{\lim_{r\rightarrow\infty}	\frac{1}{r}\EV{\metb{h(r)}{re}{\hsm{e}}}}^{-1}
\end{equation}
for suitable chosen $r\mapsto h(r)<<r$.
However, we can not guarantee that this limit exists.
Since the fluctuation bounds from Proposition \ref{fluct_prop} and some of the following arguments require a set with bounded boundary,
we will approximate the boundary of the half-space with a family of bounded disks
\begin{equation*}
  	\hsma{e}{\beta}{h(r)}{r}\approx \hsm{e}
  	\qquad\text{on the scale $r\geq 0$ for a fixed parameter $\beta$},
\end{equation*}  
see Figure \ref{lin_fig_hs-approx}.
In Corollary \ref{lin_cor_bd}, we will define these sets in a way such that Proposition \ref{inf_prop_half} guarantees good approximative properties with respect to the half-space.

The main task of this section is to show that the expected values of the corresponding arrival times,
\begin{equation*}
	r\mapsto \EV{\metb{h(r)}{re}{\hsma{e}{\beta}{h(r)}{r}}},
\end{equation*}
behave almost linearly, see Proposition \ref{lin_prop_sub} for the approximate sub- and Proposition \ref{lin_prop_sup} for the approximate superlinearity.
Already the approximate sub-linearity then implies the existence of a limit for the averaged arrival times as in \eqref{lin_eq_intro_limm}.
The approximate linearity further provides quantitative bounds for the convergence, see Corollary \ref{lin_cor_exUpB} and Corollary \ref{lin_cor_LowB}. 
We obtain the approximate sub-linearity adapting arguments from \cite[Section 4.2]{ArCa18} to our setting.
However, we only obtain the approximate super-linearity at the cost of slightly changing the forcing due to applications of Proposition \ref{inf_prop_half}.
As a result, the quality of the quantitative bounds depends on the continuity of arrival times with respect to changing the forcing by a constant, which is why we will later need Assumption \ref{sett_aP_gen}.
At the end of this section, we will define the homogenized speed and discuss the implications of Assumption \ref{sett_aP_gen}.

\begin{figure}[h]
\centering
\includegraphics[width=0.95\textwidth]{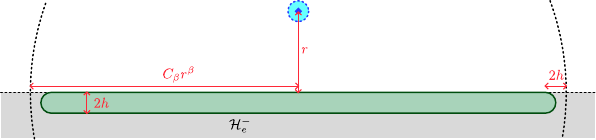}\\
\includegraphics[width=0.95\textwidth]{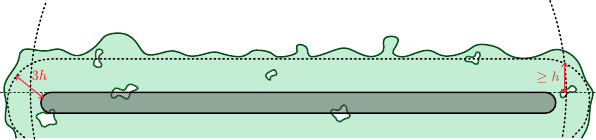}
\caption{	An illustration of the half-space
approximations and the fact that they are quickly $h$ ahead of the half-space in the corresponding half-ball.
			Top: The half-space approximation $\hsma{e}{\beta}{h}{r}$ is depicted in green, the target area $\ov{B}_h(re)$ in blue. 
			Bottom: The evolution $\reach{\pp{\cstable+3\vmineff^{-1}} h}{S_h}$ (depicted in green) of a stable $(h,\cstable)$-approximation $S_h$ of $\hsma{e}{\beta}{h}{r}$ after time $\pp{\cstable+3\vmineff^{-1}} h$ fully $h$-envelopes the set $\pp{\partial\hsp{e}+\ov{B}_h}\cap \ov{B}_R\cap\hsp{e}$.}
\label{lin_fig_hs-approx}
\end{figure}

\subsection{The half-space approximations}
We start by introducing the disks $\hsma{e}{\beta}{h(r)}{r}$ approximating the respective half-space $\hsm{e}$ on the scale $r\geq h_0$, see Figure \ref{lin_fig_hs-approx}.
To obtain the approximative properties, we compensate the far-away influence by slightly increasing the forcing as in Proposition \ref{inf_prop_half}:
We only need to compare the evolution of the approximation to the original half-space (or other large sets) up to the time $\timebound{h}{re}{\hsm{e}}\approx r$ at which we truncate the arrival times.
Plugging this into \eqref{inf_eq_Thalf} from Proposition \ref{inf_prop_half} and choosing a scaling for the width of the disk yields the scaling of the forcing increase.

\begin{corollary}[Justifying half-space approximations with Proposition \ref{inf_prop_half}]\label{lin_cor_bd}
Let $e\in\Sd$ and $\beta\geq \frac{3}{2}$.
For all $r,h\geq h_0$ we approximate $\hsm{e}$ with the fattened disks
\begin{equation*}
	\hsma{e}{\beta}{h}{r}	\coloneqq	\Bp{x\in\ov{B}_{C_{\beta}r^\beta-2h}(-he)	\text{ with } \pp{x+he}\cdot e=0}	+	\ov{B}_h\subset\ov{B}_{C_{\beta}r^\beta}	,
\end{equation*}
where $C_\beta=C_\beta(\data)>0$ is chosen large enough such that \eqref{inf_eq_Thalf} from Proposition \ref{inf_prop} and Proposition \ref{inf_prop_half} is satisfied for all $r\geq h\geq h_0$ with $(T(r,h), 2r, h, R(r), \delF(r))$ given by 
\begin{align*}
			T(r,h)		&=	\timebound{h}{re}{\hsm{e}}\leq C(\data)r	\quad\text{as in Definition \ref{intro_def_met}}
\end{align*}			
and 
\begin{align*}
		R(r)			&=	C_{\beta}r^\beta,
	&	\delF(r)		&=	r^{-\frac{\beta}{2}+\frac{3}{4}}.
\end{align*}
Let $(A,F)\in\Espeed{h}\pp{\ov{B}_{C_{\beta}r^\beta}}$ from Assumption \ref{veff_aP_star} with the additional assumption that $A(x,e)=A(0,e)$ for $x\in\Rd$, $e\in\Sd$ is constant in space. 
Then
\begin{multline}\label{lin_eq_hsma}
	\met[(A,F)]{h}{re}{\hsma{e}{\beta}{h}{r}}	
	\geq		\met[(A,F)]{h}{re}{\hsm{e}}\\
	\geq		\met[\pp{A,F+r^{-\frac{\beta}{2}+\frac{3}{4}}}]{h}{re}{\hsma{e}{\beta}{h}{r}}	-	\pp{\cstable+3\vmineff^{-1}} h.
\end{multline}
The same inequalities hold when replacing $\hsm{e}$ with any other closed set $S\subset\Rd$ which satisfies $\hsma{e}{\beta}{h}{r}\subset S \subset \hsm{e}\cup\pp{\Rd\setminus\ov{B}_{C_{\beta}r^\beta}}$.
\end{corollary}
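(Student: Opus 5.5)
The first inequality in \eqref{lin_eq_hsma} is pure monotonicity. Since $\hsma{e}{\beta}{h}{r}\subset\hsm{e}$ (and more generally $\subset S$), Lemma \ref{not_lem_comp}\ref{not_list_lemcomp1} gives $\reach{t}{\hsma{e}{\beta}{h}{r}}\subset\reach{t}{S}$ for all $t$. Hence the untruncated arrival times in $\ov{B}_h(re)$ are ordered. The truncation levels agree because $\dist(re,\hsma{e}{\beta}{h}{r})=\dist(re,\hsm{e})=r$ (the disk touches the hyperplane $\partial\hsm{e}$ on the axis, and $C_\beta r^\beta\gg r$). This also holds for $S$, since $\hsma{e}{\beta}{h}{r}\subset S$ forces $\dist(re,S)\le r$, and $S\subset\hsm{e}\cup(\Rd\setminus\ov{B}_{C_\beta r^\beta})$ forces $\dist(re,S)\ge r$. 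So the truncated times are ordered as well.

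For the second inequality, I would compare a stable approximation of the disk, evolved under the increased forcing, with the evolution of $S$ (or $\hsm{e}$) under $(A,F)$, using Proposition \ref{inf_prop_half} in the half-ball with radius $R(r)=C_\beta r^\beta$ and inner radius $2r$.

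\textbf{Step 1 (approximation becomes $h$ ahead).} On $\Espeed{h}(\ov{B}_{C_\beta r^\beta})$, the disk $\hsma{e}{\beta}{h}{r}$ is $h$-fat with boundary in $\ov{B}_{C_\beta r^\beta}$. It therefore has a stable $(h,\cstable)$-approximation $S_h$, and $\vmineff$ is an effective minimum speed. After time $\cstable h$ we have $\hsma{e}{\beta}{h}{r}\hsubs\reach{\cstable h}{S_h}$. Lemma \ref{stable_lem_vmin-iterative} with $n=3$ then gives $\hsma{e}{\beta}{h}{r}+3\ov{B}_h\hsubs\reach{\tau}{S_h}$ with $\tau=(\cstable+3\vmineff^{-1})h$. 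This set covers $(\partial\hsp{e}+\ov{B}_h)\cap\ov{B}_R\cap\hsp{e}$ and $(S+\ov{B}_h)\cap\ov{B}_R\cap\hsp{e}$, because inside $\ov{B}_R$ we have $S\subset\hsm{e}$. The enclosure issue from ``$\hsubs$'' must be handled. I would define $S_1$ as the union of $\reach{\tau}{S_h}$ with the holes it $h$-envelops, or argue that holes of width below $2h$ do not matter for arrival times in $\ov{B}_h(re)$ via Lemma \ref{veff_lem_henv}. Then $S_1=\reach{\tau}{S_h}$ is stable by Lemma \ref{stable_lem_preserv}.

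\textbf{Step 2 (far-field comparison).} Apply Proposition \ref{inf_prop_half} with $S_2=S$ (or $\hsm{e}$), $R=C_\beta r^\beta$, inner radius $2r$, $\delF=r^{-\beta/2+3/4}$, and $T=\timebound{h}{re}{\hsm{e}}\le Cr$. The stability hypothesis is with respect to $(A,F)$, while the set evolves under $(A,F+\delF)$. Stability under $(A,F)$ implies stability under $(A,F+\delF)$, so evolving $S_h$ under the larger forcing only enlarges it. I would actually take $S_1=\reach[(A,F)]{\tau}{S_h}$ and note that $\reach[(A,F+\delF)]{t}{S_1}\subset\reach[(A,F+\delF)]{t+\tau}{S_h}\subset\reach[(A,F+\delF)]{t+\tau}{\hsma{e}{\beta}{h}{r}}$, using comparison for sub/supersolutions with ordered forcings.

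The condition \eqref{inf_eq_Thalf} reads $T\lesssim\min\{R,\,R^{2/3}h^{4/3},\,R^{2/3}\delF^{4/3}\}$. With $R\sim r^\beta$, $\beta\ge 3/2$ and $h\ge h_0\ge1$, the first two terms are at least $r^{\beta}\gtrsim r$. The third is $C_\beta^{2/3}r^{2\beta/3}r^{(-2\beta/3+1)}=C_\beta^{2/3}r$, so choosing $C_\beta$ large handles all three. The conclusion is $\reach[(A,F)]{t}{S}\cap\ov{B}_{2r}\cap\hsp{e}\subset\reach[(A,F+\delF)]{t+\tau}{\hsma{e}{\beta}{h}{r}}$ for $t\le T$. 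Since $\ov{B}_h(re)\subset\ov{B}_{2r}\cap\hsp{e}$, any arrival of $S$ in the target area at time $t\le T$ yields arrival of the disk by $t+\tau$. Whenever $S$'s truncated time is the truncation value $T$, the bound also holds, because the disk's truncated time is at most its truncation level, which equals the same $T$. This gives the second inequality.

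The main obstacle is Step 1. The difficulty is matching the exact hypotheses of Proposition \ref{inf_prop_half}: genuine containment (not merely $h$-envelopment) of $(S+\ov{B}_h)\cap\ov{B}_R\cap\hsp{e}$ and of the boundary strip in a stable set. Verifying the effective-speed region condition of Lemma \ref{stable_lem_vmin-iterative} near the rim of the disk is part of this.
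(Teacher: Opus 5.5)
Your proposal is correct and follows essentially the same route as the paper: comparison (Lemma \ref{not_lem_comp}) plus Lemma \ref{veff_lem_henv} for the first inequality. For the second, a stable $(h,\cstable)$-approximation is pushed forward by $(\cstable+3\vmineff^{-1})h$ via Lemma \ref{stable_lem_vmin-iterative}, holes of width below $2h$ are filled, Proposition \ref{inf_prop_half} is applied, and the result is converted back to arrival times through Lemma \ref{veff_lem_henv}. Your choice to evolve $S_h$ under $(A,F)$ before invoking Proposition \ref{inf_prop_half}, and only afterwards pass to $(A,F+\delF)$ by ordered-forcing comparison, is slightly cleaner than the paper's write-up, because it gives stability with respect to $(A,F)$ exactly as that proposition requires.
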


\begin{proof}
We only show the proof for $S=\hsm{e}$, it is the same for any other closed set $S$ with $\hsma{e}{\beta}{h}{r}\subset S \subset \hsm{e}\cup\pp{\Rd\setminus\ov{B}_{C_{\beta}r^\beta}}$.

Via the comparison principle from Lemma \ref{not_lem_comp} we know that for all $t\geq 0$ there holds $\reach[(A,F)]{t}{\hsma{e}{\beta}{h}{r}}\subset \reach[(A,F)]{t}{\hsm{e}}$.
The first inequality in \eqref{lin_eq_hsma} hence follows via Lemma \ref{veff_lem_henv}. 
For the second inequality, due to Lemma \ref{veff_lem_henv} it is sufficient if we can show that 
\begin{equation}\label{lin_eq_reComp}
	\reach[(A,F)]{t}{\hsm{e}}\cap\ov{B}_{2r}\cap\hsp{e}	\hsubs		\reach[\pp{A,F+r^{-\frac{\beta}{2}+\frac{3}{4}}}]{t+\pp{\cstable+3\vmineff^{-1}} h}{\hsma{e}{\beta}{h}{r}}
	\quad\,\,\text{for all } t\leq \timebound{h}{re}{\hsm{e}}.
\end{equation}
Since $\hsma{e}{\beta}{h}{r}$ is $h$-fat and $(A,F)\in\Espeed{h}\pp{\ov{B}_{C_{\beta}r^\beta}}$, Assumption \ref{veff_aP_star} guarantees that there is a stable $(h,\cstable)$-approximation $S_h$ of $\hsma{e}{\beta}{h}{r}$ with respect to $(A,F)$, for which the minimum effective speed applies.
On the one hand, since $S_h\subset \hsma{e}{\beta}{h}{r}$, by the comparison principle from Lemma \ref{not_lem_comp} there holds 
\begin{equation*}
	\reach[\pp{A,F+r^{-\frac{\beta}{2}+\frac{3}{4}}}]{t}{S_h}	\subset	\reach[\pp{A,F+r^{-\frac{\beta}{2}+\frac{3}{4}}}]{t}{\hsma{e}{\beta}{h}{r}}
	\qquad\text{for all }t\geq 0.
\end{equation*}
On the other hand, due to the definition of $\hsma{e}{\beta}{h}{r}$, with $\hsma{e}{\beta}{h}{r} \hsubs \reach[\pp{A,F}]{\cstable h}{S_h}$ and with the minimum effective speed as in Lemma \ref{stable_lem_vmin-iterative}  we have
\begin{equation*}
	\pp{\partial\hsp{e}+\ov{B}_h}\cap \ov{B}_{C_{\beta}r^\beta}\cap\hsp{e}
	\subset	\pp{\hsma{e}{\beta}{h}{r}+3\ov{B}_h}	\cap \ov{B}_{C_{\beta}r^\beta}
	\hsubs \reach[\pp{A,F+r^{-\frac{\beta}{2}+\frac{3}{4}}}]{\pp{\cstable+3\vmineff^{-1}} h}{S_h}.
\end{equation*}
Filling potential holes of width smaller than $2h$ in $\reach[\pp{A,F+r^{-\frac{\beta}{2}+\frac{3}{4}}}]{\pp{\cstable+3\vmineff^{-1}} h}{S_h}$ as in the proof of Lemma \ref{not_lem_comp}, we obtain a stable set which satisfies the conditions of Proposition \ref{inf_prop_half} as $S_1$ with $S_2=\hsm{e}$. Proposition \ref{inf_prop_half} with the parameters $(T(r,h), \wt{r}(r), h, R(r), \delF(r))$ chosen as above in the assumptions hence yields \eqref{lin_eq_reComp}, which concludes the proof.
\end{proof}

\subsection{Approximate sub-linearity, convergence of the averages and lower bounds}
We will now show that the expected values of the truncated arrival times for the half-space approximations are sub-linear up to a vanishing relative error as $r\rightarrow\infty$.

\begin{proposition}[Approximate sub-linearity of the truncated arrival times]\label{lin_prop_sub}
Assume that \ref{sett_aP_stat}, \ref{sett_aP_fin}, \ref{sett_aP_Aconst} and \ref{veff_aP_star} hold.
Let $\beta\geq\frac{3}{2}$ with $C_\beta>0$ and $\hsma{e}{\beta}{h}{r}$ for $e\in\Sd$ and $r,h\geq h_0$ as defined in Corollary \ref{lin_cor_bd}.

Let $r_2\geq r_1\geq h_0$ and $h_3\geq h_2\geq h_1\geq h_0$ with $\log(h_2)\geq c\log(r_2)$ for some $c>0$.
Then there exists $C=C(c,\beta,\data)>0$ such that
\begin{multline}\label{lin_eq_sub}
	\EV{\metb{h_3}{\pp{r_1+r_2}e}{\hsma{e}{\beta}{h_3}{r_1+r_2}}}\\
		\leq		\EV{\metb{h_1}{r_1 e}{\hsma{e}{\beta}{h_1}{r_1}}}
				+ \EV{\metb{h_2}{r_2 e}{\hsma{e}{\beta}{h_2}{r_2}}}				
				+ C\sqrt{h_2r_2}\log\pp{r_1}.
\end{multline}
\end{proposition}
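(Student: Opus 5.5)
We follow the two-step strategy sketched in Subsection \ref{subsub_str_lin}. Write $D\coloneqq\hsma{e}{\beta}{h_3}{r_1+r_2}$, and let $D'\coloneqq r_2e+\hsma{e}{\beta}{h_1}{r_1}$ be the smaller disk shifted up by $r_2$. Choose a finite family of points $z_k\in\partial\hsp{e}$, $k=1,\dots,K$, spaced at distance $\sim h_2$, such that the target areas $\ov{B}_{h_2}(z_k+r_2e)$ cover the core of $D'$. More precisely, every point of $D'$ should lie in $\ov{B}_{h_2}(z_k+r_2e)$ for some $k$. Then $K\lesssim (r_1^\beta/h_2)^{d-1}$, and the translated disks $z_k+\hsma{e}{\beta}{h_2}{r_2}$ lie inside $D$ up to the $h$-envelopment slack, which uses $C_\beta(r_1+r_2)^\beta\gg C_\beta r_1^\beta+C_\beta r_2^\beta$. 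Let $\tau_k\coloneqq\metb{h_2}{z_k+r_2e}{z_k+\hsma{e}{\beta}{h_2}{r_2}}$ and $\tau\coloneqq\max_k\tau_k$.

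\textbf{Step 1 (deterministic comparison).} On the event $\Espeed{h_3}(\ov{B}_{C r_2^\beta+\ldots})$ we show two things. First, $\reach{\tau}{D}$ contains, up to $h_2$-envelopment and a delay $O(h_2)$, all target balls, and hence $h$-envelops $D'$. This uses the comparison principle Lemma \ref{not_lem_comp}, stability of the stable approximation of $D$, and iterated effective speed Lemma \ref{stable_lem_vmin-iterative} to fill the $h_2$-gaps in time $O(\vmineff^{-1}h_2)$. Second, by the semigroup property and Lemma \ref{veff_lem_henv},
\[
\metb{h_3}{(r_1+r_2)e}{D}\le \tau + Ch_2 + \metb{h_1}{(r_1+r_2)e}{D'},
\]
where $h_3\ge h_1$ only helps. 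The truncations must be handled as well: on the good event the truncation never kicks in (Lemma \ref{stable_lem_TvsEspeed}). On the complement we bound everything by $\timebound{h_3}{}{}\lesssim r_2$ times the failure probability, which is $\exp(-c h_2^{\ratespeed})$ times a polynomial in $r_2$. This is negligible because $\log h_2\ge c\log r_2$.

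\textbf{Step 2 (taking expectations).} By stationarity \ref{sett_aP_stat}, $\EV{\metb{h_1}{(r_1+r_2)e}{D'}}$ equals the expected arrival time for the untranslated disk. Moreover, $\EV{\tau_k}=\EV{\metb{h_2}{r_2e}{\hsma{e}{\beta}{h_2}{r_2}}}\eqqcolon\mu$ for all $k$. To control the maximum, write
\[
\EV{\tau}\le\mu+\lambda_0+\sum_k\EV{(\tau_k-\mu-\lambda_0)_+}
\]
and bound each term using Proposition \ref{fluct_prop}; its hypothesis \eqref{fluct_eq_hrateA} holds since $r_2\le h_2^{1/c}$. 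Choosing $\lambda_0=C\sqrt{h_2r_2}\log K\lesssim\sqrt{h_2r_2}\log r_1$ makes the sub-exponential part sum to $O(\sqrt{h_2r_2})$. The capped regime of \eqref{fluct_eq_PMfluct} contributes at most $K\,r_2\exp(-c h_2^{\ratespeed}/\log r_2)$, which is again negligible by $\log h_2\ge c\log r_2$. The residual $Ch_2$ is absorbed into $\sqrt{h_2r_2}$ since $h_2\le r_2$ may be assumed; otherwise the claim is trivial. Here $\log K\sim\beta(d-1)\log r_1$ is where $\log r_1$ enters, though $r_2\ge r_1$ means some care is needed to avoid $\log r_2$, e.g. via $\log h_2\gtrsim\log r_2$ and the relation between the scales.

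The main obstacle is Step 1. There the disks $D$ and $D'$ are not stable, so the intermediate set reached at time $\tau$ is only an $h$-envelopment, not an inclusion. The restart must therefore be done from a stable approximation, which requires a careful use of Lemma \ref{not_lem_comp}\ref{not_list_lemcomp2}, the extra time $\cstable h$, and verifying that all relevant sets' boundaries lie in the region where the event $\Espeed{h}$ is assumed. The probabilistic Step 2 is a routine union bound plus the tail estimates.
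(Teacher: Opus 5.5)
Your decomposition is the paper's. You restart from the shifted disk $D'=r_2e+\hsma{e}{\beta}{h_1}{r_1}$ once it is $h_2$-enveloped by the evolution of $D$, bound that time by arrival times of translated disks $z_k+\hsma{e}{\beta}{h_2}{r_2}\subset D$ at grid targets, and control the maximum via a union bound, stationarity and Proposition \ref{fluct_prop}. The paper uses a grid of spacing $\sim\lambda$, paying an extra $+\lambda$ and integrating the union bound over $\lambda\geq\Lambda$. Your fixed $h_2$-grid together with $\EV{\tau}\le\mu+\lambda_0+\sum_k\EV{(\tau_k-\mu-\lambda_0)_+}$ is equivalent and slightly cleaner bookkeeping. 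In it $\log K\lesssim\log(r_1^\beta/h_2)\lesssim\log r_1$ automatically, so no extra care is needed to avoid $\log r_2$. That term only appears in the capped regime of \eqref{fluct_eq_PMfluct} and in the failure probability of the good event, and both are negligible because $\log h_2\geq c\log r_2$.

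Three points in your Step 1 need correcting.

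\emph{The scale of the good event.} It must be $\Espeed{h_2}\pp{\ov{B}_{C_\beta r_3^\beta}}$ with $r_3=r_1+r_2$, not an event at scale $h_3$; it applies to $D$, which is $h_3$-fat and hence $h_2$-fat. Assumption \ref{veff_aP_star} relates no two scales. An $h_3$-event only yields stable $(h_3,\cstable)$-approximations and an effective speed on scale $h_3$, i.e.\ $h_3$-envelopments and delays $O(h_3)$. These are not absorbed into $C\sqrt{h_2r_2}\log(r_1)$, since $h_3\geq h_2$ is otherwise unrestricted.

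\emph{The $h_1$-truncation.} The claim ``on the good event the truncation never kicks in'' does not cover $\metb{h_1}{r_3e}{D'}$. Lemma \ref{stable_lem_TvsEspeed} at scale $h_1$ needs $\Espeed{h_1}$, whose failure probability $\sim (r_1^\beta/h_1)^d\exp(-\cspeed h_1^{\ratespeed})$ is not small, because nothing like $\log h_1\gtrsim\log r_1$ is assumed here. Your inequality $\metb{h_3}{r_3e}{D}\le\tau+Ch_2+\metb{h_1}{r_3e}{D'}$ nevertheless holds on the $h_2$-event alone. The evolved stable $(h_2,\cstable)$-approximation of $D$ $h_2$-envelops $D'$ at time $\tau+Ch_2$. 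By iterating the effective speed (Lemma \ref{stable_lem_vmin-iterative}), it reaches $\ov{B}_{h_2}(r_3e)$ within a further time $\vmineff^{-1}r_1+Ch_2$. So the left side is also at most $\tau+Ch_2+\vmineff^{-1}r_1$, while $\timebound{h_1}{r_3e}{D'}\geq\vmineff^{-1}r_1$. The paper states \eqref{lin_eq_Np} directly with the truncated $h_1$-term and leaves this step implicit.

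\emph{Covering $D'$.} $D'$ has thickness $2h_1$, so the balls $\ov{B}_{h_2}(z_k+r_2e)$ need not cover it when $h_1>h_2/2$. Instead, $h_2$-envelop $\ov{B}_{3h_2}(z_k+r_2e)$; this costs only another $O(\vmineff^{-1}h_2)$, which your gap-filling step already allows.
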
 

\begin{proof}
We set $r_3=r_1+r_2$.
The time until the set $\hsma{e}{\beta}{h_3}{r_3}$ effectively reaches $\ov{B}_{h_3}(r_3 e)$ is clearly less than adding the time it takes for $\hsma{e}{\beta}{h_3}{r_3}$ to fully envelop $r_2 e+\hsma{e}{\beta}{h_1}{r_1}$ to the time it then takes until the `intermediate set' $r_2 e+\hsma{e}{\beta}{h_1}{r_1}$ reaches $\ov{B}_{h_1}(r_3 e)$, see Figure \ref{lin_fig_Nplus}.

\begin{figure}[h]
\centering
\includegraphics[width=0.9\textwidth]{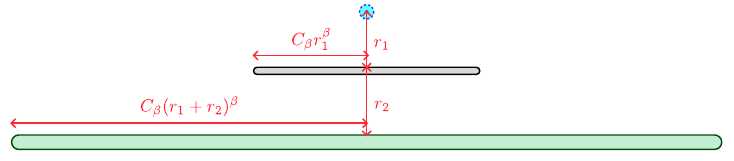}\\\vspace{10pt}
\includegraphics[width=0.9\textwidth]{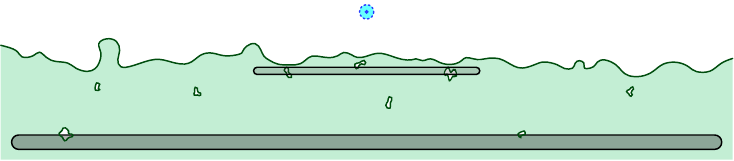}\\\vspace{10pt}
\includegraphics[width=0.9\textwidth]{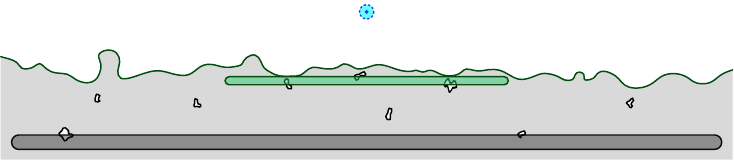}
\caption{	An illustration of the argument for the sub-linearity of the arrival times.
			Top: The large disk is depicted in green with its target area in blue, and the smaller `intermediate' disk is depicted in grey.
			Center: The evolution of the large disk at time $\Tfull$ (depicted in green) $h_2$-envelopes the smaller disk. 
			Bottom: At time $\Tfull$, we compare the evolved set with the smaller intermediate disk (depicted in green).}
\label{lin_fig_Nplus}
\end{figure}

More precisely, we denote the time by which $r_2 e+\hsma{e}{\beta}{h_1}{r_1}$ is fully $h_2$-enveloped with
\begin{equation*}
	\Tfull \coloneqq	\inf\Bp{t\geq 0\,:\,	r_2 e+\hsma{e}{\beta}{h_1}{r_1}\hsubs[h_2]\reach{t}{\hsma{e}{\beta}{h_3}{r_3}}}.
\end{equation*}
Then for $(A,F)\in\Espeed{h_2}\pp{\ov{B}_{C_{\beta}r_3^\beta}}$ there holds 
\begin{equation}\label{lin_eq_Np}
	\met[(A,F)]{h_3}{r_3e}{\hsma{e}{\beta}{h_3}{r_3}}	\leq \Tfull[(A,F)] + \cstable h_2 + \met[(A,F)]{h_1}{r_3e}{r_2 e+\hsma{e}{\beta}{h_1}{r_1}}
\end{equation}
because with $S_{h_2}$ denoting the stable $(h_2,\cstable)$-approximation of $\hsma{e}{\beta}{h_3}{r_3}$ from \ref{veff_aP_star} and with the comparison principle as in Lemma \ref{not_lem_comp}, for all $t\geq 0$ we have
\begin{equation*}
	\reach[(A,F)]{t}{r_2 e+\hsma{e}{\beta}{h_1}{r_1}}
	\hsubs[h_2]	\reach[(A,F)]{t+\Tfull[(A,F)] +\cstable h_2}{S_{h_2}}	
	\subset \reach[(A,F)]{t+\Tfull[(A,F)] +\cstable h_2}{\hsma{e}{\beta}{h_3}{r_3}}.
\end{equation*}
In order to obtain \eqref{lin_eq_sub} from \eqref{lin_eq_Np}, two steps remain:
\begin{enumerate}[label=\arabic*.), left=12pt]
\item	We will show that the expected value of $\Tfull$ on $\Espeed{h_2}\pp{\ov{B}_{C_{\beta}r_3^\beta}}$ can be bounded by the expected value of $\metb{h_2}{r_2 e}{\hsma{e}{\beta}{h_2}{r_2}}$ up to an error essentially corresponding to the fluctuation bounds from Proposition \ref{fluct_prop}.
\item	We take the expected value of \eqref{lin_eq_Np}, use the stationarity from Assumption \ref{sett_aP_stat} and manage the error on the complement of $\Espeed{h_2}\pp{\ov{B}_{C_{\beta}r_3^\beta}}$.
\end{enumerate}

\underline{Step 1: Bounding $\Tfull$.}
For $(A,F)\in\Espeed{h_2}\pp{\ov{B}_{C_{\beta}r_3^\beta}}$, due to the minimum effective speed we can essentially reduce $\Tfull$ to the maximum of a finite amount of truncated arrival times in target areas of radius $h_2$:
Instead of every point in the upper disk $r_2 e+\hsma{e}{\beta}{h_1}{r_1}$, in a first step we only care about reaching every point on a grid.
Whenever the evolution of the large disk $\hsma{e}{\beta}{h_3}{r_3}$ reaches the target area around one of these points, its stable $(h_2,\cstable)$-approximation from Assumption \ref{veff_aP_star} will reach the area just $\cstable h_2$ later.
Since $(A,F)$ admits $\vmineff$ as an effective minimum speed for the stable approximation, $\Bp{x}$ will be $h_2$-enveloped $\vmineff^{-1}h_2$ later.
Once all gridpoints are $h_2$-enveloped, the whole upper disk will be $h_2$-enveloped after $\vmineff^{-1}h_2$ and another time step proportional to the gridsize by the evolution of the stable approximation and thus due to the comparison principle also by the evolution of the large disk.
Hence, for $\lambda>0$ and $\Z^{d-1}_e$ denoting $\Z^{d-1}$ placed in the hyperplane $\Bp{x\in\Rd\,:\,x\cdot e=0}$, we obtain
\begin{align*}
	\Tfull[(A,F)]	
	&\leq	\sup\Bp{\met[(A,F)]{h_2}{x+r_2e}{\hsma{e}{\beta}{h_3}{r_3}} \,:\,x\in \frac{\vmineff}{\sqrt{d}}\lambda\Z^{d-1}_e\cap \hsma{e}{\beta}{h_1}{r_1}}\\
	&\qquad		+ \pp{\cstable+2\vmineff^{-1}} h_2 + \lambda.
\end{align*}
For each target point $x+r_2e$, we can further replace the large disk with the smaller disks $x+\hsma{e}{\beta}{h_2}{r_2}\subset \hsma{e}{\beta}{h_3}{r_3}$, see Figure \ref{lin_fig_Nplus_smallDisk}.
\begin{figure}[h]
\centering
\includegraphics[width=0.9\textwidth]{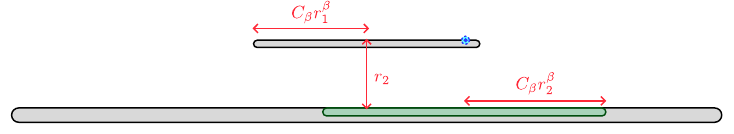}
\caption{	An illustration of the comparison argument from Step 1 of the proof for the sub-linearity of the arrival times.
			A small disk (depicted in green) is placed within the large disk $\hsma{e}{\beta}{h_3}{r_3}$. 
			Its target area (depicted in blue) covers part of the intermediate disk.}
\label{lin_fig_Nplus_smallDisk}
\end{figure}

The comparison principle as in Lemma \ref{not_lem_comp} yields
\begin{align*}	
	\Tfull[(A,F)]		
	&\leq	\sup\Bp{\met[(A,F)]{h_2}{x+r_2e}{x+\hsma{e}{\beta}{h_2}{r_2}} \,:\,x\in \frac{\vmineff}{\sqrt{d}}\lambda\Z^{d-1}_e\cap \hsma{e}{\beta}{h_1}{r_1}}\\
	&\qquad		+ \pp{\cstable+2\vmineff^{-1}} h_2 + \lambda.
\end{align*}
We set
\begin{equation*}
	T_2=\EV{\metb{h_2}{r_2 e}{\hsma{e}{\beta}{h_2}{r_2}}}.
\end{equation*}
Using this bound on $\Tfull[(A,F)]$, for $\lambda>\pp{\cstable+2\vmineff^{-1}} h_2$ and with a union bound and the stationarity of the probability distribution from Assumption \ref{sett_aP_stat}, we obtain
\begin{align*}
	&\PM{\Tfull\charfun[\Espeed{h_2}\pp{\ov{B}_{C_{\beta}r_3^\beta}}]-T_2\geq 3\lambda}\\
	&\,\,	\leq		\PM{\sup_{x\in \frac{\vmineff\lambda}{\sqrt{d}}\Z^{d-1}_e\cap \hsma{e}{\beta}{h_1}{r_1}}\metb{h_1}{x+r_2e}{x+\hsma{e}{\beta}{h_2}{r_2}} -T_2\geq \lambda}\\
	&\,\,	\leq		\sum_{x\in \frac{\vmineff\lambda}{\sqrt{d}}\Z^{d-1}_e\cap \hsma{e}{\beta}{h_1}{r_1}}
					\PM{\metb{h_2}{r_2 e}{\hsma{e}{\beta}{h_2}{r_2}}-T_2\geq \lambda}\\
	&\,\,	\leq		C\frac{r_1^{\beta(d-1)}}{\lambda^{d-1}}
					\PM{\metb{h_2}{r_2 e}{\hsma{e}{\beta}{h_2}{r_2}}-\EV{\metb{h_2}{r_2 e}{\hsma{e}{\beta}{h_2}{r_2}}}\geq  \lambda}
\end{align*}
with $C=C(\beta,\data)>0$.
Inserting the fluctuation bounds from Proposition \ref{fluct_prop}, for $\Lambda\geq \pp{\cstable+2\vmineff^{-1}} h_2$ we now obtain
\begin{align*}
	&\EV{\Tfull\charfun[\Espeed{h_2}\pp{\ov{B}_{C_{\beta}r_3^\beta}}]}-\EV{\metb{h_2}{r_2 e}{\hsma{e}{\beta}{h_2}{r_2}}}\\
	&\,\,	=		\int_0^\infty	3\PM{	\Tfull\charfun[\Espeed{h_2}\pp{\ov{B}_{C_{\beta}r_3^\beta}}]
											-\EV{\metb{h_2}{r_2 e}{\hsma{e}{\beta}{h_2}{r_2}}}\geq 3\lambda}\m\lambda\\		
	&\,\,	\leq		3\Lambda
			+	\int_\Lambda^\infty
					C\frac{r_1^{\beta(d-1)}}{\lambda^{d-1}}\exp\pp{-\frac{\cflu\lambda}{4\sqrt{h_2r_2}}}
				\m\lambda\\
	&\,\,\quad\qquad\qquad\qquad\qquad\qquad\qquad
			+	\int_{h_2^{\ratespeed}\frac{\sqrt{h_2r_2}}{\log\pp{r_2}}}^{Cr_2}
					C\frac{r_1^{\beta(d-1)}}{\lambda^{d-1}}\exp\pp{-C^{-1}\frac{h_2^{\ratespeed}}{\log\pp{r_2}}}
				\m\lambda\\
	&\,\,	\leq		3\Lambda	
			+	C\frac{r_1^{\beta(d-1)}}{\Lambda^{d-1}}\sqrt{h_2r_2}\int_{\frac{\cflu\Lambda}{4\sqrt{h_2r_2}}}^\infty\exp\pp{-s} \m s\\
	&\,\,\quad\qquad\qquad\qquad\qquad\qquad\qquad
			+	C\frac{r_1^{\beta(d-1)}\log(r_2)^{d-1}}{h_2^{\pp{\ratespeed+\frac{1}{2}}(d-1)}r_2^{\frac{d-1}{2}-1}}\exp\pp{-C^{-1}\frac{h_2^{\ratespeed}}{\log\pp{r_2}}}\\
	&\,\,	\leq		3\Lambda	
			+	C\frac{r_1^{\beta(d-1)}}{\Lambda^{d-1}}\sqrt{h_2r_2}\exp\pp{-\frac{\cflu\Lambda}{4\sqrt{h_2r_2}}}
			+	C\exp\pp{-C^{-1}\frac{h_2^{\ratespeed}}{\log\pp{r_2}}},
\end{align*}
where in the last step we used that $\log(h_2)\geq c\log(r_2)$ to absorb the polynomial terms in front.
Now choosing $\Lambda=\frac{4\sqrt{h_2r_2}}{\cflu}\log\pp{r_1^{\beta(d-1)}}$, we have
\begin{multline*}
	\EV{\Tfull\charfun[\Espeed{h_2}\pp{\ov{B}_{C_{\beta}r_3^\beta}}]}-\EV{\metb{h_2}{r_2 e}{\hsma{e}{\beta}{h_2}{r_2}}}\\
	\leq		C\sqrt{h_2r_2}\log\pp{r_1}
			+	C\exp\pp{-C^{-1}\frac{h_2^{\ratespeed}}{\log\pp{r_2}}}
\end{multline*}
for some $C=C(c,\beta,\data)>0$.

\underline{Step 2: Managing the error on the complement of $\Espeed{h}$.}
Restricting to $\Espeed{h_2}\pp{\ov{B}_{C_{\beta}r_3^\beta}}$ at the cost of an error term, applying the estimate \eqref{lin_eq_Np}, and using the stationarity of the probability distribution, after absorbing polynomial prefactors since $\log(h_2)\geq c\log(r_2)\geq c\log\pp{\frac{r_3}{2}}$ we obtain 
\begin{align*}
	&\EV{\metb{h_3}{r_3e}{\hsma{e}{\beta}{h_3}{r_3}}}\\
	&\quad	\leq		\EV{\met{h_3}{r_3e}{\hsma{e}{\beta}{h_3}{r_3}}\charfun[\Espeed{h_1}\pp{\ov{B}_{C_{\beta}r_3^\beta}}]} + Cr_3\pp{1-\PM{\Espeed{h_2}\pp{\ov{B}_{C_{\beta}r_3^\beta}}}}\\
	&\quad	\leq		\EV{\Tfull\charfun[\Espeed{h_2}\pp{\ov{B}_{C_{\beta}r_3^\beta}}]}
					+ Ch_2
					+ \EV{\metb{h_1}{r_1e}{\hsma{e}{\beta}{h_1}{r_1}}}
					+ C\exp\pp{-C^{-1}h_2^{\ratespeed}}.
\end{align*}
Plugging in the final inequality for $\EV{\Tfull\charfun[\Espeed{h_2}\pp{\ov{B}_{C_{\beta}r_3^\beta}}]}$ from Step 1 and absorbing all smaller terms into $C\sqrt{h_2r_2}\log\pp{r_1}$ yields the result. 
\end{proof}

Having obtained the approximate sub-linearity, we are now able to prove the existence of the limit for the averaged arrival times and provide a rate such that the limit can not be approached any slower from below.

\begin{corollary}\label{lin_cor_exUpB}
Assume that \ref{sett_aP_stat}, \ref{sett_aP_fin}, and \ref{sett_aP_Aconst} hold.
Let $\beta\geq\frac{3}{2}$ and $0<\vtheta<1$. 
Let $\hsma{e}{\beta}{h}{r}$ be  the approximation of the half-space from Corollary \ref{lin_cor_bd} with $e\in\Sd$ and $r,h\geq h_0$.
For $r\geq h_0^{-\vtheta}$ we set $h(r)\coloneqq r^{\vtheta}$.

Then for all $\delF\in\R$ with the coefficient field $(A,F+\delF)$ satisfying \ref{veff_aP_star} the limit
\begin{equation*}
	\limm{\delF}{\beta}{e}\coloneqq \lim_{r\rightarrow\infty}	\frac{1}{r}\EV{\metb[(A,F+\delF)]{h(r)}{re}{\hsma{e}{\beta}{h(r)}{r}}}
\end{equation*}
exists, is independent of $\vtheta$ and there exists $C=C(\delF,\beta,\vtheta,\data)>0$ such that for all $r\geq h_0^{-\vtheta}$ there holds
\begin{equation}\label{lin_eq_limmUB}
	\limm{\delF}{\beta}{e}	\leq 	\frac{1}{r}\EV{\metb[(A,F+\delF)]{h(r)}{re}{\hsma{e}{\beta}{h(r)}{r}}} + C\frac{\log(r)}{\sqrt{r^{1-\vtheta}}}.
\end{equation}
The dependence of $C$ on $\delF$ is due to replacing $C_{1F}$ from \eqref{sett_eq_assF} by $C_{1F}+\abs{\delF}$ in $\data$, and hence the same constant can be used for all $\delF'$ with $\abs{\delF'}\leq \abs{\delF}$.
\end{corollary}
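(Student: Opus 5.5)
Set $a(r)\coloneqq \EV{\metb[(A,F+\delF)]{h(r)}{re}{\hsma{e}{\beta}{h(r)}{r}}}$. The plan is a quantitative Fekete-type argument along dyadic scales, driven by Proposition \ref{lin_prop_sub} applied to $(A,F+\delF)$. That proposition needs \ref{sett_aP_stat}, \ref{sett_aP_fin}, \ref{sett_aP_Aconst} and \ref{veff_aP_star}; adding the constant $\delF$ preserves the first three, and the last is assumed. The bounds \eqref{sett_eq_assF} hold with $C_{1F}+\abs{\delF}$, which accounts for the stated dependence of $C$ on $\delF$.

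First, take $r_1=r_2=r$ and $h_1=h_2=h_3=h(r)=r^{\vtheta}$, so that $\log h_2=\vtheta\log r_2$ and the hypothesis of Proposition \ref{lin_prop_sub} holds with $c=\vtheta$. This gives
\begin{equation*}
	\EV{\metb{h(r)}{2re}{\hsma{e}{\beta}{h(r)}{2r}}}\leq 2a(r)+C r^{\frac{1+\vtheta}{2}}\log r .
\end{equation*}
The left-hand side uses the target scale $h(r)$ rather than $h(2r)$. To pass to $a(2r)$, I would apply Proposition \ref{lin_prop_sub} once more with $h_3=h(2r)\geq h_2=h_1=h(r)$, which is permitted because only $h_3\geq h_2\geq h_1$ is required. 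Alternatively, I would compare directly: enlarging the disk thickness to $h(2r)$ makes it contain the thinner disk, and enlarging the target ball only decreases the arrival time. Either route costs at most $C h(2r)$, which is absorbed into the error term. So $a(2r)\leq 2a(r)+Cr^{\frac{1+\vtheta}{2}}\log r$.

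Second, divide by $2r$. With $b(r)=a(r)/r$ this gives $b(2r)\leq b(r)+C r^{-\frac{1-\vtheta}{2}}\log r$. Summing over $r,2r,4r,\dots$ yields a geometric series, so $b(2^k r)\leq b(r)+C r^{-\frac{1-\vtheta}{2}}\log r$ uniformly in $k$. Also $b$ is bounded, since $0\leq a(r)\leq \timebound{h}{re}{\cdot}\leq Cr$. Hence $\limsup_k b(2^kr)\leq b(r)+Cr^{-\frac{1-\vtheta}{2}}\log r$ for every $r$. Taking $\liminf$ over $r$ shows that $\limsup_k b(2^k r)$ equals $\liminf_{r}b(r)=:L$ up to a vanishing error, for every starting $r$.

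Third, and this is the main obstacle, I need convergence along all $r$ and not only along dyadic sequences. I would use Proposition \ref{lin_prop_sub} with unequal scales: write any large $s$ as $s=r_1+r_2$ with $r_2\geq r_1$ comparable, or decompose $s$ as a dyadic multiple of a base point plus a remainder. Subadditivity then controls $b(s)$ from above by $\liminf$ values plus $o(1)$, so $\limsup_s b(s)\leq L$ and the limit $\limm{\delF}{\beta}{e}=L$ exists. The condition $\log h_2\geq c\log r_2$ requires choosing $h_i=r_i^{\vtheta}$ with $r_1$ not too small relative to $r_2$, and mismatched $h$'s are handled as in the first step. Given convergence, the limit along $2^kr$ is $L$, so letting $k\to\infty$ in the second step gives \eqref{lin_eq_limmUB}.

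Finally, independence of $\vtheta$. For $\vtheta<\vtheta'$, the monotonicity of $\metb{h}{}{}$ in the target radius and in the disk thickness (comparison via Lemma \ref{not_lem_comp}) shows that the two averages differ by $O(r^{\vtheta'})=o(r)$. The truncation times $\timebound{h}{\cdot}{\cdot}$ differ only by $O(h)$, which is also $o(r)$. So the two limits coincide.
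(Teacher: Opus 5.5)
The gap is your Step 3, which carries the whole existence claim. Your $\vtheta$-independence argument has a second, smaller gap.

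Your first step is sound: one application of Proposition \ref{lin_prop_sub} with $h_1=h_2=h(r)$, $h_3=h(2r)$ already gives $a(2r)\le 2a(r)+Cr^{\frac{1+\vtheta}{2}}\log r$. So is the dyadic estimate in your second step. Once existence is known, letting $k\to\infty$ in that dyadic bound gives \eqref{lin_eq_limmUB}, exactly as in the paper.

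\textbf{Step 3.} Step 2 only shows that $\lim_k b(2^kr)$ exists for each fixed $r$ and that the infimum of these limits is $L$. Taking a $\liminf$ over $r$ does not give equality to $L$ ``for every starting $r$''. So everything rests on Step 3, which you flag as the main obstacle but do not carry out, and your suggested decompositions do not close it:
\begin{itemize}
\item A single comparable split $s=r_1+r_2$ bounds $b(s)$ by $b$ at other uncontrolled points.
\item A single split $s=2^kr+q$ leaves a remainder of size up to $s/2$, about which you only know boundedness.
\item Peeling off copies of a good base scale $r$ one at a time fails. The error in Proposition \ref{lin_prop_sub} is $C\sqrt{h_2r_2}\log r_1$ with $r_2$ the \emph{larger} piece. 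After $n$ peels the accumulated relative error is of order $n^{\frac{1+\vtheta}{2}}r^{-\frac{1-\vtheta}{2}}\log r$, which diverges.
\end{itemize}

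The missing idea is to arrange the splits so that each dyadic scale is charged only boundedly often. With $G(r)=a(r)/r$, the paper proves $G(ns)\le G(s)+C\frac{\log s}{\sqrt{s^{1-\vtheta}}}$ uniformly in \emph{all} $n\in\N$, by induction over the blocks $\{2^k+1,\dots,2^{k+1}\}$. For even $n$ it halves $ns$, at cost $C\frac{\log(ns)}{\sqrt{(ns)^{1-\vtheta}}}$ with no multiplicative accumulation. For odd $n$ it peels a single $s$ off the even multiple $(n-1)s$. The per-block costs are summable in $k$. For general $r\ge s$ it then writes $r=ns+q$ with a remainder $q=O(s)$ and uses $0\le G\le C$. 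This gives $G(r)\le G(s)+C\frac{\log s}{\sqrt{s^{1-\vtheta}}}+C\frac{s}{r}$, hence $\limsup_r G(r)\le\liminf_s G(s)$.

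\textbf{Independence of $\vtheta$.} Comparison alone does not give a two-sided $O(r^{\vtheta'})$ bound, for two reasons:
\begin{itemize}
\item The disks $\hsma{e}{\beta}{h}{r}$ and $\hsma{e}{\beta}{h'}{r}$ are not nested: the thicker one has the smaller top face, of radius $C_\beta r^\beta-2h'$.
\item Shrinking the target from $\ov{B}_{h'}(re)$ to $\ov{B}_h(re)$ can raise the truncated arrival time by order $r$ if obstacles wall off the smaller ball, since there is no actual minimum speed.
\end{itemize}
So no deterministic two-sided bound of that size holds.

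The paper uses \ref{veff_aP_star} here. On $E(r)=\Espeed{h_1(r)}\pp{\ov{B}_{C_\beta r^\beta}}$, the stable approximation of either disk propagates with the effective minimum speed. It therefore $h_1$-envelops the other disk, and reaches the smaller target ball, within time $C(h_1+h_2)$. Lemmas \ref{not_lem_comp} and \ref{veff_lem_henv} then give a two-sided $O(h_1+h_2)$ bound on $E(r)$. On the complement the truncation bounds everything by $Cr$, and $\PM{E(r)}\to 1$.

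The same non-nestedness also invalidates your ``direct comparison'' alternative in Step 1. Your primary route there is fine.
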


\begin{proof}
If Assumptions \ref{sett_aP_stat}, \ref{sett_aP_fin}, and \ref{sett_aP_Aconst} hold for the random variable $(A,F)$, then they clearly also hold for $(A,F+\delF)$.

We first comment on the independence of the limit $\limm{\delF}{\beta}{e}$ with respect to $\vtheta$.
The existence of the limit will be proven below.
Let $0<\vtheta_1<\vtheta_2<1$. 
Let $\limm[i]{\delF}{\beta}{e}$ denote the respective limit with $h(r)$ replaced with $h_i(r)=r^{\vtheta_i}$ for $i=1,2$.
Note that for $E(r)\coloneqq	\Espeed{h_1(r)}\pp{\ov{B}_{C_{\beta}r^\beta}}$ with the probability bounds from Assumption \ref{veff_aP_star} we have $\lim_{r\rightarrow \infty}\Pm{E(r)}= 1$ and hence due to the truncation of the arrival times $i=1,2$ we obtain
\begin{equation}\label{lin_eq_vtheta1}
	\limm[i]{\delF}{\beta}{e} = \lim_{r\rightarrow\infty}	\frac{1}{r}\EV{\metb[(A,F+\delF)]{h_i(r)}{re}{\hsma{e}{\beta}{h_i(r)}{r}}\charfun[E(r)]((A,F))}.
\end{equation}
For coefficients $(A,F)\in E(r)$, Assumption \ref{veff_aP_star} yields stable $(h_1(r),\cstable)$-approximations for the disks $\hsma{e}{\beta}{h_i(r)}{r}$ with $i=1,2$, for which $(A,F)$ and hence $(A,F+\delF)$ admit an effective minimum speed $\vmineff$ in $\ov{B}_{C_{\beta}r^\beta}$ on the scale $h_i(r)$.
Due to this effective minimum speed, the evolution of the stable approximation for a disk $\hsma{e}{\beta}{h_i(r)}{r}$ will $h_1$-envelop the other disk after a time step proportional to $h_1(r)+h_2(r)$.
The comparison principle from Lemma \ref{not_lem_comp} and the compatibility of the arrival times with ``$\hsubs[h_1]$'' from Lemma \ref{veff_lem_henv} yields 
\begin{multline*}
	\abs{\metb[(A,F+\delF)]{h_1(r)}{re}{\hsma{e}{\beta}{h_1(r)}{r}}
	-	\metb[(A,F+\delF)]{h_1(r)}{re}{\hsma{e}{\beta}{h_2(r)}{r}}}\\
	\leq C\pp{h_1(r)+h_2(r)}
\end{multline*}
for $C=C(\data)>0$.
We further replace the target area $re+\ov{B}_{h_1(r)}$ for the second arrival time with $re+\ov{B}_{h_2(r)}$, which yields an additional error proportional to $h_1(r)+h_2(r)$:
Clearly, the larger target area is reached first, but since the stable approximation expands with effective minimum speed $\vmineff$ on the scale $h_1(r)$, the smaller target area will be reached after an additional time step proportional to $h_1(r)+h_2(r)$.
Combining these bounds on the difference with \eqref{lin_eq_vtheta1}, we obtain
\begin{equation*}
	\abs{\limm[1]{\delF}{\beta}{e}-\limm[2]{\delF}{\beta}{e}}
	\leq		\lim_{r\rightarrow\infty}C \frac{h_1(r)+h_2(r)}{r}
	=		0,
\end{equation*}
which yields the independence of the limit $\limm{\delF}{\beta}{e}$ with respect to $\vtheta$.

We now prove the existence of the limit as well as \eqref{lin_eq_limmUB}.
For $r\geq h_0^{-\vtheta}$ we abbreviate
\begin{align*}
	G(r)\coloneqq \frac{1}{r}\EV{\metb[(A,F+\delF)]{h(r)}{re}{\hsma{e}{\beta}{h(r)}{r}}}.
\end{align*}
From Proposition \ref{lin_prop_sub} we have $C=C(\delF,\beta,\vtheta,\data)>0$, such that for $r\geq s$ we obtain
\begin{align}
	G(r)-G(s)
	&=	\frac{1}{r}\pp{rG(r)-sG(s)-(r-s)G(r-s)}
			- \frac{r-s}{r}\pp{G(s)-G(r-s)}\notag\\
	&	\leq		C\frac{\log(r)}{\sqrt{r^{1-\vtheta}}}
					+	\frac{r-s}{r}\pp{G(r-s)-G(s)}.\label{lin_eq_sublinAv}
\end{align}
The proof consists of 3 steps, only using the boundedness of $G(r)$ and \eqref{lin_eq_sublinAv}.
\begin{enumerate}[label=\arabic*.), left=12pt]
\item	For any $s\geq h_0^{-\vtheta}$ the sequence $\pp{G(ns)}_{n\in\N}$ is essentially bounded from above by $G(s)$ in the sense that there is some  $C=C(\delF,\beta,\vtheta,\data)>0$ such that
		\begin{align}\label{lin_eq_bddseq}
				G(ns)&\leq	G(s)+ C\frac{\log(s)}{\sqrt{s^{1-\vtheta}}}
			\qquad	\text{for all }n\in\N.	
		\end{align}
\item	We extend the bounds to all $r\geq s$ with the additional error decreasing for $r\rightarrow\infty$ with some  $C=C(\delF,\beta,\vtheta,\data)>0$ as
		\begin{align}\label{lin_eq_bddcont}
			G(r)		&\leq	G(s) + C\frac{\log(s)}{\sqrt{s^{1-\vtheta}}} + C\frac{s}{r}
			\qquad	\text{for all }r\geq s.		
		\end{align}
\item	The first step implies that the sequence $\pp{G(2^k s)}_{k\in\N}$ is essentially decreasing and hence converges.
		In combination with the second step, this implies the existence of $\lim_{r\rightarrow\infty}G(r)$.
\end{enumerate}
Finally, \eqref{lin_eq_limmUB} then follows from sending $n\rightarrow\infty$ in \eqref{lin_eq_bddseq}.

\underline{Step 1: $G(ns)$ is essentially bounded from above by $G(s)$.}
We will show \eqref{lin_eq_bddseq} by induction, keeping track of the constant to show that it stays bounded.
Assume that for some $k\in\Nz$ there is $C_k>0$ such that
\begin{align}\label{lin_eq_bddInd}
	G(ns)&\leq	G(s)+ C_k\frac{\log(s)}{\sqrt{s^{1-\vtheta}}}
			\qquad	\text{for all }n\in \Bp{1,\ldots,2^k}.	
\end{align}
This clearly holds for $k=0$ with $C_k=0$.
For any even $n\in \Bp{2^k+1,\ldots,2^{k+1}}$, with \eqref{lin_eq_sublinAv} and this assumption  we obtain
\begin{align*}
	G(ns)-G(s)	&=	G(ns)-G\pp{\frac{n}{2}s}+G\pp{\frac{n}{2}s}-G(s)\\
				&\leq	C\frac{\log(ns)}{\sqrt{n^{1-\vtheta}}\sqrt{s^{1-\vtheta}}} 
						+ C_k\frac{\log(s)}{\sqrt{s^{1-\vtheta}}}\\
				&\leq	\wt{C}_{k+1}\frac{\log(s)}{\sqrt{s^{1-\vtheta}}}
\end{align*}
with $\wt{C}_{k+1}=C_k+C\frac{\max\Bp{k,1}}{2^{\frac{1-\vtheta}{2}k}}$.

For any odd $n\in \Bp{2^k+1,\ldots,2^{k+1}}$, with \eqref{lin_eq_sublinAv} and since $n-1\leq 2^{k+1}$ is even, we hence obtain
\begin{align*}
	G(ns)-G(s)	&\leq	C\frac{\log(ns)}{\sqrt{n^{1-\vtheta}}\sqrt{s^{1-\vtheta}}} 
						+	\frac{n-1}{n}\pp{G((n-1)s)-G(s)}\\
				&\leq	C\frac{\log(ns)}{\sqrt{n^{1-\vtheta}}\sqrt{s^{1-\vtheta}}} 
						+ \wt{C}_{k+1}\frac{\log(s)}{\sqrt{s^{1-\vtheta}}},
\end{align*}
which yields \eqref{lin_eq_bddInd} for $k+1$ with $C_{k+1}=C_k+C\frac{\max\Bp{k,1}}{2^{\frac{1-\vtheta}{2}k}}$.
Since the increasing sequence $(C_k)_{k\in\N}$ converges, we hence obtain \eqref{lin_eq_bddseq} with the constant given by $\lim_{k\rightarrow\infty}C_k$.

\underline{Step 2: Bounding $G(r)$ for all $r\geq s$.}
For $r\geq s$, let $n\in\N$ such that $ns\leq r<(n+1)s$.
Then with \eqref{lin_eq_sublinAv} and \eqref{lin_eq_bddseq} from Step 1 we have
\begin{align*}
	G(r)-G(s)	&=		G(r)-G(ns)+G(ns)-G(s)\\
				&\leq	C\frac{\log(r)}{\sqrt{r^{1-\vtheta}}}	+	\frac{r-ns}{r}\pp{G(r-ns)-G(ns)}
						+C\frac{\log(s)}{\sqrt{s^{1-\vtheta}}}\\
				&\leq	C\frac{\log(s)}{\sqrt{s^{1-\vtheta}}} + C\frac{s}{r},
\end{align*}
where in the last step we used $r-ns\leq s$ by the choice of $n$ and that $0\leq G(\cdot)\leq C(\data)$.

\underline{Step 3: Existence of $\lim_{r\rightarrow\infty}G(r)$.} 
Applying Step 1 for each $r_k\coloneqq 2^ks$,  for the sequence $\pp{G(r_k)}_{k\in\N}$ we obtain that
\begin{equation*}
	G(r_\ell)\leq	G(r_k)+ C\frac{\log(r_k)}{\sqrt{r_k^{1-\vtheta}}}
	\qquad	\text{for all }\ell> k.
\end{equation*}
In particular, the sequence $b_n\coloneqq G(r_k)-\sum_{\ell=1}^{k-1}C\frac{\log(r_\ell)}{\sqrt{r_\ell^{1-\vtheta}}}$ is decreasing.
Since it is bounded from below, it converges and thus also $\pp{G(r_k)}_{k\in\N}$.

It remains to prove that also away from this sequence $G(r)$ converges to the same limit for $r\rightarrow \infty$. 
Let $\ov{G}=\lim_{k\rightarrow\infty}G(r_k)$. 
Let $\eps>0$.
There exists $k_0\in\N$ such that
\begin{align*}
	\abs{G(r_k)-\ov{G}}			&\leq \frac{\epsilon}{3},
	&	C\frac{\log(r_k)}{r_k}	&\leq \frac{\epsilon}{3}
	&&\text{for all }k\geq k_0,
\end{align*}
with the constant $C$ as in \eqref{lin_eq_bddcont}.
Let $r\geq \max\Bp{r_{k_0}, 3C\eps^{-1}r_{k_0}}$ with $C$ as in  \eqref{lin_eq_bddcont}.
On the one hand, \eqref{lin_eq_bddcont} yields
\begin{equation*}
	G(r)	\leq		G(r_{k_0})+ C\frac{\log(r_{k_0})}{r_{k_0}} + C\frac{r_{k_0}}{r}
		\leq		\ov{G}+\frac{\eps}{3}+\frac{\eps}{3}+\frac{\eps}{3}
		\leq		\ov{G}+\eps.
\end{equation*}
On the other hand, there is $k\geq k_0$ such that $r_k\geq \max\Bp{r, 3C\eps^{-1}r}$, for which \eqref{lin_eq_bddcont} yields
\begin{equation*}
	G(r)	\geq		G(r_{k})	- C\frac{\log(r)}{r} - C\frac{r}{r_k}
		\geq		\ov{G}-\frac{\eps}{3}-\frac{\eps}{3}-\frac{\eps}{3}
		\geq		\ov{G}-\eps.
\end{equation*}
We have thus shown that $G(r)$ converges to $\ov{G}$ for $r\rightarrow\infty$, concluding the last step of the proof.
\end{proof}

\subsection{Approximate super-linearity and upper bounds for the convergence}
Next, we will obtain the super-linearity of the truncated arrival times, albeit at the cost of changing the forcing by a small constant due to applying Corollary \ref{lin_cor_bd}.

\begin{proposition}[Approximate super-linearity of the truncated arrival times]\label{lin_prop_sup}
Assume that \ref{sett_aP_stat}, \ref{sett_aP_fin}, \ref{sett_aP_Aconst} and \ref{veff_aP_star} hold.
Let $\beta\geq\frac{3}{2}$ with $C_\beta>0$ and $\hsma{e}{\beta}{h}{r}$ for $e\in\Sd$ and $r,h\geq h_0$ as in Corollary \ref{lin_cor_bd}.

Let $r_2\geq r_1\geq h_0$ and $h_3\geq h_2\geq h_1\geq h_0$ with $\frac{\log(h_2)}{\log(r_2)}, \frac{\log(h_1)}{\log(r_1)}\geq c$ and for some $c>0$.
Then there exists $C=C(c,\beta,\data)>0$ such that for $f(r)=r^{-\frac{\beta}{2}+\frac{3}{4}}$ there holds
\begin{align}\label{lin_eq_sup}
	&\EV{\metb[(A,F)]{h_3}{\pp{r_1+r_2}e}{\hsma{e}{\beta}{h_3}{r_1+r_2}}}\notag\\
	&\qquad	\geq		\EV{\metb[\pp{A,F+f(r_1)}]{h_1}{r_1 e}{\hsma{e}{\beta}{h_1}{r_1}}}
					+ \EV{\metb[\pp{A,F+f(r_2)}]{h_2}{r_2 e}{\hsma{e}{\beta}{h_2}{r_2}}}	\notag\\			
	&\qquad\quad		- C\sqrt{h_3r_2}\log\pp{r_2}.
\end{align}
\end{proposition}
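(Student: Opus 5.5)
We follow the scheme sketched in Subsection~\ref{subsub_str_lin}. Set $r_3=r_1+r_2$ and work on $E\coloneqq\Espeed{h_1}\pp{\ov{B}_{C_\beta r_3^\beta}}$, which by Assumption~\ref{veff_aP_star} and $\log h_1\geq c\log r_1$ has complement probability $\lesssim\exp(-C^{-1}h_1^{\ratespeed})$. Since arrival times are truncated at $\lesssim r_3$, restricting to $E$ costs only a negligible error.

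The inequality points the other way from Proposition~\ref{lin_prop_sub}, so we need a lower bound on the arrival time of the large disk. Consider the intermediate disk $D\coloneqq r_2e+\hsma{e}{\beta}{h_1}{r_1}$ and the first time
\begin{equation*}
\Tfirst\coloneqq\inf\Bp{t\geq0\,:\,\reach{t}{\hsma{e}{\beta}{h_3}{r_3}}\cap\pp{r_2e+\partial\hsp{e}+\ov{B}_{h_2}}\cap\ov{B}_{C_\beta r_1^\beta}(r_2e)\neq\emptyset},
\end{equation*}
at which the evolution first touches a slab around the hyperplane through $r_2e$, locally near $D$. We restart at $\Tfirst$. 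Up to time $\Tfirst$, the evolved large disk lies below this hyperplane inside $\ov{B}_{C_\beta r_1^\beta}(r_2e)$, up to a $h_2$-neighbourhood. The second half of Corollary~\ref{lin_cor_bd} (any $S$ with $\hsma{e}{\beta}{h}{r}\subset S\subset\hsm{e}\cup(\Rd\setminus\ov{B}_{C_\beta r^\beta})$), applied at the touching point after translation, then gives
\begin{equation*}
\met{h_3}{r_3e}{\hsma{e}{\beta}{h_3}{r_3}}\geq\Tfirst+\met[(A,F+f(r_1))]{h_1}{r_3e}{D}-Ch_2,
\end{equation*}
with $S=\reach{\Tfirst}{\hsma{e}{\beta}{h_3}{r_3}}$ translated and slightly shifted down by $Ch_2$. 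That shift is paid for by the effective minimum speed, which makes the outer set $h$-envelop the shifted disk within time $Ch_2$. The touching point is random, so we use the minimum over a grid of translated copies of $D$ with spacing $\sim h_2$. This replaces the second term by a minimum of $\lesssim r_1^{\beta(d-1)}$ arrival times from translated disks $x+D$. Stationarity together with the lower tail bounds of Proposition~\ref{fluct_prop} then resolves this minimum in expectation at cost $C\sqrt{h_1r_1}\log r_1$.

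It remains to bound $\Tfirst$ from below. On $E$ the first touching occurs near some grid point $x+r_2e$, with $x$ in a grid of spacing $\sim h_2$ in the hyperplane $\Bp{x\cdot e=0}\cap\ov{B}_{C_\beta r_3^\beta}$. Up to that time, the large disk $\hsma{e}{\beta}{h_3}{r_3}$ lies, within the half-ball $x+\ov{B}_{C_\beta r_2^\beta}\cap\hsp{e}$, below a set which the translated small disk $x+\hsma{e}{\beta}{h_2}{r_2}$ covers after time $\sim h_2$ by the effective minimum speed. Corollary~\ref{lin_cor_bd} again (with $f(r_2)$) gives $\Tfirst\geq\min_x\met[(A,F+f(r_2))]{h_2}{x+r_2e}{x+\hsma{e}{\beta}{h_2}{r_2}}-Ch_2$. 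The number of grid points is polynomial in $r_3$, so a union bound with the lower tail in Proposition~\ref{fluct_prop} yields
\begin{equation*}
\EV{\Tfirst\charfun[E]}\geq\EV{\metb[(A,F+f(r_2))]{h_2}{r_2e}{\hsma{e}{\beta}{h_2}{r_2}}}-C\sqrt{h_3r_2}\log r_2,
\end{equation*}
choosing the integration threshold $\Lambda\sim\sqrt{h_2r_2}\log r_2$ exactly as in Step~1 of Proposition~\ref{lin_prop_sub}. The capped tail regime is absorbed using $\log h_2\geq c\log r_2$. Adding the two expectations completes the argument.

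The main obstacle is the geometric justification of the restart step. We must check that at time $\Tfirst$ the evolved large set is genuinely behind the translated disks in the relevant half-balls of radius $C_\beta r_i^\beta$, so that the hypotheses of Proposition~\ref{inf_prop_half} (as packaged in Corollary~\ref{lin_cor_bd}) hold. The difficulty is that the large set may be far ahead outside these half-balls, and only the forcing increase $f(r_i)$ compensates for this, on the time horizon $T\lesssim r_i$.
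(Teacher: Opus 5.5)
Your architecture matches the paper's. You take a first contact time $\Tfirst$ with an intermediate level and restart from $D$ with forcing $F+f(r_1)$ via Proposition~\ref{inf_prop_half}. You then bound $\Tfirst$ from below through grid translates $x+\hsma{e}{\beta}{h_2}{r_2}$ with forcing $F+f(r_2)$ (Corollary~\ref{lin_cor_bd}), a union bound and the lower tails of Proposition~\ref{fluct_prop}; that last part is essentially the paper's argument. The gap is exactly the step you flag as the main obstacle, and the forcing increase does not close it. Your $\Tfirst$ only records contact with the slab inside $\ov{B}_{C_\beta r_1^\beta}(r_2e)$. So nothing prevents the front from crossing height $r_2$ outside that ball and entering $r_2e+(\ov{B}_{C_\beta r_1^\beta}\cap\hsp{e})$ through its curved boundary above the slab. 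When $C_\beta r_1^\beta\ll r_3$ (e.g.\ $r_1$ bounded, $r_2\to\infty$), it can even reach the centre of that half-ball this way within the relevant time window. Then the hypothesis $(S_2+\ov{B}_h)\cap\ov{B}_R\cap\hsp{e}\subset S_1$ of Proposition~\ref{inf_prop_half} fails at the restart. The increase $f(r_1)$ only compensates for influence from outside the half-ball \emph{after} the restart, assuming that hypothesis holds at $\Tfirst$. The global speed bound of Lemma~\ref{not_lem_vmax} cannot exclude the lateral entry either: the initial disk is only at distance $\approx r_2$ from the half-ball, which is within reach on the time scale $\sim r_3$.

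What is missing is a localized finite-speed argument, and it forces you to enlarge the contact region in the definition of $\Tfirst$. Your grid already ranges over $\ov{B}_{C_\beta r_3^\beta}$, so this costs nothing in the lower bound. The paper takes $\Tfirst$ to be the first time the evolution meets the wide disk $r_2e+\hsma{e}{\beta}{h_2}{r_3}$ of radius $C_\beta r_3^\beta$. It then applies Theorem~\ref{comp_thm_comp} on the bounded domain $r_2e+(\ov{B}_{C_\beta r_3^\beta}\cap\hsp{e})$ to the supersolution $\ur{\hsma{e}{\beta}{h_3}{r_3}}$ and to the solution whose zero sublevel set is a stable approximation of the upper arc $r_2e+(\partial B_{C_\beta r_3^\beta}\cap\hsp{e})$. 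The boundary ordering holds on the flat part up to $\Tfirst$ by definition and on the arc by stability. Hence inside this half-ball the evolved disk stays within distance $Ch_2+\vmax t$ of the arc, and since $\Tfirst\lesssim r_3\ll C_\beta r_3^\beta-C_\beta r_1^\beta$, it cannot enter $r_2e+(\ov{B}_{C_\beta r_1^\beta}\cap\hsp{e})$ before $\Tfirst$. The restart then uses the fixed disk $D$ and target $r_3e$ wherever first contact occurs, so your minimum over translates $x+D$ is unnecessary. As stated it is also not meaningful: moving $D$ without the target breaks the half-space-approximation geometry, and moving the target changes the quantity being bounded. A smaller point concerns your single event $\Espeed{h_1}\pp{\ov{B}_{C_\beta r_3^\beta}}$. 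Its failure probability carries the prefactor $(r_3^\beta/h_1)^d$ and is multiplied by truncation times $\sim r_3$, which $\log h_1\geq c\log r_1$ does not absorb when $r_1\ll r_2$. The paper instead uses $\Espeed{h_2}$-events on the regions of size $C_\beta r_3^\beta$ and an $\Espeed{h_1}$-event only on $r_2e+\ov{B}_{C_\beta r_1^\beta}$.
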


\begin{proof}
We set $r_3=r_1+r_2$.
We claim that the time until ${\hsma{e}{\beta}{h_3}{r_3}}$ reaches $\ov{B}_{h_3}(r_3 e)$ is larger than adding the time it takes until ${\hsma{e}{\beta}{h_3}{r_3}}$ first reaches any point of $r_2e+\hsma{e}{\beta}{h_2}{r_3}$ to the time it takes until the smaller `intermediate set' $r_2e+\hsma{e}{\beta}{h_1}{r_1}$ reaches $\ov{B}_{h_1}(r_3 e)$ with a slightly increased forcing, see Figure \ref{lin_fig_Nminus}.

\begin{figure}[h]
\centering
\includegraphics[width=0.9\textwidth]{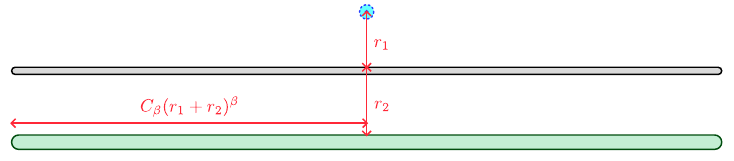}\\\vspace{10pt}
\includegraphics[width=0.9\textwidth]{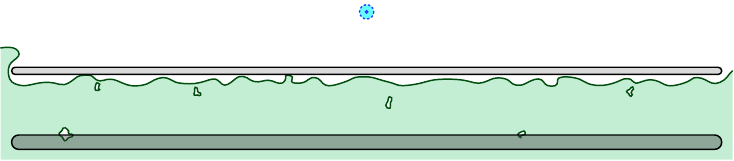}\\\vspace{10pt}
\includegraphics[width=0.9\textwidth]{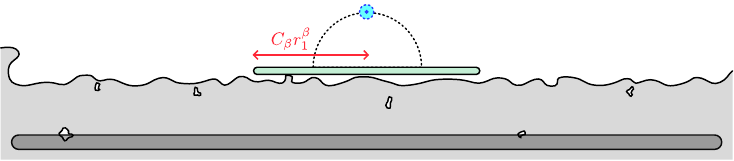}

\caption{	An illustration of the argument for the super-linearity of arrival times.
			Top: The large disk is depicted in green with its target area in blue, and the `intermediate' translation of the large disk is depicted in grey.
			Center: The evolution of the large disk at time $\Tfirst$ (depicted in green) reaches the intermediate disk.
			Bottom: At time $\Tfirst$, we replace the evolved set with the smaller intermediate disk (depicted in green), which stays ahead the evolution of the large disk within the dotted half-ball when increasing the forcing by a constant as in Proposition \ref{inf_prop_half}.}
\label{lin_fig_Nminus}
\end{figure}

More precisely, we denote the time when the disk $r_2e+\hsma{e}{\beta}{h_2}{r_3}$ is first reached with
\begin{equation*}
	\Tfirst\coloneqq	\inf\Bp{t\geq 0\,:\,	\pp{r_2e+\hsma{e}{\beta}{h_2}{r_3}}\cap\reach{t}{\hsma{e}{\beta}{h_3}{r_3}}\neq \emptyset }.
\end{equation*}
We will show that for $(A,F)\in \Espeed{h_2}\pp{r_2e+\ov{B}_{C_\beta r_3^\beta}}\cap\Espeed{h_1}\pp{r_2e+\ov{B}_{C_\beta r_1^\beta}}$ there holds
\begin{multline}\label{lin_eq_Nm}
	\met[(A,F)]{h_3}{r_3e}{\hsma{e}{\beta}{h_3}{r_3}}\\
	\geq		\Tfirst[(A,F)]	+	\met[\pp{A,F+f(r_1)}]{h_1}{r_3 e}{r_2e+\hsma{e}{\beta}{h_1}{r_1}} - 2\pp{\cstable+2\vmineff^{-1}}h_3.
\end{multline}
In order to obtain \eqref{lin_eq_sup}, we will follow these three steps:
\begin{enumerate}[label=\arabic*.), left=12pt]
\item	We will prove \eqref{lin_eq_Nm}.
\item	We will show that the expected value of $\Tfirst$ restricted to $\Espeed{h_2}\pp{\ov{B}_{2C_{\beta}r_3^\beta}}$ can be bounded from below by the expected value of $\met[(A,F+f(r_2))]{h_2}{r_2 e}{\hsma{e}{\beta}{h_2}{r_2}}$ up to an error corresponding to the fluctuation bounds from Proposition \ref{fluct_prop}.
\item	We take the expected value of \eqref{lin_eq_Nm}, use the stationarity from Assumption \ref{sett_aP_stat} and manage the error on the complement of $\Espeed{h_2}\pp{\ov{B}_{C_{\beta}r_3^\beta}}$.
\end{enumerate}

\underline{Step 1: Proving \eqref{lin_eq_Nm}.}
We claim that for $(A,F)\in \Espeed{h_2}\pp{r_2e+\ov{B}_{C_\beta r_3^\beta}}$ and for $r_3\geq C=C(\beta,\data)>0$ large enough there holds
\begin{equation}\label{lin_eq_upNm}
	\pp{r_2e+\pp{\ov{B}_{C_\beta r_1^\beta}\cap\hsp{e}}}	\cap		\reach[(A,F)]{t}{\hsma{e}{\beta}{h_3}{r_3}}	=	\emptyset
	\qquad\text{for all }0\leq t\leq \Tfirst[(A,F)].
\end{equation}
Assuming that this claim holds and if in addition $(A,F)\in\Espeed{h_1}\pp{r_2e+\ov{B}_{C_\beta r_1^\beta}}$, similar to the proof of  Corollary \ref{lin_cor_bd} we apply Proposition \ref{inf_prop_half} in $r_2e+\ov{B}_{C_\beta r_1^\beta}$:
For the set $S_1$ we choose the evolution of the stable $(h_1,\cstable)$-approximation of $r_2e+\hsma{e}{\beta}{h_1}{r_1}$ after time $\pp{\cstable+3\vmineff^{-1}}h_1$ with the holes of width smaller than $2h_1$ filled as in the proof of the comparison principle from Lemma \ref{not_lem_comp}. 
This comparison principle further implies
\begin{align*}
	S_1	&\subset	\reach[(A,F+f(r_1))]{\pp{\cstable+3\vmineff^{-1}}h_1}{r_2e+\hsma{e}{\beta}{h_1}{r_1}}.
\intertext{For the set $S_2$ we choose}
	S_2	&=	\reach[(A,F)]{\Tfirst[(A,F)]}{\hsma{e}{\beta}{h_3}{r_3}}.
\end{align*}
The parameters are given by
\begin{align*}
		r	&=	r_1	,
	&	R	&=	C_\beta r_1^\beta,
	&	\delF	&=	f(r_1) = r_1^{-\frac{\beta}{2}+\frac{3}{4}},
	&	T	&=	\timebound{h_1}{r_3 e}{r_2e + \hsma{e}{\beta}{h_1}{r_1}}.
\end{align*}
The choice of $C_\beta$ from Corollary \ref{lin_cor_bd} guarantees that \eqref{inf_eq_T} holds for these parameters, which together with the claim \eqref{lin_eq_upNm} allows us to apply Proposition \ref{inf_prop_half}. 
We hence obtain
\begin{multline*}
	\pp{r_2e+\pp{\ov{B}_{C_\beta r_1^\beta}\cap\hsp{e}}}	\cap	\reach[(A,F)]{t+\Tfirst[(A,F)]}{\hsma{e}{\beta}{h_3}{r_3}}\\
	\subset\reach[(A,F+f(r_1))]{t+\pp{\cstable+3\vmineff^{-1}}h_1}{r_2e+\hsma{e}{\beta}{h_1}{r_1}}
\end{multline*}
for $0\leq t\leq \timebound{h_1}{r_3e}{r_2e+\hsma{e}{\beta}{h_1}{r_1}}$, the time by which the set is guaranteed to reach $\ov{B}_{h_1}(r_3 e)$.
This implies \eqref{lin_eq_Nm} when considering that if the set $r_2e+\hsma{e}{\beta}{h_1}{r_1}$ reaches $\ov{B}_{h_3}(r_3e)$, it will reach $\ov{B}_{h_1}(r_3e)$ at most $\cstable h_1+\vmineff^{-1}h_3$ later due to the effective minimum speed of the stable approximation from Assumption \ref{veff_aP_star}.

To obtain \eqref{lin_eq_upNm} we compare -- within the domain $r_2e+\pp{\ov{B}_{C_\beta r_3^\beta}\cap\hsp{e}}$ -- the evolution of $\hsma{e}{\beta}{h_3}{r_3}$ to a stable approximation of the upper arc of the boundary domain given by 
	$r_2e+\pp{\partial B_{C_\beta r_3^\beta}\cap\hsp{e}}$, 
which exists since $(A,F)\in \Espeed{h_2}\pp{r_2e+\ov{B}_{C_\beta r_3^\beta}}$, see Figure \ref{lin_fig_Nminus_comp}.
\begin{figure}[h]
\centering
\includegraphics[width=0.75\textwidth]{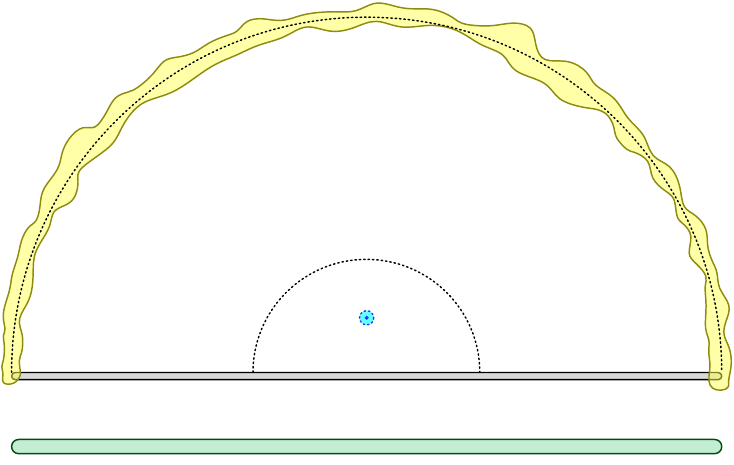}
\caption{An illustration of the comparison argument from Step 1 of the proof for the super-linearity of arrival times.
		The stable approximation of the upper arc of the half-ball is depicted in yellow.
		In this half-ball, we compare its evolution to the evolution of the large disk (depicted in green). 
		Until time $\Tfirst$, by definition this evolution does not reach the intermediate disk (depicted in grey), which makes up the straight part of the half-ball's boundary.}
\label{lin_fig_Nminus_comp}
\end{figure}
To be precise, we apply the comparison principle as in Theorem \ref{comp_thm_comp} on the bounded domain 
	$r_2e+\pp{\ov{B}_{C_\beta r_3^\beta}\cap\hsp{e}}$ 
to the supersolution $\ur{\hsma{e}{\beta}{h_3}{r_3}}$ from Lemma \ref{not_lem_ur} and a solution $\wt{u}\leq 1$ to \eqref{intro_eq_uls} as in Definition \ref{intro_def_met} with the stable approximation of the arc corresponding to the initial $0$-sublevel set. 
Clearly $\ur{\hsma{e}{\beta}{h_3}{r_3}}=1\geq \wt{u}$ for $t=0$ on the domain. 
Regarding the boundary, on the flat part we have $\ur{\hsma{e}{\beta}{h_3}{r_3}}=1\geq \wt{u}$ for $0\leq t\leq \Tfirst$ due to the definition of $\Tfirst$.
On the arc, due to the stability of the approximation we have $\wt{u}=0\leq \ur{\hsma{e}{\beta}{h_3}{r_3}}$.
Taking into account the maximum speed of propagation from Lemma \ref{not_lem_vmax}, the comparison principle yields
\begin{multline*}
	\reach[(A,F)]{t}{\hsma{e}{\beta}{h_3}{r_3}}\cap \pp{r_2e+\pp{\ov{B}_{C_\beta r_3^\beta}\cap\hsp{e}}}\\
	\subset	r_2e + \Rd\setminus\pp{C_\beta r_3^\beta-C h_2-\vmax t}\ov{B}_1
\end{multline*}
for $0\leq t\leq \Tfirst$.
Because $\Tfirst[(A,F)]\leq Cr_2\leq Cr_3<<C_\beta r_3^\beta$ for $(A,F)\in \Espeed{h_3}\pp{r_2e+\ov{B}_{C_\beta r_3^\beta}}$ and $r_3$ large enough, this yields \eqref{lin_eq_upNm} and hence \eqref{lin_eq_Nm}.

\underline{Step 2: Bounding $\Tfirst$.}
For $(A,F)\in\Espeed{h_2}\pp{\ov{B}_{2C_{\beta}r_3^\beta}}$, due to the minimum effective speed we can replace $\Tfirst$ with the minimum of a finite amount of truncated arrival times at the cost of a small error.
Considering only points on a grid within the upper disk, whenever any point of the disk is reached, the closest grid point will be reached just shortly afterwards: 
The first point of contact will be reached $\cstable h_2$ later by the stable approximation of $\hsma{e}{\beta}{h_3}{r_3}$, which will then reach the closest grid point at the rate of the effective minimum speed.
That is, for $\lambda>0$ and $\Z^{d-1}_e$ denoting $\Z^{d-1}$ placed in the hyperplane $\Bp{x\in\Rd\,:\,x\cdot e=0}$, we have
\begin{align*}
	\Tfirst[(A,F)]	
	&\geq	\inf\Bp{\met[(A,F)]{h_2}{x+r_2e}{\hsma{e}{\beta}{h_3}{r_3}}	\,:\,	x\in \frac{\vmineff}{\sqrt{d}}\lambda\Z^{d-1}_e\cap \hsma{e}{\beta}{h_2}{r_3}}\\
	&\quad	-\pp{\cstable+\vmineff^{-1} }h_2-\lambda\\
	&\geq	\inf\Bp{\met[(A,F+f(r_2))]{h_2}{x+r_2e}{x+\hsma{e}{\beta}{h_2}{r_2}}	\,:\,	x\in \frac{\vmineff}{\sqrt{d}}\lambda\Z^{d-1}_e\cap \hsma{e}{\beta}{h_2}{r_3}}\\
	&\quad	-2\pp{\cstable+2\vmineff^{-1} }h_2-\lambda,
\end{align*}
where for the second inequality Corollary \ref{lin_cor_bd} allows us to replace the large disk with the smaller disks $x+\hsma{e}{\beta}{h_2}{r_2}$ placed as in Figure \ref{lin_fig_Nplus_smallDisk}, only with a larger upper disk. 
Abbreviating
\begin{equation*}
	T_2=\EV{\metb[(A,F+f(r_2))]{h_2}{r_2 e}{\hsma{e}{\beta}{h_2}{r_2}}},
\end{equation*}
for $\lambda\geq 2\pp{\cstable+2\vmineff^{-1}} h_2$ we thus obtain
\begin{align*}
	&\PM{\Tfirst\charfun[\Espeed{h_2}\pp{\ov{B}_{2C_{\beta}r_3^\beta}}]-T_2\leq -3\lambda}\\
	&\,\,	\leq		\Pm\Bigg[\inf_{x\in \frac{\vmineff\lambda}{\sqrt{d}}\Z^{d-1}_e\cap \hsma{e}{\beta}{h_2}{r_3}}\met[(A,F+f(r_2))]{h_2}{x+r_2e}{x+\hsma{e}{\beta}{h_2}{r_2}}\charfun[\Espeed{h_2}\pp{\ov{B}_{2C_{\beta}r_3^\beta}}]
					\leq T_2-\lambda\Bigg]\\
	&\,\,	\leq		C\frac{r_3^{\beta(d-1)}}{\lambda^{d-1}}
					\PM{\metb[(A,F+f(r_2))]{h_2}{r_2 e}{\hsma{e}{\beta}{h_2}{r_2}}-T_2\leq -\lambda}\\
	&\,\,\quad				+1-\PM{\Espeed{h_2}\pp{\ov{B}_{2C_{\beta}r_3^\beta}}}
\end{align*}
with $C=C(\beta,\data)>0$, where for the second inequality we used a union bound, the stationarity of the probability distribution from Assumption \ref{sett_aP_stat} and obtained an error term for loosing the restriction to $\Espeed{h_2}\pp{\ov{B}_{2C_{\beta}r_3^\beta}}$.
Applying the fluctuation bounds from Proposition \ref{fluct_prop} and with calculations analogous to Step 1 in the proof of Proposition \ref{lin_prop_sub} while absorbing the additional term $Cr_2\pp{1-\PM{\Espeed{h_2}\pp{\ov{B}_{2C_{\beta}r_3^\beta}}}}$, we obtain
\begin{multline*}
	\EV{\Tfirst\charfun[\Espeed{h_2}\pp{\ov{B}_{2C_{\beta}r_3^\beta}}]}-\EV{\metb[(A,F+f(r_2))]{h_2}{r_2 e}{\hsma{e}{\beta}{h_2}{r_2}}}\\
	\geq		-C\sqrt{h_2r_2}\log\pp{r_2}
			-	C\exp\pp{-C^{-1}\frac{h_2^{\ratespeed}}{\log\pp{r_2}}}
\end{multline*}
for some $C=C(c,\beta,\data)>0$.

\underline{Step 3: Managing the error on the complement of $\Espeed{h}$.}
Taking the expected value of the inequality \eqref{lin_eq_Nm}, using the stationarity of the probability distribution from Assumption \ref{sett_aP_stat}, applying Step 2 and writing $E=\Espeed{h_2}\pp{r_2e+\ov{B}_{C_\beta r_3^\beta}}\cap\Espeed{h_1}\pp{r_2e+\ov{B}_{C_\beta r_1^\beta}}$ we obtain 
\begin{align*}
	&\EV{\metb[(A,F)]{h_3}{r_3e}{\hsma{e}{\beta}{h_3}{r_3}}}\\
	&\quad\geq	\EV{\met[(A,F)]{h_3}{r_3e}{\hsma{e}{\beta}{h_3}{r_3}}\charfun[E]}\\
	&\quad\geq	\EV{\metb[(A,F+f(r_2))]{h_2}{r_2 e}{\hsma{e}{\beta}{h_2}{r_2}}}- C\sqrt{h_3r_2}\log\pp{r_2}	\\
	&\qquad		+	\EV{\met[\pp{A,F+f(r_1)}]{h_1}{r_1 e}{\hsma{e}{\beta}{h_1}{r_1}}\charfun[E]}\\
	&\quad\geq	\EV{\metb[(A,F+f(r_2))]{h_2}{r_2 e}{\hsma{e}{\beta}{h_2}{r_2}}}
				+\EV{\metb[\pp{A,F+f(r_1)}]{h_1}{r_1 e}{\hsma{e}{\beta}{h_1}{r_1}}}\\
	&\qquad				-C\sqrt{h_3r_2}\log\pp{r_2}
\end{align*}
for some $C=C(c,\beta,\data)$. 
For the second inequality, we also used that $Ch_3\leq C\sqrt{h_3r_2}$ since $h_3\leq r_3\leq 2 r_2$.
For the last inequality, we loosened the restriction to $E$ and used that $\frac{\log(h_1)}{\log(r_1)}\geq c$ to absorb the resulting error term, which is controlled by the bounds on $1-\PM{\Espeed{h_2}\pp{r_2e+\ov{B}_{C_\beta r_3^\beta}}}$ and $1-\PM{\Espeed{h_1}\pp{r_2e+\ov{B}_{C_\beta r_1^\beta}}}$ from Assumption \ref{veff_aP_star}. 
\end{proof}

Now that we have the approximate superlinearity, we can use this for the convergence of the averaged arrival times:
We obtain a rate such that the limit can not be approached any slower from above, albeit this only works if we slightly increase the forcing for the arrival times compared to the ones used for the limit process.

\begin{corollary}\label{lin_cor_LowB}
Assume that \ref{sett_aP_stat}, \ref{sett_aP_fin}, and \ref{sett_aP_Aconst} hold.
Let $\hsma{e}{\beta}{h}{r}$ be  the approximation of the half-space from Corollary \ref{lin_cor_bd} with $e\in\Sd$ and $r,h\geq h_0$.
Let $\beta>\frac{3}{2}$, $0<\vtheta<1$, $h(r)\coloneqq r^{\vtheta}$ and $\limm{\delF}{\beta}{e}$ as in Corollary \ref{lin_cor_exUpB} as well as $f(r)=r^{-\frac{\beta}{2}+\frac{3}{4}}$.

Let $\delF\in\R$ and suppose that Assumption \ref{veff_aP_star} is satisfied for all coefficient fields $(A,F+\delF+c)$ with $0\leq c \leq C_1 f(r)$, where $C_1=C_1(\beta)>0$ is determined in the proof below.
Then
\begin{equation*}
	\limm{\delF}{\beta}{e}	\geq 	\frac{1}{r}\EV{\metb[(A,F+\delF+C_1f(r))]{h(r)}{re}{\hsma{e}{\beta}{h(r)}{r}}} - C_2\frac{\log(r)}{\sqrt{r^{1-\vtheta}}}
\end{equation*}
with $C_2=C_2(\delF,\beta,\vtheta,\data)>0$.
The dependence of $C_2$ on $\delF$ is due to replacing $C_{1F}$ from \eqref{sett_eq_assF} by $C_{1F}+\abs{\delF}$ in $\data$, and hence the same constant can also be used for all $\delF'$ with $\abs{\delF'}\leq \abs{\delF}$.
\end{corollary}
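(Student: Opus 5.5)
We mirror the proof of Corollary \ref{lin_cor_exUpB}, now using the approximate super-linearity from Proposition \ref{lin_prop_sup} in place of Proposition \ref{lin_prop_sub}.

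For fixed $r$ we iterate Proposition \ref{lin_prop_sup} along the dyadic chain $r_k=2^kr$. Apply it with $r_1=r_2=r_k$, $r_1+r_2=r_{k+1}$, $h_1=h_2=h(r_k)$, $h_3=h(r_{k+1})$, with base forcing $F+\delF+c_k$. This gives
\begin{equation*}
	G_{c_k}(r_{k+1})\geq G_{c_k+f(r_k)}(r_k)-C\frac{\log(r_k)}{\sqrt{r_k^{1-\vtheta}}},
\end{equation*}
where $G_c(s)\coloneqq \frac1s\EV{\metb[(A,F+\delF+c)]{h(s)}{se}{\hsma{e}{\beta}{h(s)}{s}}}$.

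The chain is arranged so that the forcing increases as we go back down to scale $r$. Set $c_0=\sum_{j\geq0}f(r_j)$ and $c_k=\sum_{j\geq k}f(r_j)$, so that $c_k+f(r_k)=c_{k-1}$ after reindexing. Since $\beta>\frac32$, the sum is geometric and $c_0\leq C_1 f(r)$ with $C_1=(1-2^{-(\beta/2-3/4)})^{-1}$; this choice defines $C_1(\beta)$. Every intermediate forcing $\delF+c_k$ lies in $[\delF,\delF+C_1f(r)]$, so Assumption \ref{veff_aP_star} holds for each one by hypothesis. The constants of Proposition \ref{lin_prop_sup} and Proposition \ref{fluct_prop} are uniform, because $C_{1F}$ is only replaced by $C_{1F}+|\delF|+C_1f(h_0)$.

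Telescoping the inequalities from $k=0$ to $K-1$ gives
\begin{equation*}
	G_{c_K}(2^Kr)\geq G_{C_1f(r)}(r)-C\sum_{k\geq0}\frac{\log(2^kr)}{\sqrt{(2^kr)^{1-\vtheta}}}\geq G_{C_1f(r)}(r)-C_2\frac{\log r}{\sqrt{r^{1-\vtheta}}}.
\end{equation*}
The forcings in this chain must be nested consistently: the left side at step $k$ must match the right side at step $k+1$ up to reindexing. This is why $c_k$ is chosen as a tail sum rather than a head sum.

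It remains to let $K\to\infty$. The left side is $G_{c_K}(2^Kr)$ with $c_K\to0$, and we need $\liminf_K G_{c_K}(2^Kr)\leq \limm{\delF}{\beta}{e}$. Monotonicity of arrival times in the forcing gives $G_{c_K}\leq G_0$, by comparison, since a larger forcing means earlier arrival. Note that the truncation time $\timebound{}{}{}$ does not depend on the forcing. Hence $G_{c_K}(2^Kr)\leq G_0(2^Kr)\to\limm{\delF}{\beta}{e}$ by Corollary \ref{lin_cor_exUpB}, and this closes the argument.

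The main obstacle is this bookkeeping. The forcing must only increase along the chain, so that Proposition \ref{lin_prop_sup}, which increases the forcing on the smaller pieces, composes correctly. The summability of $f(2^kr)$, which needs $\beta>\frac32$, and of the fluctuation errors are exactly what make the estimate close.
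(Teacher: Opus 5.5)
Your proposal is correct and is essentially the paper's proof: the same dyadic chain of Proposition \ref{lin_prop_sup}, summable forcing increments $f(2^kr)$ (using $\beta>\frac32$), and monotonicity of the truncated arrival times in the forcing. The only difference is bookkeeping: the paper keeps the forcing exactly $\delF$ at the top scale (finite tail sums) and uses monotonicity at scale $r$ to pass to $\delF+C_1f(r)$, whereas you use infinite tail sums and apply monotonicity at the top scale; note also a small index slip, since at step $k$ the base forcing should be $F+\delF+c_{k+1}$ so that $c_{k+1}+f(r_k)=c_k$, which is what your telescoped inequality actually uses.
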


\begin{proof}
For $r\geq h_0^{-\vtheta}$ and $\delF\geq 0$ we abbreviate 
\begin{equation*}
	G_{\delF}(r)\coloneqq \frac{1}{r}\EV{\metb[(A,F+\delF)]{h(r)}{re}{\hsma{e}{\beta}{h(r)}{r}}}.
\end{equation*}
With Proposition \ref{lin_prop_sup}, for $k\in\Nz$ we obtain
\begin{align*}
	&G_{\delF}(2^{k+1} r)-G_{\delF+f(2^kr)}(2^k r)\\
	&\quad	=		\frac{1}{2^{k+1}r}\pp{2^{k+1}rG_{\delF}(2^{k+1}r)-2^{k}rG_{\delF+f(2^kr)}(2^kr)-2^krG_{\delF+f(2^k r)}(2^kr)}\\
	&\quad	\geq		-C\frac{\log(2^{k+1}r)}{\sqrt{(2^{k+1}r)^{1-\vtheta}}}\\
	&\quad	\geq		-C\frac{\max\Bp{k,1}}{2^{\frac{1-\vtheta}{2} k}}\frac{\log(r)}{\sqrt{r^{1-\vtheta}}}.
\end{align*}
Chaining this inequality, in particular increasing the forcing at every step, we obtain
\begin{align*}
	G_{\delF}(2^{k+1} r)-G_{\delF+\sum_{i=0}^k f(2^kr)}(r)
	\geq		-C\pp{\sum_{i=0}^k\frac{\max\Bp{k,1}}{2^{\frac{1-\vtheta}{2} k}}}\frac{\log(r)}{\sqrt{r^{1-\vtheta}}}.
\end{align*}
Note that the sum of the forcing increments converges and scales like $f(r)$:
Because we chose $\beta>\frac{3}{2}$, there is $C_1=C_1(\beta)>0$ such that
\begin{equation*}
	\sum_{i=0}^k f(2^kr)	=		r^{-\frac{\beta}{2}+\frac{3}{4}}\sum_{i=0}^k 2^{-\pp{\frac{\beta}{2}-\frac{3}{4}}k}
						\leq	 	C_1r^{-\frac{\beta}{2}+\frac{3}{4}}
						=		C_1f(r).
\end{equation*}
Since the arrival times decrease when increasing the forcing, we have $G_{\delF}(r)\geq G_{\delF'}(r)$ for any $0\leq\delF\leq \delF'$ and hence obtain
\begin{equation*}
	G_{\delF}(2^{k+1} r)	
	\geq		G_{\delF+C_1 f(r)}(r)	-	C\frac{\log(r)}{\sqrt{r^{1-\vtheta}}}.
\end{equation*}
From Corollary \ref{lin_cor_LowB} we know that $\limm{\delF}{\beta}{e}=\lim_{s\rightarrow\infty}G_{\delF}(s)$ exists.
Taking the limit for $k\rightarrow\infty$ in the inequality above thus concludes the proof.
\end{proof}

\subsection{Definition of the homogenized speed and comments on Assumption \ref{sett_aP_gen}}
Corollary \ref{lin_cor_exUpB} guarantees that the limit of the averaged arrival times for the half-space approximations exist.
This allows us to define our candidates for the homogenized speed for different disk width scalings.

\begin{definition}[The candidates for the homogenized speed]\label{lin_def_vhom}
Assume that \ref{sett_aP_stat}, \ref{sett_aP_fin}, and \ref{sett_aP_Aconst} hold.
Then for any $\delF\in\R$ with $(A,F+\delF)$ satisfying \ref{veff_aP_star}, $\beta>\frac{3}{2}$, and $e\in\Sd$ we set
\begin{align*}
	\vhom[\delF,\beta](e)\coloneqq \pp{\lim_{r\rightarrow\infty}	\frac{1}{r}\EV{\metb[(A,F+\delF)]{h(r)}{re}{\hsma{e}{\beta}{h(r)}{r}}}}^{-1},
\end{align*}
where $\hsma{e}{\beta}{h(r)}{r}$ are the half-space approximations with $h(r)\coloneqq r^{\vtheta}$ for some $0<\vtheta<1$.
The existence of the limits and their independence with respect to $\vtheta$ follows from Corollary \ref{lin_cor_exUpB}.
It is straightforward to see that 
\begin{equation*}
	\vmineff\leq \vhom[\delF,\beta](e)	\leq \vmax(\delF)
	\qquad\text{for all  $\beta>\frac{3}{2}$, and $e\in\Sd$}
\end{equation*}
with $\vmax(\delF)$ for $(A,F+\delF)$ corresponding to the maximum speed $\vmax$ for $(A,F)$ from Lemma \ref{not_lem_vmax}.
The lower bound follows from Assumption \ref{veff_aP_star} via Lemma \ref{stable_lem_TvsEspeed}.
\end{definition}

In terms of the homogenized speed, the quantitative bounds from Corollary \ref{lin_cor_exUpB} and Corollary \ref{lin_cor_LowB} yield
\begin{align}
	\EV{\metb[(A,F+\delF)]{h(r)}{re}{\hsma{e}{\beta}{h(r)}{r}}}
	&\geq		\vhom[\delF,\beta]^{-1}(e)r - C\log(r)\sqrt{r^{1+\vtheta}}\label{lin_eq_vhomUpB}
\intertext{ and, if all coefficient fields $(A,F+\delF+c)$ with $0\leq c \leq C_1 f(r)$ satisfy \ref{veff_aP_star}, }
	\EV{\metb[(A,F+\delF)]{h(r)}{re}{\hsma{e}{\beta}{h(r)}{r}}}
	&\leq		\vhom[\delF-C_1f(r),\beta]^{-1}(e)r + C\log(r)\sqrt{r^{1+\vtheta}}.\label{lin_eq_vhomLowB}
\end{align}
Note that based on these inequalities we can fully quantify the convergence only if the function $\delF\mapsto\vhom[\delF,\beta]$ is H\"older continuous.
Assumption \ref{sett_aP_gen}, which we argued to be plausible in Remark \ref{sett_rem_hoel}, provides this H\"older continuity for some width scaling $\genhoel>\frac{3}{2}$.

However, morally the H\"older continuity should not be dependent on a specific width scaling $\genhoel$.
In fact, in the following lemma we show that the H\"older continuity for $\genhoel$ from Assumption \ref{sett_aP_gen} implies H\"older continuity with the same constant and coefficient for any $\beta>\frac{3}{2}$ and that all candidates from Definition \ref{lin_def_vhom} yield the same homogenized speed  independent of $\beta$.

\begin{lemma}\label{lin_lem_hoelInv}[The homogenized speed is independent of the width scaling]
Assume that the random coefficient field $(A,F)\in\Omega$ with probability distribution $\Pm$ satisfies Assumptions \ref{sett_aP_stat}, \ref{sett_aP_fin}, \ref{sett_aP_Aconst} and \ref{sett_aP_gen}.
Assume that there is $\delFdel\in(0,(\Choel)^{-1}]$ such that all coefficient fields $(A,F+\delF)$ with $\delF\in\bp{-\delFdel,\delFdel}$ satisfy Assumption \ref{veff_aP_star}.

Then for any $\beta>\frac{3}{2}$, $\delF>-\delFdel$ and $e\in\Sd$ the candidates for the homogenized speed from Definition \ref{lin_def_vhom} satisfy
\begin{equation*}
	\abs{\vhom[\delF,\beta](e)-\vhom[0,\beta](e)}	\leq		\Choel\abs{\delF}^{\hoel}	
	\qquad\text{ for all $\delF\in[-\delFdel,\delFdel]$}
\end{equation*}
with $\Choel>0$ and $0<\hoel\leq 1$ as in Assumption \ref{sett_aP_gen}.
In addition, there holds 
\begin{equation}\label{lin_eq_def_vhom0}
	\vhom(e)	\coloneqq	\vhom[0,\genhoel](e)	=	\vhom[0,\beta](e)
	\qquad\text{for all }\beta>\frac{3}{2}.
\end{equation}
\end{lemma}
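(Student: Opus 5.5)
The plan is to compare, for two width scalings $\beta,\beta'>\frac32$, the half-space approximations $\hsma{e}{\beta}{h(r)}{r}$ and $\hsma{e}{\beta'}{h(r)}{r}$ with the help of Corollary \ref{lin_cor_bd}. Suppose without loss of generality that $\beta\le\beta'$, so that $\hsma{e}{\beta}{h}{r}\subset\hsma{e}{\beta'}{h}{r}\subset\hsm{e}\cup(\Rd\setminus\ov{B}_{C_\beta r^\beta})$. This is exactly the sandwich condition in the last sentence of Corollary \ref{lin_cor_bd}, applied with $S=\hsma{e}{\beta'}{h}{r}$. Hence, on the event $\Espeed{h(r)}(\ov{B}_{C_\beta r^\beta})$ and for every admissible $\delF$,
\begin{multline*}
\met[(A,F+\delF)]{h}{re}{\hsma{e}{\beta}{h}{r}}\ge \met[(A,F+\delF)]{h}{re}{\hsma{e}{\beta'}{h}{r}}\\
\ge \met[(A,F+\delF+f_\beta(r))]{h}{re}{\hsma{e}{\beta}{h}{r}}-Ch,
\end{multline*}
where $f_\beta(r)=r^{-\beta/2+3/4}\to0$. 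I take expectations and use the truncation at $\timebound{h}{re}{\cdot}\lesssim r$ together with \eqref{veff_eq_probBound}. Since the complement of the event has probability $\exp(-c r^{\vtheta\ratespeed})$ up to polynomial factors, it costs only $o(r)$. Dividing by $r$ and letting $r\to\infty$ then gives, for each fixed $\delF'>0$ and all $r$ so large that $f_\beta(r)\le\delF'$ (using monotonicity in the forcing),
\begin{equation*}
\vhom[\delF,\beta](e)\le\vhom[\delF,\beta'](e)\le\vhom[\delF+\delF',\beta](e).
\end{equation*}
The limits exist by Corollary \ref{lin_cor_exUpB}, which requires \ref{veff_aP_star} for $(A,F+\delF)$. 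This is why I restrict to $\delF\in[-\delFdel,\delFdel]$, and for the upper side to $\delF+\delF'$ in that range, or larger: increasing the forcing preserves \ref{veff_aP_star}, since stable sets stay stable and the effective speed only improves by comparison.

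Next I apply this with $\beta'=\genhoel$ or $\beta=\genhoel$, depending on the order, to transfer the Hölder bound. Consider first $\beta\le\genhoel$. The sandwich gives $\vhom[\delF,\beta]\le\vhom[\delF,\genhoel]\le\vhom[\delF+\delF',\beta]$, and also, shifting the forcing, $\vhom[\delF-\delF',\genhoel]\le\vhom[\delF,\beta]\le\vhom[\delF,\genhoel]$. Sending $\delF'\downarrow0$ and using the Hölder continuity at $0$ from \ref{sett_aP_gen} yields $\vhom[0,\beta]=\vhom[0,\genhoel]$; note that only continuity at $0$ is needed here. This proves \eqref{lin_eq_def_vhom0}. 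For the Hölder estimate at general $\delF$, the difference $\vhom[\delF,\beta]-\vhom[0,\beta]$ is bounded above by $\vhom[\delF,\genhoel]-\vhom[-\delF',\genhoel]$ and below by $\vhom[\delF-\delF',\genhoel]-\vhom[0,\genhoel]$. Each term is within $\Choel|\cdot|^{\hoel}$ of $\vhom[0,\genhoel]$, and letting $\delF'\to0$ gives the constant $\Choel$ exactly. The case $\beta\ge\genhoel$ is symmetric.

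The main obstacle is the point where letting $\delF'\to0$ requires knowing $\vhom[\delF\pm\delF',\genhoel]\to\vhom[\delF,\genhoel]$, which amounts to Hölder continuity at $\delF$ and not only at $0$. Since \ref{sett_aP_gen} gives control only at $0$, the argument has to avoid this. I would bound $\vhom[\delF-\delF',\genhoel]-\vhom[0,\genhoel]\ge-\Choel|\delF-\delF'|^{\hoel}$ and $\vhom[\delF,\genhoel]-\vhom[-\delF',\genhoel]\le\Choel|\delF|^{\hoel}+\Choel\delF'^{\hoel}$, so that every comparison is made against the value at $0$. Each resulting bound carries an error $\Choel\delF'^{\hoel}$ or $|\,|\delF-\delF'|^{\hoel}-|\delF|^{\hoel}|$, and both vanish as $\delF'\to0$. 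This closes the argument with the same constants. A secondary technical point is checking that the event in Corollary \ref{lin_cor_bd} has overwhelming probability uniformly for the relevant forcings. Because the event $\Espeed{h}$ is defined for $(A,F)$ and the comparison only increases the forcing, this should follow.
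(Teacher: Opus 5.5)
Your proposal is correct and follows essentially the same route as the paper. Both arguments sandwich $\vhom[\delF,\beta]$ between speeds for the other width scaling, using the inclusion of the disks on one side and Corollary \ref{lin_cor_bd} (with an arbitrarily small forcing shift) on the other. Both then deduce $\vhom[0,\beta]=\vhom[0,\genhoel]$ from continuity at $0$, and transfer the H\"older bound by comparing everything with the value at $\delF=0$ before letting the auxiliary shift tend to zero. The differences are cosmetic: you apply the corollary with base forcing $F+\delF$ instead of $F+\delF-f(r)$, and you use two-sided bounds rather than the paper's sign-by-sign case split via monotonicity in $\delF$.
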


\begin{proof}
Let $\beta_1\geq \beta_2>\frac{3}{2}$ be two different scalings for the width of the half-space approximations. 
We claim that 
\begin{equation}\label{lin_eq_vhomBetas}
	\vhom[\delF,\beta_1](e)	\geq \vhom[\delF,\beta_2](e)	\geq		\vhom[\wt{\delF},\beta_1](e)
	\quad\text{for any $\delF>\wt{\delF}\geq-\delFdel$, $e\in\Sd$.}
\end{equation}
If this claim holds, we immediately obtain that $\vhom[0,\beta]=\vhom[0,\genhoel]$ for all $\beta>\frac{3}{2}$ since due to Assumption \ref{sett_aP_gen} we have $\vhom[0,\genhoel](e)=\lim_{\delF\rightarrow 0}\vhom[\delF,\genhoel](e)$.
We further obtain the H\"older continuity for arbitrary $\beta>\frac{3}{2}$ as follows.
For $\beta>\genhoel$ and $\delF\geq0$, let $\wt{\delF}>\delF$. Then \eqref{lin_eq_vhomBetas} yields
\begin{align*}
	\abs{\vhom[\delF,\beta](e)-\vhom[0,\beta](e)}	
	&=	\vhom[\delF,\beta](e)-\vhom[0,\beta](e)
	\leq		\vhom[\wt{\delF},\genhoel](e)-\vhom[0,\genhoel](e)
	\leq		\Choel\abs{\wt{\delF}}^{\hoel},
\end{align*}
where we used the H\"older continuity of $\wt{\delF}\mapsto \vhom[\wt{\delF},\genhoel](e)$ from Assumption \ref{sett_aP_gen}.
Taking the limit $\wt{\delF}\searrow\delF$ yields the desired inequality.
The cases of $\delF\leq 0$ as well as $\genhoel>\beta>\frac{3}{2}$ with $\delF\geq 0$ and $\delF\leq 0$ respectively are dealt with analogously.

It remains to prove \eqref{lin_eq_vhomBetas}.
Let $\delF>-\delFdel$.
For width scalings $\beta_1\geq \beta_2$, due to the definition of the half-space approximations from Corollary \ref{lin_cor_bd} we have the inclusion $\hsma{e}{\beta_1}{h(r)}{r}\supset \hsma{e}{\beta_2}{h(r)}{r}$ for all $r>h_0$ large enough.
Hence, the comparison principle from Lemma \ref{not_lem_comp} and the definition of the truncated arrival times yield
\begin{align*}
	\frac{1}{r}\metb[(A,F+\delF)]{h(r)}{re}{\hsma{e}{\beta_1}{h(r)}{r}}	
	\leq		\frac{1}{r}\metb[(A,F+\delF)]{h(r)}{re}{\hsma{e}{\beta_2}{h(r)}{r}}
\end{align*}
for all $(A,F)\in\Omega$ if $r>h_0$ is large enough. 
Taking the expected value and the limit $r\rightarrow\infty$ we thus obtain $\vhom[\delF,\beta_1](e)\geq \vhom[\delF,\beta_2](e)$.

Regarding the second inequality in \eqref{lin_eq_vhomBetas}, we apply Corollary \ref{lin_cor_bd}, that is we compensate for the propagation of influence from far away by increasing the forcing by an appropriate constant to obtain
\begin{equation*}
	\metb[(A,F+\delF)]{h(r)}{re}{\hsma{e}{\beta_2}{h(r)}{r}}
	\leq		\metb[(A,F+\delF-f(r))]{h(r)}{re}{\hsma{e}{\beta_1}{h(r)}{r}} + Ch(r)
\end{equation*}
with $f(r)=r^{-\frac{\beta_2}{2}+\frac{3}{4}}$.
Since $\beta_2>\frac{3}{2}$, for any $\wt{\delF}<\delF$ we have $\delF-f(r)\geq \wt{\delF}$ for all $r\geq h_0$ large enough and we hence obtain
\begin{equation*}
	\metb[(A,F+\delF)]{h(r)}{re}{\hsma{e}{\beta_2}{h(r)}{r}}
	\leq		\metb[(A,F+\wt{\delF})]{h(r)}{re}{\hsma{e}{\beta_1}{h(r)}{r}} + Ch(r).
\end{equation*}
Multiplying with $\frac{1}{r}$, taking the expected value and the limit $r\rightarrow \infty$ we thus obtain $\vhom[\delF,\beta_2](e)	\geq		\vhom[\wt{\delF},\beta_1](e)$, using that $\frac{h(r)}{r}=r^{\vtheta-1}\rightarrow 0$ as $r\rightarrow \infty$.
We have thus shown \eqref{lin_eq_vhomBetas}, concluding the proof.
\end{proof}

If Assumption \ref{sett_aP_gen} holds, then the bounds \eqref{lin_eq_vhomUpB} and \eqref{lin_eq_vhomLowB} for the arrival times of the half-space approximations based on Corollary \ref{lin_cor_exUpB} and Corollary \ref{lin_cor_LowB}  improve as follows, where we also use Lemma \ref{lin_lem_hoelInv}.

\begin{corollary}[Bounds for the truncated arrival times of half-space approximations]\label{lin_cor_ErrBou}
Assume that the random coefficient field $(A,F)\in\Omega$ with probability distribution $\Pm$ satisfies Assumptions \ref{sett_aP_stat}, \ref{sett_aP_fin}, \ref{sett_aP_Aconst}, and \ref{sett_aP_gen}.
Assume that there is $\delFdel\in(0,1]$ such that all coefficient fields $(A,F+\delF)$ with $\delF\in\bp{-\delFdel,\delFdel}$ satisfy Assumption \ref{veff_aP_star}.
Let $h(r)=r^\vtheta$ for some $0<\vtheta<1$ and $e\mapsto\vhom(e)$ be the homogenized speed as defined in Lemma \ref{lin_lem_hoelInv}.
Then there exist $C=C(\data)>0$ and $r=r_0\pp{\data,\delFdel}$ such that for the truncated arrival times of the half-space approximations $\hsma{e}{\beta}{h(r)}{r}$ with $e\in\Sd$, width scaling $\beta>\frac{3}{2}$ and $r\geq r_0$ there holds
\begin{align*}
	\EV{\metb[(A,F+\delF)]{h(r)}{re}{\hsma{e}{\beta}{h(r)}{r}}}
	&\geq		\vhom^{-1}(e)r - C\log(r)\sqrt{r^{1+\vtheta}} - Cr\abs{\delF}^\hoel
\intertext{for any $\delF\in\bp{-\delFdel,\delFdel}$ with the H\"older coefficient $\hoel$ from Assumption \ref{sett_aP_gen} and further}
	\EV{\metb[(A,F)]{h(r)}{re}{\hsma{e}{\beta}{h(r)}{r}}}
	&\leq		\vhom^{-1}(e)r + C\log(r)\sqrt{r^{1+\vtheta}} + Cr^{1-\pp{\frac{\beta}{2}-\frac{3}{4}}\hoel}.
\end{align*}
\end{corollary}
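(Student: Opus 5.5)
The plan is to combine the two one-sided bounds \eqref{lin_eq_vhomUpB} and \eqref{lin_eq_vhomLowB} with the H\"older continuity from Lemma \ref{lin_lem_hoelInv}. Throughout, the constants in Corollaries \ref{lin_cor_exUpB} and \ref{lin_cor_LowB} can be taken uniform in $\delF\in[-\delFdel,\delFdel]$, since $\abs{\delF}\leq 1$ only enlarges $C_{1F}$ by a bounded amount. We also use $\vmineff\leq\vhom[\delF,\beta]\leq\vmax(1)$, which converts differences of speeds into differences of inverse speeds: $\abs{\vhom[a,\beta]^{-1}-\vhom[b,\beta]^{-1}}\leq \vmineff^{-2}\abs{\vhom[a,\beta]-\vhom[b,\beta]}$.

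\textbf{Lower bound.} For $\delF\in[-\delFdel,\delFdel]$, Corollary \ref{lin_cor_exUpB} applies to $(A,F+\delF)$, and via \eqref{lin_eq_vhomUpB} it gives
\[
\EV{\metb[(A,F+\delF)]{h(r)}{re}{\hsma{e}{\beta}{h(r)}{r}}}\geq \vhom[\delF,\beta]^{-1}(e)\,r - C\log(r)\sqrt{r^{1+\vtheta}}.
\]
By Lemma \ref{lin_lem_hoelInv}, $\abs{\vhom[\delF,\beta](e)-\vhom(e)}\leq\Choel\abs{\delF}^\hoel$, where $\vhom=\vhom[0,\beta]$ by \eqref{lin_eq_def_vhom0}. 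Converting to inverse speeds gives $\vhom[\delF,\beta]^{-1}r\geq\vhom^{-1}r-Cr\abs{\delF}^\hoel$, which is the first claim.

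\textbf{Upper bound.} Apply Corollary \ref{lin_cor_LowB} with the base forcing shifted to $\delF_r\coloneqq -C_1f(r)$, where $f(r)=r^{-\frac{\beta}{2}+\frac34}$. This choice makes $F+\delF_r+C_1f(r)=F$. The hypothesis of Corollary \ref{lin_cor_LowB} requires $(A,F+\delF_r+c)$ to satisfy \ref{veff_aP_star} for all $0\leq c\leq C_1f(r)$, i.e.\ all shifts in $[-C_1f(r),0]$. Since $\beta>\frac32$, $f(r)\to 0$, so we fix $r_0=r_0(\data,\delFdel,\beta)$ with $C_1f(r)\leq\delFdel$ for $r\geq r_0$; this is where $r_0$ comes from. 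Corollary \ref{lin_cor_LowB} then gives
\[
\EV{\metb[(A,F)]{h(r)}{re}{\hsma{e}{\beta}{h(r)}{r}}}\leq \vhom[-C_1f(r),\beta]^{-1}(e)\,r + C\log(r)\sqrt{r^{1+\vtheta}}.
\]
Lemma \ref{lin_lem_hoelInv} with $\delF=-C_1f(r)$ bounds $\vhom[-C_1f(r),\beta]^{-1}\leq\vhom^{-1}+C f(r)^\hoel$. Multiplying by $r$ yields the error term $Cr^{1-(\frac{\beta}{2}-\frac34)\hoel}$.

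\textbf{Main obstacle.} The only delicate point is bookkeeping the constants. We must check that the constant $C_2$ of Corollary \ref{lin_cor_LowB}, which depends on $\delF$, stays uniform when the base shift $\delF_r$ varies with $r$. This follows from the stated remark that the same constant works for all $\abs{\delF'}\leq\abs{\delF}$, applied with $\abs{\delF}=\delFdel\leq 1$. The remaining dependence on $\beta$, $\vtheta$ and $\Choel$ is absorbed into $C$.
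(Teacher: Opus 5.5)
Your proposal is correct and is essentially the paper's own argument: the paper obtains this corollary by combining \eqref{lin_eq_vhomUpB} and \eqref{lin_eq_vhomLowB} with the H\"older bound of Lemma \ref{lin_lem_hoelInv}, where \eqref{lin_eq_vhomLowB} comes from Corollary \ref{lin_cor_LowB} applied at base forcing $-C_1f(r)$, and your treatment of the $r$-dependent shift and of the uniformity of the constants for $\abs{\delF}\le 1$ is right (as is your observation that $r_0$ and $C$ in fact also depend on $\beta$ and $\vtheta$). The only detail to add is that Lemma \ref{lin_lem_hoelInv} is stated for $\delFdel\le(\Choel)^{-1}$, so two adjustments are needed: for $(\Choel)^{-1}<\abs{\delF}\le\delFdel$ use the trivial bound $\abs{\vhom[\delF,\beta]^{-1}-\vhom^{-1}}\le\vmineff^{-1}\le\vmineff^{-1}(\Choel)^{\hoel}\abs{\delF}^{\hoel}$, and enlarge $r_0$ so that $C_1f(r)\le(\Choel)^{-1}$.
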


\section{Final homogenization argument and the proof of Theorem \ref{main_thm}}\label{s_hom}
In this section we will prove the overall homogenization result for arrival times, as stated in Theorem \ref{main_thm}.
In the first subsection, we will list the used assumptions and recall the definition of the homogenized speed.
In the second subsection, we will show that in order to prove the result we only need to look at two types of sets: Balls and round holes.
In the third subsection, we will prove bounds for the homogenization error of the arrival times given these two sets and comment on the necessity of the isotropy assumption.

\subsection{Assumptions and the homogenized speed}
We assume that the random coefficient field $(A,F)$ with the probability distribution $\Pm$ satisfies Assumptions \ref{sett_aP_stat}, \ref{sett_aP_fin}, \ref{sett_aP_Aconst}, \ref{sett_aP_ani} and \ref{sett_aP_gen}.
We require that there is $\delFdel\in(0,1]$ such that all coefficient fields $(A,F+\delF)$ with $\delF\in\bp{-\delFdel,\delFdel}$ satisfy Assumption \ref{veff_aP_star}.

Due to Assumptions \ref{sett_aP_stat}, \ref{sett_aP_fin}, \ref{sett_aP_Aconst} and \ref{veff_aP_star} we know from Corollary \ref{lin_cor_exUpB} that the limits
\begin{align*}
	\vhom[\delF,\beta](e)	&\coloneqq \pp{\lim_{r\rightarrow\infty}\frac{1}{r}\EV{\metb[(A,F+\delF)]{h(r)}{re}{\hsma{e}{\beta}{h(r)}{r}}}}^{-1}
\end{align*}
exist for all $\delF\in \bp{-\delFdel,\delFdel}$, $e\in\Sd$, and width scalings $\beta>\frac{3}{2}$, where $\hsma{e}{\beta}{h(r)}{r}$ are the half-space approximations from Corollary \ref{lin_cor_bd}.
Since Assumption \ref{sett_aP_gen} holds, we know from Lemma \ref{lin_lem_hoelInv} that for $\delF=0$ these limits are independent of $\beta$.
Due to the isotropy of the probability distribution from Assumption \ref{sett_aP_ani} -- which we use here for the first time -- all these limits are the same for each direction $e\in\Sd$. 
For the homogenized speed we hence set
\begin{align}\label{hom_eq_vhom}
	\vhom\coloneqq\vhom[0,\beta](e)	\qquad\text{for arbitrary $e\in\Sd$, $\beta>\frac{3}{2}$}.
\end{align}
In order to relate truncated arrival times of arbitrary sets to the homogenized speed, we will align half-space approximations with the boundary of these sets and use the quantitative error bounds from Corollary \ref{lin_cor_ErrBou}. 

\subsection{Reduction from arbitrary sets to a self-similar grower and shrinker}\label{subs_hom_reduc}
In order to prove Theorem \ref{main_thm}, we want to compare the arrival time for a fat set $S\subset\Rd$ at some target $x_0\in\Rd$ with respect to the homogenized first-order evolution from Definition \ref{intro_def_homR} to the heterogeneous second-order evolution on large scales. 
To be more precise, for $0<\eps<<1$ we normalize the arrival time of $\eps^{-1}S$ in the target area $\eps^{-1}x_0+\ov{B}_{h(R_\eps)}$, where 
\begin{align*}
	R_\eps		&\coloneqq	\eps^{-1}\dist\pp{x_0,S},\\
	h(r)			&\coloneqq 	r^{\vtheta}				\qquad\qquad\qquad\text{for some fixed $0<\vtheta<1$},
\end{align*}
to ultimately compare
\begin{align*}
	&\met[{hom,\vhom}]{}{x_0}{S}=\vhom^{-1}\dist\pp{x_0,S}
	&&\text{to }
	&\eps\metb[(A,F)]{h(R_\eps)}{\eps^{-1}x_0}{\eps^{-1}S}.
\end{align*}
However, instead of dealing with arbitrary sets $S$, we want to reduce this comparison to two special sets.

Since $S$ is fat, there exists $\eps_0=\eps_0(S)>0$ such that for all $0<\eps\leq\eps_0$ the set $\eps^{-1}S$ is $h(R_\eps)$-fat.
In particular, there exists $y_\eps\in\Rd$ such that 
\begin{align*}
	\ov{B}_{h(R_\eps)}(y_\eps)	&\subset	\eps^{-1} S
	&	&\text{ and }
	&	\dist\pp{\eps^{-1}x_0,\ov{B}_{h(R_\eps)}(y_\eps)}	&=	R_\eps.
\end{align*}
Thus, as shown in Figure \ref{hom_fig_sandwich} we can sandwich $\eps^{-1}S$ between
\begin{equation}\label{hom_eq_sandw}
	\ov{B}_{h(R_\eps)}(y_\eps)	\subset \eps^{-1}S	\subset	\Rd\setminus B_{R_\eps}(\eps^{-1}x_0).
\end{equation}
and therefore obtain
\begin{multline}\label{hom_eq_sandwAT}
	\eps\metb{h(R_\eps)}{\eps^{-1}x_0}{\ov{B}_{h(R_\eps)}(y_\eps)}
	\geq		\eps\metb{h(R_\eps)}{\eps^{-1}x_0}{\eps^{-1}S}\\
	\geq		\eps\metb{h(R_\eps)}{\eps^{-1}x_0}{\Rd\setminus B_{R_\eps}(\eps^{-1}x_0)}.
\end{multline}
Recall from Definition \ref{intro_def_homR} and Definition \ref{intro_def_met} that for $\wt{S}\subset\Rd$ and $x\in\Rd$ the homogenized arrival times are given by 
\begin{equation*}
	\met[{hom,\vhom[\delF]}]{}{x}{\wt{S}}	\coloneqq	\min\Bp{t\geq 0\,:\,x\in\reach[{hom,\vhom[\delF]}]{t}{\wt{S}}},
\end{equation*}
where $t\mapsto\reach[{hom,\vhom[\delF]}]{t}{\wt{S}}$ is the evolution of the set $\wt{S}$ according to \eqref{intro_eq_HomM_ls} with homogeneous coefficient field $\vhom[\delF]$.
The key consequence of an isotropic homogenized speed resulting from Assumption \ref{sett_aP_ani} is that
\begin{align*}
	\reach[{hom,\vhom[\delF]}]{t}{\wt{S}}	=	\wt{S}+t\vhom[\delF]\ov{B}_1	
	\qquad\text{for all }t>0
\end{align*}
and thus in particular from \eqref{hom_eq_sandw} we obtain
\begin{multline}\label{hom_eq_sandwHOM}
	\eps\met[{hom,\vhom}]{}{\eps^{-1}x_0}{\ov{B}_{h(R_\eps)}(y_\eps)}
	=	\met[{hom,\vhom}]{}{x_0}{S}
	=	\vhom^{-1}\dist\pp{x_0,S}\\
	=	\eps\met[{hom,\vhom}]{}{\eps^{-1}x_0}{\Rd\setminus B_{R_\eps}(\eps^{-1}x_0)}.
\end{multline}
To obtain a homogenization result, it is therefore sufficient to only treat two shapes: A ball and a round hole.
In the view of \eqref{hom_eq_sandwAT}, we want to show that the ball expands faster with respect to the heterogeneous evolution than with respect to the homogeneous evolution, that is
\begin{equation*}
	\eps\EV{\metb{h(R_\eps)}{\eps^{-1}x_0}{\ov{B}_{h(R_\eps)}(y_\eps)}}
	\leq	 \eps\met[{hom,\vhom}]{}{\eps^{-1}x_0}{\ov{B}_{h(R_\eps)}(y_\eps)}	+	\mathrm{Err}_{hom}
\end{equation*}
for a suitable error term.
Analogously, we want to show that the round hole collapses slower with respect to the heterogeneous evolution than with respect to the homogeneous one, that is
\begin{multline*}
	\eps\EV{\metb{h(R_\eps)}{\eps^{-1}x_0}{\Rd\setminus B_{R_\eps}(\eps^{-1}x_0)}}\\
	\geq		\eps\met[{hom,\vhom}]{}{\eps^{-1}x_0}{\Rd\setminus B_{R_\eps}(\eps^{-1}x_0)}	-	\mathrm{Err}_{hom}.
\end{multline*}
If we are able to show both, then for the original set \eqref{hom_eq_sandwAT} and \eqref{hom_eq_sandwHOM} yield
\begin{equation*}
	\abs{\eps\EV{\metb{h\pp{\eps^{-1}\dist(x_0,S)}}{\eps^{-1}x_0}{\eps^{-1}S}}	-	\met[{hom,\vhom}]{}{x_0}{S}}
	\leq		\mathrm{Err}_{hom}
\end{equation*}
and thus the kind of homogenization result we aim for.
In the following we will provide such a comparison for the arrival times of the ball (Proposition \ref{hom_prop_ball}) and for the arrival times of the round hole (Proposition \ref{hom_prop_hole}).

\begin{figure}[h]
\centering
\includegraphics[width=0.3\textwidth]{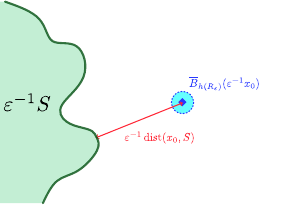}
$\,\,$
\includegraphics[width=0.3\textwidth]{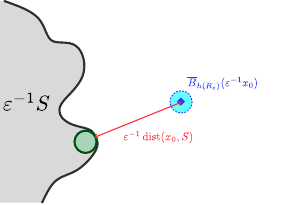}
$\,\,$
\includegraphics[width=0.3\textwidth]{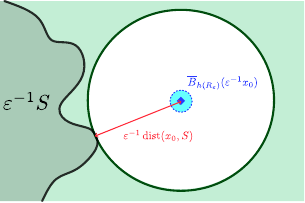}
\caption{	An illustration of the comparison argument used to reduce the homogenization problem to expanding balls and shrinking holes.
			Left: The original set $\eps^{-1}S\subset\Rd$ is depicted in green. 
			The target area  $\ov{B}_{h(R_\eps)}(\eps^{-1}x_0)$ is depicted in blue.
			Center: The ball $\ov{B}_{h(R_\eps)}(y_\eps)$ (depicted in green) is placed at the point in $\eps^{-1}S$ closest to the target area.
			Right: The set $\Rd\setminus B_{R_\eps}(\eps^{-1}x_0)$ (depicted in green) separates $\eps^{-1}S$ and the target area.}
\label{hom_fig_sandwich}
\end{figure}

\subsection{Bounding the homogenization error for the self-similar sets}\label{subs_hom_err}

\begin{proposition}[Upper bound for arrival times]\label{hom_prop_ball}
Assume that the random coefficient field $(A,F)\in\Omega$ with probability distribution $\Pm$ satisfies Assumptions \ref{sett_aP_stat}, \ref{sett_aP_fin}, \ref{sett_aP_Aconst}, \ref{sett_aP_ani}, and \ref{sett_aP_gen}.
Assume that there is $\delFdel\in(0,1]$ such that all coefficient fields $(A,F+\delF)$ with $\delF\in\bp{-\delFdel,0}$ satisfy Assumption \ref{veff_aP_star}.
Let $\vhom>0$ be the homogenized speed defined in \eqref{hom_eq_vhom} and $h(r)=r^\vtheta $ for $0<\vtheta<1$.

Then there exists a constant $C=C(\delFdel,\vtheta,\data)>0$ such that for any $e\in\Sd$, $R>0$ and $\eps>0$ with $R_\eps\coloneqq \eps^{-1}R$ there holds
\begin{multline}\label{hom_eq_ball}
	\eps\EV{\metb[(A,F)]{h(R_\eps)}{\pp{R_\eps+ h(R_\eps)}e}{\ov{B}_{h(R_\eps)}}}	- \vhom^{-1}R\\
	\leq	 	C R^{1-\alpha}\log(\max\Bp{R,2})^{\frac{3-\vtheta}{2}}\eps^{\alpha}\log(\eps^{-1})^{\frac{3-\vtheta}{2}}
\end{multline}
with $\alpha\coloneqq	\pp{1+4\pp{\frac{1}{1-\vtheta}+\frac{1}{\hoel}}}^{-1}$, where $\hoel$ is the H\"older coefficient from Assumption \ref{sett_aP_gen}.
\end{proposition}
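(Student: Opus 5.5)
We follow the strategy outlined in Subsection \ref{subs_str_main}: grow the ball in discrete steps, compare each step with half-space approximations aligned to the current sphere, and sum the errors. Fix $\beta>\frac32$, to be optimized later. Set $R_0\coloneqq h(R_\eps)$ and choose increments $r_n\approx R_{n-1}^{\gamma}$ with $\gamma=\frac{2}{1-\vtheta+4(\frac{1-\vtheta}{\hoel}+1)}<1$, truncated so that $r_n\geq h_0$. Put $R_n=R_{n-1}+r_n$ and stop at the first $N$ with $R_N\geq R_\eps+h(R_\eps)$.

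Define the stopping times $t_n\coloneqq\inf\{t\geq0:\ \ov{B}_{R_n}\hsubs[h(r_n)]\reach{t}{\ov{B}_{R_{n-1}}}\}$. These are truncated at a deterministic level of order $\vmineff^{-1}r_n+Ch(r_n)$ off the high-probability events from Assumption \ref{veff_aP_star}, exactly as in the truncation of arrival times. On the intersection $E$ of the events $\Espeed{h(r_n)}(\ov B_{2R_\eps})$, stable approximations exist and the effective minimum speed applies. Then, by the semigroup property, Lemma \ref{not_lem_comp}\ref{not_list_lemcomp2} applied to stable approximations, and Lemma \ref{veff_lem_henv}, the total arrival time is at most $\sum_n(t_n+\cstable h(r_n))$, plus a final correction of order $h(R_\eps)$ to hit the target area. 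Off $E$ the truncated arrival time is bounded by $CR_\eps$, and $1-\Pm[E]$ is at most a polynomial factor times $\exp(-c h(R_\eps)^{\ratespeed})$, which is negligible. Taking expectations, it therefore suffices to bound $\EV{t_n}$ for each $n$ (the conditioning on the past can be dropped by the monotonicity/comparison argument together with stationarity, since we only need an upper bound).

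The per-step bound works as follows. For $x\in\partial B_{R_n}$ on a grid of mesh $\sim\vmineff\lambda$, let $e_x=x/|x|$. Place the translated half-space approximation $x-(r_n+\rho_n)e_x+\hsma{e_x}{\beta}{h(r_n)}{r_n}$ so that it lies inside $\ov B_{R_{n-1}}$. The curvature gap is $\rho_n\lesssim r_n^{2\beta}/R_{n-1}$; the choice of $\gamma$ makes $\rho_n$ small, which is exactly where $\beta$ enters. By comparison, $t_n$ is at most the maximum over grid points of the truncated arrival time of these disks at target $x$, plus $C(\rho_n+h(r_n)+\lambda)$, using the effective minimum speed to fill between grid points as in Step 1 of Proposition \ref{lin_prop_sub}. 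The number of grid points is polynomial in $R_\eps$. A union bound, stationarity, isotropy (Assumption \ref{sett_aP_ani}, which makes every direction equivalent in law), and Proposition \ref{fluct_prop} then give
\begin{equation*}
\EV{t_n}\leq \EV{\metb{h(r_n)}{r_ne}{\hsma{e}{\beta}{h(r_n)}{r_n}}}+C\sqrt{h(r_n)r_n}\log R_\eps+C\rho_n .
\end{equation*}
Corollary \ref{lin_cor_ErrBou} bounds the first term by $\vhom^{-1}r_n+C\log(r_n)\sqrt{r_n^{1+\vtheta}}+Cr_n^{1-(\frac\beta2-\frac34)\hoel}$. So the step error is $\tau_n\lesssim \sqrt{r_n^{1+\vtheta}}\log R_\eps+r_n^{1-(\frac\beta2-\frac34)\hoel}+r_n^{2\beta}/R_{n-1}$. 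Only the increase $\delF\in[-\delFdel,0]$ direction is needed, consistent with the hypothesis.

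The remaining work is balancing and summing. We pick $\beta$ so that the Hölder term matches the fluctuation term, i.e.\ $(\frac\beta2-\frac34)\hoel=\frac{1-\vtheta}{2}$, and pick $r_n$ so that $r_n^{2\beta}/R_{n-1}\approx r_n^{(1+\vtheta)/2}$; this yields the exponent $\gamma$ above. Then $\sum_n\tau_n\lesssim\log R_\eps\sum_n r_n^{(1+\vtheta)/2}$. With $r_n\approx R_{n-1}^\gamma$, the sum is comparable to $\int_{R_0}^{R_\eps}s^{\gamma\frac{1+\vtheta}{2}-\gamma}\,ds\approx R_\eps^{1-\gamma\frac{1-\vtheta}{2}}$, and $\gamma\frac{1-\vtheta}{2}=\alpha$. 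An extra logarithmic factor comes from the small-radius regime and from the $\log r_n$ factors. Multiplying by $\eps$ gives $CR^{1-\alpha}\eps^{\alpha}$ times logarithms; we track the log powers to reach the exponent $\frac{3-\vtheta}{2}$.

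I expect the main obstacle to be the per-step comparison. It requires justifying that the disks placed inside the ball, whose large radius $C_\beta r_n^\beta$ must fit within $B_{R_{n-1}}$ (forcing $r_n^\beta\ll R_{n-1}$, compatible with $\gamma$ as long as $R_0$ is large), really control $h$-envelopment of the whole next sphere. It also requires handling the early steps, where $R_{n-1}$ is near $h(R_\eps)$ and the constraint $r_n\ge h_0$ interacts with the need $\log h(r_n)\geq c\log r_n$.
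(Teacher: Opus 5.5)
\textbf{Gap in the per-step bound.} Your architecture is the paper's. You use radii with $r_n\approx R_{n-1}^{\gamma}$, half-space approximations $\hsma{e}{\beta}{h(r_n)}{r_n}$ placed inside the current ball with curvature gap $\rho_n\lesssim r_n^{2\beta}/R_{n-1}$, a grid with a union bound and Proposition~\ref{fluct_prop}, Corollary~\ref{lin_cor_ErrBou}, and $\beta=\frac32+\frac{1-\vtheta}{\hoel}$. Your summation $\sum_n r_n^{(1+\vtheta)/2}\approx\int_{R_0}^{R_\eps}s^{-\alpha}\,ds$ is fine. It even avoids the factor $(\log R_\eps)^{\frac{1-\vtheta}{2}}$ that the paper gets from its crude bound on $N$.

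The gap comes from your choice of $t_n$. You define it as the first time the \emph{whole} closed ball $\ov{B}_{R_n}$ is $h(r_n)$-enveloped by $\reach{t}{\ov{B}_{R_{n-1}}}$. But you control it only through grid points $x\in\partial B_{R_n}$, citing Step~1 of Proposition~\ref{lin_prop_sub}. That argument works there because the enveloped set is a disk of thickness $2h_1\le 2h_2$. Once $\ov{B}_{h_2}$ of every grid point is reached, the whole disk lies within $h_2+\lambda$ of the evolved set.

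Here the annulus $\ov{B}_{R_n}\setminus B_{R_{n-1}}$ has width $r_n\gg h(r_n)$. The front can reach $\ov{B}_{h(r_n)}(x)$ for every $x\in\partial B_{R_n}$ while enclosing a large unreached region of the annulus. These are exactly the enclosures behind the main front that this paper allows. Assumption~\ref{veff_aP_star} only makes such an enclosure shrink at rate $\vmineff$, so it can survive for a time of order $\vmineff^{-1}$ times its width, not $O(\rho_n+h(r_n)+\lambda)$. Hence $t_n\le\max_x(\cdots)+C(\rho_n+h(r_n)+\lambda)$ does not follow.

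\textbf{The missing idea and the fix.} Enclosures behind the front never need to be controlled. As in the paper, set $t_n\coloneqq\sup_{y\in\partial B_{R_n}}\metb{h(r_n)}{y}{\ov{B}_{R_{n-1}}}$. Propagate inductively only that $\partial B_{R_n}$ is $h(r_n)$-enveloped by the evolution of a stable approximation of the initial ball; that evolution is stable by Lemma~\ref{stable_lem_preserv}.

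Before the next step, fill every connected component of its complement that meets $\ov{B}_{R_n}$. Compare on the complement of the filled set, whose boundary lies in the non-receding stable boundary, as in the proof of Lemma~\ref{not_lem_comp}\ref{not_list_lemcomp2}. This shows the filling does not change the evolution off the filled region. The filled set contains $\ov{B}_{R_n}$, so comparison with the evolution of $\ov{B}_{R_n}$ restarts.

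Alternatively, keep your $t_n$ but add grid points throughout the annulus, using half-space approximations at scale $s\le r_n$ for points at distance $s$. The errors are the same, at the cost of heavier bookkeeping. With either change the rest of your argument goes through.

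\textbf{Minor remarks.} Each $t_n$ starts from a deterministic ball, so the stacking inequality is pathwise and there is no conditioning on the past to remove. The failure probability of your good event is governed by $h(r_1)\approx R_\eps^{\gamma\vtheta^2}$, not $h(R_\eps)$; this is still stretched-exponentially small.
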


\begin{proof}
The idea is to show that we can expect the evolution of the ball according to the heterogeneous coefficient field to expand not much slower than the first order evolution with respect to the homogenized speed, that is
\begin{align*}
	\reach{t+\mathrm{Err}_t}{\ov{B}_{h(R_\eps)}}
	&\hsups[h(R_\eps)]	\ov{B}_{h(R_\eps)}+t\vhom \ov{B}_1	
	=		\reach[{hom,\vhom}]{t}{\ov{B}_{h(R_\eps)}}
\end{align*}
for all $0\leq t\leq \vhom^{-1}R$ with a suitably controlled random error term $t\mapsto \mathrm{Err}_t$.

More precisely, we will iteratively choose a suitable sequence of ball enlargements $0<r_1,\ldots, r_N$  and show that the times until one ball fully envelops the next largest one correspond to expansion not much slower than at speed $\vhom$.
That is, with the enlargements adding up to the radii 
	$R_n\coloneqq h(R_\eps)+\sum_{k=1}^n r_k$
with $R_N=R_\eps+h(R_\eps)$, for $n\in\Bp{0,\ldots,N}$ we define the times
\begin{align}\label{hom_eq_ballTk}
	t_n	=			t_n^{(A,F)}
		\coloneqq	\sup\Bp{\metb[(A,F)]{h(r_{n})}{y}{ \ov{B}_{R_{n-1}}	}	\,:\, y\in\partial\ov{B}_{R_n}}
		\qquad\text{for $n\in\Bp{1,\ldots,N}$},
\end{align}
see Figure \ref{hom_fig_ball_tn}.
\begin{figure}[h]
\centering
\includegraphics[width=0.35\textwidth]{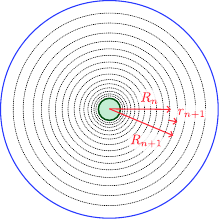}
$\quad$
\includegraphics[width=0.35\textwidth]{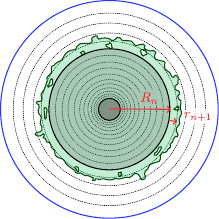}
\caption{	An illustration of the argument for bounding the expansion speed of balls from below.
			Left: The initial ball with radius $R_0=h(R_\eps)$ is depicted in green.
			The boundary of the ball with radius $R_N$ corresponding to the target distance is depicted in blue.
			In-between, dotted lines denote the boundaries of intermediate balls with radii $(R_n)_n$ corresponding to the consecutive enlargements $(r_n)_n$.
			Right: For $n\in\Bp{0,\ldots,N-1}$, after time $t_{n+1}$ the evolution $\reach{t_{n+1}}{\ov{B}_{R_n}}$ (depicted in green) of the ball $\ov{B}(R_{n})$ (depicted in light grey) fully envelops the boundary of the next larger ball $\ov{B}_{R_{n+1}}$.}
\label{hom_fig_ball_tn}
\end{figure}
We will show that for a suitable choice of the enlargements $(r_n)_n$ there are relatively small errors $(\tau_n)_n$ such that
\begin{align}\label{hom_eq_ballTkbound}
	\EV{t_n}\leq		\vhom^{-1}r_n	+	\tau_n
	\qquad\text{for all }n\in\Bp{1,\ldots,N}.
\end{align}
We claim that the overall arrival time is smaller than the sum of the times for all full ball expansions, that is that
\begin{align}\label{hom_eq_ballTkstacked}
	\metb[(A,F)]{h(R_\eps)}{\pp{R_\eps+ h(R_\eps)}e}{\ov{B}_{h(R_\eps)}}
	\leq		\sum_{n=1}^N \pp{t_n^{(A,F)} + C h(r_n)}
\end{align}
for $(A,F)\in\bigcap_{n=1}^N\Espeed{h(r_n)}(\ov{B}_{R_n})$.
Taking the expected value of \eqref{hom_eq_ballTkstacked} and applying \eqref{hom_eq_ballTkbound} will yield \eqref{hom_eq_ball} with the right hand side given by
\begin{equation}\label{hom_eq_ballTkstacked_Error}
	\eps\sum_{n=1}^N \pp{\tau_n + C h(r_n)} + CR\sum_{n=1}^N\pp{1-\PM{\Espeed{h(r_n)}(\ov{B}_{R_n})}}.
\end{equation}
Our goal is to choose the enlargements $(r_n)_n$ such that this overall error is as small as possible.
In the first part of the proof, we will establish the claim in \eqref{hom_eq_ball_reachTk}.
In the second part of the proof, we will estimate $\EV{t_n}$ for $n\in\N$ and the different contributions to the error $\tau_n$ for a given enlargement choice $r_n$ in order to deduce a recursive formula for $(r_n)_n$ which optimizes the scaling of $(\tau_n)_n$ and the overall error.
In the third part of the proof, we will analyze the resulting sequence and calculate the overall error.

\underline{Step 1: Establishing \eqref{hom_eq_ballTkstacked}.}
The claim \eqref{hom_eq_ballTkstacked} follows directly from 
\begin{equation}\label{hom_eq_ball_reachTk}
	\partial B_{R_n}	\hsubs[h(r_n)]		\reach[(A,F)]{\sum_{k=1}^n \pp{t_k^{(A,F)} + C h(r_k)}}{\ov{B}_{h(R_\eps)}},
	\qquad\text{for all $n=1,\ldots,N$}
\end{equation}
for $(A,F)\in\bigcap_{n=1}^N\Espeed{h(r_n)}(\ov{B}_{R_n})$, which we obtain via induction.

For $n=0$ with $R_0=h(R_\eps)$ we have $\partial B_{R_0}\subset \ov{B}_{h(R_\eps)}$.
Let $n\geq 1$ with 
\begin{equation*}
	\partial B_{R_{n-1}}	\hsubs[h(r_{n-1})]		\reach[(A,F)]{T_{n-1}}{\ov{B}_{h(R_\eps)}}
	\qquad\text{for } 
	T_{n-1}\coloneqq \sum_{k=1}^{n-1} \pp{t_k^{(A,F)} + C h(r_k)}.
\end{equation*}
Since $(A,F)\in\Espeed{h(r_n)}(\ov{B}_{R_n})$, due to Assumption \ref{veff_aP_star} there is a stable $(h(r_n),\cstable)$-approximation $S_h$ of $\ov{B}_{h(R_\eps)}$, for which the comparison principle from Lemma \ref{not_lem_comp} yields
\begin{equation*}
	\partial B_{R_{n-1}}	\hsubs[h(r_{n-1})]		\reach[(A,F)]{T_{n-1}}{\ov{B}_{h(R_\eps)}}
						\hsubs[h(r_n)]			\reach[(A,F)]{T_{n-1}+Ch(r_n)}{S_h}.
\end{equation*}
Since by Assumption \ref{veff_aP_star} $(A,F)$ admits an effective minimum speed on the scale $h(r_n)$ for $S_h$, an additional time step of size $C h(r_n)$ yields $\partial B_{R_{n-1}}\hsubs[h(r_n)]\reach[(A,F)]{T_{n-1}+Ch(r_n)}{S_h}$.
Using the stability of $\reach[(A,F)]{T_{n-1}+Ch(r_n)}{S_h}$ inherited from $S_h$ via Lemma \ref{stable_lem_preserv}, we can fill any of its holes which intersect $\ov{B}_{R_{n-1}}$ as in the proof of the comparison principle from Lemma \ref{not_lem_comp}\ref{not_list_lemcomp2} without changing any arrival times outside of $\ov{B}_{R_{n-1}}$.
Due to the comparison principle, the definition of $t_n$ in \eqref{hom_eq_ballTk} and the stability we thus obtain 
\begin{equation*}
	x+\ov{B}_{h(r_n)}\cap \reach[(A,F)]{T_{n-1}+Ch(r_n)+t_n^{(A,F)}}{S_h} \neq \emptyset
	\qquad\text{for all }x\in \partial B_{R_{n}}.
\end{equation*}
With the effective minimum speed, after another time step of size $C h(r_n)$ we hence obtain 
\begin{equation*}
	\partial B_{R_{n}}\hsubs[h(r_n)]\reach[(A,F)]{T_{n-1}+t_n^{(A,F)}+Ch(r_n)}{S_h}\subset \reach[(A,F)]{T_{n-1}+t_n^{(A,F)}+Ch(r_n)}{\ov{B}_{h(R_\eps)}},
\end{equation*}
where we used that $S_h\subset \ov{B}_{h(R_\eps)}$ and the comparison principle.
This yields \eqref{hom_eq_ball_reachTk} by induction and thus establishes \eqref{hom_eq_ballTkstacked}.

\underline{Step 2: Estimate on $\EV{t_n}$.}
Assume that $r_1,\ldots,r_n$ and thus $R_n=h(R_\eps)+\sum_{k=1}^n r_k$ are already chosen.
In order to relate the arrival times in the definition of $t_{n+1}$ from \eqref{hom_eq_ballTk} to the homogenized speed $\vhom$, we need to find proper placements for half-space approximations $\hsma{e}{\beta}{h(r_{n+1})}{r_{n+1}}$, which were used to define the homogenized speed.
For each $y=\pp{R_n+r_{n+1}}e$ with $e\in\Sd$, we place a half-space approximation $\hsma{e}{\beta}{h(r_{n+1})}{r_{n+1}}$ inside $ \ov{B}_{R_n}$, but as close to $y$ as possible, see Figure \ref{hom_fig_ball-gap}.
Due to the curvature of the ball, as noted in Figure \ref{hom_fig_ball-gap}, there is a gap of size $\rho_{n+1}>0$ between the boundary of the ball and the center of the half-space approximation given by
\begin{equation}\label{hom_eq_rho}
 	\pp{R_n-\rho_{n+1}}^2+\pp{C_\beta r_{n+1}^\beta}^2
 	=	R_n^2,
 	\quad\text{implying}\quad
 	\rho_{n+1}\leq C\frac{r_{n+1}^{2\beta}}{R_n},
 \end{equation} 
where $C_\beta r_{n+1}^\beta$ is the width of $\hsma{e}{\beta}{h(r_{n+1})}{r_{n+1}}$ as defined in Corollary \ref{lin_cor_bd}.
\begin{figure}[h]
\centering
\includegraphics[width=0.7\textwidth]{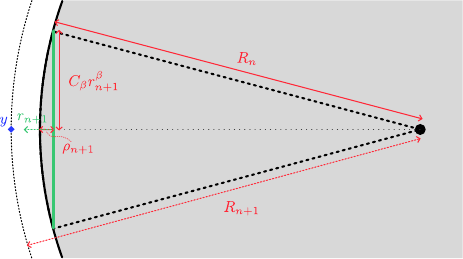}
\caption{	A sketch of the argument used to estimate the gap between the boundary of balls and the aligned half-space approximations in the proof of the lower bounds for the expansion speed of balls.	
			The ball $\ov{B}_{R_n}$ is depicted in grey with a target point $y\in \partial \ov{B}_{R_{n+1}}$ depicted in blue.			
			A half-space approximation (depicted in green) is aligned inside $\ov{B}_{R_n}$, which results in a gap of size $\rho_{n+1}>0$ between the center of the half-space approximation and the boundary of the ball.
}
\label{hom_fig_ball-gap}
\end{figure}
We hence bound the arrival time of the ball $\ov{B}_{R_n}$ at the point $y=\pp{R_n+r_{n+1}}e$ -- as in the definition of $t_{n+1}$ from \eqref{hom_eq_ballTk} -- by the arrival time from the half-space approximation $\hsma{e}{\beta}{h(r_{n+1})}{r_{n+1}}+\pp{R_n-\rho_{n+1}}e$ at the point $y-\rho_{n+1}e$ plus an error term corresponding to bridging the remaining gap at minimum effective speed, that is
\begin{align*}
	&\metb[(A,F)]{h(r_{n+1})}{y}{ \ov{B}_{R_n}	}\\
	&\quad\leq		\metb[(A,F)]{h(r_{n+1})}{r_{n+1}e+\pp{R_n-\rho_{n+1}}e}{\hsma{e}{\beta}{h(r_{n+1})}{r_{n+1}}+\pp{R_n-\rho_{n+1}}e} \\
	&\qquad			+	C\pp{\rho_{n+1}+h(r_{n+1})}
\end{align*}
for $(A,F)\in\Espeed{h(r_{n+1})}\pp{\ov{B}_{R_{n+1}}}$.
We have thus reduced the expansion of the ball to the expansion of infinitely many half-space approximations at the cost of the error term  $C\pp{\rho_{n+1}+h(r_{n+1})}$, obtaining
\begin{multline*}
	t_{n+1}\charfun[\Espeed{h(r_{n+1})}\pp{\ov{B}_{R_{n+1}}}]	-	 C\pp{\rho_{n+1}+h(r_{n+1})}	\\
		\leq		\sup_{e\in\Sd}\metb{h(r_{n+1})}{r_{n+1}e+\pp{R_n-\rho_{n+1}}e}{\hsma{e}{\beta}{h(r_{n+1})}{r_{n+1}}+\pp{R_n-\rho_{n+1}}e}.
\end{multline*}
Analogously to the proof of Proposition \ref{lin_prop_sub}, for $(A,F)\in\Espeed{h(r_{n+1})}\pp{\ov{B}\pp{R_{n+1}}}$ we can reduce the supremum over infinitely many arrival times to one over finitely many, control the fluctuations with a union bound and the fluctuation bounds from Proposition \ref{fluct_prop}, and finally use the stationarity and isotropy from Assumptions \ref{sett_aP_stat} and \ref{sett_aP_ani} to obtain 
\begin{multline*}
	\EV{t_{n+1}}
	\leq		\EV{\metb{h(r_{n+1})}{r_{n+1}e}{\hsma{e}{\beta}{h(r_{n+1})}{r_{n+1}}}} \\
			+ C\sqrt{h(r_{n+1})r_{n+1}}\log\pp{R_{n+1}}
			+  C\pp{\rho_{n+1}+h(r_{n+1})}
\end{multline*}
for some $e\in\Sd$ and $C=C(\vtheta,\data)$.
Now, assuming without loss of generality that $r_{n+1}\geq C=C(\data,\delFdel)$ is large enough to apply Corollary \ref{lin_cor_ErrBou} and plugging in the bound for $\rho_{n+1}$ from \eqref{hom_eq_rho} yields
\begin{align}
	\EV{t_{n+1}}
	&\leq	\vhom^{-1}r_{n+1}
			+	Cr_{n+1}^{1-\pp{\frac{\beta}{2}-\frac{3}{4}}\hoel}
			+	C\sqrt{h(r_{n+1})r_{n+1}}\log\pp{R_{n+1}}
			+ 	C\frac{r_{n+1}^{2\beta}}{R_n}\notag\\
	&\eqqcolon		\vhom^{-1}r_{n+1}	+	\tau_{n+1,force}	+	\tau_{n+1,fluct}	+	\tau_{n+1,curve}.\label{hom_eq_ballstep}
\end{align}
In order to balance the three error contributions $\tau_{n+1,force}, \tau_{n+1,fluct}, \tau_{n+1,curve}$ and to optimize the overall error, we make two choices: 
First, for the width scaling of the half-space approximations we choose $\beta=\frac{3}{2}+\frac{1-\vtheta}{\hoel}$, balancing the error $\tau_{n+1,force}$ due to the forcing change and the H\"older continuity from Assumption \ref{sett_aP_gen} with the error $\tau_{n+1,fluct}$ from the fluctuation bounds.
Second, we choose the enlargement $r_{n+1}$ to balance these errors with the error $\tau_{n+1,curve}$ due to the gap.

In summary, we have 
\begin{align}\label{hom_eq_ballTaubound}
	\EV{t_{n+1}}	\leq		\vhom^{-1}r_{n+1} + C\log(R_{n+1})\sqrt{r_{n+1}^{1+\vtheta}}
	\qquad\text{for}\quad	
	r_{n+1}^{2\pp{\frac{3}{2}+\frac{1-\vtheta}{\hoel}}-\frac{1+\vtheta}{2}}\approx	R_n,
\end{align}
establishing the scaling for our recursive choice of $r_{n+1}$ based on $r_1,\ldots,r_n$.

\underline{Step 3: Analyzing the recursive formula and calculating the overall error.}
We choose $r_{n+1}=R_{n+1}-R_n$ as in \eqref{hom_eq_ballTaubound}, that is 
\begin{align}
	R_{n+1}		&\coloneqq	R_n+c_{n+1}R_n^{\gamma}	=	\pp{1+c_{n+1}R_n^{\gamma-1}}R_n,\notag\\
	R_0			&\coloneqq	h(R_\eps)=R_{\eps}^\vtheta,\notag
\intertext{with}
	\gamma	&\coloneqq \pp{2\pp{\frac{3}{2}+\frac{1-\vtheta}{\hoel}}-\frac{1+\vtheta}{2}}^{-1}
			=			\frac{2}{1-\vtheta+4\pp{\frac{1-\vtheta}{\hoel}+1}}		\in(0,1),\label{hom_eq_gamma}
\end{align}
and with the  coefficients $\pp{c_{n}}_n\subset\pp{\frac{1}{2},1}$ chosen such that there is $N\in\N$ with $R_N=R_\eps$.
Since the radii are increasing, for $1\leq n \leq N$ with $R_0\leq R_n\leq R_N=R_\eps+h(R_\eps)\leq 2R_\eps$ we have
\begin{align*}
	R_\eps	\geq		R_0\prod_{n=1}^N \pp{1+\frac{1}{2}R_n^{\gamma-1}}
			\geq		\pp{1+\frac{1}{4}R_\eps^{\gamma-1}}^N R_0 		
\end{align*}
and thus
\begin{align*}
	N	\leq	\frac{\log\pp{\frac{R_\eps}{R_0}}}{\log\pp{1+\frac{1}{4}R_\eps^{\gamma-1}}}
		\leq		CR_\eps^{1-\gamma}\log\pp{R_\eps}.
\end{align*}
Based on \eqref{hom_eq_ballTkstacked}, \eqref{hom_eq_ballTkstacked_Error}, \eqref{hom_eq_ballstep} and \eqref{hom_eq_ballTaubound} due to the choice of the incremental enlargements from Step 1, we obtain
\begin{align*}
	\EV{\metb[(A,F)]{h(R_\eps)}{\pp{R_\eps+ h(R_\eps)}e}{\ov{B}_{h(R_\eps)}}}
	&\leq		\sum_{k=1}^N \EV{t_n} + C h(R_\eps)\\
	&\leq		\vhom^{-1}R_\eps	 + C h(R_\eps)	+  C\log\pp{R_\eps}\sum_{n=1}^N	\sqrt{r_{n}^{1+\vtheta}},
\end{align*}
where the term $CR\sum_{n=1}^N\pp{1-\PM{\Espeed{h(r_n)}(\ov{B}_{R_n})}}$ from \eqref{hom_eq_ballTkstacked_Error} can be bound by a constant due to the probability bound from Assumption \ref{veff_aP_star}, which allows us to absorb all polynomial prefactors since $h(r_n)\geq h(r_1)\geq \frac{1}{2}h(R_\eps)^\gamma$.
Regarding the main error term, applying Jensen's inequality we obtain
\begin{multline*}
	\sum_{n=1}^N		r_{n}^{\frac{1+\vtheta}{2}}
	=N	\frac{1}{N} \sum_{n=1}^N		r_{n}^{\frac{1+\vtheta}{2}}
	\leq		N\pp{\frac{1}{N}\sum_{n=1}^N r_n}^{\frac{1+\vtheta}{2}}
	=	N^{\frac{1-\vtheta}{2}}R_\eps^{\frac{1+\vtheta}{2}}\\
	\leq		CR_\eps^{\pp{1-\gamma}\frac{1-\vtheta}{2}+\frac{1+\vtheta}{2}}\pp{\log\pp{R_\eps}}^{\frac{1-\vtheta}{2}}.
\end{multline*}
Multiplying with $\eps$ and plugging in $R_\eps=\eps^{-1}R$ as well as the definition of $\gamma\in(0,1)$ from \eqref{hom_eq_gamma} we hence obtain \eqref{hom_eq_ball}.
\end{proof}

\begin{proposition}[Lower bound for arrival times]\label{hom_prop_hole}
Assume that the random coefficient field $(A,F)\in\Omega$ with probability distribution $\Pm$ satisfies Assumptions \ref{sett_aP_stat}, \ref{sett_aP_fin}, \ref{sett_aP_Aconst}, \ref{sett_aP_ani}, and \ref{sett_aP_gen}.
Assume that there is $\delFdel\in(0,1]$ such that all coefficient fields $(A,F+\delF)$ with $\delF\in\bp{0,\delFdel}$ satisfy Assumption \ref{veff_aP_star}.
Let $\vhom>0$ be the homogenized speed defined in \eqref{hom_eq_vhom} and $h(r)=r^\vtheta $ for $0<\vtheta<1$.

Then there exists a constant $C=C(\vtheta,\data)>0$ such that for any $R>0$ and $\eps>0$ with $R_\eps\coloneqq \eps^{-1}R$ there holds
\begin{multline}\label{hom_eq_hole}
	\eps\EV{\metb{h(R_\eps)}{0}{\Rd\setminus B_{R_\eps}}}	- \vhom^{-1}R\\
	\geq	 	-	C R^{1-\alpha}\log(\max\Bp{R,2})^{\frac{3-\vtheta}{2}}\eps^{\alpha}\log(\eps^{-1})^{\frac{3-\vtheta}{2}}
\end{multline}
with $\alpha\coloneqq	\pp{1+4\pp{\frac{1}{1-\vtheta}+\frac{1}{\hoel}}}^{-1}$, where $\hoel$ is the H\"older coefficient from Assumption \ref{sett_aP_gen}.
\end{proposition}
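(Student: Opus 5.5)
We mirror the proof of Proposition \ref{hom_prop_ball}, now for the shrinking hole $\Rd\setminus B_{R_\eps}$. Choose decreasing radii $R_0\coloneqq R_\eps$, $R_n\coloneqq R_{n-1}-r_n$, stopping at $R_N=h(R_\eps)$. Define the first-passage times
\begin{equation*}
	t_n\coloneqq \inf\Bp{t\geq 0\,:\,\reach{t}{\Rd\setminus B_{R_{n-1}}}\cap \ov{B}_{R_n}\neq\emptyset},
\end{equation*}
analogous to $\Tfirst$ in Proposition \ref{lin_prop_sup}.

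\textbf{Step 1: stacking.} Show that on $\bigcap_n\Espeed{h(r_n)}(\cdot)$ the arrival time dominates the sum of these times up to errors:
\begin{equation*}
	\metb{h(R_\eps)}{0}{\Rd\setminus B_{R_\eps}}\geq \sum_{n=1}^N\pp{t_n-Ch(r_n)}.
\end{equation*}
The argument is by induction: up to time $\sum_{k\le n}t_k$, the evolution has not entered $B_{R_n}$ (up to $h$-scale corrections). By the semigroup property and comparison (Lemma \ref{not_lem_comp}), from that time on the evolution is contained in the evolution of $\Rd\setminus B_{R_n}$. The truncation only matters off the events $\Espeed{}$, and that error is absorbed as in \eqref{hom_eq_ballTkstacked_Error}.

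\textbf{Step 2: lower bound on $\EV{t_{n}}$.}

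For each direction $e$, place a translated half-space approximation $\hsma{e}{\beta}{h(r_n)}{r_n}$ slightly outside $B_{R_{n-1}}$, with its flat face tangent-ish at $-R_{n-1}e$ from outside. The curvature gap is $\rho_n\le C r_n^{2\beta}/R_{n-1}$, as in \eqref{hom_eq_rho}. Its target area sits at distance $\approx r_n+\rho_n$ inward.

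Then apply Corollary \ref{lin_cor_bd}, in its variant for any closed $S$ with $\hsma{}{}{}{}\subset S\subset\hsm{}\cup(\Rd\setminus\ov{B}_{C_\beta r^\beta})$, in the form used in Step 1 of Proposition \ref{lin_prop_sup}. After possibly shifting by $\rho_n$, the hole is contained, inside the large half-ball, in the half-space region near the disk. So Proposition \ref{inf_prop_half} shows the hole's evolution stays behind the disk evolved with forcing $F+f(r_n)$ for times $\lesssim r_n$. This gives, modulo $C(\rho_n+h(r_n))$,
\begin{equation*}
	t_n\ge\inf_{e}\metb[(A,F+f(r_n))]{h(r_n)}{\cdot}{\text{disk}_e}.
\end{equation*}
This is where $\delF\in[0,\delFdel]$ in \ref{veff_aP_star} is needed.

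Next, discretize the infimum to a grid of polynomially many directions, using the effective minimum speed as in Step 2 of Proposition \ref{lin_prop_sup}. Apply a union bound with the fluctuation bounds of Proposition \ref{fluct_prop}, and use stationarity together with isotropy \ref{sett_aP_ani} to reduce to a single $e$. This yields $\EV{t_n}\ge \EV{\metb[(A,F+f(r_n))]{h(r_n)}{r_ne}{\hsma{e}{\beta}{h(r_n)}{r_n}}}-C\sqrt{h(r_n)r_n}\log R_{n-1}-C(\rho_n+h(r_n))$.

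The first lower bound of Corollary \ref{lin_cor_ErrBou}, with $\delF=f(r_n)$, then gives
\begin{equation*}
	\EV{t_n}\ge \vhom^{-1}r_n-Cr_n^{1-(\frac\beta2-\frac34)\hoel}-C\log(R_{n-1})\sqrt{r_n^{1+\vtheta}}-Cr_n^{2\beta}/R_{n-1}.
\end{equation*}

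\textbf{Step 3: choice of parameters and summation.} Choose $\beta=\frac32+\frac{1-\vtheta}{\hoel}$ and $r_n\approx R_{n-1}^\gamma$, with $\gamma$ as in \eqref{hom_eq_gamma}. The recursion now shrinks, $R_n=(1-c_nR_{n-1}^{\gamma-1})R_{n-1}$. For $R_{n-1}\ge R_N=R_\eps^\vtheta$ this still gives $N\le CR_\eps^{1-\gamma}\log R_\eps$, with care near the small radii; there $r_n\le R_{n-1}/2$ is automatic for large $R_\eps$. Summing with Jensen exactly as before gives an error of $CR_\eps^{(1-\gamma)\frac{1-\vtheta}2+\frac{1+\vtheta}2}\log(R_\eps)^{\frac{3-\vtheta}2}$. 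Multiplying by $\eps$ yields \eqref{hom_eq_hole}.

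The main obstacle is Step 2's use of Proposition \ref{inf_prop_half}. The set ahead must be stable, so its roles are reversed compared to Corollary \ref{lin_cor_bd}. One must check that the disk, evolved with increased forcing, dominates the hole in the upper half-ball, including near the curved boundary where the hole bulges. This is handled exactly by the gap $\rho_n$ and by requiring the disk radius $C_\beta r_n^\beta\ll R_{n-1}$. The latter holds since $2\beta\gamma<1$ by the choice of $\gamma$.
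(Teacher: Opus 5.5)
Your proposal follows the paper's proof essentially step for step: telescoping over shrinking radii $R_n=R_{n-1}-r_n$, half-space approximations pushed $\rho_n$ inside $\partial B_{R_{n-1}}$ and compared to the hole via Corollary~\ref{lin_cor_bd} with forcing $F+f(r_n)$, a grid plus union bound with Proposition~\ref{fluct_prop}, stationarity and isotropy, Corollary~\ref{lin_cor_ErrBou}, then $\beta=\frac32+\frac{1-\vtheta}{\hoel}$, $r_n\approx R_{n-1}^\gamma$ and Jensen. Defining $t_n$ as the exact first time the evolution touches $\ov B_{R_n}$, instead of the paper's infimum of truncated target-area arrival times, is an immaterial variant; the one slip is that what you need for $C_\beta r_n^\beta\ll R_{n-1}$ is $\beta\gamma<1$ (true since $\beta>1$), not $2\beta\gamma<1$, which is false: in fact $2\beta\gamma-1=\gamma\frac{1+\vtheta}{2}>0$, which is exactly what lets $\rho_n\approx R_{n-1}^{2\beta\gamma-1}$ balance the fluctuation term $R_{n-1}^{\gamma(1+\vtheta)/2}$.
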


\begin{proof}
Up to some smaller details, this proof is analogous to the proof of Proposition \ref{hom_prop_ball}, only that the geometric setup is mirrored.
The idea here is to show that we can expect the round hole to shrink not much faster when evolving according to the heterogeneous coefficient field compared to the homogenized speed, that is
\begin{align*}
	\reach{t-\mathrm{Err}_t}{\Rd\setminus B_{R_\eps}}
	&\hsubs[h(R_\eps)]	\Rd\setminus\pp{R_\eps-t\vhom}B_{1}
	=		\reach[{hom,\vhom}]{t}{\Rd\setminus B_{R_\eps}}
\end{align*}
for all $0\leq t\leq \vhom^{-1}R$ with a suitably controlled random error term $t\mapsto \mathrm{Err}_t$.

To be more precise, we again iteratively choose a suitable sequence of enlargements $0<r_1,\ldots,r_N$ resulting in shrinking radii $R_n\coloneqq R_\eps-\sum_{k=1}r_k$ for $n\in\Bp{1,\ldots,N}$.
We will show that the time until the evolution $\Rd\setminus B_{R_{n-1}}$ first reaches the boundary of the next smaller hole $\Rd\setminus B_{R_{n}}$ is not much faster than $\vhom^{-1}r_{n}$.
That is, for $n\in\Bp{1,\ldots,N}$ we define the times
\begin{align}\label{hom_eq_holeTk}
	t_n	=			t_n^{(A,F)}
		\coloneqq	\inf\Bp{\metb[(A,F)]{h(r_n)}{y}{\Rd\setminus B_{R_{n-1}}}	\,:\,	y\in\partial  B_{R_n}},
\end{align}
see Figure \ref{hom_fig_hole_tn}.
\begin{figure}[h]
\centering
\includegraphics[width=0.35\textwidth]{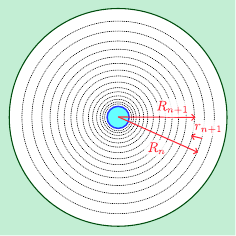}
$\quad$
\includegraphics[width=0.35\textwidth]{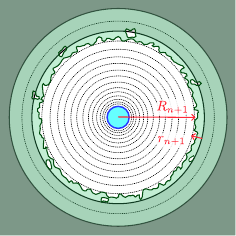}
\caption{	An illustration of the argument for bounding the shrinking speed of holes from above.
			Left: The exterior of the initial hole with radius $R_0=R_\eps$ is depicted in green.
			The target area $\ov{B}_{R_N}$ with radius $R_N=h(R_\eps)$ is depicted in blue.
			In-between, dotted lines denote the boundaries of intermediate holes with radii $(R_n)_n$ corresponding to the consecutive enlargements $(r_n)_n$.
			Right: For $n\in\Bp{0,\ldots,N-1}$, after time $t_{n+1}$ the evolution $\reach{t_{n+1}}{\Rd\setminus B_{R_n}}$ (depicted in green) of $\Rd\setminus B_{R_n}$ (depicted in light grey) first reaches $\partial B_{R_{n+1}}$.
		}
\label{hom_fig_hole_tn}
\end{figure}
Clearly, following the actual evolution to the center of the hole is slower than fully filling up the complement every time the boundary of the next smaller hole is reached and then continuing the evolution.
That is, with $R_N=h(R_\eps)$ we have 
\begin{align}\label{hom_eq_holeTimeTk}
	\metb[(A,F)]{h(R_\eps)}{0}{\Rd\setminus B_{R_\eps}}
	\geq		\sum_{n=1}^N t_n^{(A,F)}
\end{align}
for $(A,F)\in\Espeed{h(R_\eps)}\pp{\ov{B}_{R_\eps}}$, because using the definition of the $(t_n)_n$ and iteratively applying the comparison principle yields  $\reach{\sum_{k=1}^n t_n}{\Rd\setminus B_{R_\eps}}\subset	\Rd\setminus B_{R_{n}}$ for all $n$.
Mirroring the proof of Proposition \ref{hom_prop_ball}, we will choose the enlargements $(r_n)_n$ such that there are relatively small errors $(\tau_n)_n$ satisfying
\begin{align}\label{hom_eq_holeTkbound}
	\EV{t_n}\geq		\vhom^{-1}r_n	-	\tau_n
	\qquad\text{for all }n\in\Bp{1,\ldots,N},
\end{align}
which will yield \eqref{hom_eq_hole} with the error term given by $\eps\sum_{k=1}^N\tau_n$.

For the first part of the proof, we will estimate $\EV{t_n}$ for $n\in\N$ and the different contributions to the error $\tau_n$ for a given enlargement choice $r_n$ in order to deduce a recursive formula for $(r_n)_n$ which optimizes the scaling of $(\tau_n)_n$ and the overall error.
Since the resulting scaling will be essentially the same as in the proof of Proposition \ref{hom_prop_ball}, in the second part of the proof we will then briefly summarize the analysis of the resulting sequence and the calculation of the overall error. 

\underline{Step 1: Estimate on $\EV{t_n}$.}
Assume that $r_1,\ldots,r_n$ and thus $R_n=R_\eps-\sum_{k=1}^n r_k$ are already chosen.
Again, in order to relate the arrival times in the definition of $t_{n+1}$ from \eqref{hom_eq_holeTk} to the homogenized speed $\vhom$, we need to properly place half-space approximations $\hsma{e}{\beta}{h(r)}{r}$, ideally tangentially to the hole. 
However, since we now want to bound the arrival times from below, we need to place the disk approximating the half-space `ahead' of the hole's curve, such that the boundary of the disk is aligned with the boundary of the hole, see Figure \ref{hom_fig_hole-gap}.
\begin{figure}[h]
	\centering
	\includegraphics[width=0.7\textwidth]{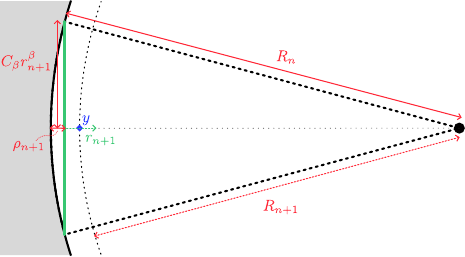}
	\caption{	A sketch of the argument used to estimate the gap between the boundary of holes and the aligned half-space approximations in the proof of the upper bounds for the shrinking speed of holes.	
				The set  $\Rd\setminus B_{R_n}$  is depicted in grey with a target point $y\in \partial B_{R_{n+1}}$ depicted in blue.			
				A half-space approximation (depicted in green) is aligned ahead of $\Rd\setminus B_{R_n}$, which results in a gap of size $\rho_{n+1}>0$ between the center of the half-space approximation and the boundary of the hole.
}
	\label{hom_fig_hole-gap}
\end{figure}
The gap $\rho_{n+1}>0$ between the center of the disk and the boundary of the hole is the same as in the mirrored situation from the proof of Proposition \ref{hom_prop_ball}, given by 
\begin{equation*}
 	\pp{R_n-\rho_{n+1}}^2+\pp{C_\beta r_{n+1}^\beta}^2
 	=	R_n^2,
 	\quad\text{implying}\quad
 	\rho_{n+1}\leq C(\beta,\data)\frac{r_{n+1}^{2\beta}}{R_n},
 \end{equation*} 
where $C_\beta r_{n+1}^\beta$ is the width of $\hsma{e}{\beta}{h(r_{n+1})}{r_{n+1}}$ as defined in Corollary \ref{lin_cor_bd}.

Consider the definition of $t_{n+1}$ from \eqref{hom_eq_ballTk}.
For a target point $y=-\pp{R_n-r_{n+1}}e$ on the boundary of the smaller hole, we now apply Corollary \ref{lin_cor_bd} to compare the half-space approximation $\hsma{e}{\beta}{h(r_{n+1})}{r_{n+1}}-\pp{R_n-\rho_{n+1}}e$ to the hole $\Rd\setminus B_{R_n}$.
For $f(r)=r^{-\frac{\beta}{2}+\frac{3}{4}}$ from Corollary \ref{lin_cor_bd}, and $(A,F)\in\Espeed{h(r_{n+1})}\pp{\ov{B}_{R_n}}$, using the thus guaranteed minimum effective speed we obtain
\begin{align*}
	&\metb[(A,F)]{h(r_{n+1})}{y}{ \Rd\setminus B_{R_n}}	\\
	&\,\,	\geq		\metb[(A,F)]{h(r_{n+1})}{r_{n+1}e-\pp{R_n-\rho_{n+1}}e}{ \Rd\setminus B_{R_n}}	-	C\pp{\rho_{n+1}+h(r_{n+1})}\\
	&\,\,	\geq		\metb[(A,F+f(r_{n+1}))]{h(r_{n+1})}{r_{n+1}e-\pp{R_n-\rho_{n+1}}e}{\hsma{e}{\beta}{h(r_{n+1})}{r_{n+1}}-\pp{R_n-\rho_{n+1}}e} \\
	&\,\,	\quad	-	C\pp{\rho_{n+1}+h(r_{n+1})}.
\end{align*}
We have thus reduced the shrinking of the hole to the expansion of infinitely many half-space approximations at the cost of the error term $C\pp{\rho_{n+1}+h(r_{n+1})}$ and changing the forcing by a constant, obtaining
\begin{multline*}
	t_{n+1}^{(A,F)}	+	 C\pp{\rho_{n+1}+h(r_{n+1})}	\\
	\geq		\inf_{e\in\Sd}\metb[(A,F+f(r_{n+1}))]{h(r_{n+1})}{r_{n+1}e-\pp{R_n-\rho_{n+1}}e}{\hsma{e}{\beta}{h(r_{n+1})}{r_{n+1}}-\pp{R_n-\rho_{n+1}}e}.
\end{multline*}
for $(A,F)\in\Espeed{h(r_{n+1})}\pp{\ov{B}_{R_{n+1}}}$.
Analogously to the proof of Proposition \ref{lin_prop_sup}, we can reduce the infimum to one over finitely many arrival times, control the fluctuations with the bounds from Proposition \ref{fluct_prop} and use the stationarity and isotropy from Assumptions \ref{sett_aP_stat} and \ref{sett_aP_ani} to obtain
\begin{multline*}
	\EV{t_{n+1}}
	\geq		\EV{\metb[(A,F+f(r_{n+1}))]{h(r_{n+1})}{r_{n+1}e}{\hsma{e}{\beta}{h(r_{n+1})}{r_{n+1}}}}\\
			-C\sqrt{h(r_{n+1})r_{n+1}}\log\pp{R_n}	
			-C\pp{\rho_{n+1}+h(r_{n+1})}
\end{multline*}
for some $e\in\Sd$ and $C=C(\beta,\vtheta,\data)>0$.
The expected value of the arrival times from the half-space approximations now provides the link to the homogenized speed, with the change in the forcing resulting in an error depending on the H\"older coefficient $\hoel$ from Assumption \ref{sett_aP_gen}.
That is, we apply Corollary \ref{lin_cor_ErrBou} to obtain
\begin{align*}
	\EV{t_{n+1}}
	&\geq		\vhom^{-1}r_{n+1}	-	Cr_{n+1}f(r_{n+1})^{\hoel}	-	C\sqrt{r_{n+1}^{1+\vtheta}}\log\pp{R_n} - C\rho_{n+1}\\
	&\eqqcolon	\vhom^{-1}r_{n+1}	-	\tau_{n+1,force}	-	\tau_{n+1,fluct}	-	\tau_{n+1,curve}.
\end{align*}
Balancing these errors just as in the proof of Proposition \ref{hom_prop_ball}, we choose $\beta=\frac{3}{2}+\frac{1-\vtheta}{\hoel}$ and obtain 
\begin{equation}\label{hom_eq_holeTaubound}
	\EV{t_{n+1}}	\geq		\vhom^{-1}r_{n+1} - C\log(R_{n})\sqrt{r_{n+1}^{1+\vtheta}}
	\qquad\text{for}\quad	
	r_{n+1}^{2\pp{\frac{3}{2}+\frac{1-\vtheta}{\hoel}}-\frac{1+\vtheta}{2}}\approx	R_n,
\end{equation}
resulting in the same error bounds and sequence of radii as above but in reverse order.

\underline{Step 2: Analyzing the recursive formula and calculating the overall error.}
Again, we look at the sequence of radii for the hole instead of the single enlargements. 
Instead of increasing as in the proof of Proposition \ref{hom_prop_ball}, these radii are decreasing with $R_{n+1}=R_n-r_{n+1}$.
We set 
\begin{align*}
	R_{n+1}	&\coloneqq	R_n-c_{n+1}R_n^{\gamma}	=	\pp{1-c_{n+1}R_n^{\gamma-1}}R_n,\\
	R_0		&\coloneqq	R_\eps
\intertext{with}
	\gamma	&\coloneqq \pp{2\pp{\frac{3}{2}+\frac{1-\vtheta}{\hoel}}-\frac{1+\vtheta}{2}}^{-1}
			=			\frac{2}{1-\vtheta+4\pp{\frac{1-\vtheta}{\hoel}+1}}		\in(0,1).
\end{align*}
and with the coefficients $(c_{n})_n\subset\pp{\frac{1}{2},1}$ chosen such that there is $N\in \N$ with $R_N=h(R_\eps)$.
To bound this number of steps, note that since we have $h(R_\eps)=R_N\leq R_n\leq R_\eps$ for all $1\leq n\leq N$, there holds
\begin{align*}
	h(R_\eps)	\leq		\pp{1-\frac{1}{2} R_\eps^{\gamma-1}}^N R_\eps
\end{align*}
and thus 
\begin{equation*}
	N	\leq		\frac{\log(\frac{R_\eps}{h(R_\eps)})}{\log\pp{\frac{1}{1-\frac{1}{2} R_\eps^{\gamma-1}}}}
		\leq		C R_\eps^{1-\gamma}\log(R_\eps).
\end{equation*}
Plugging \eqref{hom_eq_holeTaubound} into \eqref{hom_eq_holeTkbound} and the resulting bound for $\EV{t_n}$ into \eqref{hom_eq_holeTimeTk}, we obtain
\begin{align*}
	\EV{\metb{h(R_\eps)}{0}{\Rd\setminus B_{R_\eps}}} 
	&\geq		\sum_{n=1}^N\EV{t_n}\\
	&\geq		\vhom^{-1}R_\eps - C\log\pp{R_\eps} \sum_{n=1}^N r_{n}^{\frac{1+\vtheta}{2}}\\
	&\geq		\vhom^{-1}R_\eps - C\log\pp{R_\eps}	N^{\frac{1-\vtheta}{2}}R_\eps^{\frac{1+\vtheta}{2}},
\end{align*}
where for the last inequality we used Jensen's inequality as at the end of the proof of Proposition \ref{hom_prop_ball}.
In the same way, plugging in the bound for $N$ and multiplying with $\eps$ we obtain \eqref{hom_eq_hole}.
\end{proof} 

In summary, combining Proposition \ref{hom_prop_ball} and Proposition \ref{hom_prop_hole} with the arguments from Subsection \ref{subs_hom_reduc}, which allowed us to replace arbitrary sets with balls and holes, proves Theorem \ref{main_thm}.

\begin{remark}[The anisotropic case]\label{hom_rem_aniso}
In principle, Assumption \ref{sett_aP_ani} is not necessary for our argument to work.
Its role is to guarantees that as in \eqref{hom_eq_sandwHOM} we can reduce the analysis of arrival times for arbitrary sets to obtaining bounds for the arrival times of just the ball and its inverted shape, a round hole. 
This kind of reduction is still valid for an anisotropic homogeneous speed $e\mapsto\vhom(e)$ if the convex shape $K\subset\Rd$ given by
\begin{equation*}
	tK= \reach[hom,\vhom]{t}{\Bp{0}}	\qquad\text{for all }t>0
\end{equation*}
has bounded curvature and also satisfies
\begin{equation*}
	\reach[hom,\vhom]{t}{\overline{\Rd\setminus \pp{-K}}}=(1-t)\overline{\Rd\setminus \pp{-K}}	\qquad\text{for all }0<t<1.
\end{equation*}
In the isotropic case, we have $K=\vhom \ov{B}_1$ and hence clearly both conditions are satisfied.
Both conditions do not hold in general and depend on the properties of $e\mapsto \vhom(e)$.
For example, the second condition is satisfied if the homogenized evolution satisfies a dynamic programming principle, that is that the evolution of a set $S\subset\Rd$ is given by the union over the separate evolution of all included points:
\begin{equation*}
	\reach[hom,v]{t}{S}	=	\bigcup_{y\in S}\reach[hom,v]{t}{\Bp{y}}
	\qquad\text{for all }t\geq 0.
\end{equation*}
A sufficient criterion for the dynamic programming principle to hold is level-set convexity of the Hamiltonian given by the $1$-homogeneous extension of the speed, that is the convexity of the shape given by $\Sd\ni e\mapsto\vhom(e)^{-1}e$, see for example \cite{ACS14}. 
\begin{figure}[h]
\centering
\includegraphics[width=0.35\textwidth]{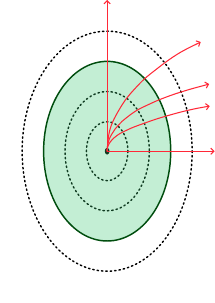}
$\quad$
\includegraphics[width=0.35\textwidth]{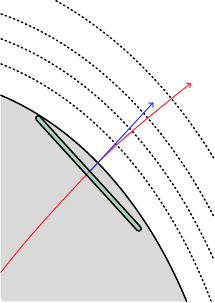}
\caption{	A depiction of the evolution of a self-similar grower with respect to a homogeneous first-order motion in the non-isotropic case.
			Left: Some normal trajectories (depicted in red) for an invariant shape $K$ (depicted in green). 
			Right: A half-space approximation (depicted in green) is tangentially aligned inside the invariant shape $K$ (depicted in grey).
			The normal of the half-space approximation (depicted in blue) is compared to the normal trajectory of the set evolution.}
\label{hom_fig_trajectories}
\end{figure}

The only major adjustment for the part of the proof presented here in Subsection \ref{subs_hom_err} is that we would have to more carefully keep track of the homogenized  motions' `trajectories':
These are the paths obtained from following the outwards normal of the invariant shape $K$ scaled up to the current point of the trajectory, see Figure \ref{hom_fig_trajectories}.
In the isotropic case these trajectories of course are straight and the velocity of the interface stays constant.
In the anisotropic case, these trajectories might be curved and hence the velocity might vary along the trajectory, but the curvature is controlled by the curvature of the invariant shape $K$.
In the proof, for a suitably chosen time step we need to show that the evolution of a tangentially fitted half-space approximation reaches the endpoint of the corresponding trajectory in a comparable time, see Figure \ref{hom_fig_trajectories}.
For the half-space approximation, Corollary \ref{lin_cor_ErrBou} up to an error guarantees the evolution along a straight trajectory with the average speed corresponding to the homogenized velocity for that direction.
Choosing the time steps small enough, the trajectory for the homogenized  motion will be close to that straight line.
The difference in velocities can be controlled by showing that $e\mapsto \vhom^{-1}(e)$ is H\"older continuous, adapting \cite[Lemma 4.7]{ArCa18} to our setting.
This leads to an additional error term in the first step of the proof of Proposition \ref{hom_prop_ball} and Proposition \ref{hom_prop_hole} respectively, which can be handled analogously.
\end{remark}

\begin{appendix}

\section{Results from the theory of viscosity solutions}\label{s_visc}
We consider the level-set evolution equation \eqref{intro_eq_SurfM_ls} in the framework of viscosity solutions.
The notion of viscosity solutions was developed in the 1980s and provides a framework which yields well-posedness and comparison principles for a wide range of scalar fully nonlinear partial differential equations of second order. 
For a general introduction see \cite{CIL92} with a detailed history of the development in the notes at the end of sections.
Here, we mainly follow the presentation in \cite{Giga06}, which summarizes and provides results for surface evolution equations.

The results presented in this section apply to evolution equations of the form
\begin{equation}\label{comp_eq_evo}
	\pat u + G(x, \nabla u, D^2 u)=0
\end{equation}
where $G\colon \Rd\times\pp{\Rd\!\setminus\!\Bp{0}}\times \Rddsym\rightarrow \R$ satisfies
\begin{enumerate}[label=(G\arabic*),left=0pt]
\item 	(Continuity)\label{comp_aG_cont}
		$G\colon \Rd\times\pp{\Rd\!\setminus\!\Bp{0}}\times \Rddsym\rightarrow \R$ is continuous.
\item	(Degenerate ellipticity)\label{comp_aG_ell}
		For any $x\in\Rd$, $\xi\in\Rd\setminus\Bp{0}$ there holds
		\begin{equation*}
			G(x,\xi,X)\leq G(x,\xi,Y)
			\quad\text{ for }X\geq Y, \, X,Y\in\Rddsym.
		\end{equation*}
\item	(Mild singularity)\label{comp_aG_fin} 
		Given the semicontinuous envelopes $G^*$ and $G_*$ as defined below in \eqref{comp_eq_envelopes}, for any $x\in\Rd$ there holds
		$-\infty<G_*(x,0,0)=G^*(x,0,0)<\infty$.
\item	(Geometricity) \label{comp_aG_geo} $G$ satisfies for all $\lambda>0$ and $\sigma\in\R$ that
		\begin{align*}
			G(x,\lambda \xi, \lambda X)&= \lambda G(x,\xi,X)
			\quad\text{for all }x\in\Rd,\, \xi\in\Rd\setminus\Bp{0},\, X\in\Rddsym,\\
			G(x,\xi, X+\sigma \xi\otimes\xi)&=G(x,\xi,X)
			\quad\text{for all }x\in\Rd,\, \xi\in\Rd\setminus\Bp{0},\, X\in\Rddsym.
		\end{align*}
\end{enumerate}

In our case, recall that for $(A,F)\in\Omega$ in \eqref{intro_eq_GAF} we set
\begin{multline*}
	G_{(A,F)}\colon \Rd\times\pp{\Rd\!\setminus\!\Bp{0}}\times \Rddsym\rightarrow \R,\\
	(x,\xi,X)\mapsto - \tr\pp{A\pp{x, \frac{\xi}{\abs{\xi}}}X} + F\pp{x, \frac{\xi}{\abs{\xi}}}\abs{\xi},
\end{multline*}
which due to the definition of $\Omega$ satisfies \ref{comp_aG_cont}, \ref{comp_aG_ell}, and \ref{comp_aG_geo}.

Regarding \ref{comp_aG_fin}, given a metric space $M$ with $U\subset M$, for $g\colon U\rightarrow \R\cup\Bp{\pm\infty}$ the upper and lower semicontinuous envelopes are given by
\begin{equation}\label{comp_eq_envelopes}
\begin{aligned}
	g^*\colon \ov{U}\rightarrow\R\cup\Bp{\pm\infty},	\quad&	g^*(x)\coloneqq \lim_{r\searrow 0} \sup\Bp{g(y)\,:\,y\in U\cap B_{r}(x)},\\
	g_*\colon \ov{U}\rightarrow\R\cup\Bp{\pm\infty},	\quad&	g_*(x)\coloneqq \lim_{r\searrow 0} \inf\Bp{g(y)\,:\,y\in U\cap B_{r}(x)}.
\end{aligned}
\end{equation}
For example, $G_{(A,F)}=G_{(A,F)}^*=\pp{G_{(A,F)}}_*$ on $\Rd\times\pp{\Rd\!\setminus\!\Bp{0}}\times \Rddsym$ and
\begin{multline}\label{comp_eq_Gstar}
	C_{1A}\min\Bp{0,\lambda_1(X)} 	\leq \pp{G_{(A,F)}}_*(x,0,X) \\
	\leq  G_{(A,F)}^*(x,0,X) 		\leq C_{1A}\max\Bp{0,\lambda_d(X)}
\end{multline}
for $x\in\Rd$ and $X\in\Rddsym$, where $\lambda_1(X)\leq \lambda_d(X)$ are the smallest and largest eigenvalue of $X$ respectively.
In particular, $G_{(A,F)}$ satisfies \ref{comp_aG_fin}.

With respect to the stability of sub- and supersolutions, the natural limit procedure is taking the \textit{upper} and \textit{lower relaxed limits}.
For a family of functions $(g_\eps)_{\eps>0}$ on $U\subset M$ with values in $\R\cup\Bp{\pm\infty}$, these are respectively defined on $\ov{U}$ by 
\begin{align}
	\label{comp_eq_defUrellim}
	\pp{\limsup_{\eps\rightarrow 0}\!{}^*g_\eps}(x)	&\coloneqq	\lim_{\eps\rightarrow 0}\sup\Bp{g_\delta(y) \,:\, y\in U\cap B_\eps(x) \text{ and } 0<\delta<\eps},\\
	\label{comp_eq_defLrellim}
	\pp{\liminf_{\eps\rightarrow 0}\!{}_*g_\eps}(x)	&\coloneqq	\lim_{\eps\rightarrow 0}\inf\Bp{g_\delta(y) \,:\, y\in U\cap B_\eps(x) \text{ and } 0<\delta<\eps}.
\end{align}

In order to obtain values for the derivatives necessary to evaluate \eqref{comp_eq_evo} given just semicontinuous functions, semijets are introduced. 
These semijets effectively approximate the behavior of the function around a point with smooth functions from above and from below.

\begin{definition}[Semijets]\label{comp_def_jets}
For an upper semicontinuous function $u\colon U\rightarrow\R\cup\Bp{-\infty}$ with $U\subset\Rd$ locally compact,
an element $(\xi,X)\in\Rd\times\Rddsym$ is called a \textit{superjet} of $u$ at some $x_0\in U$ if $u(x_0)$ is finite and 
\begin{equation*}
	u(x)-u(x_0)
	\leq		\xi\cdot(x-x_0) + \frac{1}{2}(x-x_0)^\intercal X(x-x_0) + \sorder{\abs{x-x_0}^2}
\end{equation*}
for $x\in U$ and $x\rightarrow x_0$.
As usual, $\sorder{s}$ denotes a function such that $\sorder{s}/s\rightarrow0$ as $h\rightarrow 0$.
For the set of all superjets of $u$ at $x_0\in U$ we write $\supJ[U]u(x_0)$.

For a lower semicontinuous function $u\colon U\rightarrow\R\cup\Bp{+\infty}$ an element $(\xi,X)\in\Rd\times\Rddsym$ is a \textit{subjet} of $u$ at $x_0\in U$ if $(-\xi,-X)\in -\supJ[U](-u)(x_0)$.
For the set of all subjets of $u$ at $x_0\in U$ we write $\subJ[U]u(x_0)$.
\end{definition}

For the proof of Lemma \ref{inf_lem_stableExt} we use the following property of semijets with respect to characteristic functions.

\begin{lemma}\label{comp_lem_setsum}
Let $S, B\subset \Rd$ be closed sets.
Let $v_{S}= 1-\charfun[S]$ and $v_{S+B}= 1-\charfun[S+B]$.
Then for any $x_0\in S$, $y_0\in \Bp{x_0}+B$ there holds
\begin{equation*}
	\subJ[\Rd]v_{S+B}(y_0)\subset\subJ[\Rd]v_{S}(x_0).
\end{equation*}
\end{lemma}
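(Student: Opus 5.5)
We prove the inclusion directly from Definition~\ref{comp_def_jets}, using only that $v_{S+B}$ is pointwise dominated by a translate of $v_S$ together with equality at the two base points.

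Fix $x_0\in S$ and $y_0=x_0+b_0$ with $b_0\in B$. Since $x_0\in S$, we have $y_0\in S+B$. Hence
\begin{equation*}
	v_{S+B}(y_0)=0=v_S(x_0).
\end{equation*}

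The key pointwise observation is the following. For every $z\in\Rd$, if $z\in S$ then $z+b_0\in S+B$. Therefore
\begin{equation*}
	v_{S+B}(z+b_0)\le v_S(z)\qquad\text{for all }z\in\Rd.
\end{equation*}
Indeed, either $v_S(z)=1\ge v_{S+B}(z+b_0)$, or $v_S(z)=0$, in which case $z\in S$ and so $v_{S+B}(z+b_0)=0$.

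Now let $(\xi,X)\in\subJ[\Rd]v_{S+B}(y_0)$. By definition,
\begin{equation*}
	v_{S+B}(y)-v_{S+B}(y_0)\ge \xi\cdot(y-y_0)+\tfrac12(y-y_0)^\intercal X(y-y_0)+\sorder{\abs{y-y_0}^2}
\end{equation*}
as $y\to y_0$. We substitute $y=x+b_0$, so that $y-y_0=x-x_0$ and $y\to y_0$ exactly when $x\to x_0$. Combining the pointwise inequality with $v_S(x_0)=v_{S+B}(y_0)$ gives
\begin{align*}
	v_S(x)-v_S(x_0)&\ge v_{S+B}(x+b_0)-v_{S+B}(y_0)\\
	&\ge \xi\cdot(x-x_0)+\tfrac12(x-x_0)^\intercal X(x-x_0)+\sorder{\abs{x-x_0}^2}.
\end{align*}
Thus $(\xi,X)\in\subJ[\Rd]v_S(x_0)$. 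The finiteness and lower semicontinuity required in Definition~\ref{comp_def_jets} hold because $S$ and $S+B$ are closed, so both functions are lower semicontinuous and $\{0,1\}$-valued.

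There is no real obstacle in this argument. The only point needing care is the sign convention for subjets: the inequality must run in the correct direction, and it does, because the comparison function $v_S(\cdot)$ dominates $v_{S+B}(\cdot+b_0)$ while the two agree at the base point.
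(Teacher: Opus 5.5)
Your argument is correct and is essentially the paper's proof. Both rest on the translation $z\mapsto z+b_0$, with $b_0=y_0-x_0\in B$, which maps $S$ into $S+B$. The paper argues contrapositively via a sequence $x_n\in S$ violating the subjet inequality for $v_S$ at $x_0$, tacitly using that violations can only occur where $v_S=0$. Your pointwise domination $v_{S+B}(\cdot+b_0)\le v_S$, with equality at $x_0$, gives the inclusion directly and avoids that tacit step.

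One small inaccuracy: for general closed $S,B$ the sum $S+B$ need not be closed. It is closed when $B$ is compact, as in the application $B=\ov{B}_\delta$. So your remark on lower semicontinuity of $v_{S+B}$ is not justified in that generality, but your inclusion argument never uses it.
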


\begin{proof}
We will show that $\Rd\times\Rddsym\setminus\subJ[\Rd]v_{S+B}(y_0)\supset\Rd\times\Rddsym\setminus\subJ[\Rd]v_{S}(x_0)$.
Let $(\xi,X)\in\Rd\times\Rddsym\setminus\subJ[\Rd]v_{S}(x_0)$.
Then there exists a sequence $\pp{x_n}_n\subset S$ and a constant $c>0$ such that $x_0	=\lim_{n\rightarrow\infty}x_n$ but
\begin{align*}
	\frac{0-0 + \xi\cdot(x_n-x_0) + \frac{1}{2}(x_n-x_0)^\intercal X(x_n-x_0)}{\abs{x_n-x_0}^2}\geq c
 		\quad\text{for all }n\in\N.
\end{align*} 
Note that $y_0-x_0\in B$.
Hence, we have $y_n\coloneqq x_n+\pp{y_0-x_0}\in S+B$.
The sequence  $\pp{y_n}_n$ satisfies $y_0	=\lim_{n\rightarrow\infty}y_n$ and
\begin{align*}
	\frac{v_{S+B}(y_n)-v_{S+B}(y_0) + \xi\cdot(y_n-y_0) + \frac{1}{2}(y_n-y_0)^\intercal X(y_n-y_0)}{\abs{y_n-y_0}^2}\geq c
 		\quad\text{for all }n\in\N.
\end{align*} 
This implies $(\xi,X)\notin\subJ[\Rd]v_{S+B}(y_0)$ and concludes the proof.
\end{proof}

With the semijets providing a surrogate for the derivatives, we can define semicontinuous sub- and supersolutions.

\begin{definition}
Let $U\subset\Rd$ be an open set.
\begin{enumerate}[label=(\roman*), left=0pt]
\item	$u\colon U\rightarrow \R\cup\Bp{-\infty}$ is a subsolution of $G(x,\nabla u,D^2u)=0$ in $U$ if $u^*(x)<\infty$ for $x\in U$ and
		\begin{equation*}
			G_*(x,\xi,X) \leq 0
			\quad\text{for all }x\in U \text{ and } (\xi,X)\in \supJ[U]u^*(x).
		\end{equation*}
\item	We call $u\colon U\rightarrow \R\cup\Bp{-\infty}$ a supersolution of $G(x,\nabla u,D^2u)=0$ in $U$ if $u_*(x)>-\infty$ for $x\in U$ and
		\begin{equation*}
			G^*(x,\xi,X) \geq 0
			\quad\text{for all }x\in U \text{ and } (\xi,X)\in \subJ[\Ut]u_*(x).
		\end{equation*}
\end{enumerate}
\end{definition}

For equations such as \eqref{comp_eq_evo} it is convenient to separate the spatial and time derivatives, using parabolic semijets instead of the standard semijets.

\begin{definition}[Parabolic semijets]\label{comp_def_pjets}
Fixing a locally compact set $\Ut\subset\Rd\times\R$, let $u\colon \Ut\rightarrow\R\cup\Bp{-\infty}$ be an upper semicontinuous function.
\begin{enumerate}[label=(\roman*), left=0pt]
\item	An element $(a,\xi,X)\in\R\times\Rd\times\Rddsym$ is called a \textit{parabolic superjet} of $u$ at some $(x_0,t_0)\in\Ut$ if
		\begin{multline*}
			u(x,t)-u(x_0,t_0)
			\leq 	a(t-t_0) + \xi\cdot(x-x_0) + \frac{1}{2}(x-x_0)^\intercal X(x-x_0)\\
					+\sorder{\abs{t-t_0}+\abs{x-x_0}^2}
					\quad\text{for $(x,t)\in \Ut$ as $(x,t)\rightarrow(x_0,t_0)$.}
		\end{multline*}
		The set of all parabolic superjets of $u$ at $(x_0,t_0)$ is denoted with $\supP[\Ut]u(x_0,t_0)$.
\item	The closure of the superjets at $(x_0,t_0)\in\Rd\times\R$ is restricted to sequences with a continuous approach of $u(x_0,t_0)$:
		\begin{align*}
			\supPc[\Ut]u(t_0,x_0)\coloneqq
			\Big\lbrace (a,&\xi,X)\in \R\times\Rd\times\Rddsym	\,:\,
				\text{there is } (t_j,x_j,a_j,\xi_j,X_j)_{j\in\N} \text{ with}\\
				&(x_j,t_j,u(x_j,t_j),a_j,\xi_j,X_j)\overset{j\rightarrow\infty}{\rightarrow}(x_0,t_0,u(x_0,t_0),a_0,\xi_0,X_0)\\
				&\text{and }(a_j,\xi_j,X_j)\in \supP[\Ut]u(x_j,t_j) 
			\Big\rbrace.
		\end{align*}
\item 	We call $(a,\xi,X)\in\R\times\Rd\times\Rddsym$ a \textit{relaxed parabolic superjet} of $u$ at $(x_0,t_0)\in\Ut$ if the inequality only holds in the past:
		\begin{multline*}
			u(x,t)-u(x_0,t_0)
			\leq 	a(t-t_0) + \xi\cdot(x-x_0) + \frac{1}{2}(x-x_0)^\intercal X(x-x_0)\\
					+\sorder{\abs{t-t_0}+\abs{x-x_0}^2}
					\quad\text{for $(x,t)\in \Ut,\,t\leq t_0$ as $(x,t)\rightarrow(x_0,t_0)$.}
		\end{multline*}
		The set of all relaxed parabolic superjets of $u$ at $(x_0,t_0)$ is denoted with $\supPt[\Ut]u(x_0,t_0)$.
\end{enumerate}
The sets of parabolic subjets $\subP[\Ut]u(x_0,t_0),\,\subPc[\Ut]u(x_0,t_0),\,\subPt[\Ut]u(x_0,t_0)$ for lower semicontinuous functions are defined analogously.
Additionally, if $\Ut$ is an open neighborhood of $(x_0,t_0)$, then the semijets are the same as for any other open neighborhood and we might omit $\Ut$ in the notation.
\end{definition}

Since we are only interested in either problems on $\Rd$ or with Dirichlet boundary conditions, we only define sub- and supersolutions in open sets. 
For compatibility with more complicated boundary conditions see e.g. \cite[Definition 2.3.1]{Giga06}.

\begin{definition}[Parabolic (Viscosity) Sub- and Supersolutions]
Let $T>0$ and $\Ut\subset\Rd\times[0,T]$ be an open set.
\begin{enumerate}[label=(\roman*), left=0pt]
\item	$u\colon\Ut\rightarrow \R\cup\Bp{-\infty}$ is a subsolution of \eqref{comp_eq_evo} in $\Ut$ if $u^*(x,t)<\infty$ for $(x,t)\in\Ut$ and
		\begin{equation*}
			a + G_*(x,\xi,X) \leq 0
			\quad\text{for all }(x,t)\in \Ut \text{ and } (a,\xi,X)\in \supP[\Ut]u^*(x,t).
		\end{equation*}
\item	We call $u\colon\Ut\rightarrow \R\cup\Bp{-\infty}$ a supersolution of \eqref{comp_eq_evo} in $\Ut$ if $u_*(x,t)>-\infty$ for $(x,t)\in\Ut$ and
		\begin{equation*}
			a + G^*(x,\xi,X) \geq 0
			\quad\text{for all }(x,t)\in \Ut \text{ and } (a,\xi,X)\in \subP[\Ut]u_*(x,t).
		\end{equation*}
\end{enumerate}
\end{definition}

\begin{lemma}[Localization in space]
Let $T>0$ and $U\subset\Rd$ be open.
If $u$ is a subsolution (resp. supersolution) of \eqref{comp_eq_evo} in $U\times(0,T)$, then $u$ is a subsolution (resp. supersolution) of \eqref{comp_eq_evo} in $\Ut$ for any open set $\Ut\subset U\times(0,T)$.
\end{lemma}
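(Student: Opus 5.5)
The statement to prove is the localization lemma: a sub- or supersolution of \eqref{comp_eq_evo} in $U\times(0,T)$ stays a sub- or supersolution in every open $\Ut\subset U\times(0,T)$. The proof should follow directly from the fact that parabolic semijets are local objects at interior points.

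I treat the subsolution case; the supersolution case follows by the symmetric argument, or by applying the subsolution case to $-u$ with $G$ replaced by $-G(x,-\xi,-X)$.

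First, finiteness is inherited. Take the upper semicontinuous envelope on $\Ut$, i.e. that of the restriction $u|_{\Ut}$. Because $\Ut$ is open, $\Ut\cap B_r(x,t)=(U\times(0,T))\cap B_r(x,t)$ for all small $r$ whenever $(x,t)\in\Ut$. Hence the envelope of $u|_{\Ut}$ agrees with $u^*$ at every point of $\Ut$, and in particular $u^*<\infty$ there.

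Next, the semijets coincide. Fix $(x_0,t_0)\in\Ut$ and $(a,\xi,X)\in\supP[\Ut]u^*(x_0,t_0)$. The defining inequality involves only points $(x,t)\to(x_0,t_0)$ inside $\Ut$. Since $\Ut$ contains a ball around $(x_0,t_0)$, and the inequality in the other domain concerns the same points near $(x_0,t_0)$, we get
\[
\supP[\Ut]u^*(x_0,t_0)=\supP[U\times(0,T)]u^*(x_0,t_0).
\]
This is exactly the remark after Definition \ref{comp_def_pjets}: semijets at a point are the same for any open neighborhood of that point. The subsolution property in $U\times(0,T)$ therefore gives $a+G_*(x_0,\xi,X)\le 0$, which is what is required in $\Ut$.

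There is no real obstacle. The one point that needs care is checking that the envelope $u^*$ and the jets do not change when the domain is shrunk, and both facts rest entirely on $\Ut$ being open.
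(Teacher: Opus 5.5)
Your proof is correct and follows the same route as the paper, which simply notes that the parabolic semijets at a point of $\Ut$ do not depend on which open neighborhood they are computed in. Your additional check that the semicontinuous envelope is unchanged when passing to the smaller open set is a detail the paper leaves implicit.
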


\begin{proof}
The lemma follows immediately via the independence of $\supP[\Ut]u^*(t,x)$ from $\Ut$ as long as $\Ut$ is a neighborhood of $(t,x)$.
\end{proof}

\begin{theorem}[Stability, see Theorem 2.2.1 in \cite{Giga06}]\label{comp_thm_stab}
Let $T>0$ and $\Ut\subset \Rd\times(0,T)$ be open.
Let $(G_\eps)_{\eps>0}$ and $G$ satisfy \ref{comp_aG_cont} and \ref{comp_aG_fin} with
\begin{align*}
	G_*&\leq \liminf_{\eps\rightarrow 0}\!{}_*G_{\eps} 
	&&
	\pp{\text{resp. }G^*\geq \limsup_{\eps\rightarrow 0}\!{}^*G_{\eps}}
	&&
	\text{in }\Rd\times\Rd\times \Rddsym
\end{align*}
If $(u_\eps)_{\eps>0}$ are subsolutions (resp. supersolutions) in $\Ut$ of
\begin{equation*}
	\pat u_\eps + G_\eps\pp{x,\nabla u, D^2 u}=0,
\end{equation*}
Then 
	$\ov{u}\coloneqq \limsup_{\eps\rightarrow 0}\!{}^*u_{\eps}$ 
	$\pp{\text{resp. }\underline{u}\coloneqq \liminf_{\eps\rightarrow 0}\!{}_*u_{\eps}}$ 
is a subsolution (resp. supersolution) in $\Ut$ of \eqref{comp_eq_evo}, as long as $\ov{u}<\infty$ (resp. $\underline{u}>-\infty$) in $\Ut$.
\end{theorem}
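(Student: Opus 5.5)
The plan is to prove the subsolution statement; the supersolution statement then follows by applying it to $-u_\eps$ with $\tilde G_\eps(x,\xi,X)\coloneqq -G_\eps(x,-\xi,-X)$, which swaps upper and lower envelopes and relaxed limits. I will work with the characterization of parabolic superjets through smooth test functions. The first step is to note that $\ov{u}$ is upper semicontinuous by construction of the upper relaxed limit and finite in $\Ut$ by assumption. Then I fix $(x_0,t_0)\in\Ut$ and $(a,\xi,X)\in\supP[\Ut]\ov{u}(x_0,t_0)$.

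The second step replaces the superjet by a test function with a strict maximum. I take $\phi\in C^2$ (for instance $C^{2,1}$ in $(x,t)$) with $(\pat\phi,\nabla\phi,D^2\phi)(x_0,t_0)=(a,\xi,X)$ such that $\ov{u}-\phi$ has a strict local maximum at $(x_0,t_0)$ on a compact cylinder $K=\ov{B}_r(x_0)\times[t_0-r,t_0+r]\subset\Ut$. This is standard: absorb the $\sorder{\abs{t-t_0}+\abs{x-x_0}^2}$ term into a radial modulus, and add $\abs{x-x_0}^4+\abs{t-t_0}^2$ to make the maximum strict.

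The third step, which I expect to be the main technical point, is the localization of maxima along the sequence. By the definition \eqref{comp_eq_defUrellim} there are $\eps_j\to0$ and points $(y_j,s_j)\to(x_0,t_0)$ with $u_{\eps_j}^*(y_j,s_j)\to\ov{u}(x_0,t_0)$. I then let $(x_j,t_j)$ be a maximum point of the upper semicontinuous function $u_{\eps_j}^*-\phi$ on $K$. Every limit point $(\hat x,\hat t)$ of $(x_j,t_j)$ satisfies
\begin{equation*}
	\ov{u}(\hat x,\hat t)-\phi(\hat x,\hat t)\geq\limsup_{j}\pp{u^*_{\eps_j}(x_j,t_j)-\phi(x_j,t_j)}\geq \ov{u}(x_0,t_0)-\phi(x_0,t_0).
\end{equation*}
Strictness of the maximum then forces $(x_j,t_j)\to(x_0,t_0)$ and $u^*_{\eps_j}(x_j,t_j)\to\ov{u}(x_0,t_0)$. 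One point needs care here: the $u_{\eps_j}^*$ must be bounded above near $(x_0,t_0)$ uniformly in $j$, so that the maxima are finite. This follows from $\ov{u}(x_0,t_0)<\infty$ together with the definition of the relaxed limit, after shrinking $r$ and discarding finitely many $j$. For $j$ large, $(x_j,t_j)$ is interior to $K$, hence
\begin{equation*}
	\pp{\pat\phi(x_j,t_j),\nabla\phi(x_j,t_j),D^2\phi(x_j,t_j)}\in\supP[\Ut]u^*_{\eps_j}(x_j,t_j),
\end{equation*}
and the subsolution property gives $\pat\phi(x_j,t_j)+(G_{\eps_j})_*(x_j,\nabla\phi(x_j,t_j),D^2\phi(x_j,t_j))\leq 0$.

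The final step passes to the limit. The arguments $(x_j,\nabla\phi(x_j,t_j),D^2\phi(x_j,t_j))$ converge to $(x_0,\xi,X)$, and $(G_{\eps_j})_*\geq$ the corresponding values entering the lower relaxed limit. Hence $\liminf_j(G_{\eps_j})_*(\ldots)\geq\pp{\liminf_{\eps\to0}\!{}_*G_\eps}(x_0,\xi,X)\geq G_*(x_0,\xi,X)$, and therefore $a+G_*(x_0,\xi,X)\leq0$. The relaxed limit and the envelope are taken on all of $\Rd\times\Rd\times\Rddsym$, with values at $\xi=0$ finite by \ref{comp_aG_fin}. So the case $\xi=0$, where $\nabla\phi(x_j,t_j)$ may or may not vanish, needs no separate treatment. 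This concludes the subsolution claim, and the supersolution claim follows by the symmetry noted at the start.
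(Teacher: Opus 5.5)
Your proposal is correct and is the standard proof of this stability result. The paper does not prove it but cites Giga's Theorem 2.2.1, whose argument is exactly yours: reduce the supersolution case via $u\mapsto -u$, $G(x,\xi,X)\mapsto -G(x,-\xi,-X)$; take a test function with a strict local maximum; get convergence of the maximizers of $u^*_{\eps_j}-\phi$ from the relaxed limit; and pass to the limit, where the case $\xi=0$ is absorbed because both the envelopes and the relaxed limit of $G_\eps$ are taken over $\xi\neq 0$.

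Only minor remarks. No uniform-in-$j$ upper bound is needed, since each $u^*_{\eps_j}$ is upper semicontinuous and $<\infty$ on the compact set $K$, so it attains its maximum there. You implicitly use that the upper relaxed limits of $u_\eps$ and $u^*_\eps$ coincide. Finally, $\overline{u}$ need only be finite at points carrying a superjet, not everywhere in $\Ut$.
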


In general, a comparison principle does not hold for parabolic equations with spatial inhomogeneities in the second order term. 
We do not state the precise general assumptions, since this would take up too much space.
However, in our case a comparison principle holds due to the structure of \eqref{intro_eq_uls} and the assumptions for admissible coefficient fields.

\begin{theorem}[Comparison principle for bounded domains, see Corollary 3.6.6 in \cite{Giga06}]\label{comp_thm_comp}
Let $(A,F)\in\Omega$ or $(A,F)=(\wt{A},\wt{F})_Q$ for some $(\wt{A},\wt{F})\in\Omega$ and a closed set $Q\subset\Rd$.
Let $T>0$ and $U\subset\Rd$ be an open and bounded set.
Let $u$ be a sub- and $v$ be a supersolution of \eqref{intro_eq_uls} in $U\times (0,T)$.
If $-\infty<u^*\leq v_*<\infty$ on the parabolic boundary $\pp{\ov{U}\times\Bp{0}}\cup\pp{\partial U\times[0,T]}$, then $u^*\leq v_*$ in $U\times(0,T)$.
\end{theorem}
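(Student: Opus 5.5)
This comparison principle on bounded domains is a standard doubling-of-variables argument; it is cited from \cite[Corollary 3.6.6]{Giga06}. I would verify that the structural hypotheses of that corollary hold for $G_{(A,F)}$, rather than reprove it from scratch.

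\emph{Step 1: structural properties of $G_{(A,F)}$.} We already know that $G_{(A,F)}$ satisfies \ref{comp_aG_cont}, \ref{comp_aG_ell}, \ref{comp_aG_fin} and \ref{comp_aG_geo}, the last via \eqref{comp_eq_Gstar}. The remaining condition is the usual structure condition in $x$, which requires a modulus $\omega$ such that
\begin{equation*}
	G(y,\xi,Y)-G(x,\xi,X)\leq \omega\pp{\abs{x-y}\pp{1+\abs{\xi}}+\frac{\abs{x-y}^2}{\delta}}
\end{equation*}
whenever the matrices satisfy the Ishii block inequality with test function $\abs{x-y}^2/(2\delta)$.

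For the first-order part, the Lipschitz bound \eqref{sett_eq_assF} gives $\abs{F(x,e)-F(y,e)}\abs{\xi}\leq C_{1F}\abs{x-y}\abs{\xi}$. For the second-order part, I would use the factorization $A=\sigma\sigma^\intercal$ from \eqref{sett_eq_assAbd}. Testing the block matrix inequality against $(\sigma(x,e)v,\sigma(y,e)v)$ and summing over an orthonormal basis gives
\begin{equation*}
	\tr\pp{\sigma^\intercal(x,e)X\sigma(x,e)}-\tr\pp{\sigma^\intercal(y,e)Y\sigma(y,e)}\leq \frac{C}{\delta}\abs{\sigma(x,e)-\sigma(y,e)}^2\leq \frac{C C_{1A}^2}{\delta}\abs{x-y}^2.
\end{equation*}
This is exactly the classical condition, uniformly in $e=\xi/\abs{\xi}$.

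\emph{Step 2: the restricted fields.} For $(A,F)=(\wt A,\wt F)_Q$ I would check that Definition \ref{restr_def} preserves these bounds. The function $\sigma_Q$ is a product of the Lipschitz function $\sigma$ with a cutoff $3(1-\dist(x,Q))$ clamped to $[0,1]$, which is Lipschitz in $x$. Likewise $F_Q$ is a piecewise-linear interpolation in $\dist(\cdot,Q)$ between Lipschitz functions, so it is Lipschitz with constant of order $C_{1F}+C_{1A}$. In particular, geometricity \eqref{sett_eq_assAgeom} is preserved, since $\sigma_Q$ is a scalar multiple of $\sigma$. Hence the same structure conditions hold, with modified constants.

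\emph{Step 3: apply the corollary.} With the structure conditions in hand, I would invoke \cite[Corollary 3.6.6]{Giga06} for bounded $U$ with the parabolic boundary ordering $u^*\leq v_*$.

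The main obstacle is the singularity at $\xi=0$. The doubling argument must handle maximizers where $x=y$, where $\xi$ vanishes. This is precisely what Giga's framework addresses: it uses test functions of the form $\abs{x-y}^4/\delta$, whose Hessian vanishes when $\abs{x-y}$ is small, together with condition \ref{comp_aG_fin} in the form $G_*(x,0,0)=G^*(x,0,0)=0$. I would therefore confirm that \eqref{comp_eq_Gstar} yields $G^*(x,0,0)=G_*(x,0,0)=0$. For the $\xi\neq0$ case, I would confirm that the uniform-in-direction Lipschitz estimate of Step 1 is what Giga's hypotheses require.
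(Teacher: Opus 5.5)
Your proposal is correct and takes the same route as the paper, which gives no argument beyond citing \cite[Corollary 3.6.6]{Giga06}; your check of the $x$-structure condition via $A=\sigma\sigma^\intercal$, and of its persistence under the restriction $(\wt{A},\wt{F})_Q$, is detail the paper leaves implicit. Two small slips: \eqref{comp_eq_Gstar} verifies \ref{comp_aG_fin}, not \ref{comp_aG_geo}; and because the singular case is handled with the penalty $|x-y|^4/\delta$, the second-order estimate should use the corresponding block bound of size $C|x-y|^2/\delta$ rather than the quadratic one, which your computation handles unchanged and which gives an error $C|x-y|^4/\delta\to0$.
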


For unbounded domains, the requirements are more complicated to check. 

\begin{theorem}[Comparison principle for unbounded domains]\label{comp_thm_comp-unb}
Let $(A,F)\in\Omega$ or $(A,F)=(\wt{A},\wt{F})_Q$ for some $(\wt{A},\wt{F})\in\Omega$ and a closed set $Q\subset\Rd$.
Let $T>0$ and $U\subset\Rd$ be open.
Let $u$ be a sub- and $v$ be a supersolution of \eqref{intro_eq_uls} in $U\times (0,T)$.
Assume that 
\begin{enumerate}[label=(\roman*), left=0pt]
\item	\label{comp_comp1_growth}
		there is some $C>0$ independent of $(x,t)\in U\times(0,T)$ such that
		\begin{equation*}
			u(x,t)\leq C(|x|+1)
			\quad\text{ and }\quad
			v(x,t)\geq -C(|x|+1),
		\end{equation*}
\item	\label{comp_comp2_moddiff}
		there is a modulus $m$ such that 
		\begin{equation*}
			u^*(x,t)-v_*(y,t)	\leq		m(|x-y|)
			\quad\text{ for all }(x,y,t)\in \partial_p \pp{(U\times U)\times(0,T)},
		\end{equation*}
		where $\partial_p \pp{(U\times U)\times(0,T)}$ denotes the parabolic boundary of $(U\times U)\times(0,T)$.
\item	\label{comp_comp3_growthdiff}
		there is some $C>0$ independent of $(x,y,t)\in \partial_p \pp{(U\times U)\times(0,T)}$, such that
		\begin{equation*}
			u^*(x,t)-v_*(y,t)\leq C(|x-y|+1)
			\quad\text{ on }\partial_p \pp{(U\times U)\times(0,T)}.
		\end{equation*}
\end{enumerate}
Then $u^*\leq v_*$ in $U\times(0,T)$.
\end{theorem}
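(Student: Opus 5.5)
The plan is the standard doubling-of-variables argument for geometric equations (cf. \cite[Theorem 3.6.1]{Giga06} and \cite{CIL92}), run on the unbounded set $U\times U\times(0,T)$ with an extra penalization that forces the maximum into a compact region. Arguing by contradiction, I suppose $\sup_{U\times(0,T)}(u^*-v_*)>0$ and fix a point $(\hat x,\hat t)$ where $u^*(\hat x,\hat t)-v_*(\hat x,\hat t)\geq 2\theta>0$. For small parameters $\delta,\eta,\mu>0$ I consider
\begin{equation*}
	\Phi(x,y,t)\coloneqq u^*(x,t)-v_*(y,t)-\frac{\abs{x-y}^4}{4\delta}-\eta\pp{\langle x\rangle^2+\langle y\rangle^2}-\frac{\mu}{T-t},
\end{equation*}
with $\langle x\rangle=(1+\abs{x}^2)^{1/2}$. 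I use a quartic penalty, as in the proof of Proposition \ref{inf_prop}, because it makes both the gradient $\xi_0$ and the Hessian bound from Theorem \ref{comp_thm_MaxPr} vanish together when $x_0=y_0$. That vanishing is exactly what the singularity of $G_{(A,F)}$ at $\xi=0$ requires, via \eqref{comp_eq_Gstar} and \ref{comp_aG_fin}.

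By the linear growth \ref{comp_comp1_growth}, $\Phi\to-\infty$ as $\abs{x}+\abs{y}\to\infty$ or $t\to T$. Hence $\Phi$ attains its maximum at some $(x_0,y_0,t_0)$ in the closure, and for $\eta,\mu$ small that maximum is at least $\theta$. From \ref{comp_comp3_growthdiff}, and from comparing with the value at $x=y$, I expect the standard bounds
\begin{equation*}
	\eta\pp{\abs{x_0}^2+\abs{y_0}^2}\to 0 \quad\text{as } \eta\to 0, \qquad \frac{\abs{x_0-y_0}^4}{\delta}\to 0 \quad\text{as } \delta\to 0 \text{ (for fixed } \eta\text{)}.
\end{equation*}
If the maximum lies on the parabolic boundary, assumption \ref{comp_comp2_moddiff} gives $\Phi\leq m(\abs{x_0-y_0})$, which is less than $\theta$ once $\delta$ is small. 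So the maximum must be interior.

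At an interior maximum, Theorem \ref{comp_thm_MaxPr} (Ishii's lemma) yields $(b_1,\xi_0+p_x,X)\in\supPc u^*(x_0,t_0)$ and $(b_2,\xi_0-p_y,Y)\in\subPc v_*(y_0,t_0)$. Here $b_1-b_2=\mu/(T-t_0)^2>0$, $\xi_0=\delta^{-1}\abs{x_0-y_0}^2(x_0-y_0)$, the terms $p_x,p_y=O(\eta\abs{x})$ come from the $\eta$-penalty, and the matrix inequality reads as in \eqref{inf_eq_XY} with $O(\eta)$ corrections. There are two cases.

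\emph{Case $x_0=y_0$.} Then $\xi_0=0$ and $X,Y$ are bounded by $O(\eta)$. The envelope bounds \eqref{comp_eq_Gstar}, together with the $\eta$-perturbations of the gradient (handled by continuity of $G^*$, $G_*$ and the smallness of $p_x,p_y$), give $b_1-b_2\leq o(1)$ as $\eta\to 0$. This contradicts $b_1-b_2\geq\mu/T^2$.

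\emph{Case $x_0\neq y_0$.} I subtract the sub- and supersolution inequalities. For the first-order terms, the Lipschitz bound \eqref{sett_eq_assF} gives
\begin{equation*}
	F(\cdot,x_0)\abs{\xi_0}-F(\cdot,y_0)\abs{\xi_0}\lesssim \abs{x_0-y_0}\abs{\xi_0}=\frac{\abs{x_0-y_0}^4}{\delta}\to 0,
\end{equation*}
plus $O(\eta\abs{x_0})$ errors. For the second-order terms I write $A=\sigma\sigma^\intercal$ and use the matrix inequality to get
\begin{equation*}
	\tr\pp{\sigma_x^\intercal X\sigma_x}-\tr\pp{\sigma_y^\intercal Y\sigma_y}\lesssim \frac{\abs{x_0-y_0}^2}{\delta}\,\abs{\sigma(x_0,e)-\sigma(y_0,e)}^2\lesssim\frac{\abs{x_0-y_0}^4}{\delta},
\end{equation*}
using the Lipschitz continuity of $\sigma$ in $x$. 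The directional mismatch between $\xi_0+p_x$ and $\xi_0-p_y$ is handled as in \eqref{inf_eq_xiErr}.

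I expect this direction mismatch to be the main obstacle, because when $\abs{\xi_0}$ is comparable to $\eta\abs{x_0}$ it is not small. My first attempt is to take $\eta\to 0$ before the other limits, which requires showing that $\eta\abs{x_0}\to 0$ (this follows from $\eta\abs{x_0}^2\to 0$). In the intermediate regime where $\abs{\xi_0}$ is itself small, I will argue with the semicontinuous envelopes near $\xi=0$ via \ref{comp_aG_fin}, as in the Case $x_0=y_0$.

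For restricted fields $(\wt A,\wt F)_Q$ the same argument applies: $\sigma_Q$ and $F_Q$ are still globally Lipschitz by Definition \ref{restr_def}. Sending $\delta\to 0$ and then $\eta\to 0$, with $\mu$ fixed, yields $b_1-b_2\leq o(1)<\mu/T^2$, which is the desired contradiction.
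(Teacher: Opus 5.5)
The paper does not run a doubling argument here; it invokes \cite[Theorem 4.2]{GGIS91} and only remarks that adding $F|\xi|$ preserves that theorem's hypotheses. A self-contained proof along your lines is possible, but as written it has a genuine gap at the step you call standard.

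From (i) and $\Phi(x_0,y_0,t_0)\geq\theta$ you only get $\eta(|x_0|^2+|y_0|^2)\leq C(|x_0|+|y_0|+2)$, i.e.\ $|x_0|+|y_0|\lesssim\eta^{-1}$. So $\eta|x_0|$ is bounded, but not small. The usual proof of $\eta(|x_0|^2+|y_0|^2)\to0$ compares $\sup\Phi$ at $\eta$ and $\eta/2$, and it needs $\lim_{\eta\to0}\sup\Phi<\infty$. That amounts to $u^*(x,t)-v_*(y,t)\leq K(|x-y|+1)$ on all of $U\times U\times(0,T)$. Hypothesis (iii) gives this only on the parabolic boundary, and (i) does not rule out a priori that $u^*-v_*$ grows linearly along the diagonal for $t>0$. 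In that case the maximizers of $c|x|-2\eta|x|^2$ sit at $\eta|x_0|\approx c/4$.

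Then the penalty gradients $2\eta x_0$, $2\eta y_0$ are of order one. Already in your case $x_0=y_0$, the boundedness of $A,F$ only yields $b_1-b_2\leq C\eta+4C_{1F}\eta|x_0|$, which does not contradict $b_1-b_2\geq\mu/T^2$. You therefore need a preliminary step propagating (iii) into the interior. This is not hard. Suppose $u^*(x,t)-v_*(y,t)-L\langle x-y\rangle-L't-\eta(\langle x\rangle^2+\langle y\rangle^2)-\frac{\mu}{T-t}$, with $L\geq2C$, had a positive maximum. It would be interior, since by (iii) this function is $\leq0$ on the parabolic boundary. There the penalty has gradient and Hessian $O(L+1)$, and crude bounds on $G_{(A,F)}$ (bounded $A$, $F$, degenerate ellipticity) give $L'\leq C(L+1)$, which fails for $L'$ large. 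But the step has to be done.

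Even granting that bound, the order you finally state ($\delta\to0$, then $\eta\to0$) does not close the case $x_0\neq y_0$. The direction mismatch is harmless in the first-order term, since $\xi\mapsto F(x,\xi/|\xi|)|\xi|$ is globally Lipschitz. However, your second-order estimate silently evaluates $\sigma$ in the same direction $e$ at both points. With $e_1,e_2$ the directions of $\xi_0+p_x$ and $\xi_0-p_y$, you only get $\frac{C}{\delta}|x_0-y_0|^2\bigl(|x_0-y_0|+|e_1-e_2|\bigr)^2+C\eta$.

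Only $|x_0-y_0|^4/\delta\to0$ is known, so $|x_0-y_0|^2/\delta$ may diverge. Using $|e_1-e_2|\lesssim\eta(|x_0|+|y_0|)/|\xi_0|$ then produces a term of order $\eta^2(|x_0|+|y_0|)^2\delta/|x_0-y_0|^4$, which blows up as $\delta\to0$ at fixed $\eta$. In the regime $0<|\xi_0|\lesssim\eta|x_0|$ you also cannot fall back on (G3) as in the case $x_0=y_0$. There the Ishii Hessians were $O(\eta)$; here they are only bounded by $C|x_0-y_0|^2/\delta$.

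The repair is an explicit threshold $\lambda$:
\begin{itemize}
\item If $|\xi_0|\leq\lambda$, then $|x_0-y_0|^2/\delta\leq\lambda^{2/3}\delta^{-1/3}$. Ellipticity with bounded $A,F$ then gives $b_1-b_2\leq C\bigl(\lambda^{2/3}\delta^{-1/3}+\lambda+\eta(|x_0|+|y_0|)+\eta\bigr)$.
\item If $|\xi_0|>\lambda$, the mismatch term is $\lesssim\eta\lambda^{-4/3}\delta^{-1/3}$, because $\eta(|x_0|^2+|y_0|^2)$ is now bounded.
\end{itemize}
So you must first take $\delta$ small, so that $|x_0-y_0|^4/\delta$ is small. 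This follows from $\limsup_{\eta\to0}|x_0-y_0|^4/(8\delta)\leq M_{2\delta}-M_\delta\to0$, where $M_\delta\coloneqq\sup\bigl(u^*(x,t)-v_*(y,t)-|x-y|^4/(4\delta)-\mu/(T-t)\bigr)$; this quantity is finite only thanks to the interior bound. Then take $\lambda\ll\delta^{1/2}$, and only then $\eta$ small. That is the reverse of the order you state.
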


\begin{proof}
See \cite[Theorem 4.2]{GGIS91}.
The comment in \cite{GGIS91} right below Theorem 4.2 implies that $G_{(A,F)}$ satisfies the conditions for applying Theorem 4.2 when ignoring the first order term. 
However, it is easy to see that adding the first order term does not break any of the conditions.
\end{proof}

\begin{remark}\label{comp_rem_condcomp}
Sufficient conditions for Assumptions \ref{comp_comp1_growth}, \ref{comp_comp2_moddiff} and \ref{comp_comp3_growthdiff} in Theorem \ref{comp_thm_comp} to be satisfied are as follows:
\begin{enumerate}[label=\alph*), left=0pt]
\item	\label{comp_cond1_bound}
		If $u$ and $v$ are uniformly bounded, then \ref{comp_comp1_growth} and \ref{comp_comp3_growthdiff} hold.
\item	\label{comp_cond2_uni0}
		If $U=\Rd$ and $u(\cdot,0)$ or $v(\cdot,0)$ is uniformly continuous with $u^*\leq v_*$ on $U\times\Bp{0}$, then \ref{comp_comp2_moddiff} holds choosing $m$ as the modulus of continuity.
\item	\label{comp_cond3_u}
		If $u$ is uniformly continuous with $u\leq v_*$ on $U\times\Bp{0}$ and $u\leq \inf_{U\times(0,T)}v$ on $\partial U\times(0,T)$, then \ref{comp_comp2_moddiff} holds choosing $m$ as the modulus of continuity for $u$.
\item	\label{comp_cond4_v}
		If $v$ is uniformly continuous with $u^*\leq v$ on $U\times\Bp{0}$ and $v\geq \sup_{U\times(0,T)}u$ on $\partial U\times(0,T)$, then \ref{comp_comp2_moddiff} holds choosing $m$ as the modulus of continuity for $v$.
\end{enumerate}
\end{remark} 

A classical ingredient to proving comparison principles is a maximum principle for semicontinuous functions, also known as Ishii's lemma.
In particular, this principle is used whenever the comparison principle itself is not applicable, but the machinery behind it still leads to results.
Here we use a parabolic version.

\begin{theorem}[Theorem 8.3 in \cite{CIL92}]\label{comp_thm_MaxPr}
Let $n\in\N$. Let $T>0$, let $d_j\in\N$ and $U_j\subset \R^{d_j}$ be open sets for $j=1,\ldots,n$, and let $u_j\colon U_j\times(0,T)$ be upper semicontinuous functions.
Let $\vphi\colon U_1\times\ldots\times U_n\times (0,T)$ be once continuously differentiable in $t$ and twice continuously differentiable in $(x_1,\ldots,x_n)\in U_1\times\ldots\times U_n$. 

Assume that $(t,x_1,\ldots,x_n)\mapsto u_1(x_1,t)+\ldots+u_n(x_n,t)-\vphi(x_1,\ldots,x_n,t)$ has a maximum in $(\wt{x},_1,\ldots,\wt{x}_n,\wt{t})$ 
and that 
\begin{equation}\label{comp_eq_condMax}
\begin{aligned}
	&\text{there exists } r>0 \text{ such that for all }L>0 \text{ there is a } C>0 \text{ such that }\\
	&\text{whenever }(a_j,\xi_j,X_j)\in\supJ u_j(x_j,t)
	\text{ for } \abs{x_j-\wt{x}_j}+\abs{t-\wt{t}}\leq r\\
	&\text{with } \abs{u_j(x_j,t)}+\abs{\xi_j}+\abs{X_j}\leq L,
	\text{ then } a_j\leq C.
\end{aligned}
\end{equation}
Then for each $\gamma>0$ there are $b_1,\ldots,b_n\in\R$ and $X_j\in\R^{d_j\times d_j}_{\mathrm{sym}}$, $j=1,\ldots,n$, such that with $Z=D_{x}^2\vphi(\wt{x},_1,\ldots,\wt{x}_n,\wt{t})$ there hold
\begin{equation*}
\begin{aligned}
	(i)\quad&	(b_j,\nabla_{x_j}\vphi(\wt{x},_1,\ldots,\wt{x}_n,\wt{t}), X_j)\in \supPc u(\wt{x}_j,\wt{t}),\\
	(ii)\quad&	-\pp{\frac{1}{\gamma}+\abs{Z}}\Id	
				\leq \begin{pmatrix}X_1 & \cdots & 0 \\ \vdots & \ddots & \vdots \\ 0 & \cdots & X_n\end{pmatrix}
				\leq Z+\gamma Z^2,\\
	(iii)\quad&	b_1+\ldots+b_n = \pat\vphi(\wt{x},_1,\ldots,\wt{x}_n,\wt{t}).
\end{aligned}
\end{equation*}
\end{theorem}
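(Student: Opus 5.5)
This is Ishii's lemma in its parabolic form, so I would reduce it to the elliptic maximum principle for semicontinuous functions (Theorem 3.2 in \cite{CIL92}) applied on a doubled time variable. After replacing $\vphi$ by $\vphi+\abs{x-\wt{x}}^4+\abs{t-\wt{t}}^2$, I may assume the maximum at $(\wt{x}_1,\ldots,\wt{x}_n,\wt{t})$ is strict. The quartic term leaves the first and second spatial derivatives at the maximum point unchanged, and the quadratic term in time leaves $\pat\vphi$ there unchanged. I would then restrict to a small compact neighborhood on which all $u_j$ are bounded above.

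The key device is to give each function its own time variable. For $\alpha>0$ I would consider
\begin{equation*}
	\Phi_\alpha(x_1,t_1,\ldots,x_n,t_n)\coloneqq\sum_j u_j(x_j,t_j)-\vphi(x_1,\ldots,x_n,t_1)-\frac{\alpha}{2}\sum_{j\geq2}\abs{t_j-t_1}^2 .
\end{equation*}
A standard penalization argument, using upper semicontinuity and the strict maximum, shows that maximizers $(x^\alpha,t^\alpha)$ converge to $(\wt{x},\wt{t},\ldots,\wt{t})$. It also gives $u_j(x_j^\alpha,t_j^\alpha)\to u_j(\wt{x}_j,\wt{t})$ and $\alpha\abs{t_j^\alpha-t_1^\alpha}^2\to0$. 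I would then apply the elliptic Theorem 3.2 of \cite{CIL92} in the variables $(x_j,t_j)$ with the smooth function $\vphi+\frac{\alpha}{2}\sum\abs{t_j-t_1}^2$, regularized by the usual sup-convolution. This produces elliptic jets for each $u_j$ in $(x_j,t_j)$. Projecting out the time rows yields parabolic jets $(b_j^\alpha,\nabla_{x_j}\vphi,X_j^\alpha)\in\supPc u_j(x_j^\alpha,t_j^\alpha)$. The spatial block satisfies the matrix inequality (ii) with $Z$ evaluated at the approximate point. The $b_j^\alpha$ sum exactly to $\pat\vphi$, because the penalty's $t$-derivatives cancel in the sum.

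To pass to the limit, I would fix $\gamma$ and use (ii) to bound the $X_j^\alpha$ uniformly. The gradients are bounded by smoothness of $\vphi$, and the values $u_j$ converge. The main obstacle is bounding the $b_j^\alpha$, which is where hypothesis \eqref{comp_eq_condMax} enters. It gives $b_j^\alpha\leq C$ for every $j$. Since the $b_j^\alpha$ sum to a bounded quantity, this also gives a lower bound $b_j^\alpha\geq \pat\vphi-(n-1)C$. Along a subsequence everything then converges. The closedness of $\supPc$ under convergence with convergent function values yields (i) at $(\wt{x}_j,\wt{t})$. Statements (ii) and (iii) pass to the limit by continuity of $D^2_x\vphi$ and $\pat\vphi$.

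A technical point needs care. The elliptic lemma gives jets only in the closure sense and involves the full Hessian including the $t$-variables. Dropping the time rows of a matrix inequality between symmetric matrices preserves the inequality for the spatial principal submatrix. Time enters only through first-order terms, so the resulting elliptic jets do restrict to parabolic ones.
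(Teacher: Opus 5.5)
The paper gives no proof of this statement; it is quoted from \cite{CIL92}. Your strategy of separate time variables, penalization and the elliptic Theorem 3.2 is reasonable, but there is a genuine gap at the step ``the spatial block satisfies (ii) with $Z$ evaluated at the approximate point''. Both your compactness argument and your use of \eqref{comp_eq_condMax} rest on it. Theorem 3.2, applied to $\psi_\alpha=\vphi(x,t_1)+\frac{\alpha}{2}\sum_{j\ge2}|t_j-t_1|^2$, gives
\[ -\bigl(\tfrac1\gamma+\|A^\alpha\|\bigr)I\le\mathrm{diag}(Y_1,\dots,Y_n)\le A^\alpha+\gamma(A^\alpha)^2, \]
where $A^\alpha$ is the Hessian of $\psi_\alpha$ in \emph{all} variables $(x_j,t_j)$. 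The sup-convolution inside that lemma is taken with parameter $\frac1\gamma+\|A^\alpha\|$, which is exactly where this lower bound comes from. The time block of $A^\alpha$ is $\alpha$ times the Laplacian of a star graph on $n$ vertices (largest eigenvalue $n$), plus $\partial_t^2\vphi$ in one corner, so $\|A^\alpha\|\ge n\alpha-C$. Compressing to the spatial coordinates therefore only gives $X_j^\alpha\ge-(\frac1\gamma+n\alpha-C)I$, which degenerates as $\alpha\to\infty$. At a concave kink such as $u_j=-|x_j|$, the superjets contain arbitrarily negative matrices, so nothing forces the returned $X_j^\alpha$ to stay bounded below.

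The upper side is also not what you claim. The spatial compression of $(A^\alpha)^2$ is $Z^2+ww^{\intercal}$ with $w=\partial_t\nabla_x\vphi$, because the compression of a square is not the square of the compression. So the $X_j^\alpha$ are bounded above but not below. You cannot extract a convergent subsequence, and you cannot even invoke \eqref{comp_eq_condMax}, which needs $|X_j|\le L$ before it yields $b_j\le C$.

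There is a further problem one step earlier. $\vphi$ is only $C^1$ in $t$, and mixed derivatives need not exist, so $\psi_\alpha$ is not $C^2$ and Theorem 3.2 does not apply to it. Adding $|t-\wt t|^2$ does not help. Mollifying $\vphi$ in $t$ can make $\partial_t^2\vphi$ and $\partial_t\nabla_x\vphi$ blow up, which feeds into both sides of the matrix inequality.

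To close these gaps you need two extra ingredients.
\begin{itemize}
\item First, reduce to a test function that is separable in $(x,t)$. The $C^{1,2}$ Taylor remainder is $o(|t-\wt t|+|x-\wt x|^2)$, so near the maximum you can majorize $\vphi$ by $\partial_t\vphi(\wt x,\wt t)(t-\wt t)+g(t)+h(x)$. Here $g\in C^1$ with $g(\wt t)=g'(\wt t)=0$, and $h\in C^2$ has the same $x$-jet as $\vphi$ at $\wt x$. Then mollify only $g$. The spatial compression of $(A^\alpha)^2$ is then exactly $(Z^\alpha)^2$, and all blow-up sits in the time block.
\item Second, use that superjets are upward closed in the matrix slot. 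If $D=\mathrm{diag}(X_j^\alpha)\le M$ and $\lambda>\|M\|$, then $\lambda D(\lambda I-D)^{-1}$ is again block diagonal, is $\ge D$ and $\ge-\lambda I$, and satisfies $\lambda D(\lambda I-D)^{-1}\le\lambda M(\lambda I-M)^{-1}\le M+(\lambda-\|M\|)^{-1}M^2$. This yields two-sided bounds uniform in $\alpha$.
\end{itemize}
After that, your limit argument goes through: \eqref{comp_eq_condMax} gives $b_j\le C$, and the sum gives the lower bound. The exact constants in (ii) are recovered by one more limit, using that $\supPc u_j(\wt x_j,\wt t)$ is closed. The standard alternative is to rerun the sup-convolution, Jensen and Aleksandrov argument of the elliptic lemma with a single common time variable, which avoids the time penalization altogether.
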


\begin{remark}
\eqref{comp_eq_condMax} holds for example whenever $u_j$ is a subsolution or $-u_j$ is a supersolution of a parabolic equation.
\end{remark}

If $G$ satisfies a parabolic comparison principle, then replacing the parabolic semijets with the relaxed parabolic semijets from Definition \ref{comp_def_pjets} yields an equivalent definition.

\begin{proposition}[Localization in time]\label{comp_prop_local}
Let $T>0$ and $U\subset\Rd$ be an open set.
Let $(A,F)\in\Omega$ or $(A,F)=(\wt{A},\wt{F})_Q$ for some $(\wt{A},\wt{F})\in\Omega$ and a closed set $Q\subset\Rd$.
Then $u$ is a subsolution (resp. supersolution) of \eqref{intro_eq_uls} in $U\times(0,T)$ if and only if
		\begin{align*}
			a + \pp{G_{(A,F)}}_*(x,\xi,X) \leq 0
			\quad&\text{for all }(x,t)\in \Ut \text{ and } (a,\xi,X)\in \supPt[\Ut]u^*(x,t)\\
			\Big( a + \pp{G_{(A,F)}}^*(x,\xi,X) \geq 0
			\quad&\text{for all }(x,t)\in \Ut \text{ and } (a,\xi,X)\in \subPt[\Ut]u_*(x,t)\Big).
		\end{align*}
\end{proposition}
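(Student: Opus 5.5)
The direction ``$\Leftarrow$'' is immediate, since every parabolic superjet is a relaxed parabolic superjet: $\supP[\Ut]u^*(x,t)\subset\supPt[\Ut]u^*(x,t)$, and likewise for subjets. So the content is ``$\Rightarrow$''. We treat only subsolutions; the supersolution case follows by the symmetric argument (or by passing to $-u$ and the reflected operator, which has the same structure). Fix $(x_0,t_0)\in\Ut$ and $(a,\xi,X)\in\supPt[\Ut]u^*(x_0,t_0)$. We argue by contradiction and assume
\[
a+\pp{G_{(A,F)}}_*(x_0,\xi,X)\geq 2\kappa>0 .
\]

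\emph{Step 1: a strict supersolution.} We build a smooth strict supersolution on a small backward cylinder $Q_r\coloneqq B_r(x_0)\times(t_0-r,t_0)$ that touches $u^*$ from above at $(x_0,t_0)$ only along past times. As a first attempt set
\[
\psi(x,t)\coloneqq u^*(x_0,t_0)+(a-\kappa)(t-t_0)+\xi\cdot(x-x_0)+\tfrac12(x-x_0)^\intercal(X+\eta\Idd)(x-x_0)+\abs{x-x_0}^4 .
\]
If $\xi\neq0$, then by continuity of $G_{(A,F)}$ away from $\xi=0$ we get $\pat\psi+\pp{G_{(A,F)}}_*(x,\nabla\psi,D^2\psi)\geq\kappa/2$ on $Q_r$ for $r,\eta$ small. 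If $\xi=0$, the singularity has to be handled. Here I would use the bound \eqref{comp_eq_Gstar} on $\pp{G_{(A,F)}}_*(x,0,X)$, the geometricity \ref{comp_aG_geo}, and the lower semicontinuity of $G_*$. If that is not enough, I would replace the quadratic part by the standard Giga class of admissible test functions $f(\abs{x-x_0})$ with $f''(0)=0$, or argue with doubling of variables instead of a single test function.

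\emph{Step 2: boundary control.} Because the superjet is relaxed, we have
\[
u^*(x,t)\leq u^*(x_0,t_0)+a(t-t_0)+\xi\cdot(x-x_0)+\tfrac12(x-x_0)^\intercal X(x-x_0)+\sorder{\abs{t-t_0}+\abs{x-x_0}^2}
\]
for $t\leq t_0$. The extra terms $-\kappa(t-t_0)$, $\frac{\eta}{2}\abs{x-x_0}^2$ and $\abs{x-x_0}^4$ therefore give $\psi\geq u^*+\delta$ on the parabolic boundary $\pp{\ov{B}_r\times\{t_0-r\}}\cup\pp{\partial B_r\times[t_0-r,t_0]}$ for some $\delta>0$, once $r$ is small. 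Shifting $\psi$ down by $\delta/2$ keeps it a strict supersolution and keeps it above $u^*$ on the parabolic boundary.

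\emph{Step 3: comparison and contradiction.} Apply the comparison principle on the bounded domain (Theorem \ref{comp_thm_comp}) on $B_r(x_0)\times(t_0-r,t_0)$. This gives $u^*\leq\psi-\delta/2$ in the open cylinder. The main obstacle is to pass this inequality to the terminal time $t=t_0$, because upper semicontinuity alone does not let us go up to $t_0$.

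I would resolve this in one of two ways. The first option is to apply the comparison on the slightly longer interval $(t_0-r,t_0+s)$ with $\psi$ extended to $t>t_0$. This is allowed because the relaxed jet only constrains the past, so the boundary inequality still only needs $t\leq t_0$ on $\partial B_r$; the lateral boundary for $t\in(t_0,t_0+s)$ is covered by upper semicontinuity of $u^*$ on the compact set $\partial B_r\times[t_0,t_0+s]$ for $s$ small. The second option is to invoke the standard fact that a subsolution on $U\times(t_0-r,t_0)$ remains a subsolution at the terminal time $t_0$ when tested with one-sided jets (Chen--Giga--Goto, as in \cite{Giga06}), and then run the maximum-principle argument of Theorem \ref{comp_thm_MaxPr} on $\ov{B}_r\times(t_0-r,t_0]$.

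Either route gives $u^*(x_0,t_0)\leq\psi(x_0,t_0)-\delta/2=u^*(x_0,t_0)-\delta/2$, which is a contradiction. Besides this terminal-time issue, the other delicate point is the case $\xi=0$ in Step 1.
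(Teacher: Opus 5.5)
Your proposal is correct and takes essentially the same route as the paper, which defers to Juutinen's Theorem 1 \cite{Juu01}: argue by contradiction, turn the relaxed jet into a smooth strict supersolution $\psi$ that touches $u^*$ strictly from the past, and apply the comparison principle on a small cylinder; your first option for the terminal time handles that step correctly, namely running the comparison on $B_r(x_0)\times(t_0-r,t_0+s)$ and getting the boundary margin on $\partial B_r\times[t_0,t_0+s]$ from upper semicontinuity of $u^*-\psi$. The case $\xi=0$ needs none of the alternatives you list: lower semicontinuity of $(G_{(A,F)})_*$ alone gives $\partial_t\psi+(G_{(A,F)})_*(x,\nabla\psi,D^2\psi)\geq\kappa/2$ near $(x_0,t_0)$, and at points where $\nabla\psi=0$ the supersolution test only involves $(G_{(A,F)})^*\geq(G_{(A,F)})_*$ (together with degenerate ellipticity for subjets $Y\leq D^2\psi$), so $\psi$ is a viscosity supersolution there as well, which is exactly the paper's remark that the singularity is absorbed by the semicontinuous envelopes.
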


\begin{proof}
See \cite[Theorem 1]{Juu01}, which requires only degenerate ellipticity and that a parabolic comparison principle as in Theorem \ref{comp_thm_comp} holds.
While the result in \cite{Juu01} is stated for a continuous functional without singularities, the proof works in the same way given a singularity using the semi-continuous envelopes from \eqref{comp_eq_envelopes}.
While the relaxed parabolic semijets in \cite{Juu01} are defined via test functions, it is easy to see that -- as for the usual parabolic semijets -- this is equivalent to our formulation, see e.g. \cite[Lemma 3.2.7]{Giga06}.
\end{proof}

For sufficiently nice initial data and some further regularity assumptions on $G$, there exists a unique solution to \eqref{comp_eq_evo}. 
We state the result only for $G_{(A,F)}$ to avoid listing all of the extra assumptions.
The appropriate space given a metric space $M$ is defined by 
\begin{equation*}
	UC_*(M) \coloneqq
	\big\lbrace
		g\in C(M) \,:\, (g\wedge L)\vee (-L) \text{ is uniformly continuous for all }L>0
	\big\rbrace.
\end{equation*}

\begin{theorem}[Existence and uniqueness of solutions]\label{comp_thm_exis}
Let $T>0$ and $(A,F)\in\Omega$ or $(A,F)=(\wt{A},\wt{F})_Q$ for some $(\wt{A},\wt{F})\in\Omega$ and a closed set $Q\subset\Rd$.
Then for every $u_0\in UC_*(\Rd)$ there exists a unique solution $u\in UC_*(\Rd\times[0,T))$ of \eqref{intro_eq_uls} in $\Rd\times(0,T)$ with $u(\cdot, 0)=u_0$.
\end{theorem}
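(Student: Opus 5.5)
This is the standard existence and uniqueness result for geometric level-set equations, so the plan is to reduce it to the general theory in \cite{Giga06} (Theorems 4.3.1 and 4.5.1 there). I would verify the structural hypotheses for $G_{(A,F)}$ rather than reprove Perron's method. Continuity \ref{comp_aG_cont}, degenerate ellipticity \ref{comp_aG_ell} and geometricity \ref{comp_aG_geo} come directly from \eqref{sett_eq_assAbd}, \eqref{sett_eq_assAgeom} and \eqref{sett_eq_assF}. The mild singularity condition \ref{comp_aG_fin} follows from the envelope bounds \eqref{comp_eq_Gstar}. For restricted fields $(\wt A,\wt F)_Q$ one checks from Definition \ref{restr_def} that $\sigma_Q$ and $F_Q$ are still bounded and Lipschitz, with constants depending only on $C_{1A},C_{1F}$. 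This holds because $\dist(\cdot,Q)$ is $1$-Lipschitz and the cutoffs are piecewise linear and continuous. Hence the restricted case reduces to the admissible one up to constants.

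Uniqueness and the comparison principle then come from Theorem \ref{comp_thm_comp-unb} together with Remark \ref{comp_rem_condcomp}. Work first with bounded $u_0\in UC(\Rd)$. Two solutions with the same uniformly continuous initial datum are uniformly bounded by $\sup|u_0|$, since constants are solutions. Item \ref{comp_cond1_bound} gives the growth conditions and item \ref{comp_cond2_uni0} gives the modulus condition, so comparison applies in both directions and the solutions coincide. The substantive structural input for the doubling-of-variables argument, which sits behind Theorem \ref{comp_thm_comp-unb}, is the factorisation $A=\sigma\sigma^\intercal$ with $\sigma$ Lipschitz in $x$. This yields the usual estimate $\tr(\sigma\sigma^\intercal(x)X)-\tr(\sigma\sigma^\intercal(y)Y)\le C|x-y|^2/\delta$ from Ishii's lemma (Theorem \ref{comp_thm_MaxPr}). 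It is what I expect to be the one point needing care, especially near $\xi=0$, where one uses the $|x-y|^4$ penalisation as in \cite{GGIS91}.

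For existence I would use Perron's method. First build barriers: for uniformly continuous $u_0$ with modulus $m$, approximate $u_0$ by $u_0^\eps\in C^2$ with bounded derivatives and $|u_0-u_0^\eps|\le m(\eps)$. Then $u_0^\eps\pm(C_\eps t+m(\eps))$ are a super- and a subsolution, where $C_\eps$ bounds $|G_{(A,F)}(x,\nabla u_0^\eps,D^2u_0^\eps)|$; this uses \eqref{comp_eq_Gstar} at vanishing gradient. Taking the infimum and supremum over $\eps$ gives barriers attaining $u_0$ continuously at $t=0$. Perron's method \cite[Theorem 2.4.x]{Giga06}, combined with the stability Theorem \ref{comp_thm_stab}, produces a discontinuous solution between the barriers, and comparison shows it is continuous. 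Uniform continuity in $x$ follows from comparing $u(\cdot+z,t)$ with $u$. This step uses spatial translation, which is not an invariance because the equation is $x$-dependent. Instead I would apply the doubling estimate directly to $u(x,t)-u(y,t)$, which yields a modulus depending on $m$, $T$ and the Lipschitz constants. Uniform continuity in $t$ then follows from the barriers together with the semigroup property.

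Finally, to pass from bounded uniformly continuous data to $UC_*$, note that geometricity makes the equation invariant under $u\mapsto\theta\circ u$ for continuous nondecreasing $\theta$ \cite[Theorem 4.2.1]{Giga06}. Solve with the truncated data $(u_0\wedge L)\vee(-L)$ and observe that these solutions are consistent truncations of each other. Gluing over $L\to\infty$ gives the unique $u\in UC_*(\Rd\times[0,T))$. Of all these steps, I expect the comparison step to be the main obstacle; everything else is citation of \cite{Giga06} once the hypotheses are verified.
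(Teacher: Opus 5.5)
Your proposal is correct and takes essentially the same route as the paper. The paper's proof only observes that Theorem \ref{comp_thm_comp-unb} supplies the comparison principle on unbounded domains, which is the hypothesis needed to invoke \cite[Theorem 4.3.5]{Giga06}; your barrier-plus-Perron construction, the comparison-based uniqueness for bounded data, and the truncation passage to $UC_*$ are exactly what that cited theorem packages, and your one loosely sketched step (the uniform spatial modulus obtained by doubling variables in $u(x,t)-u(y,t)$) is likewise covered by the citation rather than argued in the paper.
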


\begin{proof}
With Theorem \ref{comp_thm_comp-unb}, a comparison principle for unbounded domains holds for \eqref{comp_eq_evo}.
With this, the assumptions for \cite[Theorem 4.3.5]{Giga06} hold, which yields the unique existence result.
\end{proof}

\begin{remark}[Suitable initial data given a set as initial data]\label{comp_rem_iniData}
Given a closed set $S$, a suitable function $u_0\in UC_*(\Rd)$ with $S=\Bp{x\in\Rd\,:\,u_0(x)\leq 0}$ is given for example by
\begin{equation*}
	u_0(x)\coloneqq \dist(x,S).
\end{equation*}
\end{remark}

While the existence result requires uniformly continuous initial data, the evolution of a fixed level set is independent of the specific initial data, as long as the level set is preserved.

\begin{theorem}[Invariance, see Theorem 4.2.1 in \cite{Giga06}]\label{comp_thm_inv}
Let $T>0$.
Assume that $G$ satisfies \ref{comp_aG_cont} and \ref{comp_aG_geo}.
Let $\theta\colon\R\rightarrow\R$ be nondecreasing and upper semicontinuous (resp. lower semicontinuous).
Let $u$ be a subsolution (resp. supersolution) of \eqref{comp_eq_evo} in $\Rd\times(0,T)$. 
Then $\theta\circ u$ also is a subsolution (resp. supersolution) of \eqref{comp_eq_evo} in $\Rd\times(0,T)$. 
\end{theorem}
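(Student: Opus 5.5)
The plan is to reduce to the definition of viscosity sub/supersolutions via semijets, treating separately the points where $\theta$ is continuous and where it jumps. I will treat the subsolution case; the supersolution case follows by applying the argument to $-u$ and $\tilde\theta(s)\coloneqq-\theta(-s)$, which swaps upper and lower semicontinuity and turns $G$ into $\tilde G(x,\xi,X)=-G(x,-\xi,-X)$. This $\tilde G$ still satisfies \ref{comp_aG_cont} and \ref{comp_aG_geo}.

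First I note that since $\theta$ is nondecreasing and upper semicontinuous, $(\theta\circ u)^*=\theta\circ u^*$. So fix $(x_0,t_0)$ and $(a,\xi,X)\in\supP(\theta\circ u^*)(x_0,t_0)$; the goal is $a+G_*(x_0,\xi,X)\le0$. The standard route is to replace the semijet by a smooth test function $\vphi$ touching $\theta\circ u^*$ from above at $(x_0,t_0)$, with a strict maximum (e.g.\ via \cite[Lemma 3.2.7]{Giga06}).

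\emph{Case 1: $\theta$ is strictly increasing and continuous near $s_0=u^*(x_0,t_0)$.} Then $\theta$ has a continuous inverse $\theta^{-1}$ near $\theta(s_0)$, and $\theta^{-1}\circ\vphi$ touches $u^*$ from above. If $\theta$ is also $C^2$ with $\theta'>0$, the chain rule combined with \ref{comp_aG_geo} gives the claim: the first part rescales by $\lambda=1/\theta'$, and the second part absorbs the $\xi\otimes\xi$ correction coming from $(\theta^{-1})''$. For general $\theta$ I would approximate it by smooth strictly increasing $\theta_k$ from above and pass to the limit with the stability theorem (Theorem \ref{comp_thm_stab}), using upper relaxed limits.

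\emph{Case 2: $\theta$ is constant near $s_0$, or $s_0$ is a jump point.} This is the main obstacle. If $\theta$ is locally constant, then $\theta\circ u^*$ is locally constant around $(x_0,t_0)$ in the relevant sublevel region. This forces $a=0$, $\xi=0$ and $X\ge0$, and then \ref{comp_aG_ell} together with the structure at $\xi=0$ gives $G_*(x_0,0,X)\le G_*(x_0,0,0)$. For general operators one needs $G_*(x,0,0)\le0$, which is where the mild singularity condition enters; for geometric $G$, 1-homogeneity gives $G(x,0,0)=0$ in the envelope sense. The genuinely delicate step is the degenerate-gradient case $\xi=0$, where one must use \ref{comp_aG_geo} to reduce to test functions whose Hessian vanishes to higher order, via the Barles--Georgelin / Ishii-type argument. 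Jump points are handled by upper semicontinuity: the superjet of $\theta\circ u^*$ at a jump sees only the upper branch, so it reduces to a supremum of subsolutions of the form $\mathbf{1}_{\{u^*\ge s_0\}}$.

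Since this is exactly \cite[Theorem 4.2.1]{Giga06}, I would cite it for the full general-$\theta$ approximation and for the $\xi=0$ subtleties rather than redo them.
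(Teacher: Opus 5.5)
Deferring to \cite[Theorem~4.2.1]{Giga06} is exactly what the paper does, since it gives no argument of its own. As a proof by citation your proposal therefore matches it, and your reduction of the supersolution case via $-u$ and $s\mapsto-\theta(-s)$ is fine. The sketch you place in front of the citation, however, would not stand on its own, for the following reasons.

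\emph{The envelope identity is false in general.} The identity $(\theta\circ u)^*=\theta\circ u^*$ fails unless $u$ is upper semicontinuous. For $u(x,t)=-|x|$ ($x\neq0$), $u(0,t)=-1$ and $\theta=\mathbf{1}_{[0,\infty)}$, one has $\theta\circ u\equiv0$ but $\theta(u^*(0,t))=1$. Only $(\theta\circ u)^*\le\theta\circ u^*$ holds. Equality can fail only at points $z_0$ where $u<s_0\coloneqq u^*(z_0)$ on a neighbourhood and $\theta(s_0-)\coloneqq\lim_{s\uparrow s_0}\theta(s)<\theta(s_0)$. At such points, $(\theta\circ u)^*$ is touched from above at $z_0$ by $\min\{\theta,\theta(s_0-)\}\circ u^*$. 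So one may reduce to $\vartheta\circ u^*$ with $\vartheta$ nondecreasing and usc, but this step has to be made explicitly.

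\emph{Case~2 fails as written.} If $\theta$ is constant near $s_0$, then $\theta\circ u^*$ has a local maximum at $z_0$ but need not be locally constant, because $u^*$ is only usc and may drop below the plateau arbitrarily close to $z_0$. Take $u^*=0$ on $\{x_1\le0\}$, $u^*=-5$ on $\{x_1>0\}$, and $\theta$ constant near $0$ with $\theta(-5)<\theta(0)$. Then $(0,-e_1,0)\in\mathcal{P}^{2,+}(\theta\circ u^*)(0,t)$, so $\xi=0$ is not forced. Case~2 also invokes \ref{comp_aG_ell} and \ref{comp_aG_fin}, which are not hypotheses of the theorem. The jump case is circular: ``$\mathbf{1}_{\{u^*\ge s_0\}}$ is a subsolution'' is the instance $\theta=\mathbf{1}_{[s_0,\infty)}$ of the very statement. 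Nor is $\xi=0$ delicate for the envelope definition used here. By \ref{comp_aG_geo}, homogeneity passes to the envelope as $G_*(x,0,\lambda X)=\lambda G_*(x,0,X)$ for $\lambda>0$, so no Barles--Georgelin-type restriction of test functions is needed.

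\emph{What does work.} The approximation you mention inside Case~1, used globally rather than pointwise, proves the statement and makes the case split superfluous.
\begin{itemize}
\item For $\theta\in C^2$ with $\theta'>0$ on all of $\R$, your chain-rule computation shows that $(a,\xi,X)\in\mathcal{P}^{2,+}(\theta\circ u^*)(z_0)$ implies $\bigl(a/\theta',\,\xi/\theta',\,X/\theta'+(\theta^{-1})''\,\xi\otimes\xi\bigr)\in\mathcal{P}^{2,+}u^*(z_0)$. Here $\theta'$ is taken at $s_0$ and $(\theta^{-1})''$ at $\theta(s_0)$. Then \ref{comp_aG_geo} yields $a+G_*(x_0,\xi,X)\le0$ for $\xi\neq0$ and $\xi=0$ alike.
\item A nondecreasing usc $\theta$ is right-continuous. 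Hence there are smooth $\theta_k$ with $\theta_k'>0$ and $\theta\le\theta_k\le\theta(\cdot+2/k)+\pi/k$, for instance a one-sided mollification of $\theta(\cdot+1/k)$ plus $\frac1k(\arctan+\frac{\pi}{2})$.
\item Right-continuity of $\theta$ and upper semicontinuity of $u^*$ give $\limsup^*_{k\to\infty}\theta_k\circ u^*=\theta\circ u^*$. Stability then covers plateaus and jumps in one stroke.
\end{itemize}
The only caveat is that Theorem~\ref{comp_thm_stab} is stated under \ref{comp_aG_fin}, which the invariance theorem does not assume. For a fixed $G$ the usual stability argument uses only the lower semicontinuity of $G_*$, so this is harmless, but it should be said.
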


\begin{theorem}[Uniqueness of set evolutions, see Theorem 4.2.11 in \cite{Giga06}]\label{comp_thm_unique}
Let $T>0$ and $(A,F)\in\Omega$ or $(A,F)=(\wt{A},\wt{F})_Q$ for some $(\wt{A},\wt{F})\in\Omega$ and a closed set $Q\subset\Rd$.
Let $u$ and $\wt{u}$ be solutions of \eqref{intro_eq_uls} in $\Rd\times(0,T)$ where
\begin{equation*}
	\Bp{x\in\Rd\,:\, u(x,0)\leq 0} = \Bp{x\in\Rd\,:\, \wt{u}(x,0)\leq 0}.
\end{equation*}
Then for every $t\in(0,T)$ there holds
\begin{equation*}
	\Bp{x\in\Rd\,:\, u(x,t)\leq 0} = \Bp{x\in\Rd\,:\, \wt{u}(x,t)\leq 0}.
\end{equation*}
\end{theorem}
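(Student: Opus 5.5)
The statement to prove is the uniqueness theorem for set evolutions (Theorem \ref{comp_thm_unique}): the $0$-sublevel sets of two solutions of \eqref{intro_eq_uls} with the same initial $0$-sublevel set agree for all times. I would follow the classical route of Evans--Spruck / Chen--Giga--Goto, reducing the claim to the invariance under monotone reparametrization (Theorem \ref{comp_thm_inv}) combined with the comparison principle on $\Rd$ (Theorem \ref{comp_thm_comp-unb} with Remark \ref{comp_rem_condcomp}). By symmetry it suffices to prove one inclusion, $\Bp{u(\cdot,t)\leq 0}\subset\Bp{\wt u(\cdot,t)\leq 0}$.

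The central step is to construct a nondecreasing function $\theta\colon\R\to\R$ with $\theta(0)=0$, $\theta(s)>0$ for $s>0$, and $\theta\circ u(\cdot,0)\ge \wt u(\cdot,0)$ on $\Rd$ (at least where relevant). I would first truncate: replace $u,\wt u$ by $(u\wedge L)\vee(-L)$, which by Theorem \ref{comp_thm_inv} are again solutions, are uniformly continuous (they lie in $UC_*$), and have the same $0$-sublevel sets for $L>0$. With $u_0=u(\cdot,0)$ and $\wt u_0=\wt u(\cdot,0)$, define
\begin{equation*}
	\theta(s)\coloneqq \sup\Bp{\wt u_0(x)\,:\,u_0(x)\leq s}\quad\text{for }s\geq 0,
\end{equation*}
and $\theta(s)\coloneqq \theta(0)$... more precisely $\theta(s)=\inf\wt u_0$ (or $-L$) for $s<0$.

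The main obstacle is showing $\theta(s)\to 0$ as $s\searrow 0$, i.e.\ continuity of $\theta$ at $0$, so that after replacing $\theta$ by a continuous nondecreasing majorant $\ov\theta$ with $\ov\theta(0)=0$ we may apply comparison. Pointwise this follows from the equality of the sublevel sets, but on unbounded $\Rd$ compactness fails. I would handle it by uniform continuity. If $u_0(x_k)\to 0^+$ while $\wt u_0(x_k)\ge c>0$, uniform continuity of $\wt u_0$ gives a ball of fixed radius around $x_k$ disjoint from $\Bp{\wt u_0\le 0}=\Bp{u_0\le 0}$, and I would need $u_0$ to stay bounded away from $0$ there. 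This is not automatic for general $UC_*$ data. In the case needed for Definition \ref{not_def_reach} one can take $u_0=\dist(\cdot,S)$ by Remark \ref{comp_rem_iniData}, and then the argument works directly, since $\dist(x_k,S)\ge$ the fixed radius. For general data I would follow \cite[Theorem 4.2.11]{Giga06}: first compare each solution with the one started from $\min\Bp{\dist(\cdot,S),1}$, using that the positive part of $u_0$ satisfies $u_0\ge \phi(\dist(\cdot,S))$ for some modulus-type $\phi$ on the region $\{u_0 \le 1\}$ ... or reduce via the inverse functions. This reduction is where care is needed.

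Once $\ov\theta$ is available, $\ov\theta\circ u$ is a solution by Theorem \ref{comp_thm_inv}, is bounded and uniformly continuous, and satisfies $\ov\theta\circ u\ge\wt u$ at $t=0$. Theorem \ref{comp_thm_comp-unb} with Remark \ref{comp_rem_condcomp}\ref{comp_cond1_bound},\ref{comp_cond2_uni0} then gives $\wt u\le\ov\theta\circ u$ for all $t$. Hence $u(x,t)\le 0$ implies $\wt u(x,t)\le\ov\theta(0)=0$, which proves the inclusion, and exchanging the roles of $u$ and $\wt u$ gives equality.
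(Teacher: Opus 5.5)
The paper gives no argument of its own here (it defers to Giga). Your $\theta$-plus-comparison strategy is the standard one, but as written it proves only one inclusion.

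The function $\theta(s)=\sup\{\widetilde u_0(x):u_0(x)\le s\}$ is guaranteed to satisfy $\theta(0^+)\le 0$ when $u_0$ is bounded below, away from $S$, by a positive function of $\operatorname{dist}(\cdot,S)$. This holds for $u_0=\operatorname{dist}(\cdot,S)\wedge L$, so your argument does give $\{u(\cdot,t)\le 0\}\subset\{\widetilde u(\cdot,t)\le 0\}$ in that case.

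The closing step ``exchanging the roles of $u$ and $\widetilde u$'' is not legitimate, because the construction is not symmetric. It would now require the arbitrary datum $\widetilde u_0$ to have that lower bound, and $UC_*$ data need not. For example, take $S=\{x_1\le 0\}$ and $\widetilde u_0(x)=\min\{1,x_1^+\}/(1+|x|)$. This is Lipschitz with $\{\widetilde u_0\le 0\}=S$, but along $x=(r,0,\ldots,0)$, $r\to\infty$, one has $\widetilde u_0(x)\to 0$ while $\operatorname{dist}(x,S)=r$. Hence $\sup\{\operatorname{dist}(x,S)\wedge 1:\widetilde u_0(x)\le s\}=1$ for every $s>0$, and no admissible $\theta$ exists. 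The same example refutes the bound $u_0\ge\phi(\operatorname{dist}(\cdot,S))$ on which your suggested reduction for general data rests. So $\{\widetilde u(\cdot,t)\le 0\}\subset\{u(\cdot,t)\le 0\}$ is unproven even for the distance datum you single out, and the theorem does not follow. (A minor point: for $s<0$ you need $\theta\ge\sup_S\widetilde u_0$, e.g.\ $\theta\equiv 0$ on $(-\infty,0]$, not $\theta=\inf\widetilde u_0$.)

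What is missing is a localization step, since on $\mathbb{R}^d$ no global $\theta$ exists in this direction. One way to supply it:
\begin{itemize}
\item Replace $\widetilde u$ by $\max\{\widetilde u,0\}$, which is a solution by Theorem~\ref{comp_thm_inv} and has the same zero sublevel sets.
\item Fix $(x_0,t)$ with $u(x_0,t)>0$ for the distance-datum solution $u$. Let $w_R$ solve the equation with datum $\operatorname{dist}(\cdot,S_R)\wedge L$, where $S_R=S\cup(\mathbb{R}^d\setminus B_R(x_0))$.
\item For $s>0$ the set $\{\operatorname{dist}(\cdot,S_R)\ge s\}$ is a compact subset of $\mathbb{R}^d\setminus S$, so $\widetilde u_0$ has a positive minimum on it. This yields a continuous nondecreasing $\theta_R$ with $\theta_R(0)=0$, $\theta_R>0$ on $(0,\infty)$, and $\theta_R\circ w_R(\cdot,0)\le\max\{\widetilde u_0,0\}$. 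Comparison then gives $\max\{\widetilde u,0\}\ge\theta_R\circ w_R$.
\item It remains to show $w_R(x_0,t)>0$ for large $R$. The data converge locally uniformly to $\operatorname{dist}(\cdot,S)\wedge L$ and are uniformly $1$-Lipschitz. A quadratic barrier therefore gives an $R$-independent modulus of continuity at $t=0$. The half-relaxed limits of $(w_R)$ are then a sub- and a supersolution attaining that datum, and Theorem~\ref{comp_thm_comp-unb} forces $w_R\to u$ locally uniformly.
\end{itemize}
Combining this inclusion with the one you proved, and passing through the distance datum, gives the theorem for two arbitrary data.
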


\section{Proofs for the preliminaries}\label{s_pre-proofs}
In this section we collect our proofs for the preliminary properties of the set evolutions, stable sets, restricted coefficient fields and measurability from Section \ref{s_assump} and Section \ref{s_pre}.

\subsection{Proofs for Section \ref{s_assump}}

\begin{proof}[Proof of Lemma \ref{sett_lem_isotropy}]
Due to Assumption \ref{sett_aP_stat}, Assumption \ref{sett_aP_fin}, and Assumption \ref{sett_aP_Aconst} there exists $\wt{A}\in  C^{0,1}(\Sd, \Rddsym)$ satisfying \eqref{sett_eq_assAbd} and \eqref{sett_eq_assAgeom} such that 
\begin{equation*}
	\PM{(A,F)\in\Omega\,:\,A(x,e)=\wt{A}(e) \text{ for all }x\in\Rd, e\in\Sd}=1.
\end{equation*}
Assumption \ref{sett_aP_ani_actual} from Remark \ref{sett_rem_ani_actual} further yields that
\begin{equation}\label{prepro_eq_iso-rot}
	\wt{A}(e) = R^\intercal \wt{A}(Re)R
	\qquad\text{for all $R\in\operatorname{SO}(d)$ and $e\in\Sd$}.
\end{equation}
We claim that for any $e\in\Sd$ there exists $a(e)\geq 0$ such that 
\begin{equation}\label{prepro_eq_iso-claim}
	\wt{A}(e) = a(e)\pp{Id-e\otimes e}.
\end{equation}
Then \eqref{prepro_eq_iso-rot} would immediately imply that $a$ is constant, concluding the proof.

It remains to show \eqref{prepro_eq_iso-claim}.
Let $e_1\in\Sd$.
First note that $\wt{A}(e_1)$ is positive semi-definite and symmetric due to \eqref{sett_eq_assAbd} and hence has an eigendecomposition with only non-negative eigenvalues.
With \eqref{sett_eq_assAbd} we obtain that $\wt{A}(e_1)e_1=0$ and hence that $e_1$ is an eigenvector with eigenvalue $0$.
For $d=2$, this already implies \ref{prepro_eq_iso-claim}.
For $d\geq 3$, we choose eigenvectors $e_2,\ldots, e_d\in\Sd$ with eigenvalues $\lambda_2,\ldots,\lambda_d\geq 0$ such that $e_1,\ldots,e_d$ is an orthonormal basis of $\Rd$.
For $i\in\Bp{3,\ldots,d}$, let $R\in\operatorname{SO}(d)$ be the rotation given by $R e_j=e_j$ for $j\notin \Bp{2,i}$, $Re_2=e_i$ and $Re_i= - e_2$.
Then with \eqref{prepro_eq_iso-rot} we obtain
\begin{equation*}
	\lambda_2 e_2 = A(e_1)e_2 = R^\intercal \wt{A}(Re_1) Re_2 = R^\intercal \wt{A}(e_1) e_i = \lambda_i R^\intercal e_i = \lambda_i e_2.
\end{equation*}
Since $i\in\Bp{3,\ldots,d}$ was arbitrary, this implies $\wt{A}(e_1) = \lambda_2\pp{Id-e_1\otimes e_1}$ and thus establishes the claim \eqref{prepro_eq_iso-claim}.
\end{proof}

\begin{proof}[Proof of Lemma \ref{veff_lem_henv}]
Let $S_3, S_4\subset\Rd$. We claim that
\begin{equation}\label{veff_eq_hsubs-cons}
	\text{if $S_3\hsubs S_4$, then $S_3+\ov{B}_r\subset S_4+\ov{B}_{\max\Bp{r,h}}$ for all $r>0$.}
\end{equation}
Note that for closed sets $S\subset\Rd$ we can rewrite the arrival time from Definition \ref{intro_def_met} as
\begin{equation*}
	\met[(A,F)]{h}{x_0}{S}=\min\Bp{\min\Bp{t\geq 0 \,:\, x_0\in\reach[(A,F)]{t}{S}+\ov{B}_h},\,\timebound{h}{x_0}{S}}.
\end{equation*}
For any $0\leq t\leq \timebound{h}{x_0}{S_2}$ with $x_0\in\reach[(A,F)]{t}{S_1}+\ov{B}_h$, we in particular have $x_0\in\pp{\reach[(A,F)]{t}{S_1}\cap\ov{B}_h(x_0)}+\ov{B}_h $ and thus  $x_0\in\reach[(A,F)]{t}{S_2}+\ov{B}_h$ due to \eqref{veff_eq_reach-hsubs} and \eqref{veff_eq_hsubs-cons}.
This implies 
\begin{multline}\label{veff_eq_hsubs-met-T}
	\met[(A,F)]{h}{x_0}{S_1}	\\
	\geq		\min\Bp{\min\Bp{t\geq 0 \,:\, x_0\in\reach[(A,F)]{t}{S_2}+\ov{B}_h},\,\timebound{h}{x_0}{S_2},\,\timebound{h}{x_0}{S_1}}.
\end{multline}
Note that for closed sets we can rewrite the distance as 
\begin{equation*}
	\dist(x_0,S)=\min\Bp{r\geq 0	\,:\,	x_0\in S+\ov{B}_r}.
\end{equation*}
Thus, if $\dist(x_0,S_1)\leq h$, then \eqref{veff_eq_hsubs-cons} implies that also $\dist(x_0,S_2)\leq h$ and hence $\met[(A,F)]{h}{x_0}{S_2}=0$, which yields $\met[(A,F)]{h}{x_0}{S_1}	\geq \met[(A,F)]{h}{x_0}{S_2}$.
Alternatively, if $\dist(x_0,S_1)> h$, then the above characterization of distance and \eqref{veff_eq_reach-hsubs} in combination with \eqref{veff_eq_hsubs-cons} imply that $\dist(x_0,S_1)\geq \dist(x_0,S_2)$ and hence $\timebound{h}{x_0}{S_1}\geq \timebound{h}{x_0}{S_2}$.
Plugging this into \eqref{veff_eq_hsubs-met-T} yields $\met[(A,F)]{h}{x_0}{S_1}	\geq \met[(A,F)]{h}{x_0}{S_2}$.

It remains to show that \eqref{veff_eq_hsubs-cons} holds.
Let $x\in S_3+\ov{B}_r$ with $y\in S_3$ such that $x-y\in\ov{B}_r$.
Without loss of generality assume that $x\in \comp{(S_4)}$. 
If $x$ is in the same connected component of $\comp{(S_4)}$ as $y$, then $x\in S_4+\ov{B}_h$ by Definition \ref{veff_def_fatstab} since $S_3\hsubs S_4$. 
If $x$ is not in the same connected component, then there exists $z\in S_4$ on the line segment connecting $x$ and $y$. 
In particular, there holds $x-z\in\ov{B}_r$ and hence $x\in S_4+\ov{B}_r$.
This establishes \eqref{veff_eq_hsubs-cons} and thus concludes the proof.
\end{proof}

\subsection{General properties of the set evolutions}

\begin{proof}[Proof of Lemma \ref{not_lem_ur}]
Let $\uls{S}$ be as in Definition \ref{not_def_reach}. 
Then writing $\ur[(A,F)]{S}$ as 
\begin{equation*}
	\ur[(A,F)]{S}(x,t)
	= 1-\charfun[\reach{t}{S}](x) 
	= \pp{\charfun[\Bp{r\in\R\,:\,r>0}]\circ\uls{S}}(x,t)
\end{equation*}
yields the result by Theorem \ref{comp_thm_inv}.
\end{proof}


\begin{proof}[Proof of Lemma \ref{not_lem_vmax}]
Let $x_0\in\Rd$ with  $\dist(x_0,S)>\delta+\ov{T}\vmax[\delta]$, for some $\ov{T}>T$.
We will prove the lemma by showing that $x_0\notin\reach[(A,F)]{T}{S}$.
Let $\ur{S}=1-\charfun[\reach{\cdot}{S}]$ be the supersolution to \eqref{intro_eq_uls} from Lemma \ref{not_lem_ur}.
A comparable subsolution $v$ of \eqref{intro_eq_uls} in $\Rd\times(0,\ov{T})$ is given by a closing hole moving according to the maximum forcing and with a curvature contribution corresponding to the hole of radius $\delta$ reached after time $\ov{T}$, see Figure \ref{not_fig_vmax}. 
\begin{figure}[h]
\centering
\includegraphics[width=0.5\textwidth]{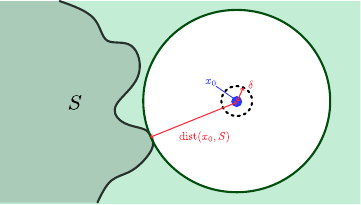}
\caption{	An illustration of the argument bounding the maximum speed of front propagation.
			The evolution of a set $S\subset\Rd$ (depicted in grey) is compared to the evolution of a hole (its exterior is depicted in green) around the target point.
			For the evolution of the hole, the forcing is set to its maximum value.}
\label{not_fig_vmax}
\end{figure}

That is, $v$ is given by
\begin{equation*}
	v(x,t)\coloneqq \charfun[{\pp{\delta+(\ov{T}-t)\vmax[\delta]} B_1(x_0)}](x),
\end{equation*}
where we use that on the one hand $0\leq A \leq C_{1A}$ and $\abs{F}\leq \max\Bp{C_{1F},\,6C_{1A}}$, while on the other hand the interface has `curvature' larger or equal to $\delta^{-1}$ for $t\leq \ov{T}$.
Relaxing the estimate $\abs{F}\leq C_{1F}$ to $\abs{F}\leq \max\Bp{C_{1F},\,6C_{1A}}$ covers the case that $(A,F)$ is the restriction of an admissible coefficient field, see Definition \ref{restr_def}.
Note that $\ur{S}(\cdot,0)\geq v(\cdot,0)$ since $\dist(x_0,S)>\delta+\ov{T}\vmax[\delta]$.
Now applying the comparison principle (see Theorem \ref{comp_thm_comp}) to the supersolution $\ur{S}$ and the subsolution $v$ on the ball $(\delta+\ov{T}\vmax[\delta]) B_1(x_0)$ yields 
	$\reach{t}{S}\subset\comp{\pp{(\delta+(\ov{T}-t)\vmax[\delta]) B_1(x_0)}}$ 
for all $t<\ov{T}$.
\end{proof}

\subsection{Properties of stable sets}

\begin{proof} [Proof of Lemma \ref{stable_lem_preserv}]
Let $T>0$.
We first show the non-shrinking property and then the preservation of stability for any $t< T$. 
Since $T$ is arbitrary, this yields the properties for all $t>0$.
The idea of the proof is to take the union of all sets up to a certain time. 
We show that this is a supersolution, which due to the comparison principle means that original evolution of the set has to be at least as large and thus can not shrink.
\\

\underline{Step 1: Non-shrinking.} 
Let $\uls{S}$ be as in Definition \ref{not_def_reach}, without loss of generality assume $\uls{S} = \min\Bp{1,\uls{S}}$. 
Let $\ur{S}=1-\charfun[\reach{\cdot}{S}]$ be the supersolution to \eqref{intro_eq_uls} from Lemma \ref{not_lem_ur}.
We claim that $\wt{u}$ given by $\wt{u}(x,t)\coloneqq \inf_{s\in[0,t]} \ur{S}(x,s)$ is also a supersolution of \eqref{intro_eq_uls}.
If this was the case, then we can apply the comparison principle (see Theorem \ref{comp_thm_comp-unb}) to $\wt{u}$ and $\uls{S}$ on $\Rd\times(0,T)$ with Remark \ref{comp_rem_condcomp}\ref{comp_cond2_uni0}.
Since by definition $\wt{u}(\cdot,0)\geq \uls{S}(\cdot,0)$, this yields $\wt{u}\geq \uls{S}$.
Hence, 
\begin{equation*}
	\reach{t}{S}=\Bp{x\,:\, \uls{S}(x,t)\leq 0}
	\supset \Bp{x\,:\, \wt{u}(x,t)\leq 0}
	\supset \Bp{x\,:\, \ur{S}(x,t)\leq 0}
	=\reach{t}{S}
\end{equation*}
and therefore $\reach{t}{S} = \Bp{x\,:\, \wt{u}(x,t)\leq 0}$, in particular $\reach[(A,F)]{s}{S}\subset \reach[(A,F)]{t}{S}$ for any $0\leq s \leq t <T$ by the definition of $\wt{u}$.

It remains to show that indeed $\wt{u}$ is a supersolution.
Let $(a,\xi,X)\in\subP \wt{u}(x_0,t_0)$ for some $(x_0,t_0)\in\Rd\times(0,T)$. 
There exists $0\leq t_1\leq t_0$ such that $\wt{u}(x_0,t_0)=\ur{S}(x_0,t_1)$.\\
\underline{Case 1: $t_1>0$.}
Since $\wt{u}$ is non-increasing and by the definition of $\subP \wt{u}(x_0,t_0)$,
\begin{align*}
	\ur{S}(x_0+x,t_1+t)
	&\geq \wt{u}(x_0+x,t_1+t) 
	\geq \wt{u}(x_0+x,t_0+t) \\
	&\geq \ur{S}(x_0,t_1) + at + \xi\cdot x + \frac{1}{2}x^\intercal Xx + \sorder{\abs{t}+\abs{x}^2}
\end{align*}
and therefore $(a,\xi,X)\in\subP \ur{S}(x_0,t_1)$.
Because $\ur{S}$ is a supersolution, we obtain
\begin{equation*}
	a+G_{(A,F)}^*(x_0,\xi,X)\geq 0.
\end{equation*}
\underline{Case 2: $t_1=0$.}
By the definition of $\wt{u}$, we have $\ur{S}(x_0,0)=\wt{u}(x_0,t)$ for all $t\leq t_0$.
On the one hand, by the definition of the parabolic subjet this implies $a\geq 0$.
On the other hand, since $\wt{u}$ is non-increasing,
\begin{align*}
	\ur{S}(x_0+x,0)
	&\geq \wt{u}(x_0+x,t_0)
	\geq \ur{S}(x_0,0) + \xi\cdot x + \frac{1}{2}x^\intercal Xx + \sorder{\abs{x}^2},
\end{align*}
and thus $(\xi,X)\in \subJ \pp{\ur{S}(\cdot,0)}(x_0)$. 
Since $S$ is stable, by definition $\ur{S}(\cdot,0)$ is a supersolution of \eqref{box_eq_stable}, which yields
\begin{equation*}
	0\leq G_{(A,F)}^*(x_0,\xi,X) \leq a+G_{(A,F)}^*(x_0,\xi,X).
\end{equation*}

\underline{Step 2: Preservation of stability.}
We need to show that $\ur{S}(\cdot,t_0)$ is a supersolution of \eqref{box_eq_stable}.
Let $(\xi,X)\in\subJ\pp{u(\cdot,t_0)}(x_0)$ for some $x_0\in\Rd$.
Then $(0,\xi,X)\in\subPt u(x_0,t_0)$, where $\subPt u(x_0,t_0)$ is the relaxed parabolic superjet introduced in Definition \ref{comp_def_pjets}: 
Since $t\mapsto \ur{S}(x,t)$ is non-increasing by Step 1, for $t\leq 0$ we have
\begin{equation*}
	\ur{S}(x_0+x,t_0+t)
	\geq \ur{S}(x_0+x,t_0)
	\geq \ur{S}(x_0,t_0) + \xi\cdot x + \frac{1}{2}x^\intercal Xx + \sorder{\abs{x}^2}.
\end{equation*}
Since $\ur{S}$ is a supersolution (by Proposition \ref{comp_prop_local} also with relaxed superjets), we obtain 
\begin{equation*}
	0+G_{(A,F)}^*(x_0,\xi,X)\geq 0.
\end{equation*}
Hence, $\ur{S}(\cdot,t_0)=1-\charfun[\reach{t_0}{S}]$ is a supersolution to \eqref{box_eq_stable} and thus $\reach{t_0}{S}$ is stable.
\end{proof}


\begin{proof}[Proof of Lemma \ref{not_lem_comp}]
We first prove \ref{not_list_lemcomp1}. 
Let $T>0$.
Let $\ur{S_1}(x,t)= 1-\charfun[\reach{t}{S_1}](x)$ be the supersolution of \eqref{intro_eq_uls} from Lemma \ref{not_lem_ur}.
Let $\uls{S_2}\colon \Rd\times[0,\infty)\rightarrow [0,1]$ be a uniformly continuous solution of \eqref{intro_eq_uls} on $\Rd\times(0,T)$ with $S_2=\Bp{x\in\Rd \,:\, \uls{S_2}(x,0)\leq 0}$ as in Definition \ref{not_def_reach} and hence $\reach{t}{S_2}=\Bp{x\in\Rd \,:\, \uls{S_2}(x,t)\leq 0}$. 
We now apply Theorem \ref{comp_thm_comp-unb}, the comparison principle for unbounded domains, with Remark \ref{comp_rem_condcomp}\ref{comp_cond1_bound} and \ref{comp_cond2_uni0} to $v=\ur{S_1}$ and $u=\uls{S_2}$: 
Since $S_1\subset S_2$, we have $\uls{S_2}\leq \ur{S_1}$ on $\Rd\times\Bp{0}$ and hence the comparison yields $\ur{S_1}\geq \uls{S_2}$ on $\Rd\times(0,T)$.
Since $T>0$ was arbitrary, comparing the 0-sublevel sets implies $\reach{t}{S_1}\subset \reach{t}{S_2}$ for all $t\geq 0$.

Regarding \ref{not_list_lemcomp2}, let $\wt{S}_2$ be $S_2$ with all holes of width smaller than $2h$ removed: 
\begin{equation*}
	\wt{S}_2\coloneqq S_2 \cup \bigcup\Bp{H\,:\,H\text{ is a connected component of }\comp{(S_2)}\text{ with } H\subset S_2+\ov{B}_h}.
\end{equation*}
By the definition of `$\hsubs$', we have $\wt{S}_2\hsubs S_2 \subset \reach{t}{S_2}$ for all $t\geq 0$ using Lemma \ref{stable_lem_preserv} since $S_2$ is stable. 
If we can show that $\reach{t}{S_1}\cap \comp{(\wt{S}_2)}\subset \reach{t}{S_2}$ for all $t\geq 0$, then we are done. 
This follows by applying Theorem \ref{comp_thm_comp-unb} to $\ur{S_1}$ and $\uls{S_2}$ as in the first part of the proof, only now not on $\Rd\times(0,T)$ but on $\comp{(\wt{S}_2)}\times(0,T)$.
Since $S_2$ is stable, Lemma \ref{stable_lem_preserv} yields
\begin{equation*}
	\uls{S_2}=0\leq \inf_{\Rd\times(0,\infty)} \ur{S_1} 
	\quad\text{ on }\quad
	\partial S_2\times[0,T]\supset \partial\comp{(\wt{S}_2)}\times[0,T].
\end{equation*}
Hence with Remark \ref{comp_rem_condcomp}\ref{comp_cond1_bound} and \ref{comp_cond3_u}, the conditions for  Theorem \ref{comp_thm_comp-unb} hold and  we obtain $\ur{S_1}\geq \uls{S_2}$ on $\comp{(\wt{S}_2)}\times(0,T)$ and thus $\reach{t}{S_1}\cap \comp{(\wt{S}_2)}\subset \reach{t}{S_2}$ for all $t\geq 0$, which concludes the proof.
\end{proof}

\begin{proof}[Proof of Lemma \ref{stable_lem_vmin-iterative}]
We prove the Lemma by induction, iteratively applying the effective minimum speed of propagation from Definition \ref{veff_def_fatstab} in combination with the comparison principle from Lemma \ref{not_lem_comp}.
For $n=0$ we clearly have $S+n\ov{B}_h	\hsubs	S_{n}\coloneqq\reach[(A,F)]{n\vmineff^{-1}h}{S}$.
Assume that $S+(n-1)\ov{B}_h	\hsubs	S_{n-1}$  for some $n\in\N$. 
Due to the definition of ``$\hsubs$'', see Definition \ref{veff_def_fatstab}, for each $x\in S+n\ov{B}_h$ we either already have $\Bp{x}\hsubs S_{n-1}$ or there exists $y\in S_{n-1}\cap\pp{S+n\ov{B}_h}$ on the shortest path connecting $x$ to $S+(n-1)\ov{B}_h$, see Figure \ref{stable_fig_growth}.
Either due to the stability of $S$ or due to the effective minimum speed and $x\in \ov{B}_h(y)\subset \pp{S_{n-1}\cap U}+\ov{B}_h$ we hence obtain $\Bp{x}\hsubs	\reach[(A,F)]{\cstable h+n\vmineff^{-1}h}{S}$ and thus $S+n\ov{B}_h	\hsubs	S_n$, concluding the proof.
\begin{figure}[h]
\centering
\includegraphics[width=0.6\textwidth]{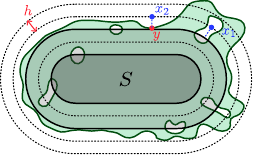}
\caption{	An illustration of a setup for the iterative application of the effective minimum speed.
			The set $S_{n-1}$ (depicted in green) $h$-envelopes $S+(n-1)\ov{B}_h$ (depicted in light grey).
			For each point $x\in S+n\ov{B}_h$, either $\Bp{x}\hsubs S_{n-1}$ or there exists $y\in S_{n-1}$ on the shortest path connecting $x$ to $S+(n-1)\ov{B}_h$, exemplified for $x_1$ and $x_2$ (both depicted in blue).}
\label{stable_fig_growth}
\end{figure}
\end{proof}


\begin{proof}[Proof of Lemma \ref{stable_lem_TvsEspeed}]
The proof is a rather straightforward application of Lemma \ref{stable_lem_vmin-iterative} using the properties which Assumption \ref{veff_aP_star} guarantees for $(A,F)\in\Espeed{h}(U)$.
Since by assumption $\partial S\subset U$, for $(A,F)\in\Espeed{h}(U)$ there exists a stable $(h,\cstable)$-approximation $S_h$ with $S_h\subset S\hsubs \reach[(A,F)]{\cstable h}{S_h}$, for which $(A,F)$ admits $\vmineff$ as an effective minimum speed of propagation on the scale $h$ in $U$.
Since $S_1\hsubs S_2$ implies $S_1+\ov{B}_h\subset S_2+\ov{B}_h$ as in \eqref{veff_eq_hsubs-cons}, with Lemma \ref{stable_lem_vmin-iterative} we obtain
\begin{equation*}
	S+n\ov{B}_h	\subset \reach[(A,F)]{\cstable h}{S_h}+n\ov{B}_h
				\hsubs \reach[(A,F)]{\cstable h+n\vmineff^{-1}h}{S_h}
\end{equation*}
for $n\in\N$ as long as $\pp{S+n\ov{B}_h}\cap\pp{\partial S\cup \comp{S}}\subset U\cup S$.
In particular, due to the assumption on $U$ and using that the evolution of the stable set $S_h$ is increasing as in Lemma \ref{stable_lem_preserv} to not round down we have
\begin{equation*}
	S+\left\lfloor \frac{\dist(x_0,S)}{h} \right\rfloor \ov{B}_h	
	\hsubs	\reach[(A,F)]{\cstable h+\dist(x_0,S)\vmineff^{-1}}{S_h}
	\subset	\reach[(A,F)]{\cstable h+\dist(x_0,S)\vmineff^{-1}}{S},
\end{equation*}
where the last inclusion is due to $S_h\subset S$ and the comparison principle from Lemma \ref{not_lem_comp}.
This implies 
\begin{equation*}
	x_0\in S+\frac{\dist(x_0,S)}{h}\ov{B}_h	\hsubs \reach[(A,F)]{\cstable h+\dist(x_0,S)\vmineff^{-1}}{S}+\ov{B}_h
\end{equation*}
and hence $\met[(A,F)]{h}{x_0}{S}\leq \cstable h+\dist(x_0,S)\vmineff^{-1} = \timebound{h}{x_0}{S}$.
\end{proof}

\subsection{Properties of restricted coefficient fields}

\begin{proof}[Proof of Lemma \ref{restr_lem_PropRestr}]
Without loss of generality, assume that $C=1$ and $u\geq 0$. 
The result for general $u$ then follows since $e^{u-C}\colon \Rd\times[0,\infty)\rightarrow[0,1]$ has the same level sets and is a supersolution due to Theorem \ref{comp_thm_inv}.
For $0<\eps<\frac{1}{2}$ we denote
\begin{equation*}
	\restcomp_\eps\coloneqq \bigcup\Bp{\ov{B}_{\frac{1-\eps}{3}}(x)\,:\,\dist(x,Q)\geq \frac{2}{3}}.
\end{equation*}
Note that $\restcomp = \bigcup_{0<\eps<1}\restcomp_\eps$.
We claim that for all $0<\eps<\frac{1}{2}$
\begin{equation}\label{restr_eq_claim}
	v_\eps(\cdot,t)\coloneqq \charfun[\restcomp_\eps] \text{ is a stationary subsolution of \eqref{intro_eq_uls} with coefficients }(A,F)_Q.
\end{equation} 
Assuming that \eqref{restr_eq_claim} holds, we apply Theorem \ref{comp_thm_comp-unb}, the comparison principle for unbounded domains, to $u$ and $v_\eps$ on $\Rd\times(0,T)$ for any $T>0$. 
The conditions \ref{comp_comp1_growth} and \ref{comp_comp3_growthdiff} from Theorem \ref{comp_thm_comp-unb} are satisfied since $u$ and $v_\eps$ are bounded. 
Regarding condition \ref{comp_comp2_moddiff}, note that whenever $v_\eps(x,0)=1$, then $v_\eps(x,0)=1\leq u(y,0)$ for all $y\in B_\frac{\eps}{6}(x)$, because
\begin{equation*}
	\dist\pp{\Bp{x\,:\,v_\eps(x,0)=1}, \Bp{x\,:\,u(x,0)\neq 1}}
	\geq		\dist\pp{\restcomp_\eps, \comp{\pp{\restcomp}}}
	\geq		\frac{\eps}{3}.
\end{equation*}
Hence, Theorem \ref{comp_thm_comp-unb} yields $v_\eps\leq u$ on $\Rd\times[0,\infty)$ for any $0<\eps<\frac{1}{2}$ and therefore 
\begin{equation*}
	\emptyset	=	\Bp{x\,:\, u(\cdot,t)<1} \cap \bigcup_{0<\eps<1} \restcomp_\eps
				=	\Bp{x\,:\, u(\cdot,t)<C} \cap \restcomp.
\end{equation*}
Now, with respect to $u$ being a supersolution to \eqref{intro_eq_uls} with coefficients $(A,F)_{\wt{Q}}$, by the definition of the restricted coefficient fields we have $G_{(A,F)_Q}\leq G_{(A,F)_{\wt{Q}}}$ on $Q+\ov{B}_{\frac{2}{3}}$.
Hence, for $t_0>0$ and $x_0\in Q+\ov{B}_{\frac{2}{3}}$ and $(a,\xi,X)\in\subJ \ur[(A,F)_Q]{S}(x_0,t_0)$, we obtain 
\begin{equation*}
	a+G_{(A,F)_{\wt{Q}}}^*(x_0,\xi,X)
	\geq 	a+G_{(A,F)_Q}^*(x_0,\xi,X)
	\geq		0
\end{equation*}
because $u$ is a supersolution to \eqref{intro_eq_uls} with coefficients $(A,F)_{Q}$.
For $t_0>0$ and $x_0\notin Q+\ov{B}_{\frac{2}{3}}$, we know that $u=C$ in a neighborhood of $(x_0,t_0)$ since $\Bp{x\,:\, u(\cdot,t)<C}\subset\comp{\pp{\restcomp}}\subset Q+\ov{B}_{\frac{2}{3}}$ for all $t>0$.
Hence, if $(a,\xi,X)\in\subJ \ur[(A,F)_Q]{S}(x_0,t_0)$, then $(a,\xi,X)=(0,0,X)$ with $X\leq 0$ and thus 
	$a+G_{(A,F)_{\wt{Q}}}^*(x_0,\xi,X)\geq 0$.

Thus, we have shown that $u$ is a supersolution to \eqref{intro_eq_uls} with coefficients $(A,F)_{\wt{Q}}$.
We obtain \ref{restr_prop1_restr} and \ref{restr_prop3_super} by choosing $u=\ur[(A,F)_Q]{S}$ as in Lemma \ref{not_lem_ur}.
We obtain \ref{restr_prop2_stable} by plugging in the stationary supersolution $u(x,t)=1-\charfun[S](x)$.

\underline{It remains to prove Claim \eqref{restr_eq_claim}:}
For $(x_0,t_0)\notin \partial \restcomp_\eps\times(0,\infty)$, there is a neighborhood of $(x_0,t_0)$ on which $v_\eps$ is constant.
Hence, if $(a,\xi,X)\in\supP v_\eps(x_0,t_0)$, then we have $(a,\xi,X)=(0,0,X)$ with $X\geq 0$ and thus 
	$a+\big(G_{(A,F)_{Q}}\big)_*(x_0,\xi,X)\leq 0$.
	
For $(x_0,t_0)\in \partial \restcomp_\eps\times(0,\infty)$, we know that $v_\eps(x_0,t)=1$, 
	$\frac{1}{3}\leq \dist(x_0,Q)\leq \frac{2}{3}$ 
and that there exists $x_1$ such that $x_0\in\partial B_\frac{1-\eps}{3}(x_1)$ and $v_\eps=1$ on $\ov{B}_\frac{1-\eps}{3}(x_1)$.
Hence, a superjet $(a,\xi,X)\in\supP v_\eps(x_0,t_0)$ has to satisfy $a=0$.
If $\xi=0$, 
then by approaching $x_0$ from within $B_\frac{1}{3}(x_1)$ we obtain $X\geq 0$ and thus 
	$a+\big(G_{(A,F)_{Q}}\big)_*(x_0,\xi,X)\leq 0$,
exactly as for $(x_0,t_0)\notin \partial \restcomp\times(0,\infty)$.

If $\xi\neq 0$, we know that $\xi\cdot\nu\geq 0$, where $\nu\coloneqq \frac{3}{1-\eps}\pp{x_1-x_0}$.
Let $\nu^\perp$ be a normal orthogonal component of $\nu$.
We approach $x_0$ along $\partial B_\frac{1-\eps}{3}(x_1)$ with 
\begin{equation*}
	s\mapsto 		x_0+s\nu^\perp + f(s)\nu,
	\quad\text{ where }\quad
	f(s)\coloneqq	\frac{1-\eps}{3}\pp{1-\sqrt{1-\pp{\frac{3}{1-\eps}s}^2}}.
\end{equation*}
Since $v_\eps$ is constant on the ball, by the definition of $\supP v_\eps(x_0,t_0)$, we have
\begin{equation*}
	0\leq \lim_{s\rightarrow 0}\frac{1}{s^2}\bigg(
	\xi\cdot\pp{s\nu^\perp + f(s)\nu}
	+\frac{1}{2} \pp{s\nu^\perp + f(s)\nu}^\intercal X \pp{s\nu^\perp + f(s)\nu}
	\bigg).
\end{equation*}
Note that $f(s)=\frac{3}{2(1-\eps)}s^2+\mathcal{O}(|s|^3)$ for $s\rightarrow 0$.
Comparing the coefficients for the different powers of $s$, this yields $\xi\cdot\nu^\perp = 0$ (thus $\xi=\abs{\xi}\nu$) and further 
\begin{align*}
	0	&\leq		\frac{3}{1-\eps}\xi\cdot\nu + \nu^\perp \cdot X\nu^\perp
		= 		\frac{3}{1-\eps}\abs{\xi} + \nu^\perp \cdot X\nu^\perp.
\end{align*}
Since $\nu^\perp$ was arbitrary and because $\frac{\xi}{\abs{\xi}}=\nu$, we obtain
\begin{equation}\label{rest_eq_Xbound}
	\pp{\Idd-\frac{\xi}{\abs{\xi}}\otimes\frac{\xi}{\abs{\xi}}}X\pp{\Idd-\frac{\xi}{\abs{\xi}}\otimes\frac{\xi}{\abs{\xi}}}
	\geq		-\frac{3}{1-\eps}\abs{\xi}.
\end{equation}
Using that $\xi\neq 0$, that $G_{(A,F)_{\wt{Q}}}$ is strongly geometric due to \eqref{sett_eq_assAgeom} and \cite[Theorem 1.6.12 and Lemma 1.6.2]{Giga06}, we obtain
\begin{align*}
	G_{(A,F)_{\wt{Q}}}^*(x_0,\xi,X)
	&=	G_{(A,F)_{\wt{Q}}}\pp{x_0,\xi,\pp{\Id-\frac{\xi}{\abs{\xi}}\otimes\frac{\xi}{\abs{\xi}}}X\pp{\Id-\frac{\xi}{\abs{\xi}}\otimes\frac{\xi}{\abs{\xi}}}}\\
	&=	-\tr\pp{A(x_0,\nu)\pp{\Id-\frac{\xi}{\abs{\xi}}\otimes\frac{\xi}{\abs{\xi}}}X\pp{\Id-\frac{\xi}{\abs{\xi}}\otimes\frac{\xi}{\abs{\xi}}}}
		+F_Q(x_0,\nu)\abs{\xi}\\
	&\leq \frac{3}{1-\eps}C_{1A}\abs{\xi} - \max\Bp{C_{1F},\,6C_{1A}}\abs{\xi}
	\leq 0
\end{align*}
where in the last step we used \eqref{rest_eq_Xbound} and that $A$ is nonnegative definite due to \eqref{sett_eq_assAbd}, the definition of $F_Q$ from \eqref{box_eq_defFQ} and that $\dist(x_0,Q)\in\bp{\frac{1}{3},\frac{2}{3}}$ as well as $0<\eps<\frac{1}{2}$.
\end{proof}


\begin{proof}[Proof of Lemma \ref{restr_lem_QwtQ}]
\underline{$\reach[(A,F)_{Q}]{t}{S}\subset\reach[(A,F)_{\wt{Q}}]{t}{S}$}:
Let $\wt{u}_S\colon \Rd\times[0,\infty)\rightarrow [0,1]$ be a uniformly continuous solution of \eqref{intro_eq_uls} for $(A,F)_{\wt{Q}}$, with $S=\Bp{x\in\Rd \,:\, \wt{u}_S(x,0)\leq 0}$ as in Definition \ref{not_def_reach}.
From Lemma \ref{restr_lem_PropRestr} we know that 
	$\ur{S}(x,t)= 1-\charfun[{\reach[{(A,F)_{Q}}]{t}{S}}](x)$ 
is a supersolution to \eqref{intro_eq_uls} for coefficients $(A,F)_{\wt{Q}}$.
Since $\ur{S}\geq \wt{u}_S$ on $\Rd\times\Bp{0}$, with Remark \ref{comp_rem_condcomp}\ref{comp_cond1_bound} and \ref{comp_cond2_uni0} we can apply the comparison principle Theorem \ref{comp_thm_comp-unb} and obtain $\ur{S}\geq \wt{u}_S$ on $\Rd\times(0,\infty)$.
Comparing the $0$-sublevel sets, we obtain $\reach[(A,F)_{Q}]{t}{S}\subset\reach[(A,F)_{\wt{Q}}]{t}{S}$.

\underline{$\reach[(A,F)_{Q}]{t}{S}\supset\reach[(A,F)_{\wt{Q}}]{t}{S}$}:
Let $\uls{S}$ be be a uniformly continuous solution of \eqref{intro_eq_uls} for coefficients $(A,F)_{Q}$ satisfying $S=\Bp{x\in\Rd \,:\, \uls{S}(x,0)\leq 0}$ as in Definition \ref{not_def_reach}.
Since $\uls{S}$ is uniformly continuous and for all $0\leq t\leq t_0$ there holds
\begin{equation*}
	Q	\supset	\reach[(A,F)_{Q}]{t}{S} + B_\eps
		=		\Bp{x\in\Rd \,:\, \uls{S}(x,t)\leq 0} + B_\eps,
\end{equation*}
there exists $C>0$, such that $\Bp{x\in\Rd \,:\, \uls{S}(x,t)\leq C}\subset Q$ for all $0\leq t\leq t_0$.
Without loss of generality, assume that $C=1$ and $0\leq \uls{S}\leq 1$ (otherwise use Theorem \ref{comp_thm_inv} to continue with $\pp{0\vee C^{-1}\uls{S}}\wedge 1$ instead).

We claim that $\uls{S}$ is a subsolution to \eqref{intro_eq_uls} not just for the coefficients $(A,F)_{Q}$, but also $(A,F)_{\wt{Q}}$:
For $(x_0,t_0)\in Q\times(0,\infty)$, if $(a,\xi,X)\in\supP \uls{S}(x_0,t_0)$, we have 
\begin{equation*}
	a+\big(G_{(A,F)_{\wt{Q}}}\big)_*(x_0,\xi,X) = a+\big(G_{(A,F)_{Q}}\big)_*(x_0,\xi,X) \leq 0
\end{equation*}
since $\uls{S}$ is a solution of \eqref{intro_eq_uls} for coefficients $(A,F)_Q$ and $(A,F)_{\wt{Q}}=(A,F)_{Q}$ on $\Sd\times Q$.
For $(x_0,t_0)\in \comp{Q}\times(0,\infty)$, there is a neighborhood of $(x_0,t_0)$ on which $\uls{S}=1$. 
Hence, if $(a,\xi,X)\in\supP \uls{S}(x_0,t_0)$, then we have $(a,\xi,X)=(0,0,X)$ with $X\geq 0$ and thus in this case we also have
	$a+\big(G_{(A,F)_{\wt{Q}}}\big)_*(x_0,\xi,X)\leq 0$.

Let $\tilde{u}^\mathcal{R}_S= 1-\charfun[{\reach[{(A,F)_{\wt{Q}}}]{t}{S}}](x)$ be the supersolution of \eqref{intro_eq_uls} for coefficients $(A,F)_{\wt{Q}}$ from Lemma \ref{not_lem_ur}.
Since $\tilde{u}^\mathcal{R}_S\geq \uls{S}$ on $\Rd\times\Bp{0}$, via the comparison principle -- Theorem \ref{comp_thm_comp-unb} and Remark \ref{comp_rem_condcomp}\ref{comp_cond1_bound} and \ref{comp_cond2_uni0} -- we obtain $\tilde{u}^\mathcal{R}_S\geq \uls{S}$ on $\Rd\times(0,\infty)$ and hence finish the proof by comparing the $0$-sublevel sets.
\end{proof}

\subsection{Measurability criterion}
\begin{proof}[Proof of Lemma \ref{meas_lem_closure}]
Note that $\Omega$ with the topology induced by $\|\cdot\|_{C_w(U\times\Sd,\Rdd\times\R)}$ is separable due to the uniform Lipschitz bounds from \eqref{sett_eq_assAbd} and \eqref{sett_eq_assF}.
It is straightforward to show that any subset of a separable metric space is again separable: For each element of the original dense set take a sequence inside the subset with the distance to the point converging to the distance between the point and the subset, which combined yields a dense countable set in the subset.
In particular, $E$ is separable with respect to the topology induced by $\|\cdot\|_{C_w(U\times\Sd,\Rdd\times\R)}$.
For brevity, in this proof we write $C_w(U\times\Sd)$ for $C_w(U\times\Sd,\Rdd\times\R)$.
Since $E\subset\Omega$ is closed with respect to $\|\cdot\|_{C_w(U\times\Sone)}$, there exist $\pp{(A_k,F_k)}_k\in\N\subset E$ such that 
\begin{align*}
	E&=\overline{\bigcup_{k\in\N} \Bp{(A_k,F_k)}}{}^{C_w(U\times\Sone)}\cap\Omega
	= \bigcap_{\ell\in\N}\pp{	
			\bigcup_{k\in\N} \ov{B}_{\frac{1}{\ell}}^{C_w(U\times\Sone)}\pp{(A_k,F_k)}\cap\Omega
								}.
\end{align*}
This shows that $E$ is measurable with respect to $\F(U)$, since 
	$\ov{B}_{\delta}^{C_w(U\times\Sone)}\pp{(\wt{A},\wt{F})}\cap\Omega$ 
is $\F(U)$-measurable for any $\delta>0$ and $(\wt{A},\wt{F})\in\Omega$, which can be seen as follows.
Let $\pp{(x_k,e_k)}_{k\in\N}$ be dense in $U\times\Sone$, again using that subsets of separable space are separable.
Using continuity, we have
\begin{align*}
	&\ov{B}_{\delta}^{C_w(U\times\Sone)}\pp{(\wt{A},\wt{F})}\cap\Omega\\
	&\quad	=	\Big\lbrace(A,F)\in\Omega\,:\,\abs{(A,F)(x,e)-(\wt{A},\wt{F})(x,e)}\leq \delta\pp{1+\abs{x}^2}\\
	&\qquad\qquad\qquad\qquad\qquad\qquad\qquad\qquad\qquad\qquad\quad
				\text{ for all $(x,e)\in U\times\Sone$}\Big\rbrace\\
	&\quad	=	\bigcap_{k\in\N} \Bp{(A,F)\in\Omega\,:\,	\abs{(A,F)(x_k,e_k)-(\wt{A},\wt{F})(x_k,e_k)}\leq \delta\pp{1+\abs{x_k}^2}},
\end{align*}
which clearly is measurable with the definition of $\F(U)$ in \eqref{sett_eq_sigmaF}.
\end{proof}

\section{Azuma's inequality -- a generalization and an alternative}\label{s_azuma}
In Subsection \ref{subs_azuma-alt}, we introduce and prove our alternative for Azuma's inequality, which we cited in Proposition \ref{str_prop_azuma} and which provides fluctuation bounds for martingales if the increments based on uniform bounds for the increments.
In contrast, our results only require uniform bounds on a set of very large probability.
Compared to Azuma's inequality, our alternative only yields exponential instead of Gaussian tail bounds.

While it is possible to generalize Azuma's inequality to martingales with only likely bounded increments, this does not provide meaningful Gaussian fluctuation bounds in our setting -- at least if it was possible to obtain such bounds, a different strategy would be required.
In order to illustrate this, we provide a concentration inequality with Gaussian tail bounds in Subsection \ref{subs_azuma-gen}, generalizing Azuma's inequality.
We further provide the probability bounds which we would obtain in the setting of Proposition \ref{fluct_prop}.
However, in this setting the probability of the uniform bounds holding is not strong enough to recover good fluctuation bounds in the same regime as with the alternative from Subsection \ref{subs_azuma-alt}. 

\subsection{An alternative to Azuma's inequality}\label{subs_azuma-alt}
\begin{lemma}\label{azuma_lem_charfun}
Let $(\Omega,\G,\Pm)$ be a probability space. 
Let $(\G_n)_{n\in\Nz}\subset\G$ be a filtration.
Then for any $E\in\G$ and $\delta>0$ there holds
\begin{equation*}
	\PM{\Bp{\omega\in\Omega\,:\, \exists n\in\Nz:\,\EV[\G_n]{\charfun[E]}(\omega)\geq \delta}}	\leq		\delta^{-1}\PM{E}.
\end{equation*}
\end{lemma}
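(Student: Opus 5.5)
This is Doob's maximal inequality applied to the bounded martingale $M_n\coloneqq\EV[\G_n]{\charfun[E]}$, and the plan is to prove it directly with a stopping time. We may choose versions of the conditional expectations with $0\leq M_n\leq 1$. Then $(M_n)_{n\in\Nz}$ is a nonnegative martingale with respect to $(\G_n)_n$, and $\EV{M_n}=\PM{E}$ for every $n$. The event on the left of the statement is $A\coloneqq\bigcup_{n\in\Nz}\Bp{M_n\geq\delta}$, which lies in $\G$ as a countable union. By continuity from below it suffices to show, for every $N\in\N$,
\begin{equation*}
	\PM{A_N}\leq\delta^{-1}\PM{E},
	\qquad A_N\coloneqq\bigcup_{n=0}^N\Bp{M_n\geq\delta}.
\end{equation*}

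For fixed $N$, define the stopping time $\tau\coloneqq\min\Bp{n\leq N\,:\,M_n\geq\delta}$, with $\tau\coloneqq N$ if no such $n$ exists. It is a bounded stopping time, so optional stopping gives $\EV{M_\tau}=\EV{M_0}=\PM{E}$.

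To check optional stopping by hand, decompose $\Bp{\tau=n}\in\G_n$. On each such event $M_n=\EV[\G_n]{\charfun[E]}$, so
\begin{equation*}
	\EV{M_\tau}=\sum_{n=0}^N\EV{M_n\charfun[\Bp{\tau=n}]}=\sum_{n=0}^N\EV{\charfun[E]\charfun[\Bp{\tau=n}]}=\PM{E}.
\end{equation*}
On $A_N$ we have $M_\tau\geq\delta$, and elsewhere $M_\tau\geq 0$. Hence $\delta\PM{A_N}\leq\EV{M_\tau}=\PM{E}$, and letting $N\to\infty$ gives the claim.

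The only delicate point is measurability and the choice of versions of the conditional expectations. Since the statement is about a countable family, fixing versions changes every event only by a null set, so this causes no real difficulty.
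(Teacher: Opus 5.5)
Your proof is correct and is essentially the paper's argument. The paper also decomposes by the first index $n$ at which $\EV[\G_n]{\charfun[E]}\geq\delta$, using the disjoint events $\wt{E}_n\in\G_n$, and applies $\EV{\charfun[E]\charfun[\wt{E}_n]}=\EV{\EV[\G_n]{\charfun[E]}\charfun[\wt{E}_n]}\geq\delta\PM{\wt{E}_n}$. The only difference is that it sums over all $n$ at once, with an inequality $\PM{E}\geq\sum_n\EV{\charfun[E]\charfun[\wt{E}_n]}$, instead of truncating at $N$ with $\tau=N$ on non-hitting paths and letting $N\to\infty$ as you do.
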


\begin{proof}
For $n\in\Nz$ we set
\begin{align*}
		E_n			&\coloneqq	\Bp{\omega\in\Omega	\,:\,	 \EV[\G_n]{\charfun[E]}(\omega)\geq \delta}\in\G_n,
	&	\wt{E}_n		&\coloneqq	E_n\setminus \bigcup_{i=0}^{n-1}	E_i\in\G_n.
\end{align*}
Due to the measurability of these events, we obtain
\begin{multline*}
	\PM{E}	=	\EV{\charfun[E]}	\geq		\sum_n	\EV{\charfun[E]\charfun[\wt{E}_n]}
								=		\sum_n	\EV{\EV[\G_n]{\charfun[E]}\charfun[\wt{E}_n]}\\
								\geq		\sum_{n}	\delta	\PM{\wt{E}_n}
								=		\delta	\PM{\bigcup_{n}\wt{E}_n}.
\end{multline*}
Since 
	$\PM{\Bp{\omega\in\Omega\,:\, \exists n\in\Nz:\,\EV[\G_n]{\charfun[E]}(\omega)\geq \delta}}	=	\PM{\bigcup_{n}E_n}$,
this concludes the proof.
\end{proof}

We now introduce our alternative to Azuma's inequality. 
Assuming that we would have uniform bounds on the martingale increments this result would be strictly weaker than the original cited in Proposition \ref{str_prop_azuma}: 
While we obtain the same typical size for the fluctuations, the stochastic integrability is worse since we only have exponential instead of Gaussian tail bounds.
However, we can afford the probability of the set on which we do not have uniform bounds to be much larger compared to the straightforward generalization of Azuma's inequality, which we present below in Proposition \ref{azuma_prop_azuma}.

\begin{proposition}[An alternative to Azuma's inequality]\label{azuma_prop_alternative}
Let $\pp{\MA_n}_{n\in\N}$ be a martingale with respect to a filtration $\pp{\G_n}_{n\in\Nz}$ on a probability space $\pp{\Omega,\G,\Pm}$.
Suppose that $\MA_0=0$, $\abs{\MA_n}\leq T$  for some $T\geq 1$ and all $n\in\Nz$.
Set $\delM_n\coloneqq \MA_n-\MA_{n-1}$ for $n\in\N$ and let $E\in\G$ be a set such that $\abs{\delM_n}(\omega)\leq 1$ for all $\omega\in E$.
Then, for all $k\in\Nz$ and $N\in\Nz$ there holds
\begin{align}
	\label{azuma_eq_pre-alternative}
	\EV{\charfun[E]\MA_N^{2k}}	&\leq \pp{2k}!N^k \PM{E}+ C(T)\pp{2k}!N^k T^{2k}\PM{\comp{E}}.\\
\intertext{with $C(T)\coloneqq\frac{2T^2}{T^2+2-e}$ and, as a consequence,}
	\label{azuma_eq_alternative}
	\EV{\MA_N^{2k}}				&\leq \pp{2k}!N^k \PM{E}+ \pp{1+C(T)\pp{2k}!N^k}T^{2k}\PM{\comp{E}}.
\end{align}
In particular, there exists $C>0$ such that for all $\lambda>0$ this implies
\begin{equation}\label{azuma_eq_alt-PM}
	\PM{\abs{\MA_N}\geq \lambda}	
	\leq		\left\lbrace\begin{aligned}
					&C\exp\pp{-\frac{\lambda}{2\sqrt{N}}}		&&\text{if }\lambda\leq	\frac{-\log\pp{\PM{\comp{E}}}}{2\log(T)}\sqrt{N},	\\
					&C\PM{\comp{E}}^{\frac{1}{2\log(T)}}	&&\text{if }\lambda\geq	\frac{-\log\pp{\PM{\comp{E}}}}{2\log(T)}\sqrt{N}.
			\end{aligned}\right.
\end{equation}
\end{proposition}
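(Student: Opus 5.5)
The plan is to prove \eqref{azuma_eq_pre-alternative} by induction on $N$, simultaneously for all $k\in\Nz$. Then \eqref{azuma_eq_alternative} follows by adding the trivial bound $\EV{\charfun[\comp{E}]\MA_N^{2k}}\leq T^{2k}\PM{\comp{E}}$. The case $N=0$ is trivial since $\MA_0=0$, and $k=0$ is trivial as well.

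For the inductive step I expand
\begin{equation*}
	\MA_N^{2k}=\sum_{j=0}^{2k}\binom{2k}{j}\MA_{N-1}^{2k-j}\delM_N^j
\end{equation*}
and treat the terms $j=0$, $j=1$ and $j\geq 2$ separately. The term $j=0$ is exactly the induction hypothesis for $N-1$. Here I use that $E$ is one fixed event on which \emph{all} increments are bounded by $1$, so the hypothesis with the same $E$ applies.

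The linear term $j=1$ is where the martingale property enters, and it is the one place where the indicator causes trouble, since $\charfun[E]$ is not $\G_{N-1}$-measurable. I write $\charfun[E]=1-\charfun[\comp{E}]$. Since $\EV[\G_{N-1}]{\delM_N}=0$, the contribution $\EV{\MA_{N-1}^{2k-1}\delM_N}$ vanishes. What remains is $-2k\,\EV{\charfun[\comp{E}]\MA_{N-1}^{2k-1}\delM_N}$, which by $\abs{\MA}\leq T$ and $\abs{\delM_N}\leq 2T$ is bounded by $4kT^{2k}\PM{\comp{E}}$. The same decomposition is used for $j\geq 2$ wherever $\abs{\delM_N}\leq 1$ is not available. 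On $E$ I simply bound $\abs{\delM_N}^j\leq 1$. Summing the resulting $T$-dependent errors, with $\abs{\delM_N}^j\leq(2T)^j$ weighted by $\binom{2k}{j}$ and normalized by $T^{2k}$, should yield a series of exponential type in $1/T^{2}$. I expect this to be the source of the constant $C(T)=\frac{2T^2}{T^2+2-e}$, and I would fix the exact bookkeeping of these $\PM{\comp{E}}$-errors last.

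The main obstacle is the combinatorial recursion for the terms $j\geq 2$ on $E$. Odd powers $\abs{\MA_{N-1}}^{2k-j}$ appear, and I control them by $\abs{x}^{2m+1}\leq\frac12(x^{2m}+x^{2m+2})$, which reduces everything to even moments covered by the induction hypothesis. It then remains to check that, with $a_{N,k}\coloneqq(2k)!N^k$,
\begin{equation*}
	a_{N-1,k}+\sum_{j\geq 2}\binom{2k}{j}\,(\text{even-moment bounds at level }N-1)\leq a_{N,k}.
\end{equation*}
The key estimate is $(2k)!/(2m)!\cdot(2k-2m)!^{-1}(2k-2m)!(N-1)^{k-m}=\frac{(2k)!}{(2m)!}(N-1)^{k-m}$. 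Using $\frac{1}{(2m)!}\leq\frac{1}{m!}\leq\binom{k}{m}\frac{1}{k^m}\cdot\frac{k^m}{m!}$, this sum is dominated by $(2k)!\bigl((N-1+1)^k-(N-1)^k\bigr)$, i.e.\ by the binomial expansion of $N^k$. If the odd-term splitting spoils the constants, I would instead run the induction with a slightly larger normalization such as $(2k)!(cN)^k$ and absorb $c$ into the final constant.

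Finally, \eqref{azuma_eq_alt-PM} follows from Markov's inequality applied to \eqref{azuma_eq_alternative}:
\begin{equation*}
	\PM{\abs{\MA_N}\geq\lambda}\leq\lambda^{-2k}\bigl((2k)!N^k+C(T)(2k)!N^kT^{2k}\PM{\comp{E}}\bigr).
\end{equation*}
Using $(2k)!\leq(2k)^{2k}$, I choose $k\approx\lambda/(2e\sqrt N)$ to obtain the exponential bound $C\exp(-\lambda/(2\sqrt N))$. The second term stays harmless as long as $T^{2k}\PM{\comp{E}}\leq 1$, i.e.\ $k\leq-\log\PM{\comp{E}}/(2\log T)$. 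This gives the first regime. For larger $\lambda$ I cap $k$ at this value, which yields the bound $C\PM{\comp{E}}^{1/(2\log T)}$.
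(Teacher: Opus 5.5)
Your scheme is the paper's. You expand $\pp{\MA_{N-1}+\delM_N}^{2k}$ and remove the linear term via $\charfun[E]=1-\charfun[\comp{E}]$ and $\EV[\G_{N-1}]{\delM_N}=0$, at a cost of $4kT^{2k}\PM{\comp{E}}$. You then bound every term with $j\geq 2$ on $E$ by $\abs{\delM_N}\leq 1$ and feed the lower even moments back into the induction.

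The one real difference is the odd powers. The paper uses $\charfun[E]\abs{\MA_{N-1}}\leq N-1$, which comes from $\MA_0=0$ and the increment bound on $E$. Your $\abs{x}^{2m+1}\leq\frac12\pp{x^{2m}+x^{2m+2}}$ is a valid alternative. It closes with exactly $(2k)!N^k$, so the fallback $(2k)!(cN)^k$ is unnecessary, and it would also change the stated bound. Concretely, set $M=N-1$ and divide by $(2k)!$. On your side the coefficient of $M^{k-r}$, $r\geq 1$, is $\frac{1}{(2r)!}+\frac{k-r}{(2r+1)!}+\frac{1}{2(2r-1)!(2k-2r+1)}$, with the last term present only for $r\geq 2$. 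This is at most $\binom{k}{r}$. That comparison, odd terms included, is what your key-estimate paragraph should state; as written it only treats the even terms.

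Two steps are wrong as written.

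First, the $\PM{\comp{E}}$-bookkeeping. For $j\geq 2$ the integrand already carries $\charfun[E]$, so there is nothing to decompose and no factors $(2T)^j$ arise; $C(T)$ does not come from such a series. The $T$-dependence enters in two places:
\begin{itemize}
\item the parts $C(T)(2m)!M^mT^{2m}\PM{\comp{E}}$, $m<k$, of the induction hypothesis, which relative to $T^{2k}$ carry a factor $T^{-2(k-m)}\leq T^{-2}$;
\item the linear error $4kT^{2k}\PM{\comp{E}}$, which must be absorbed by the increment $C(T)(2k)!\pp{N^k-M^k}T^{2k}\PM{\comp{E}}$.
\end{itemize}
With your splitting the binding case is $k=1$, where you need $\frac{2}{C(T)}+\frac{1}{2T^2}\leq 1$. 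This holds because $\frac{2}{C(T)}=1-\frac{e-2}{T^2}$; the paper chooses $C(T)$ exactly so that $\frac{2}{C(T)}+\frac{e-2}{T^2}=1$.

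Second, the Markov step. With $(2k)!\leq(2k)^{2k}$ you get $\lambda^{-2k}(2k)!N^k\leq\pp{2k\sqrt{N}/\lambda}^{2k}$. Its minimum over $k$ is $\exp\pp{-\lambda/(e\sqrt{N})}$, attained exactly at your choice $2k=\lambda/(e\sqrt N)$. Since $e>2$, no constant turns this into $C\exp\pp{-\lambda/(2\sqrt{N})}$. You need Stirling, $(2k)!\leq C\sqrt{k}\pp{2k/e}^{2k}\leq C\pp{2k/\sqrt{e}}^{2k}$. Then $k=\lfloor\lambda/(2\sqrt{N})\rfloor$ gives $Ce^{-k}\leq Ce\,e^{-\lambda/(2\sqrt{N})}$.

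Relatedly, capping at $k_0=\lfloor -\log\PM{\comp{E}}/(2\log T)\rfloor$ gives $e^{-k_0}\leq e\,\PM{\comp{E}}^{1/(2\log T)}$ only once $2k_0\leq\lambda/\sqrt{N}$. That holds for $\lambda\geq\frac{-\log\PM{\comp{E}}}{\log T}\sqrt{N}$; below it only the first-regime bound is available. The statement's threshold with $2\log T$ is therefore off by a factor $2$. The paper's own final choice $k=\min\Bp{\lfloor\lambda/(2\sqrt{N})\rfloor,k_0}$ also only delivers the threshold with $\log T$. The discrepancy is harmless where the proposition is applied, since the constants there absorb it.
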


\begin{proof}
We prove the proposition by induction. 
For both $k=0$ with arbitrary $N\in\Nz$ and $N\in\Bp{0,1}$ with arbitrary $k\in\Nz$ clearly \eqref{azuma_eq_pre-alternative} holds.
Now fix $(k,N)\in\N\times\N$, $N\geq 2$ and assume that \eqref{azuma_eq_pre-alternative} holds for all $(\ell,n)\in\Nz\times\Nz$ with either $\ell<k$, $n\leq N$ or $n<N$, $\ell\leq k$.
Showing that then \eqref{azuma_eq_pre-alternative} holds for $(k,N)$ yields the result.

Using that
	$\EV{\delM_N \MA_{N-1}^{2k-1}}=\EV{\EV[\G_{N-1}]{\delM_N \MA_{N-1}^{2k-1}}}=\EV{\EV[\G_{N-1}]{\delM_N} \MA_{N-1}^{2k-1}}$ 
and $\EV[\G_{N-1}]{\delM_N}=0$ hold since $\MA_n$ is a martingale , we obtain
\begin{align*}
	\EV{\charfun[E]\MA_N^{2k}}
	&=		\EV{\charfun[E]\pp{\MA_{N-1}+\delM_N}^{2k}}\\
	&=		\EV{\charfun[E]\MA_{N-1}^{2k}}
			-2k \EV{\charfun[\comp{E}]\delM_N \MA_{N-1}^{2k-1}}
			+\sum_{\ell=0}^{2k-2} \binom{2k}{\ell}\EV{\charfun[E]\delM_{N}^{2k-\ell}\MA_{N-1}^\ell}\\
	&\leq	\EV{\charfun[E]\MA_{N-1}^{2k}}
			+4k T^{2k}\PM{\comp{E}}
			+\sum_{\ell=0}^{k-1} \binom{2k}{2\ell}\EV{\charfun[E]\MA_{N-1}^{2\ell}}\\
	&\quad	+\sum_{\ell=0}^{k-2} \binom{2k}{2\ell+1}\EV{\charfun[E](N-1)\MA_{N-1}^{2\ell}},
\end{align*}
where in the last step we used $\abs{\delM_N}\leq 2T$, $\charfun[E]\abs{\delM_N}\leq 1$ and $\charfun[E]\abs{\MA_{N-1}}\leq N-1$.
Applying \eqref{azuma_eq_pre-alternative}, which holds due to the induction assumption, now yields
\begin{align*}
	\EV{\charfun[E]\MA_N^{2k}}
	&\leq	\pp{2k}!N^k\PM{E} \bigg(\pp{\frac{N-1}{N}}^k 
								+ \frac{1}{N}\bigg(	\sum_{\ell=0}^{k-1}\frac{1}{\pp{2k-2l}!}\frac{(N-1)^\ell}{N^{k-1}}\\
	&\qquad\qquad\qquad\qquad\qquad\quad				+\sum_{\ell=0}^{k-2}\frac{1}{\pp{2k-2l-1}!\pp{2l+1}}\frac{(N-1)^{\ell+1}}{N^{k-1}}\bigg)
								\bigg)\\
	&\quad	+C(T)\pp{2k}!N^k T^{2k} \PM{\comp{E}}\bigg(\pp{\frac{N-1}{N}}^k 
								+ \frac{1}{N}\bigg(\frac{4k}{C(T)\pp{2k}!N^{k-1}}\\
	&\qquad\qquad\qquad\qquad\qquad\qquad\quad		+\sum_{\ell=0}^{k-1}\frac{1}{\pp{2k-2l}!}\frac{(N-1)^\ell}{N^{k-1}}T^{2(\ell-k)}\\
	&\qquad\qquad\qquad\qquad						+\sum_{\ell=0}^{k-2}\frac{1}{\pp{2k-2l-1}!\pp{2l+1}}\frac{(N-1)^{\ell+1}}{N^{k-1}}T^{2(\ell-k)}\bigg)
								\bigg)
\end{align*}
Factoring out and using that $\frac{(N-1)^\ell}{N^{k-1}}\leq \pp{\frac{N-1}{N}}^{k-1}$ since $N\geq 2$, we have 
\begin{align*}
	\EV{\charfun[E]\MA_N^{2k}}
	&\leq	\pp{2k}!N^k\PM{E}\pp{\frac{N-1}{N}}^{k-1}\pp{1-\frac{1}{N}+\frac{1}{N}\pp{\mathrm{I}_k+\mathrm{II}_k}}\\
	&\quad	+C(T)\pp{2k}!N^k T^{2k}\PM{\comp{E}}\pp{\frac{N-1}{N}}^{k-1} \\
	&\qquad\qquad\qquad\qquad\qquad		\times\pp{1-\frac{1}{N}+\frac{1}{N}\pp{\mathrm{III}_{k}+\mathrm{I}_{k,T}+\mathrm{II}_{k,T}}},
\end{align*}
where, adjusting the indices of the sums, 
\begin{align*}
	\mathrm{I}_k			&\coloneqq	\sum_{\ell=1}^k	\frac{1}{\pp{2\ell}!},&
	\mathrm{II}_k		&\coloneqq	\sum_{\ell=2}^k	\frac{1}{\pp{2\ell-1}!\pp{2k-2\ell+1}},\\
	\mathrm{I}_{k,T}		&\coloneqq	\sum_{\ell=1}^k	\frac{1}{\pp{2\ell}!T^{2\ell}}
						\leq 		\frac{1}{T^2}\mathrm{I}_k,&
	\mathrm{II}_{k,T}	&\coloneqq	\sum_{\ell=2}^k	\frac{1}{\pp{2\ell-1}!\pp{2k-2\ell+1}T^{2\ell}}
						\leq 		\frac{1}{T^4}\mathrm{II}_k,\\
	\mathrm{III}_k		&\coloneqq	\frac{4k}{C(T)\pp{2k}!}
						\leq 		\frac{2}{C(T)}.
\end{align*}
Note that $\mathrm{I}_k+\mathrm{II}_k\leq e-2<1$ and hence $\mathrm{III}_k+\mathrm{I}_{k,T}+\mathrm{II}_{k,T}\leq 1$ by definition of $C(T)$.
Plugging this back into the estimate for $\EV{\charfun[E]\MA_N^{2k}}$ yields \eqref{azuma_eq_pre-alternative} and thus finishes the proof by induction.

Regarding \eqref{azuma_eq_alt-PM}, with \eqref{azuma_eq_alternative} we obtain for any $k\in\N$ that
\begin{align*}
	\PM{\abs{\MA_N}\geq \lambda}	
	&\leq		\lambda^{-2k}\EV{\MA_N^{2k}}\\
	&\leq		\lambda^{-2k}(2k)!N^k\pp{1+CT^{2k}\PM{\comp{E}}}.
\end{align*}
Using that $(2k)!\leq C\sqrt{k}\pp{\frac{2k}{e}}^{2k}\leq	C\pp{\frac{2k}{\sqrt{e}}}^{2k}$ for some $C>0$, we obtain
\begin{align*}
	\PM{\abs{\MA_N}\geq \lambda}	
	&\leq		C\exp\pp{-2k\pp{\frac{1}{2}+\log\pp{\frac{\lambda}{\sqrt{N}}}-\log\pp{2k}}}		\pp{1+CT^{2k}\PM{\comp{E}}}.
\end{align*}
Just focusing on the argument of the exponential, we want to choose $k$ as large as possible while the overall sign stays negative, that is 
	$k_\lambda\coloneqq	\left\lfloor \frac{\lambda}{2\sqrt{N}}\right\rfloor$.
However, this only makes sense while this is not counteracted by the last factor stays under control, that is for $T^{2k}\PM{\comp{E}}\leq 1$, which is guaranteed if 
	$k\leq k_0\coloneqq	\left\lfloor \frac{-\log\pp{\PM{\comp{E}}}}{2\log\pp{T}}\right\rfloor$.
We hence obtain \eqref{azuma_eq_alt-PM} by choosing $k=k_\lambda$ if $k_\lambda\leq k_0$ and $k=k_0$ if $k_\lambda\geq k_0$.
\end{proof}

\subsection{A generalization of Azuma's inequality}\label{subs_azuma-gen}
For our generalization of Azuma's inequality, we will use the following lemma.

\begin{lemma}[Hoeffding's lemma for conditional expectations]\label{azuma_lem_hoeff}
Let $X\in\R$ be a random variable with respect to some probability space $(\Omega,\G,\Pm)$ and $\wt{\G}\subset \G$ be a coarser $\sigma$-algebra.
Suppose there are $\wt{\G}$-measurable random-variables $a,b\in\R$ such that $a\leq X\leq b$.
Then for all $s\in\R$ there holds
\begin{equation*}
	\EV[\wt{\G}]{\exp\pp{sX}}	\leq		\exp\pp{s\EV[\wt{\G}]{X}+\frac{s^2\pp{b-a}^2}{8}}.
\end{equation*}
\end{lemma}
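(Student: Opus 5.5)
I will follow the standard proof of Hoeffding's lemma, carried out pointwise in $\omega$ with $\wt{\G}$-measurable quantities frozen. The idea is to use convexity of $x\mapsto e^{sx}$ on the random interval $[a,b]$:
\begin{equation*}
	\exp(sX)\leq \frac{b-X}{b-a}e^{sa}+\frac{X-a}{b-a}e^{sb}
\end{equation*}
on $\{a<b\}$. On $\{a=b\}$ we have $X=a$, which is $\wt{\G}$-measurable, so the claim holds there with equality. Taking $\EV[\wt{\G}]{\cdot}$ and pulling out the $\wt{\G}$-measurable factors $a,b,e^{sa},e^{sb}$ gives, with $m\coloneqq\EV[\wt{\G}]{X}\in[a,b]$ almost surely,
\begin{equation*}
	\EV[\wt{\G}]{\exp(sX)}\leq \frac{b-m}{b-a}e^{sa}+\frac{m-a}{b-a}e^{sb}.
\end{equation*}
Pulling these factors out needs integrability, which I handle first, either by assuming $a,b$ bounded or by truncating ($a_L=a\vee(-L)$, $b_L=b\wedge L$ on $\{|a|,|b|\leq L\}\in\wt{\G}$) and then letting $L\to\infty$.

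The second step is the deterministic Hoeffding estimate, applied for fixed $\omega$ with $a,b,m$ now real numbers. Set $p\coloneqq (m-a)/(b-a)\in[0,1]$ and $u\coloneqq s(b-a)$. Then the right-hand side equals $\exp(sm+\phi(u))$ with
\begin{equation*}
	\phi(u)=-pu+\log\pp{1-p+pe^{u}}.
\end{equation*}
We have $\phi(0)=\phi'(0)=0$. Moreover $\phi''(u)=q(1-q)\leq\frac14$, where $q=pe^u/(1-p+pe^u)$. Taylor's theorem then gives $\phi(u)\leq u^2/8=s^2(b-a)^2/8$.

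Combining the two steps yields the claim almost surely. The only genuine obstacle is the measure-theoretic care: the conditional expectation of products with possibly unbounded $\wt{\G}$-measurable factors, and the degenerate set $\{a=b\}$. Both are handled by the localization and truncation just described.
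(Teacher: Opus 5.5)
Your proof is correct and follows the same route as the paper, whose proof only says it is analogous to the classical Hoeffding lemma. You apply the convexity bound on the random interval $[a,b]$, take $\EV[\wt{\G}]{\cdot}$ with the $\wt{\G}$-measurable quantities frozen, and then use the deterministic estimate $\phi(u)\leq u^2/8$ pointwise with $m=\EV[\wt{\G}]{X}\in[a,b]$. Your handling of the degenerate set $\{a=b\}$ and your localization to the $\wt{\G}$-measurable sets $\{|a|,|b|\leq L\}$ supply measure-theoretic details the paper leaves implicit.
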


\begin{proof}
Analogous to the proof of Hoeffding's lemma for the expected value.
\end{proof}

For the following generalization of Azuma's inequality, note that the limit up to which we obtain the tail bounds is much stricter than in Proposition \ref{azuma_prop_alternative}.
In particular, it is not even necessarily better than the typical size of the fluctuations.

\begin{proposition}[A generalization of Azuma's inequality]\label{azuma_prop_azuma}
Let $\pp{\MA_n}_{n\in\N}$ be a martingale with respect to a filtration $\pp{\G_n}_{n\in\Nz}$ on a probability space $\pp{\Omega,\G,\Pm}$.
Suppose that $\abs{\MA_n}\leq T$  for some $T>0$ and all $n\in\Nz$.
Set $\delM_n\coloneqq \MA_n-\MA_{n-1}$ for $n\in\N$ and let $E\in\G$ be a set such that $\abs{\delM_n}(\omega)\leq c_n$ for all $\omega\in E$, $n\in\N$ and some $\pp{c_n}_n\subset\R_+$.
Then, for all $N\in\N$ and $s>0$ there holds
\begin{equation*}
	\EV{\exp\pp{s\pp{\MA_N-\MA_0}}}\leq	 \exp\pp{s^2\sum_{n=1}^N c_n^2}\pp{1 + N\pp{1+\frac{4TN}{s\sum_{n=1}^N c_n^2}}\PM{\comp{E}}e^{2sT}}
\end{equation*}
and the same for $\EV{\exp\pp{s\pp{\MA_0-\MA_N}}}$.
In particular, there holds
\begin{align}\label{azuma_eq_PM}
	\PM{\abs{\MA_N-\MA_0}\geq \lambda}
	&\leq		4\exp\pp{-\frac{\lambda^2}{4\sum_{n=1}^N c_n^2}}		&&\text{for all }0<\lambda\leq\lambda_0,
\end{align}
where $\lambda_0=-\log\pp{\pp{N+\frac{8TN^2}{\sqrt{\sum_{n=1}^N c_n^2}}}\PM{\comp{E}}}\frac{\sum_{n=1}^N c_n^2}{T}$.
\end{proposition}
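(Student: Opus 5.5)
The plan is to run the classical Azuma argument through the exponential moment, but to stop the martingale at the first time its increment leaves the good range, so that the Hoeffding bound of Lemma \ref{azuma_lem_hoeff} applies at every step. The error caused by stopping will be charged to $\PM{\comp{E}}$, using Lemma \ref{azuma_lem_charfun} to control how likely the conditional probabilities of $\comp{E}$ are to become large.

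First I define the bad event at step $n$ and the stopped process. Set $p_n\coloneqq\EV[\G_n]{\charfun[\comp{E}]}$. Define the $\G_{n-1}$-measurable truncated increments
\begin{equation*}
	\wt{\delM}_n\coloneqq \pp{\delM_n\charfun[\Bp{\abs{\delM_n}\leq c_n}] - \EV[\G_{n-1}]{\delM_n\charfun[\Bp{\abs{\delM_n}\leq c_n}]}}.
\end{equation*}
These have conditional mean zero and range in an interval of length at most $4c_n$. Hoeffding's lemma for conditional expectations therefore gives $\EV[\G_{n-1}]{e^{s\wt{\delM}_n}}\leq e^{2s^2c_n^2}$. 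Iterating over $n$ yields
\begin{equation*}
	\EV{e^{s\sum_n\wt{\delM}_n}}\leq e^{2s^2\sum c_n^2}.
\end{equation*}
If necessary I will tune the truncation, or instead apply Hoeffding directly to $\delM_n$ on the set where the bound holds, so that the constant matches the stated $e^{s^2\sum c_n^2}$.

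Next I compare $\MA_N-\MA_0$ with $\sum_n\wt{\delM}_n$. On $E$, each truncation indicator equals one. The remaining difference is the sum of the compensators
\begin{equation*}
	\EV[\G_{n-1}]{\delM_n\charfun[\Bp{\abs{\delM_n}>c_n}]},
\end{equation*}
each bounded by $2T\,p_{n-1}$ because $\abs{\delM_n}\leq 2T$.

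I then split the expectation into two regions. The first is where all $p_{n-1}\leq\delta$ and $E$ holds. There the exponent changes by at most $2sTN\delta$, and I will choose $\delta\approx\sum c_n^2/(TN)$ so that this shift is absorbed. The second is the complement. There I bound $e^{s(\MA_N-\MA_0)}\leq e^{2sT}$ and estimate the probability by
\begin{equation*}
	\PM{\comp{E}}+N\delta^{-1}\PM{\comp{E}}.
\end{equation*}
The second term comes from applying Lemma \ref{azuma_lem_charfun} at each $n$, or from a single application over all $n$. Combining the two regions produces a factor of the form $N\pp{1+\frac{4TN}{s\sum c_n^2}}\PM{\comp{E}}e^{2sT}$.

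The main obstacle is matching this prefactor and the exact Gaussian constant. That requires choosing $\delta$ depending on $s$ and taking care that the drift term is paid for inside the exponent rather than multiplicatively. Finally, Markov's inequality with $s=\lambda/(2\sum c_n^2)$ gives \eqref{azuma_eq_PM}, applied symmetrically to $-\MA$ for the two tails. The restriction $\lambda\leq\lambda_0$ is exactly what makes the error term $N(\dots)\PM{\comp{E}}e^{2sT}$ at most $1$.
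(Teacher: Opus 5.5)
Your argument is correct, and it is organized differently from the paper's proof. You build an auxiliary martingale with genuinely bounded differences, $\tilde Z_n=Z_n\chi_{\{|Z_n|\le c_n\}}-\mu_n$, where $\mu_n=\mathbb{E}[Z_n\chi_{\{|Z_n|\le c_n\}}\mid\mathcal{G}_{n-1}]$. You bound the drift by $|\mu_n|\le 2Tp_{n-1}$, with $p_{n-1}=\mathbb{E}[\chi_{E^c}\mid\mathcal{G}_{n-1}]$. You then make a single global split into $G=E\cap\{p_{n-1}\le\delta\ \forall n\le N\}$ and its complement.

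The key probability estimate is $\mathbb{P}[G^c]\le(1+\delta^{-1})\mathbb{P}[E^c]$, obtained from one application of the maximal inequality in Lemma \ref{azuma_lem_charfun} rather than a union bound. With $\Sigma=\sum_{n=1}^N c_n^2$ and $\delta=s\Sigma/(4TN)$ you get the additive bound
\[
\mathbb{E}\bigl[e^{s(X_N-X_0)}\bigr]\le e^{s^2\Sigma}+\Bigl(1+\frac{4TN}{s\Sigma}\Bigr)\mathbb{P}[E^c]\,e^{2sT}.
\]
This is stronger than the stated bound: it has no factor $N$, and the error is not multiplied by $e^{s^2\Sigma}$.

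The paper instead peels off one increment at a time. It inserts $\chi_E$ and a step-dependent good event $E_N=\{2Tp_{N-1}\le s\Sigma/(2N)\}\in\mathcal{G}_{N-1}$. It applies the conditional Hoeffding lemma directly to $\chi_E Z_N$, whose conditional mean is at most $2Tp_{N-1}$ by the martingale property. It pays $(\mathbb{P}[E^c]+\mathbb{P}[E_N^c])e^{2sT}$ using a per-step Markov bound, and then recurses. That route avoids constructing a truncated martingale, but it accumulates the factor $N$.

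Two details need fixing.

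\begin{itemize}
\item $\tilde Z_n$ is $\mathcal{G}_n$-measurable, not $\mathcal{G}_{n-1}$-measurable. It lies in $[-c_n-\mu_n,\,c_n-\mu_n]$, an interval of length $2c_n$ (not $4c_n$) with $\mathcal{G}_{n-1}$-measurable endpoints. Lemma \ref{azuma_lem_hoeff} therefore gives $e^{s^2c_n^2/2}$, not $e^{2s^2c_n^2}$. This is exactly what yields $e^{s^2\Sigma}$ once you add the drift $2sTN\delta=s^2\Sigma/2$, so no retuning is needed.
\item In the tail bound, $\lambda\le\lambda_0$ alone does not control $4TN/(s\Sigma)$ when $s$ is small. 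For $\lambda<\sqrt{\Sigma}$ the claim holds trivially, since $4e^{-1/4}>1$. For $\lambda\ge\sqrt{\Sigma}$ you have $s=\lambda/(2\Sigma)\ge 1/(2\sqrt{\Sigma})$, so $4TN/(s\Sigma)\le 8TN/\sqrt{\Sigma}$. This is where the $\sqrt{\Sigma}$ in $\lambda_0$ comes from.
\end{itemize}
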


\begin{proof}
We provide bounds for the exponential moments of $\MA_N-\MA_0$, the bounds for $\MA_0-\MA_N$ follow analogously.
For any $s\geq 0$, using that $\MA_n$ is $\G_{N-1}$-measurable for all $n\leq N-1$, for any $E_N\in\G_{N-1}$ we obtain
\begin{multline*}
	\EV{\exp\pp{s\pp{\MA_N-\MA_0}}}	=	\EV{(\charfun[\comp{E}]+\charfun[E])(\charfun[\comp{(E_N)}]+\charfun[E_N])\prod_{n=1}^{N}\exp\pp{s\delM_n}}\\
			\leq		\PM{\comp{E}}e^{2sT} + \PM{\comp{(E_N)}}e^{2sT} + \EV{\charfun[E_N]\prod_{n=1}^{N-1}\exp\pp{s\delM_n}\EV[\G_{N-1}]{\charfun[E]\exp\pp{s\delM_N}}}.
\end{multline*}
We now want to apply Hoeffding's Lemma in the form of Lemma \ref{azuma_lem_hoeff}.
Note that by assumption $\abs{\charfun[E]\delM_N}\leq c_N$ and due to the martingale property
\begin{equation}\label{azuma_eq_condE}
	\EV[\G_{N-1}]{\charfun[E]\delM_N}	=	\EV[\G_{N-1}]{\delM_N} - \EV[\G_{N-1}]{\charfun[\comp{E}]\delM_N}
	\leq		2T\EV[\G_{N-1}]{\charfun[\comp{E}]}.
\end{equation}
This kind of leftover term is small with probability proportional to $\PM{\comp{E}}$. 
To control this leftover term, we define suitable sets $E_n\in\G_{n-1}$, namely
\begin{align*}
	E_n\coloneqq \Bp{\omega\in\Omega \text{ such that }  2T\EV[\G_{n-1}]{\charfun[\comp{E}]}(\omega)  \leq \frac{s\sum_{k=1}^N c_k^2}{2N}},
\end{align*}
for whose probability as in the proof of Lemma \ref{azuma_lem_charfun} we have the bound
\begin{align*}
	\PM{\comp{(E_n)}}\leq	\frac{4TN}{s\sum_{n=1}^N c_n^2}\PM{\comp{E}}.
\end{align*}
Now, first applying Hoeffding's Lemma in the form of Lemma \ref{azuma_lem_hoeff} with \eqref{azuma_eq_condE} and then using the definition of $E_N$ yields
\begin{align*}
	\EV{\exp\pp{s\pp{\MA_N-\MA_0}}}
	&	\leq		\pp{\PM{\comp{E}}+\PM{\comp{(E_N)}}}e^{2sT} \\
	&	\quad		+ \EV{\charfun[E_N]\prod_{n=1}^{N-1}\exp\pp{s\delM_n}\exp\pp{{s2T\EV[\G_{N-1}]{\charfun[\comp{E}]}}}}e^{\frac{s^2 c_N^2}{2}}\\
	&	\leq		\pp{1+\frac{4TN}{s\sum_{n=1}^N c_n^2}}\PM{\comp{E}}e^{2sT} \\
	&	\quad		+ \EV{\prod_{n=1}^{N-1}\exp\pp{s\delM_n}}e^{\frac{s^2 c_N^2}{2}+\frac{s^2\sum_{n=1}^N c_n^2}{2N}}.
\end{align*}
Repeating this procedure with $\EV{\prod_{n=1}^{m}\exp\pp{s\delM_n}}$ for all $m\in\Bp{1,\ldots,N-1}$, we obtain
\begin{multline*}
	\EV{\exp\pp{s\pp{\MA_N-\MA_0}}}	\leq		N\pp{1+\frac{4TN}{s\sum_{n=1}^N c_n^2}}\exp\pp{s^2\sum_{n=2}^N c_n^2}\PM{\comp{E}}e^{2sT} \\
											+ \exp\pp{s^2\sum_{n=1}^N c_n^2},
\end{multline*}
which yields the exponential moment bounds.
Regarding \eqref{azuma_eq_PM}, we have
\begin{align*}
	&\PM{\MA_N-\MA_0\geq \lambda}
					\leq		e^{-s\lambda}\EV{\exp\pp{s\pp{\MA_N-\MA_0}}}\\
	&\qquad\qquad	\leq		\exp\pp{-s\lambda+s^2\sum_{n=1}^N c_n^2}\pp{1 + N\pp{1+\frac{4TN}{s\sum_{n=1}^N c_n^2}}\PM{\comp{E}}e^{2sT}}.
\end{align*}
Just focusing on the argument of the first exponential, to minimize it we would like to choose 
	$s=s_\lambda\coloneqq	\frac{\lambda}{2\sum_{n=1}^N c_n^2}$.
However, we still need to make sure that the second term stays under control. 
Since the probability bounds for \eqref{azuma_eq_PM} are only meaningful if they are smaller than $1$ and we are only considering half of the total contribution, we only need to consider $\lambda\geq \sqrt{\sum_{n=1}^N c_n^2}$ because otherwise just the first term breaks these bounds. 
In particular, we would like to choose at least
	$s\geq s_0\coloneqq	\frac{1}{2\sqrt{\sum_{n=1}^N c_n^2}}$.
With this restriction, we obtain for the second term that
\begin{equation*}
	N\pp{1+\frac{4TN}{s\sum_{n=1}^N c_n^2}}\PM{\comp{E}}e^{2sT}
	\leq 	N\pp{1+\frac{8TN}{\sqrt{\sum_{n=1}^N c_n^2}}}\PM{\comp{E}}e^{2sT}
	\leq		1
\end{equation*}
for all 
\begin{equation*}
	s_0 \leq s \leq s_1\coloneqq		\frac{-\log\pp{\pp{N+\frac{8TN^2}{\sqrt{\sum_{n=1}^N c_n^2}}}\PM{\comp{E}}}}{2T}.
\end{equation*}
For $s_\lambda\leq s_1$, we choose $s=s_\lambda$.
This is the case for all
\begin{equation*}
	\lambda\leq \lambda_0\coloneqq-\log\pp{\pp{N+\frac{8TN^2}{\sqrt{\sum_{n=1}^N c_n^2}}}\PM{\comp{E}}}\frac{\sum_{n=1}^N c_n^2}{T}.
\end{equation*}
Note that we can ignore the case that $s_{\lambda_0}=s_1<s_0$ because then the resulting probability bounds are anyways larger than 1.
For $\lambda>\lambda_0$, we would want to choose $s=s_1$, but this is not necessary since in the overall context of this paper the resulting bounds are worse than the ones provided by Proposition \ref{azuma_prop_alternative}.
Repeating the same argument for $\PM{\MA_N-\MA_0\leq -\lambda}$ yields \eqref{azuma_eq_PM}.
\end{proof}

If in the proof of Proposition \ref{fluct_prop} we apply this generalization of Azuma's inequality instead of Proposition \ref{azuma_prop_alternative}, we obtain Gaussian instead of exponential tail bounds for the same fluctuation size up to some lower bound, see the lemma below. 
However, for this lower bound to actually be small we would have to require that $h^{1+2\ratespeed}\gg\dist(x_0,S)$.
This is different to the lower bound resulting from Proposition \ref{azuma_prop_alternative}, where we only need that $h$ is large.

\begin{lemma}[Fluctuation bounds with improved stochastic integrability]
Assume that \ref{sett_aP_fin} and \ref{veff_aP_star} hold.
There are $c,\Cflu,\cflu, \Cfluco=c(\data),\Cflu(\data),\cflu(\data), \Cfluco(\data)>0$ such that 
for all $x_0\in\Rd$, $h\geq h_0$ and $h$-fat sets $S\subset\Rd$ with bounded boundary as well as
\begin{equation*}
	\frac{\diam(\partial S)+\dist(x_0,S)}{h}	\leq		\exp\pp{c h^{\ratespeed}}
\end{equation*}
there holds
\begin{multline*}
	\PM{\abs{\metb[]{h}{x_0}{S}	- \EV{\metb[]{h}{x_0}{S}}}\geq \lambda}\notag\\
	\leq			\Cflu	\exp\pp{-\frac{\cflu^2\lambda^2}{h\dist(x_0,S)}}
				\qquad\text{for all }0<\lambda \leq	\Cfluco h^{1+\ratespeed},
\end{multline*}
where $\ratespeed>0$ is the exponential rate from the probability bounds in Assumption \ref{veff_aP_star} for the existence of stable approximation and the applicability of the effective minimum speed of propagation.
\end{lemma}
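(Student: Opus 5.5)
The plan is to rerun the proof of Proposition \ref{fluct_prop} verbatim up to the point where the martingale concentration inequality is invoked, and then to apply Proposition \ref{azuma_prop_azuma} in place of Proposition \ref{azuma_prop_alternative}. Concretely, I take the discretized martingale $\wt{\M}_n$ with step $\Delta t=h$ and $N\leq C\dist(x_0,S)/h$ as in \eqref{fluct_eq_defN}. The good event $E_{\M}$ is the one constructed there; on it \eqref{fluct_eq_incrMn} gives $\abs{\wt{\M}_{n+1}-\wt{\M}_n}\leq Ch$, so I may take $c_n=Ch$. Moreover \eqref{fluct_eq_ProbEM} gives $\PM{\comp{E_{\M}}}\leq C\exp(-\frac{\cspeed}{2}h^{\ratespeed})$. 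We also have $\abs{\wt{\M}_n}\leq T:=\timebound{h}{x_0}{S}\leq C\dist(x_0,S)$, and $\sum_n c_n^2\approx h\dist(x_0,S)$.

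Applying \eqref{azuma_eq_PM} yields
\[
\PM{\abs{\wt{\M}_N}\geq\lambda}\leq 4\exp\pp{-\frac{\lambda^2}{Ch\dist(x_0,S)}} \qquad\text{for }\lambda\leq\lambda_0 .
\]
The next step is to estimate $\lambda_0$ from below. The prefactor $N+8TN^2/\sqrt{\sum c_n^2}$ is polynomial in $\dist(x_0,S)/h$, and by the hypothesis $\frac{\diam(\partial S)+\dist(x_0,S)}{h}\leq\exp(ch^{\ratespeed})$ with $c$ small it is at most $\exp(\frac{\cspeed}{4}h^{\ratespeed})$. Hence $-\log(\cdots)\geq c'h^{\ratespeed}$. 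Since $\sum c_n^2/T\approx h\dist(x_0,S)/\dist(x_0,S)=h$, this gives $\lambda_0\geq \Cfluco h^{1+\ratespeed}$, which is precisely the range claimed.

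It remains to pass from $\wt{\M}_N$ to the fluctuation itself. Via \eqref{fluct_eq_MN} we have $\wt{\M}_N=\metb{h}{x_0}{S}-\EV{\metb{h}{x_0}{S}}+R_1+R_2$. On $E_{\M}$ the error satisfies $\abs{R_1}\leq h$, while $R_2$ vanishes on $\Espeed{h}(\partial S)\supset E_{\M}$. Therefore
\[
\PM{\abs{\metb{h}{x_0}{S}-\EV{\metb{h}{x_0}{S}}}\geq\lambda}\leq \PM{\abs{\wt{\M}_N}\geq\lambda-h}+\PM{\comp{E_{\M}}}.
\]
The shift by $h$ costs only a constant in the Gaussian bound when $\lambda\geq 2h$. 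For $\lambda<2h$ the Gaussian right-hand side is already at least of order $1$ once $\Cflu$ is large, since $h^2\leq h\dist(x_0,S)$ whenever $\dist(x_0,S)>h$; and the case $\dist(x_0,S)\leq h$ is trivial. Finally, the additive term $C\exp(-\frac{\cspeed}{2}h^{\ratespeed})$ must be absorbed into $\Cflu\exp(-\cflu^2\lambda^2/(h\dist(x_0,S)))$. This requires $\lambda^2/(h\dist(x_0,S))\lesssim h^{\ratespeed}$, i.e.\ $\lambda\lesssim h^{(1+\ratespeed)/2}\dist(x_0,S)^{1/2}$. Because $\dist(x_0,S)\geq h$ is the nontrivial case, this is implied by $\lambda\leq\Cfluco h^{1+\ratespeed}$ after shrinking $\Cfluco$ relative to $\cflu$, or alternatively by taking $\cflu$ small.

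The main obstacle I expect is this bookkeeping of constants. Two things must be checked simultaneously. First, the polynomial prefactor inside $\lambda_0$ must not eat the $h^{\ratespeed}$ gain, which is exactly what the exponential size restriction with small $c$ provides. Second, the leftover probability of $\comp{E_{\M}}$ must stay below the Gaussian term throughout the whole claimed range. The remaining steps are identical to the proof of Proposition \ref{fluct_prop}.
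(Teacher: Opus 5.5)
Your route is essentially the paper's: rerun the proof of Proposition \ref{fluct_prop} with the same event $E_{\M}$, increments $c_n=Ch$ and bound $T=\timebound{h}{x_0}{S}$, and use Proposition \ref{azuma_prop_azuma} in place of Proposition \ref{azuma_prop_alternative}. The difference is only at the end, and there one step of yours is justified incorrectly, though it is easy to repair.

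\textbf{How the two endings differ.} The paper stays at the level of exponential moments. It bounds $\EV{\exp(s(\metb{h}{x_0}{S}-\EV{\metb{h}{x_0}{S}}))}$ by $e^{sh}\EV{e^{s\wt{\M}_N}\charfun[E_{\M}]}+e^{sT}\PM{\comp{E_{\M}}}$ and then reruns the Chernoff optimisation. The constraint $s_\lambda\lesssim h^{\ratespeed}/T$, which produces $\lambda_0\approx h^{1+\ratespeed}$, also keeps $e^{2sT}\PM{\comp{E_{\M}}}$ below $1$. So the bad event never has to be compared with the Gaussian term separately.

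You instead take the tail bound \eqref{azuma_eq_PM} and add a union bound. That is legitimate, and your steps up to that point are fine. But it means the additive term $\PM{\comp{E_{\M}}}\le C\exp(-\frac{\cspeed}{2}h^{\ratespeed})$ must then be absorbed by hand.

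\textbf{The flawed step.} For $\dist(x_0,S)\ge h$, the bound $\lambda\le\Cfluco h^{1+\ratespeed}$ does not imply $\lambda\lesssim h^{(1+\ratespeed)/2}\dist(x_0,S)^{1/2}$. That implication needs $\dist(x_0,S)\gtrsim h^{1+\ratespeed}$.

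For example, take $\dist(x_0,S)=2h$ and $\lambda=\Cfluco h^{1+\ratespeed}$. Then $\cflu^2\lambda^2/(h\dist(x_0,S))=\frac12\cflu^2\Cfluco^2h^{2\ratespeed}$, which exceeds $\frac{\cspeed}{2}h^{\ratespeed}$ for large $h$ however the constants are chosen. The mismatch is in the power of $h$, so neither shrinking $\Cfluco$ nor shrinking $\cflu$ can fix it. Your argument therefore does not cover the regime $h<\dist(x_0,S)\ll h^{1+\ratespeed}$.

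\textbf{The repair.} You are missing the truncation. Since $0\le\metb{h}{x_0}{S}\le T$ and likewise for its expectation, the probability vanishes for $\lambda>T$. For $\lambda\le T\le(\vmineff^{-1}+\cstable)\dist(x_0,S)$ (recall $\dist(x_0,S)>h$ in the nontrivial case) you get
\[
\frac{\lambda^2}{h\dist(x_0,S)}\le(\vmineff^{-1}+\cstable)\,\frac{\lambda}{h}\le(\vmineff^{-1}+\cstable)\,\Cfluco\, h^{\ratespeed}.
\]
Choosing $\cflu^2(\vmineff^{-1}+\cstable)\Cfluco\le\cspeed/2$ then absorbs $C\exp(-\frac{\cspeed}{2}h^{\ratespeed})$ into $\Cflu\exp(-\cflu^2\lambda^2/(h\dist(x_0,S)))$ throughout the claimed range.

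This is the tail-level counterpart of the paper's $sT\approx\lambda/h\lesssim h^{\ratespeed}$. With this fix your proof is complete.
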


\begin{proof}
To obtain these fluctuation bounds, we repeat the argument from the proof of Proposition \ref{fluct_prop} with Proposition \ref{azuma_prop_azuma} instead of Proposition \ref{azuma_prop_alternative}.
That is, we apply our generalization of Azuma's inequality to the martingale $(\wt{\M}_n)_n$ from the proof of Proposition \ref{fluct_prop} with
\begin{equation}
	\tag{\ref{fluct_eq_MN}}
 	\wt{\M}_N=	\metb[]{h}{x_0}{S}	- \EV{\metb[]{h}{x_0}{S}}+	R_1 + R_2
 \end{equation} 
for $N\leq C h^{-1}\dist(x_0,S)$ and 
\begin{align*}
	\abs{R_1}	&\leq	\timebound{h}{x_0}{S}\EV[\G_{\timebound{h}{x_0}{S}}]{1-\charfun[\Espeed{h}\pp{\partial S}]},\\
	\abs{R_2}	&\leq	\timebound{h}{x_0}{S}\pp{1-\charfun[\Espeed{h}\pp{\partial S}]}.
\end{align*}
From the proof of Proposition \ref{fluct_prop} we further have the event $E_\M\in\F(\Rd)$ such that 
\begin{equation*}
	\abs{\wt{\M}_{n+1}-\wt{\M}_n}\charfun[E_{\M}]	\leq  Ch
	\quad\text{for all }n\in\N.
\end{equation*}
For the failure of this event we have the stretched exponential bounds 
\begin{equation*}
	1-\PM{E_{\M}}\leq C\exp\pp{-\frac{\cspeed}{2} h^{\ratespeed}}.
\end{equation*}
Using \eqref{fluct_eq_MN} and the associated error bounds right below to then apply Proposition \ref{azuma_prop_azuma} yields
\begin{align*}
	&\EV{\exp\pp{s\pp{\metb[]{h}{x_0}{S}	- \EV{\metb[]{h}{x_0}{S}}}}}	\\
	&\qquad\qquad	\leq		\EV{\exp\pp{s\pp{\wt{M}_N-R_1}}\charfun[E_{\M}]} + \exp\pp{s\timebound{h}{x_0}{S}}\pp{1-\PM{E_{\M}}}\\
	&\qquad\qquad	\leq		\exp\pp{s  h}\EV{\exp\pp{s\pp{\wt{M}_N}}\charfun[E_{\M}]} + \exp\pp{s\timebound{h}{x_0}{S}}\pp{1-\PM{E_{\M}}}\\
	&\qquad\qquad	\leq		\exp\pp{Cs^2Nh^2 +  s h }\\
	&\qquad\qquad			\qquad\qquad\times\pp{1+C\pp{N+\frac{\timebound{h}{x_0}{S}N}{sh^2}}\pp{1-\PM{E_{\M}}}e^{2s\timebound{h}{x_0}{S}}}.
\end{align*}
This is essentially the same moment bound as in Proposition \ref{azuma_prop_azuma}.
Following the proof of \eqref{azuma_eq_PM} from Proposition \ref{azuma_prop_azuma} -- except for using that $e^{-s\lambda+sh}\leq e^{-\frac{1}{2}s\lambda}$ for $\lambda \geq 2h$ to deal with the additional term $e^{sh}$ -- we obtain 
\begin{align*}
	\PM{\abs{\metb[]{h}{x_0}{S}	- \EV{\metb[]{h}{x_0}{S}}}\geq \lambda}
	\leq		4\exp\pp{-\frac{\lambda^2}{CN h^2}}		&&\text{for all }0<\lambda\leq\lambda_0,
\end{align*}
with $\lambda_0=-\log\pp{\frac{CTN\sqrt{N}}{h}\pp{1-\PM{E_{\M}}}}\frac{Nh^2}{T}$.
Plugging in $\timebound{h}{x_0}{S}\approx\dist\pp{x_0,S}$ and $N\approx\frac{\dist\pp{x_0,S}}{h}$ from \eqref{fluct_eq_defN} as well as using Assumption \eqref{fluct_eq_hrateA} with \eqref{fluct_eq_ProbEM} to obtain $\log\pp{\frac{CTN\sqrt{N}}{h}\pp{1-\PM{E_{\M}}}}\approx -h^{\ratespeed}$ yields the first inequality in \eqref{fluct_eq_PMfluct}.
\end{proof}

\end{appendix}

%


\printbibliography[title={References}]
\end{document}